\newtheorem{introthm}{Theorem}
\newtheorem{thm}{Theorem}[section]
\newtheorem{lem}[thm]{Lemma}
\newtheorem{cor}[thm]{Corollary}
\newtheorem{prop}[thm]{Proposition}
\newtheorem*{prop*}{Proposition}
\newtheorem*{thm*}{Theorem}
\newtheorem{defi}[thm]{Definition}
\newtheorem*{defi*}{Definition}
\theoremstyle{definition}
\theoremstyle{remark}
\newtheorem*{rmk}{Remark}
\numberwithin{equation}{section}
\crefname{thm}{thm.}{thms.}
\Crefname{thm}{Theorem}{Theorems}
\crefname{lem}{lem.}{lemmas}
\Crefname{lem}{Lemma}{Lemmas}
\crefname{prop}{prop.}{props.}
\Crefname{prop}{Proposition}{Propositions}
\crefname{defi}{def.}{defs.}
\Crefname{defi}{Definition}{Definitions}
\crefname{section}{§}{§}
\Crefname{section}{Section}{Sections}
\crefname{subsection}{§}{§}
\Crefname{subsection}{Subsection}{Subsections}
\crefname{paragraph}{§}{§}
\Crefname{paragraph}{Paragraph}{Paragraphs}
\newcommand*{\sheafhom}{\mathscr{H}\kern -1.5pt om}
\renewcommand{\cal}{\mathcal}
\renewcommand{\ker}{\mathrm{ker}}
\newcommand{\RR}{\mathbb{R}}
\newcommand{\ZZ}{\mathbb{Z}}
\newcommand{\NN}{\mathbb{N}}
\newcommand{\fr}{\mathfrak}
\newcommand{\QQ}{\mathbb{Q}}
\newcommand{\GG}{\mathbb{G}}
\newcommand{\CC}{\mathbb{C}}
\DeclareMathOperator{\Spec}{Spec}
\DeclareMathOperator{\Hom}{Hom}
\DeclareMathOperator{\Ext}{Ext}
\DeclareMathOperator{\fib}{fib}
\DeclareMathOperator{\cofib}{cofib}
\DeclareMathOperator{\Gal}{Gal}
\DeclareMathOperator{\Ch}{Ch}
\DeclareMathOperator{\Ker}{Ker}
\DeclareMathOperator{\Sh}{Sh}
\DeclareMathOperator{\Disc}{Disc}
\DeclareMathOperator{\rank}{rank}
\DeclareMathOperator{\Pic}{Pic}
\DeclareMathOperator{\Lie}{Lie}
\DeclareMathOperator{\ord}{ord}
\DeclareMathOperator{\ind}{ind}
\tikzset{%
    symbol/.style={%
        draw=none,
        every to/.append style={%
            edge node={node [sloped, allow upside down, auto=false]{$#1$}}}
    }
} 
\title{Tori over number fields and special values at s=1}
\author{Adrien Morin\textsuperscript{\emph{a}}\\
		\textsuperscript{\emph{a}}Institut de Mathématiques de Bordeaux, Université de Bordeaux\\		
		351, cours de la Libération - 33 405 Talence, France\\
		Email : adrien.morin@math.u-bordeaux.fr
		}
\date{}
\begin{document}

\maketitle
\begin{abstract}
	We define a Weil-étale complex with compact support for duals (in the sense of the Bloch dualizing cycles complex $\ZZ^c$) of a large class of $\ZZ$-constructible sheaves on an integral $1$-dimensional proper arithmetic scheme flat over $\Spec(\ZZ)$. This complex can be thought of as computing Weil-étale homology. For those $\ZZ$-constructible sheaves that are moreover tamely ramified, we define an "additive" complex which we think of as the Lie algebra of the dual of the $\ZZ$-constructible sheaf. The product of the determinants of the additive and Weil-étale complex is called the fundamental line. We prove a duality theorem which implies that the fundamental line has a natural trivialization, giving a multiplicative Euler characteristic. We attach a natural $L$-function to the dual of a $\ZZ$-constructible sheaf; up to a finite number of factors, this $L$-function is an Artin $L$-function at $s+1$. Our main theorem contains a vanishing order formula at $s=0$ for the $L$-function and states that, in the tamely ramified case, the special value at $s=0$ is given up to sign by the Euler characteristic. This generalizes the analytic class number formula for the special value at $s=1$ of the Dedekind zeta function. In the function field case, this a theorem of Geisser--Suzuki.
\end{abstract}

\tableofcontents

\section{Introduction}

\subsection{Results}

\paragraph{A new \texorpdfstring{$L$}{L}-function.}
Let $\cal{O}$ be an order in a number field $K$ and $X=\Spec(\mathcal{O})$. Denote $\ZZ^c_X:=\ZZ^c_X(0)$ Bloch's cycle complex (with cohomological indexing), defined on the étale site of $X$; in particular if $X$ is regular we have $\ZZ_X^c=\GG_m[1]$. A sheaf of abelian groups $F$ on the étale site of $X$ is $\ZZ$-constructible if on a dense open it is locally constant associated to a finite type abelian group, and moreover the stalks at all geometric points are finite type abelian groups. If $F$ is a $\ZZ$-constructible sheaf, denote $F^D:=R\mathcal{H}om_X(F,\ZZ^c_X)$. The cohomology of $F^D$ is related to the compactly supported cohomology of $F$ by \emph{Artin-Verdier duality}; thus we think of it as computing homology with coefficients in $F$. To $F^D$, we associate an $L$-function:
\begin{defi*}
	For a complex of étale sheaves $M$, we put
	\[
	M \hat{\otimes}\QQ_l:= \left(R\lim_n (M \otimes^L \ZZ/l^n\ZZ)\right)\otimes \QQ
	\]
	computed on the proétale site. For each closed point $x$ of $X$, let $l=l_x$ be a prime number such that  $l\neq \mathrm{char}(\kappa(x))$ and $L_x$ the usual local factor defined using the geometric Frobenius $\varphi$: for a finite dimensional $\QQ_l$-representation $V$,
	\[
	L_x(V,s):=\det(1-\varphi N(x)^{-s}|V)^{-1}
	\]
	with $N(x)=\mathrm{card}(\kappa(x))$.
	We define the $L$-function of $F^D$ as
	\[
	L_X(F^D,s):=\prod_{x\in X_0} L_x((i_x^\ast F^D)\hat{\otimes} \QQ_{l_x},s).
	\]
\end{defi*}
We compute explicitly the local factors and show that they are well-defined and that the Euler product converges for $s>1$. In fact, denote $g:\Spec(K)\to X$ the inclusion of the generic point and $G_K=\Gal(K^{sep}/K)$. If $V$ is the rational representation of $G_K$ corresponding to $g^\ast F\otimes \QQ$ and $V^\vee$ is its linear dual, then the $L$-function of $F^D$ equals up to a finite number of factors the Artin $L$-function $L_K(V^\vee,s+1)$. In particular, the $L$-function extends to a meromorphic function on $\CC$. If $F$ is a $\ZZ$-constructible sheaf on $\Spec(K)$ associated to a finite type integral $G_K$-representation $M$, we have moreover $L_X((g_\ast F)^D,s)=L_K(\Hom_{Ab}(M,\QQ),s+1)$. We have as a special case $L_X(\ZZ^D,s)=\zeta_X(s+1)$ if $X$ is regular. On the other hand, let $i:x\to X$ be the inclusion of a closed point and $F$ is a  $\ZZ$-constructible sheaf on $x$. In \cite{AMorin21} we introduced $L$-functions for $\ZZ$-constructible sheaves on $X$; then $L_X((i_\ast F)^D,s)=\det(I-\varphi N(x)^{-s}|\cal{H}om_x(F,\ZZ)\otimes \QQ)^{-1}$ is the $L$-function of the $\ZZ$-constructible sheaf $i_\ast \cal{H}om_x(F,\ZZ)$. Finally, if $j:U\hookrightarrow X$ is the inclusion of a dense open subscheme, we have $L_X((j_!\ZZ)^D,s)=\zeta_U(s+1)\times \prod_{x\in X\backslash U} \frac{\zeta_x(s+1)}{\zeta_x(s)}$.

\paragraph{Weil-étale cohomology and special values.}
We apply the Weil-étale formalism of Flach--B. Morin \cite{Flach18} to give a special value formula at $s=0$ of this new $L$-function.
The idea of Weil-étale cohomology originates in \cite{Lichtenbaum05}, where Lichtenbaum constructs a Weil-étale topos for varieties over a finite field and links it to the special value at $s=0$ of zeta functions. Further work was done by Geisser over a finite field \cite{Geisser04}. For schemes over $\Spec(\ZZ)$, attempts at the definition of a Weil-étale topos were made by Lichtenbaum \cite{Lichtenbaum09} and Flach--B. Morin \cite{Flach10} but its cohomology doesn't behave well in high degree. Another approach was instigated by B. Morin in \cite{Morin14} and refined by Flach--B. Morin in \cite{Flach18}: instead of constructing the Weil-étale topos, one only constructs Weil-étale cohomology complexes in the derived category of abelian groups which fit into a certain distinguished triangle. This distinguished triangle comes heuristically from the pushforward from the Weil-étale topos to the étale topos\footnote{The pushforward had been computed by Geisser \cite{Geisser04} in the case over a finite field and by B. Morin in the case of the spectrum of a ring of integers in a number field \cite[§8, §9]{Morin11}.}.

Other relevant works on Weil-étale cohomology include Chiu's thesis \cite{Chiu}, Beshenov's thesis \cite{Beshenov21pI,Beshenov21pII}, Tran's article \cite{Tran16} and the author's article \cite{AMorin21}. Work related to the study of the Weil-étale cohomology of $F^D$ are Geisser--Suzuki's article \cite{Geisser21} and Tran's thesis \cite{Tran15}. As far as the author knows, Tran was the first to observe that for an integral representation $M$ of the Galois group of a number field $K$, the special value of the Artin $L$-function of $M\otimes \QQ$ at $s=1$ should be related to Weil-étale cohomology of the dual of the pushforward of $M$ to $\Spec(\cal{O}_K)$.

Following the formalism of Flach--B. Morin, we should construct for each $F$ a "multiplicative" complex\footnote{Here "multiplicative" suggests that the complex is linked to motivic cohomology: the latter involves the units, the Picard group, etc.}, the Weil-étale complex (with compact support) which we think of as "Weil-étale homology" with coefficients in $F$, and an "additive" complex\footnote{Here "additive" suggests that the complex is linked to coherent phenomena/ de Rham cohomology}, an analogue of Milne's correcting factor in special value formulas for zeta functions of varieties over finite fields. The right object to consider is then the fundamental line $\Delta_{X}(F^D)$, a free abelian group of rank $1$ which is defined as the product of the determinants of the additive and multiplicative complexes. In the general situation of an arithmetic scheme, contrary to the case over a finite field, the additive and multiplicative complexes are linked to each other through phenomena happening on complex points and so cannot be studied independently to get a special value formula; instead one of the fundamental insights of \cite{Flach18} is that one has to study them together through the fundamental line\footnote{This idea has its origin in the formulation of Fontaine--Perrin-Riou of the Bloch--Kato conjecture on special values of L-functions}. The fundamental line should have a canonical trivialization $\Delta_X(F^D)\otimes \RR \xrightarrow{\simeq} \RR$ after base change to $\RR$, which enables one to construct a multiplicative Euler characteristic as the covolume of $\Delta_X(F^D)$ inside $\Delta_X(F^D)\otimes\RR$, and this Euler characteristic should give the special value up to sign.

\paragraph{The Weil-étale complex}
We introduce compactly supported cohomology $R\Gamma_{c,B}(X,F^D)$ and Tate compactly supported cohomology $R\hat{\Gamma}_{c,B}(X,F^D)$.\footnote{The subscript $B$ refers to the way we correct the cohomology at infinity, which involves the Tate twist $\ZZ(1)=2i\pi\ZZ$} The two bear to each other the same kind of relationship as ordinary and Tate cohomology: the fiber of the canonical morphism $R\Gamma_{c,B}(X,F^D)\to R\hat{\Gamma}_{c,B}(X,F^D)$ computes some homology at the archimedean places. Moreover, we prove an Artin-Verdier duality statement for $R\hat{\Gamma}_{c,B}(X,F^D)$, which suggests that compactly supported cohomology of $F^D$ should be thought of as "étale homology" of $F$:
\begin{introthm}[Artin-Verdier duality for $F^D$, see \cref{subsec:AV}]
	There is a natural pairing $R\Gamma(X,F)\otimes^L R\hat{\Gamma}_{c,B}(X,F^D) \to \QQ/\ZZ[-2]$. It induces a map
	\[
	R\hat{\Gamma}_{c,B}(X,F^D) \to R\Hom(R\Gamma(X,F),\QQ/\ZZ[-2])
	\]
	If $F$ is $\ZZ$-constructible, the above map is an isomorphism in degree $\neq -1,0$, and an isomorphism after profinite completion of the left hand side in degree $-1,0$.
\end{introthm}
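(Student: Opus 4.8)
The plan is to construct the pairing by a cup product and a trace map, and then to establish perfectness by dévissage of $F$ into three elementary types — locally constant sheaves on a dense open, skyscrapers at non-archimedean closed points, and the archimedean corrections — each of which reduces to a duality statement that is classical or checked by hand. First I would build the evaluation map $F\otimes^L F^D\to\ZZ^c_X$ together with a cap-product pairing $R\Gamma(X,-)\otimes^L R\hat{\Gamma}_{c,B}(X,-)\to R\hat{\Gamma}_{c,B}(X,-\otimes^L -)$; since $R\hat{\Gamma}_{c,B}$ is produced as a complex in the style of Flach--B. Morin rather than as the cohomology of a site, this pairing is not formal and must be assembled compatibly out of the pairing on the non-archimedean part $R\Gamma_c(X,-)$ and the pairings on the local Tate-cohomology terms at infinity. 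Composing with the trace isomorphism $R\hat{\Gamma}_{c,B}(X,\ZZ^c_X)\simeq\QQ/\ZZ[-2]$ — which unwinds to $\hat H^3_{c,B}(X,\GG_m)\simeq\QQ/\ZZ$, i.e. the sum-of-local-invariants map of global class field theory together with vanishing above degree $3$ — gives the asserted pairing, and I would check that it is compatible with localization triangles so that the dévissage below is legitimate.

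For the dévissage, the class of $\ZZ$-constructible $F$ for which the conclusion holds is closed under shifts and cones: both sides carry distinguished triangles to distinguished triangles, and the long exact sequences keep the exceptional degrees within $\{-1,0\}$. Choosing a dense open $j:U\hookrightarrow X$ on which $j^*F$ is locally constant, with closed complement $i:Z\hookrightarrow X$, the triangle $j_!j^*F\to F\to i_*i^*F$ reduces the problem to $F=i_*M$ with $M$ a finitely generated $G_{\kappa(x)}$-module at a closed point $x$, and to $F=j_!G$ with $G$ locally constant associated to a finitely generated group. For the latter, a transfer argument along a finite étale cover $\pi:U'\to U$ trivializing $G$ — using $\pi_!=\pi_*$ and $\pi^!\ZZ^c_U=\ZZ^c_{U'}$, so that $(-)^D$ commutes with $\pi_*$ — together with the sequence $0\to G_{\mathrm{tors}}\to G\to G/G_{\mathrm{tors}}\to 0$, leaves only the cases $F$ a finite constructible sheaf (classical torsion Artin--Verdier duality) and $F=\ZZ$ the constant sheaf on $X$, i.e. $F^D=\ZZ^c_X$.

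In the closed-point case, purity for the cycle complex gives $i^!\ZZ^c_X=\ZZ^c_x=\ZZ$, hence $F^D=i_*R\Hom(M,\ZZ)$, and since $x$ is non-archimedean $R\hat{\Gamma}_{c,B}(X,i_*N)=R\Gamma(x,N)$; the assertion becomes perfectness of $R\Gamma(\hat{\ZZ},M)\otimes^L R\Gamma(\hat{\ZZ},R\Hom(M,\ZZ))\to R\Gamma(\hat{\ZZ},\ZZ)\to\QQ/\ZZ[-2]$, which is classical Tate duality over the finite field $\kappa(x)$ — and already only an isomorphism after profinite completion in the degree carrying the free part of $M$ (for $M=\ZZ$ the induced map $\ZZ\to(\QQ/\ZZ)^\vee=\hat{\ZZ}$ is precisely completion). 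For $F=\ZZ$ on $X$, pairing $R\Gamma(X,\ZZ)$ against $R\hat{\Gamma}_{c,B}(X,\ZZ^c_X)$ is governed by Dirichlet's unit theorem and finiteness of the class group, and this is exactly the case where the residual profinite-completion caveat in degrees $-1,0$ is unavoidable, consistent with the Weil-étale philosophy that integral étale cohomology here differs from the right answer only by one profinite completion.

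The main obstacle is the archimedean contribution. I expect the crux to be showing that the local correction terms at the real and complex places entering $R\hat{\Gamma}_{c,B}(X,F^D)$ are Tate/Pontryagin dual to the corresponding local terms of $R\Gamma(X,F)$ at infinity — with $\ZZ(1)=2i\pi\ZZ$ carrying the sign action of $\Gal(\CC/\RR)$ — compatibly with the global trace map, and that the $2$-periodicity of $\hat H^*(\Gal(\CC/\RR),-)$ does not leak contributions outside degrees $-1,0$; getting these signs and $2$-torsion comparisons right, and matching the fiber of $R\Gamma_{c,B}(X,F^D)\to R\hat{\Gamma}_{c,B}(X,F^D)$ with the failure of perfectness of the un-Tate-ified pairing, is the delicate step. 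Once the archimedean terms are identified, the general statement follows by the five lemma from the triangle relating $R\hat{\Gamma}_{c,B}(X,-)$, ordinary compactly supported cohomology $R\Gamma_c(X,-)$, and the archimedean local terms.
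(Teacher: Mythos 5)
Your architecture is genuinely different from the paper's, and in two places it is thinner than it needs to be. The paper does not redo a dévissage over $F$: after constructing the pairing (with trace the truncation $\tau^{\geq 2}R\hat{\Gamma}_{c,B}(X,\ZZ^D)\simeq \QQ/\ZZ[-2]$), it exhibits a morphism of fiber sequences comparing the new duality map with the \emph{classical} Artin--Verdier duality for (possibly singular) $X$, quoted from \cite[II.6.2]{ADT} for Deninger's complex $\GG_X$, and thereby reduces the entire theorem to one new statement: perfectness of the archimedean pairing $R\hat{\Gamma}(G_\RR,M^\vee(1))\otimes^L R\hat{\Gamma}(G_\RR,M)\to \QQ/\ZZ[-3]$ for every finitely generated discrete $G_\RR$-module $M$, which it then proves by two $2$-periodicity shifts reducing to the duality for Tate cohomology of finite groups (\cref{duality_tate_finite_groups}). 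Your plan instead re-proves the global body of the theorem from scratch (skyscrapers, constructible sheaves, $\ZZ$, transfer along trivializing covers), i.e.\ essentially re-derives Milne's theorem along the way, and treats the archimedean comparison only as an expected ``delicate step''. But that archimedean perfectness is precisely the new content of the theorem relative to the classical one; flagging it without an argument leaves the crux unproven, whereas everything you do prove in detail could have been quoted.

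Two further points would block your dévissage as written. First, the closure-under-cones claim is not legitimate by a naive five lemma: the conclusion mixes honest isomorphisms with isomorphisms only after profinite completion of the left-hand side in degrees $-1,0$, and profinite completion is not exact on the long exact sequences in play (the groups in degrees $1,2$ are torsion of cofinite type, e.g.\ $\QQ/\ZZ$, whose profinite completion vanishes), so you cannot complete the sequences and chase; you need either the derived reformulation $R\hat{\Gamma}_{c,B}(X,F^D)\otimes^L\hat{\ZZ}\to R\Gamma(X,F)^\ast[-2]$ (using finite generation in degrees $-1,0$) or Milne-style bookkeeping, and the same caveat applies to your concluding ``five lemma from the triangle''. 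Second, two smaller slips: $R\hat{\Gamma}_{c,B}(X,\ZZ^c_X)$ is \emph{not} isomorphic to $\QQ/\ZZ[-2]$ --- it has cohomology in degrees $\leq 0$ (units, narrow class group), as your own discussion of the case $F=\ZZ$ shows, so the trace must be the truncation map; and for $\ZZ$-constructible locally constant $G$ a bare transfer along a finite étale trivializing cover only controls kernels and cokernels up to multiplication by the degree, so the reduction to induced sheaves and $\ZZ$ has to go through Artin induction with a finite error term (an exact sequence $0\to G^n\oplus\bigoplus\mathrm{ind}\,\ZZ\to\bigoplus\mathrm{ind}\,\ZZ\to N\to 0$ with $N$ finite), together with compatibility of the pairing with finite pushforward, which itself requires a nontrivial diagram verification as in \cref{compatibility_pairings_finite}.
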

The proof proceeds by twisting the usual Artin-Verdier theorem \cite[II.3.1]{ADT} by a duality at archimedean places. With the Artin-Verdier-like duality theorem in our hands, we can construct a Weil-étale complex using the methodology of \cite{Morin14}: the cohomology groups with compact support of $F^D$ have a finite type part and a torsion of cofinite type part (i.e. the $\QQ/\ZZ$-dual of a finite type abelian group) and Artin-Verdier duality says that the torsion cofinite type part is the $\QQ/\ZZ$-dual of some étale cohomology of $F$. Taking inspiration from Geisser's and B. Morin's computation of the derived pushforward from the Weil-étale topos to the étale topos \cite{Geisser04, Morin11}, there should exist a fundamental distinguished triangle saying that Weil-étale cohomology with compact support of $F^D$ is obtained by replacing the torsion cofinite type part of cohomology with compact support of $F^D$ by a finite type part, using the short exact sequence $0 \to \ZZ \to \QQ \to \QQ/\ZZ \to 0$. This suggest the existence of a map $\Hom(H^{2-i}(X,F),\QQ) \to H^i_{c,B}(X,F^D)$ making the diagram 
\[\begin{tikzcd}[column sep = huge]
	{\Hom(H^{2-i}(X,F),\QQ)} & {H^i_{c,B}(X,F^D)} \\
	{\Hom(H^{2-i}(X,F),\QQ/\ZZ)} & {\hat{H}^i_{c,B}(X,F^D)}
	\arrow[from=1-1, to=2-1]
	\arrow[dashed, from=1-1, to=1-2]
	\arrow[from=1-2, to=2-2]
	\arrow["{\text{Artin-Verdier}}"', from=2-2, to=2-1]
\end{tikzcd}\]
commute; taking kernels and cokernels will then give something of finite type. We are thus led to consider the existence of a map $R\Hom(R\Gamma(X,F),\QQ[-2]) \xrightarrow{?} R\Gamma_{c,B}(X,F^D)$, the cone of which we want to name Weil-étale cohomology with compact support of $F^D$. We achieve the construction of such a map with good functoriality properties in $D(\ZZ)$ for a large class of $\ZZ$-constructible sheaves, which we name red sheaves and blue sheaves. 
\begin{defi*}
		Let $F$ be a $\ZZ$-constructible sheaf on $X$. We say that
		\begin{itemize}
			\item $F$ is red if $H^0_{c,B}(X,F^D)$ is torsion, hence finite.
			\item $F$ is blue if $H^1(X,F)$ is torsion, hence finite.
			\item A red-to-blue morphism is a morphism of sheaves $F\to G$ where either $F$ and $G$ are both blue, or are both red, or $F$ is red and $G$ is blue ; a red-to-blue short exact sequence is a short exact sequence with red-to-blue morphisms.
		\end{itemize}
\end{defi*}
Examples of red sheaves are extensions by zero of locally constant $\ZZ$-constructible sheaves on a regular open subscheme. Examples of blue sheaves are $\ZZ$-constructible sheaves supported on a finite closed subscheme. The important point is that there are "enough red and blue sheaves", meaning that any $\ZZ$-constructible sheaf can be put in a short exact sequence where the first term is red and the last is blue.\footnote{Namely, take the short exact localization sequence associated to a regular dense open subscheme on which the sheaf is locally constant.}
\begin{introthm}[Existence of the Weil-étale complex, see \cref{sec:construction_weil_etale}]
	For every red or blue sheaf $F$, there exists a Weil-étale complex with compact support $R\Gamma_{W,c}(X,F^D)\in D(\ZZ)$, well-defined up to unique isomorphism. It sits in a distinguished triangle
	\[
	R\Hom(R\Gamma(X,F),\QQ[-2]) \to R\Gamma_{c,B}(X,F^D)\to R\Gamma_{W,c}(X,F^D) \to
	\]
	It is a perfect complex, functorial in red-to-blue morphisms, and it yields a long exact cohomology sequence for red-to-blue short exact sequences. If $Y=\Spec(\cal{O}')$ is the spectrum of an order in a number field with a finite dominant morphism $\pi:Y\to X$, we have a canonical isomorphism $R\Gamma_{W,c}(X,(\pi_\ast F)^D)\simeq R\Gamma_{W,c}(Y,F^D)$.
\end{introthm}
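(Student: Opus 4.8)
The plan is to construct a single morphism in $D(\ZZ)$ and to deduce the whole statement from its uniqueness. Write $A := R\Hom(R\Gamma(X,F),\QQ[-2])$, $\bar A := R\Hom(R\Gamma(X,F),\QQ/\ZZ[-2])$ and $B := R\Gamma_{c,B}(X,F^D)$; let $w\colon A\to\bar A$ be induced by $\QQ\to\QQ/\ZZ$ and let $\gamma\colon B\to\bar A$ be the composite of the canonical map $B\to R\hat{\Gamma}_{c,B}(X,F^D)$ with the morphism provided by Artin--Verdier duality for $F^D$. I will show that there is a \emph{unique} $\alpha\colon A\to B$ with $\gamma\circ\alpha=w$, put $R\Gamma_{W,c}(X,F^D):=\mathrm{cone}(\alpha)$ --- so that the stated distinguished triangle holds tautologically --- and then obtain well-definedness up to unique isomorphism, functoriality in red-to-blue morphisms, the long exact sequences and the compatibility with finite pushforward as formal consequences of the uniqueness of $\alpha$ and of the vanishing of a small number of $\Hom$- and $\Ext$-groups.

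To construct $\alpha$ I first record the homological shape of the complexes involved. Since $F$ is $\ZZ$-constructible and $X$ is one-dimensional, $R\Gamma(X,F)$ is bounded with $H^0,H^1$ finitely generated and $H^i$ torsion for $i\geq2$; as $\QQ$ is an injective $\ZZ$-module, $A$ is then a bounded complex of $\QQ$-vector spaces with $H^i(A)=\Hom_\ZZ(H^{2-i}(X,F),\QQ)$, so $H^i(A)=0$ for $i\notin\{1,2\}$, and $H^1(A)=0$ also when $F$ is blue. On the other side, combining Artin--Verdier duality for $F^D$ --- the map $R\hat{\Gamma}_{c,B}(X,F^D)\to\bar A$ is an isomorphism in degrees $\neq-1,0$ and an isomorphism after profinite completion of the source in degrees $-1,0$ --- with the distinguished triangle $C\to B\to R\hat{\Gamma}_{c,B}(X,F^D)\to$ whose fibre $C$, the archimedean homology, is a bounded complex supported in degrees $\leq0$, one checks that $D:=\fib(\gamma)$ is supported in degrees $\leq1$, with $H^1(D)$ finite and $H^i(D)=0$ for $i\geq2$; and if $F$ is red, then $H^0_{c,B}(X,F^D)$ is finite by definition, which together with the finiteness of $H^3(X,F)$ forces $H^0(D)$ finite.

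Constructing $\alpha$ is now obstruction theory in $D(\ZZ)$: a lift of $w$ along $\gamma$ exists if and only if the image of $w$ in $\Hom_{D(\ZZ)}(A,D[1])$ vanishes, and it is unique if and only if $\Hom_{D(\ZZ)}(A,D)=0$. Both groups are assembled from the terms $\Ext^p_\ZZ(H^i(A),H^{i+q}(D))$ with $p\in\{0,1\}$ and $p+q\in\{0,1\}$. Since each $H^i(A)$ is a $\QQ$-vector space in degree $1$ or $2$ (only in degree $2$ when $F$ is blue), while $H^j(D)$ vanishes for $j\geq2$, is finite for $j=1$, and --- in the red case, the only case where $H^1(A)$ may be nonzero --- is finite for $j=0$, every surviving term is $\Hom_\ZZ(\QQ\text{-vector space},\text{finite})=0$ or $\Ext^1_\ZZ(\QQ\text{-vector space},\text{finite})=0$; the latter because $\Ext^1_\ZZ(\QQ,\ZZ)\cong\widehat{\ZZ}/\ZZ$ is divisible, so $\Ext^1_\ZZ(\QQ,-)$ kills finite groups. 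Hence $\alpha$ exists and is unique, $R\Gamma_{W,c}(X,F^D)$ is defined, and the same vanishing makes the passage to cones functorial: a red-to-blue morphism $F\to G$ yields a square commuting after composition with $\gamma$, and the mixed group $\Hom_{D(\ZZ)}(A_G,D_F)$ vanishes precisely because $F$ is red or $G$ is blue, so the lift is unique and one gets a canonical morphism of cones; applied to a red-to-blue short exact sequence this gives a morphism of the defining triangles and hence the long exact cohomology sequence. Perfectness of $\mathrm{cone}(\alpha)$ is then read off from its long exact sequence: $\ker(\alpha^i)\subseteq\ker(w^i)$ is a subgroup of a finitely generated group, hence finitely generated, while $\Coker(\alpha^i)$ is finitely generated because $\alpha^i$ surjects onto the divisible cofinite-type part of $H^i(B)$ --- once more the red/blue hypotheses are what control the ambiguous degrees $-1,0$.

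For a finite dominant $\pi\colon Y\to X$, the identity $\pi^!\ZZ^c_X=\ZZ^c_Y$ for Bloch's complex gives $(\pi_\ast F)^D=\pi_\ast(F^D)$; since $\pi_\ast$ is exact and finite one has $R\Gamma(X,\pi_\ast F)=R\Gamma(Y,F)$ and $R\Gamma_{c,B}(X,(\pi_\ast F)^D)=R\Gamma_{c,B}(Y,F^D)$, compatibly with the archimedean corrections (the archimedean places of $Y$ lie over those of $X$) and the Artin--Verdier pairings, so the two defining triangles coincide and, by uniqueness of $\alpha$, so do the two Weil-étale complexes. I expect the main difficulty to be the second paragraph: determining the precise structure of $R\Gamma_{c,B}(X,F^D)$ and of $D$ in the boundary degrees $-1,0$, where Artin--Verdier duality is only an isomorphism after profinite completion and where the archimedean homology intervenes. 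Arranging for ``red'' and ``blue'' to be exactly the hypotheses that make $H^0(D)$, respectively the degree $-1,0$ part of $\mathrm{cone}(\alpha)$, well-behaved, and verifying the corresponding $\Ext^1_\ZZ(\QQ\text{-vector space},-)$ vanishings, is the technical heart of the argument and the reason functoriality is restricted to red-to-blue morphisms.
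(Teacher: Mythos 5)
Your overall strategy (pin down a map $\alpha\colon R\Hom(R\Gamma(X,F),\QQ[-2])\to R\Gamma_{c,B}(X,F^D)$ compatibly with Artin--Verdier duality, use the red/blue hypotheses to kill the ambiguity, take the cone) is the same as the paper's, but your control of the boundary degrees is wrong as stated, and this is exactly where the difficulty lives. In degrees $-1,0$ Artin--Verdier is only an isomorphism \emph{after profinite completion}, so the fibre $D=\fib(\gamma)$ is not small: $H^0(D)$ and $H^1(D)$ contain cokernels of completion maps $M\to \widehat{M}$ with $M$ finitely generated, i.e.\ groups of the form $(\widehat{\ZZ}/\ZZ)^r$, which are huge divisible groups, not finite. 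Concretely, for the blue non-red sheaf $i_\ast\ZZ$ one has $H^0_{c,B}(X,(i_\ast\ZZ)^D)=\ZZ$ and $H^2(X,i_\ast\ZZ)^\ast=\widehat{\ZZ}$, so $H^1(D)\cong\widehat{\ZZ}/\ZZ\neq0$; and $H^3(X,F)$ is not finite in general either ($H^3(X,\ZZ)^\ast$ is the profinite completion of a group of rank $[K:\QQ]-1$, so $H^3(X,\ZZ)$ is infinite once $K\neq\QQ$), so your deduction that $H^0(D)$ is finite in the red case also fails. Consequently your justification ``every surviving term is $\Hom$ or $\Ext^1$ of a $\QQ$-vector space into a finite group'' is incorrect; the vanishings can be rescued (the relevant $H^j(D)$ are extensions of finite groups by divisible-by-finite groups, and divisible groups are injective $\ZZ$-modules), but that argument is not in your text. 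The paper avoids analysing $\fib(\gamma)$ altogether by computing $\Hom_{D(\ZZ)}(D_F,R\Gamma_{c,B}(X,F^D))$ via the Verdier spectral sequence, where the only problematic term is $\Ext^1(H^1(X,F)^\dagger,H^0_{c,B}(X,F^D))$, killed precisely by red/blue.

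Two further statements do not follow formally as you claim, and they are the technical heart of the theorem. First, uniqueness of the induced morphism of cones (needed for ``well-defined up to unique isomorphism'' and for functoriality) is \emph{not} a vanishing statement: the controlling group $\Hom_{D(\ZZ)}(D_G[1],R\Gamma_{W,c}(X,F^D))$ is typically nonzero, since $\Ext^1_\ZZ(\QQ,\ZZ)\cong\widehat{\ZZ}/\ZZ\neq0$; the paper's argument is that this group is a $\QQ$-vector space, hence has zero image in the finitely generated group $\Hom(R\Gamma_{W,c}(X,G^D),R\Gamma_{W,c}(X,F^D))$, which gives injectivity of restriction along $R\Gamma_{c,B}(X,G^D)\to R\Gamma_{W,c}(X,G^D)$ and hence uniqueness of fillers. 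You need this (or an equivalent) argument; mere vanishing of ``a small number of Hom- and Ext-groups'' will not do. Second, the long exact sequence for a red-to-blue short exact sequence does not follow ``hence'' from a morphism of the defining triangles: cones are not functorial, and the triangle formed on the $R\Gamma_{W,c}$'s by the canonical maps need not be distinguished. The paper proves the long exact sequence by a separate argument: $D_F\otimes^L\ZZ/p^n\ZZ=0$, so $R\Gamma_{W,c}(X,(-)^D)\otimes\ZZ_p$ agrees with the derived $p$-completion of $R\Gamma_{c,B}(X,(-)^D)$, an exact functor, and one concludes by faithful flatness of the family $(\ZZ_p)_p$ (using perfectness of $R\Gamma_{W,c}$). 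Without supplying these two steps, the well-definedness, functoriality and exactness claims of the theorem remain unproved.
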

The Weil-étale cohomology with compact support of $F^D$ is constructed from $R\Gamma_{c,B}(X,F^D)$, so it can be thought of as some "Weil-étale homology" of $F$. The idea of such a homology theory is not new: Geisser had defined "arithmetic homology" for curves over finite fields \cite{Geisser12}, and there is a tentative construction of a Weil-étale complex of $F^D$ over the spectrum of a ring of integers in a totally imaginary number field in Tran's thesis \cite{Tran15}. Finally, let $K$ be a function field associated to a smooth proper curve $C$ over a finite field. Geisser--Suzuki showed in \cite{Geisser20} that for a given torus over $K$, its connected Néron model $\cal{T}^\circ$ over $C$ is of the form $F^D$ for $F$ a complex defined in terms of the character group of $T$, and they linked the special value of the $L$-function of the torus at $s=1$ to Weil-étale cohomology of $\cal{T}^\circ$; this suggested that our approach might be fruitful (see \cref{par:comparison_GS} for a precise comparison).

Having an $\infty$-categorical construction of the map $R\Hom(R\Gamma(X,F),\QQ[-2]) \to R\Gamma_{c,B}(X,F^D)$ would have been better, but this construction has eluded us. Instead, we use a kind of miraculous vanishing of $\Ext^1$ groups, for red or blue sheaves, which enables to define the map by only specifying the maps it induces in cohomology. This vanishing is not true for arbitrary $\ZZ$-constructible sheaves, which explains our restriction. It will turn out that to define the Weil-étale Euler characteristic this is not a problem.

\paragraph{The additive complex}
We now have the multiplicative part of the fundamental line, so we turn to the additive part, which we dub the tangent space of $F^D$:
\begin{defi*}[The tangent space of $F^D$]
	Let $F$ be  a $\ZZ$-constructible sheaf on $X$ and denote $g:\Spec(K) \to X$ the inclusion of the generic point. We identify $g^\ast F$ with a discrete $G_K$-module $M$. Denote also $\cal{O}_K$ the ring of integers in $K$.
	\begin{itemize}
		\item We say that $F$ is tamely ramified if for each $x\in X_0$, the tame ramifiction group at $x$ acts trivially on $M$. Let $K^t$ be the maximal tamely ramified extension of $K$, and $G_K^t:=\Gal(K^t/K)$; if $F$ is tamely ramified, $M$ carries a natural action of $G_K^t$.
		\item Suppose that $F$ is tamely ramified. The tangent space of $F^D$ is the complex
		\[
		\Lie_X(F^D):=R\Hom_{G_K^t}(M, \cal{O}_{K^t}[1])
		\]
		\item Suppose that $F$ is tamely ramified and red or blue. The fundamental line is
		\[
		\Delta_X(F^D) := \det_\ZZ R\Gamma_{W,c}(X,F^D) \otimes \det_\ZZ \Lie_X(F^D)^{-1}
		\]
	\end{itemize}
\end{defi*}
Our definition of the tangent space is made so that in particular, when $X=\Spec(\cal{O}_K)$ is regular, the tangent space of $\ZZ^D=\ZZ^c_X=\GG_m[1]$ is $R\Gamma(X,\GG_a[1])=\cal{O}_K[1]$, and the tangent space of the dual of a sheaf supported on a finite closed subscheme is $0$. This is as expected from the $L$-function we introduce: indeed for $\ZZ^D$ it gives the zeta function at $s+1$ so its special value at $0$ is the special value of the Dedekind zeta function at $1$ and should involve the contribution from $\cal{O}_K$ (the discriminant); this was shown already (in the Weil-étale formalism) by Flach--B. Morin in the case of $\ZZ(1)=(\ZZ^D)[-2]$ for $X$ regular \cite{Flach18}. On the other hand, for the dual of a sheaf supported on a finite closed subscheme we find an $L$-function as in \cite[§ 6.4]{AMorin21}, for which there is no "additive" contribution for the special value at $0$. The restriction to tamely ramified sheaves is justified by a theorem of Noether \cite{Noether32}, which implies that $\cal{O}_{K^t}$ is cohomologically trivial. We are still investigating how to remove the tamely unramified hypothesis. The definition was inspired by \cite{Geisser20}, which uses the Lie algebra of the Néron model of a torus $T$ over $K$ to form the additive part of the fundamental line giving the special value at $s=1$ of the $L$-function of $T$ (see \cref{par:comparison_GS}). Our definition is a generalization of this construction in the tamely ramified case that includes finite groups of multiplicative type, as shows the following:
\begin{prop*}[{see \cref{lie_equals_lie}}]
	Suppose that $F$ is tamely ramified and that $g^\ast F$ is torsion-free. Denote $T$ the torus over $K$ with character group $g^\ast F$ and $\cal{T}$ its Néron model over $\Spec(\cal{O}_K)$. Then there is a canonical isomorphism
	\[
	\Lie_X(F^D)\simeq \Lie(\cal{T})[1]
	\]
\end{prop*}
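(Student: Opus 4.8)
\emph{Proof idea.} The plan is to unwind the definition of $\Lie_X(F^D)$ into an explicit Galois-fixed lattice and then to recognise that lattice as the Lie algebra of the Néron model, working one finite place of $K$ at a time. Write $M:=g^\ast F$; since $g^\ast F$ is torsion-free of finite type it is a free $\ZZ$-module of finite rank carrying a $G_K$-action which, by tameness, factors through $G_K^t$, and $X_\ast(T):=M^\vee=\Hom_\ZZ(M,\ZZ)$ is the cocharacter lattice of $T$. Since $\Lie_X(F^D)=R\Hom_{G_K^t}(M,\mathcal{O}_{K^t}[1])=(R\Hom_{G_K^t}(M,\mathcal{O}_{K^t}))[1]$ and $\Lie(\mathcal{T})[1]$ is concentrated in degree $-1$, the statement reduces to a canonical isomorphism of $\mathcal{O}_K$-modules $R\Hom_{G_K^t}(M,\mathcal{O}_{K^t})\simeq\Lie(\mathcal{T})$ (the left side being an $\mathcal{O}_K$-module because $\mathcal{O}_{K^t}$ is an $\mathcal{O}_K$-algebra on which $G_K^t$ acts $\mathcal{O}_K$-linearly). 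First I would identify the left side. As $M$ is $\ZZ$-free, $R\Hom_{G_K^t}(M,\mathcal{O}_{K^t})\simeq R\Gamma(G_K^t,M^\vee\otimes_\ZZ\mathcal{O}_{K^t})$; for a finite tamely ramified Galois extension $L/K$ with group $G$, Noether's theorem \cite{Noether32} gives that $\mathcal{O}_L$ is $\mathcal{O}_K[G]$-projective, hence $\ZZ[G]$-projective (as $\mathcal{O}_K[G]$ is $\ZZ[G]$-free), hence $M^\vee\otimes_\ZZ\mathcal{O}_L$ with the diagonal action is $\ZZ[G]$-projective, so $H^{\geq 1}(G,M^\vee\otimes_\ZZ\mathcal{O}_L)=0$. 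Writing $K^t$ as an increasing union of such $L$ and passing to the colimit, $R\Hom_{G_K^t}(M,\mathcal{O}_{K^t})$ is concentrated in degree $0$; taking for $L$ the splitting field of $T$ (the fixed field of $\ker(G_K\to\mathrm{Aut}(M))$, which tameness places inside $K^t$) and using $\mathcal{O}_{K^t}^{\Gal(K^t/L)}=\mathcal{O}_L$, we get $R\Hom_{G_K^t}(M,\mathcal{O}_{K^t})=\Hom_G(M,\mathcal{O}_L)=(X_\ast(T)\otimes_\ZZ\mathcal{O}_L)^G$ with $G=\Gal(L/K)$, an $\mathcal{O}_K$-lattice inside $(X_\ast(T)\otimes_\ZZ L)^G=\Lie(T)$.

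Next I would compare this lattice with $\Lie(\mathcal{T})$. Both $(X_\ast(T)\otimes_\ZZ\mathcal{O}_L)^G$ and $\Lie(\mathcal{T})=\Lie(\mathcal{T}^\circ)$ are coherent torsion-free $\mathcal{O}_K$-modules sitting canonically inside $\Lie(T)=\Lie(\mathcal{T})\otimes_{\mathcal{O}_K}K$ ($\Lie(\mathcal{T}^\circ)$ being locally free since $\mathcal{T}^\circ$ is smooth), so it suffices to prove they coincide after completing at each finite place $v$ of $K$. Over $R:=\widehat{\mathcal{O}_{K,v}}$ the base change $\mathcal{T}\times_{\mathcal{O}_K}R$ is the Néron model of $T_{K_v}$ (Néron models commute with completion of a Dedekind base at a closed point). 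Choosing a place $w\mid v$ of $L$ with decomposition group $G_w\subseteq G$, the $G$-algebra isomorphism $\mathcal{O}_L\otimes_{\mathcal{O}_K}R\cong\mathrm{Ind}_{G_w}^G\mathcal{O}_{L_w}$ together with the projection formula and Shapiro's lemma identifies $(X_\ast(T)\otimes_\ZZ\mathcal{O}_L\otimes_{\mathcal{O}_K}R)^G$ with $(X_\ast(T)\otimes_\ZZ\mathcal{O}_{L_w})^{G_w}$, where $L_w/K_v$ is a finite tamely ramified Galois extension splitting $T_{K_v}$.

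It then remains to carry out the local identification. Over $\mathcal{O}_{L_w}$ the split torus $T_{L_w}$ has connected Néron model the split torus $\GG_m\otimes_\ZZ X_\ast(T)$ over $\mathcal{O}_{L_w}$, carrying the descent datum of $G_w$ acting diagonally (via Galois on $\mathcal{O}_{L_w}$ and via its natural action on $X_\ast(T)$). Because $L_w/K_v$ is tamely ramified, the theory of tame base change for Néron models (Edixhoven; see also Liu--Lorenzini--Raynaud and Chai--Yu) shows that the fixed-point scheme $\big(\mathrm{Res}_{\mathcal{O}_{L_w}/R}(\GG_m\otimes_\ZZ X_\ast(T))\big)^{G_w}$ is smooth with generic fiber $T_{K_v}$, hence equals $\mathcal{T}^\circ\times_{\mathcal{O}_K}R$. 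Passing to Lie algebras, using that $\Lie$ of a Weil restriction along the finite locally free extension $\mathcal{O}_{L_w}/R$ is the restriction of scalars of the Lie algebra (compatibly with the Galois action) and that formation of $G_w$-invariants is exact in the tame situation, we obtain $\Lie(\mathcal{T}\times_{\mathcal{O}_K}R)=(X_\ast(T)\otimes_\ZZ\mathcal{O}_{L_w})^{G_w}$, precisely the lattice found above. Since all these local isomorphisms are the identity on $\Lie(T)$, they glue to the asserted canonical isomorphism $\Lie_X(F^D)\simeq\Lie(\mathcal{T})[1]$.

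I expect the main obstacle to be the local computation of the last paragraph: identifying the Lie algebra of the Néron model with the explicit Galois-fixed lattice $(X_\ast(T)\otimes_\ZZ\mathcal{O}_{L_w})^{G_w}$. This is exactly where the tameness hypothesis is indispensable --- both through Noether's theorem (which yields the cohomological triviality, hence the degree-$0$ concentration, on the side of $F$) and through the smoothness of the relevant fixed-point scheme in the Weil-restriction description of the Néron model. The remaining ingredients --- Shapiro's lemma, Galois descent for Lie algebras, the behaviour of $\Lie$ under Weil restriction, and the passage from local to global --- should be routine.
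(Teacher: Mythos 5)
Your proposal is correct in substance and follows the paper's global skeleton: concentration of $R\Hom_{G_K^t}(M,\cal{O}_{K^t})$ in degree $0$ via Noether's theorem (the paper does this through \cref{noether_theorem} and \cref{lem_CTtorsionfree}, you through projectivity of $\cal{O}_L$ over $\ZZ[G]$ --- same content), identification of both sides as $\cal{O}_K$-lattices in $\Lie(T)=\Hom_{G_K}(M,K^{sep})$, and reduction to one finite place at a time via $\cal{O}_L\otimes_{\cal{O}_K}\cal{O}_{K_v}\simeq\prod_{w\mid v}\cal{O}_{L_w}$ and Shapiro (the paper uses henselizations, you completions; immaterial). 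Where you genuinely diverge is the local identification. The paper stays inside Galois cohomology: it quotes \cite[A1.7]{Chai01}, which describes $\Lie(\cal{T})$ over a $p$-adic base as the set of elements of $\Hom_G(M,\cal{O}_L)$ lifting to $\Hom_G(M,\cal{O}_L[[t]]_1)$, and then kills the lifting obstruction by $\Ext^1_G(M,\cal{O}_L[[t]]_2)=0$, which follows from the cohomological triviality of $\cal{O}_L[[t]]_i$ (\cref{cor_noether}) and \cref{lem_CTtorsionfree}; tameness enters only through Noether's theorem. You instead invoke tame base change for N\'eron models (fixed points of a Weil restriction, Edixhoven-type) and compute $\Lie$ of the fixed-point scheme. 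Your route is more geometric and identifies the objects at the level of group schemes, but it needs a heavier external input: Edixhoven's theorem is stated for finite-type N\'eron models, so you must use its lft version for tori (available, e.g. in Chai--Yu or Halle--Nicaise), plus smoothness of tame fixed points (where the unramified part of $G_w$ can have order divisible by $p$ and is handled by \'etale descent, not by linear reductivity) and compatibility of $\Lie$ with Weil restriction and invariants.

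One caveat on the phrasing of your local step: ``smooth with generic fiber $T_{K_v}$, hence equals $\cal{T}^\circ\times_{\cal{O}_K}R$'' is not a valid inference on its own --- smooth models with the correct generic fiber are far from unique (the congruence-subgroup model $\{u\equiv 1 \bmod \pi\}$ of $\GG_m$ is smooth with generic fiber $\GG_m$ but has Lie algebra $\pi\cal{O}$), and moreover the fixed-point scheme need not be connected. What the tame descent theorem actually provides is the canonical isomorphism $\cal{N}(T_{K_v})\simeq\bigl(\mathrm{Res}_{\cal{O}_{L_w}/R}\,\cal{N}(T_{L_w})\bigr)^{G_w}$; quote it in that form, then pass to Lie algebras, noting that replacing $\cal{N}(T_{L_w})$ by its identity component $\GG_m\otimes_\ZZ X_\ast(T)$ is harmless since Weil restriction preserves the open immersion $\cal{N}^\circ\subset\cal{N}$ and $\Lie$ only sees the identity component. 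With that correction your argument goes through.
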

\paragraph{Trivialization of the fundamental line}
Now that we have our fundamental line, we should seek a canonical trivialization to obtain the Euler characteristic. We propose a contravariant generalization of the complex $R\Gamma_c(X,\RR(1))$ of \cite{Flach18}\footnote{The complex of Flach--B. Morin is the mapping fiber of the Beilinson regulator between motivic cohomology (tensored with $\RR$) and (real) Deligne cohomology on the complex points, in weight $1$; see \cite[§ 2.1]{Flach18}}:
\begin{defi*}
	The map $\log|-|:\CC^\times \to \RR$ induces a natural map
	\[
	\mathrm{Log} : R\Gamma(X,F^D)_\RR \to R\Hom_{G_\RR,X(\CC)}(\alpha^\ast F,\RR[1]).
	\]
	For $F$ a $\ZZ$-constructible sheaf, we define the Deligne compactly supported complex with coefficients in $F^D$ by
		\[
		R\Gamma_{c,\cal{D}}(X,F^D_\RR):=\mathrm{fib}\left(R\Gamma(X,F^D)_\RR \xrightarrow{\mathrm{Log}} R\Hom_{G_\RR,X(\CC)}(\alpha^\ast F,\RR[1])\right)
		\]
\end{defi*}
Then, again following \cite{Flach18}, we introduce Weil-Arakelov complexes:
\begin{defi*}
	Let $F$ be a $\ZZ$-constructible sheaf on $X$. We define Weil-Arakelov complex of $F^D$ as the complex:
	\[
	R\Gamma_{ar,c}(X,F^D_\RR):=R\Gamma_{c,\cal{D}}(X,F^D_\RR)[-1] \oplus R\Gamma_{c,\cal{D}}(X,F^D_\RR)
	\]
\end{defi*}
The determinant of the Weil-Arakelov complex has a canonical trivialization; we will obtain the trivialization of the fundamental line by relating the fundamental line with the Weil-Arakelov complex through a duality theorem and the rational splitting of Weil-étale cohomology:
\begin{introthm}[Duality theorem for $\RR$-coefficients, see \cref{sec:deligne_and_duality}]
	There is a natural pairing
	\[
	(R\Gamma(X,F)\otimes\RR) \otimes^L_\RR R\Gamma_{c,\cal{D}}(X,F^D_\RR) \to\RR[0]
	\]
	It induces a map
	\[
	R\Gamma_{c,\cal{D}}(X,F^D_\RR)\to R\Hom(R\Gamma(X,F),\RR)
	\]
	which is an isomorphism for $F\in D^+(X_{et})$. If moreover $F$ is a bounded complex with $\ZZ$-constructible cohomology groups, both sides are perfect complexes of $\RR$-vector spaces.
\end{introthm}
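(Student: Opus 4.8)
The plan is to produce the pairing from cup products together with a trace map playing the role of the Artin--Verdier trace, and then to prove the induced map is an isomorphism by dévissage.

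\emph{Construction of the pairing.} By definition $R\Gamma_{c,\cal{D}}(X,F^D_\RR)$ is the fiber of $\mathrm{Log}\colon R\Gamma(X,F^D)_\RR\to R\Hom_{G_\RR,X(\CC)}(\alpha^\ast F,\RR[1])$. I pair $R\Gamma(X,F)_\RR$ against this fiber sequence term by term. Against $R\Gamma(X,F^D)_\RR$ I use the cup product followed by the evaluation $F\otimes^L F^D\to\ZZ^c_X$,
\[
R\Gamma(X,F)_\RR\otimes^L_\RR R\Gamma(X,F^D)_\RR\longrightarrow R\Gamma(X,\ZZ^c_X)_\RR ,
\]
and against the archimedean term I use the pullback $\alpha^\ast$ together with the evaluation $\alpha^\ast F\otimes R\mathcal{H}om(\alpha^\ast F,\RR[1])\to\RR[1]$ on $X(\CC)$. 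Since $\mathrm{Log}$ is induced by $\log|-|\colon\CC^\times\to\RR$ it is compatible with these cup products; using its naturality in $F$, the two pairings assemble into a pairing of $R\Gamma(X,F)_\RR$ with the whole defining triangle of $R\Gamma_{c,\cal{D}}(X,F^D_\RR)$, valued in
\[
R\Gamma_{c,\cal{D}}(X,\ZZ^D_\RR)=\mathrm{fib}\bigl(R\Gamma(X,\ZZ^c_X)_\RR\to R\Hom_{G_\RR,X(\CC)}(\alpha^\ast\ZZ,\RR[1])\bigr) .
\]
A direct computation identifies this complex with $\RR[0]$: in degree $-1$ it is the kernel of the injective Dirichlet regulator $\cal{O}_K^\times\otimes\RR\hookrightarrow\RR^{r_1+r_2}$ (so zero), in degree $0$ its cokernel (canonically $\RR$, via the sum of archimedean coordinates), and $\Pic(X)$ and $H^2(X,\GG_m)$ are torsion. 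This canonical trivialization plays the role of the Artin--Verdier trace; composing with it yields the pairing of the statement and a natural map $\theta_F\colon R\Gamma_{c,\cal{D}}(X,F^D_\RR)\to R\Hom(R\Gamma(X,F),\RR)$, which sends distinguished triangles in $F$ to distinguished triangles.

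\emph{Reduction to two base cases.} Both functors $F\mapsto R\Gamma(X,F)_\RR$ and, contravariantly, $F\mapsto R\Gamma_{c,\cal{D}}(X,F^D_\RR)$ are exact — the second because $(-)^D$, $R\Gamma(X,-)$, $(-)_\RR$, $\alpha^\ast$, $R\Hom_{G_\RR,X(\CC)}(-,\RR[1])$ and $\mathrm{fib}$ all are — so by the five lemma the class of $F$ for which $\theta_F$ is an isomorphism is closed under shifts, cones and retracts. Using that $F\in D^{\ge a}$ forces $F^D\in D^{\le -a}$ (since $\ZZ^c_X$ has cohomological amplitude $[-1,0]$) and that $X$ has finite étale cohomological dimension, one sees that both sides transform the filtered colimits expressing an arbitrary $F\in D^+(X_{et})$ in terms of bounded complexes of $\ZZ$-constructible sheaves into convergent (derived) limits, which reduces the isomorphism claim to bounded complexes and then, by the truncation argument, to a single $\ZZ$-constructible sheaf $F$. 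Choosing a regular dense open $j\colon U\hookrightarrow X$ on which $F$ is locally constant, with closed complement $i\colon Z\hookrightarrow X$, the localization triangle $j_!j^\ast F\to F\to i_\ast i^\ast F$ leaves two cases: $F=i_\ast B$ with $B$ finitely generated $\ZZ$-constructible on $Z$, and $F=j_!A$ with $A$ locally constant $\ZZ$-constructible on the regular scheme $U$.

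\emph{The base cases.} If $F=i_\ast B$ then $\alpha^\ast F=0$, so $R\Gamma_{c,\cal{D}}(X,F^D_\RR)=R\Gamma(X,F^D)_\RR$; since $i^!\ZZ^c_X\simeq\ZZ$ we get $F^D=i_\ast R\mathcal{H}om_Z(B,\ZZ)$ and $R\Gamma(X,F^D)_\RR=R\Gamma(Z,R\mathcal{H}om_Z(B,\ZZ))_\RR$. Over the finite residue fields $R\Gamma(Z,-)_\RR$ is just Frobenius-invariants of the $\RR$-ification (rational étale cohomology over a finite field is concentrated in degree $0$) and the torsion of $B$ is killed, so $\theta_F$ becomes in degree $0$ the tautological isomorphism $(B^\vee_\RR)^{\varphi}\xrightarrow{\sim}((B_\RR)^{\varphi})^{\vee}$ for the semisimple Frobenius action. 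If $F=j_!A$ then $j$ is étale, so $j^!\ZZ^c_X=\GG_m[1]$, and since the residue characteristics on $U$ are prime to the relevant torsion, $R\mathcal{H}om_U(A,\GG_m[1])=A^\vee[1]$ with $A^\vee:=\mathcal{H}om_U(A,\GG_m)$; thus $F^D=(Rj_\ast A^\vee)[1]$ and $R\Gamma(X,F^D)_\RR=R\Gamma(U,A^\vee)_\RR[1]$, while the excision triangle on $X$ expresses $R\Gamma(X,j_!A)_\RR$ through Galois cohomology of $K$ with the appropriate local conditions. One then checks that $\theta_{j_!A}$ is the expected isomorphism: on cohomology, the two sides are matched using $S$-unit and class-group finiteness for $R\Gamma(U,A^\vee)$, the identification of the cohomology of $\mathrm{Log}$ with the classical Dirichlet regulator attached to the representation $\alpha^\ast A$ — which is exactly what compensates the a priori mismatch of ranks between the two sides — and local duality at the archimedean places; the torsion subsheaf of $A$ is handled via the Kummer sequence and contributes nothing after $\otimes\RR$.

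\emph{Perfectness and the main difficulty.} If $F$ is bounded with $\ZZ$-constructible cohomology, then $R\Gamma(X,F)$ is bounded with finitely generated cohomology, so $R\Hom(R\Gamma(X,F),\RR)$ — and hence, by the isomorphism just proved, $R\Gamma_{c,\cal{D}}(X,F^D_\RR)$ — is bounded with finite-dimensional cohomology, i.e. a perfect complex of $\RR$-vector spaces. The hard part is not any single computation but arranging the pairing to be simultaneously functorial, compatible with the localization triangles used in the dévissage, and compatible at the archimedean places with the Dirichlet regulator, so that the abstractly defined $\theta_F$ can be recognized in the base cases as the classical duality isomorphism; transporting the archimedean ($B$/Deligne) correction terms consistently through all these identifications is the real work.
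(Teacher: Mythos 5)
Your construction of the pairing is the same as the paper's (pair $R\Gamma(X,F)_\RR$ against the defining fiber sequence, land in $R\Gamma_{c,\cal{D}}(X,\ZZ^D_\RR)$, then apply a trace), but two of your steps have genuine gaps. First, the identification $R\Gamma_{c,\cal{D}}(X,\ZZ^D_\RR)\simeq\RR[0]$ is false in the generality of the statement: $X=\Spec(\cal{O})$ is an arbitrary order, and when $X$ is not unibranch one has $H^{-1}_{c,\cal{D}}(X,\ZZ^D_\RR)=\prod_{v\in Z}\bigl(\prod_{\pi(w)=v}\RR\bigr)^{\Sigma}\neq 0$ (the degree $-1$ term is $\mathrm{CH}_0(X,1)_\RR$, not $\cal{O}_K^\times\otimes\RR$); the paper gets a trace only after truncating, $R\Gamma_{c,\cal{D}}(X,\ZZ^D_\RR)\xrightarrow{\tau^{\geq 0}}\RR[0]$. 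Second, and more seriously, in your skyscraper base case $F=i_\ast B$ the induced pairing is \emph{not} tautologically the evaluation pairing $(B^\vee_\RR)^{\varphi}\otimes(B_\RR)^{\varphi}\to\RR$: the trace is the global one, and one must compute the composite $H^0(G_x,\ZZ)_\RR\to H^0_{c,\cal{D}}(X,\ZZ^D_\RR)\to\RR$. The paper does this by a snake-lemma/boundary computation using the product formula and finds it is multiplication by $-\log N(v)$; the essential point is that it is nonzero, which is exactly what your "tautological isomorphism" assertion needs and does not supply.

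Third, your other base case $F=j_!A$ with $A$ locally constant is not a "check": perfectness there is precisely a twisted (equivariant) Dirichlet unit theorem for the regulator pairing between $R\Gamma(X,j_!A)_\RR$ and the $A$-twisted $S$-units $\Hom_U(A,\GG_m)_\RR$, together with the class-group statement for $\Ext^1_U(A,\GG_m)$; this is the main content of the theorem, not a consequence of the classical case. The paper never proves it directly for general $A$: its Artin induction has a fourth step, compatibility of the pairing with pushforward along finite dominant morphisms $\pi:Y\to X$ (itself a nontrivial diagram chase at the archimedean level, cf.\ the analogue of \cref{compatibility_pairings_finite}), which reduces the locally constant case, via Artin induction on the $G_K$-module $A_\eta$, to $F=\ZZ$ on spectra of orders in finite extensions, i.e.\ to the classical Dirichlet unit theorem, plus the skyscraper and constructible cases. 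Your dévissage to only the two base cases $i_\ast B$ and $j_!A$ therefore leaves the hardest case unproved; to close it you must either add the $\pi_\ast$-compatibility step and redo the induction as in the paper, or import an independently proved twisted unit theorem (e.g.\ the duality theorem of \cite{AMorin21}), neither of which appears in your argument. You do flag the archimedean bookkeeping as "the real work", but that work is exactly what is missing.
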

\begin{prop*}[Rational splitting of Weil-étale cohomology, see \cref{prop:rational_splitting}]
	Let $F$ be a red or blue sheaf.	The defining distinguished triangle of Weil-étale cohomology splits rationally to give an isomorphism
	\[
	R\Gamma_{W,c}(X,F^D)\otimes \QQ \xrightarrow{\simeq} R\Hom(R\Gamma(X,F),\QQ[-1]) \oplus R\Gamma_{c,B}(X,F^D)\otimes\QQ
	\]
	natural in red-to-blue morphisms and red-to-blue short exact sequences, and compatible with finite dominant morphisms between spectra of orders in number fields.
\end{prop*}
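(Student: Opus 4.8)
Set $A:=R\Hom(R\Gamma(X,F),\QQ[-2])$, and let $f\colon A\to R\Gamma_{c,B}(X,F^D)$ denote the first morphism in the defining distinguished triangle. The plan is to show that $f$ vanishes after $-\otimes\QQ$; granting this, the cofiber triangle identifies $R\Gamma_{W,c}(X,F^D)\otimes\QQ$ with $R\Gamma_{c,B}(X,F^D)\otimes\QQ\oplus A[1]$, which — since $A[1]=R\Hom(R\Gamma(X,F),\QQ[-1])$ — is exactly the asserted isomorphism. Since $R\Gamma(X,F)$ is a perfect complex of abelian groups (finiteness of the étale cohomology of a $\ZZ$-constructible sheaf), $A$ is a bounded complex of $\QQ$-vector spaces with $H^i(A)=\Hom(H^{2-i}(X,F),\QQ)$, and $f\otimes\QQ$ is a morphism in $D(\QQ)$; there every object is formal and a morphism is zero iff it vanishes on cohomology. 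As $H^i(f\otimes\QQ)$ is the composite $\Hom(H^{2-i}(X,F),\QQ)\xrightarrow{H^i(f)}H^i_{c,B}(X,F^D)\to H^i_{c,B}(X,F^D)\otimes\QQ$, it therefore suffices to show that $\im(H^i(f))$ is torsion for every $i$.

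This is forced by the structure of the target. The image of $H^i(f)$, being a quotient of the $\QQ$-vector space $\Hom(H^{2-i}(X,F),\QQ)$, is a divisible subgroup of $H^i_{c,B}(X,F^D)$, so it is enough that the maximal divisible subgroup of $H^i_{c,B}(X,F^D)$ be torsion. But the structural analysis already carried out shows that $H^i_{c,B}(X,F^D)$ contains a subgroup of cofinite type (the $\QQ/\ZZ$-dual of a finitely generated group, described by Artin--Verdier duality for $F^D$ in terms of $\Hom(H^{2-i}(X,F),\QQ/\ZZ)$) whose quotient is finitely generated: indeed $R\Gamma(X,F^D)=R\Hom_X(F,\ZZ^c_X)$ has finitely generated cohomology apart from such cofinite-type subgroups in top degree, and the correction at the archimedean places that enters $R\Gamma_{c,B}$ contributes only finitely generated groups, being built from the discrete coefficients $\ZZ(1)=2\pi i\,\ZZ$. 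An extension of a finitely generated group by a group of cofinite type has torsion maximal divisible subgroup, whence $\im(H^i(f))$ is torsion for every $i$. Thus $f\otimes\QQ=0$ and the triangle splits after $-\otimes\QQ$. (The red/blue hypothesis plays no role in this step; it is used only to make $f$, hence the defining triangle, exist, via the $\Ext^1$-vanishing of the previous section.)

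It remains to make the splitting natural. The defining triangle is already functorial in red-to-blue morphisms and short exact sequences and compatible with finite dominant morphisms $\pi\colon Y\to X$, so what must be arranged is a functorial choice of splitting. The subtlety — and the main obstacle — is that a distinguished triangle with zero first morphism need not split canonically: here $\Hom_{D(\QQ)}(R\Hom(R\Gamma(X,F),\QQ[-1]),\,R\Gamma_{c,B}(X,F^D)\otimes\QQ)$ is in general nonzero. I would pin down the isomorphism by producing a natural section of the connecting morphism $R\Gamma_{W,c}(X,F^D)\otimes\QQ\to R\Hom(R\Gamma(X,F),\QQ[-1])$; since we work in $D(\QQ)$, this amounts to a choice, functorial in $F$, of splitting of the short exact sequences
\[
0\to H^i_{c,B}(X,F^D)\otimes\QQ\to H^i(R\Gamma_{W,c}(X,F^D))\otimes\QQ\to \Hom(H^{1-i}(X,F),\QQ)\to 0
\]
coming from the long exact sequences of the triangle. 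Making this work uses the same $\Ext$-vanishing that built $f$ (so that morphisms of the complexes in play are detected on cohomology) together with a check that the splittings can be made simultaneously compatible with red-to-blue morphisms, red-to-blue short exact sequences and $\pi_\ast$; this bookkeeping, rather than any new idea, is where the remaining work lies, and compatibility with finite dominant morphisms then follows from that of the defining triangle.
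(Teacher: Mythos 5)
Your first step is correct: since $D_F:=R\Hom(R\Gamma(X,F),\QQ[-2])$ is concentrated in degrees $1,2$ and $H^i_{c,B}(X,F^D)$ is torsion (of cofinite type) in those degrees, the map $\alpha_F$ dies after $-\otimes\QQ$, and since both sides have uniquely divisible cohomology a map vanishing on cohomology is zero, so the triangle splits unnaturally. This much does not use red/blue and is a legitimate (slightly different) route to the existence of \emph{a} splitting; the paper instead constructs a section of the projection $p\colon R\Gamma_{W,c}(X,F^D)_\QQ\to D_F[1]$ directly.

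The genuine gap is the naturality, which is the actual content of the statement and is exactly where the red/blue hypothesis enters — contrary to your parenthetical remark that it plays no role beyond the existence of $\alpha_F$. You assert that $\Hom_{D(\ZZ)}(D_F[1],R\Gamma_{c,B}(X,F^D)_\QQ)$ is ``in general nonzero'' and then defer the construction of compatible splittings to unexecuted bookkeeping; but under the red-or-blue hypothesis this group \emph{vanishes}: by the Verdier spectral sequence it reduces to $\Hom(H^1(X,F)^\dagger,H^0_{c,B}(X,F^D)\otimes\QQ)$ (the other Hom and all $\Ext^1$ terms vanish for degree reasons and because the targets are $\QQ$-vector spaces), and this is zero when $H^1(X,F)$ is torsion (blue) or $H^0_{c,B}(X,F^D)$ is torsion (red). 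Together with the analogous vanishing of $\Hom(D_F[1],R\Gamma_{c,B}(X,F^D)_\QQ[1])$ (automatic by degree reasons), composition with $p$ induces an isomorphism $\Hom(D_F[1],R\Gamma_{W,c}(X,F^D)_\QQ)\simeq\Hom(D_F[1],D_F[1])$, so the section $s_F:=p_\ast^{-1}(\mathrm{id})$ is \emph{unique}; it is this uniqueness (and the same isomorphism with mixed arguments $F,G$ for a red-to-blue morphism) that makes the splitting functorial and compatible with $\pi_\ast$ essentially for free. Without this vanishing, your plan of choosing splittings of the cohomology sequences compatibly across morphisms, short exact sequences and pushforwards has no mechanism behind it, so as written the proof of the naturality assertions is missing.
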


The duality theorem and the rational splitting imply that there is a distinguished triangle
\[
R\Gamma_{ar,c}(X,F^D_\RR) \to R\Gamma_{W,c}(X,F^D)\otimes\RR \to \Lie_X(F^D)\otimes\RR
\]
which gives the natural trivialization
\begin{align*}
	\lambda:\Delta_X(F^D)_\RR = \det_\RR(R\Gamma_{W,c}(X,F^D)\otimes\RR) \otimes \det_\RR(\Lie_X(F^D)\otimes\RR)^{-1} &\xrightarrow{\simeq} \det_\RR(R\Gamma_{ar,c}(X,F^D_\RR) ) \xrightarrow{\simeq}\RR
\end{align*}

\paragraph{The Euler characteristic and the special value theorem}
\begin{defi*}
	\begin{itemize}
		\item[]
		\item  Let $F$ be a tamely ramified red or blue sheaf. The Weil-étale Euler characteristic of $F^D$ is the positive real number $\chi_X(F^D)$ such that
		\[
		\lambda(\Delta_X(F^D))=\chi_X(F^D)^{-1}\cdot\ZZ \hookrightarrow \RR
		\]
		\item  Let $F$ be a tamely ramified $\ZZ$-constructible sheaf. There exists a short exact sequence $0 \to F' \to F \to F''\to 0$ with $F'$ red and tamely ramified and $F''$ blue and tamely ramified; define
		\[
		\chi_X(F^D)=\chi_X((F')^D)\chi_X((F'')^D)
		\]
		It does not depend on the chosen sequence.
	\end{itemize}
\end{defi*}
The constructed Euler characteristic is multiplicative thanks to the functoriality properties of our constructions. We have an explicit computation:
\begin{prop*}[see \cref{explicit_characteristic}]
	Let $F$ be a tamely ramified $\ZZ$-constructible sheaf and suppose that $X$ is regular. Put
	We have
	\[
	\chi_X(F^D)=\frac{(2\pi)^{r_2(F)}2^{r_1(F)}[H^0(X,F)_{tor}][\Ext^1_X(F,\GG_m)_{tor}]R(F^D)}{[H^1(X,F)_{tor}][\Hom_X(F,\GG_m)_{tor}][\Ext^1_{G_K^t}(F_\eta,\cal{O}_{K^t})][N_2]\Disc(F)}
	\]
	where $r_1(F)$ and $r_2(F)$ are some positive integers, $N_2$ is a certain finite $2$-torsion abelian group, $R(F^D)$ is a regulator-type real number and $\Disc(F)$ is a square-root-of-discriminant-type real number.
\end{prop*}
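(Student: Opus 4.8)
The plan is to compute $\chi_X(F^D)$ by writing the trivialization $\lambda$ as an explicit composite of determinant‑line isomorphisms and tracking the integral normalization introduced at each step. First, using the multiplicativity of $\chi_X(-^D)$ together with the localization sequence $0\to j_!j^\ast F\to F\to i_\ast i^\ast F\to 0$ attached to a regular dense open $j\colon U\hookrightarrow X$ on which $F$ is locally constant (with closed complement $i\colon Z\hookrightarrow X$), one reduces to two building blocks: the blue sheaf $i_\ast i^\ast F$ supported on a finite closed subscheme, and the red sheaf $j_!j^\ast F$ with $j^\ast F$ locally constant and tamely ramified on $U$. The latter may be split further along the exact sequence $0\to M_{tor}\to M\to M/M_{tor}\to 0$, where $M=g^\ast F$, so that \cref{lie_equals_lie} identifies $\Lie_X$ of the torsion‑free piece with the Lie algebra of a Néron model. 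For the blue block the additive complex vanishes and $R\Gamma_{c,B}$ carries no archimedean contribution, so $\chi_X((i_\ast i^\ast F)^D)$ is a pure product of orders of finite groups, matching \cite[§6.4]{AMorin21}; the bulk of the argument concerns the red block.

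Second, one makes $\lambda$ explicit. By construction it is the composite
\[
\Delta_X(F^D)_\RR=\det\nolimits_\RR R\Gamma_{W,c}(X,F^D)_\RR\otimes\det\nolimits_\RR\Lie_X(F^D)_\RR^{-1}\xrightarrow{\ \simeq\ }\det\nolimits_\RR R\Gamma_{ar,c}(X,F^D_\RR)\xrightarrow{\ \simeq\ }\RR,
\]
the first arrow being the determinant of the distinguished triangle $R\Gamma_{ar,c}(X,F^D_\RR)\to R\Gamma_{W,c}(X,F^D)_\RR\to\Lie_X(F^D)_\RR$ established above (via \cref{prop:rational_splitting} and the $\RR$‑duality theorem), and the second the canonical trivialization coming from $R\Gamma_{ar,c}(X,F^D_\RR)=R\Gamma_{c,\cal{D}}(X,F^D_\RR)[-1]\oplus R\Gamma_{c,\cal{D}}(X,F^D_\RR)$. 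Substituting the $\RR$‑duality isomorphism $R\Gamma_{c,\cal{D}}(X,F^D_\RR)\simeq R\Hom(R\Gamma(X,F),\RR)$ turns the middle line into a perfect pairing between the free parts of $R\Gamma_{W,c}(X,F^D)$ and of $R\Gamma(X,F)$ mediated by $\mathrm{Log}=\log\abs{-}$; the ratio between this pairing and the chosen integral bases is, by definition, the regulator‑type number $R(F^D)$.

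Third, one identifies every group entering the two integral determinants. As $X$ is regular, $F^D=R\sheafhom_X(F,\GG_m)[1]$, so $H^{i}(X,F^D)=\Ext^{i+1}_X(F,\GG_m)$ and the torsion subgroups of $\Hom_X(F,\GG_m)$ and $\Ext^1_X(F,\GG_m)$ enter the formula; Artin--Verdier duality identifies the cofinite‑type summands of $H^\ast_{c,B}(X,F^D)$ with $\QQ/\ZZ$‑duals of $H^{2-\ast}(X,F)$, and the defining triangle of $R\Gamma_{W,c}$ replaces them by finite‑type groups, bringing in the orders $[H^0(X,F)_{tor}]$ and $[H^1(X,F)_{tor}]$. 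The tangent complex $\Lie_X(F^D)=R\Hom_{G_K^t}(M,\cal{O}_{K^t}[1])$ contributes in the denominator the order $[\Ext^1_{G_K^t}(F_\eta,\cal{O}_{K^t})]$ of its torsion (finite by Noether's theorem on the cohomological triviality of $\cal{O}_{K^t}$) and, through the covolume of its natural $\cal{O}_{K^t}$‑lattice inside $\Lie_X(F^D)_\RR$, the square‑root‑of‑discriminant factor $\Disc(F)$ — reducing to $\sqrt{\abs{d_K}}$ for $F=\ZZ$. The archimedean normalization is governed by the subscript $B$: the Tate twist $\ZZ(1)=2\pi i\,\ZZ$ inside $\RR(1)=2\pi i\,\RR$ yields $(2\pi)^{r_2(F)}$, the $G_\RR$‑fixed‑point correction at each real place yields a factor $2$, hence $2^{r_1(F)}$, and the discrepancy between the $\ZZ(1)$‑correction used for $R\Gamma_{c,B}$ and the bare $\RR$‑correction used for $R\Gamma_{c,\cal{D}}$ produces the auxiliary finite $2$‑torsion group $N_2$.

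Assembling these determinant‑line identities gives the displayed formula, which one checks on $F=\ZZ$ over $X=\Spec(\cal{O}_K)$ — recovering $\chi_X(\ZZ^D)=2^{r_1}(2\pi)^{r_2}[\Pic(X)]R_K/([\mu_K]\sqrt{\abs{d_K}})$, the residue of $\zeta_K$ at $s=1$ — and on closed‑point sheaves, recovering the Euler characteristic of \cite{AMorin21}. The main obstacle is the bookkeeping: one must keep the determinant‑line isomorphisms coherent through the $\RR$‑duality theorem, the rational splitting and the Weil--Arakelov trivialization so that each torsion order appears with the correct sign of its exponent and the regulator and discriminant are normalized compatibly; in particular, pinning $N_2$, $r_1(F)$ and $r_2(F)$ down precisely from the archimedean twist, and matching the $\cal{O}_{K^t}$‑module structure on $\Lie_X(F^D)$ with the classical different when $F$ is genuinely ramified, are the delicate points.
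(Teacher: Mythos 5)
Your overall strategy (make the trivialization $\lambda$ explicit, split determinants into torsion and free parts, identify the archimedean factors, sanity-check on $\ZZ$ and on closed points) is the right one for \emph{red or blue} sheaves, and it matches the first half of the paper's argument. But there is a genuine gap at the very place where the statement is hardest: the formula is asserted for an \emph{arbitrary} tamely ramified $\ZZ$-constructible sheaf $F$, for which $R\Gamma_{W,c}(X,F^D)$, $\Delta_X(F^D)$ and $\lambda$ are not even defined; $\chi_X(F^D)$ is \emph{defined} as the product $\chi_X((j_!F_{|U})^D)\chi_X((i_\ast i^\ast F)^D)$. After your opening reduction you nevertheless compute as if the trivialization argument could be run for $F$ itself, producing $[H^i(X,F)_{tor}]$, $[\Ext^i_X(F,\GG_m)_{tor}]$, $\Lie_X(F^D)$ and $R(F^D)$ directly. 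What is missing is the proof that the product of the two block formulas can be rewritten purely in terms of invariants of $F$: the long exact sequences for $0\to j_!F_{|U}\to F\to i_\ast i^\ast F\to 0$ introduce finite error groups (the images $I$ and $J$ in the paper's \cref{first_formula_characteristic}), whose cancellation rests on Artin--Verdier duality ($I\simeq J^\ast$, so $[I]=[J]$), and one must check that the duality trivialization, the fiber-sequence trivializations and the octahedral/associativity constraints of the determinant functor are coherent. Your final "check on $F=\ZZ$ and closed-point sheaves" cannot replace this, because the right-hand side of the formula is not formally multiplicative in $F$, so Artin-induction-style verification on generators does not propagate. Relatedly, your claim that the blue block $i_\ast i^\ast F$ contributes only a product of orders of finite groups is false unless $i^\ast F$ is torsion: by \cref{special_value_point} its Euler characteristic contains a regulator $R(M)$ and $\log N(v)$ factors.

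A second, smaller gap is the conversion from the compactly supported expression (in terms of $H^\ast_{c,B}(X,F^D)$ and $R_1/R_0$) to the stated one with $\Hom_X(F,\GG_m)$, $\Ext^1_X(F,\GG_m)$, $2^{r_1(F)}$, $(2\pi)^{r_2(F)}$, $[N_2]$ and $\Disc(F)$. In the paper this is a concrete determinant computation with a biacyclic double complex plus an auxiliary acyclic complex of abelian groups; your attributions of the factors are only heuristic and partly misplaced: $2^{r_1(F)}$ is the order $[\Ext^1_{G_\RR,X(\CC)}(\alpha^\ast F,2i\pi\ZZ)]$, $(2\pi)^{r_2(F)}$ is built into the normalization of $\Disc(F)$ via the lattice $\Hom_{G_\RR,X(\CC)}(\alpha^\ast F,2i\pi\ZZ)$, and $N_2$ is the kernel of $\Ext^2_{G_\RR,X(\CC)}(\alpha^\ast F,2i\pi\ZZ)\to H^1_{c,B}(X,F^D)$, i.e.\ it measures the discrepancy between $R\Gamma_{c,B}(X,F^D)$ and $R\Gamma(X,F^D)$, not between the $B$-correction and the Deligne correction. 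The Néron-model detour via \cref{lie_equals_lie} is not needed for this computation. As written, the proposal is a plausible outline but does not yet contain the two arguments that carry the proof.
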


Now that we have an Euler characteristic at our disposition, the general method following work of Tran \cite{Tran16} (also used in \cite{Geisser21} and \cite{AMorin21}) is to reduce the special value formula \emph{via} Artin induction to computations in specific cases. We obtain:

\begin{introthm}[Special value formula, see \cref{subsec:computations_special_value_thm}]
	Let $F$ be a $\ZZ$-constructible sheaf. We have the vanishing order formula
	\[
	\mathrm{ord}_{s=0}L_X(F^D,s)=\sum (-1)^i i\cdot \dim_\RR H^i_{ar,c}(X,F^D_\RR)
	\]
	If $F$ is  tamely ramified and red or blue, we have the special value formula
	\[
	\lambda^{-1}(L_X^\ast(F^D,0)^{-1}\cdot \ZZ)=\Delta_X(F^D)
	\]
	In general, if $F$ is a tamely ramified $\ZZ$-constructible sheaf, we have the special value formula
	\[
	L_X^\ast(F^D,0)=\pm \chi_X(F^D)
	\]
\end{introthm}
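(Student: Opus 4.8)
The plan is to prove the three assertions in turn, feeding in the duality theorem for $\RR$-coefficients, the rational splitting, the functoriality and pushforward compatibility of $R\Gamma_{W,c}$, the multiplicativity of $\chi_X$, and Noether's cohomological triviality of $\mathcal O_{K^t}$, and to carry the analytic content down to a single non-trivial base case by Artin induction, in the style of \cite{Tran16} as already used in \cite{Geisser21,AMorin21}. For the vanishing order I would first rewrite the right-hand side: from $R\Gamma_{ar,c}(X,F^D_\RR)=R\Gamma_{c,\mathcal D}(X,F^D_\RR)[-1]\oplus R\Gamma_{c,\mathcal D}(X,F^D_\RR)$, a short manipulation of alternating sums gives $\sum_i(-1)^i i\dim_\RR H^i_{ar,c}(X,F^D_\RR)=-\sum_i(-1)^i\dim_\RR H^i_{c,\mathcal D}(X,F^D_\RR)$, and the duality theorem for $\RR$-coefficients identifies this with $-\sum_i(-1)^i\dim_\RR(H^i(X,F)\otimes\RR)$. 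It then remains to show $\mathrm{ord}_{s=0}L_X(F^D,s)=-\sum_i(-1)^i\dim_\RR(H^i(X,F)\otimes\RR)$. Both sides are additive in short exact sequences (on the left since $F\mapsto F^D$ carries them to distinguished triangles and the $L$-function is multiplicative in triangles; on the right by the cohomology long exact sequence) and compatible with pushforward along a finite dominant $\pi\colon Y\to X$ (since $(\pi_*G)^D=\pi_*(G^D)$, the $L$-function is compatible with $\pi$ by the explicit local factors, and $R\Gamma(X,\pi_*-)=R\Gamma(Y,-)$). Combining this with the localization sequence on a regular dense open and Artin's induction theorem for $G_K$, I reduce to $F=\ZZ$ on a regular $\Spec(\mathcal O_E)$, where $L_X(\ZZ^D,s)=\zeta_E(s+1)$ has a simple pole at $s=0$ and $\sum_i(-1)^i\dim_\RR(H^i(\Spec\mathcal O_E,\ZZ)\otimes\RR)=1$, and to skyscrapers at a closed point, where the identity is read off the explicit local factor (e.g. $L_X((i_{x,*}\ZZ)^D,s)=(1-N(x)^{-s})^{-1}$ has a simple pole and $R\Gamma(X,i_{x,*}\ZZ)\otimes\RR$ has Euler characteristic $1$).

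For the special value formula I would work with the ratio. Once the orders are known the leading coefficients are defined, and since $L_X(F^D,s)$ is multiplicative in short exact sequences leading coefficients multiply with no sign, so $|L_X^*(F^D,0)|$ is multiplicative and $|L_X^*((\pi_*F)^D,0)|=|L_Y^*(F^D,0)|$. By hypothesis $\chi_X(F^D)$ is multiplicative in short exact sequences of tame $\ZZ$-constructible sheaves, and $\chi_X((\pi_*F)^D)=\chi_Y(F^D)$ for $\pi$ finite dominant and tame: the latter combines the stated pushforward compatibility of $R\Gamma_{W,c}$, the equality $\Lie_X((\pi_*F)^D)=\Lie_Y(F^D)$ (Shapiro's lemma together with $K^t=E^t$ when $\pi$ is tame), and compatibility of the trivialization $\lambda$ with $\pi$. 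Hence $F\mapsto\log\bigl(|L_X^*(F^D,0)|/\chi_X(F^D)\bigr)$ is additive on tame $\ZZ$-constructible sheaves and so defines a homomorphism from the rationalized Grothendieck group $K_0\otimes\QQ$ of that category to $\RR$; the sign of $L_X^*$ is deliberately dropped, since it cannot survive the $\QQ$-linearization and the theorem permits a sign anyway. Using localization sequences, dévissage on torsion and on the finitely many ramified points, the pushforward compatibility, and Artin's induction theorem over $G_K^t$ (expressing the generic representation of a tame $F$ as a rational combination of representations induced from cyclic subgroups, realized by pushforwards of constant sheaves), this $K_0\otimes\QQ$ is spanned by the classes of $\ZZ$ on regular $\Spec(\mathcal O_E)$ for $E/K$ tame and of skyscrapers. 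So $|L_X^*(F^D,0)|=\chi_X(F^D)$ reduces to two computations: for $\ZZ$ on regular $Y=\Spec(\mathcal O_E)$ it is the analytic class number formula for $\zeta_E$ at $s=1$ recast as $|\zeta_E^*(1)|=\chi_Y(\ZZ^D)$; for a skyscraper $i_{x,*}G$ there is no additive term, and the statement is the special value formula for the $\ZZ$-constructible sheaf $i_{x,*}\mathcal{H}om_x(G,\ZZ)$ on $X$ established in \cite{AMorin21}. Granting these, for $F$ tame and red or blue the defining relation $\lambda(\Delta_X(F^D))=\chi_X(F^D)^{-1}\cdot\ZZ$ upgrades $|L_X^*(F^D,0)|=\chi_X(F^D)$ to $\lambda^{-1}(L_X^*(F^D,0)^{-1}\cdot\ZZ)=\Delta_X(F^D)$ (the sign absorbed by $\ZZ=-\ZZ$), and the general tame case is then exactly $L_X^*(F^D,0)=\pm\chi_X(F^D)$.

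The hard part is the base case $F=\ZZ$ on a regular $Y=\Spec(\mathcal O_E)$: one must check that the covolume under $\lambda$ of $\det_\ZZ R\Gamma_{W,c}(Y,\ZZ^D)\otimes\det_\ZZ\Lie_Y(\ZZ^D)^{-1}$ equals $|\zeta_E^*(1)|^{-1}$, i.e. that the unit regulator $R_E$, the class number $h_E$, the number $w_E$ of roots of unity, the powers of $2$ and $\pi$, and $\sqrt{|d_E|}$ all emerge with the correct normalization. Morally this is the $\ZZ(1)$ computation of Flach--B. Morin \cite{Flach18} seen through Artin--Verdier duality and the Weil--Arakelov complex, but matching the archimedean correction terms built into $R\Gamma_{c,B}$ and $R\Gamma_{c,\mathcal D}$ against the additive term $\mathcal O_E[1]=\Lie_Y(\ZZ^D)$ (where Noether's theorem makes $R\Gamma(G_E^t,\mathcal O_{E^t})$ concentrated in degree $0$), tracking the $2$-power discrepancies against $w_E$ and the auxiliary group $N_2$, and nailing down the normalization of the pairing $\log|-|$ defining $\lambda$, is delicate. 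A secondary, more organizational difficulty is to ensure the dévissage and the Artin induction can be performed entirely within tame sheaves, so that the $K_0\otimes\QQ$ argument is legitimate, and to verify the pushforward compatibility of $\lambda$ itself.
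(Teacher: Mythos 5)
Your overall route is the paper's route: reduce the vanishing order and the special value by multiplicativity in short exact sequences, compatibility with finite (tame) pushforwards, and Artin induction to the base cases of $\ZZ$ on a regular $\Spec(\cal{O}_E)$ (the analytic class number formula, which in the paper is \cref{special_value_Z} via the explicit formula \cref{explicit_characteristic}) and of skyscraper sheaves (handled through \cite{AMorin21}, as in \cref{special_value_point}); your reformulation of the right-hand side of the vanishing order formula via the duality theorem is correct, and for the vanishing order your error terms cause no trouble since both sides vanish on constructible sheaves.

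However, for the special value there is a genuine gap at exactly the point you dispose of with ``dévissage on torsion''. Artin induction applied to the generic representation only gives an exact sequence $0 \to F_\eta^n \oplus \bigoplus \mathrm{ind}\,\ZZ \to \bigoplus \mathrm{ind}\,\ZZ \to N \to 0$ with $N$ a \emph{finite} $G$-module, so after the localization and pushforward steps your generators span $K_0\otimes\QQ$ only modulo classes of constructible sheaves whose generic fibre is finite with non-trivial Galois action; such sheaves are not supported on a closed subscheme and cannot be devissed to skyscrapers. Equivalently, you need $\chi_X(F^D)=1$ for tamely ramified constructible $F$ (the $L$-function side is trivially $1$), i.e. you need the class of a finite $\ZZ[G]$-module to die in $K_0'(\ZZ[G])\otimes\QQ$; this is precisely the finiteness of $\ker\bigl(K_0'(\ZZ[G])\to K_0'(\QQ[G])\bigr)$, Swan's theorem \cite[Corollary 1]{Swan63}, which the paper invokes (following Chen's remark) in \cref{special_value_constructible}. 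Without this input (or an alternative such as reduction to Tate's global Euler characteristic formula as in \cite[6.23]{AMorin21}), your spanning claim for $K_0\otimes\QQ$ is unproven and the induction does not close. Separately, note that your ``hard part'' — the normalization-sensitive computation showing $\chi_Y(\ZZ^D)=\zeta_E^\ast(1)$, including $\Disc(\ZZ)= \sqrt{|\Delta_E|}$ and $R(\ZZ^D)=R_E$ — is only announced, not carried out; in the paper it occupies \cref{explicit_characteristic} and \cref{special_value_Z}, but since you correctly identified what must be checked this is a matter of completion rather than of approach.
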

For $F=\ZZ$ we find the analytic class formula for the Dedekind zeta function, so this can be seen as a wide generalization of the analytic class number formula. In the singular case, it does not give a special value formula at $s=1$ of the zeta function: indeed if $Z$ is the singular locus and $\pi:Y\to X$ is the normalization, $L_X(\ZZ^D,s)=\zeta_{X\backslash Z}(s+1) \times \prod_{z\in Z} \prod_{\pi(y)=z} \frac{\zeta_y(s+1)}{\zeta_y(s)}$. Using a remark made by Chen in her thesis \cite{Chen17}, we observe that the case of a constructible sheaf (which states that $\chi_X(F^D)=1$) follows formally using \cite[Corollary 1]{Swan63} from the case of a sheaf supported on a closed subscheme and the multiplicativity properties of the Euler characteristic (see \cref{special_value_constructible}). Applying this remark to our construction in \cite{AMorin21}, this gives a quick proof of Tate's formula for Euler characteristics in global fields (see \cite[I.5.1, II.2.13]{ADT} and \cite[6.23]{AMorin21} for the reduction to Tate's formula).

\subsection{Comparison with other works}

\paragraph{\cite{Tran15}} Our work represents a significant improvement on Tran's thesis. The construction of Tran, which takes place over $X=\Spec(\cal{O}_K)$ in the totally imaginary case, involves a complex $D_{F^D}$ for so-called "strongly $\ZZ$-constructible sheaves" with a naturally attached real number $\chi_{Tran}(F^D)$. In the previous chapters of his thesis, Tran had constructed a complex $D_F$ with a naturally attached real number\footnote{We do not say Euler characteristic because the multiplicativity is not proven} $\chi_{Tran}(F)$ such that, for $M$ a finite type torsion-free discrete $G_K$-module corresponding to an étale sheaf $Y$ on $\Spec(K)$ and $g:\Spec(K)\to X$ the canonical morphism, $L_K^\ast(M,0)=\pm \chi_{Tran}(g_\ast Y)$. Let $d=\rank_\ZZ Y$; Tran computes
\[
\chi_{Tran}((g_\ast Y)^D)=\frac{\chi_{Tran}(F)(2\pi)^{nd/2}}{\sqrt{|\Delta_K|}^d}
\]
and essentially deduces from the functional equation the formula
\[
L_K^\ast(M,1)=\pm \frac{\chi_{Tran}((g_\ast Y)^D)}{N_{K/\QQ}(\mathfrak{f}(M))}
\]
with $\mathfrak{f}(M)$ the Artin conductor. The quantity $N_{K/\QQ}(\mathfrak{f}(M))$ is shown to be related to two natural integral structures on the Lie algebra $\Lie(D(M))$ of the torus $D(M)$ with character group $M$.\footnote{It seems to us that there is a mistake there, as Tran claims that $\Lie(D(M))=\Hom_\ZZ(M,K)$ while the correct formula should be $\Lie(D(M))=\Hom_{G_K}(M,K^{sep})$. It is not clear to us what impact this potential mistake has on his results.}

Tran's thesis only works out the case of a totally imaginary number field, which avoids the handling of factors of $2$ linked to real places. Our construction takes places over an order in an arbitrary number field; in particular we have to be careful with the $2$-torsion, which is achieved through our compactly supported cohomology, and with singularities, which are handled by using the dualizing complex $\ZZ^c$ instead of $\GG_m[1]$. Though we restrict ourselves to tamely ramified $\ZZ$-constructible coefficients, we obtain a multiplicative Euler characteristic, given \emph{on the nose} by the fundamental line, which describes the special value at $s=0$ of the $L$-function $L_X(F^D,s)$ (which includes as a special case Artin $L$-functions at $s=1$ for tamely ramified finite type discrete $G_K$-modules). We also have to place some restrictions on the $\ZZ$-constructible sheaves to obtain a construction of Weil-étale complexes, but these restrictions are significantly weaker than the "strongly $\ZZ$-constructible" condition of Tran and give a well-behaved complex that has good functorial properties.

\paragraph{\cite{Geisser20}}\label{par:comparison_GS} Our work can be seen as adapting and generalizing to the number field case a part of Geisser--Suzuki's work on special values of $1$-motives over a function field $K$, specifically the part about tori. The analogy between our work and theirs is as follows: let $K$ be a global field, let $T$ be a torus over $K$ with character group $M$, let $X$ be either $\Spec(\cal{O}_K)$ or the smooth complete curve with function field $K$ and let $g:\Spec(K)\to X$ be the canonical morphism. In the function field case, Geisser--Suzuki consider the connected Néron model $\cal{T}^\circ$ of $T$ over $X$. They prove that $\cal{T}^\circ\simeq R\cal{H}om_X(\tau^{\leq 1}Rg_\ast M,\GG_m)$ as an étale sheaf. The cup product with a generator $e\in H^1_W(X,\ZZ)\simeq \ZZ$ induces a trivialization on $\det_\RR (R\Gamma_{W}(X,\cal{T}^\circ)\otimes\RR)$ while the complex $R\Gamma_{Zar}(X,\cal{L}ie(\cal{T}^\circ))\otimes\RR$ is trivial; this induces a trivialization $\lambda_e : \left((\det_\ZZ R\Gamma_W(X,\cal{T}^\circ))^{-1}\otimes \det_\ZZ R\Gamma_{Zar}(X,\cal{L}ie(\cal{T}^\circ)\right)\otimes \RR \xrightarrow{\simeq} \RR$. After some reformulation, the special value formula for the $L$-function of $T$ is
\begin{thm*}[{\cite[thm. 4.6]{Geisser20}}]
	\[\lambda_e^{-1}(L^\ast(T,1)^{-1}\cdot \ZZ) =\left(\det_\ZZ R\Gamma_W(X,\cal{T}^\circ)\right)^{-1}\otimes \det_\ZZ R\Gamma_{Zar}(X,\cal{L}ie(\cal{T}^\circ))\]
\end{thm*}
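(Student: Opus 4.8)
The plan is to view both sides of the claimed identity as $\ZZ$-structures on the one-dimensional real vector space $\det_\RR(R\Gamma_W(X,\cal{T}^\circ)\otimes\RR)^{-1}\otimes\det_\RR(R\Gamma_{Zar}(X,\cal{L}ie(\cal{T}^\circ))\otimes\RR)$ and to prove their coincidence by combining the Weil-étale formalism over $\FF_q$ with an Artin-induction dévissage that reduces the statement to the special value of the zeta function of a curve at $s=1$. \textbf{Step 1 (vanishing order and the trivialization).} Using the identification $\cal{T}^\circ\simeq R\cal{H}om_X(\tau^{\leq 1}Rg_\ast M,\GG_m)$ and the expression of $L(T,s)$ as an alternating product $\prod_i\det(1-\varphi N^{-s}\mid H^i(X\times_{\FF_q}\bar{\FF}_q,-)\otimes\QQ_\ell)^{(-1)^{i+1}}$, one reads off $\ord_{s=1}L(T,s)$ from the multiplicity of the Frobenius eigenvalue $q$, which by Poincaré duality and Artin-Verdier duality on $X$ equals $\rank_\ZZ H^1_W(X,\cal{T}^\circ)$; hence $L^\ast(T,1)\in\RR^\times$ is well-defined. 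Over $\FF_q$, cup product with a generator $e$ of $H^1_W(X,\ZZ)\cong\ZZ$ makes the complex $(H^\bullet_W(X,\cal{T}^\circ)\otimes\QQ,\,\cup e)$ acyclic (the invariants/coinvariants-of-Frobenius pattern), so $\cup e$ trivializes $\det_\RR(R\Gamma_W(X,\cal{T}^\circ)\otimes\RR)$; and since $\cal{L}ie(\cal{T}^\circ)$ is a coherent $\cal{O}_X$-module, $R\Gamma_{Zar}(X,\cal{L}ie(\cal{T}^\circ))$ is perfect with finite cohomology, so $\det_\ZZ$ of it is the rank-one lattice $q^{-\chi(X,\cal{L}ie(\cal{T}^\circ))}\cdot\ZZ$. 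These produce $\lambda_e$, and the assertion becomes the equality of lattices in $\RR$
\[
\lambda_e\!\left((\det_\ZZ R\Gamma_W(X,\cal{T}^\circ))^{-1}\otimes\det_\ZZ R\Gamma_{Zar}(X,\cal{L}ie(\cal{T}^\circ))\right)=L^\ast(T,1)^{-1}\cdot\ZZ.
\]

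\textbf{Step 2 (dévissage).} Let $E(T)\in\RR^\times/\{\pm 1\}$ denote the ratio of the two sides of this lattice identity. From the functoriality of Weil-étale cohomology in short exact sequences of character groups, the additivity of the local $L$-factors, and the additivity of $\cal{L}ie$ (smoothness of $\cal{T}^\circ$), $\log|E|$ factors through the Grothendieck group of finite type integral $G_K$-modules and extends uniquely to its rationalization; from the finite-pushforward compatibility $R\Gamma_W(X,(\pi_\ast F)^D)\simeq R\Gamma_W(Y,F^D)$ for $\pi\colon Y\to X$ finite, together with $L(\mathrm{Res}_{L/K}\GG_m,s)=\zeta_{X_L}(s)$ and the analogues for the Lie term, $E$ is compatible with induction $T\mapsto\mathrm{Res}_{L/K}T$. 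By Artin's induction theorem, $M\otimes\QQ$ is, in the rational representation ring, a $\QQ$-linear combination of the $\mathrm{Ind}_{G_L}^{G_K}\QQ$ for finite separable $L/K$; correspondingly, modulo torsion, $T$ is a virtual combination of the tori $\mathrm{Res}_{L/K}\GG_m$. It therefore suffices to prove $E(\GG_m)=\pm 1$ and $E(T)=\pm 1$ for $T$ finite of multiplicative type. The first is the special value of the zeta function of $X_L$ at $s=1$, i.e. the function-field analytic class number formula $\zeta_{X_L}^\ast(1)=\pm\,q^{1-g_L}[\Pic^0(X_L)](q-1)^{-1}$ up to the regulator of $\cup e$ (the $\log q$ attached to the fundamental class), matched termwise against the known $R\Gamma_W(X_L,\GG_m)$ (with $H^0=\FF_q^\times$, $H^1=\Pic(X_L)$, $H^2=\ZZ$) and $R\Gamma(X_L,\cal{O}_{X_L})$; this is classical (and in the Weil-étale formulation due to Lichtenbaum). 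The second is a combinatorial statement about orders: here $L^\ast(T,1)=1$, and one must show that the Weil-étale cohomology of the finite flat group scheme $\cal{T}^\circ$, corrected by its (in characteristic $p$ possibly infinitesimal) Lie algebra, has trivial determinant, which follows from Artin-Verdier duality on $X$ together with coherent duality.

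\textbf{Step 3 (the main obstacle).} The reductions above are formal once one has these functorialities and the $\RR$-coefficient comparison distinguished triangle relating $R\Gamma_W(X,\cal{T}^\circ)\otimes\RR$, the $\cup e$-acyclic part, and $\cal{L}ie(\cal{T}^\circ)\otimes\RR$. The genuine difficulty is integral precision: one must check that the Artin-Verdier pairing on $X$ and the coherent Serre duality pairing on the Lie algebra are compatible with the chosen integral determinant structures \emph{on the nose}, so that no spurious rational factor enters the comparison, and that the regulator built from $\cup e$ matches the normalization of $L^\ast(T,1)$. Moreover, at the finitely many places $v$ of bad reduction of $T$ one must verify, place by place, that the local Euler factor $\det(1-\varphi_v N(v)^{-s}\mid X^\ast(T)^{I_v})^{-1}$ is accounted for exactly by the torus part of the special fibre of $\cal{T}^\circ_v$, the component group $\pi_0(\cal{T}_v)$, and the conductor contribution to $\cal{L}ie(\cal{T}^\circ)_v$ --- this simultaneous matching of $L$-factor, Néron model structure, and additive term place by place is the technical heart of the argument.
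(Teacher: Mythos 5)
First, a point of order: this statement is not proved in the paper at all --- it is quoted verbatim as \cite[thm.~4.6]{Geisser20} in the comparison section, purely to set up the analogy with the number-field result proved here. So there is no internal proof to measure your argument against; what you have written is a proposed proof of Geisser--Suzuki's theorem itself. Judged on that basis, your outline follows the general Weil-étale special-value methodology (trivialization by cup product with the fundamental class, multiplicativity, Artin-induction dévissage to $\GG_m$ over covers and to the finite multiplicative-type case), which is indeed the style of argument this paper uses for its own number-field theorem, but it is not the same as simply "reducing to the class number formula": over a finite field the standard proofs (Lichtenbaum, Geisser, and Geisser--Suzuki) get both the vanishing order and the leading coefficient from the $\ell$-adic trace formula, comparing $R\Gamma_W$ integrally with $\ell$-adic cohomology and isolating the eigenvalue-$q$ part, rather than by induction from $\mathrm{Res}_{L/K}\GG_m$.

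The genuine gap is that the two places where the theorem actually lives are asserted, not proved. (i) Your Step 3 openly defers the place-by-place matching at bad reduction --- the identification of the local Euler factor $\det(1-\varphi_v N(v)^{-s}\mid X^\ast(T)^{I_v})^{-1}$ with the contribution of the special fibre of $\cal{T}^\circ$, its component group, and the conductor term in $\cal{L}ie(\cal{T}^\circ)$, all with integral normalizations compatible with the chosen determinant structures. That matching is precisely the content of Geisser--Suzuki's theorem; a proof that postpones it has not proved the statement. (ii) The dévissage itself is incomplete: Artin induction only expresses $M\otimes\QQ$ as a \emph{rational} virtual combination of induced modules, so to descend to an integral statement you must control the torsion discrepancies --- exactly the issue this paper handles via Swan's finiteness of $\ker(K_0'(\ZZ[G])\to K_0'(\QQ[G]))$ in \cref{special_value_constructible} --- and your treatment of the "finite multiplicative type" case ("follows from Artin--Verdier duality together with coherent duality") is an assertion rather than an argument; in characteristic $p$ the group scheme $\cal{T}^\circ$ can have infinitesimal/wildly ramified pieces, and showing its Weil-étale determinant is cancelled by $\det_\ZZ R\Gamma_{Zar}(X,\cal{L}ie(\cal{T}^\circ))$ is a nontrivial $p$-adic statement, not a formal duality consequence. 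Until (i) and (ii) are carried out, the proposal is a plausible strategy but not a proof.
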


In the number field case, denote $F:=\tau^{\leq 1}Rg_\ast M$ so that $F^D=R\cal{H}om_X(\tau^{\leq 1}Rg_\ast M,\GG_m)[1]=\cal{T}^\circ[1]$; the sheaf $g_\ast M$ is red and moreover $R^1g_\ast M$ is constructible, so the conclusions of both thm. A and thm. B hold without modification for $F$ and we have a Weil-étale complex with compact support $R\Gamma_{W,c}(X,\cal{T}^\circ):=R\Gamma_{W,c}(X,F^D)[-1]$. We have $L_X(F^D,s)=L(T,s+1)$. Finally when $M$ is tamely ramified, since $R^1g_\ast M$ is supported on a finite closed subscheme we have $\Lie_X(F^D)=\Lie_X((g_\ast M)^D)=\Lie(\cal{T})[1]=\Lie(\cal{T}^\circ)[1]=R\Gamma_{Zar}(X,\cal{L}ie(\cal{T}^\circ))[1]$ so our formula is the direct analogue of the previous one:
\begin{thm*}
	\[
	\lambda^{-1}(L^\ast(T,1)^{-1}\cdot \ZZ) = \left(\det_\ZZ R\Gamma_{W,c}(X,\cal{T}^\circ)\right)^{-1} \otimes \det_\ZZ R\Gamma_{Zar}(X,\cal{L}ie(\cal{T}^\circ))
	\]
\end{thm*}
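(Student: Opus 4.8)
The plan is to obtain this statement as a direct translation of the special value formula of \cref{subsec:computations_special_value_thm}, applied to the complex $F:=\tau^{\leq 1}Rg_\ast M$, through the dictionary set up above. First I would record that $F$ is covered by the red-to-blue formalism: it sits in a distinguished triangle $g_\ast M \to F \to R^1g_\ast M[-1] \to$ in which $H^0(F)=g_\ast M$ is red and $H^1(F)=R^1g_\ast M$ is a constructible sheaf supported on the finitely many primes where $M$ ramifies, hence blue. Since the Weil-étale complex, the fundamental line $\Delta_X(-)$, the $L$-function $L_X(-)$ and the trivialization $\lambda$ are all functorial and multiplicative along such red-to-blue data, the special value formula holds for $F$, the order of vanishing matching automatically through $L_X(F^D,s)=L(T,s+1)$. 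Tameness of $M$ as a $G_K$-module is precisely tameness of $F$, and $g^\ast F=M$, so $\Lie_X(F^D)=R\Hom_{G_K^t}(M,\cal{O}_{K^t}[1])$ is defined.

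Next I would pin down the three identifications. (i) Since $X=\Spec(\cal{O}_K)$ is regular, $\ZZ^c_X=\GG_m[1]$ and $F^D=R\cal{H}om_X(\tau^{\leq 1}Rg_\ast M,\GG_m)[1]$; I would then establish $F^D\simeq\cal{T}^\circ[1]$ by the number-field analogue of \cite[\S3]{Geisser20}: reduce via a permutation resolution $0\to M\to P_0\to P_1\to 0$ and additivity of $R\cal{H}om_X(-,\GG_m)$ to the case of induced modules $\ZZ[G_K/G_L]$, where Shapiro's lemma and smooth base change identify $R\cal{H}om_X(\tau^{\leq 1}Rg_\ast\ZZ[G_K/G_L],\GG_m)$ with the Weil restriction $R_{\cal{O}_L/\cal{O}_K}\GG_m$, which is the connected Néron model of $R_{L/K}\GG_m$, and then check the dévissage is compatible with passage to identity components. (ii) $L_X(F^D,s)=L(T,s+1)$: the local-factor computation of the introduction gives $L_X((g_\ast M)^D,s)=L_K(\Hom_{Ab}(M,\QQ),s+1)=L(T,s+1)$ (the cocharacter representation of $T$ is $M^\vee\otimes\QQ$), and $R^1g_\ast M$ is a torsion sheaf so its dual contributes only trivial local factors, whence the $L$-function is unchanged by passing from $g_\ast M$ to $F$. (iii) For $M$ tamely ramified, \cref{lie_equals_lie} gives $\Lie_X((g_\ast M)^D)\simeq\Lie(\cal{T})[1]$; since $g^\ast F=M$ we get $\Lie_X(F^D)=\Lie_X((g_\ast M)^D)\simeq\Lie(\cal{T})[1]$, and as $\cal{L}ie(\cal{T})=\cal{L}ie(\cal{T}^\circ)$ is a coherent sheaf on the affine scheme $X$ this is $R\Gamma_{Zar}(X,\cal{L}ie(\cal{T}^\circ))[1]$.

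Finally I would assemble the pieces. The special value formula reads $\lambda^{-1}(L_X^\ast(F^D,0)^{-1}\cdot\ZZ)=\Delta_X(F^D)=\det_\ZZ R\Gamma_{W,c}(X,F^D)\otimes\det_\ZZ\Lie_X(F^D)^{-1}$. Using the defining convention $R\Gamma_{W,c}(X,\cal{T}^\circ):=R\Gamma_{W,c}(X,F^D)[-1]$, so that $\det_\ZZ R\Gamma_{W,c}(X,F^D)=(\det_\ZZ R\Gamma_{W,c}(X,\cal{T}^\circ))^{-1}$, together with (iii), which gives $\det_\ZZ\Lie_X(F^D)^{-1}=\det_\ZZ R\Gamma_{Zar}(X,\cal{L}ie(\cal{T}^\circ))$, the fundamental line becomes $(\det_\ZZ R\Gamma_{W,c}(X,\cal{T}^\circ))^{-1}\otimes\det_\ZZ R\Gamma_{Zar}(X,\cal{L}ie(\cal{T}^\circ))$; combined with $L_X^\ast(F^D,0)=L^\ast(T,1)$ from (ii), this is exactly the claimed identity. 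The main obstacle is step (i): checking that Geisser--Suzuki's identification $\cal{T}^\circ\simeq R\cal{H}om_X(\tau^{\leq 1}Rg_\ast M,\GG_m)$ carries over to $X=\Spec(\cal{O}_K)$, which requires the structure theory of locally-finite-type Néron models of tori over a Dedekind ring of arithmetic type — in particular that Weil restriction computes them for induced tori and that the permutation-resolution dévissage respects connected components. Everything else is the bookkeeping of shifts and determinants.
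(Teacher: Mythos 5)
Your overall route is the one the paper takes: treat $F=\tau^{\leq 1}Rg_\ast M$ via the triangle $g_\ast M\to F\to R^1g_\ast M[-1]$ (with $g_\ast M$ red and $R^1g_\ast M$ constructible), identify $F^D\simeq\cal{T}^\circ[1]$, $L_X(F^D,s)=L(T,s+1)$ and $\Lie_X(F^D)\simeq R\Gamma_{Zar}(X,\cal{L}ie(\cal{T}^\circ))[1]$ via \cref{lie_equals_lie}, and then read off the claim from the special value formula of \cref{special_value_thm} together with the shift conventions $R\Gamma_{W,c}(X,\cal{T}^\circ)=R\Gamma_{W,c}(X,F^D)[-1]$; all of that bookkeeping in your steps (ii), (iii) and the final assembly matches the paper. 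The one place you diverge is step (i): the paper does not re-prove the identification $\cal{T}^\circ\simeq R\cal{H}om_X(\tau^{\leq 1}Rg_\ast M,\GG_m)$ but quotes it from Geisser--Suzuki (\cite{Geisser21}, Prop.\ 2.2), which is stated over the number ring, so the "main obstacle" you flag is not actually an obstacle — it is a citable input. Your proposed re-derivation of it is also the shakiest part of the write-up: a two-term resolution $0\to M\to P_0\to P_1\to 0$ by permutation modules does not exist for a general $G_K$-lattice (flasque/coflasque resolutions have non-permutation third terms), and neither the lft Néron model nor the identity-component functor is exact on short exact sequences of tori, so "additivity of $R\cal{H}om_X(-,\GG_m)$ plus compatibility with identity components" is precisely where the genuine work in \cite{Geisser20,Geisser21} lies rather than a routine dévissage. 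If you simply replace your step (i) by the citation, your argument coincides with the paper's.
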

Since we consider all coefficients $F^D$ for $F$ a $\ZZ$-constructible sheaf, we gain some flexibility which allows us to consider the case where $X$ is singular. The main technical difficulty in our situation seems to be the definition of Weil-étale cohomology, while in the function field case Weil-étale cohomology is a well-established construction by work of Lichtenbaum and Geisser \cite{Lichtenbaum05,Geisser04}.

\paragraph{\cite{Poonen20}} Jordan--Poonen proved an analytic class number formula for the zeta function of a $1$-dimensional affine reduced arithmetic scheme. Our work is related but different in nature, as the functions we consider are not the same: for $j:U\to X$ an open immersion, we have $L_X((j_!\ZZ)^D,s)=\zeta_U(s+1)$ only when $U=X$ and $X$ is regular. Let $X=\Spec(\cal{O})$ be the spectrum of an order in a number field $K$, let $j:U\to X$ be an open subscheme and let $S_f$ be the finite places missing in the normalization of $U$. Denote $A:=\cal{O}_U(U)$. Their special value formula for $U$ is
\[
\zeta_U^\ast(1)=\frac{2^{r_1}(2\pi)^{r_2}h(A)R(A) \prod_{x\in S_f}((1-N(x)^{-1})/\log N(x))}{\omega(A) \sqrt{|\Delta_A|}}
\]
with $r_1$, $r_2$ the number of real and complex places of $K$, $h(A):=[\Pic(U)]$, $R(A)$ is the covolume of $A^\times \subset \cal{O}_{K,S}^\times$ under the usual logarithmic embedding, $\omega(A)=[(A^\times)_{tor}]$ and $\Delta_A$ is the discriminant of $A$, that is $\det(\mathrm{Tr}(e_ie_j))$ for a $\ZZ$-basis $e_i$ of $A$.

Meanwhile our formula is computed explicitly as
\[
L_X^\ast((j_!\ZZ)^D,0)=\frac{2^{r_1}(2\pi)^{r_2}h_U R_U}{\omega \sqrt{|\Delta_K|}}
\]
with $h_U=[\mathrm{CH}_0(U)]$, $R_U$ the regulator from \cite{AMorin21} and $\omega$ the number of roots of unity in $K$.

\subsection{Notations}

\paragraph{Schemes}
The étale site of a scheme $Y$ will be denoted $Y_{et}$ and the pro-étale site $Y_{proet}$ \cite{Bhatt15}. There is morpism of topoi $\nu:\Sh(Y_{proet})\to \Sh(Y_{et})$ such that $\nu^\ast$ is fully faithful. A sheaf of abelian groups on $Y_{et}$ will be called an étale sheaf for short.

An arithmetic scheme is a scheme separated of finite type over $\Spec(\ZZ)$; an arithmetic curve is a dimension $1$ arithmetic scheme. In this paper, we will consider proper integral arithmetic curves that are flat over $\Spec(\ZZ)$. Such a scheme is the spectrum of an order $\mathcal{O}$ in a number field $K$. For the rest of the paper, we fix $X=\Spec(\cal{O})$. The generic point will be denoted $g:\eta=\Spec(K)\to X$. We write $G_K:=\Gal(K^{sep}/K)$ for the Galois group of $K$. If $v$ is a finite place (resp. an archimedean place) of $K$, we will denote $K_v$ the henselian local field (resp. complete local field) at $v$ and $g_v:\eta_v=\Spec(K_v)\to X$ the canonical morphism. If $x$ is a closed point of $X$, we will denote $i_x:x\to X$ the inclusion (or $i$ when the context is clear), $G_x=\Gal(\kappa(x)^{sep}/\kappa(x))$ the Galois group of the residue field at $x$ and $N(x)=[\kappa(x)]$ the cardinality of the residue field at $x$. When the context allows, we will abuse notation and write $v$ for a regular closed point of $X$ corresponding to a finite place $v$ of $K$. If $U$ is an open subscheme of $X$, we will denote $j:U\to X$ the corresponding morphism. We denote by $X_0$ the set of closed points of $X$.

For $F$ an étale sheaf on $X$, we denote $F_\eta:=g^\ast F$ and $F_x = i_x^\ast F$ for $x$ a closed point, and we identify those with a discrete $G_K$-module resp. discrete $G_x$-module. We will make no distinction between an étale sheaf on the spectrum of a field and the associated Galois module.

\paragraph{Dualizing complex}
Denote $z_0(X,i)$ the free abelian group generated by closed integral subschemes of $X\times \Delta^i$, of relative dimension $i$, which intersect all faces properly. The dualizing complex $\ZZ^c_X:=\ZZ^c_X(0)$ is the complex of étale sheaves with $z_0(-,-i)$ in degree $i$ and differentials the sum of face maps \cite{Geisser10}. Denote
\[
\GG_X:=\left[g_\ast \GG_m \to \bigoplus_{x\in X_0}i_{x,\ast} \ZZ\right].
\]
Deninger's dualizing complex \cite{Deninger87}. In our case we have $\ZZ^c_X\simeq \GG_X[1]$ \cite{Nart89}.

For $F$ an étale sheaf on $X$, we denote $F^D:=R\cal{H}om_X(F,\ZZ^c_X)$ the derived internal hom in the derived category of étale shaves on $X$.

\paragraph{Group cohomology}
If $G$ is a finite group, we denote $R\Gamma(G,-)$ the derived functor of $G$-invariants. Let $P^\bullet$ be the standard complete resolution of $\ZZ$. We denote furthermore $R\hat{\Gamma}(G,-)\coloneq R\Hom_G(P^\bullet,-)$. This latter functor computes Tate cohomology. 

Let $G$ be a profinite group and $H\subset G$ is an open subgroup. If $M$ is a discrete $H$-module, then the induction of $M$ to $G$ is the $G$-module $\ind_H^G M:=\mathrm{Cont}_{H}(G,M)$ of $H$-equivariant continuous maps $G\to M$. Let $L/K$ be a finite extension of fields and $F$ an étale sheaf on $\Spec(L)$ corresponding to a discrete $G_L$-module $M$. If $\pi$ denotes the map $\Spec(L)\to \Spec(K)$ then $\pi_\ast F$ corresponds to the discrete $G_K$-module $\ind_{G_L}^{G_K}M$.

If $M$ is an abelian group, we will denote $M^\vee:=\Hom_\ZZ(M,\ZZ)$ its linear dual, $M^\ast:=\Hom_\ZZ(M,\QQ/\ZZ)$ its Pontryagin dual, and $M^\dagger:=\Hom_\ZZ(M,\QQ)$\footnote{"The dagger kills torsion."}. If $V$ is a vector space over a field $E$, we will denote $V^\vee$ its linear dual.

\paragraph{Complex points}
We endow the complex points $X(\CC)$ with the analytic topology; in our case the complex points are the embeddings $K\to \CC$ and the topology is discrete. We denote $\Sh(G_\RR,X(\CC))$ the topos of $G_\RR$-equivariant sheaves on $X(\CC)$. There is a morphism of topoi $\alpha:\Sh(G_\RR,X(\CC))\to \Sh(X_{et})$. We denote $\ZZ(1):=2i\pi\ZZ\in \Sh(G_\RR,X(\CC)$, and for $M$ a $G_\RR$-equivariant sheaf of abelian groups on $X(\CC)$, $M(1):=M\otimes \ZZ(1)$ with diagonal action and
\begin{align*}
	& M^\vee := R\mathcal{H}om_{G_\RR,X(\CC)}(M,\ZZ)\\
	& M^\vee(1):=R\mathcal{H}om_{G_\RR,X(\CC)}(M,\ZZ(1)).
\end{align*}
The $G_\RR$-equivariant cohomology of a complex $C\in D(\Sh(G_\RR,X(\CC)))$ is defined as
\[
R\Gamma_{G_\RR}(X(\CC),C):=R\Gamma(G_\RR,R\Gamma(X(\CC),C))
\]
We also denote
\[
R\hat{\Gamma}_{G_\RR}(X(\CC),C):=R\hat{\Gamma}(G_\RR,R\Gamma(X(\CC),C))
\]
the Tate $G_\RR$-equivariant cohomology. The norm map $N$ induces a fiber sequence
\[
\ZZ\otimes_{\ZZ[G]}^L R\Gamma(X(\CC),C) \xrightarrow{N} R\Gamma_{G_\RR}(X(\CC),C) \to R\hat{\Gamma}_{G_\RR}(X(\CC),C)
\]
\begin{rmk}
	Since $X(\CC)$ is discrete, a $G_\RR$-equivariant sheaf on $X(\CC)$ is the data of
	\begin{itemize}
		\item For each embedding $\sigma:K\to \CC$, an abelian group $F_\sigma$
		\item An isomorphism $F_{\sigma} \to F_{\overline{\sigma}}$ for $\sigma$ complex and an action of $G_\RR$ on $F_{\sigma}$ for $\sigma$ real. 
	\end{itemize}
	Choose an embedding $\sigma_v$ for each archimedean place and put $F_v=F_{\sigma_v}$. We then have
	\[
	R\Gamma(X(\CC),F)=\prod_{v ~\text{real}} F_v \times \prod_{v~\text{complex}} \mathrm{ind}^{G_\RR} F_v
	\]
	as a $G_\RR$-module, and thus
	\[
	R\Gamma_{G_\RR}(X(\CC),F)=\prod_{v~\text{archimedean}} R\Gamma(G_{K_v},F_v)
	\]
	This breaks down if $X(\CC)$ is not discrete.
\end{rmk}

If $F$ is an étale sheaf on $X$, we will denote $F_{v}$ its pullback to $\Spec(K_v)$; thus $(\alpha^\ast F)_v=F_v$.

\paragraph{Determinants}
Let $A=\ZZ$ or $\RR$. A complex $C$ in the derived category $D(A)$ is perfect if it is bounded with finite type cohomology groups. We will use the determinant construction of Knudsen-Mumford \cite{detdiv}, and the subsequent work of Breuning, Burns and Knudsen, in particular \cite{Breuning08,Breuning11}. Denote $\mathrm{Proj}_A$ the exact category of projective finite type $A$-modules, $\mathrm{Gr}^b(\mathrm{Mod}_A^{ft})$ the bounded graded abelian category of finite type $A$-modules, $D_{perf}(A)$ the derived category of perfect complexes and $\cal{P}_A$ the Picard groupoid of graded $A$-lines. The usual determinant functor
\[
\det_A \in \mathrm{det}(\mathrm{Proj}_A, \mathcal{P}_A), ~ M  \mapsto (\Lambda^{\rank_A M} M,\rank_A M)
\]
extends to a determinant functor $g_A \in \in\mathrm{det}(\mathrm{Gr}^b(\mathrm{Mod}^{ft}_A), \mathcal{P}_A)$. Moreover, the graded cohomology functor $H:D_{perf}(A)\to \mathrm{Gr}^b(\mathrm{Mod}^{ft}_A)$ induces a functor $H^\ast: \mathrm{det}( \mathrm{Gr}^b(\mathrm{Mod}^{ft}_A),\mathcal{P}_A) \to \mathrm{det}( D_{perf}(A),\mathcal{P}_A)$, and we put $\det_A:=H^\ast g_A$. \footnote{This convention is chosen to simplify proofs later on, but of course all determinant functors extending $\det_A \in \mathrm{det}(\mathrm{Proj}_A$ are isomorphic} There is a canonical isomorphism $(\det_ZZ C)\otimes \RR \simeq \det_\RR (C\otimes \RR)$.

\paragraph{Derived $\infty$-categories}
We will use the theory of stable $\infty$-categories, see \cite{HA}. If $\cal{X}$ is a topos, we will denote $\cal{D}(\cal{X})$ the derived $\infty$-category associated to abelian objects in $\cal{X}$. It is a stable $\infty$-category whose homotopy category is the usual derived (1-)category $D(\cal{X})$. In particular we will denote $\cal{D}(X)$ the derived $\infty$-category of étale sheaves on $X$, $\cal{D}(\ZZ)$ the derived $\infty$-category of abelian groups and $\cal{D}(G_\RR,X(\CC))$ the derived $\infty$-category of $G_\RR$-equivariant sheaves on $X(\CC)$.

A stable $\infty$-category has all finite limits and colimits. Moreover, pushouts are pullbacks and reciprocally. The homotopy category of a stable $\infty$-category has a canonical structure of triangulated category. If $\cal{C}$ is a stable $\infty$-category and $A\in \cal{C}$, we denote $A[1]$ the shift of $A$, which is given by the following pushout:
\[\begin{tikzcd}
	A & 0 \\
	0 & {A[1]}
	\arrow[from=1-1, to=1-2]
	\arrow[from=1-1, to=2-1]
	\arrow[from=1-2, to=2-2]
	\arrow[from=2-1, to=2-2]
\end{tikzcd}\]
If $f:A \to B$ is a morphism in $\cal{C}$, we define $\fib(f)$ and $\cofib(f)$ by the following pullback and pushout diagrams:
\[
\begin{array}{lr}
\begin{tikzcd}
	{\fib(f)} & A \\
	0 & B
	\arrow[from=1-1, to=1-2]
	\arrow[from=1-1, to=2-1]
	\arrow["f", from=1-2, to=2-2]
	\arrow[from=2-1, to=2-2]
\end{tikzcd}&
\begin{tikzcd}
	A & B \\
	0 & {\cofib(f)}
	\arrow["f", from=1-1, to=1-2]
	\arrow[from=1-1, to=2-1]
	\arrow[from=1-2, to=2-2]
	\arrow[from=2-1, to=2-2]
\end{tikzcd}
\end{array}\]
A sequence $A\to B \to C$ is called a fiber sequence if $A=\fib(B\to C)$ (or equivalently $C=\cofib(A\to B)$). A fiber sequence induces a distinguished triangle in the homotopy category.

\subsection{Acknowledgments}
 First and foremost I would like to thank my advisor, Baptiste Morin. He introduced me to the beautiful subject of Weil-étale cohomology and progressively guided me towards independance. I would also like to thank Takashi Suzuki for stimulating exchanges about the Lie construction, and Matthias Flach for general discussions on Weil-étale cohomology and related topics. Finally, I would like to thank Thomas Geisser and Niranjan Ramachandran for helpful remarks and discussions.

This paper was written during my doctoral studies at the Institut de Mathématiques de Bordeaux, under a scholarship from the Ministère de l'Enseignement supérieur, de la Recherche et de l'Innovation. This research did not receive any specific grant from funding agencies in the public, commercial, or not-for-profit sectors. Declarations of interest: none.
\section{Cohomology with compact support of \texorpdfstring{$F^D$}{F D}}

\subsection{Definitions}
The $G_\RR$-equivariant sheaf $\alpha^\ast \ZZ^c_X=\alpha^\ast\GG_X[1]$ is $\overline{\QQ}^\times[1]$, with action of $G_\RR$ \emph{via} decomposition groups at archimedean places. We have a canonical morphism $\alpha^\ast \ZZ^c_X \to \CC^\times[1]$ and the short exact sequence $0 \to 2i\pi\ZZ \to \CC \to \CC^\times \to 0$ gives a composite arrow $\alpha^\ast \ZZ^c_X \to \CC^\times[1] \to \ZZ(1) [2]$. Let $F,G$ be abelian sheaves on $X_{et}$. The functor $\alpha^\ast$ is strict monoidal, hence from the arrow
\[
\alpha^\ast (R\cal{H}om_{X}(F,G) \otimes^L F) \to \alpha^\ast G
\]
obtained by applying $\alpha^\ast$ to the natural map, we obtain by adjunction a canonical map
\[
\alpha^\ast R\cal{H}om_X(F,G) \to R\cal{H}om_{G_\RR,X(\CC)}(\alpha^\ast F,\alpha^\ast G)
\]
In particular, there is a natural transformation 
\[
\alpha^\ast(F^D) \to R\mathcal{H}om_{G_\RR,X(\CC)}(\alpha^\ast F,\CC^\times[1]) \to (\alpha^\ast F)^\vee(1)[2]
\]
hence maps
\[
R\Gamma(X,F^D) \to R\Gamma_{G_\RR}(X(\CC),(\alpha^\ast F)^\vee(1)[2]) \to R\hat{\Gamma}_{G_\RR}(X(\CC),(\alpha^\ast F)^\vee(1)[2])
\]

\begin{defi}\label{def_cohomology_compact_support}
	The corrected cohomology with compact support of $F^D$ is the fiber of the first morphism:
	\[
	R\Gamma_{c,B}(X,F^D):=\fib\left(R\Gamma(X,F^D) \to R\Gamma_{G_\RR}(X(\CC),(\alpha^\ast F)^\vee(1)[2])\right)
	\]
	The Tate corrected cohomology with compact support of $F^D$ is the fiber of the composite morphism:
	\[
	R\hat{\Gamma}_{c,B}(X,F^D):=\fib\left(R\Gamma(X,F^D)\to R\hat{\Gamma}_{G_\RR}(X(\CC),(\alpha^\ast F)^\vee(1)[2])\right)
	\]
	We also recall the definition of Tate cohomology with compact support of $F$ from \cite[§ 2]{AMorin21}\footnote{The definition is not original but the terminology is, see \cite[II.2, Cohomology with compact support]{ADT}} as the fiber:
	\[
	R\hat{\Gamma}_c(X,F):=\fib\left(R\Gamma(X,F)\to R\hat{\Gamma}_{G_\RR}(X(\CC),\alpha^\ast F)\right)
	\]
\end{defi}

\begin{rmk}
		The above is only defined for sheaves of the specific form $F^D$. It is not a functor on the whole category of sheaves. We think of it as a contravariant functor in $F$. As we will see later, Tate corrected cohomology with compact support of $F^D$ is in an "Artin-Verdier-like" duality with étale cohomology of $F$, so we can think of it as a étale homology of $F$.
\end{rmk}

\begin{prop}\label{comparison_Tate_compact_cohomology}
	Let $F$ be an étale sheaf on $X$. The canonical map $\alpha^\ast \ZZ^c_X \to \to \ZZ(1) [2]$ induces an isomorphism
	\[
	R\hat{\Gamma}_c(X,F^D) \xrightarrow{\simeq} R\hat{\Gamma}_{c,B}(X,F^D)
	\]
\end{prop}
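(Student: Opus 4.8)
The two complexes $R\hat\Gamma_c(X,F^D)$ and $R\hat\Gamma_{c,B}(X,F^D)$ are both defined as fibers of maps out of the same complex $R\Gamma(X,F^D)$. For the first, the target is $R\hat\Gamma_{G_\RR}(X(\CC),\alpha^\ast(F^D))$, obtained by applying $R\hat\Gamma_{G_\RR}(X(\CC),-)$ to $\alpha^\ast(F^D)$ directly; for the second, the target is $R\hat\Gamma_{G_\RR}(X(\CC),(\alpha^\ast F)^\vee(1)[2])$, obtained by applying the same functor to the composite map $\alpha^\ast(F^D)\to(\alpha^\ast F)^\vee(1)[2]$. So it suffices to produce a commutative square
\[
\begin{tikzcd}
R\Gamma(X,F^D) \arrow[r] \arrow[d,equal] & R\hat\Gamma_{G_\RR}(X(\CC),\alpha^\ast(F^D)) \arrow[d] \\
R\Gamma(X,F^D) \arrow[r] & R\hat\Gamma_{G_\RR}(X(\CC),(\alpha^\ast F)^\vee(1)[2])
\end{tikzcd}
\]
in which the right vertical arrow (induced by $\alpha^\ast(F^D)\to(\alpha^\ast F)^\vee(1)[2]$) is an isomorphism; passing to fibers then gives the claim. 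Commutativity of the square is essentially the functoriality built into the constructions preceding \cref{def_cohomology_compact_support} — the map $R\Gamma(X,F^D)\to R\hat\Gamma_{G_\RR}(X(\CC),(\alpha^\ast F)^\vee(1)[2])$ is \emph{by definition} the composite through $R\hat\Gamma_{G_\RR}(X(\CC),\alpha^\ast(F^D))$.

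\textbf{The key point.} The real content is that the natural map
\[
R\hat\Gamma_{G_\RR}(X(\CC),\alpha^\ast(F^D)) \to R\hat\Gamma_{G_\RR}(X(\CC),(\alpha^\ast F)^\vee(1)[2])
\]
is an isomorphism. Using the remark that $X(\CC)$ is discrete, both sides decompose as products over the archimedean places $v$ of $K$ of Tate cohomology groups $R\hat\Gamma(G_{K_v}, -)$ of the corresponding stalks; for a complex place $G_{K_v}$ is trivial and Tate cohomology vanishes, so only the real places contribute. At a real place, $\alpha^\ast(F^D)$ has stalk (a shift of) $\overline{\QQ}^\times$ — more precisely $(\alpha^\ast\ZZ^c_X)_v = \overline{\QQ}^\times[1]$ — with its $G_\RR=\Gal(\CC/\RR)$-action through the chosen embedding, while $(\alpha^\ast F)^\vee(1)[2]$ involves $\CC^\times$ in place of $\overline{\QQ}^\times$ via the sequence $0\to 2i\pi\ZZ\to\CC\to\CC^\times\to 0$. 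So I must show that the maps
\[
R\hat\Gamma(G_\RR,\overline{\QQ}^\times) \to R\hat\Gamma(G_\RR,\CC^\times) \to R\hat\Gamma(G_\RR,(2i\pi\ZZ)[2])
\]
compose to an isomorphism (twisted appropriately by $R\cal Hom_{G_\RR}$ against the stalk $F_v$, which is a bounded complex of finitely generated abelian groups, so this reduces to the case of $\ZZ$ by dévissage). The first map $R\hat\Gamma(G_\RR,\overline{\QQ}^\times)\to R\hat\Gamma(G_\RR,\CC^\times)$ is an isomorphism because $\CC^\times/\overline{\QQ}^\times$ is uniquely divisible (it is a $\QQ$-vector space), hence has trivial Tate cohomology over the finite group $G_\RR$. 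The second map fits in the Tate cohomology long exact sequence of $0\to 2i\pi\ZZ\to\CC\to\CC^\times\to 0$, and $\CC$ is a $\QQ$-vector space so again has trivial Tate cohomology; therefore the connecting map $R\hat\Gamma(G_\RR,\CC^\times)\xrightarrow{\simeq}R\hat\Gamma(G_\RR,(2i\pi\ZZ)[2])$ is an isomorphism. Composing, we are done.

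\textbf{Expected main obstacle.} The conceptual steps are short; the friction will be bookkeeping at the level of $\infty$-categories and equivariant sheaves. One must make sure that the identification $\alpha^\ast(F^D)\simeq R\cal Hom_{G_\RR,X(\CC)}(\alpha^\ast F,\alpha^\ast\ZZ^c_X)$ composed with $R\cal Hom$ of the arrow $\alpha^\ast\ZZ^c_X\to\ZZ(1)[2]$ really is the map used in \cref{def_cohomology_compact_support}, and that taking $R\hat\Gamma_{G_\RR}(X(\CC),-)$ commutes with the relevant $R\cal Hom$'s so that the reduction "to the case of $\ZZ$" is legitimate — i.e. that $R\hat\Gamma_{G_\RR}(X(\CC), R\cal Hom_{G_\RR,X(\CC)}(\alpha^\ast F, C))$ can be computed place by place as $\prod_v R\hat\Gamma(G_{K_v}, R\Hom(F_v, C_v))$ and that, for $C_v$ with uniquely divisible or trivial-Tate-cohomology behavior tensored against the perfect complex $F_v$, the vanishing persists. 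This is standard (uniquely divisible modules over a finite group have trivial Tate cohomology, and this is preserved under $R\Hom$ against a bounded complex of finitely generated groups), but it should be spelled out carefully since the whole point of the $B$-decoration is to track $2$-torsion at real places precisely.
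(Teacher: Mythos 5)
Your overall skeleton matches the paper's: both proofs reduce to showing that $R\hat{\Gamma}(G_\RR,\alpha^\ast(F^D))\to R\hat{\Gamma}(G_\RR,(\alpha^\ast F)^\vee(1)[2])$ is an isomorphism, checked at the real places, where $\alpha^\ast(F^D)$ becomes $R\cal{H}om(\alpha^\ast F,\overline{\QQ}^\times[1])$. Where you diverge is the key step. The paper runs an Artin induction over finite type $G_\RR$-modules $M$ with three base cases: $M=\ZZ$ (where it computes $\hat{H}^\ast(G_\RR,\overline{\QQ}^\times)\simeq\hat{H}^\ast(G_\RR,\CC^\times)$ explicitly via the sign map, using that a real algebraic number which is a norm from $\CC$ is a square, hence a norm of an algebraic number), $M$ induced, and $M$ finite (using that $\overline{\QQ}^\times$ and $\CC^\times$ have the same torsion), followed by a filtered-colimit argument for general $M$. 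Your route replaces the explicit computation by the observation that $\CC^\times/\overline{\QQ}^\times$ is uniquely divisible and that $\CC$ is a $\QQ$-vector space, so the two cofibers in the composite $\overline{\QQ}^\times[1]\to\CC^\times[1]\to 2i\pi\ZZ[2]$ are Tate-acyclic. This is genuinely slicker: applied correctly (see below) it needs no Artin induction and no case distinction, and it handles arbitrary étale sheaves $F$ in one stroke, since $\cal{H}om(A,Q)$ is a $\QQ$-vector space and $\cal{E}xt^1(A,Q)=0$ for any $A$ when $Q$ is uniquely divisible, so the cofibers $R\cal{H}om_{G_\RR,X(\CC)}(\alpha^\ast F,\CC^\times/\overline{\QQ}^\times[1])$ and $R\cal{H}om_{G_\RR,X(\CC)}(\alpha^\ast F,\CC[2])$ are complexes of uniquely divisible groups, killed by Tate cohomology of the finite group $G_\RR$.

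One step, however, is wrong as literally stated: the parenthetical claim that the twisted statement "reduces to the case of $\ZZ$ by dévissage." The class of finite type $G_\RR$-modules is not generated, under shifts, extensions and retracts, by the trivial module $\ZZ$: for instance the sign module $\ZZ^-$ (which occurs as $F_v$ for perfectly ordinary $F$, e.g.\ pushforwards of character modules of nonsplit tori) is not in the thick subcategory generated by $\ZZ$ — tensoring with $\QQ$ sends that subcategory into the factor $D^b(\QQ)\times 0$ of $D^b(\QQ[G_\RR])\simeq D^b(\QQ)\times D^b(\QQ^-)$, whereas $\ZZ^-\otimes\QQ$ lives in the other factor. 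If you insist on an inductive reduction you must, as the paper does, also treat induced modules and finite modules (Artin induction), not just $\ZZ$. Fortunately the misstep is harmless for you: the correct argument is exactly the one you sketch in your last paragraph — do not reduce to $M=\ZZ$ at all, but apply $R\cal{H}om_{G_\RR,X(\CC)}(\alpha^\ast F,-)$ to the two triangles of coefficients and invoke Tate-acyclicity of the resulting uniquely divisible cofibers directly. I would simply delete the dévissage remark and promote that observation to the body of the proof.
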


\begin{proof}
	We have to show that
	\[
	R\hat{\Gamma}(G_\RR,\alpha^\ast (F^D)) \xrightarrow{\simeq} R\hat{\Gamma}(G_\RR,(\alpha^\ast F)^\vee(1)[2])
	\]
	It is enough to show it at real places. For $v$ a real place, denote $g_v:\Spec(K_v)\to X$ the canonical map. As $g_v$ is proétale, we have $g_v^\ast (F^D)=R\cal{H}om_{G_\RR}(g_v^\ast F,\overline{\QQ}^\times[1])$, and we thus want to show
	\[
	R\hat{\Gamma}(G_\RR,R\cal{H}om_{G_\RR}(g_v^\ast F,\overline{\QQ}^\times[1])) \xrightarrow{\simeq} R\hat{\Gamma}(G_\RR,R\cal{H}om_{G_\RR}(g_v^\ast F,2i\pi\ZZ[2]))
	\]
	All terms now depend only on $M:=g_v^\ast F$ so we can reason by Artin induction on any finite type $G_\RR$-module $M$. 
	
	We first treat the case $M=\ZZ$: from the exact sequence $0\to 2i\pi\ZZ \to \CC \to \CC^\times \to 0$ we find an isomorphism
	\[
	R\hat{\Gamma}(G_\RR,\CC^\times[1]) \xrightarrow{\simeq} R\hat{\Gamma}(G_\RR,\ZZ(1)[2])
	\]
	Moreover, $\overline{\QQ}^\times$ and $\CC^\times$ have the same Tate cohomology: both are zero in odd degree and in even degree, both equal $\ZZ/2\ZZ$ \emph{via} the sign map because a real algebraic number which is a norm is the square of a real algebraic number, hence the norm of an algebraic number.
	
	If $M=\mathrm{ind}_{\{0\}}^{G_\RR} \ZZ$ is induced, we have that $R\cal{H}om_{G_\RR}(M,N)\simeq\mathrm{ind}_{\{0\}}^{G_\RR} N$ is induced for any $G_\RR$-module $N$, so it has trivial Tate cohomology.
	
	Finally, if $M$ is finite $R\cal{H}om_{G_\RR}(M,\overline{\QQ}^\times)$ and $R\cal{H}om_{G_\RR}(M,\CC^\times)$ are canonically isomorphic because $\overline{\QQ}^\times$ and $\CC^\times$ have the same torsion. Since $\CC$ is uniquely divisible and has no torsion, we have morover $R\cal{H}om_{G_\RR}(M,\CC^\times) \xrightarrow{\simeq} R\cal{H}om_{G_{\RR}}(M,2i\pi\ZZ[1])$. We thus obtain the required isomorphism on Tate cohomology.
	
	By Artin induction we obtain the required isomorphism for any finite type $G_\RR$-module $M$; by taking filtered colimits, we obtain the statement for any $G_\RR$-module $M$ since Tate cohomology, being a derived Hom, commutes with derived limits.
\end{proof}

\subsection{Artin-Verdier duality for \texorpdfstring{$F^D$}{F D}}\label{subsec:AV}

For a complex of abelian groups, we will denote $R\Hom(-,\QQ/\ZZ)=\Hom^\bullet(-,\QQ/\ZZ)$ by $(-)^\ast$. Artin-Verdier duality gives a map $R\Gamma(X,F^D)=R\Hom_X(F,\GG_X[1]) \xrightarrow{AV} R\hat{\Gamma}_c(X,F)^\ast [-2]$ which is "almost" an isomorphism. We want to modify this duality at the complex points to obtain an Artin-Verdier duality relating $R\hat{\Gamma}_{c,B}(X,F^D)$ and $R\Gamma(X,F)$.
We have fiber sequences
\begin{align*}
&R\hat{\Gamma}_{G_\RR}(X(\CC),(\alpha^\ast F)^\vee(1)[1]) \to R\hat{\Gamma}_{c,B}(X,F^D) \to R\Gamma(X,F^D)\\
&R\hat{\Gamma}_c(X,F) \to R\Gamma(X,F) \to R\hat{\Gamma}_{G_\RR}(X(\CC),\alpha^\ast F)\\
\end{align*}
To obtain an Artin-Verdier duality statement, we will construct in the next three paragraphs pairings
\begin{align*}
&R\hat{\Gamma}_{G_\RR}(X(\CC),(\alpha^\ast F)^\vee(1))\otimes^L R\hat{\Gamma}_{G_\RR}(X(\CC),\alpha^\ast F) \to \QQ/\ZZ[-3]\\
&R\Gamma(X,F)\otimes^L R\hat{\Gamma}_{c,B}(X,F^D) \to \QQ/\ZZ[-2]
\end{align*}
such that we have a morphism of fiber sequences given by the adjoint maps:
\begin{equation}\label{morphism_of_fiber_sequences}
\begin{tikzcd}
R\hat{\Gamma}_{G_\RR}(X(\CC),(\alpha^\ast F)^\vee(1)[1]) \rar \dar & R\hat{\Gamma}_{c,B}(X,F^D) \rar \dar & R\Gamma(X,F^D)  \dar["AV"]\\
R\hat{\Gamma}_{G_\RR}(X(\CC),\alpha^\ast F)^\ast[-2] \rar & R\Gamma(X,F)^\ast[-2] \rar & R\hat{\Gamma}_c(X,F)^\ast[-2]
\end{tikzcd}
\end{equation}

\paragraph{Construction of the pairing for Tate cohomology on $X(\CC)$.}
Let us construct the first pairing: Tate cohomology $R\hat{\Gamma}_{G_\RR}(X(\CC),-)=R\hat{\Gamma}_{G_\RR}(-)\circ R\Gamma(X(\CC),-)$ is lax-monoidal \cite[I.3.1]{Nikolaus18}, hence the natural evaluation map $(\alpha^\ast F)^\vee(1)[1] \otimes^L \alpha^\ast F \to 2i\pi\ZZ[1]$ gives a map
\[
R\hat{\Gamma}_{G_\RR}(X(\CC),(\alpha^\ast F)^\vee(1))\otimes^L R\hat{\Gamma}_{G_\RR}(X(\CC),\alpha^\ast F) \to R\hat{\Gamma}_{G_\RR}(X(\CC),2i\pi\ZZ)
\]
Since $\hat{H}^i_{G_\RR}(X(\CC),2i\pi\ZZ)=0$ for $i$ even, there is a canonical isomorphism\footnote{Indeed, there is always such an isomorphism (non-canonical) in the homotopy category $D(\ZZ)$, and there is a canonical one here because there are no $\Ext^1$ between two consecutive cohomology groups}
\[
R\hat{\Gamma}_{G_\RR}(X(\CC),2i\pi\ZZ)\simeq \bigoplus_i \hat{H}^i_{G_\RR}(X(\CC),2i\pi\ZZ)[-i]
\]
Let $r_1$ be the number of real places of the function field of $K$. There is an identification $\hat{H}^3(G_\RR,X(\CC),2i\pi\ZZ)=(\RR^\times/\RR^\times_{>0})^{r_1}$ comes from the natural map $\CC^\times\to 2i\pi\ZZ[1]$. We compose the previous pairing with a projection and a sum map to get the pairing
\begin{equation}\label{pairing_tate}
\begin{split}
R\hat{\Gamma}_{G_\RR}(X(\CC),(\alpha^\ast F)^\vee(1))\otimes^L R\hat{\Gamma}_{G_\RR}(X(\CC),\alpha^\ast F) &\to R\hat{\Gamma}_{G_\RR}(X(\CC),2i\pi\ZZ) \\&\xrightarrow{H^3} \hat{H}^3_{G_\RR}(X(\CC),2i\pi\ZZ)[-3]\\&=(\RR^\times/\RR^\times_{>0})^{r_1}[-3]\\&\xrightarrow{\Sigma} \QQ/\ZZ[-3].
\end{split}
\end{equation}

\paragraph{Construction of the pairing for $R\hat{\Gamma}_{c,B}(X,F^D)$.}
In the following diagrams, we will denote $R\Gamma$, $R\Gamma_{G_\RR}$, $R\hat{\Gamma}_{G_\RR}$, $R\hat{\Gamma}_c$, $R\hat{\Gamma}_{c,B}$ for $R\Gamma(X,-)$, $R\Gamma_{G_\RR}(X(\CC),-)$, $R\hat{\Gamma}_{G_\RR}(X(\CC),-)$, $R\hat{\Gamma}_c(X,-)$ and $R\hat{\Gamma}_{c,B}(X,-)$ respectively, and we will also write $\otimes$ for the derived tensor product. The natural lax monoidality pairings of $R\Gamma_{G_\RR}(X(\CC),-)$ and $R\hat{\Gamma}_{G_\RR}(X(\CC),-)$ are compatible with each other (\cite[I.3.1]{Nikolaus18}) and also with the natural pairing for $R\Gamma(X,-)$ via the map $R\Gamma(X,-)\to R\Gamma_{G_\RR}(X(\CC),\alpha^\ast(-))$,\footnote{Since the lax monoidal structure for $R\Gamma(X,-)$ and $R\Gamma_{G_\RR}(X(\CC),-)$ come from the strict monoidal structure on their left adjoint, this is a formal consequence of the commutative triangle of right adjoints with strict monoidal left adjoints
\[\begin{tikzcd}[ampersand replacement=\&]
{\cal{D}(G_\RR,X(\CC))} \& {} \& {\cal{D}(X_{et})} \\
\& {\cal{D}(\ZZ)}
\arrow["{R\Gamma_{G_\RR}(X(\CC),-)}"', from=1-1, to=2-2]
\arrow["{R\Gamma(X,-)}", from=1-3, to=2-2]
\arrow["{\alpha_\ast}", from=1-1, to=1-3]
\end{tikzcd}\]}\label{compatibility_pairings}
so the following diagram commutes:
\[\begin{tikzcd}
{R\Gamma(F)\otimes R\Gamma(F^D)} & {} & {R\Gamma(\ZZ^c_X)} \\
{R\Gamma(F)\otimes R\Gamma_{G_\RR}(F^\vee_\CC(1)[2])} & {R\Gamma_{G_\RR}(\alpha^\ast F)\otimes R\Gamma_{G_\RR}(F^\vee_\CC(1)[2])} & {R\Gamma_{G_\RR}(2i\pi\ZZ[2])} \\
{R\Gamma(F)\otimes R\hat{\Gamma}_{G_\RR}(F^\vee_\CC(1)[2])} & {R\hat{\Gamma}_{G_\RR}(\alpha^\ast F)\otimes R\hat{\Gamma}_{G_\RR}(F^\vee_\CC(1)[2])} & {R\hat{\Gamma}_{G_\RR}(2i\pi\ZZ[2])}
\arrow[from=1-1, to=2-1]
\arrow[from=2-1, to=3-1]
\arrow[from=3-1, to=3-2]
\arrow[from=2-1, to=2-2]
\arrow[from=1-1, to=1-3]
\arrow[from=2-2, to=3-2]
\arrow[from=3-2, to=3-3]
\arrow[from=1-3, to=2-3]
\arrow[from=2-3, to=3-3]
\arrow[from=2-2, to=2-3]
\end{tikzcd}\]
We thus obtain the left dotted map in the following commutative diagram where the top row is a fiber sequence:
\[\begin{tikzcd}
{R\Gamma(X,F)\otimes^LR\hat{\Gamma}_{c,B}(X,F^D)} & {R\Gamma(X,F)\otimes^LR\Gamma(X,F^D)} & {R\Gamma(X,F)\otimes^L R\hat{\Gamma}_{G_\RR}(X(\CC),F^\vee_\CC(1)[2])} \\
{R\hat{\Gamma}_{c,B}(X,\ZZ^D)} & {R\Gamma(X,\ZZ^c_X)} & {R\hat{\Gamma}_{G_\RR}(X(\CC),2i\pi\ZZ[2])}
\arrow[dashed, from=1-1, to=2-1]
\arrow[from=1-2, to=2-2]
\arrow[from=1-3, to=2-3]
\arrow[from=1-1, to=1-2]
\arrow[from=1-2, to=1-3]
\arrow[from=2-1, to=2-2]
\arrow[from=2-2, to=2-3]
\end{tikzcd}\]
We have
\[
\tau^{\geq 2} R\hat{\Gamma}_{c,B}(X,\ZZ^D)=\tau^{\geq 2} R\hat{\Gamma}_{c}(X,\ZZ^c_X)=\QQ/\ZZ[-2]
\]
by \cref{comparison_Tate_compact_cohomology} and \cite[II.2.6, II.6.1]{ADT}, hence we get a pairing
\begin{equation}\label{pairing_FD}
R\Gamma(X,F)\otimes^LR\hat{\Gamma}_{c,B}(X,F^D) \to R\hat{\Gamma}_{c,B}(X,\ZZ^D) \xrightarrow{\tau^{\geq 2}} \QQ/\ZZ[-2]
\end{equation}

\paragraph{The morphism of fiber sequences.}

The following cube is commutative by compatibility of the involved pairings:
\[\begin{tikzcd}[sep=small]
{R\Gamma(F)\otimes R\Gamma(F^D)} && {R\Gamma(F)\otimes R\hat{\Gamma}_{G_\RR}((\alpha^\ast F)^\vee(1)[2])} \\
& {R\Gamma(\ZZ^c_X)} & {} & {R\hat{\Gamma}_{G_\RR}(2i\pi\ZZ[2])} \\
{R\hat{\Gamma}_{G_\RR}(\alpha^\ast F)\otimes R\Gamma(F^D)} && {R\hat{\Gamma}_{G_\RR}(\alpha^\ast F)\otimes R\hat{\Gamma}_{G_\RR}((\alpha^\ast F)^\vee(1)[2])} \\
& {R\hat{\Gamma}_{G_\RR}(\alpha^\ast \ZZ^c_X)} && {R\hat{\Gamma}_{G_\RR}(2i\pi\ZZ[2])}
\arrow[from=1-1, to=3-1]
\arrow[from=3-1, to=3-3]
\arrow[from=1-1, to=1-3]
\arrow[from=1-3, to=3-3]
\arrow[from=2-2, to=4-2]
\arrow[from=4-2, to=4-4]
\arrow[from=2-4, to=4-4]
\arrow[from=2-2, to=2-4]
\arrow[from=1-1, to=2-2]
\arrow[from=1-3, to=2-4]
\arrow[from=3-1, to=4-2]
\arrow[from=3-3, to=4-4]
\end{tikzcd}\]

hence by adding the fibers of horizontal and vertical maps we deduce a morphism of $3\times 3$ diagrams:
\begin{equation}\label{compatibility_pairings_3x3}
\begin{tikzpicture}[baseline= (a).base]
\node[scale=.85] (a) at (0,0){
\begin{tikzcd}[sep=tiny]
{R\hat{\Gamma}_c(F)\otimes R\hat{\Gamma}_{c,B}(F^D)} && {R\hat{\Gamma}_c(F)\otimes R\Gamma(F^D)} && {R\hat{\Gamma}_c(F)\otimes R\hat{\Gamma}_{G_\RR}((\alpha^\ast F)^\vee(1)[2])} && {} \\
{} & {R\hat{\Gamma}_c(\ZZ^c_X)} && {R\hat{\Gamma}_c(\ZZ^c_X)} && 0 \\
{R\Gamma(F)\otimes R\hat{\Gamma}_{c,B}(F^D)} && {R\Gamma(F)\otimes R\Gamma(F^D)} & {} & {R\Gamma(F)\otimes R\hat{\Gamma}_{G_\RR}((\alpha^\ast F)^\vee(1)[2])} && {} \\
& {R\hat{\Gamma}_{c,B}(\ZZ^D)} & {} & {R\Gamma(\ZZ^c_X)} & {} & {R\hat{\Gamma}_{G_\RR}(2i\pi\ZZ[2])} \\
{R\hat{\Gamma}_{G_\RR}(\alpha^\ast F)\otimes R\hat{\Gamma}_{c,B}(F^D)} && {R\hat{\Gamma}_{G_\RR}(\alpha^\ast F)\otimes R\Gamma(F^D)} & {} & {R\hat{\Gamma}_{G_\RR}(\alpha^\ast F)\otimes R\hat{\Gamma}_{G_\RR}((\alpha^\ast F)^\vee(1)[2])} \\
& 0 && {R\hat{\Gamma}_{G_\RR}(\alpha^\ast \ZZ^c_X)} & {} & {R\hat{\Gamma}_{G_\RR}(2i\pi\ZZ[2])} \\
&& {}
\arrow[from=3-3, to=3-5]
\arrow[from=5-3, to=5-5]
\arrow[from=3-3, to=5-3]
\arrow[from=3-5, to=5-5]
\arrow[from=1-3, to=3-3]
\arrow[from=1-5, to=3-5]
\arrow[from=1-3, to=1-5]
\arrow[from=1-1, to=1-3]
\arrow[from=1-1, to=3-1]
\arrow[from=3-1, to=3-3]
\arrow[from=3-1, to=5-1]
\arrow[from=5-1, to=5-3]
\arrow[from=4-4, to=6-4]
\arrow[from=3-3, to=4-4]
\arrow[from=5-3, to=6-4]
\arrow["\simeq", from=6-4, to=6-6]
\arrow[from=3-5, to=4-6]
\arrow["\simeq", from=4-6, to=6-6]
\arrow[from=4-4, to=4-6]
\arrow[from=2-6, to=4-6]
\arrow[from=6-2, to=6-4]
\arrow[from=2-4, to=4-4]
\arrow[from=2-4, to=2-6]
\arrow[from=4-2, to=4-4]
\arrow["\simeq"{pos=0.7}, from=2-2, to=4-2]
\arrow[from=4-2, to=6-2]
\arrow[Rightarrow, no head, from=2-2, to=2-4]
\arrow[from=1-1, to=2-2]
\arrow[from=1-3, to=2-4]
\arrow[from=1-5, to=2-6]
\arrow[from=3-1, to=4-2]
\arrow[from=5-1, to=6-2]
\arrow[from=5-5, to=6-6]
\end{tikzcd}	
};
\end{tikzpicture}
\end{equation}
Doing so, we recover in the middle layer the pairing \eqref{pairing_FD}.

Let $A, A', B, B', C, C'$ be objects of $D(\mathrm{Ab})$, with maps $A\to A'$, $B' \to B$, $C\to C'$. Given two pairings $A\otimes^L B \to C$ and $A'\otimes^L B' \to C'$, the commutativity of the two following induced diagrams is equivalent:
\[\begin{tikzcd}
A & {A'} \\
{R\cal{H}om(B,C)} & {R\cal{H}om(B',C')}
\arrow[from=1-1, to=2-1]
\arrow[from=1-1, to=1-2]
\arrow[from=2-1, to=2-2]
\arrow[from=1-2, to=2-2]
\end{tikzcd}
\begin{tikzcd}
{A\otimes^L B'} & {A\otimes^L B} & C \\
{} & {A'\otimes^L B'} & {C'}
\arrow[from=1-3, to=2-3]
\arrow[from=1-2, to=1-3]
\arrow[from=2-2, to=2-3]
\arrow[from=1-1, to=1-2]
\arrow[from=1-1, to=2-2]
\end{tikzcd}\]

We now prove that the natural diagram \eqref{morphism_of_fiber_sequences} commutes. By the above, this reformulates to the commutativity of a subdiagram of \eqref{compatibility_pairings_3x3}, together with the following commutative diagram:
\[\begin{tikzcd}
{R\hat{\Gamma}(2i\pi\ZZ[1])=R\hat{\Gamma}(\CC^\times)} & {R\hat{\Gamma}_{c,B}(\ZZ^D)} & {R\hat{\Gamma}_c(\ZZ^c_X)} \\
{} & {\QQ/\ZZ[-2]}
\arrow["\simeq"', from=1-3, to=1-2]
\arrow["{\tau^{\geq 2}}", from=1-2, to=2-2]
\arrow[from=1-1, to=1-2]
\arrow["{\tau^{\geq 2}}", from=1-3, to=2-2]
\arrow["{\Sigma\circ H^2}"', from=1-1, to=2-2]
\end{tikzcd}\]
which follows from the proof of \cite[II.2.6]{ADT}

\begin{thm}[Artin-Verdier duality for $F^D$]\label{twisted_AV}
The pairing $ R\Gamma(X,F)\otimes^L R\hat{\Gamma}_{c,B}(X,F^D) \to \QQ/\ZZ[-2]$ induces a map
\[
R\hat{\Gamma}_{c,B}(X,F^D) \to R\Gamma(X,F)^\ast[-2]
\]
which is an isomorphism in degree $\neq -1,0$, and an isormophism after profinite completion of the left hand side in degree $-1,0$. In particular if $F$ is constructible then the map is an isomorphism.
\end{thm}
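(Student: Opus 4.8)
The plan is to deduce this twisted Artin--Verdier statement from the classical one for $R\Gamma(X,F)$ versus $R\hat\Gamma_c(X,F)$, by comparing the two morphisms of fiber sequences. Concretely, I would start from the commutative diagram \eqref{morphism_of_fiber_sequences} of fiber sequences, whose construction has just been completed: the rightmost vertical arrow is the classical Artin--Verdier map $AV\colon R\Gamma(X,F^D)=R\Hom_X(F,\GG_X[1])\to R\hat\Gamma_c(X,F)^\ast[-2]$, and the leftmost vertical arrow is the adjoint of the pairing \eqref{pairing_tate} on Tate cohomology of $X(\CC)$. By the long exact sequence of a morphism of triangles (the five lemma / $3\times 3$ argument), it suffices to control the two outer vertical maps: if both are isomorphisms then so is the middle one, and more generally the defect of the middle map is squeezed between the defects of the outer ones.

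The first main step is therefore to show that the left vertical arrow
\[
R\hat\Gamma_{G_\RR}(X(\CC),(\alpha^\ast F)^\vee(1)[1]) \to R\hat\Gamma_{G_\RR}(X(\CC),\alpha^\ast F)^\ast[-2]
\]
is an isomorphism for every $\ZZ$-constructible (indeed arbitrary) $F$. Because $X(\CC)$ is discrete, this cohomology splits as a product over archimedean places, and at complex places the modules are induced, hence have vanishing Tate cohomology on both sides; so only the real places contribute, and there one is reduced to a statement about a single finite-type $G_\RR$-module $M=F_v$. By Artin induction on $G_\RR$ (as already used in the proof of \cref{comparison_Tate_compact_cohomology}) it is enough to check $M=\ZZ$, $M$ induced, and $M$ finite. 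The induced case is trivial (both sides vanish); the case $M=\ZZ$ is local Tate duality for $\RR$ packaged through $\hat H^3_{G_\RR}(X(\CC),\ZZ(1))=(\RR^\times/\RR^\times_{>0})^{r_1}$; and the finite case is ordinary local duality over $\RR$. This identifies the left arrow as the Tate-cohomology avatar of local duality at the real places and shows it is an equivalence.

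The second main step is to recall that the classical Artin--Verdier map $AV$ (the right vertical arrow) is an isomorphism in degrees $\neq -1,0$ and an isomorphism after profinite completion in degrees $-1,0$ for $\ZZ$-constructible $F$; this is \cite[II.3.1]{ADT} together with the identification of $R\Gamma(X,F^D)$ with $R\Hom_X(F,\GG_X[1])$ and of $\GG_X$ with (a shift of) the Bloch complex via \cite{Nart89}, plus the $\QQ/\ZZ$-dualization. One then chases the long exact cohomology sequence attached to \eqref{morphism_of_fiber_sequences}: since the left arrow is an isomorphism in all degrees and the right arrow is an isomorphism in degrees $\neq -1,0$, the middle arrow $R\hat\Gamma_{c,B}(X,F^D)\to R\Gamma(X,F)^\ast[-2]$ is an isomorphism in degrees $\neq -1,0$; in degrees $-1,0$ the cokernel/kernel of the middle map injects into (resp. is a quotient of) the corresponding defect of $AV$, which dies after profinite completion, so the middle map is an isomorphism after profinite completion in degrees $-1,0$ as well. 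When $F$ is constructible, $R\Gamma(X,F)$ has finite cohomology groups, so profinite completion on the right is the identity and the map is an honest isomorphism in every degree.

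The part I expect to require the most care is not the diagram chase but pinning down that the left vertical arrow is genuinely the local-duality isomorphism with the \emph{correct} signs and identifications — i.e. that the pairing \eqref{pairing_tate}, built from the lax-monoidal structure of Tate cohomology and the evaluation map $(\alpha^\ast F)^\vee(1)\otimes^L\alpha^\ast F\to \ZZ(1)$, composed with $\hat H^3_{G_\RR}(X(\CC),\ZZ(1))\xrightarrow{\Sigma}\QQ/\ZZ$, induces a \emph{perfect} pairing at each real place. This is where the Artin-induction bookkeeping lives, and where one must make sure the normalization at $\hat H^3$ matches the normalization used in the classical Artin--Verdier trace map (this is exactly the content of the last small commutative triangle invoking the proof of \cite[II.2.6]{ADT}); once that compatibility is in place, the rest is formal.
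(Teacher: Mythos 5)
Your proposal follows the paper's own proof: both start from the morphism of fiber sequences \eqref{morphism_of_fiber_sequences}, feed in classical Artin--Verdier duality (for singular schemes) for the right-hand vertical arrow, and reduce the theorem to the perfectness of the Tate pairing at the real places between $R\hat{\Gamma}(G_\RR,M^\vee(1))$ and $R\hat{\Gamma}(G_\RR,M)$ for finite type $M$. The only inessential difference is that you establish that perfectness by Artin induction directly on $G_\RR$-modules with the twisted dualizing object $\ZZ(1)$, whereas the paper first twists by the classes in $H^1(G_\RR,2i\pi\ZZ)$ and $H^2(G_\RR,\ZZ)$ to reduce to the untwisted degree-zero duality for Tate cohomology of finite groups proved in the appendix (itself by Artin induction), so the two arguments agree in substance.
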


\begin{rmk}
The statement also holds more generally for a bounded complex $F\in \cal{D}^b(X)$ such that $H^0(F)$ is $\ZZ$-constructible and $H^i(F)$ is constructible for $i\neq 0$ (by filtering with the truncations). If we reformulate the theorem as the map $R\hat{\Gamma}_{c,B}(X,F^D)\otimes \hat{\ZZ} \to R\Gamma(X,F)^\ast[-2]$ being an isomorphism, this generalizes to bounded complexes with $\ZZ$-constructible cohomlogy groups.
\end{rmk}

\begin{proof}
	By the diagram \eqref{morphism_of_fiber_sequences} and Artin-Verdier duality for singular schemes \cite[II.6.2]{ADT}, it suffices to show that the pairing 
	\[
	R\hat{\Gamma}_{G_\RR}(X(\CC),(\alpha^\ast F)^\vee(1)[1])\otimes^L R\hat{\Gamma}_{G_\RR}(X(\CC),\alpha^\ast F) \to \QQ/\ZZ[-2]
	\]
	is a perfect pairing between complexes of abelian groups with finite cohomology groups. To prove that the pairing is perfect, it suffices to do it at every real place, that is for the Galois cohomology of $G_\RR$; thus it suffices to prove the next proposition.
\end{proof}

\begin{prop}
	The pairing
	\[
	R\hat{\Gamma}(G_\RR,M^\vee(1))\otimes^L R\hat{\Gamma}(G_\RR,M)\to \QQ/\ZZ[-3]
	\]
	is perfect for $M$ any discrete $G_\RR$-module of finite type.
\end{prop}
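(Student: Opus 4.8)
The plan is to prove the equivalent statement that the pairing induces isomorphisms
\[
\hat H^i(G_\RR, M^\vee(1)) \xrightarrow{\ \sim\ } \Hom_\ZZ(\hat H^{3-i}(G_\RR, M), \QQ/\ZZ) \qquad (i\in\ZZ)
\]
by dévissage on the finite type $G_\RR$-module $M$. Every ingredient (the evaluation $M^\vee(1)=R\Hom_\ZZ(M,\ZZ(1))$, the lax-monoidal pairings on $R\hat\Gamma(G_\RR,-)$, the identification of the target with $\hat H^3(G_\RR,\ZZ(1))=\ZZ/2\hookrightarrow\QQ/\ZZ$ from the excerpt) is natural in $M$, so for a short exact sequence $0\to M'\to M\to M''\to 0$ the pairing yields a morphism from the long exact Tate cohomology sequence attached to the triangle $(M'')^\vee(1)\to M^\vee(1)\to (M')^\vee(1)$ to the $\QQ/\ZZ$-dual of the long exact Tate cohomology sequence of $M$; a degreewise five-lemma shows that the class $\mathcal C$ of modules for which the statement holds satisfies all three two-out-of-three implications for short exact sequences, and it is visibly closed under finite direct sums. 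Hence it suffices to treat (a) $M$ induced, $M=\ZZ[G_\RR]$; (b) $M=\ZZ$ with trivial action; (c) $M$ finite. Indeed $\ZZ(1)=\ZZ^-$ then lies in $\mathcal C$ via $0\to\ZZ\to\ZZ[G_\RR]\to\ZZ^-\to 0$; any lattice carries a $G_\RR$-stable filtration with rank-one graded pieces, each isomorphic to $\ZZ$ or $\ZZ^-$ (as $\QQ[G_\RR]$ is semisimple with the one-dimensional simple modules $\QQ,\QQ^-$), hence lies in $\mathcal C$; and a general finite type $M$ is an extension of a lattice by its finite torsion subgroup.

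Case (a) is immediate: induced modules are Tate-acyclic, $M^\vee(1)=\Hom_\ZZ(\ZZ[G_\RR],\ZZ(1))$ is again (co)induced hence Tate-acyclic, and $0\otimes 0\to\QQ/\ZZ[-3]$ is vacuously perfect. For case (b), $M^\vee(1)=\ZZ(1)$, and the bilinear pairing on cohomology is the cup product $\hat H^i(G_\RR,\ZZ(1))\otimes \hat H^{3-i}(G_\RR,\ZZ)\to \hat H^3(G_\RR,\ZZ(1))$ followed by the isomorphism $\hat H^3(G_\RR,\ZZ(1))\cong\ZZ/2\hookrightarrow\QQ/\ZZ$ of the excerpt. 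Since $G_\RR\cong\ZZ/2$, both $\hat H^\bullet(G_\RR,\ZZ)$ and $\hat H^\bullet(G_\RR,\ZZ(1))$ are $2$-periodic, equal to $\ZZ/2$ in even resp. odd degrees and zero otherwise, so the pairing is nontrivial only for $i$ odd, where both sides are $\ZZ/2$. Perfectness there is the classical fact that cup product with the generator of $\hat H^2(G_\RR,\ZZ)\cong\ZZ/2$ is the periodicity isomorphism on Tate cohomology of any $G_\RR$-module, together with the fact that the generator of $\hat H^0(G_\RR,\ZZ)$ acts as the identity. (Running the same argument for $M=\ZZ(1)$, where $M^\vee(1)=\ZZ$, gives an alternative proof that $\ZZ^-\in\mathcal C$.)

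For case (c), let $M$ be finite; after splitting off the prime-to-$2$ part, which lies in $\mathcal C$ because it and its dual $R\Hom_\ZZ(-,\ZZ(1))$ are Tate-acyclic, I may assume $M$ is a finite $\ZZ_2[G_\RR]$-module. Using that $\QQ(1)$ is uniquely divisible, hence $G_\RR$-acyclic and satisfying $R\Hom_\ZZ(M,\QQ(1))=0$, the sequence $0\to\ZZ(1)\to\QQ(1)\to\QQ/\ZZ(1)\to 0$ gives $M^\vee(1)\simeq M^D[-1]$ with $M^D:=\Hom_\ZZ(M,\QQ/\ZZ(1))$, and a connecting isomorphism $\hat H^2(G_\RR,\QQ/\ZZ(1))\cong\hat H^3(G_\RR,\ZZ(1))$. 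Once one checks these are compatible with the pairing — the one real bookkeeping point, since the evaluation $M^\vee(1)\otimes^L M\to\ZZ(1)$ is not naive but factors as $(M^D\otimes^L M)[-1]\to\QQ/\ZZ(1)[-1]\to\ZZ(1)$ — the pairing becomes the cup-product pairing $\hat H^j(G_\RR,M^D)\otimes \hat H^{2-j}(G_\RR,M)\to \hat H^2(G_\RR,\QQ/\ZZ(1))\hookrightarrow\QQ/\ZZ$, which is Tate local duality at the real place \cite[I.2.3]{ADT}. One may also finish by hand: $\ZZ_2[G_\RR]$ is local with residue field $\FF_2$, so $M$ is an iterated extension of copies of $\FF_2$ with trivial action, reducing to $M=\FF_2$, where $M^D=\FF_2$ and the pairing is the perfect cup product of $\hat H^\bullet(G_\RR,\FF_2)=\FF_2[t^{\pm 1}]$ followed by the isomorphism $\hat H^2(G_\RR,\FF_2)\to\hat H^2(G_\RR,\QQ/\ZZ(1))$ (surjective, hence bijective, because multiplication by $2$ annihilates $\hat H^2(G_\RR,\QQ_2/\ZZ_2(1))\cong\ZZ/2$).

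The principal difficulty is not any one computation but making the dévissage rigorous: verifying that the pairing, built from the lax-monoidal structures, genuinely assembles into the claimed morphism of long exact sequences, and that the Tate twist $(1)$ and the shift appearing in the finite case are tracked correctly. The reduction through the uniquely divisible module $\QQ(1)$ is precisely what turns the a priori opaque finite case into the classical duality for $\RR$, while the cyclicity of $G_\RR$ reduces the torsion-free cases to the standard periodicity of Tate cohomology.
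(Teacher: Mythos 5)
Your argument is correct, but it follows a genuinely different route from the paper. The paper never does the dévissage in the twisted setting: it first uses the class of the non-split extension $0 \to I_{G_\RR} \to \ZZ[G_\RR] \to \ZZ \to 0$ (noting $I_{G_\RR}\cong 2i\pi\ZZ$) to produce an isomorphism $R\hat{\Gamma}(G_\RR,M)\simeq R\hat{\Gamma}(G_\RR,M(1))[1]$, and then the periodicity class in $H^2(G_\RR,\ZZ)$, to transport the twisted pairing isomorphically onto the untwisted pairing landing in $\hat{H}^0(G_\RR,\ZZ)$; the remaining statement is then a general duality theorem for Tate cohomology of an arbitrary finite group (\cref{duality_tate_finite_groups}), proved in the appendix by Artin induction (Shapiro compatibility for induced modules, $M=\ZZ$, and the finite case via $M^\vee\simeq M^\ast[-1]$). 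You instead keep the Tate twist throughout and run the induction directly for $G_\RR=\ZZ/2\ZZ$: closure of the good class under two-out-of-three, the induced case, $M=\ZZ$ via $2$-periodicity, lattices via rank-one filtrations with graded pieces $\ZZ$ or $\ZZ^-$ (valid here since every $G_\RR$-stable rank-one sublattice with torsion-free quotient is of this form), and the finite case via the uniquely divisible module $\QQ(1)$, reducing either to archimedean local duality or to an explicit $\FF_2$ computation using that $\ZZ_2[G_\RR]$ is local. What the paper's untwisting buys is that all the sign/connecting-map bookkeeping is done once, in an untwisted degree-$0$ statement valid for every finite group and for bounded complexes with finite type cohomology; what your version buys is a self-contained argument tailored to $\ZZ/2\ZZ$ that never leaves the twisted pairing. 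Your $\QQ(1)$-divisibility step in the finite case is in fact the exact analogue of the paper's appendix maneuver with $0\to\ZZ\to\QQ\to\QQ/\ZZ\to 0$.

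Two small points. The compatibility issues you flag (that the lax-monoidal pairing really induces a morphism of long exact sequences, and that the twist and shift are tracked through $M^\vee(1)\simeq M^D[-1]$) are genuine but standard, and the paper has to address the same kind of compatibilities (e.g.\ Shapiro compatibility) in its appendix proof, so this is not a gap. Also, the archimedean case of Tate local duality in Milne is \cite[I.2.13]{ADT} rather than I.2.3; since you also give the hands-on $\FF_2$ argument, nothing depends on the citation.
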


\begin{proof}
	We will reduce to \cref{duality_tate_finite_groups} for the finite group $G_\RR=\ZZ/2\ZZ$. Denote $\varepsilon$ the augmentation $\ZZ[G_{\RR}] \to \ZZ$ and $I_{G_\RR}=(s-1)\ZZ$ the augmentation ideal. We have $H^1(G_\RR,2i\pi\ZZ)=\Ext^1_{G_\RR}(\ZZ,2i\pi\ZZ)=\ZZ/2\ZZ$, and the non-zero class $t$ corresponds to a morphism $\ZZ\to 2i\pi\ZZ[1]$ in the derived category coming from the equivalence class of the non-split exact sequence
	\[
	0 \to 2i\pi\ZZ=I_{G_\RR} \to \ZZ[G_{\RR}] \xrightarrow{\varepsilon} \ZZ \to 0
	\]
	Since $I_{G_\RR}=2i\pi\ZZ$ as a $G_\RR$-module, we obtain an isomorphism
	\[
	R\hat{\Gamma}(G_\RR,M)\xrightarrow[(\mathrm{id}_M\otimes t)_\ast]{\simeq} R\hat{\Gamma}(G_\RR,M(1))[1]
	\]
	Note that $M^\vee(1)=(M^\vee)(1)$. From the above isomorphism we deduce an isomorphism of pairings
	\[\begin{tikzcd}
	{R\hat{\Gamma}(G_\RR,M^\vee)\otimes^L R\hat{\Gamma}(G_\RR,M)} & {R\hat{\Gamma}(G_\RR,\ZZ)} & {\hat{H}^2(G_\RR,\ZZ)[-2]} & {\QQ/\ZZ[-2]} \\
	{R\hat{\Gamma}(G_\RR,M^\vee(1))[1]\otimes^L R\hat{\Gamma}(G_\RR,M)} & {R\hat{\Gamma}(G_\RR,2i\pi\ZZ)[1]} & {\hat{H}^{3}(G_\RR,2i\pi\ZZ)[-2]} & {\QQ/\ZZ[-2]}
	\arrow["\simeq", from=1-1, to=2-1]
	\arrow[from=2-1, to=2-2]
	\arrow[from=2-2, to=2-3]
	\arrow[from=1-2, to=1-3]
	\arrow["\simeq", from=1-3, to=2-3]
	\arrow["\simeq", from=1-2, to=2-2]
	\arrow[from=1-1, to=1-2]
	\arrow[from=1-3, to=1-4]
	\arrow[from=2-3, to=2-4]
	\arrow[Rightarrow, no head, from=1-4, to=2-4]
	\end{tikzcd}\]
	which shows that it suffices to check that the natural pairing 
	\[
	R\hat{\Gamma}(G_\RR,M^\vee)\otimes^L R\hat{\Gamma}(G_\RR,M) \to R\hat{\Gamma}(G_\RR,\ZZ) \to \hat{H}^2(G_\RR,\ZZ)[-2] \to \QQ/\ZZ[-2]
	\]
	is perfect. In a similar way, the non-zero class $u\in H^2(G_\RR,\ZZ)$ corresponds to a map $\ZZ\to\ZZ[2]$ whose fiber comes from induced modules\footnote{This is just a reformulation of the $2$-periodicity of the Tate cohomology of cyclic groups}, and we also get an isomorphism of pairings
	\[\begin{tikzcd}
	{R\hat{\Gamma}(G_\RR,M^\vee)\otimes^L R\hat{\Gamma}(G_\RR,M)} & {R\hat{\Gamma}(G_\RR,\ZZ)} & {\hat{H}^0(G_\RR,\ZZ)[0]} & {\QQ/\ZZ[0]} \\
	{R\hat{\Gamma}(G_\RR,M^\vee)\otimes^L R\hat{\Gamma}(G_\RR,M)[2]} & {R\hat{\Gamma}(G_\RR,\ZZ)[2]} & {\hat{H}^{2}(G_\RR,\ZZ)[0]} & {\QQ/\ZZ[0]}
	\arrow[Rightarrow, no head, from=1-4, to=2-4]
	\arrow["\simeq", from=1-1, to=2-1]
	\arrow["\simeq", from=1-2, to=2-2]
	\arrow["\simeq", from=1-3, to=2-3]
	\arrow[from=1-1, to=1-2]
	\arrow[from=2-1, to=2-2]
	\arrow[from=2-2, to=2-3]
	\arrow[from=1-2, to=1-3]
	\arrow[from=1-3, to=1-4]
	\arrow[from=2-3, to=2-4]
	\end{tikzcd}\]
	so equivalently it suffices to check that the natural pairing 
	\[
	R\hat{\Gamma}(G_\RR,M^\vee)\otimes^L R\hat{\Gamma}(G_\RR,M) \to R\hat{\Gamma}(G_\RR,\ZZ) \to \hat{H}^0(G_\RR,\ZZ)[0] \to \QQ/\ZZ[0]
	\]
	is perfect; this is \cref{duality_tate_finite_groups}.
	\end{proof}

	We finish with a study of the behaviour of our corrected compactly supported cohomology with respect to finite dominant morphisms:
	\begin{prop}\label{cohomology_compact_pushforward}
		Let $Y=\Spec(\cal{O}')$ be the spectrum of an order in a number field with a finite dominant morphism $\pi:Y\to X$ and let $F$ be a $\ZZ$-constructible sheaf on $Y$. We have canonical isomorphisms
		\begin{align*}
		R\Gamma_{c,B}(X,(\pi_\ast F)^D)\xrightarrow{\simeq} R\Gamma_{c,B}(Y,F^D)\\
		R\hat{\Gamma}_{c,B}(X,(\pi_\ast F)^D)\xrightarrow{\simeq} R\hat{\Gamma}_{c,B}(Y,F^D)
		\end{align*}
		compatible with the arrows $R\Gamma_{c,B}(X,-) \to R\hat{\Gamma}_{c,B}(X,-)$ resp. $R\Gamma_{c,B}(Y,-) \to R\hat{\Gamma}_{c,B}(Y,-)$.
	\end{prop}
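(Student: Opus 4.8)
The plan is to reduce the statement to the single fact that duality is compatible with finite pushforward, and then to match the archimedean correction terms by a base change along the complex points. First I would establish a canonical isomorphism $(\pi_\ast F)^D \simeq \pi_\ast(F^D)$. Since $\pi$ is finite, hence proper, we have $\pi_! = \pi_\ast$, the functor $\pi_\ast$ is exact on étale sheaves, and the standard compatibility of the Bloch dualizing complex with finite morphisms \cite{Geisser10} gives $\pi^!\ZZ^c_X \simeq \ZZ^c_Y$. The adjunction $\pi_! \dashv \pi^!$ then yields
\[
(\pi_\ast F)^D = R\cal{H}om_X(\pi_\ast F,\ZZ^c_X) \simeq \pi_\ast R\cal{H}om_Y(F,\pi^!\ZZ^c_X) \simeq \pi_\ast R\cal{H}om_Y(F,\ZZ^c_Y) = \pi_\ast(F^D).
\]
Applying $R\Gamma(X,-)$ and using that $\pi_\ast$ carries no higher derived functors, this gives $R\Gamma(X,(\pi_\ast F)^D) \simeq R\Gamma(Y,F^D)$.

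Next I would handle the correction at infinity. The morphism $\pi$ induces a $G_\RR$-equivariant finite map $\pi_\CC : Y(\CC) \to X(\CC)$ of discrete spaces, sitting in a commutative square of topoi with $\pi$ via the realization morphisms $\alpha_X$ and $\alpha_Y$; finiteness of $\pi$ gives the base change isomorphism $\alpha_X^\ast \pi_\ast \simeq \pi_{\CC,\ast} \alpha_Y^\ast$. As $\pi_\CC$ is a finite covering of discrete spaces, $\pi_\CC^! \ZZ(1) = \pi_\CC^\ast \ZZ(1) = \ZZ(1)$, so applying the same adjunction to $\pi_\CC$ and to the canonical arrow $\alpha_X^\ast \ZZ^c_X \to \ZZ(1)[2]$ gives isomorphisms
\[
\alpha_X^\ast((\pi_\ast F)^D) \simeq \pi_{\CC,\ast}\,\alpha_Y^\ast(F^D), \qquad (\alpha_X^\ast\pi_\ast F)^\vee(1) \simeq \pi_{\CC,\ast}\big((\alpha_Y^\ast F)^\vee(1)\big),
\]
compatible with the natural transformation $\alpha^\ast(F^D) \to (\alpha^\ast F)^\vee(1)[2]$ entering the definition of $R\Gamma_{c,B}$ (\cref{def_cohomology_compact_support}). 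Since $\pi_{\CC,\ast}$ is exact and $\pi_\CC$ is $G_\RR$-equivariant, applying $R\Gamma_{G_\RR}(X(\CC),-)$ resp. $R\hat{\Gamma}_{G_\RR}(X(\CC),-)$ turns $\pi_{\CC,\ast}$ into restriction to $Y(\CC)$, so we obtain $R\Gamma_{G_\RR}(X(\CC),(\alpha_X^\ast\pi_\ast F)^\vee(1)) \simeq R\Gamma_{G_\RR}(Y(\CC),(\alpha_Y^\ast F)^\vee(1))$ and the analogous Tate statement. Assembling the defining maps of \cref{def_cohomology_compact_support} into a commutative square — the first isomorphism on the left, the present one on the right — and passing to fibers yields both asserted isomorphisms, and their compatibility with $R\Gamma_{c,B}(X,-) \to R\hat{\Gamma}_{c,B}(X,-)$ is automatic since the whole picture is compatible with $R\Gamma_{G_\RR}(X(\CC),-) \to R\hat{\Gamma}_{G_\RR}(X(\CC),-)$. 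For the Tate isomorphism alone one may alternatively invoke \cref{comparison_Tate_compact_cohomology} to replace the correction term by $\alpha^\ast(F^D)$, after which only the base change $\alpha_X^\ast((\pi_\ast F)^D) \simeq \pi_{\CC,\ast}\alpha_Y^\ast(F^D)$ is needed.

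The main obstacle is the compatibility claimed in the second paragraph: $\alpha^\ast$ commutes neither with $R\cal{H}om$ (one only has the canonical comparison map $\alpha^\ast R\cal{H}om_X(-,-) \to R\cal{H}om_{G_\RR,X(\CC)}(\alpha^\ast-,\alpha^\ast-)$) nor with $\pi^!$, so one must verify that the natural transformation $\alpha^\ast(F^D) \to (\alpha^\ast F)^\vee(1)[2]$ is genuinely compatible with the duality isomorphisms for $\pi$ and for $\pi_\CC$. Unwinding the construction in \cref{def_cohomology_compact_support}, this comes down to the compatibility of the composite $\alpha_X^\ast \ZZ^c_X \to \CC^\times[1] \to \ZZ(1)[2]$ with the trace/norm isomorphisms $\pi^!\ZZ^c_X \simeq \ZZ^c_Y$ and $\pi_\CC^!\ZZ(1) \simeq \ZZ(1)$, which is transparent once one uses the identification $\ZZ^c_\bullet \simeq \GG_\bullet[1]$ and the fact that the norm map $N_{L/K}$ is compatible with the inclusion $\overline{\QQ}^\times \hookrightarrow \CC^\times$.
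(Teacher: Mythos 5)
Your proof is correct and follows essentially the same route as the paper: identify $(\pi_\ast F)^D\simeq\pi_\ast(F^D)$ via $R\pi^!\ZZ^c_X\simeq\ZZ^c_Y$, use the base change $\alpha^\ast\pi_\ast\simeq\pi'_\ast\alpha'^\ast$ at the complex points together with $\pi'^!=\pi'^\ast$ for the finite discrete covering, check that the two counits are compatible with $\alpha^\ast\ZZ^c_X\to\CC^\times[1]\to\ZZ(1)[2]$, and pass to fibers of the defining sequences. The only difference is that the paper makes your final "transparent" step explicit, writing the counit $\pi_\ast\ZZ^c_Y\to\ZZ^c_X$ out on Deninger's complex $\GG_X$ and verifying $\alpha^\ast(\pi_\ast\ZZ^c_Y\to\ZZ^c_X)=\pi'_\ast\overline{\QQ}^\times[1]\to\overline{\QQ}^\times[1]$ before assembling the commutative diagram of fiber sequences.
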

	\begin{proof}
		We prove it for $R\Gamma_{c,B}(X,(-)^D)$. The functor $\pi_\ast$ is exact and we have $R\pi^! \ZZ^c_X=R\pi^!\GG_X[1]=\GG_Y[1]=\ZZ^c_Y$ by the finite base change theorem, hence $(\pi_\ast F)^D=\pi_\ast (F^D)$. Denote by $\pi':\mathrm{Sh}(G_\RR,Y(\CC))\to \mathrm{Sh}(G_\RR,X(\CC))$ the morphism of topoi induced by $\pi$, and $\alpha'$ the morphism $\mathrm{Sh}(G_\RR,Y(\CC)) \to \mathrm{Sh}(Y_{et})$. Consider the commutative square
		\[\begin{tikzcd}[ampersand replacement=\&]
		{\mathrm{Sh}(G_\RR,Y(\CC))} \& { \mathrm{Sh}(Y_{et})} \\
		{\mathrm{Sh}(G_\RR,X(\CC))} \& { \mathrm{Sh}(X_{et})}
		\arrow["\pi"', from=1-2, to=2-2]
		\arrow["\alpha", from=2-1, to=2-2]
		\arrow["{\pi'}", from=1-1, to=2-1]
		\arrow["{\alpha'}"', from=1-1, to=1-2]
		\end{tikzcd}\]
		It induces a canonical map $\alpha^\ast \pi_\ast F \to \pi'_\ast \alpha'^\ast F$. We claim that it is an isomorphism; indeed it suffices to check it on points of $X(\CC)$, and then it follows from the computation of the stalks of a finite morphism in the étale case and in the topological case\footnote{In the topological case, by finite morphism we mean a universally closed separated continuous map with finite discrete fibers; we then use \cite[\href{https://stacks.math.columbia.edu/tag/09V4}{Tag 09V4}]{stacks-project}}.
		
		Since $X(\CC)$ and $Y(\CC)$ are finite discrete, we have $\pi'^!=\pi'^\ast$, so $R\pi'^!=\pi'^\ast$, $R\pi'^\ast 2i\pi\ZZ=2i\pi\ZZ$ and $R\pi'^\ast \CC^\times=\CC^\times$. The counit $\pi'_\ast\pi'^\ast \to \mathrm{id}$ is given on stalks by the sum map. Denote $\pi_x$ the base change of $\pi$ to a point $x\in X$ and $g:\eta \to X$, $g':\eta'\to Y$, $i_x:x\to X$, $i_y:y\to Y$ the inclusion of the generic points, resp. of closed points, of $X$ and $Y$. The counit $\pi_\ast \ZZ^c_Y \to \ZZ^c_X$ is given by the morphism of complexes (with left term in degree $-1$)
		\[\begin{tikzcd}[ampersand replacement=\&]
		{\pi_\ast g'_\ast \GG_m=g_\ast \pi_{\eta,\ast}\GG_m} \& {\bigoplus_{y\in Y_0} \pi_\ast i_{y,\ast \ZZ}=\bigoplus_{x\in X_0}i_{x,\ast} \left(\pi_{x,\ast} \ZZ\right)} \\
		{g_\ast\GG_m} \& {\bigoplus_{x\in X_0}i_{x,\ast} \ZZ}
		\arrow[from=1-1, to=2-1]
		\arrow["{\sum \mathrm{ord}_y}"', from=1-1, to=1-2]
		\arrow[from=1-2, to=2-2]
		\arrow["{\sum\mathrm{ord}_x}", from=2-1, to=2-2]
		\end{tikzcd}\]
		where the left arrow is obtained by applying $g_\ast$ to the counit of the adjunction $\pi_{\eta,\ast}\dashv \pi_\eta^\ast$, which is simply the sum map. Thus we have an identification $\alpha^\ast(\pi_\ast \ZZ^c_Y \to \ZZ^c_X)=\pi'_\ast \overline{\QQ}^\times[1] \to  \overline{\QQ}^\times[1]$.\footnote{That is, the pullback of the counit is the counit between the pullbacks} Combining this with the equality $\alpha^\ast \pi_\ast F = \pi'_\ast \alpha'^\ast F$ and the identification $R\pi'^!(\overline{\QQ}^\times \to 2i\pi\ZZ[2])=\overline{\QQ}^\times \to 2i\pi\ZZ [1]$, we obtain a commutative diagram\footnote{The commutation can be seen from the observations made by writing the adjonction arrows as the composition of applying the adjoint functor and postcomposing with the counit}:
		\[\begin{tikzcd}[ampersand replacement=\&]
		{R\Hom_Y(F,\ZZ^c_Y)} \& {R\Hom_{G_\RR,Y(\CC)}(\alpha^\ast F,\overline{\QQ}^\times[1])} \& {R\Hom_{G_\RR,Y(\CC)}(\alpha^\ast F,2i\pi\ZZ[2])} \\
		{R\Hom_X(\pi_\ast F,\ZZ^c_X)} \& {R\Hom_{G_\RR,X(\CC)}(\pi'_\ast \alpha^\ast F,\overline{\QQ}^\times[1])} \& {R\Hom_{G_\RR,X(\CC)}(\pi'_\ast \alpha^\ast F,2i\pi\ZZ[2])}
		\arrow["{\alpha'^\ast}", from=1-1, to=1-2]
		\arrow["{\alpha^\ast}"', from=2-1, to=2-2]
		\arrow["\simeq"', from=1-1, to=2-1]
		\arrow["\simeq", from=1-3, to=2-3]
		\arrow[from=1-2, to=1-3]
		\arrow[from=2-2, to=2-3]
		\arrow["\simeq", from=1-2, to=2-2]
		\end{tikzcd}\]
		 hence we obtain in the following diagram of fiber sequences the induced arrow, which is an isomorphism:
		\[\begin{tikzcd}
		{R\Gamma_{c,B}(Y,(-)^D)} & {R\Gamma(Y,(-)^D)} & {R\Gamma_{G_\RR}(Y(\CC),(-)_\CC^\vee(1)[2])} \\
		{R\Gamma_{c,B}(X,(\pi_\ast -)^D)} & {R\Gamma(X,(\pi_\ast -)^D)} & {R\Gamma_{G_\RR}(X(\CC),(\pi_\ast -)_\CC^\vee(1)[2])}
		\arrow["\simeq", dashed, from=1-1, to=2-1]
		\arrow[from=1-1, to=1-2]
		\arrow[from=2-1, to=2-2]
		\arrow[from=1-2, to=1-3]
		\arrow[from=2-2, to=2-3]
		\arrow["\simeq", from=1-2, to=2-2]
		\arrow["\simeq", from=1-3, to=2-3]
		\end{tikzcd}\]
	\end{proof}

	\begin{prop}\label{compatibility_pairings_finite}
		In the same setting, there is a commutative diagram of pairings:
		\[\begin{tikzcd}[ampersand replacement=\&]
		{R\Gamma(Y, F)\otimes^L R\hat{\Gamma}_{c,B}(Y,F^D)} \& {R\hat{\Gamma}_{c,B}(Y,\ZZ^D)} \\
		{R\Gamma(X,\pi_\ast F)\otimes^L R\hat{\Gamma}_{c,B}(X,(\pi_\ast F)^D)} \& {R\hat{\Gamma}_{c,B}(X,\ZZ^D)}
		\arrow["\simeq", from=1-1, to=2-1]
		\arrow[from=2-1, to=2-2]
		\arrow[from=1-1, to=1-2]
		\arrow[from=1-2, to=2-2]
		\end{tikzcd}\]
		where the arrow $R\hat{\Gamma}_{c,B}(Y,\ZZ^D) \to R\hat{\Gamma}_{c,B}(X,\ZZ^D)$ is induced from the map $R\Gamma(Y,F^D)\to R\Gamma(X,F^D)$ (coming from the counit $\varepsilon:\pi_\ast R\pi^!\ZZ^c_X=\pi_\ast \ZZ^D \to \ZZ^D$) and from the map $R\Gamma_{G_\RR}(Y(\CC),2i\pi\ZZ[2])\to R\Gamma_{G_\RR}(X(\CC),2i\pi\ZZ[2])$.
	\end{prop}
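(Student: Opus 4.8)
The plan is to use that the pairing \eqref{pairing_FD} over $X$ with coefficients $\pi_\ast F$ is, by its very construction, the map induced on fibers along the archimedean legs by the two evaluation pairings
\begin{align*}
R\Gamma(X,\pi_\ast F)\otimes^L R\Gamma(X,(\pi_\ast F)^D) &\longrightarrow R\Gamma(X,\ZZ^c_X),\\
R\Gamma(X,\pi_\ast F)\otimes^L R\hat{\Gamma}_{G_\RR}(X(\CC),(\alpha^\ast\pi_\ast F)^\vee(1)[2]) &\longrightarrow R\hat{\Gamma}_{G_\RR}(X(\CC),2i\pi\ZZ[2]),
\end{align*}
and likewise over $Y$; here one uses that $R\Gamma(X,\pi_\ast F)\otimes^L-$ is exact and that both $R\hat{\Gamma}_{c,B}(X,(\pi_\ast F)^D)$ and $R\hat{\Gamma}_{c,B}(X,\ZZ^D)$ are fibers of the corresponding leg maps. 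The isomorphism of \cref{cohomology_compact_pushforward} and the trace $R\hat{\Gamma}_{c,B}(Y,\ZZ^D)\to R\hat{\Gamma}_{c,B}(X,\ZZ^D)$ are themselves morphisms of these fiber sequences, so it is enough to upgrade the object-level identifications recorded in the proof of \cref{cohomology_compact_pushforward} to an honest morphism of the two squares defining the pairings, and then pass to fibers.

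First I would treat the étale side. Recall that $(\pi_\ast F)^D=\pi_\ast(F^D)$ via $R\cal{H}om_X(\pi_\ast F,\ZZ^c_X)=\pi_\ast R\cal{H}om_Y(F,R\pi^!\ZZ^c_X)=\pi_\ast(F^D)$, using the adjunction $\pi_\ast=\pi_!\dashv\pi^!$ and $R\pi^!\ZZ^c_X=\ZZ^c_Y$. The point is to check that, under this identification, the evaluation pairing $\pi_\ast F\otimes^L(\pi_\ast F)^D\to\ZZ^c_X$ agrees with
\[
\pi_\ast F\otimes^L\pi_\ast(F^D)\longrightarrow\pi_\ast(F\otimes^L F^D)\xrightarrow{\ \pi_\ast(\mathrm{ev})\ }\pi_\ast\ZZ^c_Y\xrightarrow{\ \varepsilon\ }\ZZ^c_X,
\]
where the first arrow is the lax-monoidal (projection-formula) map of $\pi_\ast$ and $\varepsilon$ is the counit $\pi_\ast R\pi^!\ZZ^c_X\to\ZZ^c_X$. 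This is the assertion that $\varepsilon$ is the duality-adjoint of $\mathrm{id}_{\ZZ^c}$; in our situation it can be checked by hand from the explicit description of $\varepsilon$ on $g_\ast\GG_m$ and on the $i_{x,\ast}\ZZ$ recalled in the proof of \cref{cohomology_compact_pushforward}. Applying $R\Gamma(X,-)$, using $R\Gamma(X,\pi_\ast-)=R\Gamma(Y,-)$ (as $\pi_\ast$ is exact), and tensoring with $R\Gamma(Y,F)=R\Gamma(X,\pi_\ast F)$ yields a commutative square relating the two étale evaluation pairings.

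Then I would run the same argument over $X(\CC)$: the morphism of topoi $\pi'$ is finite with $\pi'^!=\pi'^\ast$, $\pi'_\ast$ exact, $R\pi'^!2i\pi\ZZ=2i\pi\ZZ$, and $\alpha^\ast\pi_\ast F=\pi'_\ast\alpha'^\ast F$ with $\alpha^\ast\varepsilon$ equal to the corresponding counit — all established in \cref{cohomology_compact_pushforward}. Hence $(\alpha^\ast\pi_\ast F)^\vee(1)=\pi'_\ast((\alpha'^\ast F)^\vee(1))$ and the archimedean evaluation pairing factors through $\pi'_\ast$ followed by the counit $\pi'_\ast 2i\pi\ZZ\to 2i\pi\ZZ$ (the sum map) in exactly the same way, giving after $R\Gamma_{G_\RR}(X(\CC),-)$ the corresponding commutative square. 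Together with the compatibility of the étale and archimedean legs already contained in \cref{cohomology_compact_pushforward}, these two squares assemble into a commutative cube whose two opposite faces are the defining squares of the pairing \eqref{pairing_FD} over $Y$ and over $X$; taking fibers along the archimedean legs, which is licit because $R\Gamma(Y,F)\otimes^L-$ is exact and the compactly supported complexes are fibers, produces the commutative square of the statement.

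The main obstacle is the coherent bookkeeping: one has to verify the compatibility of the trace $\varepsilon$ with the lax-monoidal structures of $\pi_\ast$ and of $\pi'_\ast$ — i.e. that $\varepsilon$ is the evaluation-adjoint of the identity, so that it threads correctly through the projection-formula maps — and then organize the isomorphisms of \cref{cohomology_compact_pushforward} into a genuine morphism of (the $\infty$-categorical) fiber sequences rather than a loose family of identifications. Once this is set up, every face of the cube is a formal consequence of the projection formula and the adjunction $\pi_\ast=\pi_!\dashv\pi^!$, and the proposition follows from the functoriality of $\fib$.
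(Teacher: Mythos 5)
Your plan is essentially the paper's own proof: the paper likewise reduces (via the compatibility of the ordinary and Tate $G_\RR$-equivariant pairings) to the $R\Gamma_{c,B}$ version and then proves exactly the cube you describe, by checking that the counit $\varepsilon$ and the lax-monoidal/projection-formula maps of $\pi_\ast$, $\pi'_\ast$, $\alpha^\ast$, $\alpha'^\ast$ interact correctly with the adjunction isomorphism $(\pi_\ast F)^D\simeq\pi_\ast(F^D)$ and with the identification $\alpha^\ast(\pi_\ast\ZZ^c_Y\to\ZZ^c_X)=\pi'_\ast\overline{\QQ}^\times[1]\to\overline{\QQ}^\times[1]$, before applying global sections and passing to fibers. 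The only points you leave implicit that the paper spells out are that initial Tate-to-ordinary reduction and the fact that your key compatibility (``$\varepsilon$ threads correctly through the projection-formula maps'') is a formal consequence of the adjunction $\pi_\ast\dashv R\pi^!$ by naturality, with no need to compute $\varepsilon$ by hand.
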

	\begin{proof}
		By compatibility of the pairings for $R\Gamma_{G_\RR}(X(\CC),-)$ and $R\hat{\Gamma}_{G_\RR}(X(\CC),-)$, we can reduce to checking the statement with $R\Gamma_{c,B}$ instead of $R\hat{\Gamma}_{c,B}$. This will follow formally (by taking the fiber) if we prove that the following cube is commutative:
		\begin{equation}\label{compatibility_finite_0}
		\begin{tikzpicture}[baseline= (a).base]
		\node[scale=.85] (a) at (0,0){\begin{tikzcd}[ampersand replacement=\&,sep=tiny]
		{R\Gamma(Y,F)\otimes R\Hom_Y(F,\ZZ^c_Y)} \& {} \& {R\Gamma(Y,F)\otimes R\Hom_{G_\RR,Y(\CC)}(\alpha^\ast F,2i\pi\ZZ[2])} \\
		{} \& {R\Gamma(Y,\ZZ^c_Y)} \&\& {R\Gamma_{G_\RR}(Y(\CC),2i\pi\ZZ[2])} \\
		{R\Gamma(X,\pi_\ast F)\otimes R\Hom_X(\pi_\ast F,\ZZ^c_X)} \&\& {R\Gamma(X,\pi_\ast F)\otimes R\Hom_{G_\RR,Y(\CC)}(\pi'_\ast \alpha^\ast F,2i\pi\ZZ[2])} \\
		\& {R\Gamma(X,\ZZ^c_X)} \&\& {R\Gamma_{G_\RR}(X(\CC),2i\pi\ZZ[2])}
		\arrow[from=1-1, to=2-2]
		\arrow[from=1-1, to=1-3]
		\arrow[from=3-1, to=3-3]
		\arrow["\simeq"{description}, shift right=5, from=1-3, to=3-3]
		\arrow["\simeq"{description}, shift right=5, from=1-1, to=3-1]
		\arrow[from=2-2, to=4-2]
		\arrow[from=2-2, to=2-4]
		\arrow[from=4-2, to=4-4]
		\arrow[from=2-4, to=4-4]
		\arrow[from=3-1, to=4-2]
		\arrow["\simeq"{description}, shift left=5, from=1-1, to=3-1]
		\arrow["\simeq"{description}, shift left=5, from=1-3, to=3-3]
		\arrow[from=1-3, to=2-4]
		\end{tikzcd}};
	\end{tikzpicture}\end{equation}
	
The lax monoidality of $\pi_\ast$, $\pi'_\ast$, $\alpha^\ast$, $\alpha'^\ast$ induce maps
\begin{align*}
& c_\alpha : \alpha^\ast R\cal{H}om_{X}(-,-)\to  R\cal{H}om_{G_\RR,X(\CC)}(\alpha^\ast -,\alpha^\ast -)\\
& c_{\alpha'} :  \alpha'^\ast R\cal{H}om_{Y}(-,-)\to  R\cal{H}om_{G_\RR,Y(\CC)}(\alpha'^\ast -,\alpha'^\ast -)\\
&c_{\pi} : \pi_\ast R\cal{H}om_{Y}(-,-)\to  R\cal{H}om_{X}(\pi_\ast -,\pi_\ast -)\\
&c_{\pi'} : \pi'_\ast R\cal{H}om_{G_\RR,Y(\CC)}(-,-)\to  R\cal{H}om_{G_\RR,X(\CC)}(\pi'_\ast -,\pi'_\ast -)
\end{align*}
The following diagram commutes by naturality; the composite rectangle is given by the adjunction isomorphism $\pi_\ast F^D\simeq (\pi_\ast F)^D$:
\begin{equation}\label{compatibility_finite_1}
\begin{tikzcd}
	{\pi_\ast F^D} & {R\alpha_\ast\alpha^\ast \pi_\ast F^D=R\alpha_\ast \pi'_\ast \alpha'^\ast F^D} & {} \\
	{R\cal{H}om_X(\pi_\ast F,\pi_\ast\ZZ^c_Y)} & {R\alpha_\ast\alpha^\ast R\cal{H}om_X(\pi_\ast F,\pi_\ast\ZZ^c_Y)} & {} \\
	{(\pi_\ast F)^D} & {R\alpha_\ast\alpha^\ast (\pi_\ast F)^D}
	\arrow["{\eta_\alpha}", from=1-1, to=1-2]
	\arrow["{c_\pi}"', from=1-1, to=2-1]
	\arrow["{R\alpha_\ast \alpha^\ast(c_\pi)}", from=1-2, to=2-2]
	\arrow["{\eta_\alpha}", from=2-1, to=2-2]
	\arrow["{\varepsilon_\ast}"', from=2-1, to=3-1]
	\arrow["{R\alpha_\ast \alpha^\ast(\varepsilon_\ast)}", from=2-2, to=3-2]
	\arrow["{\eta_\alpha}", from=3-1, to=3-2]
\end{tikzcd}
\end{equation}

We have $\pi_\ast \alpha^\ast = \alpha'^\ast \pi'_\ast$ so the following diagram commutes formally for any sheaf $F$ and $G$:
\[\begin{tikzcd}[ampersand replacement=\&]
	\& {\pi'_\ast\alpha'^\ast R\cal{H}om_Y(F,G)} \\
	{\alpha^\ast \pi_\ast R\cal{H}om_Y(F,G)} \&\& {\pi'_\ast R\cal{H}om_{G_\RR,Y(\CC)}(\alpha'^\ast F,\alpha'^\ast G)} \& {} \\
	{\alpha^\ast R\cal{H}om_X(\pi_\ast F,\pi_\ast G)} \& {} \& {R\cal{H}om_X(\pi'_\ast \alpha'^\ast F,\pi'_\ast \alpha'^\ast G)} \\
	\& {R\cal{H}om_X(\alpha^\ast\pi_\ast F,\alpha^\ast\pi_\ast G)} \& {}
	\arrow[Rightarrow, no head, from=2-1, to=1-2]
	\arrow["{\pi'_\ast(c_{\alpha'})}", from=1-2, to=2-3]
	\arrow["{\alpha_\ast(c_\pi)}"', from=2-1, to=3-1]
	\arrow["{c_\alpha}"', from=3-1, to=4-2]
	\arrow[Rightarrow, no head, from=4-2, to=3-3]
	\arrow["{c_{\pi'}}", from=2-3, to=3-3]
\end{tikzcd}\]
Thus the top square in the following diagram commutes:
\begin{equation}\label{compatibility_finite_2}
\begin{tikzcd}
	{} & {\alpha^\ast \pi_\ast F^D=R\alpha_\ast \pi'_\ast \alpha'^\ast F^D} & { \pi'_\ast R\cal{H}om_{G_\RR,Y(\CC)} (\alpha^\ast F, \overline{\QQ}^\times[1])} \\
	& {\alpha^\ast R\cal{H}om_X(\pi_\ast F,\pi_\ast\ZZ^c_Y)} & {R\cal{H}om_{G_\RR,X(\CC)}(\pi'_\ast \alpha^\ast F,\pi'_\ast \overline{\QQ}^\times[1])} \\
	& {\alpha^\ast R\cal{H}om_X(\pi_\ast F,\ZZ^c_X)} & {R\cal{H}om_{G_\RR,X(\CC)}(\pi'_\ast \alpha^\ast F,\overline{\QQ}^\times[1])}
	\arrow["{\alpha^\ast(c_\pi)}", from=1-2, to=2-2]
	\arrow["{\pi'_\ast(c_{\alpha'})}", from=1-2, to=1-3]
	\arrow["{c_{\alpha}}", from=2-2, to=2-3]
	\arrow["{ \alpha^\ast(\varepsilon_\ast)}", from=2-2, to=3-2]
	\arrow["{c_{\pi'}}"', from=1-3, to=2-3]
	\arrow["{\varepsilon_\ast}"', from=2-3, to=3-3]
	\arrow["{c_\alpha}", from=3-2, to=3-3]
\end{tikzcd}
\end{equation}
The bottom square also commutes, because $\alpha^\ast(\pi_\ast \ZZ^c_Y \to \ZZ^c_X)=\pi'_\ast \overline{\QQ}^\times [1] \to \overline{\QQ}^\times [1]$. The commutative square
\[\begin{tikzcd}
	{\pi'_\ast  \overline{\QQ}^\times[1]} & {\pi'_\ast 2i\pi\ZZ[2]} \\
	{ \overline{\QQ}^\times[1]} & {2i\pi\ZZ[2]}
	\arrow[from=2-1, to=2-2]
	\arrow[from=1-1, to=2-1]
	\arrow[from=1-1, to=1-2]
	\arrow[from=1-2, to=2-2]
\end{tikzcd}\]
implies that the following diagram is commutative:
\begin{equation}\label{compatibility_finite_3}
\begin{tikzcd}[ampersand replacement=\&]
	{} \&\& { \pi'_\ast R\cal{H}om_{G_\RR,Y(\CC)} (\alpha^\ast F, \overline{\QQ}^\times[1])} \& { \pi'_\ast R\cal{H}om_{G_\RR,Y(\CC)} (\alpha^\ast F, 2i\pi\ZZ[2])} \\
	\&\& {R\cal{H}om_{G_\RR,X(\CC)}(\pi'_\ast \alpha^\ast F,\pi'_\ast \overline{\QQ}^\times[1])} \& {R\cal{H}om_{G_\RR,X(\CC)}(\pi'_\ast \alpha^\ast F,\pi'_\ast 2i\pi\ZZ[2])} \\
	\&\& {R\cal{H}om_{G_\RR,X(\CC)}(\pi'_\ast \alpha^\ast F,\overline{\QQ}^\times[1])} \& {R\cal{H}om_{G_\RR,X(\CC)}(\pi'_\ast \alpha^\ast F,2i\pi\ZZ[2])}
	\arrow["{c_{\pi'}}"', from=1-3, to=2-3]
	\arrow["{\varepsilon_\ast}"', from=2-3, to=3-3]
	\arrow["{\varepsilon_\ast}", from=2-4, to=3-4]
	\arrow["{c_{\pi'}}", from=1-4, to=2-4]
	\arrow[from=1-3, to=1-4]
	\arrow[from=2-3, to=2-4]
	\arrow[from=3-3, to=3-4]
\end{tikzcd}
\end{equation}
Apply $R\alpha_\ast$ to diagrams \cref{compatibility_finite_2,compatibility_finite_3} and paste them next to diagram \cref{compatibility_finite_1} to obtain the following diagram:
\[\begin{tikzcd}[ampersand replacement=\&]
	{\pi_\ast F^D} \& {R\alpha_\ast \pi'_\ast(\alpha^\ast F)^\vee(1)[2]} \\
	\& {R\cal{H}om_X(\pi_\ast F,\pi_\ast\ZZ^c_Y)} \& {R\cal{H}om_{X}(\pi_\ast F,R\alpha_\ast\pi'_\ast 2i\pi\ZZ[2])} \\
	\& {R\cal{H}om_X(\pi_\ast F,\ZZ^c_X)} \& {R\cal{H}om_{X}(\pi_\ast F,R\alpha_\ast 2i\pi\ZZ[2])}
	\arrow["\simeq"{description, pos=0.2}, dashed, from=1-1, to=3-2]
	\arrow[from=1-1, to=2-2]
	\arrow[from=2-2, to=3-2]
	\arrow[from=2-2, to=2-3]
	\arrow[from=3-2, to=3-3]
	\arrow["\simeq"{description, pos=0.2}, dashed, from=1-2, to=3-3]
	\arrow[from=2-3, to=3-3]
	\arrow[from=1-2, to=2-3]
	\arrow[from=1-1, to=1-2]
\end{tikzcd}\]
Here we have rewritten the terms on the right using $R\alpha_\ast R\cal{H}om_{G_\RR,X(\CC)}(\alpha^\ast-,-)=R\cal{H}om(-,R\alpha_\ast -)$. The back and side faces are obtained by composing the top and front faces; by properties of adjunctions, the dotted maps are the adjunction isomorphisms for $\pi_\ast \dashv R\pi^!$ resp. $\pi'_\ast \dashv R\pi'^!$. By the $\otimes\dashv\Hom$-adjunction, the above diagram is equivalent to the following cube:
\[\begin{tikzcd}[ampersand replacement=\&]
	{\pi_\ast F \otimes\pi_\ast F^D} \& {} \& {\pi_\ast F \otimes R\alpha'_\ast (\alpha^\ast F)^\vee(1)[2]} \\
	{} \& {\pi_\ast \ZZ^c_Y} \&\& {R\alpha_\ast \pi'_\ast 2i\pi\ZZ[2]} \\
	{\pi_\ast F \otimes (\pi_\ast F)^D} \& {} \& {\pi_\ast F \otimes R\alpha_\ast (\pi'_\ast \alpha^\ast F)^\vee(1)[2]} \\
	\& {\ZZ^c_X} \&\& {R\alpha_\ast 2i\pi\ZZ[2]}
	\arrow[from=1-1, to=1-3]
	\arrow["\simeq"', from=1-1, to=3-1]
	\arrow[from=3-1, to=4-2]
	\arrow[from=2-2, to=4-2]
	\arrow[from=1-1, to=2-2]
	\arrow[from=2-2, to=2-4]
	\arrow[from=1-3, to=2-4]
	\arrow[from=4-2, to=4-4]
	\arrow["\simeq"'{pos=0.2}, from=1-3, to=3-3]
	\arrow[from=3-3, to=4-4]
	\arrow[from=3-1, to=3-3]
	\arrow[from=2-4, to=4-4]
\end{tikzcd}\]
 Finally, applying the lax monoidal functor $R\Gamma(X,-)$ recovers the sought-after cube.
	\end{proof}
\subsection{Computations}

\begin{prop}\label{cohomology_comparison}
	Let $F$ be a $\ZZ$-constructible sheaf. We have $H^i_{c,B}(X,F^D)\simeq \hat{H}^i_{c,B}(X,F^D)$ for $i\geq 1$ and $H^0_c(X,F^D)\to \hat{H}^0_c(X,F^D)$ is surjective. If $\alpha^\ast F$ is torsionfree, we have $H^i_{c,B}(X,F^D)\simeq \hat{H}^i_{c,B}(X,F^D)$ for $i\geq 0$ and $H^{-1}_c(X,F^D)\to \hat{H}^{-1}_c(X,F^D)$ is surjective.
\end{prop}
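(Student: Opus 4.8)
The plan is to compute the fiber of the natural comparison map $c\colon R\Gamma_{c,B}(X,F^D)\to R\hat{\Gamma}_{c,B}(X,F^D)$, bound its cohomological amplitude, and then read off the statements from the associated long exact sequence. By \cref{def_cohomology_compact_support} the source and target of $c$ are the fibers of the two maps out of $R\Gamma(X,F^D)$ landing respectively in $R\Gamma_{G_\RR}(X(\CC),(\alpha^\ast F)^\vee(1)[2])$ and $R\hat{\Gamma}_{G_\RR}(X(\CC),(\alpha^\ast F)^\vee(1)[2])$, the second map being the first postcomposed with the natural transformation $R\Gamma_{G_\RR}(X(\CC),-)\to R\hat{\Gamma}_{G_\RR}(X(\CC),-)$. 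Thus $c$ is the induced map on horizontal fibers of a commutative square whose two vertical arrows are the identity of $R\Gamma(X,F^D)$ and that natural transformation evaluated at $(\alpha^\ast F)^\vee(1)[2]$. Taking fibers of this square in the other direction (legitimate in a stable $\infty$-category), $\fib(c)$ is the $[-1]$-shift of the fiber of $R\Gamma_{G_\RR}(X(\CC),(\alpha^\ast F)^\vee(1)[2])\to R\hat{\Gamma}_{G_\RR}(X(\CC),(\alpha^\ast F)^\vee(1)[2])$, which by the norm fiber sequence recalled in the notations is the homotopy orbit complex $\ZZ\otimes^L_{\ZZ[G_\RR]}R\Gamma(X(\CC),(\alpha^\ast F)^\vee(1)[2])$. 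Writing $C:=R\Gamma(X(\CC),(\alpha^\ast F)^\vee(1))$ this yields $\fib(c)\simeq\bigl(\ZZ\otimes^L_{\ZZ[G_\RR]}C\bigr)[1]$.

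Next I would bound the amplitude of $C$ and of its homotopy orbits. Since $F$ is $\ZZ$-constructible, every stalk of $\alpha^\ast F$ on the discrete space $X(\CC)$ is a finitely generated abelian group, so $R\Hom_{\ZZ}(-,\ZZ(1))$ applied to such a stalk is concentrated in cohomological degrees $[0,1]$ with degree-$1$ term the torsion subgroup; when $\alpha^\ast F$ is torsionfree the $\Ext^1$ term vanishes. Using the description of $R\Gamma(X(\CC),-)$ from the remark in the notations as a finite product of stalks, with an $\ind^{G_\RR}$ at the complex places (which preserves the degree bound), $C$ is concentrated in degrees $[0,1]$, and in degree $0$ alone when $\alpha^\ast F$ is torsionfree. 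As $G_\RR=\ZZ/2\ZZ$ is finite, $\ZZ\otimes^L_{\ZZ[G_\RR]}(-)$ carries complexes concentrated in degrees $\leq n$ to complexes concentrated in degrees $\leq n$, so $\ZZ\otimes^L_{\ZZ[G_\RR]}C$ lies in degrees $\leq 1$ in general and in degrees $\leq 0$ when $\alpha^\ast F$ is torsionfree. After the shift, $\fib(c)$ lies in degrees $\leq 0$ in general and in degrees $\leq -1$ when $\alpha^\ast F$ is torsionfree.

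Finally, the fiber sequence $\fib(c)\to R\Gamma_{c,B}(X,F^D)\xrightarrow{c}R\hat{\Gamma}_{c,B}(X,F^D)$ gives a long exact sequence $\cdots\to H^i(\fib(c))\to H^i_{c,B}(X,F^D)\to\hat{H}^i_{c,B}(X,F^D)\to H^{i+1}(\fib(c))\to\cdots$. In general $H^i(\fib(c))=0$ for $i\geq 1$, so $H^i_{c,B}(X,F^D)\to\hat{H}^i_{c,B}(X,F^D)$ is an isomorphism for $i\geq 1$ and, because $H^1(\fib(c))=0$, surjective for $i=0$. When $\alpha^\ast F$ is torsionfree, $H^i(\fib(c))=0$ for $i\geq 0$, so the map is an isomorphism for $i\geq 0$ and surjective for $i=-1$.

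This argument has no genuine obstacle; it is a formal manipulation of fiber sequences followed by an amplitude count. The only points needing a little care are tracking the shifts in the identification of $\fib(c)$, and noticing that the contributions of the complex archimedean places, being $\ind^{G_\RR}$ of a $[0,1]$-complex, have vanishing higher homotopy orbits, so that the upper amplitude of $\ZZ\otimes^L_{\ZZ[G_\RR]}C$ is controlled solely by the top cohomological degree of $C$.
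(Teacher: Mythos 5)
Your argument is correct and is essentially the paper's own proof: both identify the difference between the two compactly supported complexes, via the norm fiber sequence, with the homotopy orbits $\ZZ\otimes^L_{\ZZ[G_\RR]}R\Gamma(X(\CC),(\alpha^\ast F)^\vee(1)[2])$ (the paper phrases this with the cofiber $T(F^D)$ via the stable $3\times 3$ lemma, you with the fiber of $c$), and then bound its amplitude using that $R\Hom_\ZZ(M,\ZZ(1))$ of a finite type group sits in degrees $[0,1]$, in degree $0$ when $M$ is torsion-free, together with right-exactness of homotopy orbits. Your closing remark about induced modules at complex places is unnecessary (homotopy orbits never raise the top degree for any coefficients), but it does no harm.
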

\begin{proof}
	Let $T(F^D)$ be the cofiber of $R\Gamma_{c,B}(X,F^D)\to R\hat{\Gamma}_{c,B}(X,F^D)$. Let $s$ be the generator of $G_\RR$ and denote $N=1+s$ the norm map. By the stable $3\times 3$ lemma (\cite[Lemma 2.2]{AMorin21}), we find that
	\begin{align*}
	T(F^D)=\mathrm{fib}\left(R\Gamma_{G_\RR}(X(\CC),(\alpha^\ast F)^\vee(1)[2])\xrightarrow{N} R\hat{\Gamma}_{G_\RR}(X(\CC),(\alpha^\ast F)^\vee(1)[2])\right)&=\ZZ\otimes^L_{\ZZ[G_\RR]}R\Gamma(X(\CC),(\alpha^\ast F)^\vee(1)[2])\\
		&=\prod_{v~\text{archimedean}} \ZZ\otimes^L_{\ZZ[G_{K_v}]}F_v^\vee(1)[2]
	\end{align*}
	computes homology at the archimedean places. Let $M$ be a $G_\RR$-module of finite type. Then $M^\vee(1)[2]=R\Hom_\ZZ(M,2i\pi\ZZ)[2]$ is concentrated in degree $-2$ if $M$ is torsion-free and in degree $[-2,-1]$ in general. It follows that $\ZZ\otimes^L_{\ZZ[G_\RR]}M^\vee(1)[2]$ is concentrated in degree $\leq -2$ if $M$ is torsion-free and in degree $\leq -1$ in general.
\end{proof}

\begin{prop}\label{cohomology_compact_support_tate}
	Let $F$ be a $\ZZ$-constructible sheaf. Then $R\hat{\Gamma}_{c,B}(X,F^D)$ is concentrated in degree $\leq 2$. If $F$ is constructible, the complex has finite cohomology groups. In general, we have
	\[\hat{H}^i_{c,B}(X,F^D)=\left\{
	\begin{array}{ll}
	\text{finite} & i\leq -2\\
	\text{finite type} & i=-1,0\\
	\text{torsion of cofinite type} & i=1,2\\
	\end{array}\right.
	\]
\end{prop}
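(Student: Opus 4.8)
The plan is to read off the whole statement from the Artin--Verdier duality of \cref{twisted_AV} together with the standard structure of the étale cohomology of a $\ZZ$-constructible sheaf on the arithmetic curve $X$. By \cref{twisted_AV} the pairing induces a morphism $R\hat\Gamma_{c,B}(X,F^D)\to R\Gamma(X,F)^\ast[-2]$ which is an isomorphism in every degree $\neq -1,0$ and, in degrees $-1,0$, an isomorphism after profinite completion of the source; since $\QQ/\ZZ$ is injective, $H^i\big(R\Gamma(X,F)^\ast[-2]\big)=H^{2-i}(X,F)^\ast$, so in cohomology this reads $\hat H^i_{c,B}(X,F^D)\cong H^{2-i}(X,F)^\ast$ for $i\neq -1,0$ and $\hat H^i_{c,B}(X,F^D)^{\wedge}\cong H^{2-i}(X,F)^\ast$ for $i\in\{-1,0\}$, where $(-)^\wedge$ denotes profinite completion. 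The input on $H^\bullet(X,F)$ I would use is that, for $F$ a $\ZZ$-constructible sheaf on $X$: $H^0(X,F)$ and $H^1(X,F)$ are finitely generated; $H^2(X,F)$ and $H^3(X,F)$ are torsion of cofinite type; $H^j(X,F)$ is finite for $j\ge 4$ (it then agrees with $\bigoplus_{v\ \mathrm{real}}H^j(G_{K_v},F_v)$); and all $H^j(X,F)$ are finite when $F$ is constructible. These facts are classical (cf.\ \cite[II.2]{ADT}), and for general $\ZZ$-constructible $F$ follow from the localization dévissage $0\to j_!j^\ast F\to F\to i_\ast i^\ast F\to 0$ for a regular dense open $j\colon U\hookrightarrow X$ on which $F$ is locally constant.

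With these two ingredients the degrees $i\neq -1,0$ are immediate. For $i\ge 3$ one gets $\hat H^i_{c,B}(X,F^D)\cong H^{\le -1}(X,F)^\ast=0$, which in particular gives the concentration in degrees $\le 2$; for $i=1,2$ one gets the Pontryagin dual of the finitely generated group $H^1(X,F)$, resp.\ $H^0(X,F)$, hence a torsion group of cofinite type; and for $i\le -2$ one gets the dual of the finite group $H^{2-i}(X,F)$, hence a finite group. When $F$ is constructible all $H^j(X,F)$ are finite, so every $\hat H^i_{c,B}(X,F^D)$ is finite; this disposes of the constructible case.

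The remaining point — and the only real obstacle — is degrees $-1$ and $0$: there \cref{twisted_AV} only tells us that $\hat H^i_{c,B}(X,F^D)^\wedge\cong H^{2-i}(X,F)^\ast$ is a finitely generated $\hat{\ZZ}$-module, and profinite completion alone cannot detect a divisible subgroup. To upgrade this to ``finitely generated abelian group'' I would invoke \cref{comparison_Tate_compact_cohomology}, which identifies $R\hat\Gamma_{c,B}(X,F^D)$ with the fiber of $R\Gamma(X,F^D)\to R\hat\Gamma_{G_\RR}(X(\CC),(\alpha^\ast F)^\vee(1)[2])$, whose target is a perfect complex with \emph{finite} cohomology groups (Tate cohomology of $G_\RR\cong\ZZ/2$ on the finitely generated modules $F_v^\vee(1)$). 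Hence $\hat H^i_{c,B}(X,F^D)$ differs from $H^i(X,F^D)=\Ext^{i+1}_X(F,\GG_X)$ by finite groups, so it is enough to know that $\Hom_X(F,\GG_X)$ and $\Ext^1_X(F,\GG_X)$ are finitely generated. For $X$ regular, $\GG_X\simeq\GG_m$ and these are an $\cal O_K^\times$-type group and a $\Pic$-type group attached to $F$, finitely generated by Dirichlet's unit theorem and finiteness of the ideal class group; in general, and for all $\ZZ$-constructible $F$, the required finiteness of these ``$F$-valued units'' and ``twisted Picard'' groups is precisely what is established in \cite{AMorin21} (alternatively, dévissage again: the closed part $F=i_\ast N$ contributes $\Hom_X(i_\ast N,\GG_X)=0$ and $\Ext^1_X(i_\ast N,\GG_X)=(N^\vee)^{G_x}$ finitely generated, and the open part $F=j_!L$ reduces to unit and Picard groups of regular curves). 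With that, $\hat H^{-1}_{c,B}(X,F^D)$ and $\hat H^0_{c,B}(X,F^D)$ are finitely generated, i.e.\ of finite type, which completes the proof.
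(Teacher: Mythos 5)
Your proof is correct and follows essentially the same route as the paper: the vanishing above degree $2$ comes from \cref{twisted_AV}, and the finite-type claim in degrees $-1,0$ comes from the defining fiber sequence together with the finiteness of the archimedean Tate term and the finite generation of $\Ext^{0}_X(F,\GG_X)$ and $\Ext^{1}_X(F,\GG_X)$ as in \cite[II.3.6, II.6]{ADT}. The only (harmless) difference is that in degrees $i=1,2$ and $i\le -2$ you read the structure off the $H^{2-i}(X,F)^\ast$ side of the duality, whereas the paper obtains these degrees from the same fiber sequence and the known structure of $\Ext^\bullet_X(F,\GG_X)$.
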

\begin{proof}
	The vanishing in degree $>2$ comes from \cref{twisted_AV}. The remaining claims follow from the defining long exact cohomology sequence and \cite[II.3.6]{ADT}, because $R\hat{\Gamma}_{G_\RR}(X(\CC),(\alpha^\ast F)^\vee(1)[2])$ is a complex with finite $2$-torsion cohomology groups.
\end{proof}

\begin{prop}\label{cohomology_compact_support}
	Let $F$ be a $\ZZ$-constructible sheaf. Then $R\Gamma_{c,B}(X,F^D)$ is concentrated in degree $[-1,2]$. If $F$ is constructible, the complex is perfect. In general, we have
	\[H^i_{c,B}(X,F^D)=\left\{
	\begin{array}{ll}
	\text{finite type} & i=-1,0\\
	\text{torsion of cofinite type} & i=1,2\\
	\end{array}\right.
	\]
\end{prop}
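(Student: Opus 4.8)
The plan is to run the long exact sequence of the defining fiber sequence
\[
R\Gamma_{c,B}(X,F^D)\to R\Gamma(X,F^D)\to R\Gamma_{G_\RR}(X(\CC),(\alpha^\ast F)^\vee(1)[2]),
\]
controlling the two outer terms, and to import the information in degrees $\geq 1$ from \cref{cohomology_comparison} and \cref{cohomology_compact_support_tate}. The one genuinely new input is that $R\Gamma(X,F^D)$ is concentrated in degrees $\geq -1$. I would get this by pinning down $F^D$ itself: choose a dense open $j\colon U\hookrightarrow X$ with $U$ regular and $F|_U$ a lisse $\ZZ$-constructible sheaf, with finite closed complement $i\colon Z\hookrightarrow X$, and apply $R\cal{H}om_X(-,\ZZ^c_X)$ to $0\to j_!F|_U\to F\to i_\ast F|_Z\to 0$ to obtain the fiber sequence $(i_\ast F|_Z)^D\to F^D\to (j_!F|_U)^D$. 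Since $i^!\ZZ^c_X=\ZZ^c_Z=\ZZ$ and $R\Hom_\ZZ(M,\ZZ)$ lies in degrees $[0,1]$ for $M$ finite type, the term $(i_\ast F|_Z)^D=i_\ast R\cal{H}om_Z(F|_Z,\ZZ)$ is concentrated in degrees $[0,1]$. Since $j^\ast\ZZ^c_X=\ZZ^c_U=\GG_m[1]$ and $\GG_m$ is $n$-divisible as an étale sheaf on the characteristic-zero scheme $U$ for every $n$, the complex $R\cal{H}om_U(F|_U,\GG_m)$ is concentrated in degree $0$, so $(j_!F|_U)^D=Rj_\ast(\cal{H}om_U(F|_U,\GG_m)[1])$ is concentrated in degrees $\geq -1$. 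Hence $F^D$, and therefore $R\Gamma(X,F^D)$, is concentrated in degrees $\geq -1$; moreover $H^{-1}(X,F^D)$ injects into $H^0(U,\cal{H}om_U(F|_U,\GG_m))$, which is the group of integral points of a group of multiplicative type and hence finitely generated, and one checks similarly that $H^0(X,F^D)$ is finitely generated.

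Next I would record that $R\Gamma_{G_\RR}(X(\CC),(\alpha^\ast F)^\vee(1)[2])=\prod_{v\mid\infty}R\Gamma(G_{K_v},F_v^\vee(1))[2]$ is concentrated in degrees $\geq -2$ (each $F_v^\vee(1)=R\Hom_\ZZ(F_v,2i\pi\ZZ)$ lies in degrees $[0,1]$), with $H^{-2}=\prod_v\Hom_\ZZ(F_v,2i\pi\ZZ)^{G_{K_v}}$ finitely generated and $H^{-1}$ finite. Feeding both facts into the long exact sequence: for $i\leq -2$ both $H^i(X,F^D)$ and $H^{i-1}$ of the right-hand term vanish, so $H^i_{c,B}(X,F^D)=0$; for $i=-1,0$ the group $H^i_{c,B}(X,F^D)$ is trapped between finitely generated groups and so is finitely generated. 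For $i\geq 1$, \cref{cohomology_comparison} gives $H^i_{c,B}(X,F^D)\cong\hat{H}^i_{c,B}(X,F^D)$, and \cref{cohomology_compact_support_tate} then shows this vanishes for $i\geq 3$ and is torsion of cofinite type for $i=1,2$. This gives the concentration in $[-1,2]$ together with the asserted structure of the cohomology groups.

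Finally, when $F$ is constructible, $\alpha^\ast F$ is finite, so the right-hand term of the defining fiber sequence has finite cohomology, and the computation above shows $H^{-1}(X,F^D)$ and $H^0(X,F^D)$ are finite (the groups of multiplicative type involved are now finite); combined with the finiteness in the Tate case from \cref{cohomology_compact_support_tate} and \cref{cohomology_comparison}, $R\Gamma_{c,B}(X,F^D)$ is a bounded complex with finite cohomology groups, hence perfect. The main obstacle is the boundedness-below step: one cannot simply invoke the Artin--Verdier isomorphism of \cref{twisted_AV} here, because the $2$-torsion concentrated at the real places makes $R\hat{\Gamma}_c(X,F)$ unbounded, so the explicit determination of $F^D$ via the localization sequence is essential.
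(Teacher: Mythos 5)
Your conclusions are right, and your handling of degrees $\geq 1$ is exactly the paper's (\cref{cohomology_comparison} plus \cref{cohomology_compact_support_tate}); where you genuinely diverge is in degrees $\leq 0$. The paper gets finite generation of $H^{-1}_{c,B}$ and $H^0_{c,B}$ by noting that the cofiber of $R\Gamma_{c,B}(X,F^D)\to R\hat{\Gamma}_{c,B}(X,F^D)$ is the archimedean homology complex computed in \cref{cohomology_comparison}, hence has finite type cohomology, while $\hat{H}^{-1}_{c,B}$ and $\hat{H}^{0}_{c,B}$ are already finite type by \cref{cohomology_compact_support_tate}; and the vanishing below degree $-1$ is immediate because $\ZZ^c_X\simeq \GG_X[1]$ is a two-term complex of sheaves in degrees $[-1,0]$, so $F^D$ has no cohomology sheaves below $-1$ and the archimedean term is bounded below by $-2$. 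So your closing worry is misplaced: no appeal to \cref{twisted_AV} and no localization sequence is needed for boundedness below. Your alternative — controlling $R\Gamma(X,F^D)$ directly via $0\to j_!F|_U\to F\to i_\ast i^\ast F\to 0$ — does work, but it re-proves by hand the arithmetic finiteness that the paper imports once through \cref{cohomology_compact_support_tate} (i.e.\ the results of [ADT, II.3]).

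Two points in your argument need repair. First, the assertion that $\GG_m$ is $n$-divisible on $U_{et}$ because $U$ is a "characteristic-zero scheme" is false: $U$ is flat over $\Spec(\ZZ)$ and has closed points of residue characteristic $p$, where units of the strictly henselian local ring are not $p$-divisible; consequently $\cal{E}xt^1_U(F|_U,\GG_m)$ need not vanish (for $F|_U=\ZZ/p\ZZ$ it is the étale cokernel $\GG_m/p$, nonzero over the fiber at $p$), so $R\cal{H}om_U(F|_U,\GG_m)$ is not concentrated in degree $0$ unless $F|_U$ has torsion only at invertible primes. Luckily this claim is not load-bearing: you only need $\cal{E}xt^i_U(F|_U,\GG_m)=0$ for $i<0$, so both the bound "degrees $\geq -1$" and the injection $H^{-1}(X,F^D)\hookrightarrow \Hom_U(F|_U,\GG_m)$ survive. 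Second, "one checks similarly that $H^0(X,F^D)$ is finitely generated" conceals the actual content: from your localization triangle this requires finite generation of $\Ext^1_U(F|_U,\GG_m)$ (together with $\Hom_Z(i^\ast F,\ZZ)$), and $\Ext^1_U(F|_U,\GG_m)$ is not a group of integral points of a group of multiplicative type — its finite generation is a Picard-group/units statement (finiteness of class groups, Dirichlet; [ADT, II.3]), i.e.\ precisely the input that the paper's route, via \cref{cohomology_compact_support_tate}, already packages. With those two points corrected, your proof is valid, at the cost of being longer than the paper's.
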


\begin{proof}
	By \cref{cohomology_comparison} and the previous proposition, we have $H^i_{c,B}(X,F^D)=\hat{H}^i_{c,B}(X,F^D)=0$ for $i>2$ and $H^i_{c,B}(X,F^D)=\hat{H}^i_{c,B}(X,F^D)$ is torsion of cofinite type for $i=1,2$; finally for $i=-1,0$ the difference between $H^i_{c,B}(X,F^D)$ and $\hat{H}^i_{c,B}(X,F^D)$ is given by a group of finite type so $H^i_{c,B}(X,F^D)$ is finite type by the previous proposition. The vanishing in degree $<-1$ is clear.
\end{proof}

We now compute some special cases.
\begin{prop}\label{cohomology_Z}
	Suppose that $X=\Spec(\cal{O}_K)$ is \emph{regular}. We have
	\[H^i(X,\ZZ)=\left\{
	\begin{array}{ll}
	\ZZ & i=0\\
	0 & i=1\\
	\text{torsion} & i>1
	\end{array}\right.
	\]
	
	Let $j:U \to X$ be an open subscheme of $X$ with $U\neq X$. We have 
	\[H^i(X,j_!\ZZ)=\left\{
	\begin{array}{ll}
	0 & i=0\\
	\left(\prod_{v\in X\backslash U}\ZZ\right)/\ZZ & i=1\\
	\text{torsion} &i>1
	\end{array}\right.
	\]
\end{prop}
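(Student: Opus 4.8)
The plan is to carry out the two computations in succession, deducing the statement for $j_!\ZZ$ from the one for $\ZZ$ via the localisation sequence; the only place where the regularity of $X$ genuinely enters is the first computation.

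For $H^i(X,\ZZ)$: since $X$ is integral it is connected, so $H^0(X,\ZZ)=\ZZ$. For $i=1$ one has $H^1(X,\ZZ)=\Hom_{\mathrm{cont}}(\pi_1(X),\ZZ)=0$, since the \'etale fundamental group is profinite while $\ZZ$ is discrete and torsion-free. For $i\ge 2$ I would use the exact sequence $0\to\ZZ\to\QQ\to\QQ/\ZZ\to 0$: because $H^{i-1}(X,\QQ/\ZZ)=\colim_n H^{i-1}(X,\ZZ/n)$ is torsion, it suffices to prove $H^i(X,\QQ)=0$ for $i\ge 1$, for then the associated long exact sequence gives $H^i(X,\ZZ)\cong H^{i-1}(X,\QQ/\ZZ)$ when $i\ge 2$. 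To get this vanishing I would use the Leray spectral sequence for the inclusion $g\colon\eta\to X$ of the generic point. As $X$ is regular, the strict henselisation $\mathcal{O}_{X,\bar v}^{sh}$ at every closed point $v$ is a discrete valuation ring, so $\Spec(\mathcal{O}_{X,\bar v}^{sh})\times_X\eta$ is the spectrum of a field and the stalk of $R^qg_\ast\QQ$ there is $H^q$ of the constant sheaf $\QQ$ on that field; since $H^q(G,\QQ)=0$ for every profinite group $G$ and every $q\ge 1$ (reduce to the finite quotients $G/U$, where positive-degree cohomology is annihilated by $|G/U|$ while $\QQ$ is uniquely divisible), we get $R^qg_\ast\QQ=0$ for $q\ge 1$ and the unit $\QQ\to g_\ast\QQ$ is an isomorphism. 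Hence $H^i(X,\QQ)=H^i(\eta,\QQ)=0$ for $i\ge 1$, as wanted.

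For $H^i(X,j_!\ZZ)$ I may assume $U\ne\emptyset$ (for $U=\emptyset$ the sheaf $j_!\ZZ$ is zero), so that $Z:=X\setminus U$ is a nonempty finite set of closed points with closed immersion $i_Z\colon Z\to X$. Applying $R\Gamma(X,-)$ to $0\to j_!\ZZ\to\ZZ\to i_{Z,\ast}\ZZ\to 0$ and using that $i_Z$ is finite, one gets $H^i(X,i_{Z,\ast}\ZZ)=H^i(Z,\ZZ)=\bigoplus_{v\in Z}H^i(G_{\kappa(v)},\ZZ)$; as each residue field $\kappa(v)$ is finite, $G_{\kappa(v)}\cong\hat\ZZ$, and the standard computation (again via $0\to\ZZ\to\QQ\to\QQ/\ZZ\to 0$ and $\mathrm{cd}(\hat\ZZ)=1$) gives $\prod_{v\in Z}\ZZ$ in degree $0$, $0$ in degree $1$, the torsion group $\bigoplus_{v\in Z}\QQ/\ZZ$ in degree $2$, and $0$ in degrees $\ge 3$. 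Feeding the first computation into the long exact sequence: in degree $0$ the restriction $\ZZ=H^0(X,\ZZ)\to H^0(Z,\ZZ)=\prod_{v\in Z}\ZZ$ is the injective diagonal, so $H^0(X,j_!\ZZ)=0$; in degree $1$, since $H^1(X,\ZZ)=0$, we obtain $H^1(X,j_!\ZZ)=\operatorname{coker}\bigl(\ZZ\to\prod_{v\in Z}\ZZ\bigr)=\bigl(\prod_{v\in X\setminus U}\ZZ\bigr)/\ZZ$; and for $i\ge 2$, the long exact sequence exhibits $H^i(X,j_!\ZZ)$ as an extension of a subgroup of the torsion group $H^i(X,\ZZ)$ by a quotient of the torsion group $H^{i-1}(Z,\ZZ)$, hence torsion.

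The main obstacle --- a mild one --- is the vanishing $H^i(X,\QQ)=0$ for $i\ge 1$: this is the single input that genuinely requires regularity of $X$ (to identify the strict henselisations at closed points with discrete valuation rings), and it is the step I would write out in full. Everything else reduces to formal manipulation of long exact sequences together with the well-known cohomology of a finite field.
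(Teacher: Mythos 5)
Your proof is correct and follows essentially the same route as the paper: the paper quotes \cite[II.2.10]{ADT} for the torsionness in degrees $>1$ (and that reference is proved exactly by your Leray-at-the-generic-point argument giving $H^i(X,\QQ)=0$ for $i\geq 1$), uses the same fundamental-group argument for $H^1$, and leaves the $j_!\ZZ$ part to the localisation sequence you spell out. One caveat on your aside about where regularity enters: the identification $H^1(X_{et},\ZZ)=\Hom_{\mathrm{cont}}(\pi_1^{et}(X),\ZZ)$ itself needs $X$ normal (the paper justifies it via $\pi_1^{proet}(X)=\pi_1^{et}(X)$, which fails for non-unibranch schemes), so regularity is used there too — although your own vanishing $H^1(X,\QQ)=0$ together with the surjectivity of $H^0(X,\QQ)\to H^0(X,\QQ/\ZZ)$ already yields $H^1(X,\ZZ)=0$ without any appeal to $\pi_1$.
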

\begin{proof}
	The result for $i>1$ follows from \cite[II.2.10]{ADT}. Since $X$ is normal, $\pi_1^{proet}(X)=\pi_1^{et}(X)$ is profinite so $H^1(X,\ZZ)=\Hom_{cont}(\pi_1^{et}(X),\ZZ)=0$.
\end{proof}
\begin{prop}\label{cohomology_compact_support_ZD}
	Suppose that $X=\Spec(\cal{O}_K)$ is \emph{regular}.
	We have
	\[H^i_{c,B}(X,\ZZ^D)=\left\{
	\begin{array}{ll}
	\text{finite type of rank~} [K:\QQ]-1 & i=-1\\
	\text{finite} &i=0\\
	0 & i=1\\
	\QQ/\ZZ &i=2\\
	\end{array}\right.
	\]
	Moreover we have an exact sequence
	\[
	0 \to \ZZ^{r_2}\to H^{-1}_{c,B}(X,\ZZ^D) \to \cal{O}_K^\times \to (\ZZ/2\ZZ)^{r_1} \to H^0_{c,B}(X,\ZZ^D) \to \mathrm{Pic}(X) \to 0
	\]
	and $H^0_{c,B}(X,\ZZ^D)$ is the narrow ideal class group $\mathrm{Pic}^+(X)$.
\end{prop}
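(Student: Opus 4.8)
The statement will follow by feeding the already-established structural results into the classical computations of the étale cohomology of $\GG_m$ and of the constant sheaf $\ZZ$ on $X$.

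First I would unwind the definitions. Since $X$ is regular, $\ZZ^D=\ZZ^c_X=\GG_m[1]$ and $(\alpha^\ast\ZZ)^\vee(1)[2]=2i\pi\ZZ[2]$, so by \cref{def_cohomology_compact_support} the complex $R\Gamma_{c,B}(X,\ZZ^D)$ is the fiber of $R\Gamma(X,\GG_m)[1]\to R\Gamma_{G_\RR}(X(\CC),2i\pi\ZZ[2])$. Using the decomposition of $R\Gamma_{G_\RR}(X(\CC),-)$ over the archimedean places recalled in the notation section, together with the fact that complex conjugation acts on $2i\pi\ZZ$ by $-1$ (so that $H^n(G_\RR,2i\pi\ZZ)=\ZZ/2$ for $n$ odd and $0$ for $n$ even), one gets $H^0_{G_\RR}(X(\CC),2i\pi\ZZ)\cong\ZZ^{r_2}$, $H^n_{G_\RR}(X(\CC),2i\pi\ZZ)\cong(\ZZ/2)^{r_1}$ for $n$ odd $\geq 1$, and $0$ for $n$ even $\geq 2$, where $r_1,r_2$ are the numbers of real and complex places of $K$.

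The concentration of $R\Gamma_{c,B}(X,\ZZ^D)$ in degrees $[-1,2]$ and the qualitative shape of the cohomology groups (finite type for $i=-1,0$, torsion of cofinite type for $i=1,2$) are \cref{cohomology_compact_support}. For $i=1,2$ I would pass to the Tate version via \cref{cohomology_comparison} (legitimate since $\alpha^\ast\ZZ$ is torsion-free), then apply \cref{twisted_AV} to get $H^i_{c,B}(X,\ZZ^D)=\hat H^i_{c,B}(X,\ZZ^D)\cong H^{2-i}(X,\ZZ)^\ast$; since $H^1(X,\ZZ)=0$ and $H^0(X,\ZZ)=\ZZ$ by \cref{cohomology_Z}, this yields $H^1_{c,B}(X,\ZZ^D)=0$ and $H^2_{c,B}(X,\ZZ^D)=\QQ/\ZZ$ (and $0$ in degrees $\geq 3$, already known). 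Next I would write out the long exact cohomology sequence of the defining fiber sequence in degrees $-2$ through $0$, using $H^{n}\bigl(R\Gamma_{G_\RR}(X(\CC),2i\pi\ZZ[2])\bigr)=H^{n+2}_{G_\RR}(X(\CC),2i\pi\ZZ)$, $H^{-1}(X,\GG_m)=0$, $H^0(X,\GG_m)=\mathcal{O}_K^\times$ and $H^1(X,\GG_m)=\Pic(X)$. It reads
\[
0\to\ZZ^{r_2}\to H^{-1}_{c,B}(X,\ZZ^D)\to\mathcal{O}_K^\times\to(\ZZ/2)^{r_1}\to H^0_{c,B}(X,\ZZ^D)\to\Pic(X)\to 0,
\]
which is the asserted exact sequence. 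Since $H^{-1}_{c,B}$ is finite type by \cref{cohomology_compact_support} and $\mathcal{O}_K^\times$ has rank $r_1+r_2-1$ by Dirichlet's unit theorem, the sequence forces $\operatorname{rank}H^{-1}_{c,B}(X,\ZZ^D)=r_2+(r_1+r_2-1)=[K:\QQ]-1$; it also exhibits $H^0_{c,B}(X,\ZZ^D)$ as an extension of the finite group $\Pic(X)$ by the finite $2$-group $\operatorname{coker}\bigl(\mathcal{O}_K^\times\to(\ZZ/2)^{r_1}\bigr)$, hence finite. Unwinding the construction of $\alpha^\ast\ZZ^D\to 2i\pi\ZZ[2]$ through $\overline{\QQ}^\times[1]\to\CC^\times[1]\to 2i\pi\ZZ[2]$ identifies the middle map $\mathcal{O}_K^\times\to(\ZZ/2)^{r_1}$ with the sign map $u\mapsto(\operatorname{sgn}\sigma_v(u))_{v\ \mathrm{real}}$ — at a real place the boundary $\RR^\times=(\CC^\times)^{G_\RR}\to H^1(G_\RR,2i\pi\ZZ)=\ZZ/2$ coming from $0\to 2i\pi\ZZ\to\CC\to\CC^\times\to0$ is exactly the sign — so the displayed sequence is the classical defining exact sequence of the narrow class group.

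Finally, to identify $H^0_{c,B}(X,\ZZ^D)$ with $\Pic^+(X)$ canonically: being finite, it coincides with its profinite completion, so \cref{twisted_AV} (together with $\hat H^0_{c,B}\simeq H^0_{c,B}$) gives $H^0_{c,B}(X,\ZZ^D)\cong H^2(X,\ZZ)^\ast$; from \cref{cohomology_Z} (the vanishing $H^1(X,\ZZ)=0$, the torsionness of $H^{\geq2}(X,\ZZ)$, and $H^i(X,\QQ)=H^i(X,\ZZ)\otimes\QQ$) one gets $H^2(X,\ZZ)\cong H^1(X,\QQ/\ZZ)=\Hom_{cont}(\pi_1^{et}(X),\QQ/\ZZ)$, and class field theory identifies $\pi_1^{et}(\Spec\mathcal{O}_K)^{ab}$ with the Galois group of the narrow Hilbert class field, i.e. with $\Pic^+(X)$; double Pontryagin duality then gives $H^0_{c,B}(X,\ZZ^D)=\Pic^+(X)$, compatibly with the exact sequence above. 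The only genuinely delicate points are the bookkeeping of degree shifts in the two long exact sequences and the identification of $\mathcal{O}_K^\times\to(\ZZ/2)^{r_1}$ with the sign map; everything else is formal given \cref{twisted_AV}, \cref{cohomology_comparison}, \cref{cohomology_compact_support} and \cref{cohomology_Z}.
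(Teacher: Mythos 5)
Your proof is correct, and your derivation of the exact sequence (the long exact sequence of the defining fiber sequence, the computation $H^0_{G_\RR}(X(\CC),2i\pi\ZZ)=\ZZ^{r_2}$, $H^{\mathrm{odd}}=(\ZZ/2)^{r_1}$, $H^{\mathrm{even}\geq 2}=0$, and the identification of the middle map with the sign map) is the same as the paper's. Where you genuinely diverge is in the degrees $i\geq 0$: the paper uses \cref{comparison_Tate_compact_cohomology} and \cref{cohomology_comparison} to identify $\tau^{\geq 0}R\Gamma_{c,B}(X,\ZZ^D)$ with $\tau^{\geq 0}R\hat{\Gamma}_c(X,\GG_m[1])$ and then quotes Milne's computations (ADT II.2.6 and II.2.8(a)), which give at once $H^1=0$, $H^2=\QQ/\ZZ$ \emph{and} the identification of $H^0_{c,B}(X,\ZZ^D)$ with the narrow class group, the latter appearing there directly as the Tate-modified $H^1_c(X,\GG_m)$. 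You instead use the paper's own duality theorem \cref{twisted_AV} together with \cref{cohomology_Z} for $i=1,2$ --- exactly the alternative the paper mentions in the remark following the proposition --- and, for the narrow class group, you pass through $H^0_{c,B}\simeq H^2(X,\ZZ)^\ast$, the coefficient sequence $0\to\ZZ\to\QQ\to\QQ/\ZZ\to 0$, and class field theory for extensions unramified at all finite places ($\pi_1^{et}(X)^{ab}\simeq\Pic^+(X)$). The paper's route buys the $\Pic^+$ identification off the shelf and makes the remark's statement $H^2(X,\ZZ)\simeq\Pic^+(X)^\ast$ a corollary; yours takes that class-field-theoretic computation of $H^2(X,\ZZ)$ as input and runs the implication in reverse --- an input of depth comparable to ADT II.2.8(a). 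Two small caveats: the exact sequence and the sign map alone do not determine $H^0_{c,B}$ (it is an extension problem), so your separate duality-plus-CFT argument is indeed needed and you rightly supply it; and your parenthetical claim that the resulting isomorphism is compatible with the displayed sequence is asserted rather than proved, which is harmless here since the proposition only asserts an identification of the group.
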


\begin{proof}
	The exact sequence comes from the long exact cohomology sequence of the defining fiber sequence. By \cref{cohomology_comparison}, we have
	\[
	\tau^{\geq 0} R\Gamma_{c,B}(X,\ZZ^D) \xrightarrow{\simeq} \tau^{\geq 0} R\hat{\Gamma}_{c,B}(X,\ZZ^D) \simeq \tau^{\geq 0}R\hat{\Gamma}_c(X,\GG_m[1])
	\]
	hence the result for $i\geq 0$ follows from \cite[II.2.6, II.2.8(a)]{ADT}. 
\end{proof}

\begin{rmk}
	\begin{itemize}
		\item[]
		\item We have $H^1_{G_\RR}(X(\CC),2i\pi\ZZ)\simeq \hat{H}^0_{G_\RR}(X(\CC),\CC^\times)=(\RR^\times/\RR^\times_+)^{r_1}$. Since the map $H^0(X,\GG_m) \to H^1_{G_\RR}(X(\CC),2i\pi\ZZ)$ factors through $H^0_{G_\RR}(X(\CC),\CC^\times)$ by definition, it is given by
		\[
		\cal{O}_K^\times \xrightarrow{(\mathrm{sign}_v)_v} \bigoplus_{v~\text{real}}\ZZ/2\ZZ
		\]
		Its kernel is $\cal{O}_{K,+}^{\times}$, the group of totally positive units; it has same rank as $\cal{O}_K^\times$. Thus we have a short exact sequence
		\[
		0 \to \ZZ^{r_2}\to H^{-1}_{c,B}(X,\ZZ^D)\to \cal{O}_{K,+}^{\times} \to 0
		\]
		\item We can also recover the result for $i\geq 1$ by Artin-Verdier duality (\cref{twisted_AV}) and \cref{cohomology_comparison} since $H^i(X,\ZZ)=0,\ZZ,0$ for $i<0$, $i=0$, $i=1$. On the other hand, from the result for $i=0$ we recover $H^2(X,\ZZ)\simeq \mathrm{Pic}^+(X)^\ast$.
		\item If $X$ is singular, the exact sequence becomes
		\[
		0 \to \ZZ^{r_2}\to H^{-1}_{c,B}(X,\ZZ^D) \to \mathrm{CH}_0(X,1) \to (\ZZ/2\ZZ)^{r_1}\to H^0_{c,B}(X,\ZZ^D) \to \mathrm{CH}_0(X) \to 0
		\]
		and $H^1_{c,B}(X,\ZZ^D)=H^1(X,\ZZ)^\ast$ can be non-zero and non-finite, depending on the singularities of $X$. Moreover, $H^0_{c,B}(X,\ZZ^D)$ can be seen as a "narrow Chow group".
	\end{itemize}
\end{rmk}

\begin{prop}\label{cohomology_compact_support_jZD}
	Suppose that $X=\Spec(\cal{O}_K)$ is \emph{regular} and let $j:U\to X$ be an open subscheme with $U\neq X$, say $U=\Spec(\cal{O}_{K,S})$ with $S$ a set of places containing the archimedean places and at least one finite place. Denote $S_f$ the set of finite places in $S$ and $s_f$ its cardinality.
	We have
	\[H^i_{c,B}(X,(j_!\ZZ)^D)=\left\{
	\begin{array}{ll}
	\text{finite type of rank~} [K:\QQ]-1 & i=-1\\
	\text{finite} &i=0\\
	\Ker\left(\bigoplus_{v\in S_f}\mathrm{Br}(K_v)\xrightarrow{\Sigma} \QQ/\ZZ\right)\simeq (\QQ/\ZZ)^{s_f-1}  & i=1\\
	0 & i=2
	\end{array}\right.
	\]
	Moreover, we have an exact sequence
	\[
	0 \to \ZZ^{r_2} \to H^{-1}_{c,B}(X,(j_!\ZZ)^D) \to \cal{O}_{K,S}^\times \to (\ZZ/2\ZZ)^{r_1} \to H^0_{c,B}(X,(j_!\ZZ)^D) \to \mathrm{Pic}(U) \to 0
	\]
	and $H^0_{c,B}(X,(j_!\ZZ)^D)$ is the narrow $S$-class group $\Pic^+(U)$.
\end{prop}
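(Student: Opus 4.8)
The plan is to run the defining fibre sequence of \cref{def_cohomology_compact_support} and feed in the tools already available: the elementary cohomology of $\GG_m$ on $U$ in low degrees, and Artin--Verdier duality (\cref{twisted_AV}) together with \cref{cohomology_Z} in high degrees. First I would record two structural facts. As $X$ is regular, $\ZZ^c_X=\GG_m[1]$; as $j$ is an open immersion, $j^!=j^*$ and $j^*\ZZ^c_X=\ZZ^c_U=\GG_m[1]$, so the $j_!\dashv j^*$ adjunction and the projection formula give $(j_!\ZZ)^D=R\cal{H}om_X(j_!\ZZ,\GG_m[1])=Rj_*\GG_{m,U}[1]$ and hence $R\Gamma(X,(j_!\ZZ)^D)=R\Gamma(U,\GG_m)[1]$. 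Also every point of $X(\CC)$ lies over the generic point $\eta\in U$, so $\alpha^*j_!\ZZ=\ZZ$ and $(\alpha^*j_!\ZZ)^\vee(1)[2]=\ZZ(1)[2]$. Thus
\[
R\Gamma_{c,B}(X,(j_!\ZZ)^D)=\fib\bigl(R\Gamma(U,\GG_m)[1]\to R\Gamma_{G_\RR}(X(\CC),\ZZ(1)[2])\bigr),
\]
and \cref{cohomology_compact_support}, applied to the $\ZZ$-constructible sheaf $j_!\ZZ$, shows this is concentrated in $[-1,2]$, which gives the vanishing outside that range.

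For $i=-1,0$ I would take the long exact cohomology sequence of this fibre sequence. Here $H^{-1}(U,\GG_m)=0$, $H^0(U,\GG_m)=\cal{O}_{K,S}^\times$, $H^1(U,\GG_m)=\Pic(U)$, while $R\Gamma_{G_\RR}(X(\CC),\ZZ(1))=\prod_{v\mid\infty}R\Gamma(G_{K_v},\ZZ(1))$ gives $H^n_{G_\RR}(X(\CC),\ZZ(1))$ equal to $\ZZ^{r_2}$, $(\ZZ/2)^{r_1}$, $0$ for $n=0,1,2$ respectively. This produces the displayed exact sequence
\[
0\to\ZZ^{r_2}\to H^{-1}_{c,B}(X,(j_!\ZZ)^D)\to\cal{O}_{K,S}^\times\to(\ZZ/2)^{r_1}\to H^0_{c,B}(X,(j_!\ZZ)^D)\to\Pic(U)\to0 .
\]
Exactly as in the remark after \cref{cohomology_compact_support_ZD}, the map $\cal{O}_{K,S}^\times\to(\ZZ/2)^{r_1}$ factors through $H^0_{G_\RR}(X(\CC),\CC^\times)=(\RR^\times)^{r_1}$ and equals $u\mapsto(\mathrm{sign}_v u)_{v~\mathrm{real}}$, whose kernel (the totally positive $S$-units) has the same $\ZZ$-rank as $\cal{O}_{K,S}^\times$; so $H^{-1}_{c,B}(X,(j_!\ZZ)^D)$ is of finite type of the asserted rank (Dirichlet's $S$-unit theorem) and the cokernel description identifies $H^0_{c,B}(X,(j_!\ZZ)^D)$ with the narrow $S$-class group $\Pic^+(U)$.

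For $i=1,2$ I would instead invoke duality: $H^i_{c,B}(X,(j_!\ZZ)^D)=\hat H^i_{c,B}(X,(j_!\ZZ)^D)$ for $i\ge1$ by \cref{cohomology_comparison}; $\hat H^i_{c,B}(X,(j_!\ZZ)^D)\cong(H^{2-i}(X,j_!\ZZ))^\ast$ for $i\ne-1,0$ by \cref{twisted_AV}; and $H^0(X,j_!\ZZ)=0$, $H^1(X,j_!\ZZ)=(\prod_{v\in S_f}\ZZ)/\ZZ$ by \cref{cohomology_Z}. Hence $H^2_{c,B}(X,(j_!\ZZ)^D)=0$ and $H^1_{c,B}(X,(j_!\ZZ)^D)=\Hom\bigl((\textstyle\prod_{v\in S_f}\ZZ)/\ZZ,\QQ/\ZZ\bigr)=\ker\bigl(\bigoplus_{v\in S_f}\QQ/\ZZ\xrightarrow{\Sigma}\QQ/\ZZ\bigr)$, which the local invariant isomorphisms $\mathrm{Br}(K_v)\xrightarrow{\sim}\QQ/\ZZ$ rewrite as $\ker\bigl(\bigoplus_{v\in S_f}\mathrm{Br}(K_v)\xrightarrow{\Sigma}\QQ/\ZZ\bigr)$; since $S_f\ne\emptyset$ the sum-of-invariants map $\Sigma$ is onto, so this kernel is (abstractly) $(\QQ/\ZZ)^{s_f-1}$.

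No step is a serious obstacle: this is a bookkeeping computation layered on \cref{twisted_AV}, \cref{cohomology_comparison}, \cref{cohomology_Z} and \cref{cohomology_compact_support_ZD}. The points that need care are the two identifications of connecting maps — that $\cal{O}_{K,S}^\times\to(\ZZ/2)^{r_1}$ is the sign map (so that $H^0_{c,B}$ is the \emph{narrow} class group, not merely an abstract extension of $\Pic(U)$), and that the Artin--Verdier description of $H^1_{c,B}$ matches the stated kernel of Brauer groups — both of which reduce to unwinding the comparison morphism $\alpha^*\ZZ^c_X\to\CC^\times[1]\to\ZZ(1)[2]$ and the compatibility of the Artin--Verdier pairing with local invariants, exactly as was done for $\ZZ^D$. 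As a cross-check one may instead apply $R\Gamma_{c,B}(X,(-)^D)$ to the localisation sequence $0\to j_!\ZZ\to\ZZ\to\bigoplus_{v\in S_f}i_{v,\ast}\ZZ\to0$ and reduce to \cref{cohomology_compact_support_ZD} together with the trivial case $(i_{v,\ast}\ZZ)^D=i_{v,\ast}\ZZ$ (since $Ri_v^!\ZZ^c_X=\ZZ^c_v=\ZZ$) and $R\Gamma_{c,B}(X,(i_{v,\ast}\ZZ)^D)=R\Gamma(G_v,\ZZ)$, whose cohomology is $\ZZ$, $0$, $\mathrm{Br}(K_v)$ in degrees $0,1,2$; this recovers the same groups and the same connecting maps.
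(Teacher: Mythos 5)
Your proposal follows essentially the same route as the paper: for $i=1,2$ it uses \cref{cohomology_comparison} and \cref{twisted_AV} together with the computation of $H^i(X,j_!\ZZ)$ from \cref{cohomology_Z}, and the six-term sequence is read off from the long exact sequence of the defining fibre sequence after observing $(j_!\ZZ)^D=Rj_\ast\GG_m[1]$ and $\alpha^\ast j_!\ZZ=\ZZ$. One point is glossed over in your main text: the identification $H^0_{c,B}(X,(j_!\ZZ)^D)\simeq\Pic^+(U)$ does \emph{not} follow from the six-term sequence plus knowing that the connecting map $\cal{O}_{K,S}^\times\to(\ZZ/2\ZZ)^{r_1}$ is the sign map — that only exhibits $H^0_{c,B}$ as \emph{some} extension of $\Pic(U)$ by $\mathrm{coker}(\mathrm{sign})$, and the extension class has to be pinned down. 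The paper does this by passing to $\hat H^0_{c,B}(X,(j_!\ZZ)^D)\simeq\hat H^0_c(X,j_\ast\GG_m[1])$ via \cref{comparison_Tate_compact_cohomology} and \cref{cohomology_comparison} and running the divisor sequence $0\to j_\ast\GG_m\to g_\ast\GG_m\to\bigoplus_{v\in U_0}i_{v,\ast}\ZZ\to0$ as in \cite[II.2.8(a)]{ADT}; your remark that one must "unwind as in the $\ZZ^D$ case", and your cross-check via the localisation sequence $0\to j_!\ZZ\to\ZZ\to\bigoplus_{v\in S_f}i_{v,\ast}\ZZ\to0$, both amount to exactly this, so the idea is present, but as written the "cokernel description" alone is not a proof of this part.

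Second, a genuine discrepancy: from $0\to\ZZ^{r_2}\to H^{-1}_{c,B}(X,(j_!\ZZ)^D)\to\cal{O}_{K,S,+}^\times\to0$ and Dirichlet's $S$-unit theorem your own argument gives rank $r_2+(r_1+r_2+s_f-1)=[K:\QQ]+s_f-1$, not $[K:\QQ]-1$, so the phrase "of the asserted rank" is not what your computation yields. (The stated rank appears to be carried over from the case $U=X$ of \cref{cohomology_compact_support_ZD}; compare the short exact sequence in \cref{weil_etale_jZ}, which forces the same rank $[K:\QQ]+s_f-1$.) This is an issue with the displayed statement rather than with your method, but it should be flagged rather than absorbed into "the asserted rank".
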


\begin{proof}
 	By \cref{cohomology_comparison} and Artin-Verdier duality we obtain the result for $i=1,2$ from the computation of $H^i(X,j_!\ZZ)$. We have $(j_!\ZZ)^D=Rj_\ast \ZZ^c_U=j_\ast \GG_m[1]$ \cite[II.1.4]{ADT} and $(j_!\ZZ)_\CC^\vee(1)=2i\pi\ZZ$ so the exact sequence comes from the long exact cohomology sequence associated to the defining fiber sequence. 
	
	We have $R\hat{\Gamma}_{c,B}(X,(j_!\ZZ)^D)\xleftarrow{\simeq} R\hat{\Gamma}_c(X,j_\ast \GG_m[1])$ by \cref{comparison_Tate_compact_cohomology}. The divisor short exact sequence
	\[
	0 \to j_\ast \GG_m \to g_\ast \GG_m \to \bigoplus_{v\in U_0} i_{v,\ast} \ZZ \to 0
	\]
	and \cref{cohomology_comparison} gives, as in \cite[II.2.8(a)]{ADT}, the identification of $H^0_{c,B}(X,(j_!\ZZ)^D)\simeq \hat{H}^0_{c,B}(X,(j_!\ZZ)^D)\simeq \hat{H}^0_{c}(X,j_\ast \GG_m[1])$ with the narrow $S$-class group.
\end{proof}

\begin{rmk}
	\begin{itemize}
		\item[]
		\item The map $H^0(U,\GG_m) \to H^1_{G_\RR}(X(\CC),2i\pi\ZZ)$ is given by
		\[
		\cal{O}_{K,S}^\times \xrightarrow{(\mathrm{sign}_v)_{v~\text{real}}} \bigoplus_{v~\text{real}}\ZZ/2\ZZ
		\]
		Its kernel is $\cal{O}_{K,S,+}^{\times}$, the group of totally positive $S$-units; it has same rank as $\cal{O}_{K,S}^\times$. Thus there is a short exact sequence
		\[
		0 \to \ZZ^{r_2} \to H^{-1}_c(X,(j_!\ZZ)^D) \to \cal{O}_{K,S}^{\times,+} \to 0
		\]
		\item For $i=1,2$, we can also prove the result directly by considering the following snake diagram:
		\[
		\begin{tikzcd}
		&
		& 
		& 0 \ar{r} \arrow[d, hook]
		& H^{1}_{c,B}(X,(j_!\ZZ))^D \arrow[d, hook]   
		&
		&\\
		& 0 \ar{r}
		& 0 \ar{r} \ar{dd}
		& H^2(U,\GG_m) \arrow[r, equal] \ar{dd}
		& H^2(U,\GG_m) \ar{r} \ar{dd}
		&  0
		& ~\\
		&
		&
		& ~
		&
		&
		\ar[r, phantom, ""{coordinate, name=Y}]
		&~\\
		~& \ar[l, phantom, ""{coordinate, name=Z}] 0 \ar{r}
		& \prod_{v\in S_f}\mathrm{Br}(K_v) \ar{r} \arrow[d, equal]
		& \prod_{v\in S}\mathrm{Br}(K_v) \ar{r} \arrow[d, two heads,"\Sigma"]
		& \prod_{v~\text{archimedean}}\mathrm{Br}(K_v) \ar{r} \arrow[d, two heads,]
		& 0
		& \\
		&
		& \ar[from=uuuurr, crossing over, rounded corners,
		to path=
		{ -- ([xshift=2ex]\tikztostart.east)
			-| (Y) [near end]\tikztonodes
			-| (Z) [near end]\tikztonodes
			|- ([xshift=-2ex]\tikztotarget.west)
			-- (\tikztotarget)}
		]\prod_{v\in S_f}\mathrm{Br}(K_v) \ar{r}{\Sigma}
		& \QQ/\ZZ \ar{r}
		& H^2_{c,B}(X,(j_!\ZZ)^D) \ar{r} 
		& 0
		&
		\end{tikzcd}
		\]
		\item We recover from Artin-Vertier duality an identification $H^2(X,j_!\ZZ)\simeq \Pic^+(U)^\ast$.
	\end{itemize}
\end{rmk}

Let $i:x\to X$ be a closed point of $X$ and $M$ a discrete $G_x$-module of finite type. Since $Ri^!\ZZ^c_X=\ZZ[0]$, we have $(i_\ast M)^D=i_\ast M^\vee$, while $(i_\ast M)_\CC=0$. Thus $R\Gamma_{c,B}(X,(i_\ast M)^D)=R\Gamma(G_x,M^\vee)$ and we obtain
\begin{prop}\label{cohomology_compact_support_point}
	We have
	\[H^i_{c,B}(X,(i_\ast M)^D)=\left\{
	\begin{array}{ll}
	0 & i=-1\\
	\text{finite type} & i=0\\
	\text{finite} & i=1\\
	\text{torsion of cofinite type} &i=2\\
	\end{array}\right.
	\]
	If $M$ is finite, $R\Gamma_{c,B}(X,(i_\ast M)^D)$ has finite cohomology groups and $H^0_{c,B}(X,(i_\ast M)^D)=0$.
\end{prop}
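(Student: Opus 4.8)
\noindent\textit{Proof sketch.} Everything rests on the identification recorded just before the statement. The closed point $x$ has no complex points, so $(i_\ast M)_\CC=0$ and the archimedean correction is trivial, giving $R\Gamma_{c,B}(X,(i_\ast M)^D)\simeq R\Gamma(X,(i_\ast M)^D)$; and since $Ri_x^!\ZZ^c_X=\ZZ[0]$ we get $(i_\ast M)^D=i_{x,\ast}R\cal{H}om_\ZZ(M,\ZZ)$ (the derived internal Hom of discrete $G_x$-modules), whence
\[
R\Gamma_{c,B}(X,(i_\ast M)^D)\simeq R\Gamma\big(G_x,R\cal{H}om_\ZZ(M,\ZZ)\big).
\]
(The shorthand $(i_\ast M)^D=i_\ast M^\vee$ is literally correct when $M$ is torsion-free; in general one retains the $\Ext^1$, which is exactly what produces the finite contributions in degrees $1,2$ below.) So the proof is a computation in the cohomology of the procyclic group $G_x\cong\widehat{\ZZ}$.

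First I would describe the coefficient complex: as $M$ is of finite type, $R\cal{H}om_\ZZ(M,\ZZ)$ is concentrated in degrees $0,1$, with $H^0=M^\vee$ of finite type and torsion-free and $H^1=\Ext^1_\ZZ(M,\ZZ)\cong(M_{tor})^\ast$ finite, both discrete $G_x$-modules whose action factors through a finite quotient of $G_x$. Next I would tabulate the cohomology of $G_x\cong\widehat{\ZZ}$ on a finite type discrete module $N$: one has $H^0(G_x,N)=N^{G_x}$ of finite type, $H^1(G_x,N)$ finite, $H^2(G_x,N)$ torsion of cofinite type, and $H^i(G_x,N)=0$ for $i\geq3$. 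For the torsion part of $N$ this follows from $\mathrm{cd}_{\mathrm{tors}}(\widehat{\ZZ})=1$ (cohomology concentrated in degrees $0,1$, finite there); for a finite free $N$ one runs the long exact sequence of $0\to\ZZ\to\QQ\to\QQ/\ZZ\to0$, uses that $\QQ$-coefficients kill higher profinite cohomology and that $N\otimes\QQ/\ZZ$ has cohomology only in degrees $0,1$, and reads off $H^1(G_x,N)\cong\mathrm{coker}\big((N\otimes\QQ)^{G_x}\to(N\otimes\QQ/\ZZ)^{G_x}\big)$ (finite, equal to the $H^1$ of the finite quotient through which the action factors) and $H^2(G_x,N)\cong(N\otimes\QQ/\ZZ)_{G_x}$ (torsion of cofinite type).

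Finally I would feed this into the hypercohomology spectral sequence $E_2^{p,q}=H^p\big(G_x,H^q(R\cal{H}om_\ZZ(M,\ZZ))\big)\Rightarrow H^{p+q}_{c,B}(X,(i_\ast M)^D)$, which has only the two rows $q=0$ (entries $H^p(G_x,M^\vee)$: finite type, finite, torsion of cofinite type, $0$ for $p=0,1,2,\geq3$) and $q=1$ (entries $H^p(G_x,(M_{tor})^\ast)$: finite, finite, $0$ for $p=0,1,\geq2$). Reading the diagonals: degree $-1$ vanishes; degree $0$ is $(M^\vee)^{G_x}$, of finite type; degree $1$ is an extension of two finite subquotients, hence finite; degree $2$ is an extension of a finite group by a quotient of a torsion cofinite type group, hence torsion of cofinite type; degrees $\geq3$ vanish (consistently with \cref{cohomology_compact_support}). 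The single possibly non-zero differential $d_2\colon E_2^{0,1}\to E_2^{2,0}$ only passes to a subgroup and a quotient, so it does not affect these qualitative conclusions. When $M$ is finite, $M^\vee=0$, so only the $q=1$ row survives: $R\Gamma_{c,B}(X,(i_\ast M)^D)\simeq R\Gamma(G_x,M^\ast)[-1]$ is concentrated in degrees $1,2$ with finite cohomology, which gives the last assertion. The one genuine subtlety, and the step I would present most carefully, is the appearance of a non-zero cofinite type group in cohomological degree $2$ even though $\mathrm{cd}(\widehat{\ZZ})=1$: this is precisely the contribution of the torsion-free part $M^\vee$, seen through $\QQ/\ZZ$-coefficients. \qed
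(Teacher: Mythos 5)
Your argument is correct and is essentially the paper's: both reduce via $Ri_x^!\ZZ^c_X=\ZZ[0]$ and the vanishing of $(i_\ast M)_\CC$ to the cohomology of $G_x\simeq\hat{\ZZ}$ with coefficients in the derived dual $R\cal{H}om_\ZZ(M,\ZZ)$, and then split into the torsion-free contribution $\Hom(M,\ZZ)$ and the finite contribution $(M_{tor})^\ast[-1]$. The only cosmetic difference is bookkeeping — the paper dévisses $M$ by $0\to M_{tor}\to M\to M/tor\to 0$ and cites the $\hat{\ZZ}$-computations as known, whereas you run the two-row hypercohomology spectral sequence and spell those computations out.
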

\begin{proof}
	If $M$ is finite, we have $M^\vee=M^\ast[-1]$. If $M$ is torsion-free, we have $M^\vee=\Hom_{Ab}(M,\ZZ)$. Thus in both cases we reduce to cohomology of discrete $G_x\simeq \hat{\ZZ}$-modules of finite type, for which the result is known. For arbitrary $M$, we conclude with the short exact sequence $0 \to M_{tor} \to M \to M/{tor} \to 0$.
\end{proof}

Let $D=R\Hom(R\Gamma(X,-),\QQ[-2])$, and denote $(-)^\dagger=\Hom(-,\QQ)$.
\begin{prop}\label{cohomology_dagger}
	Let $F$ be a $\ZZ$-constructible sheaf. Then $D_F$ is concentrated in degree $[1,2]$.
\end{prop}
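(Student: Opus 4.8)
The plan is to rewrite $D_F$ explicitly and then reduce by dévissage to \cref{cohomology_Z}. Since $\QQ$ is an injective $\ZZ$-module, $D_F = R\Hom(R\Gamma(X,F),\QQ)[-2]$ has cohomology $H^i(D_F) \cong \Hom(H^{2-i}(X,F),\QQ)$; hence the statement is equivalent to the vanishing $H^j(X,F) = 0$ for $j < 0$ — which is clear since $F$ is an ordinary sheaf placed in degree $0$ — together with the assertion that $H^j(X,F)$ is torsion for every $j \geq 2$. I would prove the latter as follows.

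First I would pick a dense open immersion $j\colon U \hookrightarrow X$ with $U$ contained in the regular locus of $X$ and $F|_U$ locally constant associated to a finitely generated abelian group, and let $i\colon Z \hookrightarrow X$ be the finite closed complement. From the short exact sequence $0 \to j_!(F|_U) \to F \to i_\ast(F|_Z) \to 0$, together with the fact that each residue Galois group $G_x$ is procyclic, so that $H^j(G_x, F_x)$ is torsion for $j \geq 2$, the long exact cohomology sequence reduces the claim to showing $H^j(X, j_!F_U)$ is torsion for $j \geq 2$, with $F_U := F|_U$ locally constant of finitely generated stalks on the regular arithmetic curve $U$. Next I would trivialize $F_U$: the $\pi_1^{et}(U)$-action defining it has finite image, so there is a connected finite étale Galois cover $\pi'\colon U' \to U$, of degree $d$, with $\pi'^\ast F_U \cong \underline{M}$ constant. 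Writing $\pi\colon X' \to X$ for the normalization of $X$ in the function field of $U'$ — a finite morphism with $X'$ a regular arithmetic curve — and $j'\colon U' = \pi^{-1}(U) \hookrightarrow X'$, one has a canonical isomorphism $\pi_\ast j'_! \simeq j_!\pi'_\ast$ (checked on stalks, distinguishing whether a geometric point of $X$ lies in $U$ or in $Z$), and $\pi_\ast$ is exact; moreover the trace map exhibits $F_U$ as a retract of $\pi'_\ast\pi'^\ast F_U$ after inverting $d$. Applying the exact functor $j_!$ and then $R\Gamma(X,-)$, using $R\pi_\ast = \pi_\ast$, one sees that $H^j(X,j_!F_U)\otimes\QQ$ is a retract of $H^j(X', j'_!\pi'^\ast F_U)\otimes\QQ$, so it suffices to treat the constant sheaf $\underline{M}$ on the regular curve $X'$. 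Finally, using $\underline{M} \simeq \underline{M_{\mathrm{tor}}}\oplus\underline{\ZZ}^{\,r}$ and hence $j'_!\underline{M} \simeq j'_!\underline{M_{\mathrm{tor}}} \oplus (j'_!\ZZ)^{\oplus r}$: the first summand is a torsion constructible sheaf, whose cohomology is torsion in every degree, and the second has cohomology torsion in degrees $\geq 2$ by \cref{cohomology_Z} (when $U' = X'$, i.e. $Z = \emptyset$, one uses instead that $H^j(X',\ZZ)$ is torsion for $j \geq 2$, again by \cref{cohomology_Z}).

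The only non-formal ingredient is \cref{cohomology_Z}, which ultimately rests on \cite[II.2.10]{ADT}; everything else is dévissage. The step I expect to require the most care is the reduction to constant coefficients — verifying the base-change identity $\pi_\ast j'_! \simeq j_!\pi'_\ast$ and that the trace/retract splitting survives application of $j_!$ and $R\Gamma$ — but this is a routine verification rather than a genuine obstacle.
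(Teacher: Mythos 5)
Your proof is correct, but it follows a genuinely different route from the paper's. The paper disposes of the statement in two lines: after the same reduction $H^i(D_F)=H^{2-i}(X,F)^\dagger$, it notes that since $X$ is proper, $H^i(X,F)$ differs from $\hat{H}^i_c(X,F)$ only by the (finite) Tate cohomology groups at the archimedean places, and the latter groups are torsion for $i\neq 0,1$ by \cite[II.6.2]{ADT}. You instead avoid the compactly supported/Tate machinery entirely and argue by dévissage on ordinary étale cohomology: the localization sequence for a regular dense open on which $F$ is locally constant, torsionness of positive-degree Galois cohomology of the residue fields on the closed complement, a connected finite Galois cover together with the unit-trace retraction (which indeed survives the exact functors $j_!$ and $\pi_\ast=R\pi_\ast$ and then $R\Gamma$) to reduce rationally to constant coefficients on the regular curve $X'$, and finally the splitting $M\simeq M_{tor}\oplus\ZZ^r$ and \cref{cohomology_Z}; the identity $\pi_\ast j'_!\simeq j_!\pi'_\ast$ you need is the same one the paper uses in showing that $j_!G$ is red, so every ingredient is available at this point of the text. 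Both arguments ultimately rest on Milne's duality results, but on different ones (\cite[II.6.2]{ADT} versus \cite[II.2.10]{ADT} through \cref{cohomology_Z}): the paper's route buys brevity and treats all degrees $\neq 0,1$ uniformly, while yours is longer but more self-contained, needing nothing from the corrected compact-support constructions of the preceding subsections.
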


\begin{proof}
	We have $H^i(D_F)=H^{2-i}(X,F)^\dagger$. We thus have to show that $H^i(X,F)$ is torsion for $i\neq 0,1$. But $H^i(X,F)$ differs from $\hat{H}^i_c(X,F)$ by a finite group since $X$ is proper, and the latter is torsion for $i\neq 0,1$ \cite[II.6.2]{ADT}.
\end{proof}
\section{Weil-étale cohomology with compact support of \texorpdfstring{$F^D$}{FD}}\label{sec:construction_weil_etale}

\subsection{Construction of the Weil-étale complex}

Following \cite{Flach18}, the Weil-étale complex with compact support should be the cone of a map $\alpha_F$ making the following diagram commute
\[\begin{tikzcd}
{R\Hom(R\Gamma(X,F),\QQ[-2])} & {R\Gamma_{c,B}(X,F^D)} \\
& {R\hat{\Gamma}_{c,B}(X,F^D)} \\
& {R\Hom(R\Gamma(X,F),\QQ/\ZZ[-2])}
\arrow[from=1-1, to=3-2]
\arrow[from=1-2, to=2-2]
\arrow[from=2-2, to=3-2]
\arrow["{\alpha_F}"{description}, dashed, from=1-1, to=1-2]
\end{tikzcd}\]
Let $F$, $G$ be two $\ZZ$-constructible sheaves on $X$ and let us compute $\Hom_{D(\ZZ)}(D_G,R\Gamma_{c,B}(X,F^D))$. Recall the Verdier spectral sequence \cite[III.4.6.10]{theseverdier}for $R\Hom_{D(\ZZ)}$:
\begin{equation}\label{Verdier_ss}
E_2^{p,q}=\prod_{i\in \ZZ} \Ext^p_\ZZ(H^i(K),H^{i+q}(L))\Rightarrow \Ext^n_{D(\ZZ)}(K,L)
\end{equation}
 Using the vanishing results of the previous section, the above degenerates to a short exact sequence
\begin{equation}\label{Verdier_ses}
\begin{split}
0 \to \prod_{i=1,2}\Ext^1(H^{2-i}(X,G)^\dagger, H^{i-1}_{c,B}(X,F^D)) &\to \Hom_{D(\ZZ)}(D_G,R\Gamma_{c,B}(X,F^D)) \\
&\to \prod_{i=1,2} \Hom(H^{2-i}(X,G)^\dagger,H^i_{c,B}(X,F^D)) \to 0
\end{split}
\end{equation}

Since $H^1_{c,B}(X,F^D)$ is torsion and $H^0(X,G)^\dagger$ is a $\QQ$-vector space, we have $\Ext^1(H^0(X,G)^\dagger,H^1_{c,B}(X,F^D))=0$. Therefore the left term is $\Ext^1(H^{1}(X,G)^\dagger, H^{0}_{c,B}(X,F^D))$. Similarly to our approach in \cite{AMorin21}, this motivates the following definition:
\begin{defi}
	Let $F$ be a $\ZZ$-constructible sheaf on $X$. We say that
	\begin{itemize}
		\item $F$ is red if $H^0_{c,B}(X,F^D)$ is torsion, hence finite.
		\item $F$ is blue if $H^1(X,F)$ is torsion, hence finite; this happens if and only if $\hat{H}^1_{c,B}(X,F^D)=H^1_{c,B}(X,F^D)$ is finite (\cref{twisted_AV}).
		\item A red-to-blue morphism is a morphism of sheaves $F\to G$ where either $F$ and $G$ are both blue, or are both red, or $F$ is red and $G$ is blue ; a red-to-blue short exact sequence is a short exact sequence with red-to-blue morphisms.
	\end{itemize}
\end{defi}

\begin{rmk}
	\begin{itemize}
		\item[]
		\item If $F$ is constructible, $F$ is blue and red.
		\item The constant sheaf $\ZZ$ is red; this follows from the finiteness of $\mathrm{CH}_0(X)$. \footnote{see the remark after \cref{cohomology_compact_support_ZD} and the remark after \cite[Corollary 7.5]{AMorin21}} If $X$ is regular or more generally unibranch (so that each point has a singleton preimage in the normalization of $X$), the sheaf $\ZZ$ is also blue. 
		\item If $j:U\to X$ is an open inclusion with $U\neq X$, the sheaf $j_!\ZZ$ is red, and it is blue if $X$ is unibranch and $|X\backslash U|=1$.
		\item If $F$ is $\ZZ$-constructible and supported on a closed subscheme, then $F$ is blue. This reduces to the case of a single discrete finite type $\hat{\ZZ}$-module, where it follows from $H^1(\hat{\ZZ},\ZZ)=0$ and the Hochschild-Serre spectral sequence for a normal open subgroup acting trivially.
	\end{itemize}
\end{rmk}

\begin{prop}
	Let $j:U\to X$ be an open immersion such that $U$ is regular and $G$ a locally constant $\ZZ$-constructible sheaf on $U$. Then $j_!G$ is red. 
\end{prop}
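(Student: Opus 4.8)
The plan is to first reduce redness of $j_!G$ to the finiteness of the single global group $\Ext^1_U(G,\GG_m)$, and then to settle that finiteness by a transfer argument along a finite étale cover trivialising $G$, where everything bottoms out at the classical finiteness of $S$-class groups and the finite generation of $S$-units.

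\textbf{Reduction.} Since $j$ is an open immersion, $j^!=j^\ast$ and $j^\ast\ZZ^c_X=\ZZ^c_U$, and as $U$ is regular $\ZZ^c_U=\GG_{m,U}[1]$; hence $(j_!G)^D=Rj_\ast R\cal{H}om_U(G,\GG_{m,U}[1])$, so that $R\Gamma(X,(j_!G)^D)=R\Hom_U(G,\GG_{m,U})[1]$ and in particular $H^0(X,(j_!G)^D)=\Ext^1_U(G,\GG_m)$. I would then feed this into the defining fiber sequence
\[
R\Gamma_{c,B}(X,(j_!G)^D)\longrightarrow R\Gamma(X,(j_!G)^D)\longrightarrow R\Gamma_{G_\RR}\!\big(X(\CC),(\alpha^\ast j_!G)^\vee(1)[2]\big),
\]
whose last term is $\prod_{v\mid\infty}R\Gamma\big(G_{K_v},G_v^\vee(1)[2]\big)$ (the complex points all lie over the generic point, which belongs to $U$, so $\alpha^\ast j_!G$ agrees with the pullback of $G$). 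As $G_v^\vee(1)=R\cal{H}om_\ZZ(G_v,\ZZ(1))$ is concentrated in degrees $0$ and $1$, the cohomology of this term in degrees $-1$ and $0$ is finite: at a complex place the degree-$0$ contribution even vanishes, while at a real place one only meets positive-degree cohomology of $G_\RR$ with finite type coefficients. The long exact sequence then yields $\mathrm{rank}\,H^0_{c,B}(X,(j_!G)^D)=\mathrm{rank}\,\Ext^1_U(G,\GG_m)$, and since $H^0_{c,B}(X,(j_!G)^D)$ is finitely generated (\cref{cohomology_compact_support}), $j_!G$ is red if and only if $\Ext^1_U(G,\GG_m)$ is torsion.

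\textbf{Transfer and dévissage.} Because $G$ is locally constant $\ZZ$-constructible, it is trivialised by a finite étale morphism $\pi\colon V\to U$, i.e.\ $\pi^\ast G\simeq\underline M$ for a finite type abelian group $M$; moreover $V$, being finite étale over the regular scheme $U$, is a finite disjoint union of spectra of rings of $S$-integers in number fields. Since $\pi$ is finite étale we have $\pi_!=\pi_\ast$ and $\pi^!=\pi^\ast$, so adjunction gives $\Ext^\bullet_U(\pi_\ast\pi^\ast G,\GG_m)\simeq\Ext^\bullet_V(\pi^\ast G,\GG_m)$. The unit $u\colon G\to\pi_\ast\pi^\ast G$ is a monomorphism (it is the diagonal on stalks) and the trace $t\colon\pi_\ast\pi^\ast G\to G$ satisfies $t\circ u=\deg(\pi)=:n$; applying $\Ext^1_U(-,\GG_m)$ produces maps
\[
\Ext^1_U(G,\GG_m)\xrightarrow{\ t^\ast\ }\Ext^1_V(\underline M,\GG_m)\xrightarrow{\ u^\ast\ }\Ext^1_U(G,\GG_m),\qquad u^\ast\circ t^\ast=n.
\]
Hence $\ker t^\ast$ is killed by $n$, so $\Ext^1_U(G,\GG_m)$ is an extension of a subgroup of $\Ext^1_V(\underline M,\GG_m)$ by a torsion group, and it suffices to prove $\Ext^1_V(\underline M,\GG_m)$ finite. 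This I would obtain by dévissage on $M$: writing $M$ as an extension of a finite group by a free module reduces it to $\Ext^1_V(\ZZ,\GG_m)=\Pic(V)$ and to $\Ext^1_V(\ZZ/m,\GG_m)$, the latter sitting in a short exact sequence $0\to\cal{O}_V(V)^\times/m\to\Ext^1_V(\ZZ/m,\GG_m)\to\Pic(V)[m]\to0$ arising from $0\to\ZZ\xrightarrow{m}\ZZ\to\ZZ/m\to0$; both outer terms are finite by finiteness of the $S$-class group and finite generation of the $S$-units of $V$. Thus $\Ext^1_V(\underline M,\GG_m)$ is finite, so $\Ext^1_U(G,\GG_m)$ is torsion, and $j_!G$ is red.

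\textbf{Expected main obstacle.} The only step requiring real care is the reduction: identifying $(j_!G)^D$ with $Rj_\ast(G^D)$ via open-immersion base change, using $\ZZ^c_U=\GG_{m,U}[1]$, and — most importantly — checking that the archimedean correction term $R\Gamma_{G_\RR}(X(\CC),(\alpha^\ast j_!G)^\vee(1)[2])$ contributes nothing to the rank in degrees $-1$ and $0$. The transfer step and the dévissage are essentially formal once one recognises that the whole question reduces to the classical finiteness theorems for number fields.
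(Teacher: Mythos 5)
Your argument is correct, but it takes a genuinely different route from the paper's. The paper first reduces to torsion-free $G$, uses regularity to write $G=g_\ast g^\ast G$, and then runs Artin induction on the generic $G_K$-module: this produces a short exact sequence relating $G^n\oplus\bigoplus\pi'_{k,\ast}\ZZ$ to $\bigoplus\pi'_{l,\ast}\ZZ$ and a constructible sheaf, and redness of $j_!G$ is then deduced from the compatibility of $R\Gamma_{c,B}(X,(-)^D)$ with finite pushforwards (\cref{cohomology_compact_pushforward}) together with the already-established redness of $j_!\ZZ$ on the normalizations, which rests on the finiteness of the narrow class group via the computation $H^0_{c,B}\simeq\Pic^+$ (\cref{cohomology_compact_support_jZD}). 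You instead bypass Artin induction and the pushforward compatibility entirely: you reduce redness to torsionness of the single group $\Ext^1_U(G,\GG_m)$ by checking that the archimedean correction term in the defining fiber sequence is finite in degrees $-1$ and $0$, and then settle that finiteness by restriction–corestriction along a trivializing finite étale cover plus dévissage to $\Pic(V)$ and $\cal{O}_V(V)^\times/m$; both proofs ultimately bottom out at the classical finiteness theorems for number fields (narrow class group in the paper, $S$-class group and $S$-units in yours). Two small points you should make explicit: the relation $t\circ u=\deg(\pi)$ uses that the degree of $\pi$ is constant, which holds because $U$ is integral hence connected (or simply take $V\to U$ connected Galois); and the long exact sequence only gives $\rank H^0_{c,B}(X,(j_!G)^D)\leq\rank\Ext^1_U(G,\GG_m)$ a priori — that inequality is all you need for the direction you use, with the reverse direction following from finiteness of the degree-$0$ archimedean term. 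Your approach is more self-contained and handles torsion and torsion-free parts of $G$ uniformly, at the cost of not exploiting the machinery (pushforward compatibility, explicit computations for $j_!\ZZ$) that the paper has already built and reuses elsewhere.
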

\begin{proof}
	Indeed, the result is true if $G$ is torsion so we can suppose that $G$ is torsion-free. Let $g:\eta=\Spec(K) \to U$ be the generic point. Since $G$ is locally constant, we have $G=g_\ast g^\ast G$.\footnote{The regularity hypothesis appears here: in general $g_\ast \ZZ\neq \ZZ$.} By Artin induction for the $G_K$-module $g^\ast G$, we find normal open subgroups $H_k$, $H_l$ of $G_K$, an integer $n\in \NN^\times$ and a finite $G_K$-module $N$ such that we have a short exact sequence
	\[
	0 \to (g^\ast G)^n \oplus \bigoplus \mathrm{ind}^{H_k}_{G_K} \ZZ \to \bigoplus \mathrm{ind}^{H_l}_{G_K} \ZZ \to N \to 0
	\]
	Denote $\pi'_k:V_k\to U,\pi'_l:V_l\to U$ the normalizations of $U$ in $(K^{sep})^{H_k}, (K^{sep})^{H_l}$. By applying $g_\ast$, we find a short exact sequence
	\[
	0 \to G^n \oplus \bigoplus \pi'_{k,\ast} \ZZ \to \bigoplus \pi'_{l,\ast} \ZZ \to Q \to 0
	\]
	where $Q$ is a subsheaf of the constructible sheaf $g_\ast N$ and hence is constructible. Denote by $\pi_k:Y_k\to X$ (resp. $\pi_l:Y_l\to X$) the normalization of $X$ in $(K^{sep})^{H_k}$ (resp. $(K^{sep})^{H_l}$) and $j_k:V_k\to Y_k$ the open inclusion (resp. $j_l:V_l \to Y_l$). The points in $Y_k$ (resp. $Y_l$) above $U$ are exactly the points of $V_k$ (resp. $V_l$) so 
	\[
	\begin{array}{lr}
	j_!\pi'_{k,\ast} \ZZ=\pi_{k,\ast}j_{k,!} \ZZ, & j_!\pi'_{l,\ast} \ZZ=\pi_{l,\ast}j_{l,!} \ZZ.
	\end{array}
	\]
	By \cref{cohomology_compact_pushforward} we conclude with the preceding remark.
\end{proof}

\begin{rmk}
It follows that there are "enough red-and-blues"; by this, we mean that any $\ZZ$-constructible sheaf $F$ sits in a short exact sequence
\[
0 \to R \to F \to B \to 0
\]
with $R$ red and $B$ blue. Indeed, it suffices to take $R=j_!F_{|U}$ and $B=i_\ast i^\ast F$ for $j:U\to X$ a regular open subscheme such that $F_{|U}$ is locally constant and $i:Z\to X$ its closed complement. As in \cite{AMorin21}, this remark will allow us to bootstrap the construction of the Weil-étale Euler characteristic from red or blue sheaves to arbitrary $\ZZ$-constructible sheaves by enforcing multiplicativity.
\end{rmk}

If $F$ is red or $G$ is blue, the left term of the short exact sequence \eqref{Verdier_ses} vanishes and we obtain:
\begin{prop}\label{hom_via_cohomology}
	Suppose that $F$ is red or that $G$ is blue. We have
	\[
		\Hom(D_G,R\Gamma_{c,B}(X,F^D))=\prod_{i=1,2} \Hom(H^{2-i}(X,G)^\dagger,H^i_{c,B}(X,F^D))
	\]
\end{prop}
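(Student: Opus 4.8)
The plan is simply to extract the asserted isomorphism from the short exact sequence \eqref{Verdier_ses}, which the text preceding the statement has already produced from the Verdier spectral sequence \eqref{Verdier_ss}. Recall that \eqref{Verdier_ses} realizes $\prod_{i=1,2}\Hom(H^{2-i}(X,G)^\dagger, H^i_{c,B}(X,F^D))$ as a quotient of $\Hom_{D(\ZZ)}(D_G, R\Gamma_{c,B}(X,F^D))$, with kernel $\prod_{i=1,2}\Ext^1_\ZZ(H^{2-i}(X,G)^\dagger, H^{i-1}_{c,B}(X,F^D))$; so the whole content is to show this kernel vanishes. The $i=2$ summand $\Ext^1_\ZZ(H^0(X,G)^\dagger, H^1_{c,B}(X,F^D))$ is already zero unconditionally, since $H^0(X,G)^\dagger$ is a $\QQ$-vector space — a direct sum of copies of $\QQ$ — and $H^1_{c,B}(X,F^D)$ is torsion of cofinite type (\cref{cohomology_compact_support}), so $\Ext^1_\ZZ(\QQ, H^1_{c,B}(X,F^D))=0$ (it splits off a divisible, hence injective, summand and a finite one, and $\Ext^1_\ZZ(\QQ,\text{finite})$ is the $\lim^1$ of a pro-zero tower of multiplication maps, hence trivial). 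Thus the kernel reduces to the single term $\Ext^1_\ZZ(H^1(X,G)^\dagger, H^0_{c,B}(X,F^D))$.

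Next I would split on the hypothesis. If $G$ is blue, then $H^1(X,G)$ is finite, hence $H^1(X,G)^\dagger=\Hom_\ZZ(H^1(X,G),\QQ)=0$ (the dagger kills torsion), and the remaining $\Ext^1$ is trivially zero. If instead $F$ is red, then $H^0_{c,B}(X,F^D)$ is finite by \cref{cohomology_compact_support}, while $H^1(X,G)^\dagger$ is still a $\QQ$-vector space, so the same computation $\Ext^1_\ZZ(\QQ,\text{finite})=0$ used a moment ago yields $\Ext^1_\ZZ(H^1(X,G)^\dagger, H^0_{c,B}(X,F^D))=0$. In either case the kernel of \eqref{Verdier_ses} vanishes and the sequence collapses to the asserted isomorphism.

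I do not expect a genuine obstacle here: all the substance lives in the degree estimates of the previous section — \cref{cohomology_dagger} placing $D_G$ in degrees $[1,2]$ and \cref{cohomology_compact_support} placing $R\Gamma_{c,B}(X,F^D)$ in degrees $[-1,2]$ — which, together with $\ZZ$ having global dimension $1$, are exactly what makes the Verdier spectral sequence degenerate to the two-term extension \eqref{Verdier_ses} along the diagonal $p+q=0$. The one bookkeeping point worth checking carefully is that the edge morphism of the spectral sequence identifies the quotient map of \eqref{Verdier_ses} with the map reading off the degreewise Hom-groups, so that the right-hand side of the proposition is literally the $E_2^{0,0}$-term and not a reindexed variant of it; granting that, the proposition is just the formal harvesting of the earlier computations.
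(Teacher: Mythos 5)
Your argument is correct and is essentially the paper's own: the paper also derives the statement by noting that in the short exact sequence \eqref{Verdier_ses} the term $\Ext^1(H^0(X,G)^\dagger,H^1_{c,B}(X,F^D))$ vanishes unconditionally (a $\QQ$-vector space against a torsion cofinite-type group), so the kernel is $\Ext^1(H^1(X,G)^\dagger,H^0_{c,B}(X,F^D))$, which dies when $G$ is blue (the dagger kills the finite group $H^1(X,G)$) or when $F$ is red ($H^0_{c,B}(X,F^D)$ is finite and $\Ext^1_\ZZ$ of a $\QQ$-vector space into a finite group vanishes). Your extra bookkeeping about the edge map of the Verdier spectral sequence is harmless but already built into \eqref{Verdier_ses} as the paper states it.
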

In particular, suppose that $F$ is red or blue. For $i=1,2$, we have by \cref{twisted_AV} isomorphisms \[H^i_{c,B}(X,G^D) \xrightarrow{\simeq} \hat{H}^i_{c,B}(X,G^D) \xrightarrow{\simeq} H^{2-i}(X,G)^\ast\]
Hence we obtain a canonical element $\alpha_F\in \Hom(D_F,R\Gamma_{c,B}(X,G^D))$, given in cohomology in degree $i=1,2$ by 
\[H^i(\alpha_F):H^{2-i}(X,F)^\dagger \to H^{2-i}(X,F)^\ast \xleftarrow{\simeq} H^i_{c,B}(X,F^D).\]
\begin{defi}
	Let $F$ be red or blue. We define 
	\[
	R\Gamma_{W,c}(X,F^D):=\mathrm{Cone}(\alpha_F)
	\]
	the Weil-étale complex with compact support with coefficients in $F^D$.
\end{defi}

\begin{rmk}
	We want to emphasize that our construction is only done in the homotopy category, as a mapping cone. Nevertheless for red or blue sheaves we will obtain sufficient functoriality properties; this is unusual.
\end{rmk}

\subsection{Computation of Weil-étale cohomology}

\begin{prop}\label{weil-etale_perfect}
	Let $F$ be a red or blue sheaf. Then $R\Gamma_{W,c}(X,F^D)$ is a perfect complex of abelian groups concentrated in degree $[-1,2]$. Moreover, we have
	\[H^i_{W,c}(X,F^D)=\left\{
	\begin{array}{ll}
	H^{-1}_c(X,F^D) & i=-1\\
	(H^0(X,F)_{tor})^\ast & i=2\\
	\end{array}\right.
	\]
	and short exact sequences
	\[
	0 \to H^0_{c,B}(X,F^D) \to H^0_{W,c}(X,F^D) \to \Hom(H^1(X,F),\ZZ) \to 0
	\]
	\[
	0 \to (H^1(X,F)_{tor})^\ast \to H^1_{W,c}(X, F^D) \to \Hom(H^0(X,F),\ZZ) \to 0
	\]
\end{prop}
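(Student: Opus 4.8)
The plan is to extract everything from the long exact cohomology sequence of the defining distinguished triangle
\[
D_F \xrightarrow{\alpha_F} R\Gamma_{c,B}(X,F^D) \to R\Gamma_{W,c}(X,F^D) \to D_F[1].
\]
First I would collect the three ingredients already at hand. By \cref{cohomology_dagger}, $D_F$ is concentrated in degrees $[1,2]$ with $H^i(D_F)=H^{2-i}(X,F)^\dagger$; moreover $H^0(D_F)=H^2(X,F)^\dagger=0$ since $H^2(X,F)$ is torsion (as noted in the proof of \cref{cohomology_dagger}), so the only nonzero cohomology of $D_F$ is $H^1(D_F)=H^1(X,F)^\dagger$ in degree $1$ and $H^2(D_F)=H^0(X,F)^\dagger$ in degree $2$. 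By \cref{cohomology_compact_support}, $R\Gamma_{c,B}(X,F^D)$ is concentrated in degrees $[-1,2]$, with finite type cohomology in degrees $-1,0$ and torsion of cofinite type in degrees $1,2$. By \cref{twisted_AV} together with \cref{cohomology_comparison}, there are canonical isomorphisms $H^i_{c,B}(X,F^D)\simeq H^{2-i}(X,F)^\ast$ for $i=1,2$, under which — by the very construction of $\alpha_F$ — the map $H^i(\alpha_F)$ is the natural map $H^{2-i}(X,F)^\dagger\to H^{2-i}(X,F)^\ast$ induced by $\QQ\to\QQ/\ZZ$; in all other degrees $H^i(\alpha_F)$ is zero for degree reasons. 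This already makes $R\Gamma_{W,c}(X,F^D)$ bounded with cohomology in degrees $[-1,2]$.

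Next I would record the routine fact that for a finitely generated abelian group $A$ the natural map $A^\dagger=\Hom(A,\QQ)\to A^\ast=\Hom(A,\QQ/\ZZ)$ has kernel $\Hom(A,\ZZ)$ and cokernel $(A_{tor})^\ast$: this is the long exact sequence obtained by applying $\Hom_\ZZ(A,-)$ to $0\to\ZZ\to\QQ\to\QQ/\ZZ\to 0$, using $\Ext^1_\ZZ(A,\QQ)=0$ and the canonical isomorphism $\Ext^1_\ZZ(A,\ZZ)\simeq(A_{tor})^\ast$. I would apply this with $A=H^1(X,F)$ and $A=H^0(X,F)$, which are finitely generated because $H^i(X,F)$ is torsion for $i\neq 0,1$.

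Then the long exact sequence of the triangle gives the conclusion directly. In degree $-1$, the vanishing of $H^{-1}(D_F)$ and $H^0(D_F)$ yields $H^{-1}_{W,c}(X,F^D)\simeq H^{-1}_{c,B}(X,F^D)$. Using $H^0(D_F)=0$ and $H^3(D_F)=0$ together with the identifications above, the sequence breaks into
\[
0\to H^0_{c,B}(X,F^D)\to H^0_{W,c}(X,F^D)\to \ker H^1(\alpha_F)\to 0,
\]
\[
0\to \mathrm{coker}\, H^1(\alpha_F)\to H^1_{W,c}(X,F^D)\to \ker H^2(\alpha_F)\to 0,
\]
and $H^2_{W,c}(X,F^D)=\mathrm{coker}\, H^2(\alpha_F)$. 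Substituting $\ker H^1(\alpha_F)=\Hom(H^1(X,F),\ZZ)$, $\mathrm{coker}\, H^1(\alpha_F)=(H^1(X,F)_{tor})^\ast$, $\ker H^2(\alpha_F)=\Hom(H^0(X,F),\ZZ)$ and $\mathrm{coker}\, H^2(\alpha_F)=(H^0(X,F)_{tor})^\ast$ yields exactly the four asserted formulas. For perfectness I would then observe that $H^{-1}_{W,c}$ is finite type by \cref{cohomology_compact_support}, $H^0_{W,c}$ is an extension of a finitely generated group by a finite type group, $H^1_{W,c}$ is an extension of a finitely generated group by a finite group, and $H^2_{W,c}$ is finite; being bounded with finite type cohomology groups, $R\Gamma_{W,c}(X,F^D)$ is perfect by definition.

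I do not anticipate a real obstacle: the content that makes this work — that $H^i(\alpha_F)$ is the natural map $(-)^\dagger\to(-)^\ast$ — is built into the construction of $\alpha_F$ and already justified above. The single point requiring care is the vanishing $H^0(D_F)=0$, i.e. that $H^2(X,F)$ is torsion, which is what makes the degree $-1$ and degree $0$ statements come out in the clean form claimed; the rest is bookkeeping with the long exact sequence.
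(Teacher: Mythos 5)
Your proposal follows the same route as the paper: the paper's (much terser) proof is exactly "long exact sequence of the defining triangle, plus \cref{cohomology_compact_support}, \cref{cohomology_dagger}, and the construction of $\alpha_F$", and your elaboration of the bookkeeping — $H^i(\alpha_F)$ being the natural map $H^{2-i}(X,F)^\dagger\to H^{2-i}(X,F)^\ast$ for $i=1,2$ and zero otherwise, with kernel $\Hom(H^{2-i}(X,F),\ZZ)$ and cokernel $(H^{2-i}(X,F)_{tor})^\ast$ — is correct and is what the paper intends.

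The one point where your justification does not hold up is the finite generation of $H^0(X,F)$ and $H^1(X,F)$: you assert it "because $H^i(X,F)$ is torsion for $i\neq 0,1$", which is a non sequitur — torsionness in the other degrees (the input to \cref{cohomology_dagger}) says nothing about degrees $0,1$. You genuinely need this finite generation, both to identify the cokernels via $\Ext^1_\ZZ(A,\ZZ)\simeq (A_{tor})^\ast$ and to conclude perfectness (finiteness of $(H^i(X,F)_{tor})^\ast$ and finite generation of $\Hom(H^i(X,F),\ZZ)$). The correct argument, and the only ingredient the paper's proof spells out, is that $H^i(X,F)$ differs from $\hat{H}^i_c(X,F)$ by a finite group (the archimedean correction has finite $2$-torsion cohomology, $X$ being proper), and $\hat{H}^i_c(X,F)$ is of finite type for $i=0,1$ for a $\ZZ$-constructible sheaf by \cite[II.3.1, II.6.2]{ADT}. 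With that substitution your argument is complete.
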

\begin{proof}
	The groups $H^i(X,F)$ are of finite type for $i=0,1$: they differ from $\hat{H}^i_c(X,F)$ by a finite group since $X$ is proper, and the latter are of finite type for $i=0,1$\cite[II.3.1, II.6.2]{ADT}. The claim then follows from the distinguished triangle
	\[
	R\Hom(R\Gamma(X,F),\QQ[-2]) \to R\Gamma_{c,B}(X,F^D) \to	R\Gamma_{W,c}(X,F^D) \to
	\]
	and \cref{cohomology_compact_support,cohomology_dagger}.
\end{proof}

We compute some special cases, using \cref{cohomology_compact_support_ZD,cohomology_compact_support_jZD}.
\begin{prop}\label{weil_etale_Z}
	Suppose that $X=\Spec(\cal{O}_K)$ is regular. We have
	\[H^i_{W,c}(X,\ZZ^D)=\left\{
	\begin{array}{ll}
	\ZZ & i=1\\
	0 & i=2
	\end{array}\right.
	\]
	and an exact sequence
	\[
	0 \to \ZZ^{r_2}\to H^{-1}_{W,c}(X,\ZZ^D) \to \cal{O}_K^\times \to (\ZZ/2\ZZ)^{r_1} \to H^0_{W,c}(X,\ZZ^D) \to \mathrm{Pic}(X) \to 0
	\]
	Moreover $H^0_{W,c}(X,\ZZ^D)=\mathrm{Pic}^+(X)$.
\end{prop}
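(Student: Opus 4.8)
The plan is to specialize the structural result \cref{weil-etale_perfect} to the sheaf $F=\ZZ$ and to substitute the explicit values of $H^\bullet(X,\ZZ)$ and $H^\bullet_{c,B}(X,\ZZ^D)$. First I would observe that $\ZZ$ is both red and blue on a regular $X$: it is red by the finiteness of $\mathrm{CH}_0(X)$, and it is blue because $H^1(X,\ZZ)=0$ by \cref{cohomology_Z}. Consequently $R\Gamma_{W,c}(X,\ZZ^D)$ is defined and perfect, concentrated in degrees $[-1,2]$, and \cref{weil-etale_perfect} already supplies the identifications $H^{-1}_{W,c}(X,\ZZ^D)=H^{-1}_{c,B}(X,\ZZ^D)$ and $H^2_{W,c}(X,\ZZ^D)=(H^0(X,\ZZ)_{tor})^\ast$, together with the two short exact sequences describing $H^0_{W,c}$ and $H^1_{W,c}$ in terms of $H^0_{c,B}(X,\ZZ^D)$, $H^0(X,\ZZ)$ and $H^1(X,\ZZ)$.

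Next I would plug in $H^0(X,\ZZ)=\ZZ$ and $H^1(X,\ZZ)=0$ from \cref{cohomology_Z}. Torsion-freeness of $\ZZ$ gives $H^2_{W,c}(X,\ZZ^D)=(\ZZ_{tor})^\ast=0$. The exact sequence $0\to(H^1(X,\ZZ)_{tor})^\ast\to H^1_{W,c}(X,\ZZ^D)\to\Hom(H^0(X,\ZZ),\ZZ)\to 0$ then has vanishing left term and right term $\Hom(\ZZ,\ZZ)=\ZZ$, so $H^1_{W,c}(X,\ZZ^D)\simeq\ZZ$. The exact sequence $0\to H^0_{c,B}(X,\ZZ^D)\to H^0_{W,c}(X,\ZZ^D)\to\Hom(H^1(X,\ZZ),\ZZ)\to 0$ has vanishing right term, so the canonical map identifies $H^0_{W,c}(X,\ZZ^D)$ with $H^0_{c,B}(X,\ZZ^D)$, which is $\mathrm{Pic}^+(X)$ by \cref{cohomology_compact_support_ZD}.

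To obtain the six-term exact sequence, I would transport the exact sequence of \cref{cohomology_compact_support_ZD} along the above isomorphisms. Here \cref{cohomology_dagger} shows that $D_\ZZ=R\Hom(R\Gamma(X,\ZZ),\QQ[-2])$ sits in degrees $[1,2]$, and in fact only in degree $2$ since $H^1(X,\ZZ)=0$; the defining triangle $D_\ZZ\to R\Gamma_{c,B}(X,\ZZ^D)\to R\Gamma_{W,c}(X,\ZZ^D)\to D_\ZZ[1]$ then shows $H^i_{c,B}(X,\ZZ^D)\xrightarrow{\simeq} H^i_{W,c}(X,\ZZ^D)$ for $i\leq 0$. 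Because these isomorphisms come from a morphism of distinguished triangles, the six-term sequence of \cref{cohomology_compact_support_ZD} carries over with all its maps, giving the asserted sequence and the identification $H^0_{W,c}(X,\ZZ^D)=\mathrm{Pic}^+(X)$. I do not expect a real obstacle: this proposition is a bookkeeping corollary of \cref{weil-etale_perfect} once the vanishing $H^1(X,\ZZ)=0$ is in hand, and the only step deserving attention is checking the naturality that lets the six-term sequence transport with its maps intact, which is automatic from the morphism of triangles.
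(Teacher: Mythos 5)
Your argument is correct and is exactly the route the paper intends: the proposition is stated as a direct specialization of \cref{weil-etale_perfect} together with the computations in \cref{cohomology_Z,cohomology_compact_support_ZD}, and your substitution of $H^0(X,\ZZ)=\ZZ$, $H^1(X,\ZZ)=0$, plus the observation that $D_\ZZ$ is concentrated in degree $2$ so $H^i_{c,B}(X,\ZZ^D)\simeq H^i_{W,c}(X,\ZZ^D)$ for $i\leq 0$, is precisely the bookkeeping the paper leaves implicit.
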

\begin{rmk}
	If $X$ is singular, we still have $H^2_{W,c}(X,\ZZ^D)=0$; the abelian group $H^1_{W,c}(X, \ZZ^D)$ is of rank $1$ but it can have torsion coming from $H^1(X,\ZZ)$ if $X$ is not unibranch.
\end{rmk}

\begin{prop}\label{weil_etale_jZ}
	Suppose that $X=\Spec(\cal{O}_K)$ is regular and let $j:U\to X$ be an open immersion with $U\neq X$; we keep the notations from \cref{cohomology_compact_support_jZD}. Then $H^1_{W,c}(X,(j_!\ZZ)^D)=H^2_{W,c}(X,(j_!\ZZ)^D)=0$ and we have short exact sequences
	\[
	0 \to \ZZ^{r_2} \to H^{-1}_{W,c}(X,(j_!\ZZ)^D) \to \cal{O}_{K,S,+}^\times \to 0
	\]
	and
	\[
	0 \to \mathrm{Pic}^+(U) \to H^0_{W,c}(X,(j_!\ZZ)^D) \to \mathrm{ker}(\ZZ^{s_f}\xrightarrow{\Sigma}\ZZ) \to 0
	\]
\end{prop}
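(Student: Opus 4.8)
The plan is to specialize \cref{weil-etale_perfect} to $F=j_!\ZZ$ and to substitute the explicit computations already available. First, $j_!\ZZ$ is red: it is the extension by zero of the locally constant sheaf $\ZZ$ on the regular open subscheme $U$, so it is covered by the red-sheaf criterion proved above. Hence $R\Gamma_{W,c}(X,(j_!\ZZ)^D)$ is defined, perfect and concentrated in degrees $[-1,2]$, and \cref{weil-etale_perfect} expresses each of its cohomology groups through $H^\ast(X,j_!\ZZ)$ (computed in \cref{cohomology_Z}) and $H^\ast_{c,B}(X,(j_!\ZZ)^D)$ (computed in \cref{cohomology_compact_support_jZD} and the remark following it).

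By \cref{cohomology_Z} we have $H^0(X,j_!\ZZ)=0$ and $H^1(X,j_!\ZZ)=\bigl(\prod_{v\in X\backslash U}\ZZ\bigr)/\ZZ$; here $X\backslash U$ is exactly the finite set $S_f$ of closed points and the embedded $\ZZ$ is the diagonal, as one reads off from the localization sequence $0\to j_!\ZZ\to\ZZ\to i_\ast\ZZ\to 0$, so $H^1(X,j_!\ZZ)\cong\ZZ^{s_f-1}$ is torsion-free. Plugging this into \cref{weil-etale_perfect}: the term $H^2_{W,c}(X,(j_!\ZZ)^D)=(H^0(X,j_!\ZZ)_{tor})^\ast$ vanishes; in the short exact sequence for $H^1_{W,c}$ both the subobject $(H^1(X,j_!\ZZ)_{tor})^\ast$ and the quotient $\Hom(H^0(X,j_!\ZZ),\ZZ)$ vanish, so $H^1_{W,c}(X,(j_!\ZZ)^D)=0$; and $H^{-1}_{W,c}(X,(j_!\ZZ)^D)=H^{-1}_c(X,(j_!\ZZ)^D)$, which by the remark after \cref{cohomology_compact_support_jZD} sits in the short exact sequence $0\to\ZZ^{r_2}\to H^{-1}_c(X,(j_!\ZZ)^D)\to\cal{O}_{K,S,+}^\times\to 0$. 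This is the first asserted sequence.

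It remains to treat $H^0_{W,c}$. \Cref{weil-etale_perfect} gives the short exact sequence
\[
0\to H^0_{c,B}(X,(j_!\ZZ)^D)\to H^0_{W,c}(X,(j_!\ZZ)^D)\to\Hom(H^1(X,j_!\ZZ),\ZZ)\to 0,
\]
whose left-hand term is the narrow $S$-class group $\Pic^+(U)$ by \cref{cohomology_compact_support_jZD}. To identify the right-hand term, apply $\Hom_\ZZ(-,\ZZ)$ to the sequence $0\to\ZZ_{\mathrm{diag}}\to\ZZ^{s_f}\to H^1(X,j_!\ZZ)\to 0$; since $H^1(X,j_!\ZZ)$ is torsion-free, $\Ext^1_\ZZ(H^1(X,j_!\ZZ),\ZZ)=0$, so $\Hom(H^1(X,j_!\ZZ),\ZZ)$ is the kernel of $\Hom(\ZZ^{s_f},\ZZ)\to\Hom(\ZZ_{\mathrm{diag}},\ZZ)$, i.e.\ of $\Sigma\colon\ZZ^{s_f}\to\ZZ$, because restricting a functional along the diagonal sums its coordinates. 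This yields the second displayed sequence and completes the proof.

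The argument poses no genuine difficulty: it is a matter of assembling already established computations. The only point that deserves mild care is the identification of the structural map $\ZZ\to\prod_{v\in X\backslash U}\ZZ$ appearing in $H^1(X,j_!\ZZ)$ as the diagonal, which is what produces the factor $\ker(\Sigma\colon\ZZ^{s_f}\to\ZZ)$ in the second sequence after dualizing.
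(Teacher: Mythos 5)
Your proposal is correct and is exactly the intended argument: the paper derives this proposition by specializing \cref{weil-etale_perfect} to the red sheaf $j_!\ZZ$ and inserting the computations of \cref{cohomology_Z}, \cref{cohomology_compact_support_jZD} and the remark following it, which is precisely what you do (including the identification of $\ZZ\to\prod_{v\in S_f}\ZZ$ as the diagonal via the localization sequence, so that dualizing gives $\ker(\Sigma)$). The only cosmetic remark is that the appeal to $\Ext^1_\ZZ(H^1(X,j_!\ZZ),\ZZ)=0$ is unnecessary — left-exactness of $\Hom(-,\ZZ)$ already identifies $\Hom(H^1(X,j_!\ZZ),\ZZ)$ with the kernel of the restriction map — and "torsion-free $\Rightarrow$ $\Ext^1$ vanishes" is only valid here because the group is finitely generated.
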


\begin{prop}
	Let $i:x\to X$ be a closed point and $M$ a finite type discrete $G_x$-module. Then
	\[H^i_{W,c}(X,(i_\ast M)^D)=\left\{
	\begin{array}{ll}
	0 & i=-1\\
	H^0(G_x,M^\vee)=\Hom_{G_x}(M,\ZZ) & i=0\\
	(H^0(G_x,M)_{tor})^\ast & i=2
	\end{array}\right.
	\]
	and we have a short exact sequence
	\[
	0 \to H^1(G_x,M)^\ast \to H^1_{W,c}(X,(i_\ast M)^D) \to \Hom(H^0(G_x,M),\ZZ) \to 0
	\]
\end{prop}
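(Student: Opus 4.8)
The plan is to read off the statement from \cref{weil-etale_perfect} applied to the sheaf $F=i_\ast M$, which is blue because it is supported on a closed subscheme (so that $H^1(X,i_\ast M)=H^1(G_x,M)$ is finite). Two inputs are needed. First, $i$ being a closed immersion, $i_\ast$ is exact and $\Gamma(X,i_\ast(-))=\Gamma(x,-)$, hence $R\Gamma(X,i_\ast M)=R\Gamma(G_x,M)$; in particular $H^0(X,i_\ast M)=H^0(G_x,M)$ and $H^1(X,i_\ast M)=H^1(G_x,M)$. Second, by the discussion preceding \cref{cohomology_compact_support_point}, $R\Gamma_{c,B}(X,(i_\ast M)^D)=R\Gamma(G_x,M^\vee)$ with $M^\vee=R\Hom_\ZZ(M,\ZZ)$; since $M^\vee$ is connective with $H^0(M^\vee)=\Hom_\ZZ(M,\ZZ)$, this gives $H^0_{c,B}(X,(i_\ast M)^D)=H^0(G_x,\Hom_\ZZ(M,\ZZ))=\Hom_{G_x}(M,\ZZ)$, while \cref{cohomology_compact_support_point} gives $H^{-1}_c(X,(i_\ast M)^D)=0$.

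Next I would substitute these into \cref{weil-etale_perfect}. The outer degrees come for free: $H^{-1}_{W,c}(X,(i_\ast M)^D)=H^{-1}_c(X,(i_\ast M)^D)=0$ and $H^2_{W,c}(X,(i_\ast M)^D)=(H^0(X,i_\ast M)_{tor})^\ast=(H^0(G_x,M)_{tor})^\ast$. In degree $0$, the short exact sequence of \cref{weil-etale_perfect} is $0\to H^0_{c,B}(X,(i_\ast M)^D)\to H^0_{W,c}(X,(i_\ast M)^D)\to \Hom(H^1(X,i_\ast M),\ZZ)\to 0$; its right-hand term vanishes because $H^1(X,i_\ast M)=H^1(G_x,M)$ is finite, so $H^0_{W,c}(X,(i_\ast M)^D)=H^0_{c,B}(X,(i_\ast M)^D)=H^0(G_x,M^\vee)=\Hom_{G_x}(M,\ZZ)$. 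In degree $1$, the short exact sequence $0\to(H^1(X,i_\ast M)_{tor})^\ast\to H^1_{W,c}(X,(i_\ast M)^D)\to\Hom(H^0(X,i_\ast M),\ZZ)\to 0$ becomes, using once more that $H^1(G_x,M)$ is entirely torsion, the asserted sequence $0\to H^1(G_x,M)^\ast\to H^1_{W,c}(X,(i_\ast M)^D)\to\Hom(H^0(G_x,M),\ZZ)\to 0$.

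I do not expect a genuine obstacle: this is a bookkeeping specialization of \cref{weil-etale_perfect}. The two points that need a little care are the computation of $H^0(R\Gamma(G_x,M^\vee))$ when $M$ has torsion — where $M^\vee$ is a two-term complex, but its connectivity kills the relevant spectral-sequence differential — and the repeated appeal to $H^1(G_x,M)$ having no free part, which is exactly what the blue hypothesis provides. As an alternative to invoking \cref{weil-etale_perfect}, one can work directly from the defining triangle $R\Hom(R\Gamma(X,i_\ast M),\QQ[-2])\to R\Gamma_{c,B}(X,(i_\ast M)^D)\to R\Gamma_{W,c}(X,(i_\ast M)^D)$: because $H^1(G_x,M)$ and $H^{\ge 2}(G_x,M)$ are torsion, the first term reduces to $\Hom(H^0(G_x,M),\QQ)$ placed in degree $2$, the degree-$2$ part of $\alpha_{i_\ast M}$ is the canonical map into $\Hom(H^0(G_x,M),\QQ/\ZZ)=H^2_{c,B}(X,(i_\ast M)^D)$ with kernel $\Hom(H^0(G_x,M),\ZZ)$ and cokernel $(H^0(G_x,M)_{tor})^\ast$, and its long exact sequence then yields all four assertions.
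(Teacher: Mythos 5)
Your proposal is correct and follows exactly the route the paper intends: the proposition is stated there without proof as an immediate specialization of \cref{weil-etale_perfect} to the blue sheaf $i_\ast M$, using $R\Gamma(X,i_\ast M)=R\Gamma(G_x,M)$ and the identification $R\Gamma_{c,B}(X,(i_\ast M)^D)=R\Gamma(G_x,M^\vee)$ from the discussion preceding \cref{cohomology_compact_support_point}. Your two points of care (the hypercohomology computation giving $H^0(G_x,M^\vee)=\Hom_{G_x}(M,\ZZ)$, and the finiteness of $H^1(G_x,M)$ killing the $\Hom(-,\ZZ)$ term in degree $0$) are exactly the right ones.
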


\begin{prop}
	If $F$ is constructible then
	\[
	R\Gamma_{c,B}(X,F^D) \xrightarrow{\simeq} R\Gamma_{W,c}(X,F^D)
	\]
\end{prop}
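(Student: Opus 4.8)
The plan is to read off the claim from the defining distinguished triangle
\[
D_F \xrightarrow{\alpha_F} R\Gamma_{c,B}(X,F^D) \to R\Gamma_{W,c}(X,F^D) \to D_F[1],
\]
where $D_F = R\Hom(R\Gamma(X,F),\QQ[-2])$. Since $F$ is constructible it is in particular red (or blue), so $R\Gamma_{W,c}(X,F^D)$ is defined and the triangle exists. If I can show that $D_F$ is acyclic, then $\alpha_F$ is a map out of the zero object, so its cofiber is $R\Gamma_{c,B}(X,F^D)$ and the canonical second arrow of the triangle is the desired isomorphism.

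\textbf{Acyclicity of $D_F$.} By \cref{cohomology_dagger} (or directly from the Verdier spectral sequence) one has $H^i(D_F) = H^{2-i}(X,F)^\dagger = \Hom_\ZZ(H^{2-i}(X,F),\QQ)$, so it suffices to check that $H^j(X,F)$ is torsion for every $j$. This is exactly the point where constructibility and properness enter: as in the proof of \cref{cohomology_dagger}, $H^j(X,F)$ differs from $\hat H^j_c(X,F)$ by a finite group because $X$ is proper over $\Spec(\ZZ)$, and for constructible $F$ the groups $\hat H^j_c(X,F)$ are finite for all $j$ by \cite[II.6.2]{ADT}. Hence $H^j(X,F)$ is finite, and in particular torsion, so $H^{2-i}(X,F)^\dagger = 0$ for all $i$ and $D_F \simeq 0$.

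\textbf{Main obstacle.} There is essentially no obstacle: the only thing to verify beyond the finiteness input is that a constructible sheaf really is red or blue — so that the Weil-étale complex and its defining triangle are available — which was recorded in the remark following the definition of red and blue sheaves. The conclusion can also be stated without reference to $\alpha_F$ at all: the defining triangle exhibits $R\Gamma_{W,c}(X,F^D)$ as the cofiber of a morphism whose source is zero, hence as $R\Gamma_{c,B}(X,F^D)$ via the canonical arrow.
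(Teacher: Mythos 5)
Your proof is correct and is precisely the argument the paper intends (the proposition is stated without proof, being immediate from the construction): since $F$ is constructible, $H^0(X,F)$ and $H^1(X,F)$ are finite — they differ from the finite groups $\hat H^i_c(X,F)$ of \cite[II.6.2]{ADT} by finite $2$-torsion groups — so by \cref{cohomology_dagger} the complex $D_F=R\Hom(R\Gamma(X,F),\QQ[-2])$ is acyclic, and the defining triangle exhibits $R\Gamma_{c,B}(X,F^D)\xrightarrow{\simeq}R\Gamma_{W,c}(X,F^D)$. Your side remarks (that constructible sheaves are red and blue, so the triangle exists, and that the isomorphism is the canonical second arrow of the triangle) are exactly the right points to record.
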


\subsection{Functoriality properties}

\begin{thm}\label{weil_etale_well_defined}
	Let $F$ be red or blue. The complex $R\Gamma_{W,c}(X,F^D)$ is well-defined up to unique isomorphism, is functorial in red-to-blue morphisms, and gives long exact cohomology sequences for red-to-blue short exact sequences.
\end{thm}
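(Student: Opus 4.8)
The plan is to leverage the rigidity supplied by \cref{hom_via_cohomology} together with the degree bounds of \cref{cohomology_compact_support,cohomology_dagger,weil-etale_perfect}. First I would note that applying \cref{hom_via_cohomology} with $G=F$ identifies $\Hom_{D(\ZZ)}(D_F,R\Gamma_{c,B}(X,F^D))$ with $\prod_{i=1,2}\Hom(H^{2-i}(X,F)^\dagger,H^i_{c,B}(X,F^D))$, so the element $\alpha_F$—which is prescribed on $H^1$ and $H^2$ via \cref{twisted_AV} and the natural transformation $(-)^\dagger\to(-)^\ast$—is canonical. Then I would record, by the same Verdier-spectral-sequence analysis that gave \eqref{Verdier_ses} but now also using the defining triangle of $R\Gamma_{W,c}$, the analogous control of $\Hom_{D(\ZZ)}$ (in a small range of shifts) between the complexes $D_G$, $R\Gamma_{c,B}(X,F^D)$ and $R\Gamma_{W,c}(X,F^D)$ whenever $F$ is red or $G$ is blue; the point is that $D_G$ and $D_F$ have $\QQ$-vector-space cohomology concentrated in degrees $[1,2]$ while the other two complexes sit in $[-1,2]$, so the relevant cross terms of \eqref{Verdier_ss} collapse. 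This rigidity is the sole input for everything that follows.

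For well-definedness up to unique isomorphism: given two mapping cones $(C_k,u_k,v_k)$ of $\alpha_F$, axiom TR3 produces an isomorphism $\phi\colon C_1\to C_2$ with $\phi u_1=u_2$ and $v_2\phi=v_1$. To see $\phi$ is unique, let $\delta$ be the difference of two such; then $\delta u_1=0$ and $v_2\delta=0$, so using the two triangles $\delta$ factors as $C_1\xrightarrow{v_1}D_F[1]\xrightarrow{\epsilon}C_2$ with $v_2\epsilon v_1=0$, and I would push this identity through the long exact $\Hom$-sequences of the two triangles and conclude $\delta=0$ from the rigidity recorded above. The complex $R\Gamma_{W,c}(X,F^D)$ is then perfect and concentrated in $[-1,2]$ by \cref{weil-etale_perfect}.

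For functoriality: a red-to-blue morphism $F\to G$ induces $D_G\to D_F$ and $R\Gamma_{c,B}(X,G^D)\to R\Gamma_{c,B}(X,F^D)$, and since $F$ is red or $G$ is blue the square these form with $\alpha_G$ and $\alpha_F$ commutes—both composites live in a $\Hom$-group detected on $H^1,H^2$ by \cref{hom_via_cohomology}, where they agree by naturality of \cref{twisted_AV} and of $(-)^\dagger\to(-)^\ast$. TR3 then gives a map on cones $R\Gamma_{W,c}(X,G^D)\to R\Gamma_{W,c}(X,F^D)$, unique by the same mechanism as above, and uniqueness forces it to respect composition and identities, so $F\mapsto R\Gamma_{W,c}(X,F^D)$ is a functor on red-to-blue morphisms. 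For the long exact sequence: applying the triangulated contravariant functors $R\Hom(R\Gamma(X,-),\QQ[-2])$ and $R\Gamma_{c,B}(X,(-)^D)$ to a red-to-blue short exact sequence $0\to F'\to F\to F''\to0$ yields two distinguished triangles linked by a morphism built from the (now proven natural) $\alpha$'s; the stable $3\times3$ lemma \cite[Lemma 2.2]{AMorin21} then produces a distinguished triangle $R\Gamma_{W,c}(X,(F'')^D)\to R\Gamma_{W,c}(X,F^D)\to R\Gamma_{W,c}(X,(F')^D)\to$, whose associated long exact cohomology sequence is the assertion.

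I expect the uniqueness claim (the second step, reused in the functoriality step) to be the main obstacle. Because the construction is performed only in the homotopy category, where the mapping cone is not functorial, one cannot get away with the vanishing of a single obstruction group—the naive obstruction groups need not vanish. What makes the argument close up is the interplay of the cohomological rigidity of \cref{hom_via_cohomology} with the rational, degree-$[1,2]$ nature of $D_F$; organizing the spectral-sequence bookkeeping so that every choice is genuinely forced is the delicate part, and is precisely why the restriction to red or blue sheaves is needed.
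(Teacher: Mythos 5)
The central gap is in your uniqueness step, and it propagates to everything built on it (well-definedness, functoriality, and your identification of maps in the last step). You claim the conclusion $\delta=0$ follows because ``the relevant cross terms of \eqref{Verdier_ss} collapse'', but the obstruction group that actually controls uniqueness, $\Hom_{D(\ZZ)}(D_F[1],R\Gamma_{W,c}(X,F^D))$, does \emph{not} vanish in general: by \eqref{Verdier_ss} it receives contributions $\Ext^1_\ZZ(H^{1}(X,F)^\dagger,H^{-1}_{W,c}(X,F^D))$ and $\Ext^1_\ZZ(H^{0}(X,F)^\dagger,H^{0}_{W,c}(X,F^D))$, and $\Ext^1_\ZZ(\QQ,\ZZ)\neq 0$ while these Weil-étale groups have free parts (already for $F=i_\ast\ZZ$ or $F=j_!\ZZ$). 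So factoring $\delta=\epsilon v_1$ through $D_F[1]$ gives nothing by vanishing alone. The argument that actually closes this in the paper is of a different nature: $\Hom(D_F[1],R\Gamma_{W,c}(X,F^D))$ is a $\QQ$-vector space (multiplication by $n$ is invertible on $D_F$), whereas $\Hom(R\Gamma_{W,c}(X,G^D),R\Gamma_{W,c}(X,F^D))$ is a finitely generated abelian group because both complexes are perfect (\cref{weil-etale_perfect}); hence the image of the former in the latter is a divisible subgroup of a finitely generated group, i.e.\ zero, which is exactly the injectivity of $\Hom(R\Gamma_{W,c}(X,G^D),R\Gamma_{W,c}(X,F^D))\to\Hom(R\Gamma_{c,B}(X,G^D),R\Gamma_{W,c}(X,F^D))$ needed for uniqueness. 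You acknowledge in your last paragraph that ``the naive obstruction groups need not vanish'', but you never supply this divisibility-versus-finite-generation mechanism, so the delicate point is left unresolved rather than proved. (Your canonicity of $\alpha_F$ via \cref{hom_via_cohomology} and the commutativity check in degrees $1,2$ are fine and match the paper.)

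The long exact sequence step is also gapped as written, and here your route genuinely differs from the paper's. You invoke the stable $3\times 3$ lemma, but the maps $\alpha_F$ and the squares only exist in the homotopy category $D(\ZZ)$ (the paper emphasizes the construction is a bare mapping cone), so the $\infty$-categorical nine lemma does not apply without first choosing coherent lifts of the homotopy-commutative squares and then re-identifying the induced maps on cofibers with the already-constructed $f^\ast,g^\ast$ --- an identification that again requires the missing uniqueness mechanism. The paper instead concedes that the resulting triangle of Weil-étale complexes need not be distinguished and proves long exactness directly: since $D_F\otimes^L\ZZ/p^n\ZZ=0$, one gets $R\Gamma_{c,B}(X,F^D)\otimes^L\ZZ/p^n\ZZ\simeq R\Gamma_{W,c}(X,F^D)\otimes^L\ZZ/p^n\ZZ$, hence after derived $p$-completion $R\Gamma_{W,c}(X,(-)^D)\otimes^L\ZZ_p$ is exact, and faithful flatness of the family $(\ZZ_p)_p$ yields exactness of the integral long sequence. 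Your approach could likely be repaired along the lines sketched above, but as it stands both the uniqueness argument and the exactness argument rest on claims that are either false as stated or unjustified.
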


\begin{proof}
	For each red or blue sheaf $F$ we fixe a choice $R\Gamma_{W,c}(X,F^D)$ of cone of $\alpha_F$. Let $f:F\to G$ be a red-to-blue morphism. Let us consider the following diagram with distinguished rows:
	\[\begin{tikzcd}
	{D_F} & {R\Gamma_{c,B}(X,F^D)} & {R\Gamma_{W,c}(X,F^D)} & {~} \\
	{D_G} & {R\Gamma_{c,B}(X,G^D)} & {R\Gamma_{W,c}(X,G^D)} & {~}
	\arrow["{f^\ast}", from=2-1, to=1-1]
	\arrow["{f^\ast}", from=2-2, to=1-2]
	\arrow["{\alpha_F}", from=1-1, to=1-2]
	\arrow["{\alpha_G}", from=2-1, to=2-2]
	\arrow[from=1-2, to=1-3]
	\arrow[from=1-3, to=1-4]
	\arrow[from=2-3, to=2-4]
	\arrow[from=2-2, to=2-3]
	\end{tikzcd}\]
	We claim that the left square commutes. Indeed, by \cref{hom_via_cohomology} it suffices to check it in cohomology in degree $1$ and $2$, in which case it follows from the functoriality of the maps $D_{(-)} \to R\Hom_\ZZ(R\Gamma(X,-),\QQ/\ZZ[-2])$, $R\hat{\Gamma}_{c,B}(X,(-)^D) \to R\Hom_\ZZ(R\Gamma(X,-),\QQ/\ZZ[-2])$ and $R\Gamma_{c,B}(X,(-)^D)\to R\hat{\Gamma}_{c,B}(X,(-)^D)$. We obtain an induced morphism $f^\ast : R\Gamma_{W,c}(X,G^D) \to R\Gamma_{W,c}(X,F^D)$ in $D(\ZZ)$ completing the diagram to a morphism of distinguished triangles. Let us show that this induced morphism is uniquely determined by the left square. It suffices to show that the natural map
	\[
	\Hom(R\Gamma_{W,c}(X,G^D),R\Gamma_{W,c}(X,F^D)) \to \Hom(R\Gamma_{c,B}(X,G^D),R\Gamma_{W,c}(X, F^D))
	\]
	is injective. We have an exact sequence
	\[
	\Hom(D_G[1],R\Gamma_{W,c}(X,F^D)) \to\Hom(R\Gamma_{W,c}(X,G^D),R\Gamma_{W,c}(X,F^D)) \to \Hom(R\Gamma_{c,B}(X,G^D),R\Gamma_{W,c}(X, F^D))
	\]
	Since $D_G$ is a complex of $\QQ$-vector spaces, multiplication by $n$ on $D_G$ is a quasi-isomorphism for every integer $n$ so the same holds for $\Hom(D_G[1],R\Gamma_{W,c}(X,F^D))$ by functoriality. The latter must thus be a $\QQ$-vector space. On the other hand, $\Hom(R\Gamma_{W,c}(X,G^D),R\Gamma_{W,c}(X,F^D))$ is a finite type abelian group \footnote{using \cref{weil-etale_perfect,Verdier_ss}, for instance.}, so the image of $\Hom(D_G[1],R\Gamma_{W,c}(X,F^D))$ in $\Hom(R\Gamma_{W,c}(X,G^D),R\Gamma_{W,c}(X,F^D))$ must be $0$ and the claim follows. This proves the functoriality in red-to-blue morphisms; by running the same argument with the morphism $\mathrm{id}:F\to F$ for a choice of two different cones of $\alpha_F$, we obtain the uniqueness up to unique isomorphism.
	
	Let $0\to F \to G \to H \to 0$ be a red-to-blue short exact sequence. Denote $u: H \to F[1]$ the induced map. By the previous argument, we obtain a diagram
	\[\begin{tikzcd}
	{D_F} & {R\Gamma_{c,B}(X,F^D)} & {R\Gamma_{W,c}(X,F^D)} & ~ \\
	{D_G} & {R\Gamma_{c,B}(X,G^D)} & {R\Gamma_{W,c}(X,G^D)} &  ~\\
	{D_H} & {R\Gamma_{c,B}(X,H^D)} & {R\Gamma_{W,c}(X,H^D)} & ~ \\
	{D_F[-1]} & {R\Gamma_{c,B}(X,F^D)[-1]} & {R\Gamma_{W,c}(X,F^D)[-1]} & ~
	\arrow["{f^\ast}", from=2-1, to=1-1]
	\arrow["{f^\ast}", from=2-2, to=1-2]
	\arrow["{\alpha_F}", from=1-1, to=1-2]
	\arrow["{\alpha_G}", from=2-1, to=2-2]
	\arrow[from=1-2, to=1-3]
	\arrow[from=1-3, to=1-4]
	\arrow[from=2-3, to=2-4]
	\arrow[from=2-2, to=2-3]
	\arrow["{u^\ast}", from=4-1, to=3-1]
	\arrow["{g^\ast}", from=3-1, to=2-1]
	\arrow["{\alpha_H}", from=3-1, to=3-2]
	\arrow["{\alpha_F[-1]}", from=4-1, to=4-2]
	\arrow[from=3-2, to=3-3]
	\arrow[from=4-2, to=4-3]
	\arrow["{g^\ast}", from=3-2, to=2-2]
	\arrow["{u^\ast}", from=4-2, to=3-2]
	\arrow["{f^\ast}", from=2-3, to=1-3]
	\arrow["{g^\ast}", from=3-3, to=2-3]
	\arrow[from=4-3, to=4-4]
	\arrow[from=3-3, to=3-4]
	\arrow["{u^\ast}", dashed, from=4-3, to=3-3]
	\end{tikzcd}\]
	To construct the unique dotted arrow $u^\ast$ making the diagram commute, repeat the previous argument using \cref{cohomology_dagger,Verdier_ss}.
	
	The right column provides a triangle that is not necessarily a distinguished. We will show that it induces a long exact cohomology sequence. Fix a prime $p$ and an integer $n\in \NN^\times$. Observe that $D_F$ is a complex of $\QQ$-vector spaces, so that $D_F\otimes^L \ZZ/p^n\ZZ=0$. It follows that
	\[
	R\Gamma_{c,B}(X,F^D)\otimes^L \ZZ/p^n\ZZ \xrightarrow{\simeq} R\Gamma_{W,c}(X,F^D)\otimes^L \ZZ/p^n\ZZ
	\]
	Denote by $(-)^{\wedge}_p=R\lim (-\otimes^L \ZZ/p^n\ZZ)$ the derived $p$-completion. By passing to the derived limit, we find
	\[
	R\Gamma_{c,B}(X,F^D)^{\wedge}_p \xrightarrow{\simeq} R\Gamma_{W,c}(X,F^D)^{\wedge}_p = R\Gamma_{W,c}(X,F^D)\otimes^L \ZZ_p
	\]
	naturally in $F$ where the right equality holds because $R\Gamma_{W,c}(X,F^D)$ is a perfect complex\footnote{This follows from \cite[\href{https://stacks.math.columbia.edu/tag/0EEU}{Tag 0EEU}]{stacks-project} and \cite[\href{https://stacks.math.columbia.edu/tag/00MA}{Tag 00MA}]{stacks-project} by filtering with the truncations.}. Since derived $p$-completion is an exact functor, it follows that $R\Gamma_{W,c}(X,(-)^D)\otimes^L \ZZ_p$ is an exact functor too. Now $\ZZ_p$ is flat and the family $(\ZZ_p)_{p~ \text{prime}}$ is faithfully flat so it suffices to show that the long cohomology sequence is exact after tensoring with $\ZZ_p$, for each prime $p$; this follows from the exactness of $R\Gamma_{W,c}(X,(-)^D)\otimes^L \ZZ_p$.
\end{proof}

\begin{prop}
		Let $Y=\Spec(\cal{O}')$ be the spectrum of an order in a number field with a finite dominant morphism $\pi:Y\to X$ and let $F$ be a red or blue $\ZZ$-constructible sheaf on $Y$. Then $\pi_\ast F$ is red or blue and we have a unique isomorphism
		\[
		R\Gamma_{W,c}(Y,F^D) \xrightarrow{\simeq} R\Gamma_{W,c}(X,(\pi_\ast F)^D)
		\]
\end{prop}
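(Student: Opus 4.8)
The plan is to reduce everything to the defining mapping cone, exploiting the compatibilities with $\pi$ already established for $R\Gamma_{c,B}$ and for the Artin--Verdier pairing. \textbf{Step 1 (the hypothesis transports).} Since $\pi$ is finite, $\pi_\ast$ is exact, so $R\Gamma(X,\pi_\ast F)=R\Gamma(Y,F)$; in particular $H^i(X,\pi_\ast F)=H^i(Y,F)$ for all $i$, and writing $D_{(-)}$ over $Y$ for $R\Hom(R\Gamma(Y,-),\QQ[-2])$ one gets a canonical identification $D_{\pi_\ast F}=D_F$. Combined with the isomorphism $H^0_{c,B}(X,(\pi_\ast F)^D)\simeq H^0_{c,B}(Y,F^D)$ of \cref{cohomology_compact_pushforward}, this shows that $\pi_\ast F$ is red (resp. blue) if and only if $F$ is, so $\alpha_{\pi_\ast F}$ is defined and $R\Gamma_{W,c}(X,(\pi_\ast F)^D)$ makes sense.

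\textbf{Step 2 (the two copies of $\alpha$ agree).} I would show that under the identification $D_{\pi_\ast F}=D_F$ of Step 1 and the isomorphism $R\Gamma_{c,B}(X,(\pi_\ast F)^D)\simeq R\Gamma_{c,B}(Y,F^D)$ of \cref{cohomology_compact_pushforward}, the morphism $\alpha_{\pi_\ast F}$ corresponds to $\alpha_F$. Because $\pi_\ast F$ is red or blue, \cref{hom_via_cohomology} applies and the group $\Hom(D_{\pi_\ast F},R\Gamma_{c,B}(X,(\pi_\ast F)^D))$ is detected on cohomology in degrees $1$ and $2$; in those degrees $\alpha_{(-)}$ is by construction the composite of the natural map $H^{2-i}(-)^\dagger\to H^{2-i}(-)^\ast$ with the inverse of the Artin--Verdier isomorphism of \cref{twisted_AV}, which itself factors through the comparison $H^i_{c,B}\to\hat H^i_{c,B}$ and the pairing \eqref{pairing_FD}. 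So it suffices to check that the square relating $H^i_{c,B}(X,(\pi_\ast F)^D)\to H^{2-i}(X,\pi_\ast F)^\ast$ and $H^i_{c,B}(Y,F^D)\to H^{2-i}(Y,F)^\ast$ commutes; this follows from the compatibility of the maps $R\Gamma_{c,B}\to R\hat\Gamma_{c,B}$ with $\pi$ recorded in \cref{cohomology_compact_pushforward}, from the compatibility of the Artin--Verdier pairings over $X$ and $Y$ under $\pi$ which is \cref{compatibility_pairings_finite}, and from the identification $H^i(X,\pi_\ast F)^\ast=H^i(Y,F)^\ast$ of Step 1.

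\textbf{Step 3 (cones and uniqueness).} Step 2 produces a commuting square with the top row $\alpha_{\pi_\ast F}$, the bottom row $\alpha_F$, and vertical isomorphisms; taking mapping cones yields a morphism $R\Gamma_{W,c}(Y,F^D)\to R\Gamma_{W,c}(X,(\pi_\ast F)^D)$, which is an isomorphism since the two other vertical maps are. For uniqueness I would repeat the argument from the proof of \cref{weil_etale_well_defined}: any two such morphisms differ by an element of $\Hom(D_{\pi_\ast F}[1],R\Gamma_{W,c}(X,(\pi_\ast F)^D))$, a group which is at once a $\QQ$-vector space (as $D_{\pi_\ast F}$ is a complex of $\QQ$-vector spaces) and a finitely generated abelian group (as the target is perfect by \cref{weil-etale_perfect}, using the Verdier spectral sequence \eqref{Verdier_ss}), hence zero.

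\textbf{Main obstacle.} Everything outside Step 2 is formal; the real content is the commutativity of the square in Step 2, which does not follow from \cref{cohomology_compact_pushforward} alone and genuinely needs the compatibility of the Artin--Verdier pairings under $\pi$ supplied by \cref{compatibility_pairings_finite}. The delicate part there is bookkeeping: one must make sure the dagger-to-star maps, the profinite completions implicit in the Artin--Verdier isomorphism in degrees $1,2$, and the canonical identifications $H^i(X,\pi_\ast F)=H^i(Y,F)$ and $H^i_{c,B}(X,(\pi_\ast F)^D)=H^i_{c,B}(Y,F^D)$ are all applied coherently, so that the degree-wise comparison of $\alpha_{\pi_\ast F}$ and $\alpha_F$ really is an equality and not just an abstract isomorphism.
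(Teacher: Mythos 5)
Your overall route is exactly the paper's: identify $D_{\pi_\ast F}$ with $D_F$ and $R\Gamma_{c,B}(X,(\pi_\ast F)^D)$ with $R\Gamma_{c,B}(Y,F^D)$, check that $\alpha_{\pi_\ast F}$ corresponds to $\alpha_F$ by testing in cohomology via \cref{hom_via_cohomology}, take cones, and obtain uniqueness because $\Hom(D_{\pi_\ast F}[1],R\Gamma_{W,c}(X,(\pi_\ast F)^D))$ is at once a $\QQ$-vector space and a finite type abelian group. Steps 1 and 3 are fine (Step 1 even makes explicit the red/blue transfer that the paper leaves implicit).

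The gap is in Step 2. The Artin--Verdier map entering the definition of $\alpha$ is not the pairing of \cref{compatibility_pairings_finite} itself but its composite with the trace $\tau^{\geq 2}\colon R\hat{\Gamma}_{c,B}(-,\ZZ^D)\to\QQ/\ZZ[-2]$, as in \eqref{pairing_FD}. \Cref{compatibility_pairings_finite} only compares the two pairings with values in $R\hat{\Gamma}_{c,B}(Y,\ZZ^D)$ and $R\hat{\Gamma}_{c,B}(X,\ZZ^D)$, linked by the map induced by the counit $\pi_\ast R\pi^{!}\ZZ^c_X\to\ZZ^c_X$. To conclude that the two $\QQ/\ZZ[-2]$-valued Artin--Verdier pairings (hence the two duality isomorphisms, hence $\alpha_{\pi_\ast F}$ and $\alpha_F$) match under your identifications, you still have to know that this counit-induced map is compatible with the two truncations $\tau^{\geq 2}$, i.e.\ that it induces the \emph{identity} on $\hat{H}^2\simeq\QQ/\ZZ$. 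This is not bookkeeping: a priori such a map could act on $\QQ/\ZZ$ by multiplication by an integer (e.g.\ the degree $[K(Y):K]$, as happens for the unit rather than the counit), which would wreck the degreewise comparison of the two $\alpha$'s. The paper handles this by rewriting the map as $R\hat{\Gamma}_c(X,\pi_\ast R\pi^{!}\ZZ^c_X)\to R\hat{\Gamma}_c(X,\ZZ^c_X)$ and invoking the classical compatibility of the trace with the counit/corestriction (\cite[II.3.10]{ADT}, \cite[8.9]{Harari20}). Add that one step and your argument coincides with the paper's proof.
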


\begin{proof}
We can run an argument similar to the previous proof: it suffices to check that the square
\[\begin{tikzcd}
{R\Gamma(Y,F)^\dagger[-2]} & {R\Gamma_{c,B}(Y,F^D)} & {} \\
{R\Gamma(X,\pi_\ast F)^\dagger[-2]} & {R\Gamma_{c,B}(X,(\pi_\ast F)^D)}
\arrow["{\alpha_F}", from=1-1, to=1-2]
\arrow["\simeq"', from=2-1, to=1-1]
\arrow["\simeq"', from=2-2, to=1-2]
\arrow["{\alpha_{\pi_\ast F}}"', from=2-1, to=2-2]
\end{tikzcd}\]
is commutative. This can be checked in cohomology in degree $2$ and $3$, whence from the definition of $\alpha_F$ we reduce to checking that
\[\begin{tikzcd}
{\hat{H}^i_{c,B}(Y,F^D)} & {H^{2-i}(Y,F)^\ast} & {} \\
{\hat{H}^i_{c,B}(X,(\pi_\ast F)^D)} & {H^{2-i}(X,\pi_\ast F)^\ast}
\arrow[from=1-1, to=1-2]
\arrow[from=2-1, to=2-2]
\arrow["\simeq"', from=2-1, to=1-1]
\arrow["\simeq"', from=2-2, to=1-2]
\end{tikzcd}\]
is commutative for $i=2,3$. This would be implied by the commutativity of the following diagram:
\[\begin{tikzcd}[ampersand replacement=\&]
{R\Gamma(Y, F)\otimes^L R\hat{\Gamma}_{c,B}(Y,F^D)} \& {R\hat{\Gamma}_{c,B}(Y,\ZZ^D)} \& {\QQ/\ZZ[-2]} \\
{R\Gamma(X,\pi_\ast F)\otimes^L R\hat{\Gamma}_{c,B}(X,(\pi_\ast F)^D)} \& {R\hat{\Gamma}_{c,B}(X,\ZZ^D)} \& {\QQ/\ZZ[-2]}
\arrow["\simeq", from=1-1, to=2-1]
\arrow[from=2-1, to=2-2]
\arrow[from=1-1, to=1-2]
\arrow[from=1-2, to=2-2]
\arrow[Rightarrow, no head, from=1-3, to=2-3]
\arrow["{\tau^{\geq 2}}"', from=1-2, to=1-3]
\arrow["{\tau^{\geq 2}}"', from=2-2, to=2-3]
\end{tikzcd}\]
By \cref{compatibility_pairings_finite}, the left square is commutative. Moreover, the arrow $R\hat{\Gamma}_{c,B}(Y,\ZZ^D) \to R\hat{\Gamma}_{c,B}(X,\ZZ^D)$ is the composition 
\[
R\hat{\Gamma}_{c,B}(Y,\ZZ^D)\simeq R\hat{\Gamma}_c(Y,\ZZ^c_Y)\simeq R\hat{\Gamma}_c(X,\pi_\ast R\pi^!\ZZ^c_X) \to R\hat{\Gamma}_c(X,\ZZ^c_X)\simeq R\hat{\Gamma}_{c,B}(X,\ZZ^D)
\]
coming from the counit $\varepsilon:\pi_\ast R\pi^! \to \mathrm{id}$. It induces in cohomology in degree $2$ the identity $\QQ/\ZZ \to \QQ/\ZZ$ \cite[II.3.10]{ADT},\cite[8.9]{Harari20}, so the right square is commutative.
\end{proof}

The Weil-étale complex splits rationally:
\begin{prop}\label{prop:rational_splitting}
	Let $F$ be a red or blue sheaf.	There is an isomorphism
	\[
	R\Gamma_{W,c}(X,F^D)_\QQ \xrightarrow{\simeq} R\Hom(R\Gamma(X,F),\QQ[-1])  \oplus R\Gamma_{c,B}(X,F^D)_\QQ
	\]
	natural in red-to-blue morphisms and red-to-blue short exact sequences.
\end{prop}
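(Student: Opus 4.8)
The plan is to obtain the splitting from a degree count, and then to propagate naturality through the functoriality already established in \cref{weil_etale_well_defined}, the one non-formal input being that the red-to-blue hypothesis is exactly what makes the relevant obstruction group vanish.

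\textbf{Step 1: the splitting.} By \cref{cohomology_compact_support}, $H^i_{c,B}(X,F^D)$ is finite type for $i=-1,0$ and torsion of cofinite type for $i=1,2$ (and $0$ otherwise); a torsion of cofinite type group is killed by $-\otimes_\ZZ\QQ$, so $R\Gamma_{c,B}(X,F^D)\otimes\QQ$ is concentrated in degrees $[-1,0]$. By \cref{cohomology_dagger}, $D_F=R\Hom(R\Gamma(X,F),\QQ[-2])$ is concentrated in degrees $[1,2]$ and is already a complex of $\QQ$-vector spaces. Since $\mathcal D(\QQ)$ is semisimple, $\Hom_{\mathcal D(\QQ)}(A,B)=\prod_i\Hom_\QQ(H^i(A),H^i(B))$, so every morphism from a complex supported in $[1,2]$ to one supported in $[-1,0]$ is zero; in particular $\alpha_F\otimes\QQ=0$. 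As $-\otimes\QQ$ is exact it commutes with the mapping cone, whence $R\Gamma_{W,c}(X,F^D)\otimes\QQ=\mathrm{Cone}\big(0\colon D_F\to R\Gamma_{c,B}(X,F^D)_\QQ\big)\cong R\Gamma_{c,B}(X,F^D)_\QQ\oplus D_F[1]=R\Gamma_{c,B}(X,F^D)_\QQ\oplus R\Hom(R\Gamma(X,F),\QQ[-1])$, and under this isomorphism the two structure maps $\beta_F\otimes\QQ$ and $\gamma_F\otimes\QQ$ of the defining triangle become the inclusion of the first summand and the projection onto the second.

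\textbf{Step 2: naturality in red-to-blue morphisms.} Fix a splitting $\eta_F$ as above for each red or blue $F$. Given a red-to-blue morphism $f\colon F\to G$, \cref{weil_etale_well_defined} gives $f^\ast$ fitting into a morphism of the defining triangles; after $-\otimes\QQ$ and transport along $\eta_F,\eta_G$ the middle component is a map $R\Gamma_{c,B}(X,G^D)_\QQ\oplus D_G[1]\to R\Gamma_{c,B}(X,F^D)_\QQ\oplus D_F[1]$ which, by commutation with $\beta$ and with $\gamma$, has first column $\binom{f^\ast}{0}$ and last row $(0\ \ f^\ast)$; its only possibly non-diagonal entry is an element of $\Hom_{\mathcal D(\QQ)}\big(D_G[1],R\Gamma_{c,B}(X,F^D)_\QQ\big)=\Hom_\QQ\big(H^1(X,G)^\dagger,\,H^0_{c,B}(X,F^D)_\QQ\big)$, using the supports from Step 1. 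Now $f$ red-to-blue means $F,G$ both red, both blue, or $F$ red and $G$ blue; in every case either $F$ is red, so $H^0_{c,B}(X,F^D)$ is finite and $H^0_{c,B}(X,F^D)_\QQ=0$, or $G$ is blue, so $H^1(X,G)$ is finite and $H^1(X,G)^\dagger=0$. Hence the off-diagonal entry vanishes and $f^\ast\otimes\QQ$ respects the direct sum decomposition, which is the asserted naturality.

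\textbf{Step 3: naturality in red-to-blue short exact sequences.} For $0\to F\to G\to H\to 0$ a red-to-blue short exact sequence, I would first observe that $R\Gamma_{W,c}(X,(-)^D)$ sends it to a distinguished triangle: the natural comparison $R\Gamma_{W,c}(X,H^D)\to\mathrm{fib}\big(R\Gamma_{W,c}(X,G^D)\to R\Gamma_{W,c}(X,F^D)\big)$ becomes an equivalence after $-\otimes^L\ZZ/p^n\ZZ$ for all $p,n$ by the computation in the proof of \cref{weil_etale_well_defined} (there the reduction mod $p^n$ agrees with that of the exact functor $R\Gamma_{c,B}(X,(-)^D)$), and both sides are perfect, so it is an equivalence. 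Tensoring with $\QQ$, Step 2 identifies the first two maps of the resulting triangle with the direct sums $\mathcal R(H)\oplus\mathcal D(H)\to\mathcal R(G)\oplus\mathcal D(G)\to\mathcal R(F)\oplus\mathcal D(F)$ (writing $\mathcal R=R\Gamma_{c,B}(X,(-)^D)_\QQ$, $\mathcal D=R\Hom(R\Gamma(X,-),\QQ[-1])$), and the connecting map, being compatible via $\beta$ with the $\mathcal R$-triangle and via $\gamma$ with the $\mathcal D$-triangle, is lower triangular with single off-diagonal entry in $\Hom_{\mathcal D(\QQ)}\big(D_F[1],R\Gamma_{c,B}(X,H^D)_\QQ[1]\big)=\Hom_{\mathcal D(\QQ)}\big(D_F,R\Gamma_{c,B}(X,H^D)_\QQ\big)=0$, which vanishes now purely for support reasons. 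Thus the $\QQ$-triangle is the direct sum of the $\mathcal R$- and $\mathcal D$-triangles, giving the naturality for short exact sequences; compatibility with finite dominant morphisms follows the same way from the functoriality already recorded.

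I expect the only real difficulty to be the naturality package of Steps 2 and 3 — isolating precisely which off-diagonal $\Hom$-group controls each comparison and checking that the red-to-blue condition, and nothing more, kills the one that is not formal — together with the auxiliary exactness of $R\Gamma_{W,c}(X,(-)^D)$ on red-to-blue short exact sequences. The splitting itself is an immediate consequence of the degree count $\alpha_F\otimes\QQ=0$.
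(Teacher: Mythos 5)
Your argument is essentially the paper's own proof, just reorganized: the paper constructs a canonical section of $p\colon R\Gamma_{W,c}(X,F^D)_\QQ\to D_F[1]$ using exactly the two vanishings you use, namely $\Hom_{D(\ZZ)}(D_F,R\Gamma_{c,B}(X,G^D)_\QQ)=0$ unconditionally (your $\alpha_F\otimes\QQ=0$) and $\Hom_{D(\ZZ)}(D_F[1],R\Gamma_{c,B}(X,G^D)_\QQ)=\Hom(H^1(X,F)^\dagger,H^0_{c,B}(X,G^D)_\QQ)=0$ under the red/blue hypothesis (your off-diagonal entry), and then functoriality is read off from the resulting isomorphism $\Hom(D_F[1],R\Gamma_{W,c}(X,G^D)_\QQ)\simeq\Hom(D_F[1],D_G[1])$; your Steps 1--2 are the same computation via \cref{Verdier_ss}, \cref{cohomology_compact_support} and \cref{cohomology_dagger}.

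One caveat on Step 3: the claim that $R\Gamma_{W,c}(X,(-)^D)$ sends a red-to-blue short exact sequence to a distinguished triangle is both stronger than what the paper establishes (the proof of \cref{weil_etale_well_defined} explicitly says the triangle need not be distinguished and only produces a long exact cohomology sequence) and not justified as written -- in $D(\ZZ)$ there is no canonical comparison map to $\fib\bigl(R\Gamma_{W,c}(X,G^D)\to R\Gamma_{W,c}(X,F^D)\bigr)$, so "becomes an equivalence after $-\otimes^L\ZZ/p^n\ZZ$" presupposes a choice and compatibilities you have not checked. Fortunately this detour is unnecessary: the naturality asserted in the proposition only requires that $g^\ast$, $f^\ast$ and the connecting map $u^\ast$ of \cref{weil_etale_well_defined} become diagonal after $\otimes\QQ$, and your own matrix argument (commutation with the $\beta$- and $\gamma$-squares already recorded there, plus the support vanishing $\Hom(D_F,R\Gamma_{c,B}(X,H^D)_\QQ)=0$) delivers exactly that without any distinguishedness.
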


\begin{proof}
	Consider the distinguished triangle
	\[
	R\Gamma_{c,B}(X,(-)^D)_\QQ \to R\Gamma_{W,c}(X,(-)^D)_\QQ \xrightarrow{p} D_{(-)}[1]
	\]
	It suffices to show that $p$ has a section; but if $F$ is a red sheaf or $G$ is a blue sheaf, considerations on the Verdier spectral sequence \cref{Verdier_ss} show that 
	\[
	\begin{array}{lr}
	\Hom_{D(\ZZ)}(D_F[1],R\Gamma_{c,B}(X,G^D)_\QQ)=0, & \Hom_{D(\ZZ)}(D_F[1],R\Gamma_{c,B}(X,G^D)_\QQ[1])=0.
	\end{array}
	\]
	so that composition with $p_G$ induces an isomorphism
	\[
	\Hom_{D(\ZZ)}(D_F[1],R\Gamma_{W,c}(X,G^D)_\QQ) \xrightarrow[p_{G,\ast}]{\simeq} \Hom_{D(\ZZ)}(D_F[1],D_G[1])
	\]
	We then put $s_F=p_{F,\ast}^{-1}(\mathrm{id})$; the functoriality is easily checked from the above isomorphism.
\end{proof}

\begin{prop}
	Let $Y=\Spec(\cal{O}')$ be the spectrum of an order in a number field with a finite dominant morphism $\pi:Y\to X$ and let $F$ be a red or blue sheaf on $Y$. Then the rational splitting of Weil-étale cohomology is compatible with $\pi_\ast$, i.e. the following square commutes
	\[\begin{tikzcd}[ampersand replacement=\&]
		{R\Gamma_{W,c}(Y,F^D)_\QQ} \& { D_F[1] \oplus R\Gamma_c(Y,F^D)_\QQ } \\
		{R\Gamma_{W,c}(X,(\pi_\ast F)^D)_\QQ} \& { D_{\pi_\ast F}[1] \oplus R\Gamma_c(X,(\pi_\ast F)^D)_\QQ}
		\arrow["\simeq", from=1-1, to=2-1]
		\arrow["\simeq"', from=1-1, to=1-2]
		\arrow["\simeq", from=2-1, to=2-2]
		\arrow["\simeq"', from=1-2, to=2-2]
	\end{tikzcd}\]
\end{prop}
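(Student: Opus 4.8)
The plan is to isolate the two pieces of data that determine the rational splitting and check that each is $\pi_\ast$-equivariant. Recall from the proof of \cref{prop:rational_splitting} that for a red or blue sheaf $G$ the splitting isomorphism is the inverse of
\[
(s_G, i_G)\colon D_G[1] \oplus R\Gamma_{c,B}(X,G^D)_\QQ \xrightarrow{\ \simeq\ } R\Gamma_{W,c}(X,G^D)_\QQ,
\]
where $i_G$ is the first map of the defining distinguished triangle and $s_G = p_{G,\ast}^{-1}(\mathrm{id}_{D_G[1]})$ is the canonical section of the projection $p_G\colon R\Gamma_{W,c}(X,G^D)_\QQ \to D_G[1]$; here $p_{G,\ast}$ is an isomorphism because $\Hom_{D(\ZZ)}(D_G[1], R\Gamma_{c,B}(X,G^D)_\QQ[j]) = 0$ for $j=0,1$ — the case $j=0$ uses that $G$ is red or blue, and $j=1$ is automatic by \cref{cohomology_compact_support,cohomology_dagger} for degree reasons. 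Hence it suffices to show that the three canonical comparison isomorphisms, namely $R\Gamma_{c,B}(Y,F^D)_\QQ \simeq R\Gamma_{c,B}(X,(\pi_\ast F)^D)_\QQ$ from \cref{cohomology_compact_pushforward}, $D_F \simeq D_{\pi_\ast F}$ from exactness of $\pi_\ast$, and $R\Gamma_{W,c}(Y,F^D) \simeq R\Gamma_{W,c}(X,(\pi_\ast F)^D)$ from the previous proposition, intertwine $i_F$ with $i_{\pi_\ast F}$ and $s_F$ with $s_{\pi_\ast F}$.

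The first compatibility is essentially built into the previous proposition: the comparison isomorphism on Weil-étale complexes is produced there (exactly as in \cref{weil_etale_well_defined}) as the unique morphism completing the commutative square relating $\alpha_F$ and $\alpha_{\pi_\ast F}$ to a morphism of distinguished triangles, so after rationalizing we obtain a commutative $3\times 3$ ladder and in particular the square relating $i_F$ to $i_{\pi_\ast F}$ and the one relating $p_F$ to $p_{\pi_\ast F}$. For the section, write $u$ and $v$ for the comparison isomorphisms on $R\Gamma_{W,c}$ and on $D[1]$; then both $u\circ s_F$ and $s_{\pi_\ast F}\circ v$ are morphisms $D_F[1]\to R\Gamma_{W,c}(X,(\pi_\ast F)^D)_\QQ$ whose image under $p_{\pi_\ast F,\ast}$ equals $v\circ p_F\circ s_F = v = p_{\pi_\ast F}\circ s_{\pi_\ast F}\circ v$. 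Since $\pi_\ast F$ is red or blue, $p_{\pi_\ast F,\ast}$ is injective on $\Hom_{D(\ZZ)}(D_F[1],-)$ (using $D_F[1]\simeq D_{\pi_\ast F}[1]$ and the vanishing recalled above), so $u\circ s_F = s_{\pi_\ast F}\circ v$. Combining the two compatibilities and inverting the horizontal isomorphisms gives the displayed commutative square; the same uniqueness argument, applied along red-to-blue morphisms and short exact sequences, also matches this with the naturality established in \cref{prop:rational_splitting}.

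I do not expect a genuine obstacle here. The two points that need care are: confirming that the previous proposition really supplies a morphism of distinguished triangles rather than only a commuting square on the first two terms (true by its construction), and checking the vanishing of the rational $\Hom$/$\Ext$ groups between $D_F[1]$ and $R\Gamma_{c,B}(X,(\pi_\ast F)^D)_\QQ$ that makes the section $s_{(-)}$ rigid (available from \cref{cohomology_compact_support,cohomology_dagger} together with the red/blue hypothesis). Everything else is formal diagram-chasing in $D(\ZZ)$.
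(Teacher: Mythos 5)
Your proof is correct and follows essentially the same route as the paper: reduce to compatibility of the canonical sections (the $i$-compatibility being built into the construction of the comparison isomorphism as a morphism of triangles), then conclude from the injectivity of $p_{\pi_\ast F,\ast}$ via the computation $p_{\pi_\ast F}\circ u\circ s_F = v = p_{\pi_\ast F}\circ s_{\pi_\ast F}\circ v$. The paper's proof is exactly this argument, stated more tersely.
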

\begin{proof}
	It suffices to prove that in the following square, the square with the sections commutes:
	\[\begin{tikzcd}[ampersand replacement=\&]
		{R\Gamma_{W,c}(Y,F^D)_\QQ} \& {D_F[1]} \\
		{R\Gamma_{W,c}(X,(\pi_\ast F)^D)_\QQ} \& {D_{\pi_\ast F}[1]}
		\arrow["\gamma", from=1-1, to=2-1]
		\arrow["{p_F}"', shift right=2, from=1-1, to=1-2]
		\arrow["{p_{\pi_\ast F}}", shift left=2, from=2-1, to=2-2]
		\arrow["\delta"', from=1-2, to=2-2]
		\arrow["{s_F}"', shift right=2, from=1-2, to=1-1]
		\arrow["{s_{\pi_\ast F}}", shift left=2, from=2-2, to=2-1]
	\end{tikzcd}\]
	As in the previous proof, there is an isomorphism 
	\[
	\Hom_{D(\ZZ)}(D_F[1],R\Gamma_{W,c}(X,(\pi_\ast F)^D)_\QQ) \xrightarrow[p_{\pi_\ast F,\ast}]{\simeq} \Hom_{D(\ZZ)}(D_F[1],D_{\pi_\ast F}[1])
	\]
	We compute 
	\[p_{\pi_\ast F}\gamma s_F=\delta p_F s_F=\delta=p_{\pi_\ast F} s_{\pi_\ast F}\delta.\]
\end{proof}
\section{The tangent space and the fundamental line}

Denote $g:\eta=\Spec(K) \to X$ the inclusion of the generic point, $G_K$ the absolute Galois group of $K$, $K^t$ the maximal tamely ramified extension of $K$, $G_K^t=\mathrm{Gal}(K^t/K)$ and $\cal{O}_{K^t}$ the ring of integers of $K^t$. For each finite place $v$ of $K$, denote $P_v\subset I_v \subset D_v \subset G_K$ the wild ramification, inertia, decomposition subgroups of $G_K$ at $v$~\footnote{coming from a choice of embedding $K_v \hookrightarrow K^{sep}$}. Denote $N$ the smallest closed normal subgroup of $G_K$ containing $P_v$ for each finite place $v$. We have $G_K^t=G_K/N$ by \cref{tame_galois_group_and_wild_ramification}. For a sheaf $F$ on $X$, denote $F_\eta:=g^\ast F$, seen as a discrete $G_K$-module.
\begin{defi}
	Let $F$ be a $\ZZ$-constructible sheaf on $X$.
	\begin{itemize}
		\item We say that $F$ is tamely ramified if for each finite place $v$ of $K$, $P_v$ acts trivially on $F_\eta$; then $F_\eta$ carries a natural action of $G_K^t$.
		\item Suppose that $F$ is tamely ramified. The tangent space of $F^D$ is the complex
		\[
		\Lie_X(F^D):=R\Hom_{G_K^t}(F_\eta, \cal{O}_{K^t}[1])
		\]
	\end{itemize}
\end{defi}

\begin{rmk}
	\begin{itemize}
		\item Let us motivate the above definition. Following the work of Flach-Morin \cite{Flach18}, $\Lie_X(F^D)$ should be the "additive complex" giving the additive part of the fundamental line. It should behave like Milne's correcting factor in the special value formula for $L_X(F^D,s)$ at $s=0$. Since $L_X(\ZZ^D,s)=\zeta_X(s+1)$ when $X=\Spec(\cal{O}_K)$ is regular, we must have $\Lie_X(\GG_m)=\cal{O}_K$: this is the additive complex for $\ZZ(1)[1]$ on $X$. We want $\Lie_X$ to be an exact functor\footnote{A way to avoid the tame ramification hypothesis may be to relax this condition and only ask that $\det_\ZZ(\Lie_X((-)^D)$ is multiplicative with respect to fiber sequences)}. Recall the $L$-function of a $\ZZ$-constructible sheaf $L_X(-,s)$ introduced in \cite[§ 6.4]{AMorin21}. If $i:x\to X$ is the inclusion of a closed point and $M$ is a finite type $G_x$-module, we have $L_X((i_\ast M)^D,s)=L_X(i_\ast M^\vee,s)$. Since the special value at $s=0$ of $L_X(i_\ast M^\vee,s)$ doesn't involve an additive complex, it is expected that $\Lie_X((i_\ast M)^D)=0$. Therefore, $\Lie_X(F^D)$ should only depend on $F_\eta$. In \cite{Geisser20}, Geisser-Suzuki consider a torus $T$ over a function field $K$ associated to a proper smooth curve over a finite field $C$. They write down a special value formula for the $L$-function of $T$ (defined in terms of the rational $l$-adic Tate module) at $s=1$ in terms of Weil-étale cohomology and of the Lie algebra of the connected Néron model $\cal{T}^0$ on $C$ of $T$. Moreover, they show that on the étale site, $\cal{T}^0=R\cal{H}om(\widetilde{\cal{Y}},\GG_m)$ where $\widetilde{\cal{Y}}$ is a bounded complex with $\ZZ$-constructible cohomology sheaves related to the character group of $T$. This inspired our construction.
		\item If $F$ is $\ZZ$-constructible and $X=\Spec(\cal{O}_K)$ is regular, $F$ is tamely ramified if and only if for each $U\subset X$ such that $F_{|U}$ is locally constant, there exist a finite extension $L/K$ such that the normalization $\pi :Y \to X$ of $X$ in $L$ is étale over $U$ with $(\pi^\ast F)_{|\pi^{-1}(U)}$ constant, and $\pi$ is tamely ramified above points $x\in X\backslash U$.
		\item If $X$ is not regular we nonetheless have $\Lie_X(\ZZ^D)=\cal{O}_K[1]$. This seems strange to us, but as we explained $\Lie_X(F^D)$ depends only on $F_\eta$ which does not see singular points so it is also more or less expected.
		\item We have $\Lie_X(F^D)_\QQ=\Hom_{G_K}(F_\eta,K^{sep})[1]$.
	\end{itemize}
\end{rmk}

The restriction to tamely ramified sheaves is justified by the following theorem, essentially due to Noether \cite{Noether32}\footnote{See also \cite[Theorem 1.1]{Kock04}}:
\begin{thm}[{\cite[6.1.10]{NeukirchCohomologyNumberFields}}]\label{noether_theorem}
	Let $K$ be a number field or a $p$-adic field. The discrete $G_K^t$-module $\cal{O}_{K^t}$ is cohomologically trivial. In particular
	\[
	R\Gamma(G_K^t, \cal{O}_{K^t})=\cal{O}_K[0]
	\]
\end{thm}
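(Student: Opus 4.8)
The plan is to recall the standard reduction of this statement to Noether's classical normal basis theorem for tamely ramified extensions; this is exactly the argument behind \cite[6.1.10]{NeukirchCohomologyNumberFields}.

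First I would express $\cal{O}_{K^t}$ as the filtered colimit $\colim_{K'}\cal{O}_{K'}$ of discrete $G_K^t$-modules, the colimit running over the finite Galois subextensions $K'/K$ of $K^t/K$. Each such $K'/K$ is tamely ramified (being a subextension of $K^t/K$), and $\cal{O}_{K'}=(\cal{O}_{K^t})^{G_{K'}^t}$ since the ring of integers is integrally closed. Using that continuous cohomology of a profinite group turns filtered colimits of discrete modules into colimits and satisfies $H^q(G,-)=\colim_N H^q(G/N,(-)^N)$ over the open normal subgroups $N$, for any open subgroup $U=G_{K'}^t$ and any $q\geq 1$ this gives
\[
H^q(G_{K'}^t,\cal{O}_{K^t})=\colim_{K''}H^q(\Gal(K''/K'),\cal{O}_{K''}),
\]
the colimit over the finite Galois subextensions $K''/K'$ of $K^t/K'$.

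The substantive input comes next: by Noether's theorem (\cite{Noether32}, see also \cite[Thm.~1.1]{Kock04}), since $K''/K'$ is finite Galois and tamely ramified, $\cal{O}_{K''}$ is a projective $\cal{O}_{K'}[\Gal(K''/K')]$-module. As $\cal{O}_{K'}$ is free of finite rank over $\ZZ$, the group ring $\cal{O}_{K'}[\Gal(K''/K')]$ is free over $\ZZ[\Gal(K''/K')]$, so $\cal{O}_{K''}$ is a projective $\ZZ[\Gal(K''/K')]$-module and hence cohomologically trivial as a $\Gal(K''/K')$-module; in particular $H^q(\Gal(K''/K'),\cal{O}_{K''})=0$ for $q\geq 1$. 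Plugging this into the colimit formula yields $H^q(U,\cal{O}_{K^t})=0$ for every open $U\leq G_K^t$ and every $q\geq 1$, i.e. cohomological triviality. To finish I would note $H^0(G_K^t,\cal{O}_{K^t})=(\cal{O}_{K^t})^{G_K^t}=\cal{O}_K$, again by integral closedness, which together with the vanishing gives $R\Gamma(G_K^t,\cal{O}_{K^t})\simeq\cal{O}_K[0]$.

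The only genuinely non-formal ingredient is Noether's theorem itself; the remaining steps are colimit bookkeeping and the elementary observation that projective (indeed induced) $\ZZ[G]$-modules have vanishing cohomology in positive degrees over every subgroup. So I expect the only real obstacle to be the import of this deep classical result — which is also precisely the reason tameness is assumed of $F$ throughout the paper.
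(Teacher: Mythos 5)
Your proposal is correct, and it is essentially the paper's "proof": the paper does not argue this statement at all but quotes it from \cite[6.1.10]{NeukirchCohomologyNumberFields}, whose proof is exactly your dévissage — write $\cal{O}_{K^t}=\colim \cal{O}_{K''}$, invoke Noether's projectivity of $\cal{O}_{K''}$ over $\cal{O}_{K'}[\Gal(K''/K')]$ (hence over $\ZZ[\Gal(K''/K')]$), and pass to the colimit. One small point worth making explicit: cohomological triviality is needed for arbitrary \emph{closed} subgroups of $G_K^t$ (the paper later applies it to pro-$p$-Sylow subgroups in \cref{lem_CTtorsionfree}), whereas you only treat open subgroups; this is covered by the same colimit bookkeeping, since for a discrete module $M$ and a closed subgroup $H$ one has $H^q(H,M)=\colim_U H^q(U,M)$ over the open subgroups $U\supseteq H$.
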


\begin{cor}\label{cor_noether}
	Let $L/K$ be  finite tamely ramified Galois extension of number fields or $p$-adic fields with Galois group $G$. For $i\geq 1$ denote $\cal{O}_L[[t]]_i:= 1+t^i \cal{O}_L[[t]]$. Then $\cal{O}_L[[t]]_i$ is a cohomologically trivial $G$-module.
\end{cor}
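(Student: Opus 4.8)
The plan is to deduce the statement from \cref{noether_theorem} by a dévissage along the $t$-adic filtration of $\cal O_L[[t]]_i$. The input I will use is the classical form of Noether's theorem behind \cref{noether_theorem}: for a finite tamely ramified Galois extension $L/K$ of number fields or $p$-adic fields with group $G$, the ring of integers $\cal O_L$ is a cohomologically trivial $G$-module. Indeed Noether's theorem says $\cal O_L$ is locally free over $\cal O_K[G]$, hence a projective $\cal O_K[G]$-module, and $\cal O_K[G]$ is a free $\ZZ[G]$-module in the number field case, resp. an induced module $\ind_{1}^{G}\ZZ_p$ in the $p$-adic case; a projective (resp. induced) module and all its direct summands are cohomologically trivial, over $G$ and over every subgroup of $G$.

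Next I would set up the descending filtration by the $G$-submodules $\cal O_L[[t]]_i\supseteq \cal O_L[[t]]_{i+1}\supseteq\cdots$, which satisfies $\bigcap_{n\geq i}\cal O_L[[t]]_n=\{1\}$ and, by $t$-adic completeness of $\cal O_L[[t]]$,
\[
\cal O_L[[t]]_i=\varprojlim_n \cal O_L[[t]]_i/\cal O_L[[t]]_n .
\]
For $n\geq i$ the leading-coefficient map $1+at^n+\cdots\mapsto a \bmod t$ gives a $G$-equivariant isomorphism $\cal O_L[[t]]_n/\cal O_L[[t]]_{n+1}\xrightarrow{\ \simeq\ }(\cal O_L,+)$, since multiplication in $1+t^n\cal O_L[[t]]$ adds leading coefficients modulo higher order terms. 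Then, by induction on $n$ using the long exact Tate cohomology sequence of
\[
0\to \cal O_L[[t]]_{n-1}/\cal O_L[[t]]_n \to \cal O_L[[t]]_i/\cal O_L[[t]]_n \to \cal O_L[[t]]_i/\cal O_L[[t]]_{n-1}\to 0,
\]
each finite-level quotient $\cal O_L[[t]]_i/\cal O_L[[t]]_n$ is a cohomologically trivial $G$-module, being a successive extension of copies of the cohomologically trivial module $\cal O_L$; equivalently $R\hat\Gamma(H,\cal O_L[[t]]_i/\cal O_L[[t]]_n)=0$ for every subgroup $H\leq G$.

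Finally I would pass to the limit as in the proof of \cref{comparison_Tate_compact_cohomology}. For each subgroup $H\leq G$ the functor $R\hat\Gamma(H,-)=R\Hom_{\ZZ[H]}(P^\bullet,-)$ is a derived Hom out of a fixed complex, hence commutes with homotopy limits; moreover the transition maps of the tower $\bigl(\cal O_L[[t]]_i/\cal O_L[[t]]_n\bigr)_n$ are surjective, so it is Mittag--Leffler and $R\varprojlim_n$ coincides with $\varprojlim_n$. Therefore
\[
R\hat\Gamma\bigl(H,\cal O_L[[t]]_i\bigr)\simeq R\varprojlim_n R\hat\Gamma\bigl(H,\cal O_L[[t]]_i/\cal O_L[[t]]_n\bigr)=R\varprojlim_n 0=0,
\]
and since this holds for every subgroup $H$, the $G$-module $\cal O_L[[t]]_i$ is cohomologically trivial. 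The only point requiring care is this last interchange of Tate cohomology with the inverse limit — the modules involved are far from finitely generated — but it is exactly the "derived Hom commutes with derived limits" argument already used in the proof of \cref{comparison_Tate_compact_cohomology}; the rest is routine dévissage.
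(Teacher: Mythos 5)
Your proof is correct and follows essentially the same route as the paper: the same $t$-adic filtration with graded pieces $\cal{O}_L$, induction from Noether's theorem for the finite-level quotients, and passage to the limit using surjectivity of the transition maps to identify $\lim$ with $R\lim$ and commuting cohomology with the derived limit. The only cosmetic difference is that you work with Tate cohomology $R\hat{\Gamma}(H,-)$ throughout, whereas the paper computes with ordinary $R\Gamma(H,-)$ (using surjectivity on $H$-invariants, which follows from $H^1(H,\cal{O}_L)=0$, at the last step) and deduces cohomological triviality from the vanishing in positive degrees.
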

\begin{proof}
	We have an isomorphism $\cal{O}_L[[t]]_i/\cal{O}_L[[t]]_{i+1}\simeq \cal{O}_L$ and 
	\[
	\cal{O}_L[[t]]_i=\lim_{k>i} \cal{O}_L[[t]]_i/\cal{O}_L[[t]]_k
	\]
	The transition maps are surjective, and the terms are cohomologically trivial by induction using \cref{noether_theorem}. Moreover, the kernel of $\cal{O}_L[[t]]_i/\cal{O}_L[[t]]_{k+1} \to \cal{O}_L[[t]]_i/\cal{O}_L[[t]]_k$ is $\cal{O}_L[[t]]_k//\cal{O}_L[[t]]_{k+1}\simeq \cal{O}_L$ which is cohomologically trivial, so for every subgroup $H$ of $G$ the maps $(\cal{O}_L[[t]]_i/\cal{O}_L[[t]]_{k+1})^H \to (\cal{O}_L[[t]]_i/\cal{O}_L[[t]]_k)^H $ are surjective. 
	
	Put $A_k=\cal{O}_L[[t]]_i/\cal{O}_L[[t]]_k$ and let $H$ be a subgroup of $G$. We compute:
	\begin{align*}
	R\Gamma(H,\cal{O}_L[[t]]_i)=R\Gamma(H,\lim A_k)=R\Gamma(H,R\lim A_k)&= R\lim R\Gamma(H,A_k)\\
	&=R\lim (A_k^H[0])\\
	&=(\lim A_k^H) [0]
	\end{align*}
	where each limit is a derived limit because the transition maps are surjective.
\end{proof}

The following lemma together with the above theorem will enable us to compute the tangent space of $F^D$ more explicitely:
\begin{lem}\label{lem_CTtorsionfree}
	Let $G$ be a profinite group and let $\cal{O}$ be a torsion-free cohomologically trivial discrete $G$-module. Let $M$ be a finite type discrete $G$-module. The complex $R\Hom_{G}(M, \cal{O})$ is cohomologically concentrated in degree $[0,1]$. Moreover, if $M$ is finite it is concentrated in degree $1$ and if $M$ is torsion-free it is concentrated in degree $0$.
\end{lem}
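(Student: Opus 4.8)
The plan is to reduce the whole statement to the single assertion $(\star)$: for every finitely generated $\ZZ$-free discrete $G$-module $N$ one has $\Ext^i_G(N,\cal{O})=0$ for all $i\geq 1$ (equivalently, $R\Hom_G(N,\cal{O})$ is concentrated in degree $0$). Granting $(\star)$, the torsion-free case of the lemma is immediate, and the other cases follow by dévissage. For $M$ finite, pick an open normal $U\trianglelefteq G$ acting trivially on $M$ and a surjection $\ZZ[G/U]^m\twoheadrightarrow M$ with kernel $N$; then $N$ is finitely generated and $\ZZ$-free (a submodule of a free $\ZZ$-module), and since $R\Hom_G(\ZZ[G/U],\cal{O})=R\Gamma(U,\cal{O})=\cal{O}^U[0]$ is concentrated in degree $0$ (Shapiro's lemma together with the cohomological triviality of $\cal{O}$ over the open subgroup $U$, i.e. \cref{noether_theorem} and its evident variant for open subgroups), the long exact sequence together with $(\star)$ for $N$ gives $\Ext^{\geq 2}_G(M,\cal{O})=0$, while $\Hom_G(M,\cal{O})=0$ because $\cal{O}$ is torsion-free; hence $R\Hom_G(M,\cal{O})$ sits in degree $1$. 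Finally, for general finite type $M$ apply the contravariant $R\Hom_G(-,\cal{O})$ to $0\to M_{tor}\to M\to M/M_{tor}\to 0$: the two outer complexes are concentrated in degrees $1$ and $0$ respectively, so the associated long exact sequence forces $R\Hom_G(M,\cal{O})$ into degrees $[0,1]$ (and $\Hom_G(M_{tor},\cal{O})=0$ pins $H^0$ down to $\Hom_G(M/M_{tor},\cal{O})$).

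It remains to prove $(\star)$, which is the heart of the argument. A finitely generated $\ZZ$-free discrete $G$-module $N$ is inflated from $\bar G:=G/U$ for some open normal $U$, and the Hochschild--Serre spectral sequence $\Ext^p_{\bar G}(N,H^q(U,\cal{O}))\Rightarrow \Ext^{p+q}_G(N,\cal{O})$ degenerates — because $R\Gamma(U,\cal{O})=\cal{O}^U[0]$ by cohomological triviality — to an isomorphism $R\Hom_G(N,\cal{O})\simeq R\Hom_{\bar G}(N,\cal{O}^U)$. So it suffices to show $\Ext^i_{\bar G}(N,\cal{O}^U)=0$ for $i\geq 1$ over the \emph{finite} group $\bar G$. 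Now $\cal{O}^U$ is cohomologically trivial as a $\bar G$-module: this follows formally from the cohomological triviality of $\cal{O}$, since $\hat{H}^i\big(H/V,(\cal{O}^U)^{V/U}\big)=\hat{H}^i(H/V,\cal{O}^V)=0$ for all open $V\trianglelefteq H$ in $G$ with $U\subseteq V$. Being in addition $\ZZ$-torsion-free, $\cal{O}^U$ admits a length-one projective resolution $0\to P_1\to P_0\to \cal{O}^U\to 0$ over $\ZZ[\bar G]$ — the standard structure theorem for cohomologically trivial modules over a finite group. Since $N$ is finitely generated over the Noetherian ring $\ZZ[\bar G]$ and is $\ZZ$-projective, and since any projective $\ZZ[\bar G]$-module is a direct summand of some $\ZZ[\bar G]^{(I)}=\mathrm{Coind}_{\{e\}}^{\bar G}(\ZZ^{(I)})$, one computes $R\Hom_{\bar G}(N,P)$ for $P$ projective to be concentrated in degree $0$ (by the Shapiro adjunction $R\Hom_{\bar G}(N,\mathrm{Coind}_{\{e\}}^{\bar G}\ZZ^{(I)})=R\Hom_\ZZ(N,\ZZ^{(I)})=\Hom_\ZZ(N,\ZZ)^{(I)}$, using that $\Ext^{\geq 1}_{\bar G}(N,-)$ commutes with direct sums as $N$ is finitely generated). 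Feeding the resolution into the long exact sequence for $\Hom_{\bar G}(N,-)$ then kills $\Ext^i_{\bar G}(N,\cal{O}^U)$ for $i\geq 1$, which proves $(\star)$.

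The main obstacle is organizational rather than computational: the finite case and the general case both reduce to $(\star)$ for $\ZZ$-free modules, and a naive dévissage of $(\star)$ by Artin induction reintroduces finite modules, so one must take care not to run in a circle. The clean exit, as above, is to isolate $(\star)$ and prove it directly by descending to the finite quotient $\bar G$, where the implication ``cohomologically trivial $+$ $\ZZ$-torsion-free $\Rightarrow$ projective dimension $\leq 1$'' is available. Minor technical points that need attention are the derived change-of-group identity $R\Hom_G(N,\cal{O})\simeq R\Hom_{\bar G}(N,R\Gamma(U,\cal{O}))$, the use of cohomological triviality over open subgroups (and not merely over $G$ itself), and the fact that $\Ext^{\geq 1}_{\bar G}(N,-)$ still vanishes against infinitely generated projective modules — which holds precisely because $N$ is finitely generated.
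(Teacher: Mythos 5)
Your proof is correct, but it runs along a genuinely different track from the paper's. The paper first handles the finite case directly: since $\cal{O}$ is torsion-free, $\Ext^i_G(M,\cal{O})\simeq H^{i-1}(G,\Ext^1_\ZZ(M,\cal{O}))$, and the vanishing for $i\ge 2$ is obtained by passing to a $p$-Sylow subgroup, filtering $M$ by composition series with graded pieces $\ZZ/p\ZZ$, using $H^i(H,\cal{O}/p\cal{O})=0$, and then a restriction--corestriction argument; the torsion-free case is then done by Artin induction, which reintroduces finite modules and therefore \emph{uses} the finite case, the induced modules being handled by Shapiro plus cohomological triviality exactly as you do for $\ZZ[G/U]$. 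You reverse the logical order: you prove the $\ZZ$-free case first and unconditionally, by descending through the exact inflation/invariants adjunction to a finite quotient $\bar G=G/U$ (legitimate since $R\Gamma(U,\cal{O})=\cal{O}^U[0]$), observing that $\cal{O}^U$ is cohomologically trivial over $\bar G$, and invoking the Serre--Nakayama structure theorem (projective dimension $\le 1$ over $\ZZ[\bar G]$) together with Shapiro for coinduced modules; the finite case then falls out of a presentation $0\to N\to \ZZ[G/U]^m\to M\to 0$, and the dévissage $0\to M_{tor}\to M\to M/M_{tor}\to 0$ is the same as the paper's. What your route buys is the complete avoidance of Artin induction and of any Sylow/composition-series bookkeeping, and a cleaner separation of the two cases (no circularity, as you note); what it costs is reliance on the full strength of cohomological triviality for profinite modules (Tate cohomology of the finite subquotients $H/V$ acting on $\cal{O}^V$, which is indeed what \cref{noether_theorem} in the sense of Neukirch--Schmidt--Wingberg provides) and on the heavier, though entirely standard, structure theorem over the finite group, where the paper gets by with vanishing of ordinary cohomology in positive degrees. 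One small remark: torsion-freeness of $\cal{O}^U$ is not needed for the length-one projective resolution (the structure theorem holds for arbitrary cohomologically trivial modules), so that clause in your argument is harmless but superfluous; the torsion-freeness is only needed, as in the paper, to kill $\Hom_G(M,\cal{O})$ for $M$ finite.
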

\begin{proof}
	 To show the vanishing result, we can use the short exact sequence $0\to M_{tor} \to M \to M/tor \to 0$ to reduce to the finite and torsion-free cases; thus it suffices to study those cases:
	\begin{itemize}
		\item Suppose first that $M$ is finite. We then have $\Hom_{G}(M,\cal{O})=0$ because $\cal{O}$ is torsion-free. As $\Ext^p_{\ZZ}(M,\cal{O})=0$ for $p\neq 1$, we obtain for $i \geq 1$:
		\[
		\Ext^i_{G}(M,\cal{O})=H^{i-1}(G,\Ext^1_\ZZ(M,\cal{O})).
		\]
		Let us now show the vanishing for $i>1$. We can suppose that $M$ is $p$-primary, and even $p$-torsion.  Let $H$ be any $p$-Sylow of $G$. As an $H$-module, $M$ has a composition series with quotients isomorphic to $\ZZ/p\ZZ$ with its trivial action \cite[IX, Thm 1]{Serrelocaux}. We have
		\[
		H^i(H,\Ext^1_\ZZ(\ZZ/p\ZZ,\cal{O}))=H^i(H,\cal{O}/p\cal{O})=0
		\]
		for $i\geq 1$ because $\cal{O}$ is cohomologically trivial. It follows that for any $i\geq 1$ we also have $H^i(H,\Ext^1_\ZZ(M,\cal{O}))=0$. Finally, the following triangle commutes:
		\[\begin{tikzcd}[ampersand replacement=\&]
			{H^i(G,\Ext^1_\ZZ(M,\cal{O})} \& {} \& {H^i(G,\Ext^1_\ZZ(M,\cal{O})} \\
			\& {H^i(H,\Ext^1_\ZZ(M,\cal{O})=0}
			\arrow["{\mathrm{Res}}"', from=1-1, to=2-2]
			\arrow["{\mathrm{Cores}}"', from=2-2, to=1-3]
			\arrow["{[G:H]}", from=1-1, to=1-3]
			\arrow["\simeq"', draw=none, from=1-1, to=1-3]
		\end{tikzcd}\]
		whence it follows that $H^i(G,\Ext^1_\ZZ(M,\cal{O}))=0$ for $i\geq 1$.
		
		\item Suppose now that $M$ is torsion-free, and let us show that $\Ext^i_{G}(M,\cal{O})=0$ for $i>0$. By Artin induction and the previous case it suffices to consider the case of $M=\mathrm{ind}^{G}_H \ZZ$ for $H$ an open subgroup. Since $\cal{O}$ is cohomologically trivial we have
		\[
		R\Hom_{G}(\mathrm{ind}^{G}_H \ZZ,\cal{O})=R\Gamma(H,\cal{O})=\cal{O}^H[0]
		\]
		which is concentrated in degree $0$.
	\end{itemize}
\end{proof}

\begin{cor}
	Let $M$ be a finite type discrete $G_K^t$-module. Then $R\Hom_{G_K^t}(M, \cal{O}_{K^t}[1])$ is a perfect complex cohomologically concentrated in degree $[-1,0]$. 
	More precisely, $\Ext^{-1}_{G_K^t}(M,\cal{O}_{K^t}[1])$ is free of finite rank and $\Ext^0_{G_K^t}(M,\cal{O}_{K^t}[1])$ is finite.
\end{cor}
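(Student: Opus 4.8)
The plan is to reduce the statement to \cref{lem_CTtorsionfree} and Noether's theorem \cref{noether_theorem}, after cutting down to a finite Galois group. First, writing $R\Hom_{G_K^t}(M,\cal{O}_{K^t}[1])=R\Hom_{G_K^t}(M,\cal{O}_{K^t})[1]$, I would note that $\cal{O}_{K^t}$ is torsion-free as an abelian group (being a subring of the characteristic zero field $K^t$) and cohomologically trivial over $G_K^t$ by \cref{noether_theorem}; hence \cref{lem_CTtorsionfree} applies and shows $R\Hom_{G_K^t}(M,\cal{O}_{K^t})$ is concentrated in degrees $[0,1]$, so the shift is concentrated in $[-1,0]$ (in degree $-1$ alone if $M$ is torsion-free, in degree $0$ alone if $M$ is finite). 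This already gives the amplitude statement; it remains to establish perfectness and to pin down the two possibly nonzero cohomology groups.

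For that, I would pass to a finite quotient. Since $M$ is of finite type, the stabiliser of a finite generating family is open, so the $G_K^t$-action on $M$ factors through $\Gal(L/K)$ for a finite subextension $K\subseteq L\subseteq K^t$. The extension $L/K$ is tamely ramified, and tame ramification being transitive, $K^t$ is also the maximal tamely ramified extension of $L$; thus \cref{noether_theorem} applied over $L$ gives $R\Gamma(\Gal(K^t/L),\cal{O}_{K^t})=\cal{O}_L[0]$. As $\Gal(K^t/L)$ acts trivially on $M$, a standard composition-of-derived-functors argument then identifies
\[
R\Hom_{G_K^t}(M,\cal{O}_{K^t})\simeq R\Hom_{\Gal(L/K)}\bigl(M,R\Gamma(\Gal(K^t/L),\cal{O}_{K^t})\bigr)\simeq R\Hom_{\ZZ[\Gal(L/K)]}(M,\cal{O}_L).
\]

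Now $M$ is a finitely generated module over the Noetherian ring $\ZZ[\Gal(L/K)]$, so it admits a resolution $P_\bullet\to M$ by finite free $\ZZ[\Gal(L/K)]$-modules, and $R\Hom_{\ZZ[\Gal(L/K)]}(M,\cal{O}_L)$ is computed by $\Hom_{\ZZ[\Gal(L/K)]}(P_\bullet,\cal{O}_L)$, whose terms $\cal{O}_L^{\oplus r_i}$ are finitely generated over $\ZZ$; combined with the amplitude bound, the complex thus has bounded and finitely generated cohomology, hence is perfect over $\ZZ$. Finally, $\Ext^{-1}_{G_K^t}(M,\cal{O}_{K^t}[1])=\Hom_{\Gal(L/K)}(M,\cal{O}_L)$ is a subgroup of the finitely generated torsion-free group $\Hom_\ZZ(M,\cal{O}_L)$, hence free of finite rank, and $\Ext^{0}_{G_K^t}(M,\cal{O}_{K^t}[1])=\Ext^1_{\ZZ[\Gal(L/K)]}(M,\cal{O}_L)$ is finitely generated by perfectness and torsion since $\QQ[\Gal(L/K)]$ is semisimple (so that $R\Hom_{\ZZ[\Gal(L/K)]}(M,\cal{O}_L)\otimes_\ZZ\QQ\simeq R\Hom_{\QQ[\Gal(L/K)]}(M\otimes_\ZZ\QQ,\cal{O}_L\otimes_\ZZ\QQ)$ vanishes above degree $0$), hence finite.

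The argument is essentially bookkeeping on top of \cref{noether_theorem}; the one point I would take care with is the transitivity of tame ramification, which is what allows invoking Noether over $L$ to obtain the $\Gal(K^t/L)$-cohomological triviality of $\cal{O}_{K^t}$, together with the harmless flat base-change identity used for the rational vanishing of $\Ext^1$.
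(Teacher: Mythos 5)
Your argument is correct and follows essentially the same route as the paper: apply \cref{lem_CTtorsionfree} with Noether's theorem for the amplitude, reduce to a finite quotient $G=\Gal(L/K)$ via the adjunction $R\Hom_{G_K^t}(M,\cal{O}_{K^t})\simeq R\Hom_G(M,\cal{O}_L)$, and read off that $\Hom_G(M,\cal{O}_L)$ is free of finite rank while $\Ext^1_G(M,\cal{O}_L)$ is finite. The only (harmless) divergence is at the last step, where you get finiteness of $\Ext^1$ from finite generation plus rational vanishing via semisimplicity of $\QQ[G]$, whereas the paper dévisses through $M_{tor}$ and identifies $\Ext^1_G(M,\cal{O}_L)\simeq H^0(G,\Ext^1_\ZZ(M_{tor},\cal{O}_L))$.
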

\begin{proof}
	We apply first \cref{lem_CTtorsionfree} to obtain the vanishing outside $[-1,0]$. Now let $H$ be an open normal subgroup of $G_K^t$ acting trivially on $M$ and corresponding to a finite tamely ramified Galois extension $L/K$, and put $G:= G_K^t/H=\Gal(L/K)$. There is a natural $G$-action on $M$. The functor $R\Gamma(H,-)$ is right adjoint to the forgetful functor $G\text{-}\mathrm{Mod}\to G_K^t\text{-}\mathrm{Mod}$, so we obtain
	\[
	R\Hom_{G}(M,\cal{O}_{K^t}[1])=R\Hom_G(M,R\Gamma(H,\cal{O}_{K^t}[1]))=R\Hom_G(M,\cal{O}_L[1])
	\]
	by \cref{noether_theorem}.
	
	Since $\cal{O}_L$ is torsion-free, $\Hom_G(M,\cal{O}_L)$ is free of finite rank. On the other hand, we can use the short-exact sequence $0 \to M_{tor} \to M \to M/tor \to 0$, the vanishing $\Ext^p_\ZZ(M_{tor},\cal{O}_L)=0$ for $p\neq 1$ and \cref{lem_CTtorsionfree} (applied to $G$ and $\cal{O}_L$) to compute
	\[
	\Ext^1_G(M,\cal{O}_L)\xrightarrow{\simeq} \Ext^1_G(M_{tor},\cal{O}_L)=H^0(G,\Ext^1_\ZZ(M_{tor},\cal{O}_L))
	\]
	which is finite.
\end{proof}

\begin{cor}\label{lie_perfect}
	Let $F$ be a tamely ramified $\ZZ$-constructible sheaf. Then $\Lie_X(F^D)$ is a perfect complex concentrated in degree $[-1,0]$. Moreover $H^{-1}\Lie_X(F^D)$ is free of finite rank and $H^0\Lie_X(F^D)$ is finite.
\end{cor}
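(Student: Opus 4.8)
The plan is to deduce the statement directly from the preceding corollary. First I would observe that, since $F$ is $\ZZ$-constructible, its generic stalk $F_\eta = g^\ast F$ is a finite type abelian group, hence a finite type discrete $G_K$-module; and since $F$ is tamely ramified, each wild inertia subgroup $P_v$ acts trivially on $F_\eta$, so the $G_K$-action factors through $G_K^t = G_K/N$ (using that $N$ is the smallest closed normal subgroup containing the $P_v$). Thus $F_\eta$ is a finite type discrete $G_K^t$-module, and by definition $\Lie_X(F^D) = R\Hom_{G_K^t}(F_\eta, \cal{O}_{K^t}[1])$.

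Now I would simply apply the preceding corollary with $M = F_\eta$: it yields that $\Lie_X(F^D)$ is a perfect complex cohomologically concentrated in degree $[-1,0]$, that $H^{-1}\Lie_X(F^D) = \Ext^{-1}_{G_K^t}(F_\eta, \cal{O}_{K^t}[1])$ is free of finite rank, and that $H^0\Lie_X(F^D) = \Ext^0_{G_K^t}(F_\eta, \cal{O}_{K^t}[1])$ is finite. There is no genuine obstacle: this corollary is merely the translation of the previous one into the language of tamely ramified sheaves, and all the substantive input — Noether's theorem \cref{noether_theorem} that $\cal{O}_{K^t}$ is cohomologically trivial, together with the concentration/vanishing results of \cref{lem_CTtorsionfree} and the Artin-induction argument reducing torsion-free modules to induced ones — has already been carried out.
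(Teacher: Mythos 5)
Your proof is correct and matches the paper's intent exactly: the corollary is stated without proof precisely because it is the previous corollary applied to $M=F_\eta$, once one notes (as you do) that $F_\eta$ is a finite type discrete $G_K$-module whose action factors through $G_K^t$ by the tameness hypothesis and \cref{tame_galois_group_and_wild_ramification}. Nothing is missing.
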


\begin{cor}\label{lie_equals_lie}
	Suppose that $M$ is a torsion-free discrete $G_K^t$-module and let $T$ be the torus over $K$ with character group $M$. Denote $\cal{T}$ the lft Néron model of $T$ over $\Spec(\cal{O}_K)$. We have a canonical, functorial isomorphism
	\[
	\Lie(\cal{T})\xrightarrow{\simeq}\Hom_{G_K^t}(M,\cal{O}_{K^t})
	\]
	where the left term is the Lie algebra of $\cal{T}$.
\end{cor}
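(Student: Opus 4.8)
The plan is to realize both sides as finite projective $\mathcal{O}_K$-submodules of the $K$-vector space $\Lie(T)=\Hom_{G_K}(M,K^{sep})$ and to show they coincide by a local-to-global argument. For the right-hand side: since $M$ is of finite type and the $G_K$-actions on $M$ and on $\mathcal{O}_{K^t}$ both factor through $G_K^t$, every $G_K$-equivariant map $M\to\mathcal{O}_{K^t}$ has image in $\mathcal{O}_{L_0}$, where $L_0/K$ is the splitting field of $T$ (finite, Galois, and tamely ramified since it is the fixed field of $\ker(G_K^t\to\operatorname{Aut}M)$, hence contained in $K^t$); thus $\Hom_{G_K^t}(M,\mathcal{O}_{K^t})=\Hom_{\Gal(L_0/K)}(M,\mathcal{O}_{L_0})$, a $\Gal(L_0/K)$-stable $\mathcal{O}_K$-submodule of the finite $\mathcal{O}_K$-module $\Hom_{\ZZ}(M,\mathcal{O}_{L_0})$, so it is finite, torsion-free, hence projective, of rank $\rank_\ZZ M=\dim T$, and $\Hom_{G_K^t}(M,\mathcal{O}_{K^t})\otimes_{\mathcal{O}_K}K=\Lie(T)$ by Galois descent. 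For the left-hand side: the lft Néron model is smooth, so $\Lie(\mathcal{T})$ is finite projective of rank $\dim T$ over the Dedekind ring $\mathcal{O}_K$, and restriction to the generic fibre embeds it into $\Lie(T)$ as a submodule which becomes all of $\Lie(T)$ after $-\otimes_{\mathcal{O}_K}K$. Two such submodules are equal if and only if they agree after localizing and henselizing at every finite place $v$ of $K$; since Néron models commute with localization and henselization, $\Lie(\mathcal{T})\otimes_{\mathcal{O}_K}\mathcal{O}_{K_v}$ is the Lie algebra of the lft Néron model of $T_{K_v}$, while a routine computation using $\mathcal{O}_{L_0}\otimes_{\mathcal{O}_K}\mathcal{O}_{K_v}=\prod_{w\mid v}\mathcal{O}_{L_{0,w}}$ and Frobenius reciprocity identifies $\Hom_{G_K^t}(M,\mathcal{O}_{K^t})\otimes_{\mathcal{O}_K}\mathcal{O}_{K_v}$ with $\Hom_{G_{K_v}^t}(M,\mathcal{O}_{K_v^t})$. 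So everything reduces to a local statement.

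The local statement is: for $F$ a henselian local field and $T$ a tamely ramified torus over $F$ with character group $M$ and splitting field $L/F$ (finite tame Galois, $G=\Gal(L/F)$, so $L\subset F^t$), the Lie algebra of the lft Néron model $\mathcal{T}$ of $T$ over $\mathcal{O}_F$ equals $\Hom_{G_F}(M,\mathcal{O}_{F^t})$ inside $\Lie(T)$. As the Lie algebra depends only on a neighbourhood of the identity section, $\Lie(\mathcal{T})=\Lie(\mathcal{T}^\circ)$ with $\mathcal{T}^\circ$ the fibrewise identity component (the connected Néron model). Here I would invoke the structure theory of Néron models of tamely ramified tori (Edixhoven, \emph{N\'eron models and tame ramification}; see also work of Chai--Yu on Néron models of tori): the formation of the connected Néron model commutes with the tame base change $\mathcal{O}_F\to\mathcal{O}_L$ in the sense that there is a canonical isomorphism of $\mathcal{O}_L$-group schemes with $G$-descent datum between $\mathcal{T}^\circ\times_{\mathcal{O}_F}\mathcal{O}_L$ (with its canonical descent datum) and the connected Néron model of the split torus $T_L$, namely $\GG_{m,\mathcal{O}_L}^{\,r}$ equipped with the descent datum induced by the $G$-action on $M$. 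It is precisely here that tameness is essential --- in the spirit of Noether's \cref{noether_theorem} --- the statement failing for wildly ramified tori, where a dilatation intervenes. Passing to Lie algebras gives a $G$-equivariant isomorphism $\Lie(\mathcal{T}^\circ)\otimes_{\mathcal{O}_F}\mathcal{O}_L\xrightarrow{\ \simeq\ }\Hom_{\ZZ}(M,\mathcal{O}_L)$, with $G$ acting on the target by $(\sigma\cdot f)(m)=\sigma\bigl(f(\sigma^{-1}m)\bigr)$; taking $G$-invariants and using $\mathcal{O}_L^G=\mathcal{O}_F$ together with the freeness of $\Lie(\mathcal{T}^\circ)$ over $\mathcal{O}_F$ yields $\Lie(\mathcal{T})=\Lie(\mathcal{T}^\circ)=\Hom_{\ZZ}(M,\mathcal{O}_L)^G=\Hom_G(M,\mathcal{O}_L)=\Hom_{G_F}(M,\mathcal{O}_{F^t})$, compatibly with the embedding into $\Lie(T)$. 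Functoriality in $M$ is then immediate: a $G_K^t$-morphism $M\to M'$ induces $T'\to T$, hence $\mathcal{T}'\to\mathcal{T}$ by the Néron property, hence $\Lie(\mathcal{T}')\to\Lie(\mathcal{T})$, which these identifications match with the precomposition map $\Hom_{G_K^t}(M',\mathcal{O}_{K^t})\to\Hom_{G_K^t}(M,\mathcal{O}_{K^t})$.

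The main obstacle is this last local input: the whole argument hinges on the tame base-change behaviour of connected Néron models of tori, and the cleanest route is to cite Edixhoven. Should a self-contained proof be preferred, one can instead run the Artin-induction technique already used in the paper: both functors $M\mapsto\Hom_{G_K^t}(M,\mathcal{O}_{K^t})$ and $M\mapsto\Lie(\mathcal{T}_{D(M)})$ take short exact sequences of torsion-free discrete $G_K^t$-modules to short exact sequences --- for the first because $\Ext^1_{G_K^t}(M'',\mathcal{O}_{K^t})=0$ when $M''$ is torsion-free (the corollary to \cref{lem_CTtorsionfree}), for the second again via the tame structure theory --- and they agree on $M=\ZZ$ (where $T=\GG_m$ has Néron model $\GG_{m,\mathcal{O}_K}$, with Lie algebra $\mathcal{O}_K$) and on $M=\ind_{H}^{G_K^t}\ZZ$ for $H$ open (where $T$ is a Weil restriction of $\GG_m$ along a tame extension $\mathcal{O}'/\mathcal{O}_K$, whose connected Néron model is literally $\operatorname{Res}_{\mathcal{O}'/\mathcal{O}_K}\GG_m$, with Lie algebra $\mathcal{O}'$); since every lattice is built from such modules by the induction, the natural transformation is forced to be an isomorphism.
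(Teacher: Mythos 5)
Your first half --- viewing both sides as $\mathcal{O}_K$-lattices in $\Lie(T)=\Hom_{G_K}(M,K^{sep})$, reducing to equality after $-\otimes_{\mathcal{O}_K}\mathcal{O}_{K_v}$ at every finite place, using that lft Néron models commute with this base change, and computing $\Hom_{G_K^t}(M,\cal{O}_{K^t})\otimes\mathcal{O}_{K_v}=\Hom_{D_{w_0}}(M,\mathcal{O}_{L_{w_0}})$ --- is exactly the paper's reduction. The problem is your local input. The asserted "tame base change" statement, that $\mathcal{T}^\circ\times_{\mathcal{O}_F}\mathcal{O}_L$ is isomorphic (with descent datum) to the connected Néron model $\GG_{m,\mathcal{O}_L}^r$ of $T_L$, is false: Néron models do not commute with ramified base change even when the ramification is tame. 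Concretely, for the norm-one torus of a tame ramified quadratic extension $L=F(\sqrt{\pi})$, $p$ odd, the smooth model $\{a^2-\pi b^2=1\}$ shows that the identity component of the Néron model has special fibre $\GG_a$; after base change to $\mathcal{O}_L$ the special fibre is still unipotent, so it cannot be $\GG_{m,\mathcal{O}_L}$. (If your claim held, $\mathcal{T}^\circ$ would be a form of $\GG_m^r$ over $\mathcal{O}_F$ and its special fibre would be a torus, which fails for every ramified torus.) What Edixhoven's tame theory actually gives is the opposite direction: $\mathcal{N}(T)\simeq(\mathrm{Res}_{\mathcal{O}_L/\mathcal{O}_F}\mathcal{N}(T_L))^{G}$, fixed points of the Weil restriction. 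With that corrected citation your computation could be salvaged ($\Lie$ of the smooth fixed-point scheme is the $G$-fixed part of $\Hom_\ZZ(M,\mathcal{O}_L)$, since $|G|$ is invertible on the special fibre), but as written the key step is wrong. The paper avoids this structure theory altogether: after the same local reduction it quotes \cite[A1.7]{Chai01}, which identifies $\Lie(\mathcal{T})$ with those $v\in\Hom_G(M,\mathcal{O}_L)$ lifting to $\Hom_G(M,\mathcal{O}_L[[t]]_1)$, and then kills the lifting obstruction by showing $\Ext^1_G(M,\mathcal{O}_L[[t]]_2)=0$, which is where tameness enters via Noether's theorem (\cref{noether_theorem}, \cref{cor_noether}, \cref{lem_CTtorsionfree}).

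Your fallback Artin-induction argument also does not close the gap. First, the exactness of $M\mapsto\Lie(\mathcal{N}(D(M)))$ on short exact sequences of tamely split lattices is not an available input --- in the paper it is recorded as a \emph{consequence} of this corollary, and proving it independently is essentially as hard as the corollary itself. Second, Artin induction does not build an arbitrary lattice from $\ZZ$'s and induced modules by short exact sequences of lattices: the sequences it produces, such as $0\to M^n\oplus\bigoplus\ind\ZZ\to\bigoplus\ind\ZZ\to N\to 0$, carry a finite error module $N$, and applying $\Hom_{G_K^t}(-,\cal{O}_{K^t})$ introduces nonzero finite $\Ext^1(N,\cal{O}_{K^t})$ terms. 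So agreement on $\ZZ$ and on induced modules only pins both lattices down up to finite index inside $\Lie(T)$ --- but "equal up to finite index" is already clear from the start, and the entire content of the statement is the integral equality.
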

\begin{proof}
	Both terms are lattices inside $\Lie(T)=\Hom_{G_K}(M,K^{sep})$. It suffices to show that they are canonically isomorphic after tensoring with $\cal{O}_{K_v}$ for all non-archimedean places $v$. On the one hand we have
	\[
	\Lie(\cal{T})\otimes_{\cal{O}_K} \cal{O}_{K_v}=\Lie(\cal{T}_{\cal{O}_{K_v}})=\Lie(\cal{N}(T_{K_v}))
	\]
	where $\cal{N}$ denotes the lft Néron model. On the other hand let $G=\Gal(L/K)$ be a quotient of $G_K^t$ through which the action of $G_K^t$ on $M$ factors. We have
	\[
	\cal{O}_L\otimes_{\cal{O}_K}\cal{O}_{K_v}\simeq \prod_{w|v} \cal{O}_{L_w}
	\]
	and thus
	\begin{align*}
	\Hom_{G_K^t}(M,\cal{O}_{K^t})\otimes_{\cal{O}_K}\cal{O}_{K_v}=\Hom_G(M,\cal{O}_L)\otimes_{\cal{O}_K}\cal{O}_{K_v}&=\Hom_G(M,\cal{O}_L\otimes_{\cal{O}_K}\cal{O}_{K_v})\\
	&=\Hom_G(M,\prod_{w|v} \cal{O}_{L_w})\\
	&= \Hom_{D_{w_0/v}}(M,\cal{O}_{L_{w_0}}
	\end{align*}
	where $w_0$ is a fixed place of $L$ above $v$, so we can reduce to the case where $K$ is a $p$-adic field. By \cite[A1.7]{Chai01} we have a canonical functorial isomorphism
	\[
	\Lie(\cal{T})\xrightarrow{\simeq}\{v\in \Hom_G(M,\cal{O}_L) ~|~v ~\text{lifts to}~ \Hom_G(M,\cal{O}_L[[t]]_1)\}
	\]
	It thus suffices to show that $\Hom_G(M,\cal{O}_L[[t]]_1) \to \Hom_G(M,\cal{O}_L)$ is surjective. We have an exact sequence
	\[
	\Hom_G(M,\cal{O}_L[[t]]_1) \to \Hom_G(M,\cal{O}_L) \to \Ext^1_G(M,\cal{O}_L[[t]]_2)
	\]
	Now $\cal{O}_L[[t]]_2$ is torsion-free and cohomologically trivial by \cref{cor_noether} and $M$ is a torsion-free finite type $G$-module, so we can apply \cref{lem_CTtorsionfree} to obtain $\Ext^i_G(M,\cal{O}_L[[t]]_2)=0$ for $i\geq 1$.
\end{proof}

\begin{rmk}
	As a corollary, a short exact sequence of tori that are split by a tamely ramified extension gives rise to a short exact sequence of the Lie algebras of the respective lft Néron models.
\end{rmk}

In the following, denote $(-)_\RR:=-\otimes^L \RR$.
\begin{prop}\label{lie_tensor_R}
	Let $F$ be a tamely ramified $\ZZ$-constructible sheaf. We have
	\[
	\Lie_X(F^D)_\RR = R\Hom_{G_\RR,X(\CC)}(\alpha^\ast F,\CC[1])
	\]
\end{prop}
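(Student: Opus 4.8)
The plan is to reduce the claim to a purely archimedean computation. First I would tensor the defining complex $\Lie_X(F^D) = R\Hom_{G_K^t}(F_\eta, \cal{O}_{K^t}[1])$ with $\RR$. Since $F_\eta$ is a finite type discrete $G_K^t$-module and $\cal{O}_{K^t}$ is cohomologically trivial (\cref{noether_theorem}), \cref{lie_perfect} shows $\Lie_X(F^D)$ is a perfect complex, so $\Lie_X(F^D)_\RR = R\Hom_{G_K^t}(F_\eta,\cal{O}_{K^t})_\RR[1]$, and tensoring with $\RR$ kills the finite $H^0$, leaving $\Lie_X(F^D)_\RR \simeq \Hom_{G_K}(F_\eta, K^{sep})_\RR[1]$ (using the last bullet of the remark after the definition: $\Lie_X(F^D)_\QQ = \Hom_{G_K}(F_\eta,K^{sep})[1]$). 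So the task becomes identifying $\Hom_{G_K}(F_\eta,K^{sep})\otimes_\QQ\RR$ with $R\Hom_{G_\RR,X(\CC)}(\alpha^\ast F,\CC[1])$, i.e. with $\Hom_{G_\RR,X(\CC)}(\alpha^\ast F, \CC)$ placed in degree $-1$ (the higher $G_\RR$-cohomology with $\RR$-vector space — hence uniquely divisible, cohomologically trivial — coefficients vanishes).

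Next I would use the explicit description of $G_\RR$-equivariant sheaves on the discrete set $X(\CC)$ from the Remark in the Notations section: choosing an embedding $\sigma_v$ for each archimedean place $v$, one has $R\Gamma(X(\CC),\alpha^\ast F) = \prod_{v\text{ real}} F_v \times \prod_{v\text{ complex}} \ind^{G_\RR} F_v$ as $G_\RR$-modules, and therefore
\[
R\Hom_{G_\RR,X(\CC)}(\alpha^\ast F,\CC[1]) = \prod_{v\text{ archimedean}} R\Hom_{G_{K_v}}(F_v, \CC[1]).
\]
At a complex place this is $\Hom_\CC$-nothing — just $\Hom_{\ZZ}(F_v,\CC)[1] = \Hom_\QQ(F_v\otimes\QQ,\CC)[1]$; at a real place induced modules are cohomologically trivial and $\CC$ is uniquely divisible, so $R\Hom_{G_\RR}(F_v,\CC[1]) = \Hom_{G_\RR}(F_v,\CC)[1]$, again a $\QQ$-vector space in degree $-1$. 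So the right-hand side is concentrated in degree $-1$, with value $\prod_v \Hom_{G_{K_v}}(F_v\otimes\QQ, \CC)$.

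Then I would match this with the left-hand side. One has $\Hom_{G_K}(F_\eta,K^{sep}) \otimes_\QQ \RR = \Hom_{G_K}(F_\eta \otimes\QQ, K^{sep}\otimes_\QQ\RR)$, and $K^{sep}\otimes_\QQ\RR \simeq \prod_{v\mid\infty} \overline{K_v} = \prod_{v\text{ real}}\CC \times \prod_{v\text{ complex}}(\CC\times\CC)$ as a $G_K$-algebra, the decomposition being exactly by archimedean places with $G_{K_v}$-actions on the factors (this is the standard $K\otimes_\QQ\RR$ decomposition base-changed along $K\subset K^{sep}$, or equivalently the statement $\alpha^\ast g_\ast \simeq$ pushforward to complex points applied to $\GG_a$). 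Taking $G_K$-invariants of $\Hom_{\QQ}(F_\eta\otimes\QQ, -)$ against this product and using Shapiro/Frobenius reciprocity at each place turns $\Hom_{G_K}(F_\eta\otimes\QQ, \prod_{v\mid\infty}\overline{K_v})$ into $\prod_{v\mid\infty}\Hom_{G_{K_v}}(F_v\otimes\QQ,\CC)$, which is exactly the degree $-1$ term computed above. To make the isomorphism canonical rather than place-by-place, I would instead build it at the level of the natural map: the composite $\alpha^\ast F^D \to R\cal{H}om_{G_\RR,X(\CC)}(\alpha^\ast F,\CC^\times[1])$ constructed at the start of Section 2.1, combined with the obvious map $\cal{O}_{K^t}[1]\to\GG_a$-type comparison, gives a natural transformation $\Lie_X(F^D)\to R\Hom_{G_\RR,X(\CC)}(\alpha^\ast F,\CC[1])$ before tensoring, and I would check it becomes an isomorphism after $-\otimes\RR$ by reducing to $F_\eta = \ZZ$ and $F_\eta$ induced (Artin induction on the finite type $G_K^t$-module $F_\eta$, using that both sides are exact functors of $F_\eta$ up to torsion, which vanishes rationally) — the case $F_\eta=\ZZ$ being the identity $\cal{O}_K\otimes\RR = \prod_{v\mid\infty}\RR$-ish statement $\Lie_X(\GG_m)_\RR = R\Gamma_{G_\RR}(X(\CC),\CC[1])$, essentially $\cal{O}_K\otimes_\ZZ\RR \simeq R\Gamma(G_\RR, K^{sep}\otimes\RR)$.

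The main obstacle is organizing the canonicity and functoriality: the place-by-place computation is routine, but producing a single natural isomorphism (needed for the exactness/multiplicativity arguments later, and for the duality triangle in Section 5) requires threading the comparison map $\alpha^\ast R\cal{H}om_X(-,-)\to R\cal{H}om_{G_\RR,X(\CC)}(\alpha^\ast-,\alpha^\ast-)$ through $\cal{O}_{K^t}[1]\to\alpha^\ast\ZZ^c_X\otimes(\text{something})$ and checking it is compatible with the $G_K^t$-versus-$G_\RR$ change of groups; concretely, the subtlety is that $\Lie_X$ is defined with the \emph{tame} Galois group $G_K^t$ while the archimedean side naturally involves the full $G_K$ (or rather $G_{K_v}$), and one must verify the pullback of $\cal{O}_{K^t}$ to the archimedean places, after tensoring with $\RR$, recovers $\overline{K_v}\otimes\RR$-style factors — which is where the cohomological triviality from \cref{noether_theorem} and the uniqueness of divisible coefficients do the real work.
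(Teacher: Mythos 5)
Your overall strategy is essentially the paper's: reduce to an archimedean, place-by-place computation, decompose the real-ified coefficients into modules induced from decomposition groups, and use Frobenius reciprocity together with the unique divisibility/cohomological triviality of $\CC$ over $G_{K_v}$ to identify both sides with $\prod_{v\mid\infty}\Hom_{D_{w_0}}(F_\eta,L_{w_0})$ placed in degree $-1$. The paper does exactly this, but stays at a finite level throughout: it chooses a finite tamely ramified Galois extension $L/K$ splitting $F_\eta$, writes $\Lie_X(F^D)_\RR=R\Hom_G(F_\eta,\cal{O}_L\otimes_\ZZ\RR[1])$ with $G=\Gal(L/K)$ (using that $F_\eta$ is finitely presented over $\ZZ[G]$), and uses $\cal{O}_L\otimes_\ZZ\RR=\prod_{v\mid\infty}\prod_{w\mid v}L_w=\prod_{v\mid\infty}\ind^G_{D_{w_0}}L_{w_0}$, then matches the right-hand side via the adjunction for $R\Gamma(\Gal(\CC/L_{w_0}),-)$.

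The one step of yours that is false as stated is the identification $K^{sep}\otimes_\QQ\RR\simeq\prod_{v\mid\infty}\overline{K_v}$ (equivalently $K^{sep}\otimes_K K_v\simeq\overline{K_v}$): the left-hand side is an infinite-dimensional $\RR$-algebra, the filtered colimit of the products $L\otimes_\QQ\RR=\prod_{w\mid\infty}L_w$ with an ever-growing number of factors and idempotents, so it is not a finite product of copies of $\CC$. This does not sink the argument, because $\Hom_{G_K}(F_\eta\otimes\QQ,-)$ only sees a finite level: since $G_L$ acts trivially on $F_\eta$, any $G_K$-equivariant map lands in the $G_L$-invariants, i.e.\ in $L\otimes_\QQ\RR=\prod_{w\mid\infty}L_w$, and from there your Shapiro/Frobenius step is exactly the paper's computation. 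So the repair is to drop the statement about $K^{sep}\otimes_\QQ\RR$ and run the identification at the finite level $L$ — which also renders unnecessary both the detour through the (unproved) remark $\Lie_X(F^D)_\QQ=\Hom_{G_K}(F_\eta,K^{sep})[1]$ and the concluding Artin-induction/canonicity scheme: the chain of identifications at level $L$, as in the paper, already produces the isomorphism directly for every tamely ramified $\ZZ$-constructible $F$.
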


\begin{proof}
	Let $L/K$ be a finite tamely ramified Galois extension such that $G_L$ acts trivially on $F_\eta$, and denote $G:=G_K^t/G_L^t=\Gal(L/K)$. We have:
	\begin{align*}
		\Lie_X(F^D)_\RR = R\Hom_G(F_\eta,\cal{O}_L[1])_\RR &\underset{(\ast)}{=} R\Hom_G(F_\eta,\cal{O}_L \otimes_\ZZ \RR[1]) \\
		&=R\Hom_G(F_\eta, \prod_{v~\text{archimedean}} \prod_{w|v} L_w[1])\\
		& = \prod_{v~\text{archimedean}} R\Hom_G(F_\eta,\prod_{w|v} L_w[1]).
	\end{align*}
	where $(\ast)$ holds because $F_\eta$ is a finite type abelian group, hence a finite presentation $\ZZ[G]$-module. For each archimedean place $v$, choose a place $w_0$ in $L$ above $v$. The group $G$ acts transitively on the places above $v$, the fields $L_w$ are pairwise isomorphic, and the stabilizer $D_{w_0}$ of $w_0$ identifies with $\Gal(L_{w_0}/K_v)$. Thus $\prod_{w|v} L_w = \ind^{G}_{D_{w_0}} L_{w_0}$ and we find
	\[
	\Lie_X(F^D)_\RR = \prod_{v~\text{archimedean}} R\Hom_{D_{w_0}}(F_\eta,L_{w_0}[1])
	\]
	On the other hand, for $v$ an archimedean place we have $F_v=F_\eta$ with the restricted action to $G_{K_v}:=\Gal(\CC/K_v)$. Therefore the action of $G_{K_v}$ factors through $\Gal(L_{w_0}/K_v)$; as $R\Gamma(\Gal(\CC/L_{w_0}),-)$ is right adjoint to the forgetful functor $D_{w_0}\text{-}\mathrm{Mod} \to G_{K_v}\text{-}\mathrm{Mod}$, we find
	\begin{align*}
		R\Hom_{G_\RR,X(\CC)}(\alpha^\ast F,\CC[1])&=R\Gamma_{G_\RR}(X(\CC),R\cal{H}om_{X(\CC)}(\alpha^\ast F,\CC[1]))\\
		&=\prod_{v~\text{archimedean}} R\Gamma(G_{K_v},R\Hom_\ZZ(F_v,\CC[1]))\\
		&=\prod_{v~\text{archimedean}}R\Hom_{G_{K_v}}(F_v,\CC[1])\\
		&=\prod_{v~\text{archimedean}}R\Hom_{D_{w_0}}(F_v,L_{w_0}[1])\\
		&=\Lie_X(F^D)_\RR
\end{align*}

\end{proof}

\begin{prop}
	Let $Y=\Spec(\cal{O}')$ be the spectrum of an order in a number field with a finite dominant morphism $\pi:Y\to X$. If $K(Y)/K$ is tamely ramified and $F$ is a tamely ramified $\ZZ$-constructible sheaf on $Y$, we have
	\[
	\Lie_{X}((\pi_\ast F)^D)=\Lie_{Y}(F^D)
	\]
\end{prop}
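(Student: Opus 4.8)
The plan is to reduce the identity to the inflation/invariants and Shapiro adjunctions together with Noether's theorem (\cref{noether_theorem}), after passing from $K^t$ to a suitable finite level. Write $K=K(X)$ and $L=K(Y)$; since $\pi$ is finite dominant between integral schemes, $L/K$ is finite, and by hypothesis it is separable (characteristic zero) and tamely ramified. First I would record two standard facts: the Galois closure of a finite tamely ramified extension is tamely ramified, and every subextension of a tamely ramified extension is tamely ramified (ramification indices are multiplicative in towers and prime to the residue characteristics). Since $F$ is $\ZZ$-constructible on $Y$, the discrete module $M:=F_\eta$ is of finite type, so the $G_L^t$-action on it factors through $\Gal(L''/L)$ for a finite tamely ramified $L''/L$. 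Let $L'/K$ be the Galois closure of $L''/K$: it is finite, Galois, tamely ramified over $K$, contains $L$, and $M$ becomes a module over $H:=\Gal(L'/L)$. Setting $G:=\Gal(L'/K)$, the two facts above yield $L^t=K^t=(L')^t$, hence $\cal{O}_{L^t}=\cal{O}_{K^t}=\cal{O}_{(L')^t}$, and identify $G_L^t=\Gal(K^t/L)$ and $G_{L'}^t=\Gal(K^t/L')$ with the open subgroups of $G_K^t=\Gal(K^t/K)$ lying over $H\le G$.

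Next I would pin down the $G_K^t$-module $(\pi_\ast F)_\eta$. As a $G_K$-module it is $\ind_{G_L}^{G_K}M$. Because $L'/K$ is \emph{Galois}, the square with rows $G_L\hookrightarrow G_K$ and $H\hookrightarrow G$ and vertical restriction maps is cartesian with surjective vertical maps, so the base-change compatibility $\mathrm{Infl}\circ\ind\simeq\ind\circ\mathrm{Infl}$ holds (checked on the function-space models of the coinduced modules, where the Galois hypothesis is exactly what makes the fibrewise constancy go through) and gives $(\pi_\ast F)_\eta\simeq\mathrm{Infl}_{G_K\to G}(\ind_H^G M)$. In particular this $G_K$-module is inflated from $G$, hence from $G_K^t$ because $L'\subseteq K^t$; this shows in passing that $\pi_\ast F$ is tamely ramified, so that $\Lie_X((\pi_\ast F)^D)$ is defined. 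Similarly, $M$ viewed as a $G_L^t$-module is inflated from $H$ along $G_L^t=\Gal(K^t/L)\twoheadrightarrow\Gal(L'/L)=H$.

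Finally I would compute both sides in the same way. Using transitivity of inflation, the adjunction between inflation along $G_K^t\twoheadrightarrow G$ and $R\Gamma(\Gal(K^t/L'),-)$, Noether's theorem in the form $R\Gamma(\Gal(K^t/L'),\cal{O}_{K^t})=R\Gamma(G_{L'}^t,\cal{O}_{(L')^t})=\cal{O}_{L'}[0]$, and Shapiro's lemma for the finite-index pair $H\le G$, one obtains
\[
\Lie_X((\pi_\ast F)^D)=R\Hom_{G_K^t}\bigl(\mathrm{Infl}_{G_K^t\to G}\ind_H^G M,\ \cal{O}_{K^t}[1]\bigr)=R\Hom_G\bigl(\ind_H^G M,\ \cal{O}_{L'}[1]\bigr)=R\Hom_H\bigl(M,\ \cal{O}_{L'}[1]\bigr).
\]
Running the identical chain with inflation along $G_L^t\twoheadrightarrow H$ and the same Noether input gives $\Lie_Y(F^D)=R\Hom_{G_L^t}(M,\cal{O}_{K^t}[1])=R\Hom_H(M,\cal{O}_{L'}[1])$, so the two complexes agree; all the isomorphisms used are natural in $F$, hence the identification is functorial. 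The main obstacle is organizational rather than conceptual: one must keep the restriction, inflation and induction functors and their derived adjunctions straight, and in particular verify the cartesian-square compatibility that fixes the $G_K^t$-module structure of $(\pi_\ast F)_\eta$ — and with it the tameness of $\pi_\ast F$ — after which Noether's theorem and Shapiro's lemma close the argument.
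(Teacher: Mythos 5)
Your argument is correct, but it takes a somewhat different route from the paper. The paper stays at the infinite level: using $K^t=L^t$ (tameness of $L/K$) it identifies $(\pi_\ast F)_\eta=\ind_{G_L}^{G_K}F_\eta$ with the underlying $G_K$-module of $\ind_{G_L^t}^{G_K^t}F_\eta$ via the same continuous-function computation you perform for your cartesian square, and then concludes in one stroke by Shapiro for the finite-index open subgroup $G_L^t\subset G_K^t$ (induction being both left and right adjoint to restriction), with $\cal{O}_{K^t}$ coefficients throughout; Noether's theorem is not needed. You instead descend to a finite Galois quotient $G=\Gal(L'/K)$: the inflation/derived-invariants adjunction plus \cref{noether_theorem} pushes the coefficients down to $\cal{O}_{L'}$, and Shapiro at the finite pair $H\le G$ then identifies both sides with $R\Hom_H(M,\cal{O}_{L'}[1])$ — essentially replaying, on each side, the reduction used in the paper's corollary following \cref{lem_CTtorsionfree}. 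Your version is heavier (it requires choosing $L'$ and invoking cohomological triviality of $\cal{O}_{K^t}$), but it has the merit of making explicit both the tameness of $\pi_\ast F$, which the paper leaves implicit, and the concrete finite-level description of the common value; the paper's proof buys brevity and avoids any dependence on Noether's theorem. One small point of hygiene: when you apply ``Shapiro'' in the form $R\Hom_G(\ind_H^G M,-)\simeq R\Hom_H(M,-)$ you are using that induction coincides with coinduction and is exact for the finite-index pair, exactly the left-and-right adjointness the paper spells out; it is worth saying so, as the lax statement ``Shapiro's lemma'' usually refers to the other variance.
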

\begin{proof}
	Denote $g':\Spec(L)\to Y$ the inclusion of the generic point and $\pi':L\to K$ the map induced by base change of $\pi$ by $g:\Spec(K)\to X$. By finite base change, we have $g^\ast \pi_\ast F=\pi'_\ast g'^\ast F$. If $K\subset H \subset G$ are normal subgroups of a profinite group $G$ and $M$ is a discrete $H/K$-module, we have
	\[
	\ind_{H/K}^{G/K} M = \mathrm{Cont}_{H/K}(G/K,M)\simeq \mathrm{Cont}(G/H,M)\simeq \mathrm{Cont}_H(G,M) = \ind_H^G M
	\]
	where the left term is seen as a $G$-module, and on the right $M$ has the natural $H$-module structure.	Denote $U:G_K^t\text{-}\mathrm{Mod}\to G_K\text{-}\mathrm{Mod}$ resp. $U':G_L^t\text{-}\mathrm{Mod}\to G_L\text{-}\mathrm{Mod}$ the forgetful functors. The functor $\pi'_\ast$ identifies with induction $\mathrm{ind}_{G_L}^{G_K}$. We have $K^t=L^t$ because $L/K$ is tamely ramified so we can apply the above to $\Gal{K^{sep}/K^t}\subset G_L \subset G_K$. We obtain the formula $U\circ \mathrm{ind}_{G_L^t}^{G_K^t}=\mathrm{ind}_{G_L}^{G_K}\circ U'$, so that
	\[
	\Lie_Y((\pi_\ast F)^D):=R\Hom_{G_K^t}(\mathrm{ind}_{G_L^t}^{G_K^t} g'^\ast F, \cal{O}_{K^t}[1])=R\Hom_{G_L^t}(g'^\ast F, \cal{O}_{K^t}[1])=\Lie_X(F^D)
	\]
	since induction is right \emph{and} left adjoint to the forgetful functor by the finiteness of $L/K$.
\end{proof}

\begin{defi}
	Let $F$ be a tamely ramified red or blue sheaf. The fundamental line is the determinant
	\[
	\Delta_{X}(F^D)=\det_\ZZ(R\Gamma_{W,c}(X,F^D))\otimes \det_\ZZ(\Lie_X(F^D))^{-1}
	\]
\end{defi}

\section{Deligne compactly supported cohomology and the duality theorem}\label{sec:deligne_and_duality}

There is a natural $G_\RR$-equivariant map of sheaves on $X(\CC)$ given by the logarithm of the absolute value $\log|\cdot|:\CC^\times \to \RR$. Let $F$ be a sheaf on $X$. We denote by $\mathrm{Log}$ the natural map
\begin{align*}
\mathrm{Log} : R\Hom(F,\ZZ^c_X)\xrightarrow{\alpha^\ast} R\Hom_{G_\RR,X(\CC)}(\alpha^\ast F,\overline{\QQ}^\times[1]) &\to R\Hom_{G_\RR,X(\CC)}(\alpha^\ast F,\CC^\times[1])\\ 
&\xrightarrow{(\log|\cdot|)_\ast[1]} R\Hom_{G_\RR,X(\CC)}(\alpha^\ast F,\RR[1])
\end{align*}
Since the target is a sheaf of $\RR$-vector spaces, the map factors as $\mathrm{Log} : R\Gamma(X,F^D)_\RR=R\Hom(F,\ZZ^c_X)_\RR \to R\Hom_{G_RR,X(\CC)}(\alpha^\ast F,\RR[1])$.
\begin{defi}\label{defi_deligne_compact}
	Let $F$ be a $\ZZ$-constructible sheaf. We define the Deligne compactly supported cohomology with coefficients in $F^D$ by
	\[
	R\Gamma_{c,\cal{D}}(X,F^D_\RR):=\mathrm{fib}\left(R\Gamma(X,F^D)_\RR \xrightarrow{\mathrm{Log}} R\Hom_{G_\RR,X(\CC)}(\alpha^\ast F,\RR[1])\right)
	\]
\end{defi}

\begin{rmk}
	We have chosen to name the different cohomologies with compact support by what happens at the archimedean places; here the cohomology at the archimedean places is replaced by a contravariant version of Deligne cohomology with real coefficients $\RR(1)_{\cal{D}}=[2i\pi\RR \to \CC]\xrightarrow{\simeq} \RR[-1]$.
\end{rmk}

 The commutative diagram with exact rows of $G_\RR$-equivariant sheaves
\[\begin{tikzcd}
0 & 2i\pi\ZZ & \CC & {\CC^\times} & 0 \\
0 & 2i\pi\RR & \CC & \RR & 0
\arrow[from=2-1, to=2-2]
\arrow[from=2-2, to=2-3]
\arrow[from=2-3, to=2-4]
\arrow[from=2-4, to=2-5]
\arrow["\log|\cdot|", from=1-4, to=2-4]
\arrow[Rightarrow, no head, from=1-3, to=2-3]
\arrow[hook, from=1-2, to=2-2]
\arrow[from=1-1, to=1-2]
\arrow[from=1-2, to=1-3]
\arrow[from=1-3, to=1-4]
\arrow[from=1-4, to=1-5]
\end{tikzcd}\]
gives a commutative square
\begin{equation}\label{square_infinity}
\begin{tikzcd}
{} && {\CC^\times} & {2i\pi\ZZ[1]} \\
&& \RR & {2i\pi\RR[1]}
\arrow[from=2-3, to=2-4]
\arrow["\log|\cdot|", from=1-3, to=2-3]
\arrow[from=1-3, to=1-4]
\arrow[from=1-4, to=2-4]
\end{tikzcd}\end{equation}
Consider the following diagram
\[\begin{tikzcd}[ampersand replacement=\&]
{R\Gamma_{c,B}(X,F^D)_\RR} \& {R\Gamma(X,F^D)_\RR} \& {R\Hom_{G_\RR,X(\CC)}(\alpha^\ast F,2i\pi\RR)[2]} \\
{R\Hom_{G_\RR,X(\CC)}(\alpha^\ast F,\CC[1])} \& {R\Hom_{G_\RR,X(\CC)}(\alpha^\ast F,\RR[1])} \& {R\Hom_{G_\RR,X(\CC)}(\alpha^\ast F,2i\pi\RR)[2]}
\arrow[from=1-1, to=1-2]
\arrow[from=1-2, to=1-3]
\arrow[from=2-2, to=2-3]
\arrow[from=2-1, to=2-2]
\arrow["{\mathrm{Log}}", dashed, from=1-1, to=2-1]
\arrow[Rightarrow, no head, from=1-3, to=2-3]
\arrow["{\mathrm{Log}}", from=1-2, to=2-2]
\end{tikzcd}\]
where the top fiber sequence is the defining fiber sequence of $R\Gamma_{c,B}(X,F^D)$ tensored with $\RR$ and the bottom fiber sequence is induced from the short exact sequence $0 \to 2i\pi\RR \to \CC \to \RR \to 0$. We claim that the right square is commutative, hence induces the left map making the whole diagram commute. This follows from the following commutative diagram, induced by the commutative square \eqref{square_infinity}, by using the universal property of base change to $\RR$-coefficients:
\[\begin{tikzcd}[ampersand replacement=\&]
	\& {R\Gamma(X,F^D)} \& {R\Hom_{G_\RR,X(\CC)}(\alpha^\ast F,\overline{\QQ}^\times[1])} \\
	{} \& {} \& {R\Hom_{G_\RR,X(\CC)}(\alpha^\ast F,\CC^\times[1])} \& {R\Hom_{G_\RR,X(\CC)}(\alpha^\ast F,2i\pi\ZZ[2])} \\
	\&\& {R\Hom_{G_\RR,X(\CC)}(\alpha^\ast F,\RR[1])} \& {R\Hom_{G_\RR,X(\CC)}(\alpha^\ast F,2i\pi\RR[2])}
	\arrow[Rightarrow, no head, from=2-4, to=3-4]
	\arrow["{(\log|\cdot|)_\ast[1]}", from=2-3, to=3-3]
	\arrow[from=3-3, to=3-4]
	\arrow[from=2-3, to=2-4]
	\arrow["{\alpha^\ast}", from=1-2, to=1-3]
	\arrow[from=1-3, to=2-3]
	\arrow["\mathrm{Log}"', bend right = 30, from=1-2, to=3-3]
\end{tikzcd}\]
By the $\infty$-categorical nine lemma\footnote{see \cite[lem. 2.2]{AMorin21}} we obtain
\[
R\Gamma_{c,\cal{D}}(X,F^D_\RR)=\mathrm{fib}\left(R\Gamma_{c,B}(X,F^D)_\RR \xrightarrow{\mathrm{Log}} R\Hom_{G_\RR,X(\CC)}(\alpha^\ast F,\CC[1])\right)
\]

We now want to prove a duality theorem relating $R\Gamma(X,F)_\RR$ and $R\Gamma_{c,\cal{D}}(X,F^D_\RR)$. Let us construct a natural pairing
\[
R\Gamma(X,F)_\RR \otimes^L R\Gamma_{c,\cal{D}}(X,F^D_\RR) \to \RR[0]
\]
Consider the following diagram, with the right rectangle commutative:
\begin{equation}\label{pairing_deligne}\begin{tikzpicture}[baseline= (a).base]
\node[scale=.85] (a) at (0,0){
\begin{tikzcd}[ampersand replacement=\&]
{R\Gamma(X,F)_\RR \otimes^L R\Gamma_{c,\cal{D}}(X,F^D_\RR)} \& {R\Gamma(X,F)_\RR \otimes^L R\Gamma(X,F^D)_\RR} \& {R\Gamma(X,F)_\RR \otimes^L  R\Hom_{G_\RR,X(\CC)}(\alpha^\ast F,\RR[1])} \\
\& {R\Gamma(X,F)_\RR \otimes^L R\Hom(F,\ZZ^c_X)_\RR} \& {R\Gamma_{G_\RR}(X(\CC),\alpha^\ast F)_\RR\otimes^LR\Hom_{G_\RR,X(\CC)}(\alpha^\ast F,\RR[1])} \\
{R\Gamma_{c,\cal{D}}(X,\ZZ^D_\RR)} \& {R\Gamma(X,\ZZ^c_X)_\RR} \& {R\Gamma_{G_\RR}(X(\CC),\RR[1])}
\arrow[Rightarrow, no head, from=1-2, to=2-2]
\arrow[from=1-1, to=1-2]
\arrow[from=1-2, to=1-3]
\arrow[from=2-2, to=3-2]
\arrow[dashed, from=1-1, to=3-1]
\arrow[from=3-1, to=3-2]
\arrow[from=3-2, to=3-3]
\arrow[from=1-3, to=2-3]
\arrow[from=2-3, to=3-3]
\end{tikzcd}};
\end{tikzpicture}\end{equation}
The top and bottom rows are fiber sequences, so we obtain an induced dotted map making the diagram commute. To obtain the desired pairing, it remains to compute $R\Gamma_{c,\cal{D}}(X,\ZZ^D_\RR)$. The complex $R\Gamma(X,\ZZ^c_X)=R\Gamma(X,\GG_X)$ is torsion in degree $i\geq 1$ \cite[II.6.2]{ADT}, and $R\Gamma_{G_\RR}(X(\CC),\RR[1])=(\displaystyle\prod_{\sigma : K \to \CC} \RR)^{G_\RR} [1]$. The cohomology in degree $-1$ and $0$ is given by the exact sequence
\[
0 \to H^{-1}_{c,\cal{D}}(X,\ZZ^D_\RR) \to \mathrm{CH}_0(X,1)_\RR \xrightarrow{\mathrm{Log}} (\displaystyle\prod_{\sigma : K \to \CC} \RR)^{G_\RR} \to H^0_{c,\cal{D}}(X,\ZZ^D_\RR) \to \mathrm{CH}_0(X)_\RR=0
\]
Denote $f:X\to \Spec(\ZZ)$ the structure map of $X$, $Z$ its singular locus and $U$ its regular locus. By \cite[thm. 4.4]{AMorin21}\footnote{Which also holds for singular schemes; this can be seen either by applying it on $\Spec(\ZZ)$ to the direct image with compact support along the structural morphism, or by modifying slightly the proof to reduce to the regular case by removing the singular points}, we have\footnote{This is essentially the finiteness of the class number of a number field}
\[
\mathrm{CH}_0(X)_\RR=\Hom_X(\ZZ,\ZZ^c_X)_\RR\xrightarrow{\simeq} \Hom(H^0_c(X,\ZZ)_\RR,\RR)=0
\]
Moreover, \cite[Theorem 4.4]{AMorin21} and the computation of $H^1(X,\ZZ)$ gives a short exact sequence
\[
0 \to \mathrm{CH}_0(X,1)_\RR \xrightarrow{\mathrm{Log}} (\displaystyle\prod_{\sigma : K \to \CC} \RR)^{G_\RR} \times \prod_{v\in Z}\prod_{\pi(w)=v} \RR \xrightarrow{\Sigma \times \prod_{v\in Z}\Sigma} \RR \times \prod_{v\in Z} \RR \to 0
\]
where $\Sigma$ denote sum maps. Consider the following snake diagram:
\[
\begin{tikzcd}
&
& 
& 0 \ar{r} \arrow[d, hook]
& H^{-1}_{c,\cal{D}}(X,\ZZ^D_\RR) \arrow[d, hook]   
&
&\\
& 0 \ar{r}
& 0 \ar{r} \ar{dd}
& \mathrm{CH}_0(X,1)_\RR \arrow[r, equal] \ar{dd}
& \mathrm{CH}_0(X,1)_\RR \ar{r} \ar{dd}
&  0
& ~\\
&
&
& ~
&
&
\ar[r, phantom, ""{coordinate, name=Y}]
&~\\
~& \ar[l, phantom, ""{coordinate, name=Z}] 0 \ar{r}
& \displaystyle\prod_{v\in Z}\prod_{\pi(w)=v} \RR \ar{r} \arrow[d, equal]
& \displaystyle(\displaystyle\prod_{\sigma : K \to \CC} \RR)^{G_\RR} \times \displaystyle\prod_{v\in Z}\displaystyle\prod_{\pi(w)=v} \RR \ar{r} \arrow[d, two heads,"\Sigma \times \prod_{v\in Z}\Sigma"]
& \displaystyle(\displaystyle\prod_{\sigma : K \to \CC} \RR)^{G_\RR} \ar{r} \arrow[d, two heads,]
& 0
& \\
&
& \ar[from=uuuurr, crossing over, rounded corners,
to path=
{ -- ([xshift=2ex]\tikztostart.east)
	-| (Y) [near end]\tikztonodes
	-| (Z) [near end]\tikztonodes
	|- ([xshift=-2ex]\tikztotarget.west)
	-- (\tikztotarget)}
]\displaystyle\prod_{v\in Z}\displaystyle\prod_{\pi(w)=v} \RR \ar{r}{(0,\prod_{v\in Z}\Sigma)}
& \RR \times \displaystyle\prod_{v\in Z} \RR \ar{r}
& H^0_{c,\cal{D}}(X,\ZZ^D_\RR) \ar{r} 
& 0
&
\end{tikzcd}
\]
If we denote by $(\prod_I \RR)^\Sigma$ the kernel of the sum map $\prod_I \RR \to \RR$, we obtain identifications
\[
H^{-1}_{c,\cal{D}}(X,\ZZ^D_\RR)= \prod_{v\in Z}(\prod_{\pi(w)=v}\RR)^\Sigma,~~ H^0_{c,\cal{D}}(X,\ZZ^D_\RR)= \RR
\]
and under the latter identification the map $(\displaystyle\prod_{\sigma : K \to \CC} \RR)^{G_\RR} \to H^0_{c,\cal{D}}(X,\ZZ^D_\RR)$ is simply the restriction of $\Sigma$. If $X$ is unibranch, and in particular if it is regular, we have $H^{-1}_{c,\cal{D}}(X,\ZZ^D_\RR)=0$.

The desired pairing is now
\[
R\Gamma(X,F)_\RR \otimes^L R\Gamma_{c,\cal{D}}(X,F^D_\RR) \to R\Gamma_{c,\cal{D}}(X,\ZZ^D_\RR)\xrightarrow{\tau^\geq 0} \RR[0]
\]

\begin{thm}\label{duality_theorem}
	Let $F\in D^b(X_{et})$ be a bounded complex with $\ZZ$-constructible cohomology groups. The pairing
	\[
	R\Gamma(X,F)_\RR \otimes^L R\Gamma_{c,\cal{D}}(X,F^D_\RR) \to \RR[0]
	\]
	is a perfect pairing of perfect complexes of $\RR$-vector spaces. If $F\in D^+(X_{et})$, the map
	\[
	R\Gamma_{c,\cal{D}}(X,F^D_\RR)\to R\Hom(R\Gamma(X,F)_\RR,\RR)
	\]
	is an isomorphism.
\end{thm}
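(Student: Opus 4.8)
The plan is to prove the statement by dévissage, reducing to a short list of ``atomic'' sheaves. Both functors $F \mapsto R\Gamma_{c,\mathcal{D}}(X, F^D_\RR)$ and $F \mapsto R\Hom(R\Gamma(X,F)_\RR,\RR)$ are exact in $F$ (both $R\Gamma(X,-)$ and $R\mathcal{H}om_X(-,\ZZ^c_X)$ are exact, and $(-)_\RR$, $\mathrm{fib}$, $R\Hom(-,\RR)$ preserve triangles), and the pairing constructed via diagram \eqref{pairing_deligne} is natural in $F$; hence the class of $F$ for which the pairing is perfect is closed under shifts, triangles and direct summands. Filtering a bounded complex by its truncations reduces us to a single $\ZZ$-constructible sheaf. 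Using the localization sequence $0 \to j_!(F_{|U}) \to F \to i_*i^*F \to 0$ for $j : U \hookrightarrow X$ a regular dense open on which $F_{|U}$ is locally constant and $i : Z \hookrightarrow X$ its finite closed complement, then running on $j_!(F_{|U})$ the Artin-induction argument used to prove that $j_!G$ is red (which expresses $j_!G$, up to triangles with constructible sheaves, in terms of sheaves $\pi_*(j_{Y,!}\ZZ)$ for $\pi : Y = \Spec(\mathcal{O}_L) \to X$ finite dominant and $j_Y : V \hookrightarrow Y$ a regular open, via $j_!\pi'_{*}\ZZ = \pi_*j_{Y,!}\ZZ$), and finally the localization sequence $0 \to j_{Y,!}\ZZ \to \ZZ_Y \to i'_*\ZZ \to 0$ on the \emph{regular} scheme $Y$, we are reduced to the atomic cases: (i) $F$ constructible; (ii) $F = i_{x,*}M$ for $i_x : x \hookrightarrow X$ a closed point and $M$ a finite type discrete $G_x$-module; (iii) $F = \ZZ$ with $X$ regular; together with the compatibility of everything with a finite dominant $\pi : Y \to X$. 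This compatibility is proved exactly as \cref{cohomology_compact_pushforward} and \cref{compatibility_pairings_finite}: one has $(\pi_*F)^D = \pi_*(F^D)$, $\alpha^*\pi_*F = \pi'_*\alpha'^*F$ and $\mathrm{Log}$ is natural, yielding $R\Gamma_{c,\mathcal{D}}(X,(\pi_*F)^D_\RR) \simeq R\Gamma_{c,\mathcal{D}}(Y,F^D_\RR)$ compatibly with $R\Gamma(X,\pi_*F) = R\Gamma(Y,F)$ and the pairings.

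For (i): $R\Gamma(X,F)_\RR = 0$ since $R\Gamma(X,F)$ has finite cohomology ($X$ proper, $F$ constructible), and $R\Gamma_{c,\mathcal{D}}(X,F^D_\RR) = \mathrm{fib}\big(R\Gamma_{c,B}(X,F^D)_\RR \xrightarrow{\mathrm{Log}} R\Hom_{G_\RR,X(\CC)}(\alpha^*F,\CC[1])\big) = 0$, because $\alpha^*F$ is torsion so the target vanishes and $R\Gamma_{c,B}(X,F^D)$ has finite cohomology groups for $F$ constructible (combine \cref{cohomology_compact_support}, \cref{cohomology_comparison}, \cref{cohomology_compact_support_tate}). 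For (ii): since $(i_{x,*}M)^D = i_{x,*}M^\vee$ and $\alpha^*i_{x,*}M = 0$, we get $R\Gamma_{c,\mathcal{D}}(X,(i_{x,*}M)^D_\RR) = R\Gamma(G_x,M^\vee)_\RR$ and $R\Gamma(X,i_{x,*}M)_\RR = R\Gamma(G_x,M)_\RR$; as $G_x \cong \hat{\ZZ}$ acts on $M_\RR$ and on $M^\vee_\RR$ through a finite cyclic quotient whose order is invertible in $\RR$, both complexes are concentrated in degree $0$, namely $(M_\RR)^{G_x}$ and $(M^\vee_\RR)^{G_x} \cong ((M_\RR)^{G_x})^\vee$, and the pairing becomes the evaluation pairing on these, multiplied by the scalar $R\Gamma(G_x,\ZZ)_\RR = \RR \to H^0 R\Gamma_{c,\mathcal{D}}(X,\ZZ^D_\RR) = \RR$ induced by the counit $i_{x,*}\ZZ \to \ZZ^c_X$; this scalar is $\pm\log N(x) \neq 0$ (for $X$ regular, immediately so: the localization triangle for $0 \to j_!\ZZ \to \ZZ_X \to i_{x,*}\ZZ \to 0$ with a single point removed has $R\Gamma_{c,\mathcal{D}}(X,(j_!\ZZ)^D_\RR) = 0$, forcing the scalar to be an isomorphism). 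For (iii): $R\Gamma(X,\ZZ)_\RR = \RR$ in degree $0$ by \cref{cohomology_Z}, while the computation carried out just before the theorem — resting on \cite[thm. 4.4]{AMorin21}, i.e. the finiteness of the class number and the Dirichlet regulator — gives $R\Gamma_{c,\mathcal{D}}(X,\ZZ^D_\RR) = \RR$ in degree $0$, and the pairing in degree $0$ is the multiplication map $\RR \otimes \RR \to \RR$, hence perfect.

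The atomic cases and the pushforward compatibility then give the perfectness statement for every $\ZZ$-constructible sheaf, hence for every $F \in D^b(X_{et})$ with $\ZZ$-constructible cohomology (both complexes being then bounded with finite-dimensional cohomology, i.e. perfect over $\RR$). For $F \in D^+(X_{et})$ one writes $F = \mathrm{colim}_n \tau^{\le n}F$; since $R\Gamma(X,-)$ and $(-)^D$ have bounded cohomological amplitude, the limits and colimits that occur are degreewise eventually constant, so the isomorphism for $F$ follows from those for the $\tau^{\le n}F$. The only genuinely arithmetic input is case (iii) — the analogue of the analytic class number formula — which is already available from the pre-theorem computation; I expect the remaining delicate point to be the verification that the pairings in cases (ii) and (iii) are genuinely non-degenerate rather than merely of matching dimension, i.e. the identification of the normalizing scalars (in particular the $\log N(x)$ in (ii)) by a chase through diagram \eqref{pairing_deligne}.
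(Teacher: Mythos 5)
Your overall strategy is the paper's own: reduce to a single $\ZZ$-constructible sheaf by exactness and truncation, then Artin induction with the same atomic cases (constructible sheaves, skyscrapers $i_{x,*}M$, $\ZZ$ on regular $X$) together with compatibility under a finite dominant $\pi:Y\to X$; cases (i) and (iii) and the pushforward compatibility are handled exactly as in the paper (for the latter, note you still need to check that the induced map $H^0_{c,\cal{D}}(Y,\ZZ^D_\RR)\to H^0_{c,\cal{D}}(X,\ZZ^D_\RR)$ is the identity of $\RR$, which the paper does via the sum-over-embeddings description).

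The genuine gap is in case (ii), and it is exactly the point you flag at the end: the identification (or at least the nonvanishing) of the scalar $H^0(G_x,\ZZ)_\RR\to H^0_{c,\cal{D}}(X,\ZZ^D_\RR)$ when $X$ is \emph{not} regular. Your shortcut --- that $R\Gamma_{c,\cal{D}}(X,(j_!\ZZ)^D_\RR)=0$ for $j:U=X\setminus\{x\}\hookrightarrow X$, forcing the map in the localization triangle to be an isomorphism --- only works for regular $X$. In general the skyscraper cases produced by your dévissage live on the original, possibly singular $X$, and there the vanishing fails: with $U$ the regular locus minus $x$, one has $H^{-1}(X,(j_!\ZZ)^D)_\RR=(\cal{O}_{K,S}^\times)_\RR$ of rank $r_1+r_2-1+|S_f|$, where $S_f$ is the set of places of $K$ over $x$ and over the singular points, while the target of $\mathrm{Log}$ has dimension $r_1+r_2$; as soon as $|S_f|>1$ (e.g. $x$ singular with several branches, or other singular points present) the kernel $H^{-1}_{c,\cal{D}}(X,(j_!\ZZ)^D_\RR)$ is nonzero, so the triangle argument gives nothing. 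The paper closes this by computing the scalar directly and uniformly: using the pre-theorem computation of $H^{\pm}_{c,\cal{D}}(X,\ZZ^D_\RR)$ in terms of $\mathrm{CH}_0(X,1)_\RR$ (valid for singular $X$), a snake diagram comparing $X$ and $U=X\setminus\{x\}$ shows that the boundary map sends $1$ to $\Sigma\circ\mathrm{Log}(f)$ for $f\in\mathrm{CH}_0(U,1)_\RR$ with $\ord_x(f)=1$, which equals $-\log N(x)\neq 0$ by the product formula. So your argument needs this (or an equivalent) computation to be complete; with it, the rest of your reduction goes through and coincides with the paper's proof.
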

\begin{proof}
	The map $R\Gamma_{c,\cal{D}}(X,(-)^D_\RR)\to R\Hom(R\Gamma(X,-)_\RR,\RR)$ is a natural transformation between exact functors which commute with filtered colimits. Any complex $F\in D^+(X_{et})$ is a filtered colimit of bounded complexes; a bounded complex has a filtration by truncations with graded pieces shifts of sheaves; and any sheaf is a filtered colimit of $\ZZ$-constructible sheaves. We thus reduce to the case of a single $\ZZ$-constructible sheaf. The groups $H^i(X,F)$ differ from $\hat{H}^i_c(X,F)$ by a finite group since $X$ is proper, hence are finite type for $i=0,1$ and torsion otherwise, so $R\Gamma(X,F)_\RR$ is a perfect complex of $\RR$-vector spaces.
	
	We now prove the perfectness of the complex $R\Gamma_{c,\cal{D}}(X,F^D_\RR)$ and of the pairing by Artin induction.
	\begin{itemize}
		\item This is trivial if $F$ is constructible.
		
		\item If $X$ is regular and $F=\ZZ$, we have $R\Gamma(X,\ZZ)_\RR=\RR[0]$ and $R\Gamma_{c,\cal{D}}(X,\ZZ^D_\RR)=\RR[0]$ so the pairing is of the form $\RR \otimes \RR \to \RR$, concentrated in degree $0$. By construction, we have a commutative square
		\[\begin{tikzcd}[ampersand replacement=\&]
		{R\Gamma(X,\ZZ)_\RR\otimes^L R\Gamma_{G_\RR}(X(\CC),\RR)} \& {R\Gamma(X,\ZZ)_\RR\otimes^LR\Gamma_{c,\cal{D}}(X,\ZZ^D_\RR)} \\
		{R\Gamma_{G_\RR}(X(\CC),\RR)} \& {R\Gamma_{c,\cal{D}}(X,\ZZ^D_\RR)}
		\arrow[from=1-1, to=2-1]
		\arrow[from=1-1, to=1-2]
		\arrow[from=2-1, to=2-2]
		\arrow[from=1-2, to=2-2]
		\end{tikzcd}\]
		so we get a commutative diagram
		\[\begin{tikzcd}[ampersand replacement=\&]
		{\RR\otimes(\displaystyle\prod_{\sigma : K \to \CC} \RR)^{G_\RR}} \& {\RR\otimes \RR} \\
		{(\displaystyle\prod_{\sigma : K \to \CC} \RR)^{G_\RR}} \& \RR
		\arrow[from=1-1, to=2-1]
		\arrow["{\mathrm{id}\otimes \Sigma}", two heads, from=1-1, to=1-2]
		\arrow["{\Sigma}", two heads, from=2-1, to=2-2]
		\arrow[from=1-2, to=2-2]
		\end{tikzcd}\]
		Since the pairing on the left is multiplication, the pairing on the right is also multiplication, so it is perfect.
		
		\item If $F$ is supported on a closed subscheme, without loss of generality we can suppose that there is a closed point $i\colon x \to X$ and a finite type $G_x$-module $M$ such that $F=i_\ast M$. We have
		\begin{align*}
			R\Gamma(X,i_\ast M)&\simeq R\Gamma(G_x,M)\\
			R\Gamma_{c,\cal{D}}(X,(i_\ast M)^D_\RR)\xrightarrow{\simeq} R\Gamma(X,(i_\ast M)^D)_\RR&=R\Gamma(X,i_\ast M^\vee)_\RR \simeq R\Gamma(G_x,M^\vee)_\RR
		\end{align*}
		The counit $i_\ast Ri^!\ZZ^c_X=i_\ast \ZZ \to \ZZ^c_X$ induces a map $R\Gamma(G_x,\ZZ)=R\Gamma(X,(i_\ast \ZZ)^D)\to R\Gamma(X,\ZZ^c_X)$. Since $\alpha^\ast i_\ast =0$, this induces a map $R\Gamma(G_x,\ZZ)_\RR \to R\Gamma_{c,\cal{D}}(X,\ZZ^D_\RR)$. Upon identifying $\ZZ$ with $\ZZ^\vee$, this map is the map $R\Gamma_{c,\cal{D}}(X,(i_\ast \ZZ)^D_\RR) \to R\Gamma_{c,\cal{D}}(X,\ZZ^D_\RR)$ induced by $\ZZ \to i_\ast \ZZ$. Denote $j:U:=X\backslash x \to X$ the open immersion; the short exact sequence $0\to j_!\ZZ \to \ZZ \to i_\ast\ZZ \to 0$ gives a $3\times 3$ diagram:
		\[\begin{tikzcd}[ampersand replacement=\&]
			{} \& {R\Gamma_{c,\cal{D}}(X,(j_! \ZZ)^D_\RR[-1])} \& {R\Gamma_{c,\cal{D}}(X,(i_\ast\ZZ)_\RR^D)} \& {R\Gamma_{c,\cal{D}}(X,\ZZ^D_\RR)} \\
			{} \& {R\Gamma(U,\ZZ^c_U[-1])_\RR} \& {R\Gamma(X,i_\ast\ZZ)_\RR} \& {R\Gamma(X,\ZZ^c_X)_\RR} \\
			{} \& {R\Gamma_{G_\RR}(X(\CC),\RR)} \& 0 \& {R\Gamma_{G_\RR}(X(\CC),\RR[1])}
			\arrow[Rightarrow, no head, from=1-3, to=2-3]
			\arrow[from=1-2, to=2-2]
			\arrow[from=1-2, to=1-3]
			\arrow[from=2-2, to=2-3]
			\arrow[from=2-3, to=3-3]
			\arrow[from=2-2, to=3-2]
			\arrow[from=3-2, to=3-3]
			\arrow[from=2-3, to=2-4]
			\arrow[from=1-3, to=1-4]
			\arrow[from=1-4, to=2-4]
			\arrow[from=3-3, to=3-4]
			\arrow[from=2-4, to=3-4]
		\end{tikzcd}\]
		The map $R\Gamma_{c,\cal{D}}(X,(i_\ast\ZZ)^D_\RR) \to R\Gamma_{c,\cal{D}}(X,\ZZ^D_\RR)$ is given in degree $0$ by a map $H^0(G_x,\ZZ)_\RR=\RR \to H^0_{c,\cal{D}}(X,\ZZ^D_\RR)=\RR$ which is computed as the boundary map $\delta$ in the following snake diagram
		\[
		\begin{tikzcd}
			& 0 \ar{r}
			& H^{-1}_{c,\cal{D}}(X,\ZZ^D_\RR) \ar{r} \arrow[d, hook]
			& H^{-1}_{c,\cal{D}}(X,(j_!\ZZ)^D_\RR) \ar{r} \arrow[d, hook]
			& \RR \arrow[d, equal] 
			&
			&\\
			& 0 \ar{r}
			& \mathrm{CH}_0(X,1)_\RR \ar{r} \ar{dd}
			& \mathrm{CH}_0(U,1)_\RR \arrow[r, "\ord_v"] \ar{dd}
			& \RR \ar{r} \ar{dd}
			&  0
			& ~\\
			&
			&
			& ~
			&
			&
			\ar[r, phantom, ""{coordinate, name=Y}]
			&~\\
			~& \ar[l, phantom, ""{coordinate, name=Z}] 0 \ar{r}
			& (\displaystyle\prod_{\sigma : K \to \CC} \RR)^{G_\RR} \arrow[r, equal]  \arrow[d, two heads,"\Sigma"]
			& (\displaystyle\prod_{\sigma : K \to \CC} \RR)^{G_\RR} \ar{r} \arrow[d, two heads]
			& 0 \ar{r} \ar{d}
			& 0
			& \\
			&
			& \ar[from=uuuurr, "\delta", crossing over, rounded corners,
			to path=
			{ -- ([xshift=2ex]\tikztostart.east)
				-| (Y) [near end]\tikztonodes
				-| (Z) [near end]\tikztonodes
				|- ([xshift=-2ex]\tikztotarget.west)
				-- (\tikztotarget)}
			] H^0_{c,\cal{D}}(X,\ZZ^D_\RR)=\RR \ar{r}
			& H^0_{c,\cal{D}}(X,(j_!\ZZ)^D_\RR) \ar{r}
			& 0 \ar{r} 
			& 0
			&
		\end{tikzcd}
		\]
		Let $f\in \mathrm{CH}_0(U,1)_\RR$ such that $\ord_v(f)=1$; we also have $\ord_w(f)=0$ for $w\in U$ by definition. Then \[\delta(1)=\Sigma\circ\mathrm{Log}(f)=-\log N(v)\] by the product formula. From \cref{pairing_deligne}, we obtain the commutativity of the upper left rectangle in the following commutative diagram.
		\[\begin{tikzcd}[ampersand replacement=\&]
			{R\Gamma(X,i_\ast M)_\RR \otimes^L R\Gamma_{c,\cal{D}}(X,(i_\ast M)^D_\RR)} \& {R\Gamma_{c,\cal{D}}(X,\ZZ^D_\RR)} \& {\RR[0]} \\
			{R\Gamma(X,i_\ast M)_\RR \otimes^L R\Gamma(X,(i_\ast M)^D)_\RR} \& {R\Gamma(X,\ZZ^c_X)_\RR} \\
			{R\Gamma(G_x,M)_\RR\otimes^L R\Gamma(G_x, M^\vee)_\RR} \& {R\Gamma(G_x,\ZZ)_\RR} \& {\RR[0]}
			\arrow[from=1-1, to=1-2]
			\arrow["\simeq"', from=1-1, to=2-1]
			\arrow[from=1-2, to=2-2]
			\arrow[from=2-1, to=2-2]
			\arrow["\simeq", from=3-1, to=2-1]
			\arrow[from=3-1, to=3-2]
			\arrow[from=3-2, to=2-2]
			\arrow["{\tau^\geq 0}", from=1-2, to=1-3]
			\arrow["{\tau^\geq 0}", from=3-2, to=3-3]
			\arrow["{-\log N(v)}"', from=3-3, to=1-3]
			\arrow[bend right = 60, from=3-2, to=1-2]
		\end{tikzcd}\]

		The perfectness of the pairing reduces to that of the natural pairing
		\[
		R\Gamma(G_x,M)_\RR\otimes^LR\Gamma(G_x,M^\vee)_\RR \to \RR
		\]
		coming from $M\otimes^L M^\vee \to \ZZ$. It was shown in \cite[Paragraph 4.2.4]{AMorin21} that this latter pairing is perfect\footnote{This also reduces by Artin induction, after showing compatibility with induction, to the perfectness for $M=\ZZ$.}.
		
		\item Let $Y=\Spec(\cal{O}')$ be the spectrum of an order in a number field with a finite dominant morphism $\pi:Y\to X$, and suppose $F=\pi_\ast G$ for a $\ZZ$-constructible sheaf $G$ on $Y$. Denote $\pi':Y(\CC)\to X(\CC)$ the induced morphism. The counit $\pi_\ast \ZZ^c_Y \to \ZZ^c_X$ is sent under $\alpha^\ast$ to the counit $\pi'_\ast \overline{\QQ}^\times[1] \to \overline{\QQ}^\times[1]$, and we have the canonical maps $\overline{\QQ}^\times[1] \to \CC^\times[1] \xrightarrow{\log} \RR[1]$. We obtain formally a morphism $R\Gamma_{c,\cal{D}}(Y,\ZZ^D_\RR) \to R\Gamma_{c,\cal{D}}(X,\ZZ^D_\RR)$ and isomorphisms $R\Gamma_{c,\cal{D}}(Y,G^D_\RR) \xrightarrow{\simeq} R\Gamma_{c,\cal{D}}(X,F^D_\RR)$ making the following diagram of pairing commute
		\[\begin{tikzcd}[ampersand replacement=\&]
		{R\Gamma(Y,G)_\RR\otimes^LR\Gamma_{c,\cal{D}}(Y,G^D_\RR)} \& {R\Gamma_{c,\cal{D}}(Y,\ZZ^D_\RR)} \& {\RR[0]} \\
		{R\Gamma(X,F)_\RR\otimes^LR\Gamma_{c,\cal{D}}(X,F^D_\RR)} \& {R\Gamma_{c,\cal{D}}(X,\ZZ^D_\RR)} \& {\RR[0]}
		\arrow[from=1-2, to=2-2]
		\arrow[from=1-1, to=1-2]
		\arrow[from=2-1, to=2-2]
		\arrow["\simeq", from=1-1, to=2-1]
		\arrow["{\tau{\geq 0}}", from=1-2, to=1-3]
		\arrow["{\tau^{\geq 0}}", from=2-2, to=2-3]
		\arrow[from=1-3, to=2-3]
		\end{tikzcd}\]
		The rightmost map is determined by the following commutative diagram (coming from the defining long exact cohomology sequences)
		\[\begin{tikzcd}[ampersand replacement=\&]
			{(\displaystyle\prod_{\tau:K(Y)\to\CC} \RR)^{G_\RR}} \& {H^0_{c,\cal{D}}(Y,\ZZ^D_\RR)=\RR} \\
			{(\displaystyle\prod_{\sigma:K\to\CC} \RR)^{G_\RR}} \& {H^0_{c,\cal{D}}(X,\ZZ^D_\RR)=\RR}
			\arrow[from=1-2, to=2-2]
			\arrow["\Sigma",two heads, from=2-1, to=2-2]
			\arrow["\Sigma",two heads, from=1-1, to=1-2]
			\arrow[ from=1-1, to=2-1]
		\end{tikzcd}\]
		where the left map sums components corresponding to embeddings of $K(Y)$ restricting to the same embedding of $K$. Thus the rightmost map is the identity and the pairing for $F$ is perfect if and only if the pairing for $G$ is perfect.
	\end{itemize}
\end{proof}

\begin{rmk}
	Using duality for Deligne cohomology, it should be possible to reduce the theorem to the duality theorem \cite[thm. 4.4]{AMorin21} similarly to how we reduced Artin-Verdier duality for $F^D$ to Artin-Verdier duality for $F$.
\end{rmk}
\section{The \texorpdfstring{$L$}{L}-function of \texorpdfstring{$F^D$}{F D}}
For any scheme $S$, we will denote $\nu:\Sh(S_{proet})\to \Sh(S_{et})$ the natural morphism of topoi; the left adjoint $\nu^\ast$ is fully faithful \cite[5.1.2]{Bhatt15}.

\begin{defi}
	For each closed point $x$ of $X$, let $l_x$ be a prime number coprime to the residual characteristic at $x$ and $\varphi$ be the \emph{geometric} frobenius in $G_x$. Denote $-\widehat{\otimes}\QQ_l=(R\lim (-\otimes^L\ZZ/l^n\ZZ))\otimes \QQ$ the \emph{completed} tensor product with $\QQ_l$, with the derived limit computed on the proétale site. 
	
	Let $F$ be a $\ZZ$-constructible sheaf on $X$. We define the $L$-function of $F^D$ by the Euler product
	\[
	L_X(F^D,s)=\prod_{x\in X_0} \det\left(I-N(x)^{-s}\varphi|\left(\nu^\ast(i_x^\ast F^D)\right)\widehat{\otimes}\QQ_{l_x}\right)^{-1}
	\]
\end{defi}
We will show that the $L$-function doesn't depend on the choice of the prime numbers $(l_x)_{x\in X_0}$.

\subsection{Explicit computation}

In this section we compute more explicitely the $L$-function of $F^D$. Denote $\pi:Y\to X$ the normalization of $X$ and let $i:x \to X$ be a closed point. Denote $\bar{x}=\Spec(\kappa(x)^{sep})$, $G_x=\Gal(\kappa(x)^{sep}/\kappa(x))$, $\cal{O}_x^h$ (resp. $\cal{O}_x^{sh}$) the henselian (resp. strict henselian) local ring at $x$. For each closed point $y\in Y$ above $x$, we will consider similarly $G_y=\Gal(\kappa(y)^{sep}/\kappa(y))$, $\cal{O}_y^h$, $\cal{O}_y^{sh}$, and moreover $K_y^h=\mathrm{Frac}(\cal{O}_y^h)$ (resp. $K_y^{sh}$) the henselian local field at $y$ (resp. its maximal unramified extension). Fix an embedding $K_y^{h} \hookrightarrow K^{sep}$. This determines an inertia group $I_y$ inside $G_K$ which is the absolute Galois groups of $K_y^{sh}$. If $G$ is a topological group we will denote $\underline{G}$ the associated condensed group.

Let us fix some conventions. If $L(M,s)$ is an $L$-function attached to some object $M$\footnote{which kind of $L$-function we will try to make clear each time from the context}, defined by an Euler product over closed points of $X$, we will denote $L_x(M,s)$ for the local factor at a closed point $x\in X$. If $N$ is a discrete $G_x$-module with free of finite type underlying abelian group, or a rational or $l$-adic $G_x$-representation of finite dimension, denote
\[
Q_x(N,s):= \det(I-N(x)^{-s}\varphi| N)^{-1}
\]
with $\varphi$ the geometric Frobenius at $x$.

\begin{thm}\label{explicit_expression_L_function}
	Let $F$ be a $\ZZ$-constructible sheaf on $X$. The local factor at $x$ of the $L$-function of $F^D$ is
	\[
	L_x(F^D,s)=\frac{\displaystyle\prod_{\pi(y)=x}Q_y((F_\eta^\vee)^{I_y},s+1)Q_x(F_x^\vee,s)}{\displaystyle\prod_{\pi(y)=x}Q_y((F_\eta^\vee)^{I_y},s)}
	\]
\end{thm}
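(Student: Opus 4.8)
The plan is to compute the local factor
\[
L_x(F^D,s)=\det\bigl(I-N(x)^{-s}\varphi\bigm|(\nu^\ast(i_x^\ast F^D))\widehat{\otimes}\QQ_{l_x}\bigr)^{-1}
\]
by first identifying the stalk $i_x^\ast F^D$ as an object of the derived category of $G_x$-modules, then computing its $l$-adic realization. The starting point is the formula $\ZZ^c_X\simeq\GG_X[1]=[g_\ast\GG_m\to\bigoplus_{x'}i_{x',\ast}\ZZ][1]$. Since $F^D=R\cal{H}om_X(F,\ZZ^c_X)$, I would compute $i_x^\ast$ of this internal Hom. The key geometric input is base change for the strictly henselian local ring at $x$: writing $i:x\to X$ and passing to $\Spec(\cal{O}_x^{sh})$, the stalk of $g_\ast\GG_m$ at $\bar x$ is $\bigoplus_{\pi(y)=x}(K_y^{sh})^\times$ (one copy for each point $y$ of the normalization above $x$, since the normalization of a henselian local ring is a product of henselian local rings), and the stalk of $\bigoplus_{x'}i_{x',\ast}\ZZ$ is $\bigoplus_{\pi(y)=x}\ZZ$. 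So $i_x^\ast\ZZ^c_X$ is (up to shift) the complex $[\bigoplus_{\pi(y)=x}(K_y^{sh})^\times\xrightarrow{\sum\ord_y}\bigoplus_{\pi(y)=x}\ZZ]$, with its natural $G_x$-action (which permutes the summands according to how $G_x$ acts on the points above $x$, and acts on each $(K_y^{sh})^\times$ through $G_y$... more precisely through the decomposition structure). One should recognize this as $\bigoplus_{\pi(y)=x}\ind_{G_y}^{G_x}$ of the complex $[(K_y^{sh})^\times\to\ZZ]$, which is a model for $Ri_y^!\ZZ^c$ type data on the normalization.

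Next I would commute $i_x^\ast$ with $R\cal{H}om$. One expects $i_x^\ast R\cal{H}om_X(F,\ZZ^c_X)\simeq R\Hom_{G_x}^{\mathrm{cont}}(i_x^\ast F,\ldots)$ is NOT quite right; rather, since $F$ is $\ZZ$-constructible one should work with a presentation and use that $i_x^\ast$ of an internal Hom into a sufficiently injective-like object computes the stalk. The cleaner route: use the localization triangle $j_!j^\ast\to\mathrm{id}\to i_\ast i^\ast$ applied near $x$, or directly compute on $\Spec\cal{O}_x^{sh}$ where everything becomes Galois cohomology of $G_x$ with the explicit module above. The upshot should be a triangle expressing $i_x^\ast F^D$ in terms of three pieces: $R\Hom$ from the generic stalk $F_\eta$ (with its $I_y$-fixed-points entering because the relevant units $(K_y^{sh})^\times$ have a filtration whose graded pieces involve $\ZZ$ and the units, and $R\Hom_{I_y}$ of $F_\eta$ into an unramified module picks out invariants and coinvariants), and $R\Hom$ from $F_x=i_x^\ast F$ into $\ZZ$. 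The dual appears because $\GG_m$ and $\ZZ$ are the coefficients, and $R\cal{H}om(-,\ZZ)=(-)^\vee$ on the torsion-free part after $\widehat\otimes\QQ_l$.

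Then comes the $l$-adic computation: apply $\nu^\ast(-)\widehat\otimes\QQ_{l_x}$. The functor kills torsion and turns each piece into a $\QQ_{l_x}$-representation of $G_x$. Using $\ZZ\widehat\otimes\QQ_l=\QQ_l$ and $(K_y^{sh})^\times\widehat\otimes\QQ_l=\QQ_l(1)$ (the Tate twist, from $\mu_{l^\infty}$), one computes that the three contributions to $\det(I-N(x)^{-s}\varphi\mid-)$ are: from the $(K_y^{sh})^\times$-part paired against $F_\eta$, a factor $Q_y((F_\eta^\vee)^{I_y},s+1)$ — the shift $s\mapsto s+1$ being exactly the Tate twist $\QQ_l(1)$ which multiplies Frobenius eigenvalues by $N(y)=N(x)$ (since $y$ lies over $x$ in a finite morphism, $N(y)$ is a power of $N(x)$... actually for the normalization $\kappa(y)\supseteq\kappa(x)$ so $N(y)=N(x)^{[\kappa(y):\kappa(x)]}$, but after $\ind_{G_y}^{G_x}$ the local factor is naturally indexed by $y$ with $N(y)$, and one checks the bookkeeping gives $Q_y(\ldots,s+1)$ with $Q_y$ as defined); from the $\bigoplus\ZZ$-part paired against $F_\eta$, a factor $Q_y((F_\eta^\vee)^{I_y},s)^{-1}$ in the denominator; and from the $F_x$-part, $Q_x(F_x^\vee,s)$. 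Assembling with the correct signs from the two-term complex gives precisely
\[
L_x(F^D,s)=\frac{\prod_{\pi(y)=x}Q_y((F_\eta^\vee)^{I_y},s+1)\,Q_x(F_x^\vee,s)}{\prod_{\pi(y)=x}Q_y((F_\eta^\vee)^{I_y},s)}.
\]
Independence of $l_x$ then follows since the expression on the right does not involve $l_x$ (the Frobenius eigenvalues on $(F_\eta^\vee)^{I_y}$ and $F_x^\vee$ are those of the integral, hence $l$-independent, representations).

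**Main obstacle.** The delicate point is the second step: correctly identifying $i_x^\ast R\cal{H}om_X(F,\GG_X[1])$ as a complex of $G_x$-modules, including getting the $I_y$-invariants $(F_\eta^\vee)^{I_y}$ to appear with the right variance and the right relation between the generic stalk $F_\eta$ (a $G_K$-module) and the strictly-henselian-local picture. One must be careful that $R\cal{H}om$ into $g_\ast\GG_m$ does not naively commute with the stalk functor $i_x^\ast$ unless one either resolves $F$ by nice sheaves or works with the pro-étale/henselian local model and uses that $\GG_m$ on $\Spec\cal{O}_x^{sh}$ is "acyclic enough." I would handle this by reducing via the localization sequence to the two extreme cases — $F$ supported at $x$ (where $F^D=i_\ast F_x^\vee$ and the computation is immediate, giving $Q_x(F_x^\vee,s)$), and $F=j_!G$ with $G$ locally constant on a regular open $U$ (where $F^D=Rj_\ast\ZZ^c_U$ and one computes $i_x^\ast Rj_\ast$ by the local-to-global / henselian base change, producing the $Q_y((F_\eta^\vee)^{I_y},s)$ and $Q_y((F_\eta^\vee)^{I_y},s+1)$ factors from the tame/unramified structure of $K_y^{sh}$) — and then combining multiplicatively using that both sides of the claimed identity are "multiplicative in short exact sequences" in the appropriate sense. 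The bookkeeping with the normalization (several $y$ over one $x$) and with the passage $\ind_{G_y}^{G_x}$ versus a product of local factors indexed by $y$ is the other place where care is needed, but it is routine once the derived-category identification is in hand.
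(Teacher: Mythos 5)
Your plan is correct and is essentially the paper's proof: the same identification of $i_x^\ast F^D$ over the henselian local ring as an extension of $R\mathcal{H}om_x(F_x,\ZZ)$ by pieces $\ind_{G_y}^{G_x}R\Hom_{\Spec(K_y^{sh})}(F_\eta,\GG_m[1])$ indexed by the points of the normalization above $x$, with the numerator $Q_y((F_\eta^\vee)^{I_y},s+1)$ coming from the Tate-twist part $\QQ_l(1)$ and the denominator from the shifted $\QQ_l$ (valuation) part, plus compatibility of local factors with induction. The only differences are that the paper establishes the stalk fiber sequence directly for an arbitrary sheaf (so no d\'evissage into $j_!$ and skyscraper cases is needed), and that the bulk of its work goes into the pro\'etale/condensed lemmas (Mittag--Leffler vanishing of $R^1\lim$, torsion bounds on the relevant $\Ext$ groups, finite-presentation arguments) that rigorously justify the ``$\widehat{\otimes}\,\QQ_{l}$ kills torsion and yields these $\QQ_l$-representations'' step you take for granted.
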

The proof is divided into the following several lemmas below. We mention first some important consequences:

\begin{cor}\label{L_function_pushforward}
	\hspace{2em}
	\begin{itemize}
		\item If $F$ is locally constant around the regular closed point $v$, then $I_v$ acts trivially on $F_\eta$, $F_\eta=F_v$ and we find that the local factor at $v$ of $F^D$ equals the local factor at $v$ of the Artin $L$-function of $F_{\eta}\otimes \QQ$ at $s+1$: \footnote{Notice that a rational representation of a finite group is self-dual because its character takes real values hence is its own conjugate}
		\[
		L_v(F^D,s)=L_v(F_{\eta}\otimes \QQ,s+1)
		\]
		\item Each local factor is well-defined, independently of the choice of a prime number $l$ coprime to the residual characteristic.
		\item The $L$-function of $F^D$ differs from the Artin $L$-function $L_K(F_{\eta}\otimes \QQ,s+1)$ by a finite number of factors and is thus well-defined
		\item $L_X(F^D,s)$ is meromorphic.
	\end{itemize}
\end{cor}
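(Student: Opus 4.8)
The heavy lifting is already done in \cref{explicit_expression_L_function}, which rewrites each local factor $L_x(F^D,s)$ purely in terms of the elementary factors $Q_y(-,-)$ attached to honest finite type abelian groups and rational representations, with no auxiliary prime appearing. The plan is therefore to first extract bullet (1) by a direct simplification of that formula at a good place, and then to deduce bullets (2)--(4) formally from (1) together with the classical theory of Artin $L$-functions (convergence in a right half-plane and Brauer meromorphic continuation).

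For bullet (1), I would argue as follows. Since $v$ is a regular closed point and $F$ is locally constant on some open $U\ni v$, the scheme $X$ is normal near $v$, so the normalization $\pi\colon Y\to X$ is an isomorphism over $v$ and $\pi^{-1}(v)=\{v\}$ with $\kappa(v)$ unchanged. A locally constant $\ZZ$-constructible sheaf on the normal scheme $U$ corresponds to a continuous action of the profinite group $\pi_1^{et}(U)$ on the finite type abelian group $F_\eta$ equipped with the \emph{discrete} topology; continuity forces the image to be finite, and since $v\in U$ the sheaf extends over $v$, i.e.\ $I_v$ acts trivially and $F_v=F_\eta^{I_v}=F_\eta$ as $G_v$-modules. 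Substituting $(F_\eta^\vee)^{I_v}=F_\eta^\vee=F_v^\vee$ into \cref{explicit_expression_L_function} collapses it to
\[
L_v(F^D,s)=\frac{Q_v(F_v^\vee,s+1)\,Q_v(F_v^\vee,s)}{Q_v(F_v^\vee,s)}=Q_v(F_v^\vee,s+1).
\]
It then remains to identify this with the local Artin factor $L_v(F_\eta\otimes\QQ,s+1)=Q_v\big((F_\eta\otimes\QQ)^{I_v},s+1\big)=Q_v(F_v\otimes\QQ,s+1)$. Since $F_v^\vee\otimes\QQ=(F_v\otimes\QQ)^\vee$ and $Q_v$ depends only on the rationalization, this reduces to $(F_v\otimes\QQ)^\vee\simeq F_v\otimes\QQ$ as $\varphi$-modules, which is the one small point that needs care: because the Galois action factors through a finite quotient, $\varphi$ acts on $F_v\otimes\QQ$ with finite order, its eigenvalues are roots of unity, and the characteristic polynomial lies in $\QQ[T]$, so the multiset of eigenvalues is stable under $\zeta\mapsto\zeta^{-1}$ (equivalently, a rational representation of a finite group is self-dual). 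This gives $Q_v(F_v^\vee,-)=Q_v(F_v\otimes\QQ,-)$ and finishes (1).

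The remaining bullets are then essentially bookkeeping. For (2), the right-hand side of \cref{explicit_expression_L_function} contains no reference to the prime $l_x$, so each $L_x(F^D,s)$ is independent of that choice. For (3), I would use that $X$ has only finitely many singular points and that $F$ is locally constant on a dense open $V\subset X$, so outside a finite set of closed points bullet (1) identifies the Euler factor of $L_X(F^D,-)$ at $v$ with the Euler factor of $L_K(F_\eta\otimes\QQ,-)$ at the corresponding finite place evaluated at $s+1$; since all but finitely many finite places of $K$ arise from such closed points of $X$, the two Euler products agree away from a finite set, whence $L_X(F^D,s)$ equals $L_K(F_\eta\otimes\QQ,s+1)$ times a finite product of Euler factors and inverse Euler factors, each a rational function of the relevant $N(v)^{-s}$. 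As the Artin $L$-function converges in a right half-plane, so does $L_X(F^D,s)$, giving well-definedness; and as $L_K(F_\eta\otimes\QQ,s+1)$ extends meromorphically to $\CC$ by Brauer's theorem while the finite correction stays meromorphic, bullet (4) follows. The main (and only real) obstacle in this corollary is the self-duality identification in bullet (1); everything else is immediate from \cref{explicit_expression_L_function} and standard facts about Artin $L$-functions.
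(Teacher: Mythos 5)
Your proposal is correct and follows essentially the same route as the paper: all four bullets are read off from \cref{explicit_expression_L_function} exactly as intended, with the good-place simplification $(F_\eta^\vee)^{I_v}=F_v^\vee$ collapsing the local factor to $Q_v(F_v^\vee,s+1)$ and the rest being standard Artin $L$-function bookkeeping. Your eigenvalue/rational-characteristic-polynomial argument for self-duality is just a minor variant of the paper's real-character footnote.
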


Denote $T$ the torus over $K$ with character group $Y:=F_\eta/tor$. For any prime $l$, define the rational $l$-adic Tate module of $T$ as
\[
V_l(T):=(\lim T[l^n])\otimes \QQ
\]
It is a finite dimensional $l$-adic representation of $G_K$. We know that
\[
V_l(T) \simeq Y^\vee \otimes \QQ_l(1) = F_\eta^\vee \otimes \QQ_l(1)
\]
as $l$-adic representations. 

\begin{defi}[{\cite[§ 8]{Fontaine92}}]
	The $L$-function $L_K(T,s)$ of $T$ is the $L$-function of the $1$-motive $[0 \to T]$ over $K$, defined by the Euler product:
	\[
	L_K(T,s)=\prod_{x\in X_0}\det(I-N(x)^{-s}\varphi|V_{l_x}(T)^{I_x})^{-1}
	\]
	where $l_x$ is a prime number coprime to the residual characteristic at $x$. 
\end{defi}
By the above, the $L$-function of $T$ is also the Artin $L$-function at $s+1$ of $Y\otimes \QQ=F_\eta \otimes \QQ$, and it doesn't depend on the choice of the family $(l_x)$.
\begin{prop}
	Suppose that $v$ is a regular closed point. Let $M$ be a discrete $D_v$-module of finite type. There is a canonical $G_v$-equivariant isomorphism
	\[
	M^{I_v}\otimes \QQ \xrightarrow{\simeq} M_{I_v}\otimes \QQ
	\]
\end{prop}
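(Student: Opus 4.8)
The plan is to write down the obvious natural map and then reduce its invertibility after $\otimes\QQ$ to elementary representation theory of a finite group. First I would note that, since $M$ is a discrete $D_v$-module whose underlying abelian group is finitely generated, the inertia group $I_v$ acts on $M$ through a \emph{finite} quotient: the intersection of the (open) stabilizers of a finite set of generators of $M$ is an open, hence finite-index, subgroup of $I_v$ acting trivially on $M$, and we may replace it by its normal core. Write $\bar I_v$ for the resulting finite quotient of $I_v$ through which the action factors, so that $M^{I_v}=M^{\bar I_v}$ and $M_{I_v}=M_{\bar I_v}$.

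Next I would take as the asserted map the canonical composite
\[
\iota\colon M^{I_v}\hookrightarrow M \twoheadrightarrow M_{I_v}.
\]
Both $M^{I_v}$ and $M_{I_v}$ are $D_v$-modules on which $I_v$ acts trivially, hence $G_v=D_v/I_v$-modules, and $\iota$ is $D_v$-linear, so it is automatically $G_v$-equivariant; it is visibly natural in $M$, so no choices enter. It remains to show that $\iota\otimes\QQ$ is an isomorphism. Since $-\otimes\QQ$ is exact it commutes with the kernel and cokernel computing invariants and coinvariants, so $\iota\otimes\QQ$ is identified with the natural map $V^{\bar I_v}\to V_{\bar I_v}$ for the finite group $\bar I_v$ acting on the finite-dimensional $\QQ$-vector space $V:=M\otimes\QQ$.

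Finally, I would invoke Maschke's theorem: $\QQ[\bar I_v]$ is semisimple, so $V=V^{\bar I_v}\oplus W$ with $W$ a sum of nontrivial simple $\bar I_v$-modules. On $V^{\bar I_v}$ the map $V^{\bar I_v}\to V_{\bar I_v}$ is the identity onto its image, while for a nontrivial simple module $S$ the submodule generated by $\{gs-s\}$ is a nonzero, hence the whole, submodule, so $S_{\bar I_v}=0$ and likewise $S^{\bar I_v}=0$; thus $W^{\bar I_v}=W_{\bar I_v}=0$ and the natural map $V^{\bar I_v}\to V_{\bar I_v}$ is an isomorphism. This yields the desired canonical $G_v$-equivariant isomorphism $M^{I_v}\otimes\QQ\xrightarrow{\simeq}M_{I_v}\otimes\QQ$. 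There is no real obstacle; the only points deserving care are (i) verifying that the action of $I_v$ genuinely factors through a finite quotient, so that Maschke applies, and (ii) recording $G_v$-equivariance and naturality, since the statement is used functorially in the proof of \cref{explicit_expression_L_function}. The regularity of $v$ is used only to guarantee that the local ring at $v$ is a discrete valuation ring, so that the decomposition and inertia subgroups $I_v\subset D_v$ are unambiguously defined.
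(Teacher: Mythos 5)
Your proof is correct, and the heart of it — the canonical composite $M^{I_v}\hookrightarrow M\twoheadrightarrow M_{I_v}$, which is $D_v$-linear and hence $G_v$-equivariant, together with the reduction to a finite quotient through which the action factors — is exactly the paper's setup. The only real difference is how rational invertibility is established: the paper observes that if $H$ denotes the (finite) image of $I_v$ and $N=\sum_{h\in H}h$ is the norm, then the composite $f\colon M^H\to M\to M_H$ satisfies $fN=[H]\,\mathrm{Id}$ and $Nf=[H]\,\mathrm{Id}$, so $[H]^{-1}N$ is an explicit inverse after $\otimes\,\QQ$; you instead invoke Maschke's theorem to split $M\otimes\QQ$ as $V^{\bar I_v}\oplus W$ with $W$ a sum of nontrivial simples on which both invariants and coinvariants vanish. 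Both arguments are standard and complete; the norm argument is slightly more economical in that it produces the inverse explicitly (and in fact shows the map is an isomorphism already after inverting $[H]$), whereas the semisimplicity route needs characteristic zero but requires no computation. Your preliminary verification that the action of $I_v$ factors through a finite quotient, and the remark that $\otimes\,\QQ$ commutes with the (finite) kernel and cokernel computing invariants and coinvariants, are correct and fill in details the paper leaves implicit.
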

\begin{proof}
	The action of $D_v$ on $M$ factors through a finite quotient $G$; denote $H$ the image of $I_v$ in $G$. Then the canonical composite map
	\[
	f:M^H \to M \to M_H
	\]
	is $G$-equivariant hence also $G/H$-equivariant, and if $N$ denotes the norm $\sum_{h\in H}h$ we have $fN=[H]\mathrm{Id}$ and $Nf=[H]\mathrm{Id}$. The result follows.
\end{proof}

\begin{cor}
	\begin{itemize}
		\item[]
		\item Suppose that the canonical map $F_v\to F_\eta^{I_v}$ is an isomorphism for a regular closed point $v$. Then the local factor at $v$ of $F^D$ is $L_v(F_\eta\otimes \QQ,s+1)=L_v(T,s)$.
		\item Suppose that $X$ is regular. We have $L_X((g_\ast Y)^D,s)=L_K(T,s)=L_K(Y\otimes \QQ,s+1)$.
		\item Suppose that $X$ is regular and denote $\cal{T}^0$ the connected (lft) Néron model of $T$ on $X$, seen as an étale sheaf on $X$. We can define an $L$-function $L_X(\cal{T}^0,s)$ for $\cal{T}^0$ with local factor at $i:x\to X$ given by $Q_x((\nu^\ast i^\ast \cal{T}^0)\widehat{\otimes} \QQ_l,s)$. Then
		\[
		L_X(\cal{T}^0,s)=L_K(T,s)^{-1}
		\]
	\end{itemize}
\end{cor}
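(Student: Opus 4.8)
The plan is to deduce the formula from the second bullet, $L_X((g_\ast Y)^D,s)=L_K(T,s)$, together with the Geisser--Suzuki-type description of the connected Néron model and a single cohomological shift. First I would use that, $X$ being regular, $\ZZ^c_X=\GG_m[1]$, and that (the number field analogue of the structural identity $\cal{T}^0=R\cal{H}om(\widetilde{\cal{Y}},\GG_m)$ of \cite{Geisser20}, cf. \cref{par:comparison_GS}) one has $\cal{T}^0=R\cal{H}om_X(\tau^{\leq 1}Rg_\ast Y,\GG_m)$ in $D(X_{et})$. Writing $G:=\tau^{\leq 1}Rg_\ast Y$, this says $\cal{T}^0=G^D[-1]$, because $G^D=R\cal{H}om_X(G,\GG_m[1])=R\cal{H}om_X(G,\GG_m)[1]$.

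Next I would track the effect of the shift on the Euler product. The functors $\nu^\ast$, $i_x^\ast$ and $(-)\,\widehat{\otimes}\,\QQ_{l_x}$ are exact, hence commute with the shift, so $(\nu^\ast i_x^\ast\cal{T}^0)\,\widehat{\otimes}\,\QQ_{l_x}=\big((\nu^\ast i_x^\ast G^D)\,\widehat{\otimes}\,\QQ_{l_x}\big)[-1]$; and since $\det(I-N(x)^{-s}\varphi\,|\,C)$ is the alternating product of the determinants on the cohomology of $C$, shifting a perfect complex of $\QQ_{l_x}$-vector spaces by one degree inverts this determinant. Hence $L_x(\cal{T}^0,s)=L_x(G^D,s)^{-1}$ at each closed point, so $L_X(\cal{T}^0,s)=L_X(G^D,s)^{-1}$. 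This is where the inversion in the statement comes from: $\cal{T}^0$ is smooth with geometric sections that are $l$-divisible away from the residue characteristic, so $(\nu^\ast i_x^\ast\cal{T}^0)\,\widehat{\otimes}\,\QQ_{l_x}$ genuinely sits in degree $-1$; already for $T=\GG_m$ it equals $\QQ_{l_x}(1)[1]$.

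Then I would pass from $G^D$ to $(g_\ast Y)^D$. The truncation triangle $g_\ast Y\to\tau^{\leq 1}Rg_\ast Y\to(R^1g_\ast Y)[-1]\to$ gives, after the contravariant exact functor $(-)^D=R\cal{H}om_X(-,\ZZ^c_X)$, a distinguished triangle $(R^1g_\ast Y)^D[1]\to G^D\to(g_\ast Y)^D\to$. Since determinants of Frobenius, hence $L$-functions, are multiplicative along distinguished triangles, $L_X(G^D,s)=L_X((g_\ast Y)^D,s)\cdot L_X((R^1g_\ast Y)^D,s)^{-1}$. Now $R^1g_\ast Y$ is a constructible sheaf supported on the finitely many closed points where $Y$ is ramified, with finite stalks $H^1(I_x,Y)$: the image of $I_x$ in the discrete group $\mathrm{Aut}(Y)$ is finite, so inflation--restriction reduces $H^1(I_x,Y)$ to the cohomology of a finite group on a finitely generated module, the contribution of the kernel vanishing since it is profinite and $Y$ is torsion-free. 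Hence $(R^1g_\ast Y)^D$ is a bounded complex with torsion cohomology groups, which $(-)\,\widehat{\otimes}\,\QQ_{l_x}$ annihilates, so $L_X((R^1g_\ast Y)^D,s)=1$. Therefore $L_X(G^D,s)=L_X((g_\ast Y)^D,s)=L_K(T,s)$ by the second bullet, and combining with the previous step, $L_X(\cal{T}^0,s)=L_K(T,s)^{-1}$.

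I expect the only genuine difficulty to be the input of the first step, the number field analogue of $\cal{T}^0=R\cal{H}om(\widetilde{\cal{Y}},\GG_m)$, which must be established or cited; granting it, the remainder is formal manipulation with determinants and shifts already prepared by \cref{explicit_expression_L_function} and the earlier bullets. An alternative that sidesteps this input is a direct local computation: for $l=l_x$ coprime to the residue characteristic, $\cal{T}^0(\cal{O}_x^{sh})$ is $l$-divisible (smoothness over a strictly henselian base, together with vanishing of higher étale cohomology of a point), so $(\nu^\ast i_x^\ast\cal{T}^0)\,\widehat{\otimes}\,\QQ_l=V_l(\cal{T}^0(\cal{O}_x^{sh}))[1]$, and the Tate module of the Néron model of a torus is the inertia invariants $(T_lT)^{I_x}$; this yields $L_x(\cal{T}^0,s)=\det(I-N(x)^{-s}\varphi\,|\,(V_lT)^{I_x})=L_x(T,s)^{-1}$ directly, at the cost of quoting this Néron-model computation.
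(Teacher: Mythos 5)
Your argument for the third bullet is correct and essentially the paper's: the same structural identity $\cal{T}^0=R\cal{H}om_X(\tau^{\leq 1}Rg_\ast Y,\GG_m)$ (which the paper cites as \cite[2.2]{Geisser21}), the same observation that $R^1g_\ast Y$ is a skyscraper with finite stalks, and the same degree shift inverting the local determinants, combined with the second bullet. Two small remarks. First, ``bounded complex with torsion cohomology groups'' is not by itself enough for $(-)\widehat{\otimes}\,\QQ_{l}$ to vanish (e.g.\ $\QQ/\ZZ\,\widehat{\otimes}\,\QQ_l\simeq \QQ_l[1]\neq 0$); what you actually need, and what your finiteness argument for the stalks $H^1(I_x,Y)$ does provide, is that $(R^1g_\ast Y)^D$ is killed by a fixed integer $N$ — this is exactly how the paper phrases the step. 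Second, the statement comprises all three bullets, and you take the first two as given: in the paper they follow in one line from the preceding proposition giving $M^{I_v}\otimes\QQ\simeq M_{I_v}\otimes\QQ$, hence $(F_\eta^\vee)^{I_v}\otimes\QQ\simeq F_v^\vee\otimes\QQ$, which collapses the local factor of \cref{explicit_expression_L_function} to $L_v(F_\eta\otimes\QQ,s+1)$; you should supply this (or an equivalent argument) rather than treat those bullets as prior results. Your alternative local computation via the Tate module of the Néron model would also work, but at the cost of importing a nontrivial Néron-model fact the paper avoids.
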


\begin{proof}
	We have a canonical $G_v$-equivariant isomorphism: 
	\[
	(F_\eta^\vee)^{I_v}\otimes \QQ=\Hom_{I_v}(F_\eta\otimes \QQ, \QQ)=\Hom_{Ab}((F_\eta)_{I_v}\otimes \QQ, \QQ) \simeq \Hom_{Ab}(F_\eta^{I_v}\otimes \QQ, \QQ)\simeq F_v^\vee \otimes \QQ
	\]
	hence the local factor at $v$ of the $L$-function of $F^D$ simplifies to $L_v(F_\eta\otimes \QQ,s+1)$.
	
	The second point is an immediate consequence of the first. Let us prove the last point. We have \cite[2.2]{Geisser21}
	\[
	\cal{T}^0=R\cal{H}om_X(\tau^{\leq 1}Rg_\ast Y,\GG_m)
	\]
	as étale sheaves. Since $R^1 g_\ast Y$ is skyscraper with finite stalks (because $Y$ is free as an abelian group), $R^1g_\ast Y$ is killed by an integer $N$ and thus
	\[
	(\nu^\ast i^\ast \cal{T}^0)\widehat{\otimes} \QQ_l= (\nu^\ast i^\ast (g_\ast Y)^D)\widehat{\otimes} \QQ_l[-1].
	\]
	The result follows.
\end{proof}

We now prove \cref{explicit_expression_L_function}.
\begin{lem}
	Let $F$ be an étale sheaf on $X$, and let $i:x \to X$ be a closed point. There is a fiber sequence
	\[
	R\cal{H}om_x(F_x,\ZZ) \to i^\ast F^D \to  \prod_{\pi(y)=x} \mathrm{ind}^{G_x}_{G_y} R\Hom_{\Spec(K_y^{sh})}(F_\eta,\GG_m[1])
	\]
\end{lem}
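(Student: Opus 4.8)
The plan is to compute the stalk $i^\ast F^D$ by decomposing the dualizing complex $\ZZ^c_X \simeq \GG_X[1]$ along the normalization map. Recall that $\GG_X = [g_\ast \GG_m \to \bigoplus_{x} i_{x,\ast}\ZZ]$, so the natural filtration of $\GG_X$ by the subcomplex $i_{x,\ast}\ZZ[-1]$ (in degree $1$) with quotient $g_\ast\GG_m$ (in degree $0$) induces, after applying $R\cal{H}om_X(F,-)$ and pulling back along $i^\ast$, a fiber sequence. The "skyscraper" part $i^\ast R\cal{H}om_X(F, (\bigoplus_x i_{x,\ast}\ZZ)[0])$ should give $R\cal{H}om_x(F_x,\ZZ)$ because $i^\ast i_{x,\ast} = \mathrm{id}$ on the point $x$ and the other summands pull back to zero; here one uses the proper (or finite) base change isomorphism for the closed immersion. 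The "generic" part $i^\ast R\cal{H}om_X(F, g_\ast\GG_m[1])$ is the term that must be identified with $\prod_{\pi(y)=x}\mathrm{ind}^{G_x}_{G_y}R\Hom_{\Spec(K_y^{sh})}(F_\eta,\GG_m[1])$.

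For that identification, the first step is to reduce to the strict henselization: computing the stalk of $R\cal{H}om_X(F,g_\ast\GG_m)$ at the geometric point $\bar x$ amounts to computing $R\Hom_{\Spec(\cal{O}_x^{sh})}(F|_{\Spec(\cal{O}_x^{sh})}, \GG_m)$ on the strictly henselian local scheme, and then remembering the residual $G_x$-action (which is why the answer is phrased $G_x$-equivariantly rather than as a bare group). The second step is to use that $\Spec(\cal{O}_x^{sh})$ has a generic point $\Spec(K_x^{sh})$ and to analyze $g_\ast\GG_m$ on it: since $\cal{O}_x^{sh}$ is a normal-ish (reduced, $1$-dimensional) strictly henselian local ring but possibly non-normal, its normalization is $\prod_{\pi(y)=x}\cal{O}_y^{sh}$, a product of strictly henselian DVRs indexed by the points $y$ of $Y$ over $x$. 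On each such DVR $\GG_m$ has no higher cohomology and the units sit in the standard exact sequence; the key input is that $R g_\ast \GG_m$ over the strictly henselian trait $\Spec(\cal{O}_y^{sh})$ is concentrated in degree $0$, so that $R\cal{H}om$ of $F$ against $g_\ast\GG_m$ over $\Spec(\cal{O}_x^{sh})$ reduces, after base change along the normalization, to $\prod_y R\Hom_{G_y}(F_\eta, \GG_m[\ldots])$ computed over $\Spec(K_y^{sh})$, where $F_\eta$ appears because the pullback of $F$ to each generic point $\Spec(K_y^h)$ is $F_\eta$ with its $G_y$-action. Tracking the $G_x$-action through the normalization — $G_x$ permutes the points $y$ over $x$ transitively and $G_y$ is the stabilizer of $y$ — converts the product $\prod_y$ into an induction $\prod_{\pi(y)=x}\mathrm{ind}^{G_x}_{G_y}$, exactly as in the description of $\pi_\ast$ for a finite morphism recalled in the Notations.

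The main obstacle will be handling the non-normality of $X$ carefully: one must justify that $g_\ast\GG_m$ on $\Spec(\cal{O}_x^{sh})$ is correctly computed via the normalization $\coprod_y \Spec(\cal{O}_y^{sh}) \to \Spec(\cal{O}_x^{sh})$, i.e.\ that pushing forward $\GG_m$ from the generic point factors appropriately through the normalization and that no correction terms appear in the relevant degrees once one works with the full complex $\GG_X$ rather than just $g_\ast\GG_m$ — this is precisely why the dualizing complex $\ZZ^c_X$ (equivalently $\GG_X[1]$), and not $\GG_m[1]$, is the right object, since the $\bigoplus i_{x,\ast}\ZZ$ term absorbs the discrepancy between $\GG_m$ on $X$ and on its normalization. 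A secondary technical point is the compatibility of all the base change isomorphisms (finite base change along $i_x$, and the identification of pullbacks of $F$ to the various generic points with $F_\eta$) with the $G_x$-equivariant structure; but this is routine bookkeeping once the shape of the fiber sequence is in place. I would set up the two-term filtration of $\ZZ^c_X$, take the induced fiber sequence, identify the skyscraper end by base change, and then devote the bulk of the argument to the strictly-henselian-plus-normalization computation of the generic end.
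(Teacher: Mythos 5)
Your overall strategy is the same as the paper's: split $\GG_X$ (equivalently $\ZZ^c_X[-1]$) into its skyscraper part and $g_\ast\GG_m$, identify the skyscraper contribution by adjunction along the closed immersion, and compute the generic contribution by passing to the (strictly) henselian local ring at $x$, whose generic fiber decomposes via the normalization into copies of $\Spec(K_y^{sh})$, using that $\GG_m$ has no higher cohomology over $K_y^{sh}$ so that $g_\ast\GG_m=Rg_\ast\GG_m$ there. The paper implements this by first pulling back along the ind-étale map $\Spec(\cal{O}_x^h)\to X$ and invoking the appendix computation of $\cal{O}_x^{sh}\otimes_A K$ (\cref{generic_point_strict_henselization}); your version is the same argument in only slightly different packaging.

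One step of your Galois bookkeeping is wrong as stated and should be fixed: $G_x$ does \emph{not} permute the points $y$ of the normalization above $x$ transitively (if it did, the answer would be a single induction, not a product of inductions). The correct mechanism, which is exactly what the appendix result encodes, is that for each \emph{fixed} $y$ the generic fiber of $\Spec(\cal{O}_x^{sh})$ contains one copy of $\Spec(K_y^{sh})$ for every $\kappa(x)$-embedding $\kappa(y)\hookrightarrow\kappa(x)^{sep}$; the group $G_x$ acts transitively on this set of embeddings with stabilizer $G_y$, which produces the factor $\mathrm{ind}^{G_x}_{G_y}R\Hom_{\Spec(K_y^{sh})}(F_\eta,\GG_m[1])$, while the index $y$ itself survives as a genuine product $\prod_{\pi(y)=x}$. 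With that correction (and the routine justification, also used by the paper, that $R\cal{H}om$ commutes with the ind-étale pullback to the henselian local scheme, which is where the other skyscraper summands die and the stalk computation becomes legitimate), your proof matches the paper's.
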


\begin{rmk}
	We abused notation by identifying what should be the right term with its underlying complex of abelian groups.
\end{rmk}

\begin{proof}
	Denote $X(x):=\Spec(\cal{O}_x^{h})$, $\eta(x):=\Spec(\cal{O}_x^{h})\times_X \Spec(K)$ and $f:X(x)\to X$, $g:\eta \to X$ the canonical morphisms. We consider the cartesian diagram
	\[\begin{tikzcd}[ampersand replacement=\&]
	{\eta(x)} \& {X(x)} \\
	\eta \& X
	\arrow["g", from=2-1, to=2-2]
	\arrow["f", from=1-2, to=2-2]
	\arrow["{f'}"', from=1-1, to=2-1]
	\arrow["{g'}", from=1-1, to=1-2]
	\end{tikzcd}\]
	Denote again $i:x\to X(x)$ the closed immersion. Since $f$ is (ind-)étale, we have 
	\[
	i^\ast F^D=i^\ast f^\ast R\cal{H}om_X(F,\GG_X[1])=i^\ast R\cal{H}om_{X(x)}(F,\GG_{X(x)}[1])
	\]
	By \cref{generic_point_strict_henselization}, we have
	\[
	\eta(x)=\coprod_{{\pi(y)=x}}\coprod_{\Gal(\kappa(y)/\kappa(x))}\Spec(K_y^{sh})
	\]
	and thus
	\[
	(Rg'_\ast \GG_m)_{\bar{x}}=R\Gamma(\eta(x),\GG_m)=\prod_{\pi(y)=x} \mathrm{ind}^{G_x}_{G_y} R\Gamma(K_y^{sh},\GG_m)=\prod_{\pi(y)=x} \mathrm{ind}^{G_x}_{G_y} (K_y^{sh})^{\times}
	\]
	hence $g'_\ast \GG_m=Rg'_\ast \GG_m$. Moreover there is by definition a fiber sequence
	\[
	\GG_{X(x)} \to g'_\ast \GG_m \to i_\ast \ZZ
	\]
	so we obtain a fiber sequence
	\[
	R\cal{H}om_x(F_x,\ZZ) \to i^\ast F^D \to i^\ast Rg'_\ast R\cal{H}om_{\eta(x)}(g'^\ast F,\GG_m[1])
	\]
	Using again \cref{generic_point_strict_henselization}, we see that the right term is the following complex of abelian groups with its natural $G_x$-module structure:
	\begin{align*}
		(Rg'_\ast R\cal{H}om_{\eta(x)}(g'^\ast F,\GG_m[1]))_{\bar{x}} = \prod_{\pi(y)=x} \mathrm{ind}^{G_x}_{G_y} R\Hom_{\Spec(K_y^{sh})}(F_\eta,\GG_m[1])
	\end{align*}
\end{proof}

From now on, we suppose that $F$ is a $\ZZ$-constructible sheaf on $X$.
\begin{lem}
	We have
	\[
	(\nu^\ast R\cal{H}om_x(F_x,\ZZ))\widehat{\otimes} \QQ_l = (\nu^\ast \cal{H}om_x(F_x,\ZZ))\otimes \QQ_l
	\]
\end{lem}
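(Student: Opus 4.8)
\emph{The plan.} The idea is to compute the cohomology sheaves of $R\cal{H}om_x(F_x,\ZZ)$ and to observe that the completed tensor product with $\QQ_l$ only retains the degree-zero part. First I would split off torsion: the short exact sequence $0\to (F_x)_{tor}\to F_x\to F_x/tor\to 0$ of discrete $G_x$-modules yields, after applying $R\cal{H}om_x(-,\ZZ)$, a distinguished triangle. Since $F_x/tor$ is finitely presented with finitely generated free underlying abelian group, the formation of $R\cal{H}om_x(-,\ZZ)$ commutes with the pullback along a geometric point $\bar x\to x$, and $\Ext^i_{\ZZ}((F_x/tor)_{\bar x},\ZZ)=0$ for $i>0$; hence $R\cal{H}om_x(F_x/tor,\ZZ)=\cal{H}om_x(F_x,\ZZ)[0]$ is a single sheaf with finitely generated free underlying group. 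Likewise $\cal{H}om_x((F_x)_{tor},\ZZ)=0$ and $\cal{E}xt^1_x((F_x)_{tor},\ZZ)$ is a finite sheaf, the remaining $\cal{E}xt^i$ vanishing. So $R\cal{H}om_x(F_x,\ZZ)$ lies in a distinguished triangle
\[
\cal{H}om_x(F_x,\ZZ)[0]\to R\cal{H}om_x(F_x,\ZZ)\to \cal{E}xt^1_x((F_x)_{tor},\ZZ)[-1]\xrightarrow{+1}
\]
with left term finitely generated free and right term finite.

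Next, applying the exact functor $\nu^\ast$ and the triangulated functor $-\widehat{\otimes}\QQ_l=(R\lim_n(-\otimes^L\ZZ/l^n))\otimes\QQ$, the statement reduces to: (i) $(\nu^\ast\cal{E}xt^1_x((F_x)_{tor},\ZZ))\widehat{\otimes}\QQ_l=0$, and (ii) $(\nu^\ast\cal{H}om_x(F_x,\ZZ))\widehat{\otimes}\QQ_l=(\nu^\ast\cal{H}om_x(F_x,\ZZ))\otimes\QQ_l$. Both are assertions about derived limits on the proétale site of towers of the shape $\nu^\ast(-)$, which I would compute stalkwise using the good behaviour of $R\lim$ on $X_{proet}$ \cite{Bhatt15} (in particular the vanishing of $R^1\lim$ for Mittag–Leffler towers). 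For (ii), the tower $(\nu^\ast(\cal{H}om_x(F_x,\ZZ)/l^n))_n$ has surjective transition maps, hence its derived limit is the honest limit, the $l$-adic sheaf with stalk $\cal{H}om_x(F_x,\ZZ)_{\bar x}\otimes\ZZ_l$; inverting $l$ yields $\nu^\ast\cal{H}om_x(F_x,\ZZ)\otimes\QQ_l$. For (i), writing $N:=\cal{E}xt^1_x((F_x)_{tor},\ZZ)$, the tower $\nu^\ast N\otimes^L\ZZ/l^n=\nu^\ast(N\otimes^L\ZZ/l^n)$ is $\nu^\ast$ of a bounded complex of finite sheaves whose cohomology has order bounded by $[N]$ uniformly in $n$, so it is Mittag–Leffler and its derived limit is again a bounded complex of torsion sheaves; tensoring with $\QQ$ kills it. Alternatively, since $\nu_\ast$ is a right adjoint it commutes with $R\lim$ and $\nu_\ast\nu^\ast\simeq\mathrm{id}$, so both derived-limit computations may be transported to the étale site, where they are elementary.

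The only non-formal point — and where I expect the main difficulty to lie — is steps (i)–(ii): namely verifying that $\widehat{\otimes}\QQ_l$ is insensitive to the finite $\cal{E}xt^1$-contribution and computes the naive tensor product on the finitely generated free part, which requires some care about the interaction of $\nu^\ast$ with derived limits on the proétale site. The remainder — the stalkwise vanishing of $\Ext^{>0}_{\ZZ}$ of a finitely generated free module and the formal triangle manipulation — is routine.
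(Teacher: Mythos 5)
Your proposal is correct, but it takes a genuinely different route from the paper. You compute the \emph{étale-level} $R\cal{H}om_x(F_x,\ZZ)$ first, via the torsion sequence: it is an extension of the finite sheaf $\cal{E}xt^1_x((F_x)_{tor},\ZZ)[-1]$ by the sheaf $\cal{H}om_x(F_x,\ZZ)$ with finitely generated free underlying group, so after applying the exact functor $\nu^\ast$ and the triangulated functor $\widehat{\otimes}\QQ_l$ everything reduces to the behaviour of $\widehat{\otimes}\QQ_l$ on constant-type towers, which is handled by exactly the Mittag--Leffler lemmas the paper proves in its appendix (\cref{mittag_leffler,mittag_leffler_discrete}). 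The paper instead pushes the problem into the proétale/condensed world at the start: it rewrites the left side as $R\cal{H}om_{x_{proet}}(\nu^\ast F_x,\ZZ_l)$, kills $\cal{E}xt^i$ for $i\geq 2$ by the Clausen--Scholze vanishing of condensed Ext between locally compact groups, shows $\cal{E}xt^1$ is bounded torsion via an $R\lim$ exact sequence, and finally identifies the $\cal{H}om$ term with the naive tensor product by a finite-presentation argument over $\ZZ[\underline{G}]$. Your route buys elementarity — no condensed Ext machinery and no group-ring reduction — at the mild cost of justifying the classical input that $R\cal{H}om_x(-,\ZZ)$ at the point has only the two expected cohomology sheaves (e.g.\ via a two-term resolution by modules that are finite free as abelian groups, or the exactness of $\cal{H}om(P,-)$ for such $P$); the paper's route stays uniform with the harder generic-fibre lemma that follows it, where an analogous bounded-torsion and finite-presentation analysis is unavoidable. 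One caveat: your parenthetical alternative "transport the $R\lim$ computations to the étale site via $\nu_\ast$" is not a valid shortcut as stated — knowing $\nu_\ast$ of the derived limit does not recover the proétale object unless you already know it lies in the essential image of $\nu^\ast$, which is essentially the point at issue — but since your main argument computes the limits directly on the proétale site, this aside is inessential.
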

\begin{proof}
	We have
	\[
	\nu^\ast R\cal{H}om_x(F_x,\ZZ)=R\cal{H}om_{x_{proet}}(\nu^\ast F_x,\ZZ)
	\]
	The functor $-\otimes^L \ZZ/l^n\ZZ$ is the cofiber of the map $(-) \xrightarrow{l^n} (-)$ so it commutes with exact functors, and we find
	\[
	R\cal{H}om_{x_{proet}}(\nu^\ast F_x,\ZZ) \widehat{\otimes} \ZZ_l = R\cal{H}om_{x_{proet}}(\nu^\ast F_x, R\lim \ZZ/l^n\ZZ)
	\]
	The constant sheaf functor is exact so the transition maps $\ZZ/l^{n+1} \ZZ \to \ZZ/l^n\ZZ$ are surjective, hence we have $R\lim \ZZ/l^n\ZZ=\lim \ZZ/l^n\ZZ=:\ZZ_l$ \cite[3.1.10]{Bhatt15}.
	
	Under the identification $x_{proet}\simeq \underline{G_x}\text{-}\mathrm{Cond}(Set)$, the sheaf $\cal{E}xt^i_{x_{proet}}(\nu^\ast F_x, \ZZ_l)$ is  the condensed abelian group $\cal{E}xt^i_{\mathrm{Cond(Ab)}}(\nu^\ast F_x,\ZZ_l)$ with its natural $\underline{G_x}$-action; since both $\nu^\ast F_x$ and $\ZZ_l$ are locally compact abelian groups\footnote{The former is even discrete}, we find $\cal{E}xt^i_{x_{proet}}(\nu^\ast F_x, \ZZ_l)=0$ for $i\geq 2$ \cite[Remark after 4.9]{Clausen19}. Let us show that $\cal{E}xt^1_{x_{proet}}(\nu^\ast F_x, \ZZ_l)$ is killed by some integer $N$. We have $R\cal{H}om_{x_{proet}}(\nu^\ast F_x,\ZZ_l)=R\lim R\cal{H}om_{x_{proet}}(\nu^\ast F_x,\nu^\ast\ZZ/l^n\ZZ)=R\lim \nu^\ast R\cal{H}om_{x}(F_x,\ZZ/l^n\ZZ)$ so there is a short exact sequence
	\[
	0 \to R^1\lim \nu^\ast\cal{H}om_x(F_x,\ZZ/l^n\ZZ) \to \cal{E}xt^1_{x_{proet}}(\nu^\ast F_x, \ZZ_l) \to \lim \nu^\ast\cal{E}xt^1_x(F_x,\ZZ/l^n\ZZ) \to 0
	\]
	The underlying abelian group of the $G_x$-module $\cal{H}om_x(F_x,\ZZ/l^n\ZZ)$ is $\Hom_{Ab}(F_x,\ZZ/l^n\ZZ)$, so the transition maps $\cal{H}om_x(F_x,\ZZ/l^{n+1}\ZZ)\to \cal{H}om_x(F_x,\ZZ/l^n\ZZ)$ are eventually surjective: if $F_x$ is torsion-free this is clear, and if $F_x$ is torsion then the groups are constant for $n$ greater than the $l$-adic valuation of the order of $F_x$. Thus the left term is zero by \cref{mittag_leffler}. Let $k$ be the $l$-adic valuation of $(F_x)_{tor}$; then $\cal{E}xt^1_x(F_x,\ZZ/l^n\ZZ) = \cal{E}xt^1_x((F_x)_{tor},\ZZ/l^n\ZZ)$ is killed by $N=l^k$, and therefore so is $\cal{E}xt^1_{x_{proet}}(\nu^\ast F_x, \ZZ_l)$.
	
	From the previous point, it follows by tensoring with $\QQ$ that
	\[
	R\cal{H}om_{x_{proet}}(\nu^\ast F_x,\ZZ) \widehat{\otimes} \QQ_l = \cal{H}om_{x_{proet}}(\nu^\ast F_x,\ZZ_l)\otimes \QQ
	\]
	
	There are canonical maps
	\[
	 \cal{H}om_{x_{proet}}(\nu^\ast F_x,\ZZ_l)\otimes \QQ \to \cal{H}om_{x_{proet}}(\nu^\ast F_x,\QQ_l) \leftarrow \cal{H}om_{x_{proet}}(\nu^\ast F_x,\ZZ)\otimes \QQ_l
	\]
	We have $\mathrm{Ab}(\mathrm{Sh}(x_{proet})=\mathrm{Ab}(\mathrm{Sh}((BG_x)_{proet})=\underline{G_x}-\mathrm{Cond(Ab)}=\ZZ[\underline{G_x}]-\mathrm{Mod}$, the category of modules under the condensed ring $\ZZ[\underline{G_x}]$ \cref{bg_proet}. To conclude it suffices to show that both maps are isomorphisms. Let $G$ be a finite quotient of $G_x$. We check it first for $F_x=\ZZ[G]$. This is easily done using next lemma.
	
	By a standard argument it thus suffices to show that $\nu^\ast F_x$ is (globally) of finite presentation as a $\ZZ[\underline{G}]$-module for some finite quotient $G$ of $G_x$; this is clear as $F_x$ is discrete and of finite type as an abelian group.
\end{proof}

\begin{lem}
	Let $G$ be a finite quotient of $G_x$. Then $\ZZ[\underline{G}]$ is a left $\underline{G_x}$-module such that 
	\[
	\cal{H}om_{x_{proet}}(\ZZ[\underline{G}],-)=\ZZ[\underline{G}]\otimes -
	\]
\end{lem}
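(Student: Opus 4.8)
The plan is to unwind the condensed-mathematics bookkeeping and observe that $\ZZ[\underline{G}]$, for $G$ a finite (hence discrete) group, is a dualizable/compact object in the category of $\ZZ[\underline{G_x}]$-modules, so that internal $\sheafhom$ out of it is computed by tensoring with its dual, which is again $\ZZ[\underline{G}]$ because a finite group is self-dual as a finite free module with its group-algebra structure. Concretely: the pro-étale topos of $x=\Spec(\kappa(x))$ is $\Sh(x_{proet})$ with abelian objects $\ZZ[\underline{G_x}]\text{-}\mathrm{Mod}$ in condensed abelian groups (as recalled just before the statement, citing \cref{bg_proet}). A finite quotient $G$ of $G_x$ gives a continuous surjection $\underline{G_x}\twoheadrightarrow \underline{G}$; pulling back along it, $\ZZ[\underline{G}]$ becomes a $\ZZ[\underline{G_x}]$-module whose underlying condensed abelian group is the \emph{finite} free abelian group $\ZZ^{\#G}$ with discrete (hence trivially condensed) topology.

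First I would record that for any condensed abelian group $M$ with a $\underline{G_x}$-action, $\sheafhom_{x_{proet}}(\ZZ[\underline{G}],M)$ is, as a condensed abelian group, $\underline{\Hom}_{\mathrm{Cond(Ab)}}(\ZZ[\underline{G}],M)$ equipped with the induced $\underline{G_x}$-action — this is just the internal-hom description of sheaf-Hom on $(BG_x)_{proet}\simeq \underline{G_x}\text{-}\mathrm{Cond}(Set)$. Second, since $\ZZ[\underline{G}]\cong \bigoplus_{g\in G}\ZZ$ is a finite direct sum of copies of the unit, internal Hom out of it is a finite direct sum of copies of $M$, i.e. $\underline{\Hom}_{\mathrm{Cond(Ab)}}(\ZZ[\underline{G}],M)\cong \bigoplus_{g\in G} M = \ZZ[\underline{G}]\otimes_{\ZZ} M$ as condensed abelian groups. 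Third — and this is the only point requiring a little care — I would check that the $\underline{G_x}$-equivariant structures match: on the Hom side the $\underline{G_x}$-action is the conjugation/translation action $(\gamma\cdot\phi)(a)=\gamma\cdot\phi(\gamma^{-1}a)$, and under the self-duality pairing $\ZZ[\underline{G}]\otimes\ZZ[\underline{G}]\to\ZZ$, $(g,h)\mapsto \delta_{g,h}$ — which is $\underline{G_x}$-invariant because $G_x$ acts on $\ZZ[\underline G]$ through $G$ by the left regular representation, an orthogonal action on the standard basis — this action corresponds exactly to the diagonal action on $\ZZ[\underline{G}]\otimes M$. So the identification $\sheafhom_{x_{proet}}(\ZZ[\underline G],-)=\ZZ[\underline G]\otimes-$ holds as an isomorphism of functors valued in $\underline{G_x}\text{-}\mathrm{Cond}(\mathrm{Ab})=\mathrm{Ab}(\Sh(x_{proet}))$.

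The main obstacle, such as it is, will be purely notational: making sure the self-duality of the finite free $\ZZ[\underline G]$ is genuinely $\underline{G_x}$-equivariant (not merely $\ZZ$-linear) and that no higher $\Ext$ sheaves appear — but the latter is immediate since $\ZZ[\underline G]$ is (locally on a point, hence globally) projective as a $\ZZ[\underline{G_x}]$-module, being a direct summand of a free module after enlarging by an induced module, or more simply because internal Hom out of it is already exact by the direct-sum formula. I would therefore keep the proof to a few lines: identify $\Sh(x_{proet})$-abelian objects with $\ZZ[\underline{G_x}]$-modules, note $\ZZ[\underline G]$ is finite free with $G_x$ acting through $G$ by permuting a basis, invoke the orthogonality of this permutation action to produce the equivariant self-duality, and conclude $\sheafhom(\ZZ[\underline G],-)\cong \ZZ[\underline G]\otimes-$. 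This is exactly what is needed in the preceding lemma to reduce the case $F_x=\ZZ[G]$ of the comparison $\cal{H}om_{x_{proet}}(\nu^\ast F_x,\ZZ_l)\otimes\QQ \xrightarrow{\sim}\cal{H}om_{x_{proet}}(\nu^\ast F_x,\QQ_l)\xleftarrow{\sim}\cal{H}om_{x_{proet}}(\nu^\ast F_x,\ZZ)\otimes\QQ_l$ to the trivial statement that $\ZZ\otimes\QQ_l=\QQ_l=\QQ_l\otimes\QQ$.
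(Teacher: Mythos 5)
Your proof is correct and is essentially the paper's own: identify $\ZZ[\underline{G}]=\underline{\ZZ[G]}=\bigoplus_G \underline{\ZZ}$, so that internal Hom out of it is $\bigoplus_G(-)=\ZZ[\underline{G}]\otimes(-)$; your additional verification that the $\underline{G_x}$-equivariant structures agree, via the equivariant self-duality pairing $(g,h)\mapsto\delta_{g,h}$ on the regular representation, is exactly the point the paper's one-line proof leaves implicit. The only inaccuracy is the aside that $\ZZ[\underline{G}]$ is projective as a $\ZZ[\underline{G_x}]$-module (already false for the trivial quotient $G$, where it is $\ZZ$ with trivial action and nontrivial higher cohomology); this is harmless here, since the lemma concerns only the underived internal $\mathcal{H}om$, and the exactness you invoke already follows from the direct-sum formula as you note.
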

\begin{proof}
	We have $\ZZ[\underline{G}]=\underline{\ZZ[G]}=\oplus_G \underline{\ZZ}$, thus
	\[
	\cal{H}om_{x_{proet}}(\ZZ[\underline{G}],-)=\oplus_G 	\cal{H}om_{x_{proet}}(\ZZ,-) = \oplus_G \mathrm{Id} = \ZZ[\underline{G}]\otimes -
	\]
\end{proof}

Denote $R\cal{\Hom}_{Y_{proet}}$ the enriched $R\Hom$ on the proétale site of a scheme, a complex of condensed abelian groups with underlying complex of abelian groups $R\Hom_Y$.
\begin{lem}
	We have
	\[
	 (\nu^\ast R\Hom_{\Spec(K_y^{sh})}(F_\eta,\GG_m[1]))\widehat{\otimes} \ZZ_l = R\underline{\Hom}_{\Spec(K_y^{sh})_{proet}}(\nu^\ast F_\eta,\ZZ_l(1)[2]))
	\]
\end{lem}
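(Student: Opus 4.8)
The plan is to follow the pattern of the two preceding lemmas, the one genuinely new ingredient being Kummer theory on $\Spec(K_y^{sh})$. First I would unwind the definition: $(\nu^\ast C)\widehat{\otimes}\ZZ_l = R\lim_n(\nu^\ast C\otimes^L\ZZ/l^n\ZZ)$ computed on $\Spec(K_y^{sh})_{proet}$, where $C := R\Hom_{\Spec(K_y^{sh})}(F_\eta,\GG_m[1])$. Since $\nu^\ast$ is symmetric monoidal and carries the constant sheaf $\ZZ/l^n\ZZ$ to the constant sheaf $\ZZ/l^n\ZZ$, we have $\nu^\ast C\otimes^L\ZZ/l^n\ZZ = \nu^\ast(C\otimes^L\ZZ/l^n\ZZ)$. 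Note that the $R\lim$ is taken \emph{after} $\nu^\ast$, so the failure of $\nu^\ast$ to commute with limits never intervenes; it therefore suffices to compute $C\otimes^L\ZZ/l^n\ZZ$ on the étale site, then apply $\nu^\ast$, then $R\lim_n$.

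Next I would compute $C\otimes^L\ZZ/l^n\ZZ$ étale-locally. As $R\cal{H}om_{\Spec(K_y^{sh})}(F_\eta,-)$ is triangulated it commutes with the cofiber defining $\otimes^L\ZZ/l^n\ZZ$ (equivalently, $\ZZ/l^n\ZZ$ being a perfect complex of abelian groups, the internal-Hom projection formula applies), so $C\otimes^L\ZZ/l^n\ZZ = R\cal{H}om_{\Spec(K_y^{sh})}(F_\eta,\GG_m[1]\otimes^L\ZZ/l^n\ZZ)$. Because $l$ is coprime to the residual characteristic at $x$, hence at $y$, the Kummer sequence $0\to\mu_{l^n}\to\GG_m\xrightarrow{l^n}\GG_m\to 0$ on $\Spec(K_y^{sh})_{et}$ identifies $\GG_m\otimes^L\ZZ/l^n\ZZ$ with $\mu_{l^n}[1]$, whence $C\otimes^L\ZZ/l^n\ZZ \simeq R\cal{H}om_{\Spec(K_y^{sh})}(F_\eta,\mu_{l^n}[2])$, compatibly with the transition maps in $n$.

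The third step is to pull $\nu^\ast$ inside the internal $\Hom$. The continuous action of $I_y = G_{K_y^{sh}}$ on the finite type abelian group $F_\eta$ factors through a finite quotient $G$, over whose (Noetherian) group ring $F_\eta$ is finitely presented; hence $\nu^\ast F_\eta$ is globally of finite presentation as a $\ZZ[\underline G]$-module, and the same standard argument used in the previous lemma gives $\nu^\ast R\cal{H}om_{\Spec(K_y^{sh})}(F_\eta,\mu_{l^n}[2]) = R\underline{\Hom}_{\Spec(K_y^{sh})_{proet}}(\nu^\ast F_\eta,\nu^\ast\mu_{l^n}[2])$. Finally I would reassemble: internal $R\Hom$ commutes with homotopy limits in the second variable, so $R\lim_n$ of the right-hand side equals $R\underline{\Hom}_{\Spec(K_y^{sh})_{proet}}(\nu^\ast F_\eta,(R\lim_n\nu^\ast\mu_{l^n})[2])$; the transition maps $\mu_{l^{n+1}}\to\mu_{l^n}$ are surjective as étale sheaves, hence so are their pullbacks, and since the pro-étale topos is replete one has $R\lim_n\nu^\ast\mu_{l^n} = \lim_n\nu^\ast\mu_{l^n} =: \ZZ_l(1)$. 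Chaining these equalities yields the stated identity.

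The main obstacle is the third step, the compatibility of $\nu^\ast$ with internal $R\Hom$ out of $F_\eta$: it is false for arbitrary sheaves and genuinely requires the finite-presentation input. However this is precisely the point already isolated and proved in the lemma just above (for $\cal{H}om_x(F_x,\ZZ)$), so here it is only a routine transcription; the remaining steps are formal manipulations with monoidal functors, the Kummer sequence, and repleteness of the pro-étale topos.
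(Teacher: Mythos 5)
Your argument is correct and takes essentially the same route as the paper: identify $\nu^\ast$ of the étale Hom with the enriched proétale Hom, push $-\otimes^L\ZZ/l^n\ZZ$ and $R\lim$ into the second variable, apply Kummer theory (using that $l$ is invertible on $\Spec(K_y^{sh})$), and compute $R\lim \nu^\ast\mu_{l^n}=\lim\nu^\ast\mu_{l^n}=\ZZ_l(1)$ from surjectivity of the transition maps and repleteness. The only differences are the order of operations (you reduce mod $l^n$ and apply Kummer on the étale side before applying $\nu^\ast$, whereas the paper passes to the proétale enriched Hom with target $\nu^\ast\GG_m[1]$ first and then completes inside the second slot) and that you spell out the finite-presentation justification for commuting $\nu^\ast$ past the Hom, which the paper asserts without comment — both harmless.
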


\begin{proof}
We have 
\[
\nu^\ast R\Hom_{\Spec(K_y^{sh})}(F_\eta,\GG_m[1])=R\underline{\Hom}_{\Spec(K_y^{sh})_{proet}}(\nu^\ast F_\eta,\nu^\ast \GG_m[1])
\]
Thus we find
\[
\nu^\ast R\Hom_{\Spec(K_y^{sh})}(F_\eta,\GG_m[1])\widehat{\otimes} \ZZ_l = \underline{R\Hom}_{\Spec(K_y^{sh})_{proet}}(\nu^\ast F_\eta,(\nu^\ast \GG_m[1])\widehat{\otimes} \ZZ_l)
\]
Since $l$ is invertible on $\Spec(K_y^{sh})$, we have $\GG_m\otimes^L \ZZ/l^n\ZZ=\mu_{l^n}[1]$ and thus 
\[
(\nu^\ast \GG_m[1])\widehat{\otimes} \ZZ_l = R\lim \nu^\ast \mu_{l^n}[2]=\ZZ_l(1)[2]
\]
\end{proof}

\begin{lem}
	Let $H$ be an open normal subgroup of $I_y$ acting trivially on $F_\eta$, and let $G:=I_y/H$. Denote $R\underline{\Hom}_{\underline{G}}$ the enriched $R\Hom$ between condensed $\underline{G}$-modules. There is a fiber sequence
	\[
	R\underline{\Hom}_{\underline{G}}(\nu^\ast F_\eta,\ZZ_l(1))) \to R\underline{\Hom}_{\Spec(K_y^{sh})_{proet}}(\nu^\ast F_\eta,\ZZ_l(1))) \to R\underline{\Hom}_{\underline{G}}(\nu^\ast F_\eta,\ZZ_l))[-1]
	\]
\end{lem}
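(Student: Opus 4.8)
The plan is to deduce the fiber sequence from the inflation/invariants adjunction for the quotient $q\colon \underline{I_y}\twoheadrightarrow \underline{G}$ (with kernel $\underline{H}$), combined with an explicit computation of the Galois cohomology of the strictly henselian local field fixed by $H$. Under the identification of \cref{bg_proet}, abelian sheaves on $\Spec(K_y^{sh})_{proet}$ are condensed $\ZZ[\underline{I_y}]$-modules, so $R\underline{\Hom}_{\Spec(K_y^{sh})_{proet}}(\nu^\ast F_\eta,\ZZ_l(1))=R\underline{\Hom}_{\underline{I_y}}(\nu^\ast F_\eta,\ZZ_l(1))$. Since $H$ acts trivially on $F_\eta$, the module $\nu^\ast F_\eta$ is inflated from $\underline{G}$, and $q^\ast$ (inflation) is exact with right adjoint the derived invariants functor $N\mapsto R\Gamma(\underline{H},N)$, valued in condensed $\underline{G}$-modules because $\underline H$ is normal; deriving the adjunction $\Hom_{\underline{I_y}}(q^\ast A,B)=\Hom_{\underline{G}}(A,B^{\underline H})$ gives
\[
R\underline{\Hom}_{\underline{I_y}}(\nu^\ast F_\eta,\ZZ_l(1))\simeq R\underline{\Hom}_{\underline{G}}\big(\nu^\ast F_\eta,\,R\Gamma(\underline{H},\ZZ_l(1))\big).
\]
It therefore suffices to produce a fiber sequence $\ZZ_l(1)[0]\to R\Gamma(\underline{H},\ZZ_l(1))\to \ZZ_l[-1]$ of condensed $\underline{G}$-modules and apply the triangulated functor $R\underline{\Hom}_{\underline{G}}(\nu^\ast F_\eta,-)$.

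To obtain that fiber sequence, set $M:=(K^{sep})^{H}$, a finite (necessarily totally ramified) extension of $K_y^{sh}$: it is again a strictly henselian local field with separably closed residue field of characteristic $p\neq l$, and $H=G_M$. As $l$ is prime to $p$, all $l$-power roots of unity already lie in $K_y^{sh}$, so $\ZZ_l(1)$ is trivial as a condensed $\underline{I_y}$-module (non-canonically $\cong \underline{\ZZ_l}$). Computing $R\Gamma(\underline{H},\ZZ_l(1))=R\lim_n R\Gamma(\underline{H},\mu_{l^n})$ as in the surrounding lemmas, one has $H^0(\underline{H},\mu_{l^n})=\mu_{l^n}$ and, by Kummer theory, $H^1(\underline{H},\mu_{l^n})=M^\times/(M^\times)^{l^n}\cong \ZZ/l^n$ (the unit group $\cal{O}_M^\times$ is $l$-divisible, so only the valuation survives), while $H^i(\underline{H},\mu_{l^n})=0$ for $i\geq 2$ since $G_M$ has $l$-cohomological dimension $\leq 1$; the transition maps are surjective, so $R^1\lim$ vanishes and $R\Gamma(\underline{H},\ZZ_l(1))$ is concentrated in degrees $0,1$ with $H^0=\ZZ_l(1)$ and $H^1\cong\ZZ_l$ (a uniformizer of $M$ generates $H^1$ and the valuation is $\underline{G}$-invariant, so the residual action is trivial and the Tate twist disappears). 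The canonical truncation triangle then reads $\ZZ_l(1)[0]\to R\Gamma(\underline{H},\ZZ_l(1))\to \ZZ_l[-1]$.

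Applying $R\underline{\Hom}_{\underline{G}}(\nu^\ast F_\eta,-)$ to this triangle and identifying the middle term via the adjunction isomorphism above produces exactly the asserted fiber sequence. The main obstacle is organizational rather than arithmetic: one must set up the inflation/invariants adjunction and the identification of $R\underline{\Hom}_{\Spec(K_y^{sh})_{proet}}$ with $R\underline{\Hom}_{\underline{I_y}}$ carefully in the condensed framework, and verify that the derived limit computing $R\Gamma(\underline{H},\ZZ_l(1))$ has no $R^1\lim$ contribution and the right residual $\underline G$-action in degree $1$; the local-field inputs ($l$-power roots of unity, cohomological dimension, Kummer theory) are classical.
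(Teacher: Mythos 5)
Your proposal is correct and follows essentially the same route as the paper: identify $\Spec(K_y^{sh})_{proet}$ with condensed $\underline{I_y}$-modules, use the inflation/derived-$H$-invariants adjunction to rewrite the middle term as $R\underline{\Hom}_{\underline{G}}(\nu^\ast F_\eta,R\underline{\Gamma}(H,\ZZ_l(1)))$, and reduce to the fiber sequence $\ZZ_l(1)\to R\underline{\Gamma}(H,\ZZ_l(1))\to \ZZ_l[-1]$ obtained from the finite-level computation ($\mu_{l^n}$ in degree $0$, $\ZZ/l^n\ZZ$ via the valuation in degree $1$, vanishing above by cohomological dimension $1$) together with surjective transition maps killing $R^1\lim$. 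Your Kummer-theory phrasing of the degree-$1$ term and the explicit remark that the residual $\underline{G}$-action there is trivial are just a reformulation of the paper's Hensel-plus-valuation-sequence argument.
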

\begin{proof}
	We have $\Spec(K_y^{sh})_{proet}=(BI_y)_{proet}=\underline{I_y}\text{-}\mathrm{Cond}(Set)$ by \cref{bg_proet}. The right adjoint to the forgetful functor $D(\underline{I_y}\text{-}\mathrm{Cond}(Ab)) \to D(\underline{G}\text{-}\mathrm{Cond}(Ab))$ is $R\underline{\Gamma}(H,-):=R\underline{\Hom}_{\underline{H}}(\ZZ,-)$, so we have
	\[
	R\underline{\Hom}_{\Spec(K_y^{sh})_{proet}}(\nu^\ast F_\eta,\ZZ_l(1))= R\underline{\Hom}_{\underline{G}}(\nu^\ast F_\eta,R\underline{\Gamma}(H,\ZZ_l(1)))
	\]
	We compute
	\[
	R\underline{\Gamma}(H,\ZZ_l(1))=R\lim R\underline{\Gamma}(H,\nu^\ast\mu_{l^n}) = R\lim \nu^\ast R\Gamma(H,\mu_{l^n})
	\]
	Denote $F$ the finite extension of $K_y^{sh}$ corresponding to $H$. Since $l$ is prime to the residual characteristic of $K_y^{sh}$, all $l^n$-th roots of unity are contained in $K_y^{sh}$ and thus also in $H$. Hensel's lemma together with the short exact sequence $0 \to \cal{O}_{F}^\times \to F^\times \to \ZZ \to 0$ then gives $H^i(H,\mu_{l^n})=\mu_{l^n},\ZZ/l^n\ZZ$ for $i=0,1$, while $H^i(H,\mu_{l^n})=0$ for $i\geq 2$ because $I_y$ is of cohomological dimension $1$ and therefore also $H$ \cite[8.11 (b), 5.10]{Harari20}. Thus we have a fiber sequence of discrete $G$-modules
	\[
	\mu_{l^n} \to R\Gamma(H,\mu_{l^n}) \to \ZZ/l^n\ZZ [-1]
	\]
	The transition maps on both sides are surjective. Applying the fully faithful exact functor $\nu^\ast$, we can compute the $R\lim$ using that the transition maps are again surjective and we obtain a fiber sequence in $D(\underline{G}\text{-}\mathrm{Cond}(Ab))$:
	\[
	\ZZ_l(1) \to R\underline{\Gamma}(H,\ZZ_l(1)) \to \ZZ_l[-1]
	\]
\end{proof}

Since $K_y^{sh}$ has all $l^n$-th roots of unity, as $\underline{G}$-modules we have $\ZZ_l\simeq \ZZ_l(1)$.
\begin{lem}
	We have
	\begin{align*}
	R\underline{\Hom}_{\underline{G}}(\nu^\ast F_\eta,\ZZ_l)\otimes \QQ& = \nu^\ast \Hom_G(F_\eta,\QQ) \otimes_\QQ \QQ_l = \nu^\ast (F_\eta^\vee)^{I_y} \otimes \QQ_l\\
	R\underline{\Hom}_{\underline{G}}(\nu^\ast F_\eta,\ZZ_l(1))\otimes \QQ &= \nu^\ast (F_\eta^\vee)^{I_y} \otimes \QQ_l(1)
	\end{align*}
\end{lem}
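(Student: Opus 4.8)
The plan is to compute the enriched Hom by a finite free resolution over the condensed group ring $\ZZ[\underline G]=\underline{\ZZ[G]}$, reduce it to ordinary $\Hom_{\ZZ[G]}(F_\eta,-)$, and then rationalize, where semisimplicity of $\QQ[G]$ collapses everything to degree zero. First I would dispose of the twist: since $K_y^{sh}$ contains all $l^n$-th roots of unity, $G=I_y/H$ acts trivially on each $\mu_{l^n}$, so $\ZZ_l(1)\simeq\ZZ_l$ as condensed $\underline G$-modules (the remark made just above the statement), and it suffices to treat $\ZZ_l$ with trivial action, carrying the twist along on the right-hand side only in order to record the residual Frobenius structure.

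Next, choose a resolution $P_\bullet\to F_\eta$ by finite free $\ZZ[G]$-modules, possible since $F_\eta$ is a finite-type $\ZZ[G]$-module. Applying the exact, fully faithful functor $\nu^\ast$ produces a resolution $\nu^\ast P_\bullet\to\nu^\ast F_\eta$ by copies of $\ZZ[\underline G]$, and since $\ZZ[\underline G]$ is free of rank one over itself, $\underline\Hom_{\underline G}(\ZZ[\underline G],M)=M$ for every condensed $\underline G$-module $M$; hence $R\underline\Hom_{\underline G}(\nu^\ast F_\eta,\ZZ_l)$ is represented by $\Hom_{\ZZ[G]}(P_\bullet,\ZZ)\otimes_\ZZ\ZZ_l$, i.e. by the complex of finite free abelian groups $R\Hom_{\ZZ[G]}(F_\eta,\ZZ)$ tensored over $\ZZ$ with $\ZZ_l$ (the differentials being the integer matrices obtained from $P_\bullet$ by augmentation). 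Therefore $\Ext^i_{\underline G}(\nu^\ast F_\eta,\ZZ_l)=\Ext^i_{\ZZ[G]}(F_\eta,\ZZ)\otimes_\ZZ\ZZ_l$, and rationalizing (which commutes with cohomology) gives $R\underline\Hom_{\underline G}(\nu^\ast F_\eta,\ZZ_l)\otimes\QQ=\bigl(R\Hom_{\ZZ[G]}(F_\eta,\ZZ)\otimes\QQ\bigr)\otimes_\QQ\QQ_l$. Since each $P_i$ is finite free, $R\Hom_{\ZZ[G]}(F_\eta,\ZZ)\otimes\QQ=R\Hom_{\QQ[G]}(F_\eta\otimes\QQ,\QQ)$, and $\QQ[G]$ being semisimple by Maschke, $F_\eta\otimes\QQ$ is projective, so this complex is $\Hom_{\QQ[G]}(F_\eta\otimes\QQ,\QQ)=\Hom_G(F_\eta,\QQ)$ concentrated in degree $0$. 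This proves the first identity $R\underline\Hom_{\underline G}(\nu^\ast F_\eta,\ZZ_l)\otimes\QQ=\nu^\ast\Hom_G(F_\eta,\QQ)\otimes_\QQ\QQ_l$.

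For the second identity, $\Hom_G(F_\eta,\QQ)=(\Hom_\ZZ(F_\eta,\ZZ)\otimes\QQ)^G=(F_\eta^\vee)^G\otimes\QQ$, since $(-)^G$ is a kernel and hence commutes with the flat base change $\otimes\QQ$; and because $H$ acts trivially on $F_\eta$, hence on $F_\eta^\vee$, one has $(F_\eta^\vee)^G=(F_\eta^\vee)^{I_y}$, so $\Hom_G(F_\eta,\QQ)\otimes_\QQ\QQ_l=(F_\eta^\vee)^{I_y}\otimes\QQ_l$. Every arrow above is functorial in $F_\eta$ as a module over the decomposition group $D_y$, so the identifications are equivariant for the residual action of $D_y/I_y$, and twisting the whole chain by the invertible $\underline G$-module $\ZZ_l(1)\simeq\ZZ_l$ yields the displayed formula with $\QQ_l(1)$.

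The step requiring the most care is the first identification $R\underline\Hom_{\underline G}(\nu^\ast F_\eta,\ZZ_l)\simeq R\Hom_{\ZZ[G]}(F_\eta,\ZZ)\otimes_\ZZ\ZZ_l$ in $\mathrm{Cond}(Ab)$: one must check that resolving $\nu^\ast F_\eta$ by copies of $\ZZ[\underline G]$ is legitimate (exactness and full faithfulness of $\nu^\ast$), that $\underline\Hom_{\underline G}(\ZZ[\underline G],-)$ is exact and is the identity functor, and that the resulting complex has the expected integer differentials so that $\otimes\QQ$ commutes with passage to cohomology. None of this is deep — it is bookkeeping in condensed abelian groups — and the only genuine inputs are the flatness of $\ZZ_l$ over $\ZZ$, the semisimplicity of $\QQ[G]$, and the triviality of higher group cohomology against a $\QQ$-vector space.
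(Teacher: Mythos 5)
Your proof is correct, but it follows a genuinely different route from the paper's. The paper never identifies the full derived enriched Hom: it writes $\ZZ_l=\lim\nu^\ast\ZZ/l^n\ZZ$, uses the $R\lim$ short exact sequences together with Mittag-Leffler vanishing to show that $\underline{\Ext}^i_{\underline{G}}(\nu^\ast F_\eta,\ZZ_l)$ is killed by an explicit integer (a power of $[G]$ times $[(F_\eta)_{tor}]$) for $i\geq 1$, so that rationally only the underived $\underline{\Hom}$ survives, and then compares the three objects $\underline{\Hom}_{\underline{G}}(\nu^\ast F_\eta,\ZZ_l)\otimes\QQ\to\underline{\Hom}_{\underline{G}}(\nu^\ast F_\eta,\QQ_l)\leftarrow\underline{\Hom}_{\underline{G}}(\nu^\ast F_\eta,\QQ)\otimes_\QQ\QQ_l$ by reducing, via finite presentation of $\nu^\ast F_\eta$ over $\ZZ[\underline{G}]$, to the free module $\ZZ[\underline{G}]$, where $\underline{\Hom}_{\underline{G}}(\ZZ[\underline{G}],-)$ is the forgetful functor. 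You instead prove the stronger integral statement $R\underline{\Hom}_{\underline{G}}(\nu^\ast F_\eta,\ZZ_l)\simeq R\Hom_{\ZZ[G]}(F_\eta,\ZZ)\otimes_\ZZ\ZZ_l$ by pulling back a finite free $\ZZ[G]$-resolution, and then let Maschke collapse everything rationally; this is shorter and avoids the limit/Mittag-Leffler analysis entirely, at the price that your one "bookkeeping" step is exactly the mathematical content: you must know that $\nu^\ast P_\bullet$, a bounded-above complex of finite sums of $\ZZ[\underline{G}]$'s, computes the derived \emph{enriched} Hom in condensed $\underline{G}$-modules. This is true — $\ZZ[\underline{G}]=\ZZ[\underline{G}\times\ast]$ is projective, and the internal induction–restriction adjunction gives $R\underline{\Hom}_{\underline{G}}(\ZZ[\underline{G}],M)=UM$ concentrated in degree $0$, so the termwise complex is the derived Hom — but it deserves to be spelled out rather than waved at; the paper's reduction to $\ZZ[\underline{G}]$ is precisely the underived shadow of this fact. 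Two smaller points: your chosen resolution is only $G$-equivariant, so the claim that the identification is equivariant for the residual $D_y/I_y$-action should be justified by the canonicity of the comparison maps (independence of the resolution), as the paper's formulation does automatically; and your disposal of the twist via $\ZZ_l(1)\simeq\ZZ_l$ as $\underline{G}$-modules is the same observation the paper makes just before the lemma, so the final identification $\Hom_G(F_\eta,\QQ)=(F_\eta^\vee)^{I_y}\otimes\QQ$ matches the paper's last step.
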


\begin{proof}
	The proof of both statements is similar, so we treat only the first one. 
	
	We first show that $\underline{\Ext}^i_{\underline{G}}(\nu^\ast F_\eta,\ZZ_l)$ is killed by some integer $N$ for all $i\geq 1$. There is a spectral sequence giving short exact sequences
	\[
	0 \to R^1\lim \nu^\ast \Ext^{i-1}_G(F_\eta,\ZZ/l^n\ZZ) \to \underline{\Ext}^i_{\underline{G}}(\nu^\ast F_\eta,\ZZ_l) \to \lim \nu^\ast \Ext^{i}_G(F_\eta,\ZZ/l^n\ZZ) \to 0
	\]
	Denote $\cal{H}om_G$ is the internal Hom for discrete $G$-modules. There is moreover a spectral sequence giving a long exact seqence
	\[
	\cdots \to H^i(G,\cal{H}om_G(F_\eta,\ZZ/l^n\ZZ)) \to \Ext^{i}_G(F_\eta,\ZZ/l^n\ZZ) \to  H^{i-1}(G,\cal{E}xt^1_G(F_\eta,\ZZ/l^n\ZZ)) \to \cdots.
	\]
	For $i\geq 2$, since 
	\[
		\cal{H}om_G(F_\eta,\ZZ/l^n\ZZ)=\Hom_{Ab}(F_\eta,\ZZ/l^n\ZZ) ~~\text{and}~~ \cal{E}xt^1_G(F_\eta,\ZZ/l^n\ZZ)=\Ext^1_{Ab}(F_\eta,\ZZ/l^n\ZZ)\\
	\]are of finite type, both $H^i(G,\cal{H}om_G(F_\eta,\ZZ/l^n\ZZ))$ and $H^{i-1}(G,\cal{E}xt^1_G(F_\eta,\ZZ/l^n\ZZ))$ are finite and killed by $[G]$, thus $\Ext^{i}_G(F_\eta,\ZZ/l^n\ZZ)$ is killed by $[G]^2$. For $i=1$, $H^1(G,\cal{H}om_G(F_\eta,\ZZ/l^n\ZZ))$ is finite killed by $[G]$ and $H^{0}(G,\cal{E}xt^1_G(F_\eta,\ZZ/l^n\ZZ))$ is finite killed by $[(F_\eta)_{tor}]$, so $\Ext^{1}_G(F_\eta,\ZZ/l^n\ZZ)$ is finite killed by $[G][(F_\eta)_{tor}]$. Finally, for $i=0$ we have that $\Hom_G(F_\eta,\ZZ/l^n\ZZ)$ is a finite group.
	
	Suppose first that $i\geq 2$. Then $\Ext^{i-1}_G(F_\eta,\ZZ/l^n\ZZ)$ is a system of finite groups so it satisfies the Mittag-Leffler condition, thus $R^1\lim \nu^\ast \Ext^{i-1}_G(F_\eta,\ZZ/l^n\ZZ)=0$ by \cref{mittag_leffler,mittag_leffler_discrete}. On the other hand, $\Ext^{i}_G(F_\eta,\ZZ/l^n\ZZ)$ is a system of finite group killed by $[G]^2$, thus $\underline{\Ext}^i_{\underline{G}}(\nu^\ast F_\eta,\ZZ_l)=\lim \nu^\ast \Ext^{i}_G(F_\eta,\ZZ/l^n\ZZ)$ is killed by $[G]^2$. We now treat the case $i=1$. Let us show that $R^1\lim \nu^\ast \Hom_G(F_\eta,\ZZ/l^n\ZZ)=0$. Again this is a system of finite groups, which thus satisfies the Mittag-Leffler condition, and we can apply \cref{mittag_leffler,mittag_leffler_discrete}. It follows that $\underline{\Ext}^1_{\underline{G}}(\nu^\ast F_\eta,\ZZ_l)=\lim \nu^\ast \Ext^{1}_G(F_\eta,\ZZ/l^n\ZZ)$ is killed by $[G][(F_\eta)_{tor}]$.
	
	We deduce that
	\[
	R\underline{\Hom}_{\underline{G}}(\nu^\ast F_\eta,\ZZ_l)\otimes \QQ = \underline{\Hom}_{\underline{G}}(\nu^\ast F_\eta,\ZZ_l) \otimes \QQ
	\]
	
	There are canonical maps
	\[
	\underline{\Hom}_{\underline{G}}(\nu^\ast F_\eta,\ZZ_l) \otimes \QQ \to \underline{\Hom}_{\underline{G}}(\nu^\ast F_\eta,\QQ_l) \leftarrow \underline{\Hom}_{\underline{G}}(\nu^\ast F_\eta,\QQ) \otimes_\QQ \QQ_l = \nu^\ast \Hom_G(\nu^\ast F_\eta,\QQ) \otimes_\QQ \QQ_l
	\]
	Since $F_\eta$ is a finite type, hence finite presentation abelian group and $G$ is finite, $F_\eta$ is a finite presentation $G$-module so $\nu^\ast F_\eta$ is (globally) of finite presentation as a $\underline{G}$-module\footnote{Note that as $G$ is finite, $\ZZ[\underline{G}]=\underline{\ZZ[G]}$}. By a standard argument, to show that the canonical maps are isomorphisms we reduce to the case of $\ZZ[\underline{G}]$. But $\underline{\Hom}_{\underline{G}}(\ZZ[\underline{G}],-)$ is the forgetful functor $U:\underline{G}-\mathrm{Cond(Ab)} \to \mathrm{Cond(Ab)}$, thus everything follows from the identifications
	\[
	(U \ZZ_l)\otimes \QQ = \ZZ_l\otimes \QQ = \QQ_l = U \QQ_l = \QQ \otimes_\QQ \QQ_l = U\QQ \otimes_\QQ \QQ_l
	\]
	
	The second equality in each case follows from the identity $\Hom_G(F_\eta,\QQ)=(F_\eta^\vee)^{G}\otimes \QQ=(F_\eta^\vee)^{I_y}\otimes \QQ$.
\end{proof}

\subsection{Functoriality}

\begin{prop}
	Let $0\to F \to G \to H \to 0$ be a short exact sequence of $\ZZ$-constructible sheaves on $X$. Then
	\[
	L_X(G^D,s)=L_X(F^D,s)L_X(H^D,s)
	\]
\end{prop}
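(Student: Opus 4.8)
The plan is to reduce the statement to a local, factor-by-factor identity and then invoke the multiplicativity of graded determinants along distinguished triangles. Since $L_X((-)^D,s)$ is an Euler product over $X_0$ which converges for $\Re(s)>1$ and extends to a meromorphic function on $\CC$ (\cref{L_function_pushforward}), it suffices to show that for every closed point $x$ the local factors satisfy $L_x(G^D,s)=L_x(F^D,s)\,L_x(H^D,s)$. Fix $x\in X_0$ and abbreviate, for a $\ZZ$-constructible sheaf $E$ on $X$,
\[
C_x(E):=\bigl(\nu^\ast(i_x^\ast E^D)\bigr)\,\widehat{\otimes}\,\QQ_{l_x},\qquad P_x(E,T):=\det\bigl(I-T\varphi\mid C_x(E)\bigr)\in\QQ_{l_x}(T),
\]
so that $L_x(E^D,s)=P_x(E,N(x)^{-s})^{-1}$; here, for a perfect complex $C$ of $\QQ_{l_x}$-vector spaces with an endomorphism, $\det(I-T\varphi\mid C)$ denotes the alternating product $\prod_i\det(I-T\varphi\mid H^i(C))^{(-1)^i}$ of the determinants on the cohomology groups, a convention consistent with the explicit formula of \cref{explicit_expression_L_function}.

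First I would note that $E\mapsto C_x(E)$ is an exact functor from $\ZZ$-constructible sheaves to the derived category of $\QQ_{l_x}$-vector spaces equipped with a geometric Frobenius action: the duality functor $(-)^D=R\cal{H}om_X(-,\ZZ^c_X)$ is triangulated (contravariant), $i_x^\ast$ and $\nu^\ast$ are exact, and $\widehat{\otimes}\QQ_{l_x}=\bigl(R\lim_n(-\otimes^L\ZZ/l_x^n\ZZ)\bigr)\otimes\QQ$ is a composite of functors each preserving fiber sequences. Hence the short exact sequence $0\to F\to G\to H\to 0$, which induces the distinguished triangle $H^D\to G^D\to F^D\to$, produces a distinguished triangle
\[
C_x(H)\to C_x(G)\to C_x(F)\to
\]
whose maps are all $\varphi$-equivariant. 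By the computations of the previous subsection underlying \cref{explicit_expression_L_function}, each $C_x(E)$ is a perfect complex of $\QQ_{l_x}$-vector spaces, so each $P_x(E,T)$ is well-defined.

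Then I would conclude by the multiplicativity of the graded determinant along distinguished triangles: the long exact cohomology sequence of the triangle $C_x(H)\to C_x(G)\to C_x(F)\to$ breaks into short exact sequences of finite-dimensional $\QQ_{l_x}$-vector spaces on which $\varphi$ acts compatibly, and the ordinary determinant $\det(I-T\varphi\mid-)$ is multiplicative in such short exact sequences; taking the alternating product over degrees gives $P_x(G,T)=P_x(F,T)\,P_x(H,T)$. Substituting $T=N(x)^{-s}$ and taking reciprocals yields $L_x(G^D,s)=L_x(F^D,s)L_x(H^D,s)$, and forming the product over $x\in X_0$—the identity of convergent Euler products for $\Re(s)>1$ then propagating to the meromorphic continuations—proves the proposition.

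The argument is essentially formal once two inputs from the preceding subsection are in hand: that $\widehat{\otimes}\QQ_{l_x}$ preserves fiber sequences and that the complexes $C_x(E)$ are perfect. I therefore do not expect a genuine obstacle; the only point deserving care is tracking the $\varphi$-equivariance through all the functors, which is automatic since each of $(-)^D$, $i_x^\ast$, $\nu^\ast$ and $\widehat{\otimes}\QQ_{l_x}$ is applied over $x=\Spec(\kappa(x))$ and hence respects the $G_x$-action.
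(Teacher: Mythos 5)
Your proposal is correct and follows essentially the same route as the paper: dualize the short exact sequence to get a fiber triangle of the complexes $(\nu^\ast i_x^\ast(-)^D)\widehat{\otimes}\QQ_{l_x}$ at each closed point, then invoke multiplicativity of the local factors (alternating product of characteristic polynomials along the resulting long exact sequence) and take the Euler product. The paper states this more tersely, but the content is identical.
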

\begin{proof}
	If $i:v\to X$ is a closed point, the short exact sequence gives a fiber triangle
	\[
	(\nu^\ast i^\ast H^D)\widehat{\otimes}\QQ_l \to (\nu^\ast i^\ast G^D)\widehat{\otimes}\QQ_l \to (\nu^\ast i^\ast F^D)\widehat{\otimes}\QQ_l
	\]
	Local factors are multiplicative with respect to short exact sequnces, hence
	\[
	L_v(G^D,s)=L_v(F^D,s)L_v(H^D,s)
	\]
\end{proof}

\begin{prop}
Let $\pi:Y\to X$ be a finite morphism between spectra of orders in number fields and let $F$ be a $\ZZ$-constructible sheaf on $Y$. We have
\[
L_X((\pi_\ast F)^D,s)=L_Y(F^D,s)
\]
\end{prop}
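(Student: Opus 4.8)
The plan is to prove the identity one Euler factor at a time: for each closed point $x\in X_0$ I will show
\[
L_x\bigl((\pi_\ast F)^D,s\bigr)=\prod_{\pi(y)=x} L_y(F^D,s),
\]
the product running over the finitely many closed points $y$ of $Y$ above $x$. Since $\pi$ is finite and dominant, the induced map $Y_0\to X_0$ is surjective and each closed point of $Y$ lies over a unique closed point of $X$, so multiplying these local identities over $x\in X_0$ gives exactly $L_X((\pi_\ast F)^D,s)=L_Y(F^D,s)$.

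First I would reduce $i_x^\ast(\pi_\ast F)^D$ to a product of induced modules. As in the proof of \cref{cohomology_compact_pushforward}, the finite base change theorem gives $R\pi^!\ZZ^c_X=R\pi^!\GG_X[1]=\GG_Y[1]=\ZZ^c_Y$, and since $\pi_\ast$ is exact (hence $\pi_\ast=\pi_!$) with right adjoint $R\pi^!$, one gets $(\pi_\ast F)^D=\pi_\ast(F^D)$. Because $\pi$ is finite, $\pi_\ast$ commutes with arbitrary base change; pulling back along $i_x\colon x\to X$ and using that $Y\times_X x$ has the same étale cohomology as its reduction $\coprod_{\pi(y)=x}\Spec\kappa(y)$, with each residue extension $\kappa(y)/\kappa(x)$ a separable extension of finite fields, I obtain an isomorphism of complexes of $G_x$-modules
\[
i_x^\ast(\pi_\ast F)^D\;\simeq\;\prod_{\pi(y)=x}\ind_{G_y}^{G_x}\bigl(i_y^\ast F^D\bigr),
\]
where $G_y\subset G_x$ is the open subgroup of index $f_y:=[\kappa(y):\kappa(x)]$ topologically generated by $\varphi_x^{f_y}$. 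Now $\nu^\ast$ and $-\,\widehat{\otimes}\,\QQ_{l}$ commute with finite products, and with $\ind_{G_y}^{G_x}$ as well (for the finite-index inclusion $G_y\subset G_x$ this functor is computed by tensoring with the finite free $\ZZ[G_y]$-module $\ZZ[G_x]$, hence is exact and commutes with the relevant limits and colimits); moreover by \cref{L_function_pushforward} each local factor is independent of the auxiliary prime, so I may fix a single prime $l$ coprime to the residual characteristic of $x$ (equivalently of every $y$ above $x$) and use it on both sides.

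It then remains to compute the characteristic polynomial of geometric Frobenius on an induced representation: for $V_y:=(\nu^\ast i_y^\ast F^D)\,\widehat{\otimes}\,\QQ_l$, Shapiro's lemma for (graded) determinants gives $\det\bigl(I-T\varphi_x\mid\ind_{G_y}^{G_x}V_y\bigr)=\det\bigl(I-T^{f_y}\varphi_y\mid V_y\bigr)$ with $\varphi_y=\varphi_x^{f_y}$ the geometric Frobenius at $y$. Taking $T=N(x)^{-s}$ and using $N(x)^{f_y}=N(y)$ yields $\det(I-N(x)^{-s}\varphi_x\mid\ind_{G_y}^{G_x}V_y)^{-1}=L_y(F^D,s)$, and multiplying over $\pi(y)=x$ gives the claimed local identity. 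I expect the main obstacle to be the careful justification of the base-change isomorphism $i_x^\ast(\pi_\ast F)^D\simeq\prod_{\pi(y)=x}\ind_{G_y}^{G_x}(i_y^\ast F^D)$ at the level of complexes — dealing with the possible non-reducedness of the fibre $Y\times_X x$, the separability bookkeeping, and the identification of the finite pushforward along $\Spec\kappa(y)\to\Spec\kappa(x)$ with induction — together with verifying that $-\,\widehat{\otimes}\,\QQ_l$, computed on the proétale site, genuinely commutes with these operations; the subsequent determinant manipulation is standard. Alternatively one could try to deduce the statement from the explicit formula of \cref{explicit_expression_L_function} by comparing the normalizations of $X$ and $Y$, but the direct argument via finite base change seems cleaner.
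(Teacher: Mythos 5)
Your proof is correct, but it takes a genuinely different route from the paper. The paper disposes of this proposition in one line: it applies the explicit local-factor formula of \cref{explicit_expression_L_function} to $\pi_\ast F$, using $(\pi_\ast F)_\eta=\ind_{G_{K(Y)}}^{G_K}F_\eta$ and $(\pi_\ast F)_x=\prod_{\pi(y)=x}\ind_{G_y}^{G_x}F_y$, and then invokes the classical compatibility of the $Q$-factors with induction (Neukirch, VII.10.4(iv)). You instead argue directly from the definition of the Euler factors: $(\pi_\ast F)^D=\pi_\ast(F^D)$ via $R\pi^!\ZZ^c_X=\ZZ^c_Y$, then finite base change plus topological invariance of the \'etale site along the possibly non-reduced fibre gives $i_x^\ast\pi_\ast(F^D)\simeq\prod_{\pi(y)=x}\ind_{G_y}^{G_x}(i_y^\ast F^D)$, and the same determinant identity for induced representations (now applied to the completed $\QQ_l$-linearization) finishes the local comparison. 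Both arguments ultimately rest on the same induction identity $\det(I-T\varphi_x\mid\ind_{G_y}^{G_x}V)=\det(I-T^{f_y}\varphi_y\mid V)$; the difference is where the work is done. The paper's route reuses the heavy lifting already packaged in \cref{explicit_expression_L_function} (all the pro\'etale/condensed bookkeeping is done there once), so the proposition becomes a formal consequence. Your route is more self-contained and conceptually cleaner at the level of complexes, but it obliges you to re-verify the commutation of $\nu^\ast$ and $-\,\widehat{\otimes}\,\QQ_l$ with finite products and with $\ind_{G_y}^{G_x}$ — which is indeed unproblematic here, since induction from a finite-index open subgroup is a finite direct sum on underlying (condensed) abelian groups, so it commutes with $\otimes^L\ZZ/l^n$ and $R\lim$ — and to justify the base-change step, which is fine because finite pushforward in the \'etale setting commutes with arbitrary base change and $\kappa(x)$ is perfect. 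Your fixing of a single auxiliary prime $l$ for $x$ and all $y$ above it is legitimate by \cref{L_function_pushforward}. In short: no gap, just a different (definition-level) decomposition where the paper prefers to quote its explicit formula.
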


\begin{proof}
	This follows readily from \cref{explicit_expression_L_function} using the compatibility of local factors with induction \cite[VII.10.4 (iv) and its proof]{NeukirchANT}.
\end{proof}

We mention the following consequence of \cite{AMorin21}:
\begin{prop}
	Let $X=\Spec(\cal{O})$ be the spectrum of an order in a number field $K$, with open subscheme $j:U \to X$. Let $\omega$ be the number of roots of units in $K$, $\Delta_{K}$ its discriminant, $r_1$ and $r_2$ respectively the number of real and complex places of $K$, and $R_U$ the regulator introduced in \cite{AMorin21} for irreducible $1$-dimensional arithmetic schemes. We have
	\[
	L_X^\ast((j_!\ZZ)^D,0)=\frac{2^{r_1}(2\pi)^{r_2}[\mathrm{CH}_0(U)]R_U}{\omega \sqrt{|\Delta_{K}|}}
	\]
\end{prop}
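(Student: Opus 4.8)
The plan is to deduce this from the special value theorem of the present paper applied to the sheaf $F=j_!\ZZ$, followed by an explicit evaluation of the resulting Weil-étale Euler characteristic in which the quantities of \cite{AMorin21} appear. First I would record that $j_!\ZZ$ is admissible input: it is red (the remark following \cref{weil_etale_jZ}, or the proposition on $j_!G$ for $G$ locally constant on a regular open), and it is tamely ramified because $(j_!\ZZ)_\eta=\ZZ$ with the trivial $G_K$-action (we may assume $U\neq\emptyset$, hence $U$ dense and $\eta\in U$; for $U=\emptyset$ the statement is empty). Thus the special value theorem gives $L_X^\ast((j_!\ZZ)^D,0)=\pm\chi_X((j_!\ZZ)^D)$, together with the ``on the nose'' identity $\lambda^{-1}\!\left(L_X^\ast((j_!\ZZ)^D,0)^{-1}\cdot\ZZ\right)=\Delta_X((j_!\ZZ)^D)$, so it remains to compute $\chi_X((j_!\ZZ)^D)$ and to pin down the sign.

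For the additive factor, since $(j_!\ZZ)_\eta=\ZZ$ carries the trivial action, \cref{noether_theorem} gives $\Lie_X((j_!\ZZ)^D)=R\Gamma(G_K^t,\cal{O}_{K^t})[1]=\cal{O}_K[1]$; I would stress that this depends only on the \emph{maximal} order $\cal{O}_K$, which is the conceptual reason the discriminant $\Delta_K$ (rather than that of $\cal{O}$) shows up, even when $X$ is singular. Via \cref{lie_tensor_R} the factor $\det_\ZZ\Lie_X((j_!\ZZ)^D)^{-1}$ contributes to the trivialization $\lambda$ the covolume of $\cal{O}_K$ inside $\cal{O}_K\otimes\RR=\prod_{v\mid\infty}K_v$, namely $\sqrt{|\Delta_K|}$ up to a power of $2$, together with the period $(2\pi)^{r_2}$ attached to the Tate twist $\ZZ(1)=2i\pi\ZZ$ at the complex places.

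For the multiplicative factor I would feed the groups $H^i_{W,c}(X,(j_!\ZZ)^D)$, computed from \cref{weil-etale_perfect,cohomology_compact_support_jZD} (i.e.\ \cref{weil_etale_jZ} in the regular case and its evident analogue with $\Pic^+(U)$, $\mathrm{CH}_0(U)$ and the totally positive $S$-units in general), into the distinguished triangle $R\Gamma_{ar,c}(X,(j_!\ZZ)^D_\RR)\to R\Gamma_{W,c}(X,(j_!\ZZ)^D)\otimes\RR\to\Lie_X((j_!\ZZ)^D)\otimes\RR$ defining $\lambda$, which comes from \cref{duality_theorem} and \cref{prop:rational_splitting}. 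The torsion of these groups, together with the finite group $(\ZZ/2\ZZ)^{r_1}$ of the six-term sequence of \cref{cohomology_compact_support_jZD}, contributes $2^{r_1}[\mathrm{CH}_0(U)]\,\omega^{-1}$ (absorbing the powers of $2$ left over from the covolume of $\cal{O}_K$); the free parts — an extension by $\ZZ^{r_2}$ of the totally positive $S$-units, paired against $\ker(\ZZ^{s_f}\xrightarrow{\Sigma}\ZZ)$ — are compared with the real trivialization via $\log|\cdot|$ at the archimedean places and, by the product formula exactly as in the proof of \cref{duality_theorem}, via $-\log N(x)$ at the points of $X\setminus U$; this covolume is precisely the regulator $R_U$ of \cite[\S4]{AMorin21}. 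Assembling the three contributions gives $\chi_X((j_!\ZZ)^D)=\dfrac{2^{r_1}(2\pi)^{r_2}[\mathrm{CH}_0(U)]R_U}{\omega\sqrt{|\Delta_K|}}$. The sign is $+$: both $\chi_X((j_!\ZZ)^D)$ and the right-hand side are positive by construction, and independently one reads positivity of $L_X^\ast((j_!\ZZ)^D,0)$ off the factorization $L_X((j_!\ZZ)^D,s)=\zeta_U(s+1)\prod_{x\in X\setminus U}\frac{\zeta_x(s+1)}{\zeta_x(s)}$ from \cref{explicit_expression_L_function}, since $\zeta_U$ has positive residue at $s=1$ and each factor $\zeta_x(s+1)(1-N(x)^{-s})$ has leading term $\tfrac{\log N(x)}{1-N(x)^{-1}}\,s$.

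The main obstacle is the bookkeeping of rational and $2$-adic normalizations in the third paragraph: matching the several powers of $2$ (real places: $\cal{O}_K^\times$ versus totally positive units, $\Pic$ versus $\Pic^+$, and the normalization of the covolume of $\cal{O}_K$), the powers of $2\pi$ coming from $\ZZ(1)$, and especially checking that the covolume produced by the trivialization $\lambda$ coincides \emph{on the nose} with the normalization of $R_U$ fixed in \cite{AMorin21}, rather than merely up to a rational factor. Once these conventions are reconciled the identity follows; consistency with the classical analytic class number formula (and with \cite{Poonen20}) can then be checked against the factorization of $L_X((j_!\ZZ)^D,s)$ above.
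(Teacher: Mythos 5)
Your route — run \cref{special_value_thm} on the red, tamely ramified sheaf $j_!\ZZ$ and then evaluate $\chi_X((j_!\ZZ)^D)$ explicitly — is genuinely different from the paper's proof, and it is viable in principle (the paper itself says so in the remark following this proposition). But as written it has a real gap: the entire quantitative content of the statement sits in your third paragraph, and there you assert rather than prove the two decisive identifications, namely (i) that the free-part covolume produced by the trivialization $\lambda$ (equivalently, the quantity $R((j_!\ZZ)^D)$ of \cref{explicit_characteristic}, computed from the complex $B^\bullet$ built out of $\cal{O}_{K,S}^\times$, the archimedean $\mathrm{Log}$ maps and the valuations at $X\setminus U$) coincides with the regulator $R_U$ in the exact normalization of \cite{AMorin21}, and (ii) that the torsion and $2$-power bookkeeping ($\Pic^+$ versus $\Pic$/$\mathrm{CH}_0(U)$, totally positive $S$-units versus $S$-units, $\Ext^1_{G_\RR,X(\CC)}(\alpha^\ast F,2i\pi\ZZ)=(\ZZ/2\ZZ)^{r_1}$, $\omega$ arising as $[\Hom_X(j_!\ZZ,\GG_X)_{tor}]$) assembles to exactly $2^{r_1}[\mathrm{CH}_0(U)]/\omega$. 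You flag this yourself as ``the main obstacle,'' but it is not a convention-check one can wave through: it is the computation. The paper's remark indicates what is needed to close it — the snake diagram computing $R((j_!\ZZ)^D)$ (which in the singular case also involves $\mathrm{CH}_0(X,1)$ and the points of the normalization over the singular locus and over $X\setminus U$) fed into \cref{explicit_characteristic}, in the spirit of the $F=\ZZ$ computation of \cref{special_value_Z}. Some of your attributions also suggest the bookkeeping hasn't been traced: $\sqrt{|\Delta_K|}$ enters as $\Disc(j_!\ZZ)$ exactly (not ``up to a power of $2$''), and $2^{r_1}$, $(2\pi)^{r_2}$ come from $\Ext^1$ and $\Hom$ of $\alpha^\ast F$ into $2i\pi\ZZ$, not from a leftover of the covolume of $\cal{O}_K$.

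For comparison, the paper's actual proof avoids all of this. Writing $\pi:Y\to X$ for the normalization and comparing with \cref{explicit_expression_L_function}, one has $L_X((j_!\ZZ)^D,s)=L_Y(\ZZ^D,s)\prod_{v\in Z}\zeta_v(s)\big/\prod_{w\in T}\zeta_w(s)$, where $Z\subset U$ is the singular locus and $T\subset Y$ lies over $Z\cup(X\setminus U)$; since $Y$ is regular, $L_Y(\ZZ^D,s)=\zeta_K(s+1)$ and the classical analytic class number formula gives its special value, while the finite correction factors contribute ratios of $\log N$'s. The same factorization applied to $L_X(j_!\ZZ,s)=\zeta_U(s)$ together with \cite[thm. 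C]{AMorin21} converts $h_KR_K$ and those $\log N$ factors into $[\mathrm{CH}_0(U)]R_U$ on the nose, which is precisely the normalization-matching step your plan leaves open. Your positivity argument for the sign is fine, but note it is only needed on your route; the paper's proof produces the positive value directly.
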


\begin{proof}
	Denote $f:V\to X$ the regular locus of $U$ seen as an open subscheme of $X$ and $Z$ its closed complement in $U$. Denote also $\pi:Y\to X$ the normalization of $X$, $f':V\to Y$ the inclusion and $K$ the function field of $X$ and $Y$. We have $f=\pi f'$ and $\pi$ induces an isomorphism between $\pi^{-1}(V)$ and $V$. Finally, denote $T$ the closed complement of $V$ in $Y$. We have for the $L$-functions of $\ZZ$-constructible sheaves:
	\[
	L_X(j_!\ZZ)=\zeta_U=\zeta_Y \frac{\prod_{v\in Z} \zeta_v}{\prod_{w\in T}\zeta_w}
	\]
	Hence by \cite[thm. C]{AMorin21}, we get the relation
	\[
	\frac{[\mathrm{CH}_0(U)]R_U}{\omega}=\frac{h_K R_K\prod_{w\in T}\log N(w)}{\omega \prod_{v\in Z}\log N(v)}
	\]
	which simplifies to
	\[
	[\mathrm{CH}_0(U)]R_U=\frac{h_K R_K\prod_{w\in T}\log N(w)}{\prod_{v\in Z}\log N(v)}
	\]
	On the other hand, we have similarly
	\[
	L_X((j_!\ZZ)^D)=L_Y(\ZZ^D) \frac{\prod_{v\in Z}\zeta_v}{\prod_{w\in T}\zeta_w}
	\]
	hence
	\begin{align*}
	L_X^\ast((j_!\ZZ)^D,0)= \zeta_Y^\ast(1) \frac{\prod_{v\in Z}\zeta^\ast_v(0)}{\prod_{w\in T}\zeta^\ast_w(0)}&=\frac{2^{r_1}(2\pi)^{r_2}h_K R_K\prod_{w\in T}\log N(w)}{\omega \sqrt{|\Delta_K|}\prod_{v\in Z}\log N(v)}\\
	&=\frac{2^{r_1}(2\pi)^{r_2}[\mathrm{CH}_0(U)]R_U}{\omega \sqrt{|\Delta_K|}}
	\end{align*}
\end{proof}

\begin{rmk}
	When $X$ is singular we do not necessarily have $L_X(\GG_X[1],s)=\zeta_X(s+1)$, as can be seen from \cref{explicit_expression_L_function}. This formula is to be compared with the formula for $\zeta_U^\ast(1)$:
	\[
	\zeta_U^\ast(1)=\frac{2^{r_1}(2\pi)^{r_2}h_K R_K}{\omega \sqrt{|\Delta_K|}}\frac{\prod_{v\in Z} 1/(1-N(v)^{-1})}{\prod_{w\in T} 1/(1-N(w)^{-1})}
	=\frac{2^{r_1}(2\pi)^{r_2}[\mathrm{CH}_0(U)] R_U}{\omega \sqrt{|\Delta_K|}}\frac{\prod_{v\in Z} \log N(v)/(1-N(v)^{-1})}{\prod_{w\in T} \log N(w)/(1-N(w)^{-1})}
	\]
	and also with the formula from \cite{Poonen20}.
\end{rmk}

\begin{rmk}
	We could have also obtained this using \cref{special_value_thm}. Suppose that $U$ is a proper subscheme of $X$. Denote $S_f$ the finite places in the normalization $\pi:Y\to X$ that are not above $U$, and $Z$ the closed complement of $U$. We can compute explicitly $R((j_! \ZZ)^D)$, by considering the following snake diagram:
	\[\begin{tikzpicture}[baseline= (a).base]
	\node[scale=.85] (a) at (0,0){
		\begin{tikzcd}
		&
		& 
		& 0 \ar{r} \arrow[d, hook]
		& (H^{1}(X,(j_!\ZZ))_\RR)^\vee \arrow[d, hook]   
		&
		&\\
		& 0 \ar{r}
		& 0 \ar{r} \ar{dd}
		& \mathrm{CH}_0(X,1)_\RR \arrow[r, equal] \ar{dd}[near start]{\mathrm{Log}}
		& \mathrm{CH}_0(X,1)_\RR \ar{r} \ar{dd}[near start]{\mathrm{Log}}
		&  0
		& ~\\
		&
		&
		& ~
		&
		&
		\ar[r, phantom, ""{coordinate, name=Y}]
		&~\\
		~& \ar[l, phantom, ""{coordinate, name=Z}] 0 \ar{r}
		& \displaystyle\prod_{v\in S_f}\RR \times \displaystyle\prod_{v\in Z}\displaystyle\prod_{\pi(w)=v} \RR \ar{r} \arrow[d, equal]
		& \displaystyle\prod_{v\in S_f}\RR \times \displaystyle\prod_{v\in Z}\displaystyle\prod_{\pi(w)=v} \RR \times (\displaystyle\prod_\sigma \RR)^{G_\RR} \ar{r} \arrow[d, two heads,"\Sigma \times \prod_v \Sigma \times \Sigma"]
		& (\displaystyle\prod_\sigma \RR)^{G_\RR} \ar{r} \arrow[d, two heads]
		& 0
		& \\
		&
		& \ar[from=uuuurr, crossing over, rounded corners,
		to path=
		{ -- ([xshift=2ex]\tikztostart.east)
			-| (Y) [near end]\tikztonodes
			-| (Z) [near end]\tikztonodes
			|- ([xshift=-2ex]\tikztotarget.west)
			-- (\tikztotarget)}
		]\displaystyle\prod_{v\in S_f}\RR \times \displaystyle\prod_{v\in Z}\displaystyle\prod_{\pi(w)=v} \RR \ar{r}{\Sigma \times \prod_v \Sigma}
		& \RR \times \displaystyle\prod_{v\in Z} \RR \ar{r}
		& 0 
		&
		&
		\end{tikzcd}
	};
	\end{tikzpicture}\]
	If $U=X$, we get a similar diagram with $H^0(X,\ZZ)_\RR^\vee$ instead of $0$ at the bottom, enabling us to compute $R(\ZZ^D)$. We then obtain the formula using \cref{explicit_characteristic}.
\end{rmk}

\section{The Weil-étale Euler characteristic and the special value theorem}

\subsection{Definitions}

\begin{defi}
	Let $F$ be a $\ZZ$-constructible sheaf on $X$. We define Weil-Arakelov cohomology wtih coefficients in $F^D$ as the complex:
	\[
	R\Gamma_{ar,c}(X,F^D_\RR):=R\Gamma_{c,\cal{D}}(X,F^D_\RR)[-1] \oplus R\Gamma_{c,\cal{D}}(X,F^D_\RR)
	\]
\end{defi}
The determinant of Weil-Arakelov cohomology has a canonical trivialization
\[
\det_\RR R\Gamma_{ar,c}(X,F^D_\RR) \xrightarrow{\simeq} \det_\RR(R\Gamma_{c,\cal{D}}(X,F^D_\RR))^{-1}\otimes \det_\RR(R\Gamma_{c,\cal{D}}(X,F^D_\RR)) \xrightarrow{\simeq} \RR
\]
We now put an integral structure on the determinant of Weil-Arakelov cohomology. Let $F$ be a tamely ramified red or blue sheaf. Recall that $\Lie_X(F^D)_\RR=R\Hom_{G_\RR,X(\CC)}(\alpha^\ast F,\CC[1])$. Consider the map
\[
R\Gamma_{W,c}(X,F^D)_\RR = R\Hom(R\Gamma(X,F),\RR[-1])\oplus R\Gamma_{c,B}(X,F^D)_\RR \xrightarrow{p} R\Gamma_{c,B}(X,F^D)_\RR \xrightarrow{\mathrm{Log}} R\Hom_{G_\RR,X(\CC)}(\alpha^\ast F,\CC[1])
\]
By the duality \cref{duality_theorem} and the remark after \cref{defi_deligne_compact}, its mapping fiber is 
\[
R\Hom(R\Gamma(X,F),\RR[-1]) \oplus R\Gamma_{c,\cal{D}}(X,F^D_\RR) \simeq R\Gamma_{ar,c}(X,F^D_\RR)
\]
We thus have a distinguished triangle
\[
R\Gamma_{ar,c}(X,F^D_\RR) \to R\Gamma_{W,c}(X,F^D)_\RR \to \Lie_X(F^D)_\RR
\]
and we find by taking determinants a natural trivialization 
\begin{align*}
	\lambda:\Delta_X(F^D)_\RR = \det_\RR(R\Gamma_{W,c}(X,F^D)_\RR) \otimes \det_\RR(\Lie_X(F^D)_\RR)^{-1} &\xrightarrow{\simeq} \det_\RR(R\Gamma_{ar,c}(X,F^D_\RR) ) \xrightarrow{\simeq}\RR
\end{align*}

\begin{defi}
	Let $F$ be a tamely ramified red or blue sheaf.  The Weil-étale Euler characteristic of $F^D$ is the positive real number $\chi_X(F^D)$ such that
	\[
	\lambda(\Delta_X(F^D))=(\chi_X(F^D))^{-1}\ZZ \hookrightarrow \RR
	\]
\end{defi}

\begin{rmk}
	If we let $\cup\theta$ be the map given by the composition of projections and inclusions
	\[
	\cup\theta :R\Gamma_{ar,c}(X,F_\RR^D) \to R\Gamma_{c,\cal{D}}(X,F^D_\RR) \to R\Gamma_{ar,c}(X,F_\RR^D)[1]
	\]
	then $\cup\theta$ induces a long exact sequence
	\[
	\cdots \to H^{i-1}_{ar,c}(X,F^D_\RR) \xrightarrow{\cup\theta} H^i_{ar,c}(X,F^D_\RR) \xrightarrow{\cup\theta} H^{i+1}_{ar,c}(X,F^D_\RR) \to \cdots
	\]
	which gives a trivialization
	\[\lambda':\bigotimes_{i\in \ZZ} (\det_\RR  H^i_{ar,c}(X,F^D_\RR))^{(-1)^{i}}\xrightarrow{\simeq} \RR\]
	which coincides with $\lambda$ under the isomorphism $\det_\RR R\Gamma_{ar,c}(X,F^D_\RR)\simeq \bigotimes_{i\in \ZZ} (\det_\RR  H^i_{ar,c}(X,F^D_\RR))^{(-1)^{i}}$. We can thus define alternatively $\chi_X(F^D)$ such that
	\[
	\lambda\left(\bigotimes_{i\in \ZZ} (\det_\ZZ  H^i_{W,c}(X,F^D))^{(-1)^{i}}\otimes \bigotimes_{i\in \ZZ} (\det_\ZZ  H^i\Lie_X(F^D))^{(-1)^{i+1}}\right)=(\chi_X(F^D))^{-1}\ZZ \hookrightarrow \RR
	\]
\end{rmk}

\begin{thm}
	The Weil-étale Euler characteristic is multiplicative with respect to red-to-blue short exact sequences of tamely ramified sheaves.
\end{thm}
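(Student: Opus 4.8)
The plan is to reduce the multiplicativity of $\chi_X(F^D)$ to the multiplicativity (with respect to red-to-blue short exact sequences) of the three ingredients from which it is built: the Weil-étale complex $R\Gamma_{W,c}(X,F^D)$, the tangent space $\Lie_X(F^D)$, and the trivialization $\lambda$. Let $0\to F'\to F\to F''\to 0$ be a red-to-blue short exact sequence of tamely ramified sheaves. By \cref{weil_etale_well_defined} it yields a distinguished triangle $R\Gamma_{W,c}(X,(F'')^D)\to R\Gamma_{W,c}(X,F^D)\to R\Gamma_{W,c}(X,(F')^D)\to$ (contravariance in $F$), hence a canonical isomorphism of graded lines $\det_\ZZ R\Gamma_{W,c}(X,F^D)\simeq \det_\ZZ R\Gamma_{W,c}(X,(F')^D)\otimes \det_\ZZ R\Gamma_{W,c}(X,(F'')^D)$. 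Since $\Lie_X$ is an exact functor on tamely ramified sheaves (it is $R\Hom_{G_K^t}(g^\ast(-),\cal{O}_{K^t}[1])$, and $g^\ast$ is exact), one likewise gets a distinguished triangle $\Lie_X((F'')^D)\to\Lie_X(F^D)\to\Lie_X((F')^D)\to$ and a corresponding factorization of determinants. Tensoring, we obtain a canonical isomorphism $\Delta_X(F^D)\simeq \Delta_X((F')^D)\otimes\Delta_X((F'')^D)$ of invertible $\ZZ$-modules.

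Next I would check that this isomorphism is compatible with the trivializations $\lambda$, i.e. that the diagram relating $\Delta_X(F^D)_\RR\xrightarrow{\lambda}\RR$ to $\Delta_X((F')^D)_\RR\otimes\Delta_X((F'')^D)_\RR\xrightarrow{\lambda\otimes\lambda}\RR$ commutes. Recall $\lambda$ is constructed from the distinguished triangle $R\Gamma_{ar,c}(X,F^D_\RR)\to R\Gamma_{W,c}(X,F^D)_\RR\to \Lie_X(F^D)_\RR$, which in turn comes from the $\mathrm{Log}$ map composed with the rational splitting $p$ and the duality \cref{duality_theorem}. The key point is that all three of these inputs are natural in red-to-blue morphisms and red-to-blue short exact sequences: the rational splitting by \cref{prop:rational_splitting}, the duality pairing $R\Gamma(X,F)_\RR\otimes^L R\Gamma_{c,\cal{D}}(X,F^D_\RR)\to\RR$ by its construction via the universal pairing into $R\Gamma_{c,\cal{D}}(X,\ZZ^D_\RR)$ (which is evidently functorial), and $\mathrm{Log}$ by its definition as a composite of natural transformations. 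Consequently one gets a morphism of distinguished triangles relating the Weil-Arakelov triangles for $F'$, $F$, $F''$, and the determinant of a morphism of triangles is compatible with the canonical trivialization $\det_\RR R\Gamma_{ar,c}(X,F^D_\RR)\simeq \det_\RR(R\Gamma_{c,\cal{D}})^{-1}\otimes\det_\RR(R\Gamma_{c,\cal{D}})\simeq\RR$ — this is a formal property of determinants of acyclic complexes / octahedra. Tracking through, $\lambda$ factors as $\lambda'\otimes\lambda''$ up to the identification $\Delta_X(F^D)_\RR\simeq \Delta_X((F')^D)_\RR\otimes\Delta_X((F'')^D)_\RR$.

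Finally, combining the integral factorization of $\Delta_X(F^D)$ with the compatibility of the real trivializations: since $\lambda(\Delta_X(F^D))=\chi_X(F^D)^{-1}\ZZ$ and $\lambda$ respects the tensor decomposition into the pieces for $F'$ and $F''$, the image of the integral lattice $\Delta_X((F')^D)\otimes\Delta_X((F'')^D)$ is $\chi_X((F')^D)^{-1}\chi_X((F'')^D)^{-1}\ZZ$, whence $\chi_X(F^D)=\chi_X((F')^D)\chi_X((F'')^D)$. The main obstacle is the second step: verifying carefully that the octahedron assembling the Weil-Arakelov triangle from the $\mathrm{Log}$/splitting/duality data is itself functorial in red-to-blue short exact sequences, and that passing to determinants of this $3\times 3$ (really, morphism-of-triangles) diagram is compatible with the canonical $\RR$-trivializations — this is where one must be attentive to signs and to the precise form of the determinant functor $\det_A=H^\ast g_A$ fixed in the Notations, but no genuinely new input beyond \cref{prop:rational_splitting,duality_theorem,weil_etale_well_defined} is needed.
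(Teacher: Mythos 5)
Your first step contains the gap. You invoke \cref{weil_etale_well_defined} to get a \emph{distinguished} triangle $R\Gamma_{W,c}(X,(F'')^D)\to R\Gamma_{W,c}(X,F^D)\to R\Gamma_{W,c}(X,(F')^D)\to$ and then apply the determinant functor to it. But that theorem does not produce a distinguished triangle: $R\Gamma_{W,c}(X,(-)^D)$ is only defined as a mapping cone in the homotopy category, the connecting map for a red-to-blue short exact sequence is constructed by hand, and the resulting triangle is only shown to induce a long exact cohomology sequence --- the paper explicitly flags that it is \emph{not} (known to be) distinguished. Consequently the ``canonical isomorphism of graded lines'' $\det_\ZZ R\Gamma_{W,c}(X,F^D)\simeq \det_\ZZ R\Gamma_{W,c}(X,(F')^D)\otimes\det_\ZZ R\Gamma_{W,c}(X,(F'')^D)$ cannot be obtained by feeding this triangle to a determinant functor on $D_{perf}(\ZZ)$; your integral factorization of $\Delta_X(F^D)$ is unjustified at exactly the point where the whole difficulty of the construction sits.

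The way around this (and the route the paper takes) is to exploit the convention $\det_\ZZ=H^\ast g_\ZZ$: since the determinant factors through graded cohomology, the long exact cohomology sequence of the (possibly non-distinguished) triangle, viewed as an acyclic object of $\mathrm{Gr}^b(\mathrm{Mod}^{ft}_\ZZ)$, already furnishes the isomorphism of integral determinant lines; the $\Lie$-triangle, being genuinely distinguished, is treated the same way. One must then check that after $\otimes\RR$, where both triangles \emph{are} distinguished, the exact-sequence structure on cohomology coming from the distinguished triangles agrees with the one base-changed from $\ZZ$, so that the trivializations $\lambda$ (built from the Weil--Arakelov triangle, the rational splitting and the duality theorem, as you correctly describe) are compatible with these determinant isomorphisms. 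Your second and third steps are essentially this compatibility check and would go through once the first step is repaired, but as written the proof starts from a distinguished triangle that the paper's construction does not provide.
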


\begin{proof}
	Let $0\to F \to G \to H \to 0$ be a red-to-blue short exact sequence of tamely ramified sheaves. We let
	\begin{align*}
	&T_1: R\Gamma_{W,c}(X,G^D) \to R\Gamma_{W,c}(X,H^D) \to R\Gamma_{W,c}(X,F^D) \to\\
	&T_2: \Lie_X(G^D) \to \Lie_X(H^D) \to \Lie_X(F^D) \to 
	\end{align*}
	be the two natural triangles; the first one is \emph{not} distinguished. For $R=\ZZ,\RR$, let $g_R$ be a determinant functor on $\mathrm{Gr}^b(\mathrm{Mod}_R^{ft})$ with values in graded $R$-lines extending the usual determinant functor on projective $R$-modules of finite type. The graded cohomology functor \[H:D_{perf}(R) \to \mathrm{Gr}^b(\mathrm{Mod}_R^{ft})\] together with the assignment that sends a distinguished triangle $T:X\xrightarrow{u} Y \to Z \to $ to the exact sequence 
	\[
	0 \to \ker(H(u)) \to H(X) \to H(Y) \to H(Z) \xrightarrow{\partial} \ker(H(u))[1] \to 0\] induced by the long exact sequence, induces a pullback functor $H^\ast$ on Picard groupoids of determinants. We put $f_R=H^\ast g_R$. Notice that $H(T)$ makes sense for any triangle (not necessarily distinguished) that induces a long exact cohomology sequence. There is a base change isomorphism $\gamma:(g_\ZZ(-))_\RR \xrightarrow{\simeq} g_\RR((-)_\RR)$, which induces a base change isomorphism $H^\ast(\gamma):(f_\ZZ(-))_\RR \xrightarrow{\simeq} f_\RR((-)_\RR)$. By \cref{weil_etale_well_defined}, $H(T_1)$ is an exact sequence in $\mathrm{Gr}^b(\mathrm{Mod}_\ZZ^{ft})$, while $H(T_2)$ is because $T_2$ is distinguished. On the other hand, $T_{1,\RR}$ and $T_{2,\RR}$ are both distinguished triangles and the structure of exact sequence on $H(T_{1,\RR})$ and $H(T_{2,\RR})$ coming from the distinguished triangles coincides with the structure coming by base change from $H(T_1)$ and $H(T_2)$. We get a commutative diagram (see \cite[thm. 6.3 and its proof]{AMorin21}):
	\[
	\begin{tikzcd}[ampersand replacement=\&, column sep = 4cm]
		\parbox{8cm}{\centering $f_\ZZ(R\Gamma_{W,c}(X,H^D)) \otimes f_\ZZ(R\Gamma_{W,c}(X,F^D)) \otimes f_\ZZ(\Lie_X(H^D))^{-1}\otimes f_\ZZ(\Lie_X(F^D))^{-1}$} \& \parbox{4cm}{\centering $f_\ZZ(R\Gamma_{W,c}(X,G^D)) \otimes f_\ZZ(\Lie_X(G^D))^{-1}$} \\
			\parbox{8cm}{\centering $(f_\ZZ(R\Gamma_{W,c}(X,H^D)))_\RR \otimes (f_\ZZ(R\Gamma_{W,c}(X,F^D))_\RR \otimes (f_\ZZ(\Lie_X(H^D))^{-1})_\RR\otimes (f_\ZZ(\Lie_X(F^D))^{-1})_\RR $} \& 	\parbox{4cm}{\centering $(f_\ZZ(R\Gamma_{W,c}(X,G))_\RR \otimes (f_\ZZ(\Lie_X(G))_\RR$} \\
		{f_\RR(\Delta_X(H^D)_\RR)\otimes f_\RR(\Delta_X(F^D)_\RR)} \& {f_\RR(\Delta_X(F^D)_\RR)} \\
		\RR\otimes\RR \& \RR
		\arrow["{g_\ZZ(HT_1)\otimes g_\ZZ(HT_2)^{-1}}", from=1-1, to=1-2]
		\arrow[from=1-1, to=2-1]
		\arrow[from=1-2, to=2-2]
		\arrow["{(g_\ZZ(HT_1))_\RR\otimes (g_\ZZ(HT_2)^{-1})_\RR}", from=2-1, to=2-2]
		\arrow["{H^\ast(\gamma)\otimes H^\ast(\gamma)\otimes H^\ast(\gamma)^{-1}\otimes H^\ast(\gamma)^{-1}}", from=2-1, to=3-1]
		\arrow["{H^\ast(\gamma)\otimes H^\ast(\gamma)^{-1}}"', from=2-2, to=3-2]
		\arrow["{g_\RR(H(T_{1,\RR}))\otimes g_\RR(H(T_{2,\RR})^{-1})=f_\RR(T_{1,\RR})\otimes f_\RR(T_{2,\RR})^{-1}}", from=3-1, to=3-2]
		\arrow[from=3-1, to=4-1]
		\arrow[from=3-2, to=4-2]
		\arrow["\mathrm{mult}", from=4-1, to=4-2]
	\end{tikzcd}\]
	Under the multiplication map, the image of $x\ZZ \otimes y\ZZ \subset \RR \otimes \RR$ is $xy\ZZ$.
\end{proof}

\begin{prop}
	Let $Y=\Spec(\cal{O}')$ be the spectrum of an order in a number field with a finite dominant morphism $\pi:Y\to X$. Suppose that $K(Y)/K$ is tamely ramified, and let $F$ be a tamely ramified red or blue sheaf on $Y$. Then
	\[
	\chi_X((\pi_\ast F)^D)=\chi_Y(F^D).
	\]
\end{prop}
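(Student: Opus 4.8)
The plan is to assemble the compatibility of all the pieces entering the definition of $\chi$ under the finite dominant morphism $\pi$. Recall that $\chi_X((\pi_\ast F)^D)$ is read off from the trivialization $\lambda$ applied to the integral structure $\Delta_X((\pi_\ast F)^D)=\det_\ZZ R\Gamma_{W,c}(X,(\pi_\ast F)^D)\otimes\det_\ZZ\Lie_X((\pi_\ast F)^D)^{-1}$. We already have at our disposal: the isomorphism $R\Gamma_{W,c}(X,(\pi_\ast F)^D)\simeq R\Gamma_{W,c}(Y,F^D)$ (from the functoriality theorem for the Weil-étale complex, since $K(Y)/K$ tamely ramified implies $F$ and $\pi_\ast F$ are both tamely ramified and red/blue), the isomorphism $\Lie_X((\pi_\ast F)^D)\simeq\Lie_Y(F^D)$ (from the proposition on $\Lie$ and finite morphisms, which requires $K(Y)/K$ tame), and the isomorphism $R\Gamma_{c,\cal{D}}(Y,F^D_\RR)\simeq R\Gamma_{c,\cal{D}}(X,(\pi_\ast F)^D_\RR)$ (from the proof of \cref{duality_theorem}). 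So the first step is simply to observe that these three isomorphisms are canonical and hence induce a canonical isomorphism $\Delta_X((\pi_\ast F)^D)\simeq\Delta_Y(F^D)$ of free rank-one $\ZZ$-modules, and likewise $R\Gamma_{ar,c}(X,(\pi_\ast F)^D_\RR)\simeq R\Gamma_{ar,c}(Y,F^D_\RR)$.

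The real content is then to check that the trivializations $\lambda$ on each side are compatible with these identifications, i.e.\ that the square
\[
\begin{tikzcd}[ampersand replacement=\&]
\Delta_Y(F^D)_\RR \& \RR \\
\Delta_X((\pi_\ast F)^D)_\RR \& \RR
\arrow["{\lambda_Y}", from=1-1, to=1-2]
\arrow["\simeq", from=1-1, to=2-1]
\arrow["{\lambda_X}", from=2-1, to=2-2]
\arrow[Rightarrow, no head, from=1-2, to=2-2]
\end{tikzcd}
\]
commutes; once this is known, applying both routes to the lattice $\Delta_Y(F^D)=\Delta_X((\pi_\ast F)^D)$ gives $\chi_X((\pi_\ast F)^D)^{-1}\ZZ=\chi_Y(F^D)^{-1}\ZZ$, hence equality of the (positive) Euler characteristics. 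Since $\lambda$ is built from the distinguished triangle $R\Gamma_{ar,c}\to R\Gamma_{W,c}\otimes\RR\to\Lie\otimes\RR$ via the rational splitting of Weil-étale cohomology, the duality \cref{duality_theorem}, and the canonical trivialization $\det_\RR R\Gamma_{ar,c}\xrightarrow{\simeq}\RR$, the compatibility reduces to checking that each of these ingredients is natural in $\pi_\ast$. Concretely, I would verify: (i) the rational splitting is compatible with $\pi_\ast$ — this is exactly the proposition already proven in the excerpt; (ii) the duality pairing for $\RR$-coefficients is compatible with $\pi_\ast$ — this is the commutative diagram of pairings appearing in the last bullet of the proof of \cref{duality_theorem} (the case $F=\pi_\ast G$), where the induced map $H^0_{c,\cal{D}}(Y,\ZZ^D_\RR)=\RR\to H^0_{c,\cal{D}}(X,\ZZ^D_\RR)=\RR$ was shown to be the identity; (iii) the identification $\Lie_X((\pi_\ast F)^D)_\RR=R\Hom_{G_\RR,X(\CC)}(\alpha^\ast\pi_\ast F,\CC[1])$ via \cref{lie_tensor_R} is compatible with $\alpha^\ast\pi_\ast F=\pi'_\ast\alpha'^\ast F$; (iv) the $\mathrm{Log}$ map $R\Gamma_{c,B}\to R\Hom_{G_\RR,X(\CC)}(\alpha^\ast(-),\CC[1])$ is compatible with $\pi_\ast$, which follows from the commutativity $\alpha^\ast(\pi_\ast\ZZ^c_Y\to\ZZ^c_X)=(\pi'_\ast\overline{\QQ}^\times[1]\to\overline{\QQ}^\times[1])$ already used in \cref{cohomology_compact_pushforward}, together with the naturality of $\log|\cdot|$; and finally (v) the canonical trivialization $\det_\RR R\Gamma_{ar,c}\xrightarrow{\simeq}\RR$ is manifestly functorial since $R\Gamma_{ar,c}=R\Gamma_{c,\cal{D}}[-1]\oplus R\Gamma_{c,\cal{D}}$ is a functorial direct sum and the trivialization is the tautological pairing of a line with its inverse.

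The main obstacle, as is typical in this kind of argument, is bookkeeping rather than any genuine new idea: one must assemble a single large commutative diagram comparing the defining triangle $R\Gamma_{ar,c}(Y,F^D_\RR)\to R\Gamma_{W,c}(Y,F^D)_\RR\to\Lie_Y(F^D)_\RR$ with its counterpart over $X$, using the mapping-fiber description of $R\Gamma_{ar,c}$ from the beginning of §7 (as the fiber of $R\Gamma_{W,c}(X,F^D)_\RR\xrightarrow{p}R\Gamma_{c,B}\xrightarrow{\mathrm{Log}}R\Hom_{G_\RR,X(\CC)}(\alpha^\ast F,\CC[1])$), and verifying that every arrow in this fiber construction commutes with the pushforward isomorphisms — i.e.\ that the projection $p$ from the rational splitting, the $\mathrm{Log}$ map, and the duality identification $\mathrm{fib}(\cdots)\simeq R\Hom(R\Gamma(X,F),\RR[-1])\oplus R\Gamma_{c,\cal{D}}(X,F^D_\RR)$ are each natural in $\pi_\ast$. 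Points (i)--(iv) above each supply one face of this diagram; the remaining faces are formal naturality statements. Taking determinants of the resulting morphism of distinguished triangles and using that $\det_\ZZ$ is compatible with base change to $\RR$ (the canonical isomorphism $(\det_\ZZ C)_\RR\simeq\det_\RR(C_\RR)$ recorded in the Notations), one concludes that $\lambda_X$ and $\lambda_Y$ agree, which is precisely what is needed. I would model the write-up closely on the proof of the multiplicativity theorem just above, which handles an exactly analogous compatibility for red-to-blue short exact sequences.
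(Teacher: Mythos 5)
Your proposal is correct and follows essentially the same route as the paper: the paper's own proof consists precisely of the three canonical isomorphisms (for $R\Gamma_{W,c}$, $\Lie$, and $R\Gamma_{c,\cal{D}}$) together with the asserted isomorphism of distinguished triangles $R\Gamma_{ar,c}\to R\Gamma_{W,c}\otimes\RR\to\Lie\otimes\RR$ over $Y$ and over $X$, from which compatibility of the trivializations and hence equality of the Euler characteristics follows. Your points (i)–(v) simply make explicit the compatibilities (rational splitting, duality pairing, $\Lie_\RR$ identification, $\mathrm{Log}$, and the tautological trivialization) that the paper leaves implicit in stating that isomorphism of triangles.
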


\begin{proof}
	This follows from the isomorphism
	\begin{align*}
		R\Gamma_{W,c}(Y,F^D)&\xrightarrow{\simeq} R\Gamma_{W,c}(X,(\pi_\ast F)^D)\\
		\Lie_Y(F^D) &\xrightarrow{\simeq} \Lie_X((\pi_\ast F)^D)\\
		R\Gamma_{c,\cal{D}}(Y,F^D_\RR) &\xrightarrow{\simeq} R\Gamma_{c,\cal{D}}(X,(\pi_\ast F)^D_\RR)\\
	\end{align*}
	and the isomorphism of distinguished triangles
	\[\begin{tikzcd}[ampersand replacement=\&]
		{R\Gamma_{ar,c}(Y,F^D_\RR)} \& {R\Gamma_{W,c}(Y,F^D)_\RR} \& {\Lie_Y(F^D)_\RR} \& {} \\
		{R\Gamma_{ar,c}(X,(\pi_\ast F)^D_\RR)} \& {R\Gamma_{W,c}(X,(\pi_\ast F)^D)_\RR} \& {\Lie_X((\pi_\ast F)^D)_\RR} \& {}
		\arrow[from=2-1, to=2-2]
		\arrow["\simeq"', from=1-1, to=2-1]
		\arrow["\simeq"', from=1-2, to=2-2]
		\arrow[from=1-1, to=1-2]
		\arrow[from=1-2, to=1-3]
		\arrow[from=2-2, to=2-3]
		\arrow[from=1-3, to=1-4]
		\arrow[from=2-3, to=2-4]
		\arrow["\simeq"', from=1-3, to=2-3]
	\end{tikzcd}\]
\end{proof}

We now use the multplicativity of the Weil-étale Euler characteristic to extend it to arbitrary tamely ramified $\ZZ$-constructible sheaves:
\begin{defi}
	Let $F$ be a tamely ramified $\ZZ$-constructible sheaf, let $j:U\to X$ be an open subscheme such that $F_{|U}$ is locally constant and let $i:Z\to X$ be the closed complement. The sheaves $j_!F_{|U}$ and $i_\ast i^\ast F$ are tamely ramified and respectively red and blue, and we define the Weil-étale Euler characteristic of $F^D$ as
	\[
	\chi_X(F^D)=\chi_X((j_! F_{|U})^D)\chi_X((i_\ast i^\ast F)^D)
	\]
	The definition doesn't depend on the choice of $U$. The Weil-étale Euler characteristic is multiplicative with respect to short exact sequences of tamely ramified sheaves. Let $Y=\Spec(\cal{O}')$ be the spectrum of an order in a number field with a finite dominant morphism $\pi:Y\to X$ such that $K(Y)/K$ is tamely ramified, and let $F$ be a tamely ramified $\ZZ$-constructible sheaf on $Y$. Then
	\[
	\chi_X((\pi_\ast F)^D)=\chi_Y(F^D)
	\]
\end{defi}
\begin{proof}
	This follows formally from the previous results by using the open-closed decomposition lemma, see \cite[§ 6.5]{AMorin21}.
\end{proof}
\subsection{Computations of the Euler characteristic, and the special value theorem}\label{subsec:computations_special_value_thm}

We first give an explicit expression of our Euler characteristic which doesn't involve Weil-étale cohomology anymore. This will allow us to give an expression valid for any tamely ramified $\ZZ$-constructible sheaf.

\begin{prop}
	Let $F$ be a tamely ramified red or blue sheaf. Let $R_1(F^D)$ be the absolute value of the determinant of the pairing
	\[
	H^{-1}_{c,\cal{D}}(X,F^D_\RR)\times H^{1}(X,F)_\RR \to \RR
	\]
	and $R_0(F^D)$ the absolute value of the determinant of the pairing
	\[
	H^0_{c,\cal{D}}(X, F^D_\RR) \times H^{0}(X,F)_\RR \to \RR
	\] in the following bases : pick bases modulo torsion of $H^{-1}(\Lie_X(F^D))$, of $H^i(X,F)$ for $i=0,1$ and of $H^{i}_{c,B}(X,F^D)$ for $i=-1,0$. Then pick any $\RR$-bases of $H^i_{c,\cal{D}}(X,F^D_\RR)$ compatible\footnote{In the following sense: denote $\cal{E}^\bullet$ the exact sequence seen as an acyclic complex, then the choice of bases gives an element of $\det_\RR \cal{E}^\bullet$ which is required to be sent to $1$ under $\det_\RR \cal{E}^\bullet\xrightarrow[\simeq]{\det(0)} \RR$} with the chosen bases in the following exact sequence
	\[
	0 \to H^{-1}_{c,\cal{D}}(X,F^D_\RR) \to H^{-1}_{c,B}(X,F^D)_\RR \to H^{-1}(\Lie_X(F^D))_\RR \to H^0_{c,\cal{D}}(X, F^D_\RR) \to H^{0}_{c,B}(X,F^D)_\RR \to 0
	\]
	We have
	\[
	\chi_X(F^D)=\frac{[H^0(X,F)_{tor}][H^0_{c,B}(X,F^D)_{tor}]}{[H^1(X,F)_{tor}][H^{-1}_{c,B}(X,F^D)_{tor}][H^{0}(\Lie_X(F^D))]}\frac{R_1(F^D)}{R_0(F^D)}
	\]
\end{prop}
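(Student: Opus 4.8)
The statement is a determinant-unwinding computation: we have the trivialization $\lambda$ built from the distinguished triangle
\[
R\Gamma_{ar,c}(X,F^D_\RR) \to R\Gamma_{W,c}(X,F^D)_\RR \to \Lie_X(F^D)_\RR
\]
and we must express the resulting covolume in terms of orders of torsion subgroups and the two regulators $R_0(F^D)$, $R_1(F^D)$. The strategy is to compute $\det_\RR R\Gamma_{ar,c}(X,F^D_\RR)$ in two ways: once via the canonical trivialization of the Weil-Arakelov complex (which, tensored with the chosen $\ZZ$-structures, gives the covolume $\chi_X(F^D)^{-1}$), and once by tracking the rational/real bases through all the exact sequences relating Weil-étale, Deligne, and $\Lie$ cohomology. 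The main tool is the standard fact that for a perfect complex $C$ of abelian groups with chosen bases modulo torsion on each $H^i(C)$, the induced trivialization of $\det_\RR(C_\RR)$ differs from the integral structure $\det_\ZZ C$ by the factor $\prod_i [H^i(C)_{tor}]^{(-1)^i}$; and that a long exact sequence of finite-dimensional $\RR$-vector spaces with chosen bases contributes the absolute value of the determinant of the "connecting" maps.

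\textbf{Key steps.} First I would record, using \cref{weil-etale_perfect}, the short exact sequences computing $H^i_{W,c}(X,F^D)$ in terms of $H^i_{c,B}(X,F^D)$ and $\Hom(H^j(X,F),\ZZ)$, together with the identifications $H^{-1}_{W,c}=H^{-1}_{c,B}$ and $H^2_{W,c}=(H^0(X,F)_{tor})^\ast$; passing to $\RR$-coefficients the $\Hom(H^j(X,F),\ZZ)_\RR = H^j(X,F)^\vee_\RR$ terms appear. Second, I would use the defining distinguished triangle of $R\Gamma_{ar,c}$ as the mapping fiber of $\mathrm{Log}\circ p : R\Gamma_{W,c}(X,F^D)_\RR \to \Lie_X(F^D)_\RR$, combined with the rational splitting of \cref{prop:rational_splitting}, so that $R\Gamma_{ar,c}(X,F^D_\RR) \simeq R\Hom(R\Gamma(X,F),\RR[-1])\oplus R\Gamma_{c,\cal{D}}(X,F^D_\RR)$; this exhibits its cohomology and isolates exactly where the regulator pairings enter — the pairing $H^{-1}_{c,\cal{D}}\times H^1(X,F)_\RR\to\RR$ of \cref{duality_theorem} governs the degree-$1$ (equivalently degree $-1$ after the shift) comparison giving $R_1(F^D)$, and $H^0_{c,\cal{D}}\times H^0(X,F)_\RR\to\RR$ gives $R_0(F^D)$. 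Third, I would feed in the five-term exact sequence
\[
0 \to H^{-1}_{c,\cal{D}}(X,F^D_\RR) \to H^{-1}_{c,B}(X,F^D)_\RR \to H^{-1}(\Lie_X(F^D))_\RR \to H^0_{c,\cal{D}}(X, F^D_\RR) \to H^{0}_{c,B}(X,F^D)_\RR \to 0
\]
coming from the defining fiber sequence of $R\Gamma_{c,\cal{D}}$ and \cref{lie_perfect}, which is where the compatibility-of-bases condition in the statement is used; together with the torsion contributions $[H^0(\Lie_X(F^D))]$ (the finite group $H^0\Lie$), $[H^i_{c,B}(X,F^D)_{tor}]$ for $i=-1,0$, $[H^i(X,F)_{tor}]$ for $i=0,1$, and $[(H^0(X,F)_{tor})^\ast]=[H^0(X,F)_{tor}]$ from $H^2_{W,c}$. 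Assembling all the $(-1)^i$ signs then yields the claimed formula; I would organize this as a single multiplicativity computation in the Picard groupoid of graded $\RR$-lines, exactly as in \cite[thm. 6.3]{AMorin21}, to avoid sign errors.

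\textbf{Expected main obstacle.} The routine part is genuinely routine once the bookkeeping is set up, but the delicate point is the sign/degree accounting: the complex $R\Gamma_{ar,c}$ is a sum of $R\Gamma_{c,\cal{D}}(X,F^D_\RR)[-1]$ and $R\Gamma_{c,\cal{D}}(X,F^D_\RR)$, and one must be careful that the $\ZZ$-structure on $\det_\RR R\Gamma_{ar,c}$ inherited from $\Delta_X(F^D)$ via $\lambda$ — which factors through $R\Gamma_{W,c}$ and $\Lie_X$, not directly through $R\Gamma_{c,\cal{D}}$ — is correctly matched with the basis choices, since $R\Gamma_{c,\cal{D}}$ has no natural integral structure of its own. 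In other words, the hard part is checking that the "compatible $\RR$-bases" prescription in the statement is precisely the one induced by transporting the integral bases of $H^i_{W,c}$ and $H^i\Lie_X(F^D)$ through the two triangles, and that the regulators $R_0, R_1$ as defined via the Deligne-duality pairings are exactly the determinants of the resulting change-of-basis matrices; I would verify this by chasing the explicit description of $\mathrm{Log}$ and of the duality pairing of \cref{duality_theorem} on cohomology, rather than by abstract nonsense.
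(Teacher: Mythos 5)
Your proposal is correct and follows essentially the same route as the paper: both arguments feed the rational splitting of $R\Gamma_{W,c}$ and the fiber sequence $R\Gamma_{c,\cal{D}}(X,F^D_\RR) \to R\Gamma_{c,B}(X,F^D)_\RR \to \Lie_X(F^D)_\RR$ into the trivialization, identify the only nontrivial comparison maps as the duality pairings of \cref{duality_theorem} in degrees $-1,0$ (yielding $R_1$ and $R_0$), and account for torsion by the standard lattice-versus-trivialization lemmas of \cite{AMorin21} (the paper's lem.\ A.1/A.2, your ``single multiplicativity computation''). The only cosmetic difference is that the paper tracks the image of $\Delta_X(F^D)$, split into torsion and torsion-free parts, directly through $\lambda$, rather than computing $\det_\RR R\Gamma_{ar,c}(X,F^D_\RR)$ twice.
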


\begin{rmk}
	We can reformulate the part about determinants of pairings. Consider an acyclic chain complex of $\RR$-vector spaces $A^\bullet$ with a $\ZZ$-lattice $M^i\subset A^i$ given for each $i$. We define $\det_{\ZZ,\RR} A^\bullet$ to be the absolute value of the image of the vector corresponding to bases of the lattices under the canonical isomorphism $\det_\RR A^\bullet \xrightarrow[\det_\RR(0)]{\simeq} \RR$; put another way, it is the positive real number such that 
		\[
		\bigotimes_{i\in \ZZ} (\det_\ZZ M^i)^{(-1)^i} \subset (\bigotimes_{i\in \ZZ} (\det_\ZZ M^i)^{(-1)^i})_\RR \simeq \det_\RR A^\bullet
		\]
		corresponds to $(\det_{\ZZ,\RR} A^\bullet) \ZZ \subset \RR$ under the canonical isomorphism $\det_\RR A^\bullet \xrightarrow[\det_\RR(0)]{\simeq} \RR$. Consider the following complex
		\[
		A^\bullet :~~ 0 \to (H^1(X,F)_\RR)^\vee \to H^{-1}_{c,B}(X,F^D)_\RR \to H^{-1}(\Lie_X(F^D))_\RR \to (H^0(X,F)_\RR)^\vee \to H^{0}_{c,B}(X,F^D)_\RR \to 0
		\]
		with first non-zero term in degree $-1$ and endowed with the natural lattices. Then we have
		\[
		\frac{R_1(F^D)}{R_0(F^D)}=\det_{\ZZ,\RR} A^\bullet
		\]	
\end{rmk}

\begin{proof}
	Since determinants depend only on cohomology and are multiplicative in short exact sequences, we can write $\Delta_X(F^D)$ as an alternating tensor product of the determinants of the torsion and torsion-free parts of the cohomology groups:
	\[
	\Delta_X(F^D)=\Delta_X(F^D)_{tor}\Delta_X(F^D)/tor
	\]
	We have $(\Delta_X(F^D)_{tor})_\RR = \det_\RR(0)=\RR$ canonically hence by \cite[lem. A.1]{AMorin21} the contribution of $\Delta_X(F^D)_{tor}$ inside $\Delta_X(F^D)_\RR$ is sent under the trivialisation to
	\[
	1/\chi_X(F^D)_{tor}:=\frac{\prod [H^i \Lie_X(F^D)_{tor}]^{(-1)^i}}{\prod [H^i_{W,c}(X,F^D)_{tor}]^{(-1)^i}}
	\]
	We obtain the first part of the claimed expression by the computation of the involved cohomology groups in \cref{weil-etale_perfect,lie_perfect}.
	
	It remains to show that $\Delta_X(F^D)/tor=\frac{R_1(F^D)}{R_0(F^D)}$. We will use \cite[lem. A.2]{AMorin21}. Fix basis vectors of $H^i(X,F)/tor$ for $i=0,1$, of $(H^{-1}\Lie_X(F^D))/tor$ and of $H^i_{c,B}(X,F^D)$ for $i=-1,0$. The image of $\Delta_X(F^D)/tor$ inside 
	\begin{align*}
	\Delta_X(F^D)_\RR&:= \det_\RR R\Gamma_{W,c}(X,F^D)_\RR \otimes_\RR (\det_\RR \Lie_X(F^D)_\RR)^{-1}\\
	& \simeq \det_\RR (D_{F,\RR}[1]) \otimes_\RR \det_\RR R\Gamma_{c,B}(X,F^D)_\RR \otimes_\RR (\det_\RR \Lie_X(F^D)_\RR)^{-1} \\
	& \simeq \det_\RR (D_{F,\RR}[1]) \otimes_\RR \det_\RR R\Gamma_{c,\cal{D}}(X,F^D_\RR)\\
	& \simeq \bigotimes_i \left(\det_\RR H^{i+1}(D_{F,\RR})\right)^{(-1)^i} \otimes_\RR \bigotimes_{i} \left(\det_\RR H^{i}_{c,\cal{D}}(X,F^D_\RR)\right)^{(-1)^i}
	\end{align*}
	is a certain product of basis vectors and dual basis vectors, obtained from the basis vectors of $H^i_{c,B}(X,F^D)/tor$, $H^i\Lie_X(F^D)/tor$ and $H^i(X,F)/tor$ through the identity $H^i(D_{F,\RR})=\Hom(H^{2-i}(X,F),\RR)$ and the long exact sequence associated to the fiber sequence
	\[
	R\Gamma_{c,\cal{D}}(X,F^D_\RR) \to R\Gamma_{c,B}(X,F^D)_\RR \to \Lie_X(F^D)_\RR
	\]
	Finally, the trivialisation is obtained by combining together, for each $i\in \ZZ$, the terms related by the duality theorem \cref{duality_theorem}:
	\begin{align*}
	\left(\det_\RR H^{i}_{c,\cal{D}}(X,F^D_\RR)\right)^{(-1)^i} \otimes \left(\det_\RR H^{i+2}(D_{F,\RR})\right)^{(-1)^{i+1}} &\xrightarrow{\simeq}  \left(\det_\RR H^{i}_{c,\cal{D}}(X,F^D_\RR)\right)^{(-1)^i} \bigotimes \left(\det_\RR H^{i}_{c,\cal{D}}(X,F^D_\RR) \right)^{(-1)^{i+1}}\\
	& \xrightarrow{\simeq} \RR
	\end{align*}
	The isomorphism $H^{i}_{c,\cal{D}}(X,F^D_\RR) \xrightarrow{\simeq} H^{i+2}(D_{F,\RR})$ is non-trivial only for $i=-1,0$ in which case its determinant in the bases obtained is exactly (up to sign) $R_1$, resp. $R_0$. By \cite[lem. A.2]{AMorin21}, the contribution of $\Delta_X(F^D)/tor$ is thus sent under the trivialisation to
	\[
	\frac 1 {\chi_X(F^D)/tor} := R_0/R_1
	\]
\end{proof}

\begin{cor}\label{first_formula_characteristic}
	Let $F$ be a tamely ramified $\ZZ$-constructible sheaf. With the same notations, we have
	\[
	\chi_X(F^D)=\frac{[H^0(X,F)_{tor}][H^0_{c,B}(X,F^D)_{tor}]}{[H^1(X,F)_{tor}][H^{-1}_{c,B}(X,F^D)_{tor}][H^{0}(\Lie_X(F^D))]}\frac{R_1(F^D)}{R_0(F^D)}
	\]
\end{cor}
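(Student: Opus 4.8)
Write $\chi'_X(F^D)$ for the right-hand side of the displayed formula. All of the data entering it --- the groups $H^i(X,F)$ and $H^i_{c,B}(X,F^D)$, the complex $\Lie_X(F^D)$, and the pairings computing $R_0(F^D)$ and $R_1(F^D)$ --- make sense for an arbitrary tamely ramified $\ZZ$-constructible sheaf, so $\chi'_X(F^D)$ does as well. By the preceding proposition, $\chi_X(F^D)=\chi'_X(F^D)$ whenever $F$ is red or blue. Now let $F$ be an arbitrary tamely ramified $\ZZ$-constructible sheaf; choose a regular open $j\colon U\to X$ on which $F$ is locally constant, with closed complement $i\colon Z\to X$, and set $R:=j_!F_{|U}$, $B:=i_\ast i^\ast F$. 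Then $R$ and $B$ are tamely ramified, $R$ is red, $B$ is blue, there is a short exact sequence $0\to R\to F\to B\to 0$, and by definition $\chi_X(F^D)=\chi_X(R^D)\chi_X(B^D)=\chi'_X(R^D)\chi'_X(B^D)$. It therefore suffices to prove that $\chi'_X$ is multiplicative along short exact sequences of tamely ramified sheaves.

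Fix such a sequence $0\to F'\to F\to F''\to 0$. The functors $R\Gamma(X,-)$, $R\Gamma_{c,B}(X,(-)^D)$, $R\Gamma_{c,\cal{D}}(X,(-)^D_\RR)$ and $\Lie_X((-)^D)=R\Hom_{G_K^t}((-)_\eta,\cal{O}_{K^t}[1])$ are all exact --- the last because $R\Hom_{G_K^t}(-,\cal{O}_{K^t}[1])$ is exact and $F'$, $F$, $F''$ are all tamely ramified, so that $0\to F'_\eta\to F_\eta\to F''_\eta\to 0$ is a short exact sequence of $G_K^t$-modules --- so each turns the sequence into a distinguished triangle, and by naturality these triangles are compatible with the Artin--Verdier pairing of \cref{twisted_AV}, the $\RR$-coefficient pairing of \cref{duality_theorem}, and the $\mathrm{Log}$ maps. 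Using the reformulation of the preceding remark, which identifies $R_1(F^D)/R_0(F^D)$ with the covolume $\det_{\ZZ,\RR}$ of the displayed acyclic complex built from $H^{\bullet}(X,F)$, $H^{\bullet}_{c,B}(X,F^D)$ and $H^{\bullet}\Lie_X(F^D)$, one writes $\chi'_X(F^D)$ as this covolume times the alternating product of the listed torsion orders and runs the Knudsen--Mumford determinant and base-change bookkeeping exactly as in the proof of the multiplicativity of $\chi_X$ (compare \cite[§ 6, lem. A.1, lem. A.2]{AMorin21}): each quotient of determinant lines is multiplicative along the triangles, and the concentration ranges of \cref{weil-etale_perfect}, \cref{lie_perfect}, \cref{cohomology_dagger} and \cref{cohomology_compact_support} ensure that every torsion subgroup and every regulator contributes in precisely the slot it occupies in the formula. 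This yields $\chi'_X(F^D)=\chi'_X(F'^D)\chi'_X(F''^D)$.

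Combining the two paragraphs, $\chi_X(F^D)=\chi'_X(R^D)\chi'_X(B^D)=\chi'_X(F^D)$, which is the asserted formula. The one delicate point is the compatibility of the duality pairings with the connecting morphisms of the triangles --- equivalently, the naturality of the trivialization $\lambda$ in short exact sequences --- which rests on the compatibilities already established for the Artin--Verdier and Deligne-coefficient dualities with the relevant fiber sequences; everything else is a routine property of determinants.
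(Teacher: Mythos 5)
Your overall reduction is the right one: writing $\chi'_X(F^D)$ for the displayed formula, the preceding proposition gives $\chi_X=\chi'_X$ on red or blue sheaves, and since $\chi_X(F^D)$ is \emph{defined} as $\chi_X((F_U)^D)\chi_X((F_Z)^D)$ with $F_U=j_!F_{|U}$ red and $F_Z=i_\ast i^\ast F$ blue, everything comes down to the identity $\chi'_X(F^D)=\chi'_X((F_U)^D)\chi'_X((F_Z)^D)$. But your second paragraph, which is where all the content lies, does not prove this. The multiplicativity theorem for $\chi_X$ that you propose to imitate is a statement about the fundamental line and the trivialization $\lambda$, and its proof rests on the Weil-étale complexes and their functoriality (\cref{weil_etale_well_defined}); these objects exist only for red or blue sheaves, so "running the same determinant bookkeeping" for an arbitrary tamely ramified sheaf, or along an arbitrary short exact sequence of such sheaves, is not available. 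What actually has to be controlled is the defect of multiplicativity of each ingredient of the explicit formula along the long exact sequences induced by $0\to F_U\to F\to F_Z\to 0$: these sequences do not split into the short exact pieces your slot-by-slot argument presupposes. Concretely, the relevant six-term portions terminate in two finite correction groups, $I=\mathrm{im}\bigl(H^0_{c,B}(X,(F_U)^D)\to H^1_{c,B}(X,(F_Z)^D)\bigr)$ and $J=\mathrm{im}\bigl(H^1(X,F_Z)\to H^2(X,F_U)\bigr)$, and the formula comes out multiplicative only because $[I]=[J]$; this is not routine, but follows from the compatibility square of Artin--Verdier duality (\cref{twisted_AV}) giving $I\simeq J^\ast$. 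Your proof never introduces these terms, nor does it use that $\Lie_X((F_Z)^D)=0$, which is what allows the Lie contribution of $F$ to be compared with that of $F_U$ alone.

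Moreover, the stronger claim you reduce to --- multiplicativity of $\chi'_X$ along \emph{arbitrary} short exact sequences of tamely ramified sheaves --- is true only a posteriori, once the corollary identifies $\chi'_X$ with $\chi_X$; proving it directly is at least as hard as the corollary itself, and the compatibilities you invoke ("naturality of $\lambda$ in short exact sequences") are either not defined in this generality or are exactly the statements needing proof. The paper instead works with the single decomposition sequence: it forms the auxiliary line $\Delta=\det_\ZZ A^\bullet\,\det_\ZZ B^\bullet\,\det_\ZZ\Lie_X(F^D)\,\bigl(\det_\ZZ\Lie_X((F_U)^D)\bigr)^{-1}$ built from the two acyclic complexes of cohomology groups, and compares its acyclicity trivialization with the trivialization coming from the duality isomorphism of \cref{duality_theorem} via an octahedron and the associativity axiom for determinant functors; that comparison, together with $[I]=[J]$, yields the stated identity. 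As written, your proposal asserts the outcome of that computation rather than carrying it out, so there is a genuine gap at the decisive step.
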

\begin{proof}
	Let $j:U\to X$ be an open subscheme such that $F_{|U}$ is locally constant and let $i:Z\to X$ be the closed complement. Put $F_U=j_!F_{|U}$ and $F_Z=i_\ast i^\ast F$. Then $F_U$ is red, $F_Z$ is blue and there is a short exact sequence $0 \to F_U \to F \to F_Z \to 0$. Consider the two following acyclic complexes of abelian groups, with first non-zero term respectively in degree $-1$ and $0$:
	\begin{align*}
	A^\bullet : 0 &\to H^{-1}_{c,B}(X,(F_Z)^D) \to H^{-1}_{c,B}(X,F^D) \to H^{-1}_{c,B}(X,(F_U)^D)\\
	& \to H^{0}_{c,B}(X,(F_Z)^D) \to H^{0}_{c,B}(X,F^D) \to H^{0}_{c,B}(X,(F_U)^D) \to I \to 0\\
	B^\bullet : 0 & \to H^0(X,F_U) \to H^0(X,F) \to H^0(X,F_Z)\\
	& \to H^1(X,F_U) \to H^1(X,F) \to H^1(X,F_Z) \to J \to 0
	\end{align*}
	where $I$ is the image of $H^{0}_{c,B}(X,F_U^D)$ in $H^{1}_{c,B}(X,F_Z^D)$ and $J$ is the image of $H^1(X,F_Z)$ in $H^2(X,F_U)$. Since $F_Z$ is blue , $H^{1}_{c,B}(X,F_Z^D)$ and $H^1(X,F_Z)$ are finite so $I$ and $J$ are also finite. Moreover Artin-Verdier duality gives a commutative square
	\[\begin{tikzcd}[ampersand replacement=\&]
	{H^{0}_{c,B}(X,(F_U)^D)} \& {H^{1}_{c,B}(X,(F_Z)^D)} \\
	{H^2(X,F_U)^\ast} \& {H^1(X,F_Z)^\ast}
	\arrow["\simeq", from=1-1, to=2-1]
	\arrow["\simeq", from=1-2, to=2-2]
	\arrow["\partial", from=1-1, to=1-2]
	\arrow["\partial", from=2-1, to=2-2]
	\end{tikzcd}\]
	whence $I\simeq J^\ast$. 
	
	Since $F_Z$ is supported on a finite closed subscheme, we have $\Lie_X((F_Z)^D)=0$ hence an isomorphism $\Lie_X(F^D)\xrightarrow{\simeq}\Lie_X((F_U)^D)$ which gives a canonical isomorphism
	\[
	\det_\ZZ \Lie_X(F^D) \left(\det_\ZZ \Lie_X((F_U)^D)\right)^{-1} \xrightarrow[\tau]{\simeq} \ZZ
	\]
	On the other hand, since $A^\bullet$ and $B^\bullet$ are acyclic, we obtain canonical trivializations $\det_\ZZ A^\bullet \xrightarrow[\det(0)]{\simeq} \ZZ$ and $\det_\ZZ B^\bullet \xrightarrow[\det(0)]{\simeq} \ZZ$. Consider now the line
	\[
	\Delta:= \det_\ZZ A^\bullet \det_\ZZ B^\bullet \det_\ZZ \Lie_X(F^D) \left(\det_\ZZ \Lie_X((F_U)^D)\right)^{-1}
	\]
	It obtains a canonical trivialization $\alpha: \Delta \xrightarrow{\det 0 \otimes \det 0 \otimes \tau } \ZZ\otimes \ZZ \otimes \ZZ \xrightarrow{\simeq} \ZZ$ through the previous remarks.
	
	For a distinguished triangle $X\to Y \to Z \to X[1]$, denote $i:(\det Y)^{-1} \det X \det Z\to 1$ the structural isomorphism to the unit. From the octahedral diagram
	\[\begin{tikzcd}[ampersand replacement=\&]
		{R\Gamma_{c,\cal{D}}(X,(F_Z)^D_\RR)} \& {R\Gamma_{c,\cal{D}}(X,F^D_\RR)} \& {R\Gamma_{c,\cal{D}}(X,(F_U)^D_\RR)} \\
		{R\Gamma_{c,B}(X,(F_Z)^D)_\RR} \& {R\Gamma_{c,B}(X,F^D)_\RR} \& {R\Gamma_{c,B}(X,(F_U)^D)_\RR} \\
		{\Lie_X((F_Z)^D)_\RR=0} \& {\Lie_X(F)_\RR} \& {\Lie_X((F_U)^D)_\RR}
		\arrow["\simeq", from=1-1, to=2-1]
		\arrow["\simeq", from=3-2, to=3-3]
		\arrow[from=2-1, to=2-2]
		\arrow[from=1-1, to=1-2]
		\arrow[from=1-2, to=2-2]
		\arrow[from=1-2, to=1-3]
		\arrow[from=1-3, to=2-3]
		\arrow[from=2-2, to=2-3]
		\arrow[from=2-3, to=3-3]
		\arrow[from=2-2, to=3-2]
		\arrow[from=2-1, to=3-1]
		\arrow[from=3-1, to=3-2]
	\end{tikzcd}\]
	we find using the associativity axiom of determinant functors \cite[3.1]{Breuning11} a commutative diagram
	\begin{equation}\label{associativity_diagram}
		\begin{tikzcd}[ampersand replacement=\&]
		{\left(\det_\RR R\Gamma_{c,\cal{D}}(X,F^D_\RR)\right)^{-1}\det_\RR R\Gamma_{c,\cal{D}}(X,(F_U)^D_\RR) \det_\RR R\Gamma_{c,\cal{D}}(X,(F_Z)^D_\RR)} \& \RR \\
		\parbox{8cm}{\centering $\left(\det_\RR R\Gamma_{c,B}(X,F^D)_\RR\right)^{-1}  \det_\RR R\Gamma_{c,B}(X,(F_U)^D)_\RR$ \phantom{a} $\det_\RR R\Gamma_{c,B}(X,(F_Z)^D)_\RR \det_\RR \Lie_X(F^D)_\RR$ \phantom{a} $\left(\det_\RR \Lie_X((F_U)^D)_\RR\right)^{-1}$}
		 \& {\RR\otimes_\RR \RR}
		\arrow["i",from=1-1, to=1-2]
		\arrow[from=2-2, to=1-2]
		\arrow["\simeq", from=1-1, to=2-1]
		\arrow["i_\RR \otimes \tau_\RR", from=2-1, to=2-2]
	\end{tikzcd}
\end{equation}
	There are isomorphims
	\begin{align}
		\label{isomA} \det_\RR A^\bullet_\RR &\simeq \left(\det_\RR R\Gamma_{c,B}(X,F^D)_\RR\right)^{-1} \det_\RR R\Gamma_{c,B}(X,(F_U)^D)_\RR\det_\RR R\Gamma_{c,B}(X,(F_Z)^D)_\RR\\
		\label{isomB} \det_\RR B^\bullet_\RR &\simeq  \left(\det_\RR R\Gamma(X,F)_\RR\right)^{(-1)} \det_\RR R\Gamma(X,F_U)_\RR \det_\RR R\Gamma(X,F_Z)_\RR
	\end{align}
	 under which the trivialization $\det(0)_\RR=\det_\RR(0)$ on the left corresponds to $i_\RR$ on the right.
	 By combining \cref{associativity_diagram,isomA,isomB} we find a commutative diagram of isomorphisms
	 \[\begin{tikzcd}[ampersand replacement=\&]
	 	{\Delta_\RR} \& \parbox{8cm}{\centering $\left(\det_\RR R\Gamma_{c,\cal{D}}(X,F^D_\RR)\right)^{(-1)} \det_\RR R\Gamma_{c,\cal{D}}(X,(F_U)^D_\RR) \det_\RR R\Gamma_{c,\cal{D}}(X,(F_Z)^D_\RR)$ \phantom{a} $\left(\det_\RR R\Gamma(X,F)_\RR\right)^{(-1)} \det_\RR R\Gamma(X,F_U)_\RR\det_\RR R\Gamma(X,F_Z)_\RR$} \\
	 	\RR \& {\RR}
	 	\arrow["\simeq", from=1-1, to=1-2]
	 	\arrow["{i\cdot i_\RR}", from=1-2, to=2-2]
	 	\arrow[equals, from=2-2, to=2-1]
	 	\arrow["\alpha"', from=1-1, to=2-1]
	 \end{tikzcd}\]
  	
  	We now describe how to produce another trivialization of $\Delta_\RR$. By \cref{duality_theorem}, there is an isomorphism $\psi_{(-)}:R\Gamma_{c,\cal{D}}(X,(-)^D_\RR) \xrightarrow{\simeq} R\Gamma(X,(-))_\RR^\vee$. If $L$ is a graded $\RR$-line, there is a canonical isomorphism $L\otimes L^{-1} \xrightarrow{\delta} \RR$ and we take the trivialization
  	\begin{align*}
  		& \begin{split}
  			\left(\det_\RR R\Gamma_{c,\cal{D}}(X,F^D_\RR)\right)^{(-1)} \det_\RR R\Gamma_{c,\cal{D}}(X,(F_U)^D_\RR) \det_\RR R\Gamma_{c,\cal{D}}(X,(F_Z)^D_\RR)\\ \left(\det_\RR R\Gamma(X,F)_\RR\right)^{(-1)} \det_\RR R\Gamma(X,F_U)_\RR\det_\RR R\Gamma(X,F_Z)_\RR
  		\end{split}\\
  		&\\
  		& \xrightarrow{((\det\psi_F)^t)^{-1}\det\psi_{F_U} \det\psi_{F_Z}}  \begin{split}
  			\det_\RR R\Gamma(X,F)_\RR \left(\det_\RR R\Gamma(X,F_U)_\RR\right)^{(-1)} \left(\det_\RR R\Gamma(X,F_Z)_\RR\right)^{(-1)}\\ \left(\det_\RR R\Gamma(X,F)_\RR\right)^{(-1)} \det_\RR R\Gamma(X,F_U)_\RR\det_\RR R\Gamma(X,F_Z)_\RR
  		\end{split} \\
  	 &\xrightarrow{\delta\otimes \delta \otimes \delta} \RR \otimes_\RR \RR \otimes_\RR \RR \\
  	 &\xrightarrow{\mathrm{mult}} \RR.
  	\end{align*}
  	Consider the following diagram:
  	\[\begin{tikzcd}[ampersand replacement=\&,column sep=0cm]
  		{\parbox{8cm}{\centering $\left(\det_\RR R\Gamma_{c,\cal{D}}(X,F^D_\RR)\right)^{(-1)} \det_\RR R\Gamma_{c,\cal{D}}(X,(F_U)^D_\RR) \det_\RR R\Gamma_{c,\cal{D}}(X,(F_Z)^D_\RR)$ \phantom{a} $\left(\det_\RR R\Gamma(X,F)_\RR\right)^{(-1)} \det_\RR R\Gamma(X,F_U)_\RR\det_\RR R\Gamma(X,F_Z)_\RR$}} \\
  		{\RR\otimes_\RR \RR} \& {\parbox{8cm}{\centering $\det_\RR R\Gamma(X,F) \left(\det_\RR R\Gamma(X,F_U)\right)^{(-1)} \left(\det_\RR R\Gamma(X,F_Z)\right)^{(-1)}$ \phantom{a} $\left(\det_\RR R\Gamma(X,F)_\RR\right)^{(-1)} \det_\RR R\Gamma(X,F_U)_\RR\det_\RR R\Gamma(X,F_Z)_\RR$}} \\
  		\RR \& {\RR \otimes_\RR \RR \otimes_\RR \RR}
  		\arrow["{\mathrm{mult}}", from=2-1, to=3-1]
  		\arrow["{((i_\RR)^t)^{-1}\otimes i_\RR}"', from=2-2, to=2-1]
  		\arrow[from=2-2, to=3-2]
  		\arrow["{((\det\psi_F)^t)^{-1}\det\psi_{F_U} \det\psi_{F_Z}}"{description}, from=1-1, to=2-2]
  		\arrow["{i\otimes i_\RR}", from=1-1, to=2-1]
  		\arrow["{i\cdot i_\RR}"', bend right, from=1-1, to=3-1]
  		\arrow["\mathrm{mult}", from=3-2, to=3-1]
  	\end{tikzcd}\]
  	From the isomorphism of fiber sequences
  	\[\begin{tikzcd}[ampersand replacement=\&]
  		{ R\Gamma_{c,\cal{D}}(X,(F_Z)^D_\RR)} \& {R\Gamma_{c,\cal{D}}(X,F^D_\RR)} \& { R\Gamma_{c,\cal{D}}(X,(F_U)^D_\RR)} \\
  		{R\Gamma(X,F_Z)_\RR^\vee} \& {R\Gamma(X,F)_\RR^\vee} \& {R\Gamma(X,F_U)_\RR^\vee}
  		\arrow[from=2-1, to=2-2]
  		\arrow[from=2-2, to=2-3]
  		\arrow["\simeq", from=1-1, to=2-1]
  		\arrow["\simeq", from=1-2, to=2-2]
  		\arrow["\simeq"', from=1-3, to=2-3]
  		\arrow[from=1-1, to=1-2]
  		\arrow[from=1-2, to=1-3]
  	\end{tikzcd}\]
  	we deduce that the upper triangle commutes, while the lower square commutes formally. Thus the trivialization by duality equals the trivialization $i\cdot i_\RR$.  	

	Under those equal trivializations, the image of $\Delta$ inside $\RR$ is $\ZZ$ on the one hand and is computed with \cite[prop. A.3]{AMorin21} on the other hand. We get an identity
	\[
	1 = \frac{[H^0(X,F)_{tor}][H^0_{c,B}(X,F^D)_{tor}]}{[H^1(X,F)_{tor}][H^{-1}_{c,B}(X,F^D)_{tor}][H^{0}(\Lie_X(F^D))]}\frac{R_1(F^D)}{R_0(F^D)}\frac{[J]}{[I]}\frac{1}{\chi_X((F_U)^D)\chi_X((F_Z)^D)}
	\]
	and thus, because $[I]=[J]$:
	\[
	\chi_X(F^D):=\chi_X((F_U)^D)\chi_X((F_Z)^D)=\frac{[H^0(X,F)_{tor}][H^0_{c,B}(X,F^D)_{tor}]}{[H^1(X,F)_{tor}][H^{-1}_{c,B}(X,F^D)_{tor}][H^{0}(\Lie_X(F^D))]}\frac{R_1(F^D)}{R_0(F^D)}
	\]
\end{proof}

\begin{defi}\label{defi_discriminant}
	Let $F$ be a tamely ramified $\ZZ$-constructible sheaf. We put
	\begin{align*}
	& r_1(F)=\log_2([\Ext^1_{G_\RR,X(\CC)}(\alpha^\ast F,2i\pi\ZZ)])\\
	& r_2(F)=\rank_\ZZ \Hom_{G_\RR,X(\CC)}(\alpha^\ast F,2i\pi\ZZ)
	\end{align*}
	For each archimedean place $v$ of $K$, fix a corresponding embedding $\sigma_v$. There is an isomorphism\footnote{Non-canonical; the particular choice here is justified by the later computation of $\Disc(F)$ and $R(F^D)$ for the case $F=\ZZ$}
	\[
	\Hom_{G_\RR,X(\CC)}(\alpha^\ast F,\RR) \xrightarrow{\simeq} \prod_v \Hom_{G_v}(F_v,\ZZ)_\RR
	\]
	given by $(\phi_\sigma) \mapsto (\phi_{\sigma_v})_{v~\text{real}},(2\phi_{\sigma_v})_{v~\text{complex}}$.	Using the isomorphism $\Lie_X(F^D)_\RR \simeq \Hom_{G_\RR,X(\CC)}(\alpha^\ast F,\CC[1])$ (\ref{lie_tensor_R}), the short exact sequence $0 \to 2i\pi\RR \to \CC\xrightarrow{\Re} \RR \to 0$ and the above isomorphism, we can consider the following acyclic complex of $\RR$-vector spaces with lattices:
	\[
	D^\bullet :~~ 0 \to \Hom_{G_\RR,X(\CC)}(\alpha^\ast F,2i\pi\ZZ)_\RR \to H^{-1}\Lie_X(F^D)_\RR \to \prod_v \Hom_{G_v}(F_v,\ZZ)_\RR \to 0
	\]
	(with first non-zero term in degree $-1$). We denote
	\[
	\Disc(F)=(2\pi)^{r_2(F)}\det_{\ZZ,\RR} D^\bullet
	\]
\end{defi}

\begin{rmk}
\begin{itemize}
	\item[]
	\item If $\widetilde{D}^\bullet$ is the complex 
	\[
	0 \to \Hom_{G_\RR,X(\CC)}(\alpha^\ast F,i\ZZ)_\RR \to H^{-1}\Lie_X(F^D)_\RR \to \prod_v \Hom_{G_v}(F_v,\ZZ)_\RR \to 0
	\]
	with similar degree conventions, then $\Disc(F)=\det_{\ZZ,\RR} \widetilde{D}^\bullet$.
	\item The quantity $\Disc(F)$ only depends on $F_\eta$.
\end{itemize}
\end{rmk}

We now reformulate the previous expression obtained to make it closer to a formula looking like the analytic class number formula.
\begin{prop}\label{explicit_characteristic}
	Let $F$ be a tamely ramified $\ZZ$-constructible sheaf. Denote 
	\[N_2:=\Ker\left(\Ext^2_{G_\RR,X(\CC)}(\alpha^\ast F,2i\pi\ZZ)\to H^1_{c,B}(X,F^D)\right)\]
	and let $\GG_X=[g_\ast \GG_m \to \oplus_{x\in X_0}i_{x,\ast}\ZZ]$ denote Deninger's dualizing complex. Let $R(F^D):=\det_{\ZZ,\RR} B^\bullet$ with $B^\bullet$ the exact sequence of $\RR$-vector spaces with lattices
	\[
	B^\bullet : ~~ 0 \to (H^1(X,F)_\RR)^\vee \to \Hom_X(F,\GG_X)_\RR \to \prod_v \Hom_{G_v}(F_v,\ZZ)_\RR \to (H^0(X,F)_\RR)^\vee \to \Ext^1_X(F,\GG_X)_\RR \to 0
	\]
	with first non-zero term in degree $-1$. We have
	\[
	\chi_X(F^D)=\frac{(2\pi)^{r_2(F)}2^{r_1(F)}[H^0(X,F)_{tor}][\Ext^1_X(F,\GG_X)_{tor}]}{[H^1(X,F)_{tor}][\Hom_X(F,\GG_X)_{tor}][\Ext^1_{G_K^t}(F_\eta,\cal{O}_{K^t})][N_2]\Disc(F)}R(F^D)
	\]
\end{prop}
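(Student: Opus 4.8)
The plan is to build on \cref{first_formula_characteristic}, which already gives
\[
\chi_X(F^D)=\frac{[H^0(X,F)_{tor}][H^0_{c,B}(X,F^D)_{tor}]}{[H^1(X,F)_{tor}][H^{-1}_{c,B}(X,F^D)_{tor}][H^{0}(\Lie_X(F^D))]}\cdot\frac{R_1(F^D)}{R_0(F^D)},
\]
and to rewrite the ingredients $[H^{-1}_{c,B}(X,F^D)_{tor}]$, $[H^0_{c,B}(X,F^D)_{tor}]$, $[H^0(\Lie_X(F^D))]$ and the ratio $R_1(F^D)/R_0(F^D)$ in the terms appearing in the statement. The first is immediate: since $\Lie_X(F^D)=R\Hom_{G_K^t}(F_\eta,\cal{O}_{K^t}[1])$ one has $H^0(\Lie_X(F^D))=\Ext^1_{G_K^t}(F_\eta,\cal{O}_{K^t})$, so that factor is already in the desired form, and the remaining task is to identify the low-degree torsion of $R\Gamma_{c,B}(X,F^D)$ and the regulator ratio.

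The main tool is the defining fiber sequence $R\Gamma_{c,B}(X,F^D)\to R\Gamma(X,F^D)\to R\Gamma_{G_\RR}(X(\CC),(\alpha^\ast F)^\vee(1)[2])$. Rewriting it with $F^D=\GG_X[1]$ (so $H^i(X,F^D)=\Ext^{i+1}_X(F,\GG_X)$) and $(\alpha^\ast F)^\vee(1)[2]=R\Hom_{G_\RR,X(\CC)}(\alpha^\ast F,2i\pi\ZZ)[2]$, its long exact cohomology sequence in low degrees reads
\[
0\to\Hom_{G_\RR,X(\CC)}(\alpha^\ast F,2i\pi\ZZ)\to H^{-1}_{c,B}(X,F^D)\to\Hom_X(F,\GG_X)\to\Ext^1_{G_\RR,X(\CC)}(\alpha^\ast F,2i\pi\ZZ)\to H^0_{c,B}(X,F^D)\to\Ext^1_X(F,\GG_X)\to\Ext^2_{G_\RR,X(\CC)}(\alpha^\ast F,2i\pi\ZZ)\to\cdots
\]
Here the first term is free of rank $r_2(F)$ and $[\Ext^1_{G_\RR,X(\CC)}(\alpha^\ast F,2i\pi\ZZ)]=2^{r_1(F)}$ by definition of the $r_i(F)$; the higher $\Ext^j_{G_\RR,X(\CC)}(\alpha^\ast F,2i\pi\ZZ)$ ($j\ge 2$) are finite $2$-groups, and since $R\Gamma_{c,B}(X,F^D)$ is perfect the map $R\Gamma(X,F^D)\to R\Gamma_{G_\RR}(X(\CC),(\alpha^\ast F)^\vee(1)[2])$ is an isomorphism in large degree, so in any alternating product these higher terms cancel against the corresponding $\Ext^j_X(F,\GG_X)$. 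Cutting the sequence above into short exact pieces and applying the multiplicativity of ``order of torsion times covolume'' in short exact sequences of finite type abelian groups, I would extract $[H^{-1}_{c,B}(X,F^D)_{tor}]$ and $[H^0_{c,B}(X,F^D)_{tor}]$ as an alternating product of $[\Hom_X(F,\GG_X)_{tor}]$, $[\Ext^1_X(F,\GG_X)_{tor}]$, $2^{r_1(F)}$, $[N_2]$ (note $N_2=\ker(\Ext^2_{G_\RR,X(\CC)}(\alpha^\ast F,2i\pi\ZZ)\to H^1_{c,B}(X,F^D))=\mathrm{im}(\Ext^1_X(F,\GG_X)\to\Ext^2_{G_\RR,X(\CC)}(\alpha^\ast F,2i\pi\ZZ))$) and certain covolume indices; the last will reappear in, and be absorbed by, the regulator comparison.

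For the regulator ratio, recall from the remark after \cref{first_formula_characteristic} that $R_1(F^D)/R_0(F^D)=\det_{\ZZ,\RR}A^\bullet$, where $A^\bullet$ is the acyclic complex of $\RR$-spaces with lattices coming from the distinguished triangle $R\Gamma_{c,\cal{D}}(X,F^D_\RR)\to R\Gamma_{c,B}(X,F^D)_\RR\xrightarrow{\mathrm{Log}}\Lie_X(F^D)_\RR$ together with the identification $R\Gamma_{c,\cal{D}}(X,F^D_\RR)\simeq R\Gamma(X,F)_\RR^\vee$ of \cref{duality_theorem}. I would compare this triangle, by the nine lemma, with the $\RR$-scalar extension of the fiber sequence of the previous step and with the fiber sequence $R\Hom_{G_\RR,X(\CC)}(\alpha^\ast F,2i\pi\RR[1])\to\Lie_X(F^D)_\RR\to R\Hom_{G_\RR,X(\CC)}(\alpha^\ast F,\RR[1])$ coming from $0\to 2i\pi\RR\to\CC\to\RR\to 0$ and the identification $\Lie_X(F^D)_\RR=R\Hom_{G_\RR,X(\CC)}(\alpha^\ast F,\CC[1])$ of \cref{lie_tensor_R}. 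This presents $A^\bullet$ as assembled from the sequence $B^\bullet$ (whose $\det_{\ZZ,\RR}$ is $R(F^D)$), the sequence $D^\bullet$ (whose $\det_{\ZZ,\RR}$ is $(2\pi)^{-r_2(F)}\Disc(F)$), and the scalar extension of the integral $\Ext$-sequence; a determinant bookkeeping in the style of \cite[prop. A.3, lem. A.2]{AMorin21} then yields $\det_{\ZZ,\RR}A^\bullet=(2\pi)^{r_2(F)}R(F^D)/\Disc(F)$ times precisely the covolume indices complementary to the torsion contributions of the previous step. The normalization isomorphism $\Hom_{G_\RR,X(\CC)}(\alpha^\ast F,\RR)\xrightarrow{\simeq}\prod_v\Hom_{G_v}(F_v,\ZZ)_\RR$ fixed in \cref{defi_discriminant} (with the factor $2$ at complex places) is exactly what makes the common middle term $\prod_v\Hom_{G_v}(F_v,\ZZ)_\RR$ of $B^\bullet$ and $D^\bullet$ cancel with matching integral structure, producing the $(2\pi)^{r_2(F)}$. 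Combining this with the previous two steps and \cref{first_formula_characteristic} gives the stated formula.

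The main obstacle is this last assembly: organizing the nine-lemma comparison so that every factor of $2$, every power of $2\pi$, and all the finite $2$-torsion groups $\Ext^j_{G_\RR,X(\CC)}(\alpha^\ast F,2i\pi\ZZ)$ with $j\ge 2$ are tracked correctly — in particular verifying that the higher ones cancel and that what survives in degrees $\le 1$ is exactly $2^{r_1(F)}/[N_2]$, and checking that passing between the two incarnations of $\mathrm{Log}$ (valued in $\RR[1]$ versus $\CC[1]$, differing by the $2i\pi\RR$ and hence the $2\pi$ factors) introduces no spurious constant. This is a finite but delicate diagram chase; everything else reduces to the duality theorems \cref{duality_theorem,twisted_AV}, the perfectness and functoriality of $R\Gamma_{W,c}(X,F^D)$, and the determinant-functor lemmas of \cite{AMorin21}.
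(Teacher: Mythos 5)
Your plan is correct and follows essentially the same route as the paper: reduce to \cref{first_formula_characteristic}, identify $H^0(\Lie_X(F^D))=\Ext^1_{G_K^t}(F_\eta,\cal{O}_{K^t})$, use the defining long exact sequence truncated at $N_2$ for the torsion terms, and compare the $\RR$-scalar-extended sequences built from the $\CC$- and $\RR$-valued logarithm maps to convert $R_1/R_0$ into $(2\pi)^{r_2(F)}R(F^D)/\Disc(F)$ times the remaining factors. The "delicate assembly" you flag is exactly what the paper carries out by arranging your three fiber sequences into a biacyclic double complex (rows $A^\bullet$, $B^\bullet$; columns $C^\bullet$, $D^\bullet$, $E^\bullet$) and applying \cref{trivialization_double_complex}, together with the observation that the integral truncated sequence $G^\bullet$ satisfies $G^\bullet_\RR=C^\bullet\oplus E^\bullet[-3]$, so that $\det_{\ZZ,\RR}C^\bullet(\det_{\ZZ,\RR}E^\bullet)^{-1}$ is a product of the torsion orders.
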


\begin{proof}
	We have $\ZZ^c_X\simeq \GG_X[1]$ \cite{Nart89}.	Consider the following biacyclic double complex (numbers on the side indicate indexing, zeros are ommited):
	\[\begin{tikzpicture}[baseline= (a).base]
	\node[scale=.85] (a) at (0,0){\begin{tikzcd}[column sep=small]
	&& {C^\bullet} & {D^\bullet} && {E^\bullet} \\
	&& {\Hom_{G_\RR,X(\CC)}(\alpha^\ast F,2i\pi \ZZ)_\RR} & {\Hom_{G_\RR,X(\CC)}(\alpha^\ast F,2i\pi \ZZ)_\RR} &&& {-1} \\
	{A^\bullet:} & {(H^1(X,F)_\RR)^\vee} & {H^{-1}_{c,B}(X,F^D)_\RR} & {H^1\Lie_X(F^D)_\RR} & {(H^0(X,F)_\RR)^\vee } & {H^{0}_{c,B}(X,F^D)_\RR} & 0 \\
	{B^\bullet:} & {(H^1(X,F)_\RR)^\vee} & {H^{-1}(X,F^D)_\RR} & {\prod_v \Hom_{G_v}(F_v,\ZZ)_\RR} & {(H^0(X,F)_\RR)^\vee } & {H^{0}(X,F^D)_\RR} & 1 \\
	& {-1} & 0 & 1 & 2 & 3
	\arrow[Rightarrow, no head, from=3-5, to=4-5]
	\arrow[Rightarrow, no head, from=3-2, to=4-2]
	\arrow[from=3-3, to=4-3]
	\arrow[from=4-2, to=4-3]
	\arrow[from=3-2, to=3-3]
	\arrow[from=3-3, to=3-4]
	\arrow[from=3-4, to=3-5]
	\arrow[from=3-5, to=3-6]
	\arrow[from=4-5, to=4-6]
	\arrow["\simeq", from=3-6, to=4-6]
	\arrow[from=4-3, to=4-4]
	\arrow[from=4-4, to=4-5]
	\arrow[from=3-4, to=4-4]
	\arrow[Rightarrow, no head, from=2-3, to=2-4]
	\arrow[from=2-3, to=3-3]
	\arrow[from=2-4, to=3-4]
	\end{tikzcd}};
	\end{tikzpicture}\]
	If $C^{\bullet,\bullet}$ is any biacyclic double complex of $\RR$-vector spaces with lattices, we find by applying \cref{trivialization_double_complex} to both the rows and the columns that the following diagram is commutative:
	\[\begin{tikzcd}[ampersand replacement=\&,column sep = 1.5cm]
	{\bigotimes_{i,j}(\det_\RR C^{i,j})^{(-1)^{i+j}}} \& {\det_\RR \mathrm{Tot}C} \& {\bigotimes_{j}(\det_\RR C^{\bullet,j})^{(-1)^{j}}} \& {\bigotimes_{j}(\RR)^{(-1)^{j}}} \\
	{} \& {\bigotimes_{i}(\det_\RR C^{i,\bullet})^{(-1)^{i}}} \& {\bigotimes_{i}(\RR)^{(-1)^{i}}} \& \RR
	\arrow["\simeq", from=1-1, to=1-2]
	\arrow[from=1-2, to=1-3]
	\arrow[from=1-2, to=2-2]
	\arrow["{\otimes_j\det 0}", from=1-3, to=1-4]
	\arrow[from=2-3, to=2-4]
	\arrow[from=1-4, to=2-4]
	\arrow["{\otimes_i\det 0}", from=2-2, to=2-3]
	\arrow["{\det 0}"{description}, from=1-2, to=2-4]
	\end{tikzcd}\]
	Fix bases $(u_k^{(i,j)})$ of the lattices inside the $C^{i,j}$, and denote $(u_k^{i,j})^{-1}$ the dual basis vectors. By looking at the image under the composite morphism $\bigotimes_{i,j}(\det_\RR C^{i,j})^{(-1)^{i+j}} \to \RR$ of $\otimes_{i,j} \bigwedge_k ((u_k^{(i,j)})^{(-1)^{i+j}})$, we find the relation
	\[
	\prod_i (\det_{\ZZ,\RR} C^{i,\bullet})^{(-1)^i}=\prod_j (\det_{\ZZ,\RR} C^{\bullet,j})^{(-1)^j}
	\]
	Hence in our case we obtain
	\[
	\det_{\ZZ,\RR} A^\bullet (\det_{\ZZ,\RR} B^\bullet)^{-1}=\det_{\ZZ,\RR} C^\bullet (\det_{\ZZ,\RR} D^\bullet)^{-1} (\det_{\ZZ,\RR} E^\bullet)^{-1}
	\]
	Thus it remains to compute $\det_{\ZZ,\RR} C^\bullet~ (\det_{\ZZ,\RR} E^\bullet)^{-1}$. Consider the following acyclic complex of abelian groups:
	\[G^\bullet:~~\begin{tikzcd}[row sep=tiny]
	0 & {\Hom_{G_\RR,X(\CC)}(\alpha^\ast F,2i\pi\ZZ)} & {H^{-1}_{c,B}(X,F^D)} & {H^{-1}(X,F^D)} \\
	{} & {\Ext^1_{G_\RR,X(\CC)}(\alpha^\ast F,2i\pi\ZZ)} & {H^{0}_{c,B}(X,F^D)} & {H^{0}(X,F^D)} \\
	{} & {N_2} & 0 && {}
	\arrow[from=1-1, to=1-2]
	\arrow[from=1-2, to=1-3]
	\arrow[from=1-3, to=1-4]
	\arrow[from=2-2, to=2-3]
	\arrow[from=2-1, to=2-2]
	\arrow[from=2-3, to=2-4]
	\arrow[from=3-1, to=3-2]
	\arrow[from=3-2, to=3-3]
	\end{tikzcd}\]
	with first non-zero term in degree $-1$. The last term is a subgroup of $\Ext^2_{G_\RR,X(\CC)}(\alpha^\ast F,2i\pi\ZZ)$ and hence is finite $2$-torsion. Then $\det_\ZZ G^\bullet\xrightarrow[\det(0)]{\simeq} \ZZ$ and $G^\bullet_\RR=C^\bullet\oplus E^\bullet[-3]$, thus we find 
	\[
	\det_{\ZZ,\RR}C^\bullet(\det_{\ZZ,\RR}E^\bullet)^{-1}=\det_{\ZZ,\RR} G_\RR = \prod_i [G^i_{tor}]
	\]
\end{proof}

\begin{prop}\label{special_value_Z}
	Supose that $X$ is regular. We have
	\begin{align*}
	&\sum (-1)^i i\cdot \dim_\RR H^i_{ar,c}(X,\ZZ^D_\RR) = -1\\
	&\chi_X(\ZZ^D)=\frac{2^{r_1}(2\pi)^{r_2}h_K R_K}{\omega \sqrt{|\Delta_K|}}
	\end{align*}
	and thus the special value formula
	\begin{align*}
		\ord_{s=0} L_X(\ZZ^D)&= \ord_{s=1} \zeta_X = \sum (-1)^i i\cdot \dim_\RR H^i_{ar,c}(X,\ZZ^D_\RR)\\
		L^\ast_X(\ZZ^D,0)&=\zeta_X^\ast(1)=\chi_X(\ZZ^D)
	\end{align*}
\end{prop}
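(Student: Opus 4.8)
\emph{Strategy.} The plan is to verify the two displayed identities separately and then deduce the special value formula by comparison with the classical analytic class number formula, using the explicit expression of the Euler characteristic in \cref{explicit_characteristic}. Throughout, regularity of $X=\Spec(\cal{O})$ means $\cal{O}=\cal{O}_K$, so $\zeta_X=\zeta_K$ and $\ZZ^D=\ZZ^c_X=\GG_m[1]$, and we use that $L_X(\ZZ^D,s)=\zeta_X(s+1)=\zeta_K(s+1)$.

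\emph{Vanishing order.} First I would compute the Deligne compactly supported complex. Since $X$ is regular (hence unibranch), the computation of $H^{-1}_{c,\cal{D}}$ and $H^0_{c,\cal{D}}$ carried out in \cref{sec:deligne_and_duality} gives $H^{-1}_{c,\cal{D}}(X,\ZZ^D_\RR)=0$ and $H^0_{c,\cal{D}}(X,\ZZ^D_\RR)=\RR$, so $R\Gamma_{c,\cal{D}}(X,\ZZ^D_\RR)\simeq\RR[0]$. Hence $R\Gamma_{ar,c}(X,\ZZ^D_\RR)=\RR[-1]\oplus\RR[0]$, i.e. $H^0_{ar,c}=H^1_{ar,c}=\RR$ and all other cohomology vanishes, which gives $\sum(-1)^i i\cdot\dim_\RR H^i_{ar,c}(X,\ZZ^D_\RR)=-1$. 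On the analytic side $\zeta_K$ has a simple pole at $s=1$, so $\ord_{s=0}L_X(\ZZ^D)=\ord_{s=1}\zeta_K=-1$, matching.

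\emph{The Euler characteristic.} The next step is to feed $F=\ZZ$ into \cref{explicit_characteristic} and evaluate every factor. By \cref{cohomology_Z}, $H^0(X,\ZZ)=\ZZ$ and $H^1(X,\ZZ)=0$, so $H^0(X,\ZZ)_{tor}=H^1(X,\ZZ)_{tor}=0$. One has $\Hom_X(\ZZ,\GG_X)=H^0(X,\GG_m)=\cal{O}_K^\times$ with torsion $\mu_K$ of order $\omega$, and $\Ext^1_X(\ZZ,\GG_X)=H^1(X,\GG_m)=\Pic(X)$, finite of order $h_K$. Noether's theorem \cref{noether_theorem} gives $R\Gamma(G_K^t,\cal{O}_{K^t})=\cal{O}_K[0]$, so $\Ext^1_{G_K^t}(\ZZ,\cal{O}_{K^t})=0$. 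Since $\alpha^\ast\ZZ$ is the constant sheaf $\ZZ$, one computes $\Ext^i_{G_\RR,X(\CC)}(\alpha^\ast\ZZ,2i\pi\ZZ)=\prod_{v}H^i(G_{K_v},2i\pi\ZZ)$, which at complex places is $\ZZ$ in degree $0$ and $0$ otherwise, and at real places is $\ZZ/2\ZZ$ in odd degrees and $0$ in nonnegative even degrees; hence $r_2(\ZZ)=r_2$, $r_1(\ZZ)=r_1$, and $\Ext^2_{G_\RR,X(\CC)}(\alpha^\ast\ZZ,2i\pi\ZZ)=0$, which combined with $H^1_{c,B}(X,\ZZ^D)=0$ (\cref{cohomology_compact_support_ZD}) forces $N_2=0$. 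Substituting, \cref{explicit_characteristic} collapses to
\[
\chi_X(\ZZ^D)=\frac{2^{r_1}(2\pi)^{r_2}h_K}{\omega\cdot\Disc(\ZZ)}\,R(\ZZ^D).
\]
It then remains to identify the two regulator/discriminant terms. Since $H^{-1}\Lie_X(\ZZ^D)=H^0(G_K^t,\cal{O}_{K^t})=\cal{O}_K$, the complex $\widetilde D^\bullet$ of the remark after \cref{defi_discriminant} becomes the standard acyclic sequence comparing the lattice $\cal{O}_K$ to $\prod_v K_v\cong\RR^{r_1}\times\CC^{r_2}$, with the normalization ``$2\phi_{\sigma_v}$ at complex places'' built into \cref{defi_discriminant}; its normalized determinant is the covolume of $\cal{O}_K$ for the measure so normalized, namely $\sqrt{|\Delta_K|}$, so $\Disc(\ZZ)=\sqrt{|\Delta_K|}$. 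Likewise the complex $B^\bullet$ of \cref{explicit_characteristic}, using $H^1(X,\ZZ)=0$, $\Ext^1_X(\ZZ,\GG_X)_\RR=0$, $\Hom_X(\ZZ,\GG_X)_\RR=(\cal{O}_K^\times)_\RR$ and $(H^0(X,\ZZ)_\RR)^\vee=\RR$, reduces to $0\to(\cal{O}_K^\times)_\RR\to\prod_v\RR\to\RR\to 0$ where the first map is $u\mapsto(\log\|u\|_v)_v$ (normalized absolute values, the factor $2$ at complex places coming again from the isomorphism in \cref{defi_discriminant}) and the second is the sum map; by the product formula and Dirichlet's unit theorem the first map is an isomorphism onto the trace-zero hyperplane, and the normalized determinant of $B^\bullet$ is $R_K$, so $R(\ZZ^D)=R_K$. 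This yields $\chi_X(\ZZ^D)=\dfrac{2^{r_1}(2\pi)^{r_2}h_K R_K}{\omega\sqrt{|\Delta_K|}}$.

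\emph{Conclusion and main obstacle.} The right-hand side is exactly $\zeta_K^\ast(1)$ by the analytic class number formula, and $L_X^\ast(\ZZ^D,0)=\zeta_X^\ast(1)=\zeta_K^\ast(1)$; together with the vanishing order computation this gives the two special value statements. I expect the main obstacle to be the bookkeeping in the previous paragraph: checking that the normalization conventions encoded in \cref{defi_discriminant} and in the exact sequences $B^\bullet$ and $\widetilde D^\bullet$ reproduce the classical $\sqrt{|\Delta_K|}$ and $R_K$ with no stray powers of $2$ or $\pi$. This is precisely why the isomorphism in \cref{defi_discriminant} was set up with the factor $2$ at complex places and why the factors $2^{r_1}$ and $(2\pi)^{r_2}$ appear in \cref{explicit_characteristic}, so the computation is routine but must be done carefully.
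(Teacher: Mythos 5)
Your proposal is correct and follows essentially the same route as the paper: specialize \cref{explicit_characteristic} to $F=\ZZ$, evaluate $r_1(\ZZ)=r_1$, $r_2(\ZZ)=r_2$, $N_2=0$, $H^0\Lie_X(\ZZ^D)=0$ and the torsion orders $h_K$ and $\omega$, identify $\Disc(\ZZ^D)=\sqrt{|\Delta_K|}$ and $R(\ZZ^D)=R_K$, and conclude via the analytic class number formula together with the computation $R\Gamma_{c,\cal{D}}(X,\ZZ^D_\RR)\simeq\RR[0]$ for the vanishing order. The one step you defer as ``routine bookkeeping'' --- that the normalization of \cref{defi_discriminant} turns the covolume into exactly $\sqrt{|\Delta_K|}$ with no stray factor $2^{r_2}$ --- is precisely where the paper invests explicit effort (row operations on the embedding matrix plus Brill's theorem for the sign of $\Delta_K$), so in a complete write-up that verification should be carried out rather than asserted.
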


\begin{proof}
	The first formula is immediate from the previous computations.
	
	We use \cref{explicit_characteristic}. We have $\Hom_{G_\RR,X(\CC)}(\ZZ,2i\pi\ZZ)=(2i\pi\ZZ)^{r_2}$, which is of rank $r_2(\ZZ)=r_2$, $\Ext^1_{G_\RR,X(\CC)}(\ZZ,2i\pi\ZZ)=(\ZZ/2\ZZ)^{r_1}$ so that $r_1(\ZZ)=r_1$, $\Ext^2_{G_\RR,X(\CC)}(\ZZ,2i\pi\ZZ)=0$ hence $N_2=0$, and $\Lie_X(\ZZ^D)=\cal{O}_K[1]$ so in particular $H^0\Lie_X(\ZZ^D)=0$. Thus using \cref{weil_etale_Z,cohomology_Z} we find
	\[
	\chi_X(\ZZ^D)=\frac{2^{r_1}(2\pi)^{r_2}h_K R(\ZZ^D)}{\omega \Disc(\ZZ^D)}
	\]
	
	By the choices made in \cref{defi_discriminant}, $\Disc(\ZZ^D)$ is computed as following: fix a basis $(\alpha_i)$ of $\cal{O}_K$. Choose an ordering of the archimedean places, real places first and complex places second, and denote $\sigma_i$ the choice of an embedding for each place $v_i$ and $\sigma_{r_1+r_2+i}=\overline{\sigma_{r_1+i}}$. Thus $\sigma_1,\ldots,\sigma_{r_1}$ are the real embeddings and $\sigma_{r_1+1},\ldots,\sigma_{r_1+r_2}$ the chosen complex embeddings for each complex place. By unravelling the definitions, we see that we are asked to compute the determinant of the matrix
	\[
		\begin{pmatrix}
		\Re(\sigma_1(\alpha_1)) & \cdots & \Re(\sigma_1(\alpha_n)) \\
		\vdots  & \ddots & \vdots  \\
		\Re(\sigma_{r_1+r_2}(\alpha_1)) & \cdots & \Re(\sigma_{r_1+r_2}(\alpha_n)) \\
		\Im(\sigma_{r_1+1}(\alpha_1)) & \cdots & \Im(\sigma_{r_1+1}(\alpha_n))\\
		\vdots  & \ddots & \vdots  \\
		\Im(\sigma_{r_1+r_2}(\alpha_1)) & \cdots & \Im(\sigma_{r_1+r_2}(\alpha_n))\\
		\end{pmatrix}
	\]
	Observe that it is obtained from the matrix $(\sigma_j(\alpha_i))_{i,j=1,\ldots,n}$ by the transformation on the rows corresponding to complex embeddings given schematically by
		\[
		\begin{pmatrix}
		\overline{\sigma(\alpha)}\\
		\sigma(\alpha)
		\end{pmatrix}
		\xrightarrow{R_1\leftarrow R_1+R_2}
		\begin{pmatrix}
		2\Re(\sigma(\alpha))\\
		\sigma(\alpha)
		\end{pmatrix}
		\xrightarrow{R_2\leftarrow R_2-\frac{1}{2} R_1}
		\begin{pmatrix}
		2\Re(\sigma(\alpha))\\
		i\Im(\sigma(\alpha))
		\end{pmatrix}
		\xrightarrow{R_2\leftarrow 1/i R_21}
		\begin{pmatrix}
		2\Re(\sigma(\alpha))\\
		\Im(\sigma(\alpha))
		\end{pmatrix},
		\]
		plus some permutation of the rows. Since $\left(\det(\sigma_j(\alpha_i))_{i,j}\right)^2=\Delta_K$ and the sign of $\Delta_K$ is $(-1)^{r_2}$ (Brill's theorem \cite[Lemma 2.2]{Washington97}), we obtain
		\[
		\Disc(\ZZ^D) = |(\frac{1}{i})^{r_2}\det (\sigma_j(\alpha_i))_{i,j} | = sqrt{|\Delta_K|}
		\]
		so that
		\[
		\chi_X(\ZZ^D)=\frac{2^{r_1}(2\pi)^{r_2}h_K R(\ZZ^D)}{\omega \sqrt{|\Delta_K|}}
		\]
		
		It remains to determine $R(\ZZ^D)$. By definition, this is $\det_{\ZZ,\RR} B^\bullet$ where $B^\bullet$ is the acyclic complex with first non-zero term in degree $0$
		\[
		0 \to (\cal{O}_K^\times)_\RR \to (\prod_v \ZZ)_\RR \to (\ZZ_\RR)^\vee \to 0
		\]
		Let us make the maps explicit. Denote $|\cdot|_v$ the normalized absolute value for the archimedean place $v$: $|\cdot|_v=|\cdot|$ if $v$ is real and $|\cdot|_v=|\cdot|^2$ if $v$ is complex. Denote also $L$ the logarithmic embedding $L:=\prod_v \log|\sigma_v(\cdot)|_v$. We have a commutative diagram
		\[\begin{tikzcd}[ampersand replacement=\&]
		\& {(\prod_{\sigma} \RR)^{G_\RR}} \\
		{(\cal{O}_K^\times)_\RR} \& {\prod_v \RR} \& \RR
		\arrow["\simeq", from=1-2, to=2-2]
		\arrow["{\prod_\sigma \log|\sigma(\cdot)|}", from=2-1, to=1-2]
		\arrow["\Sigma", from=1-2, to=2-3]
		\arrow["\Sigma"', from=2-2, to=2-3]
		\arrow["{L}"', from=2-1, to=2-2]
		\end{tikzcd}\]
		where the middle isomorphism comes from \cref{defi_discriminant} and $\Sigma$ is the sum of components map (see the discussion after \cref{defi_deligne_compact}). Here $\RR=H^0_{c,\cal{D}}(X,\ZZ^D_\RR)=H^0(X,\ZZ)_\RR^\vee$ obtains the canonical basis because the perfect pairing $H^0_{c,\cal{D}}(X,\ZZ^D_\RR)\times H^0(X,\ZZ)_\RR = \RR\times \RR \to \RR$ is given by the multiplication map. The ordering on the places gives an ordered  basis on $(\prod_v \ZZ)_\RR$. Let $x$ be the vector $(1,0,\ldots,0)\in (\prod_v \ZZ)_\RR$. We have $\Sigma(x)=1$. Denote $(\varepsilon_i)$ a system of fundamental units of $\cal{O}_K^\times$ and . Then $\det_{\ZZ,\RR} B^\bullet$ is the absolute value of the determinant of the matrix $P$ expressing $(L(\varepsilon_1),L(\varepsilon_2),\ldots,L(\varepsilon_{r_1+r_2-1}),x)$ in the basis of $\prod_v \ZZ)_\RR$. Put $N_i=1$ if $\sigma_i$ is real and $N_i=2$ if $\sigma_i$ is complex. Then we have,
		\[
		P =		\begin{pmatrix}
		N_1\log|\sigma_1(\varepsilon_1)| & \cdots & N_1\log|\sigma_{1}(\varepsilon_{s-1})| & 1 \\
		N_2\log|\sigma_2(\varepsilon_1)| & \cdots & N_2\log|\sigma_{2}(\varepsilon_{s-1})| & 0 \\
		\vdots  & \ddots & \vdots & \vdots  \\
		N_s\log|\sigma_s(\varepsilon_1)| & \cdots & N_s\log|\sigma_{s}(\varepsilon_{s-1})| & 0 \\
		\end{pmatrix}
		\]
		By definition, the regulator $R_K$ is the determinant of any $(s-1)\times (s-1)$ minor of the matrix $(N_i\log|\sigma_i(\varepsilon_j)|)_{i=1,\ldots,s;j=1,\ldots,s-1}$; thus by developping with respect to the last column we find
		\[
			R(\ZZ^D) = |\det P| = R_K.
		\]
		
		We obtain finally
		\[
		\chi_X(\ZZ^D)=\frac{2^{r_1}(2\pi)^{r_2}h_K R_K}{\omega \sqrt{|\Delta_K|}}
		\]
\end{proof}

\begin{rmk}
	For $F=\ZZ$, the complexes $\Lie_X(\ZZ^D)$, $R\Gamma_{W,c}(X,\ZZ^D)$, $R\Gamma_{c,\cal{D}}(X,\ZZ^D_\RR)$ and $R\Gamma_{ar,c}(X,\ZZ^D_\RR)$ are equal (up to a shift) to the complexes $R\Gamma_{dR}(X/\ZZ)/F^1$, $R\Gamma_{W,c}(X,\ZZ(1))$, $R\Gamma_c(X,\RR(1))$ and $R\Gamma_{ar,c}(X,\widetilde{\RR}(1))$ of \cite{Flach18}, in which the above formula was already obtained.
\end{rmk}

\begin{prop}\label{special_value_point}
	Let $i:x\to X$ be the inclusion of a closed point, $M$ a discrete $G_x$-module of finite type. Let $\chi_{W,c}$ denote the Weil-étale Euler characteristic of a $\ZZ$-constructible sheaf on $X$ from \cite{AMorin21} and let $R(M)$ be the absolute value of the determinant of the pairing
	\[
	H^0(G_x,M)_\RR \times H^0(G_x,M^\vee)_\RR \to \RR 
	\]
	in bases modulo torsion. Then
	\begin{align*}
	&\sum (-1)^i i \cdot \dim_\RR H^i_{ar,c}(X,(i_\ast M)^D_\RR) = - \rank_\ZZ H^0(G_x,M^\vee)\\
	&\chi_X((i_\ast M)^D)=\frac{[H^0(G_x,M)]_{tor}}{[H^1(G_x,M)]R(M)\log(N(v))^{\rank H^0(G_x,M)}}=\chi_{W,c}(i_\ast M)=\chi_{W,c}(i_\ast M^\vee)
	\end{align*}
	We have the special value formula 
	\begin{align*}
		\ord_{s=0}L_X((i_\ast M)^D)& = \sum (-1)^i i \cdot \dim_\RR H^i_{ar,c}(X(,i_\ast M)^D_\RR)\\
		L_X^\ast((i_\ast M)^D,0)& =\chi_X((i_\ast M)^D).
	\end{align*}
\end{prop}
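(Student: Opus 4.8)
The plan is to reduce all three assertions to the local structure at $x$ and to results already in hand. I begin with the identifications. Since $Ri^!\ZZ^c_X=\ZZ[0]$ we have $(i_\ast M)^D=i_\ast M^\vee$ with $M^\vee=R\Hom_\ZZ(M,\ZZ)$, and since $\alpha^\ast i_\ast=0$ the archimedean corrections disappear, so
\[
R\Gamma_{c,B}(X,(i_\ast M)^D)=R\Gamma(G_x,M^\vee),\qquad R\Gamma_{c,\cal{D}}(X,(i_\ast M)^D_\RR)=R\Gamma(G_x,M^\vee)_\RR,
\]
and $\Lie_X((i_\ast M)^D)=0$ because the tangent space depends only on the generic stalk, which vanishes. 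The sheaf $i_\ast M$ is blue, so $R\Gamma_{W,c}(X,(i_\ast M)^D)$ is defined and \cref{first_formula_characteristic} applies. As $G_x\simeq\hat{\ZZ}$ has cohomological dimension $1$ on torsion coefficients and the non-torsion part of $R\Gamma(G_x,M^\vee)$ survives only in degree $0$, we get $R\Gamma(G_x,M^\vee)_\RR\simeq H^0(G_x,M^\vee)_\RR[0]$, of dimension $r:=\rank_\ZZ H^0(G_x,M^\vee)$; since a finite-order automorphism and its contragredient have the same eigenvalue multiset, $r=\rank_\ZZ H^0(G_x,M)$. Hence $R\Gamma_{ar,c}(X,(i_\ast M)^D_\RR)$ has $H^0\simeq H^1\simeq\RR^{r}$ and is zero otherwise, so $\sum_i(-1)^i i\cdot\dim_\RR H^i_{ar,c}=-r$.

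For the $L$-function, \cref{explicit_expression_L_function} applied to $F=i_\ast M$ (all factors indexed by $\pi(y)=x$ involve $F_\eta^\vee=0$ and are trivial) gives $L_X((i_\ast M)^D,s)=Q_x(M^\vee,s)=\det(1-\varphi N(x)^{-s}\mid M^\vee\otimes\QQ)^{-1}$. Writing $M^\vee\otimes\QQ$ as the sum of its $\varphi=1$ eigenspace and a complement $V'$ one obtains $\det(1-\varphi N(x)^{-s}\mid M^\vee\otimes\QQ)=(1-N(x)^{-s})^{r}P(N(x)^{-s})$ with $P(1)=\det(1-\varphi\mid V')\neq 0$, so $\ord_{s=0}L_X((i_\ast M)^D,s)=-r$, matching the first paragraph, and
\[
L_X^\ast((i_\ast M)^D,0)=\frac{1}{(\log N(x))^{r}\,\det(1-\varphi\mid V')}.
\]
Next I compute the Euler characteristic via \cref{first_formula_characteristic}. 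Here $H^{-1}_{c,B}(X,(i_\ast M)^D)=0$, $H^0_{c,B}(X,(i_\ast M)^D)=H^0(G_x,M^\vee)=\Hom_{G_x}(M,\ZZ)$ is torsion-free, and $H^0(\Lie_X((i_\ast M)^D))=0$, so the torsion factors collapse to $[H^0(G_x,M)_{tor}]/[H^1(G_x,M)]$; the regulator $R_1$ is the determinant of the empty pairing, hence $1$; and $R_0$ is the determinant of the pairing of \cref{duality_theorem} on $H^0$, which by the closed-point computation in the proof of that theorem equals $-\log N(x)$ times the evaluation pairing $H^0(G_x,M^\vee)_\RR\times H^0(G_x,M)_\RR\to\RR$, so $R_0=(\log N(x))^{r}R(M)$. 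This yields $\chi_X((i_\ast M)^D)=[H^0(G_x,M)_{tor}]\big/\big([H^1(G_x,M)]\,R(M)\,(\log N(x))^{r}\big)$.

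Comparing the two displayed formulas, the special value formula $L_X^\ast((i_\ast M)^D,0)=\chi_X((i_\ast M)^D)$ is equivalent to the identity $\det(1-\varphi\mid V')=[H^1(G_x,M)]\,R(M)\big/[H^0(G_x,M)_{tor}]$. Since $M\otimes\QQ$ and $M^\vee\otimes\QQ$ are isomorphic as $\varphi$-modules (dual rational representations of a finite group share the same characteristic polynomial), this is exactly the equality, coming from \cite{AMorin21}, of the leading coefficient of $L_X(i_\ast M,s)$ with $\chi_{W,c}(i_\ast M)$, once one checks that the regulator $R(M)$ above is the one entering that formula; thus it reduces to (a version of) Tate's local Euler characteristic formula. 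Alternatively it can be proved directly by a linear-algebra computation with the Frobenius action, using the regulator lemmas of \cite[lem. A.1, lem. A.2]{AMorin21} and reducing to the torsion-free and finite cases by d\'evissage along $0\to M_{tor}\to M\to M/tor\to0$. Finally, $\chi_X((i_\ast M)^D)=\chi_{W,c}(i_\ast M)=\chi_{W,c}(i_\ast M^\vee)$ then follows from this special value formula together with $L_X((i_\ast M)^D,s)=L_X(i_\ast M,s)=L_X(i_\ast M^\vee,s)$ (the last two being the $L$-functions of $\ZZ$-constructible sheaves from \cite[\S 6.4]{AMorin21}, equal by the same characteristic-polynomial remark) and the special value theorem for $\ZZ$-constructible sheaves of \cite{AMorin21}, all quantities being positive. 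The main obstacle I anticipate is exactly this last bookkeeping: matching $R(M)$ with the regulator appearing in \cite{AMorin21}, keeping careful track of torsion subgroups (notably the asymmetric factor $[H^0(G_x,M)_{tor}]$) and of the powers of $\log N(x)$ produced by the product formula in \cref{duality_theorem}, and verifying that the trivialization $\lambda$ used here agrees with the one underlying $\chi_{W,c}$.
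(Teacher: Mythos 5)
Your proof is correct and follows essentially the same route as the paper: compute $\chi_X((i_\ast M)^D)$ explicitly via an Euler-characteristic formula (you use \cref{first_formula_characteristic}, the paper uses \cref{explicit_characteristic}, with the same outcome), extract the factor $\log N(v)^{\rank H^0(G_x,M)}$ from the closed-point case of \cref{duality_theorem}, and then reduce to the special value theorem and the explicit closed-point formula for $\chi_{W,c}$ from \cite{AMorin21}. The only minor divergence is that you deduce $\chi_{W,c}(i_\ast M)=\chi_{W,c}(i_\ast M^\vee)$ from the equality of the three $L$-functions plus positivity, whereas the paper proves it directly from $H^1(G_x,M)\simeq H^1(G_x,M^\vee)^\ast$, $R(M)=R(M^\vee)$ and a torsion/torsion-free dévissage; both arguments are fine.
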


\begin{proof}
	The computation of $\sum (-1)^i i \cdot \dim_\RR H^i_{ar,c}(X(,i_\ast M)^D_\RR)$ is straightforward. Let us compute $\chi_X((i_\ast M)^D)$. Using \cref{explicit_characteristic} we find
	\[
	\chi_X((i_\ast M)^D)=\frac{[H^0(G_x,M)_{tor}]R((i_\ast M)^D)}{[H^1(G_x,M)]}
	\]
	Here, the complex $B^\bullet$ is
	\[
	(H^0(X,i_\ast M)_\RR)^\vee \xrightarrow{\simeq} H^0(X,i\ast M^\vee)_\RR
	\]
	with first term in degree $2$, so $R((i_\ast M)^D)=\det_{\ZZ,\RR} B^\bullet$ is the inverse of the absolute value of the determinant of the pairing
	\[
	H^0(X,i_\ast M)_\RR\times H^0(X,i_\ast M^\vee) \to \RR
	\]
	Hence from the proof of \cref{duality_theorem} we see that
	\[
	R((i_\ast M)^D)=\frac 1 {R(M)\log(N(v))^{\rank H^0(G_x,M)}}
	\]
	which shows the first equality. The second equality is \cite[cor. 6.8]{AMorin21}. Let us show the last equality. If $M$ is torsion, then $M^\vee=M^\ast[-1]$ so by \cite[prop. 6.23]{AMorin21} both terms equal $1$. We can thus suppose that $M$ is torsion-free. Let $n=\rank_\ZZ H^0(G_x,M)$. Then $\rank_\ZZ \Hom_{G_x}(M,\ZZ)=n$ by the perfectness of the above pairing, $M^\vee = \Hom_\ZZ(M,\ZZ)$, and
	\[
	\chi_{W,c}(i_\ast M)=\frac 1 {[H^1(G_x,M)]R(M)\log(N(v))^n}
	\]
	\[
	\chi_{W,c}(i_\ast M^\vee)=\frac 1 {[H^1(G_x,M^\vee)]R(M^\vee) \log(N(v))^n}
	\]
	whence the result from the duality isomorphism $H^1(G_x,M)\simeq \Ext^1_{G_x}(M,\ZZ)^\ast = H^1(G_x,M^\vee)^\ast$ and the identification $R(M)=R(M^\vee)$.
	
	Finally, since $L_X((i_\ast M)^D,s)=L_X(i_\ast M^\vee,s)$ is the $L$-function of a (complex of) $\ZZ$-constructible sheaves, from \cite[thm. 6.24]{AMorin21} we find
	\begin{align*}
		&\ord_{s=0} L_X((i_\ast M)^D,s) = - \rank_\ZZ H^0(G_x,M^\ast)\\
		& L_X^\ast((i_\ast M)^D,0)=\chi_{W,c}(i_\ast M^\vee)
	\end{align*}
\end{proof}

\begin{thm}\label{special_value_constructible}
	Let $F$ be a constructible sheaf. Then
	\[\sum (-1)^i i\cdot \dim_\RR H^i_{ar,c}(X,F^D_\RR)=0\]
	In particular, we have the vanishing order formula $\ord_{s=0} L_X(F^D,s)=\sum (-1)^i i\cdot \dim_\RR H^i_{ar,c}(X,F^D_\RR)$. If $F$ is moreover tamely ramified, then
	\[
	\chi_X(F^D)=1.
	\]
	In particular, we have the special value formula $L_X^\ast(F^D,0)=\pm\chi_X(F^D)$.
\end{thm}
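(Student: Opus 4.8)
The theorem comprises three separate assertions: the identity $L_X(F^D,s)=1$, the vanishing $R\Gamma_{ar,c}(X,F^D_\RR)=0$, and — when $F$ is moreover tamely ramified — the equality $\chi_X(F^D)=1$. The two ``in particular'' clauses then follow at once: both sides of the vanishing order formula equal $0$, and the special value formula reads $L_X^\ast(F^D,0)=1=\pm\chi_X(F^D)$.

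For the $L$-function I would argue directly from \cref{explicit_expression_L_function}. Since $F$ is constructible, the abelian groups $F_\eta$ and $F_x$ are finite for every closed point $x$, so $F_\eta^\vee=\Hom_\ZZ(F_\eta,\ZZ)=0$, $F_x^\vee=0$, and $(F_\eta^\vee)^{I_y}=0$ for every point $y$ above $x$ in the normalization. In the local factor
\[
L_x(F^D,s)=\frac{\prod_{\pi(y)=x}Q_y((F_\eta^\vee)^{I_y},s+1)\,Q_x(F_x^\vee,s)}{\prod_{\pi(y)=x}Q_y((F_\eta^\vee)^{I_y},s)}
\]
every factor is $\det(I-\cdots\mid 0)^{-1}=1$, hence $L_x(F^D,s)=1$ and $L_X(F^D,s)=1$; in particular $\ord_{s=0}L_X(F^D,s)=0$ and $L_X^\ast(F^D,0)=1$. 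For the Weil--Arakelov complex: as $X$ is proper and $F$ is constructible, $R\Gamma(X,F)$ differs from $R\hat{\Gamma}_c(X,F)$ by finite groups and the latter has finite cohomology, so $R\Gamma(X,F)_\RR=0$; the duality theorem \cref{duality_theorem} then gives $R\Gamma_{c,\cal D}(X,F^D_\RR)\simeq R\Hom(R\Gamma(X,F)_\RR,\RR)=0$, whence $R\Gamma_{ar,c}(X,F^D_\RR)=R\Gamma_{c,\cal D}(X,F^D_\RR)[-1]\oplus R\Gamma_{c,\cal D}(X,F^D_\RR)=0$ and $\sum(-1)^i i\cdot\dim_\RR H^i_{ar,c}(X,F^D_\RR)=0$. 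This proves the vanishing order formula.

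It remains to show $\chi_X(F^D)=1$ when $F$ is also tamely ramified. Fix a regular dense open $j\colon U\hookrightarrow X$ with $F|_U$ locally constant and let $i\colon Z\hookrightarrow X$ be the closed complement; in the short exact sequence $0\to j_!(F|_U)\to F\to i_\ast i^\ast F\to 0$ all three terms are constructible (hence red and blue) and tamely ramified, so multiplicativity of the Weil-étale Euler characteristic gives $\chi_X(F^D)=\chi_X((j_!(F|_U))^D)\cdot\chi_X((i_\ast i^\ast F)^D)$. The second factor is a product over the finitely many $x\in Z$ of the numbers $\chi_X((i_{x,\ast}F_x)^D)$, and since $F_x$ is finite the formula of \cref{special_value_point} degenerates to $[H^0(G_x,F_x)]/[H^1(G_x,F_x)]=1$ (kernel and cokernel of $\varphi-1$ on a finite group have equal order), so $\chi_X(F^D)=\chi_X((j_!G)^D)$ with $G:=F|_U$. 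Fix now an integer $N$ annihilating $G$; then $G$ corresponds to a finite module $M$ over a finite quotient $\Gamma$ of $\pi_1^{et}(U)$, and by tameness the action of $G_K$ on $F_\eta$ factors through $G_K^t$, so the covers introduced below are tamely ramified. For $H\le\Gamma$ let $\pi_H\colon Y_H\to X$ be the normalization of $X$ in the corresponding fixed field and $j_H\colon V_H\hookrightarrow Y_H$ the preimage of $U$; the sheaf $j_{H,!}\ZZ$ is red and tamely ramified, and the short exact sequence $0\to j_{H,!}\ZZ\xrightarrow{N}j_{H,!}\ZZ\to j_{H,!}(\ZZ/N\ZZ)\to 0$ is red-to-blue, so multiplicativity forces $\chi_{Y_H}((j_{H,!}(\ZZ/N\ZZ))^D)=1$, and compatibility of $\chi$ with the tame finite morphism $\pi_H$ gives $\chi_X((\pi_{H,\ast}j_{H,!}(\ZZ/N\ZZ))^D)=1$, where $\pi_{H,\ast}j_{H,!}(\ZZ/N\ZZ)=j_!(\pi'_{H,\ast}(\ZZ/N\ZZ))$ corresponds on $U$ to $\ind_H^\Gamma(\ZZ/N\ZZ)$. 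By \cite[Corollary 1]{Swan63}, the class of $M$ in the Grothendieck group of finite $\ZZ[\Gamma]$-modules is a finite $\ZZ$-linear combination of classes of modules of the form $\ind_H^\Gamma(\ZZ/N_H\ZZ)$; applying the exact functor $j_!(-)$ to a realization of this identity by short exact sequences, and using that $\chi_X((-)^D)$ is multiplicative on all short exact sequences of tamely ramified $\ZZ$-constructible sheaves, we conclude $\chi_X((j_!G)^D)=1$, hence $\chi_X(F^D)=1$ and $L_X^\ast(F^D,0)=1=\pm\chi_X(F^D)$.

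The main obstacle is this last step: Swan's corollary provides an identity for $[M]$ in a Grothendieck group, but one must check that it can be realized by genuine short exact sequences of \emph{tamely ramified $\ZZ$-constructible} sheaves on $X$, with all intermediate terms red or blue, so that the multiplicativity theorem and the pushforward-compatibility of the Euler characteristic literally apply (equivalently, that $\chi_X((-)^D)$ descends to $K_0$ of this subcategory). Once this bookkeeping is arranged the argument is purely formal; the remaining ingredients — the vanishing of the $L$-function, the vanishing of $R\Gamma_{ar,c}$ through the real duality theorem, and the reduction of $\chi$ to the closed-point case — are immediate from results already established.
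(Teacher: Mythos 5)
The first two thirds of your argument are correct and essentially what the paper does: for constructible $F$ every local factor in \cref{explicit_expression_L_function} is $1$ because $F_\eta$ and $F_x$ are finite, $R\Gamma(X,F)_\RR=0$ together with \cref{duality_theorem} gives $R\Gamma_{ar,c}(X,F^D_\RR)=0$, the closed part is handled by \cref{special_value_point}, and your computation of $\chi_{Y_H}((j_{H,!}(\ZZ/N\ZZ))^D)=1$ from the multiplication-by-$N$ sequence is fine.

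The last step, however, contains a genuine gap, and it is exactly the point you flag yourself. The statement you attribute to \cite[Corollary 1]{Swan63} is not what that corollary says, and it is false: in the Grothendieck group of \emph{finite} $\ZZ[\Gamma]$-modules the classes $[\ind_H^\Gamma(\ZZ/N_H\ZZ)]$ do not generate. By dévissage that group is $\bigoplus_p K_0'(\FF_p[\Gamma])$, and already for $\Gamma=\ZZ/3\ZZ$ and $p\equiv 1 \pmod 3$ one has $K_0'(\FF_p[\Gamma])\simeq \ZZ^3$ while the permutation classes $[\FF_p[\Gamma/H]]$, $H\in\{1,\Gamma\}$, span a rank-$2$ subgroup; a nontrivial mod-$p$ character of $\Gamma$ (a perfectly good tamely ramified finite module) is therefore not an integral combination of your induced modules, so no bookkeeping with short exact sequences can realize the claimed identity. (If instead you meant the Grothendieck group $K_0'(\ZZ[\Gamma])$ of all finitely generated modules, the classes $[\ind_H^\Gamma(\ZZ/N_H\ZZ)]$ are all zero there, so your identity would force $[M]=0$, which is also false in general.) Swan's actual Corollary 1 is the finiteness of the kernel of $K_0'(\ZZ[\Gamma])\to K_0'(\QQ[\Gamma])$, and the paper uses it as follows: define $\theta(M)=\chi_X((g_\ast M)^D)$ for \emph{every} finitely generated $G_{L/K}$-module $M$, check that $\theta$ is multiplicative (the defect of exactness of $g_\ast$ is constructible, supported on a finite closed subscheme, with trivial $\chi$), so that $\theta$ factors through $K_0'(\ZZ[G_{L/K}])$ with values in the torsion-free group $\RR^\times_{>0}$; since $M\otimes\QQ=0$, Swan's finiteness makes $[M]$ torsion, hence $\theta(M)=1$. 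Note that the multiplicativity issue you worry about is actually the easy part (in a short exact sequence of tamely ramified constructible sheaves all terms are both red and blue, so the multiplicativity theorem applies verbatim); what your write-up is missing is the extension of $\theta$ to all finitely generated modules (via $g_\ast$, or via $j_!$ of the associated locally constant sheaf on $U$, which is red), so that the torsion argument can replace the explicit induction decomposition that does not exist.
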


\begin{proof}
	The vanishing order formula is immediate; let us show the special value formula. If $F$ is supported on a finite closed subscheme, we reduce to the case $F=i_\ast M$ where $i:x\to X$ is a closed point and $M$ is a finite discrete $G_x$-module. In that case, \cref{special_value_point} together with \cite[prop. 6.23]{AMorin21} give $\chi_X((i_\ast M)^D)=1$. By dévissage, we reduce to checking that $\chi_X((g_\ast M)^D)=1$ for $M$ a tamely ramified finite discrete $G_K$-module. Let $L$ be a tamely ramified extension of $K$ such that $G_L$ acts trivially on $M$, and let $G_{L/K}:=\Gal(L/K)$. For $R$ a ring, denote $K_0'(R)$ the Grothendieck group of finitely generated left $R$-modules. The map $\theta : M\in G_{L/K}\text{-}\mathrm{Mod} \mapsto \chi_X((g_\ast M)^D)$ is multiplicative: indeed, if $0\to M ' \to M \to M'' \to 0$ is an exact sequence of $G_{L/K}$-modules then we have an exact sequence $0 \to g_\ast M' \to g_\ast M \to g_\ast M'' \to N \to 0$ where $N\subset R^1g_\ast M'$ is constructible supported on a finite closed subset. Then $\chi_X(N)=1$ so $\chi_X(g_\ast M)=\chi_X(g_\ast M')\chi_X(g_\ast M'')$. Thus $\theta$ factors through $K_0'(\ZZ[G_{L/K}])$ and takes values in $\RR^\times_{>0}$ which is torsion-free. By \cite[Corollary 1]{Swan63}, the kernel of $K_0'(\ZZ[G_{L/K}]) \to K_0'(\QQ[G_{L/K}])$ is finite; since $M\otimes \QQ=0$, this implies that the class of $M$ is a torsion element in $K_0'(\ZZ[G_{L/K}])$ so $\theta(M)=\chi_X(g_\ast M)=1$.
\end{proof}

\begin{rmk}
	\begin{itemize}
		\item[]
		\item In \cite{AMorin21}, a similar formula for the Weil-étale Euler characteristic $\chi_X(F)$ of a constructible sheaf $F$ is proven by reduction to Tate's formula for the Euler characteristic of a global field (see \cite[I.5.1, II.2.13]{ADT} and \cite[6.23]{AMorin21} for the reduction). The method of the above proof applies \emph{mutatis mutandis} to $\chi_X(F)$ and thus gives an alternative proof of Tate's formula.
		\item If $F$ is a tamely ramified constructible sheaf, we have $R\Gamma_{ar,c}(X,F^D_\RR)=0$, $R\Gamma_{W,c}(X,F^D)\simeq R\Gamma_{c,B}(X,F^D)$, $\Lie_X(F^D)=\Ext^1_{G_K^t}(F_\eta,\cal{O}_{K^t})[0]$ and the two latter complexes are bounded with finite cohomology groups. Thus their Euler characteristics
		\[
		\chi_{W,X}(F^D):=\prod_{i\in \ZZ}[H^i_{W,c}(X,F^D)]^{(-1)^i}
		\]
		and 
		\[
		\chi_{L,X}(F^D):=[\Ext^1_{G_K^t}(F_\eta,\cal{O}_{K^t})]
		\]
		are well-defined and we have $\chi_X(F^D)=\chi_{W,X}(F^D)/\chi_{L,X}(F^D)$. On the other hand, using Artin-Verdier duality, the morphism of fiber sequences \cref{morphism_of_fiber_sequences} and \cite[prop. 6.23]{AMorin21}, we find
		\[
		\chi_{W,X}(F^D)=[F_\eta]^{[K:\QQ]}
		\]
		Thus the above proposition implies that
		\[
		[\Ext^1_{G_K^t}(F_\eta,\cal{O}_{K^t})]=[F_\eta]^{[K:\QQ]}
		\]
	\end{itemize} 
\end{rmk}

\begin{thm}[Special values theorem]\label{special_value_thm}
	Let $F$ be a $\ZZ$-constructible sheaf. We have the vanishing order formula
	\[
	\mathrm{ord}_{s=0}L_X(F^D,s)=\sum (-1)^i i\cdot \dim_\RR H^i_{ar,c}(X,F^D_\RR).
	\]
	If $F$ is tamely ramified, we have the special value formula
	\[
	L_X^\ast(F^D,0)=\pm\chi_X(F^D)
	\]
\end{thm}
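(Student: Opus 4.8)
The plan is to prove the vanishing order formula first, unconditionally, and then deduce the special value formula for tamely ramified $F$ by a dévissage which reduces, in both cases, to the three base computations already in hand: $F=\ZZ$ on a regular base (\cref{special_value_Z}), $F$ supported at a closed point (\cref{special_value_point}), and $F$ constructible (\cref{special_value_constructible}). The two engines of the reduction are multiplicativity in short exact sequences and compatibility with finite pushforward: for $L_X((-)^D,s)$ these are the multiplicativity proposition and the identity $L_X((\pi_\ast F)^D,s)=L_Y(F^D,s)$ proved above, and for $\chi_X((-)^D)$ they are the multiplicativity theorem for the Weil-étale Euler characteristic and its invariance under finite \emph{tamely ramified} pushforward, both proved above.

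For the vanishing order formula, first rewrite the right-hand side. Since $R\Gamma_{ar,c}(X,F^D_\RR)=R\Gamma_{c,\cal{D}}(X,F^D_\RR)[-1]\oplus R\Gamma_{c,\cal{D}}(X,F^D_\RR)$, a direct computation gives
\[
\sum_i(-1)^i i\cdot\dim_\RR H^i_{ar,c}(X,F^D_\RR)=-\sum_i(-1)^i\dim_\RR H^i_{c,\cal{D}}(X,F^D_\RR),
\]
and by the duality \cref{duality_theorem} this Euler characteristic equals that of $R\Gamma(X,F)_\RR$, so the right-hand side is $\rank_\ZZ H^1(X,F)-\rank_\ZZ H^0(X,F)$ (all other $H^i(X,F)$ being torsion since $X$ is proper). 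In particular it is additive in short exact sequences of $\ZZ$-constructible sheaves and invariant under $\pi_\ast$, while the left-hand side has the same properties for trivial reasons (orders of vanishing add under products, and $L_X((\pi_\ast F)^D,s)=L_Y(F^D,s)$). Using the short exact sequence $0\to j_!(F|_U)\to F\to i_\ast i^\ast F\to 0$, with $j:U\to X$ the inclusion of the regular locus on which $F$ is locally constant, one reduces to $F=i_\ast M$ — which is \cref{special_value_point} — and to $F=j_!G$ with $G$ locally constant on regular $U$; splitting off $(F_\eta)_{\mathrm{tor}}$ (whose extension by zero is constructible, handled by \cref{special_value_constructible}) one may assume $F_\eta$ torsion-free. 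Then Artin induction for the $G_K$-module $F_\eta$, pushed forward by $j_!g_\ast$ exactly as in the proof that $j_!G$ is red, produces a short exact sequence relating a power of $j_!G$, extensions by zero of sheaves of the shape $\pi_\ast j''_!\ZZ$ with $\pi:Y\to X$ the normalization of $X$ in a finite extension of $K$ (so $Y$ is regular, being one-dimensional and normal), and a constructible cokernel; combining additivity, \cref{special_value_constructible}, the pushforward compatibility \cref{L_function_pushforward}, and on each regular $Y$ the sequence $0\to j''_!\ZZ\to\ZZ\to i''_\ast\ZZ\to 0$ together with \cref{special_value_Z} and \cref{special_value_point}, the formula follows.

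For the special value formula we now take $F$ tamely ramified. Since orders of vanishing add, the leading coefficients $L_X^\ast((-)^D,0)$ are multiplicative in short exact sequences and invariant under $\pi_\ast$, so they obey the same functoriality as $\chi_X((-)^D)$. One then runs the same dévissage in the tamely ramified category, using the open-closed decomposition lemma of \cite[§ 6.5]{AMorin21} to reduce to $F=i_\ast M$ (\cref{special_value_point}) and to $F=j_!G$ with $G$ locally constant tamely ramified on a regular open $U$, and then Artin induction for the $G_K^t$-module $F_\eta$. The cokernel appearing in Artin induction is a tamely ramified constructible sheaf, for which $\chi_X=1$ and $L_X^\ast=\pm1$ by \cref{special_value_constructible}; since the $\chi_X((-)^D)$ are positive reals and $\RR^\times_{>0}$ is torsion-free, taking $n$-th roots of the resulting relation (after passing to absolute values on the $L$-side) eliminates the multiplicity $n$, so it suffices to treat the induced sheaves $\pi_\ast j''_!\ZZ$. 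The covers $\pi:Y\to X$ are tamely ramified by the tameness hypothesis on $G$, so the $\pi_\ast$-compatibility of $\chi_X$ applies and one reduces to $F=j''_!\ZZ$ on the regular scheme $Y$, and then — via $0\to j''_!\ZZ\to\ZZ\to i''_\ast\ZZ\to 0$ — to $F=\ZZ$ on a regular base, which is \cref{special_value_Z}, and to point sheaves, which is \cref{special_value_point}. Since $\chi_X(F^D)>0$ by construction, the sign in $L_X^\ast(F^D,0)=\pm\chi_X(F^D)$ is genuinely present and unavoidable (it is already so for $F=\ZZ$, where the discriminant enters through a square root).

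The main obstacle is the bookkeeping in the Artin-induction step: one must check that every sheaf produced remains tamely ramified and of the expected shape, that the normalizations that arise are regular (this is where one uses that a one-dimensional normal scheme is regular, and that the tameness hypothesis forces the relevant covers $Y\to X$ to be tamely ramified, so the pushforward compatibility of $\chi_X$ is applicable), and that the finite cokernel contributes trivially — the latter is precisely the Swan-theoretic argument of \cref{special_value_constructible}, which is why the constructible case must be established before the general tamely ramified one. Everything else is a formal consequence of multiplicativity, the duality \cref{duality_theorem}, and the three base cases.
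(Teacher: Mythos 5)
Your proposal is correct and follows essentially the same route as the paper: the paper's proof is the one-line reduction ``by Artin induction'' to \cref{special_value_Z,special_value_point,special_value_constructible}, and your argument simply spells out the intended dévissage (open--closed decomposition, multiplicativity in short exact sequences, compatibility with tamely ramified finite pushforwards, and elimination of the Artin-induction multiplicity via positivity of $\chi_X$). The bookkeeping you supply — tameness of the induced covers so that the $\pi_\ast$-compatibility of $\chi_X$ applies, and the Swan-type handling of the constructible cokernel — matches what the paper relies on implicitly.
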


\begin{proof}
	This follows from \cref{special_value_Z,special_value_point,special_value_constructible} by Artin induction.
\end{proof}

\appendix
\section{Duality for the Tate cohomology of finite groups}

Let $G$ be a finite group. The functor $R\hat{\Gamma}(G,-)$ is lax monoidal, so for any complex of $G$-modules $M$ we can construct the pairing
\[
R\hat{\Gamma}(G,M^\vee)\otimes^L R\hat{\Gamma}(G,M)\to R\hat{\Gamma}(G,\ZZ)\to \hat{H}^0(G,\ZZ)[0]=\ZZ/|G|\ZZ[0] \hookrightarrow \QQ/\ZZ[0]
\]
where $M^\vee=R\cal{H}om_G(M,\ZZ)$. In the following, we will also denote $M^\ast=\cal{H}om_G(M,\QQ/\ZZ)$ the Pontryagin dual.
\begin{thm}\label{duality_tate_finite_groups}
	Let $G$ be a finite group. The natural pairing
	\[
	R\hat{\Gamma}(G,M^\vee)\otimes^L R\hat{\Gamma}(G,M)\to \QQ/\ZZ[0]
	\]
	is perfect for any bounded complex $M$ of $G$-modules with finite type cohomology groups.
\end{thm}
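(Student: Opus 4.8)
The plan is to reduce, by dévissage, to two elementary cases, and then to recognize the pairing as classical Tate duality for finite groups.

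\emph{Step 1: reduction to a single module, then to the free and finite cases.} Write $(-)^\ast=R\Hom(-,\QQ/\ZZ)$ for the Pontryagin dual on $D(\mathrm{Ab})$. The pairing is perfect precisely when the adjoint map $\Phi_M : R\hat{\Gamma}(G,M)\to R\hat{\Gamma}(G,M^\vee)^\ast$ is an isomorphism. Since $M$ is bounded with finite type cohomology and Tate cohomology of a finite type module under a finite group is annihilated by $|G|$, both $R\hat{\Gamma}(G,M)$ and $R\hat{\Gamma}(G,M^\vee)^\ast$ are cochain complexes with finite cohomology in each degree, so this is equivalent to the pairings $\hat{H}^i(G,M^\vee)\times\hat{H}^{-i}(G,M)\to\QQ/\ZZ$ being perfect for all $i$. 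Both $M\mapsto R\hat{\Gamma}(G,M)$ and $M\mapsto R\hat{\Gamma}(G,M^\vee)^\ast$ are covariant triangulated functors on the bounded derived category, and by compatibility of the lax-monoidal pairing with distinguished triangles, $\Phi$ is a natural transformation compatible with triangles; hence if $\Phi_M$ is an isomorphism for $M$ ranging over a class generating $D^b$ under shifts and cones, it is an isomorphism in general. Filtering a bounded complex by its truncations reduces to $M=N[0]$ for $N$ a finite type $G$-module, and the exact sequence $0\to N_{\mathrm{tor}}\to N\to N/N_{\mathrm{tor}}\to 0$ reduces further to the two cases $N$ finite and $N$ free of finite rank over $\ZZ$.

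\emph{Step 2: identification with cup-product duality.} In both remaining cases I would use $0\to\ZZ\to\QQ\to\QQ/\ZZ\to 0$ together with the vanishing of Tate cohomology on uniquely divisible modules ($R\hat{\Gamma}(G,-)$ kills $\QQ$-vector spaces, being $|G|$-torsion). When $N$ is $\ZZ$-free, tensoring this sequence with the flat module $N^\vee=\Hom_\ZZ(N,\ZZ)$ and taking boundary gives $\hat{H}^i(G,N^\vee)\xrightarrow{\simeq}\hat{H}^{i-1}(G,\Hom_\ZZ(N,\QQ/\ZZ))$; when $N$ is finite, $R\Hom_\ZZ(N,\QQ)=0$ gives $N^\vee=R\Hom_\ZZ(N,\ZZ)\simeq\Hom_\ZZ(N,\QQ/\ZZ)[-1]$, hence the same shift. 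Using compatibility of the pairing with this boundary map and with the isomorphism $\hat{H}^{-1}(G,\QQ/\ZZ)\xrightarrow{\simeq}\hat{H}^0(G,\ZZ)$ coming from the same sequence, the theorem becomes: for every finite type $G$-module $A$, the cup-product pairing
\[
\hat{H}^{j}(G,\Hom_\ZZ(A,\QQ/\ZZ))\times\hat{H}^{-1-j}(G,A)\longrightarrow \hat{H}^{-1}(G,\QQ/\ZZ)\cong\hat{H}^0(G,\ZZ)=\ZZ/|G|\ZZ\hookrightarrow\QQ/\ZZ
\]
is perfect for all $j$.

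\emph{Step 3: cup-product duality by dimension shifting.} Embedding $A$ into the induced module $\ind_{\{1\}}^G A$ (which for finite $G$ is also coinduced, hence has vanishing Tate cohomology), and observing that $\Hom_\ZZ(-,\QQ/\ZZ)$ carries induced modules to induced modules, the long exact sequences identify, up to a degree shift, the pairing for $A$ with the pairing for the cokernel; using instead $\ZZ[G]\otimes_\ZZ A\twoheadrightarrow A$ one shifts in the opposite direction. This reduces the claim to the single degree $j=0$, i.e.\ to perfectness of $\hat{H}^0(G,\Hom_\ZZ(A,\QQ/\ZZ))\times\hat{H}^{-1}(G,A)\to\hat{H}^{-1}(G,\QQ/\ZZ)$. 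There one computes $\hat{H}^0(G,\Hom_\ZZ(A,\QQ/\ZZ))=\Hom(A_G,\QQ/\ZZ)/N_G\Hom_\ZZ(A,\QQ/\ZZ)$ and $\hat{H}^{-1}(G,A)=\ker(N\colon A_G\to A^G)$, the pairing being induced by evaluation $(f,a)\mapsto f(a)$; perfectness then follows from injectivity of $\QQ/\ZZ$ by checking that $N_G\Hom_\ZZ(A,\QQ/\ZZ)$ is exactly the annihilator in $\Hom(A_G,\QQ/\ZZ)$ of the subgroup $\ker(N\colon A_G\to A^G)$ and that every functional on that subgroup extends to $A_G$.

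\emph{Main obstacle.} The genuine content is classical and the individual computations are short; the real work is bookkeeping. One must verify that the pairing produced by the lax-monoidal structure of $R\hat{\Gamma}(G,-)$ coincides with the classical cup product, and that it is compatible with all the connecting maps appearing in the truncation and torsion/free dévissages and with the dimension-shifting isomorphisms, so that perfectness genuinely propagates through the reductions down to the degree-$0$ base case. Alternatively, the statement reached at the end of Step 2 is exactly Tate duality for finite groups and may simply be cited from the literature (e.g.\ \cite{NeukirchCohomologyNumberFields}).
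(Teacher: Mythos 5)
Your proposal is correct, but it takes a genuinely different route from the paper's proof. The paper argues by Artin induction: after the same reduction to perfectness of the cup-product pairing in a single degree, it proves (i) compatibility of the pairing with induction from a subgroup $H$ via an explicit Shapiro-type diagram (the most delicate part of its argument), (ii) the case $M=\ZZ$, where the degree-zero pairing is multiplication on $\ZZ/|G|\ZZ$, and (iii) the case $M$ finite, by the orthogonal-complement computation for the Pontryagin pairing $M^\ast\otimes M\to\QQ/\ZZ$; Artin's induction theorem together with the two-out-of-three property then covers general finite type modules. You instead stop the d\'evissage at ``finite or $\ZZ$-free'', use the coefficient sequence $0\to\ZZ\to\QQ\to\QQ/\ZZ\to0$ to convert both cases into the single classical statement that $\hat{H}^{j}(G,\Hom_\ZZ(A,\QQ/\ZZ))\times\hat{H}^{-1-j}(G,A)\to\hat{H}^{-1}(G,\QQ/\ZZ)$ is perfect for every finitely generated $A$, and prove that by dimension shifting along modules (co)induced from the trivial subgroup down to a degree-zero computation — which is correct: $N\cdot\Hom_\ZZ(A,\QQ/\ZZ)$ consists exactly of the functionals on $A_G$ factoring through $\overline{N}\colon A_G\to A^G$, and injectivity of $\QQ/\ZZ$ gives surjectivity onto the dual of $\ker\overline{N}$, so your base case is essentially the same elementary computation the paper does for finite $M$. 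What your route buys: no Artin induction, no subgroups, no Shapiro-compatibility diagram, and a uniform treatment of all finitely generated modules (there is no need to single out $\ZZ$ or induced modules); moreover the statement reached after your Step 2 is classical Tate duality for finite groups and could simply be cited. What it costs: you must verify that the lax-monoidal pairing on $R\hat{\Gamma}(G,-)$ is the classical cup product and that perfectness is preserved by the connecting maps of your two coefficient sequences and of the dimension shifts (in both variables, with the usual signs) — exactly the bookkeeping you flag; the paper needs the same identification but confines the connecting-map compatibility to the finite case, paying instead with the induction diagram.
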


\begin{proof}
	By definition, showing that the pairing is perfect is showing that the adjoint map $R\hat{\Gamma}(G,M^\vee) \to R\hat{\Gamma}(G,M)^\ast$ is an isomorphism. Passing to cohomology, this is equivalent (using {\cite[\href{https://stacks.math.columbia.edu/tag/0FP2}{Tag 0FP2}]{stacks-project}} and the injectivity of $\QQ/\ZZ$) to showing that the cup product pairing
	\[
	\hat{H}^i(G,M)\times \hat{H}^{-i}(G,M^\vee)\to \hat{H}^{0}(G,\ZZ) \to \QQ/\ZZ
	\]
	is perfect for each $i\in \ZZ$. Notice that if $M$ is induced, then $M^\vee$ is also induced; moreover, if $M_0$ is a finite type abelian group, $\ind^G M_0$ also is. Thus the usual dimension shifting argument reduces to checking that the pairing is perfect in just one degree. We proceed by Artin induction:
	\begin{itemize}
		\item In an exact sequence $0 \to M \to P \to Q \to 0$ or more generally a fiber sequence $M \to P \to Q$, if the theorem is true for two out of the three terms then it is true for the third. 
		\item We can filter a bounded complex with finite type cohomology groups by its truncations, which reduces to the case of a $G$-module $M$ of finite type.
		\item Let $H$ be a proper subgroup of $G$, let $M$ be a torsion-free finite type discrete $H$-module and let us consider the induced $G$-module $\ind_H^G M$. We want to show compatibility of the pairing in degree $0$ for $\ind_H^G M$ and the pairing in degree $0$ for $M$.
		
		We have $M^\vee=\Hom_\ZZ(M,\ZZ)$. Denote $\pi_\ast$ for the induction functor $\ind_H^G$ and $\pi^\ast$ for restriction to $H$. The functor $\pi_\ast$ is a right adjoint of $\pi^\ast$ and the finiteness of $G$ also makes $\pi_\ast$ into a left adjoint of $\pi^\ast$. Let us denote $\eta:\pi_\ast \pi^\ast \to \mathrm{id}$ the counit of the latter adjunction; the counit $\eta_\ZZ:\pi_\ast\ZZ \simeq \oplus_{G/H} \ZZ \to \ZZ$ is the sum map. As a right adjoint to a strict monoidal functor, $\pi_\ast$ has a lax monoidal structure $\pi_\ast(-) \otimes \pi_\ast(-) \xrightarrow{\mathrm{lax}} \pi_\ast (-\otimes -)$. Consider the following commutative diagram:
		\[\begin{tikzcd}[ampersand replacement=\&]
			{\pi_\ast M \otimes (\pi_\ast M)^\vee} \&\& \ZZ \\
			{\pi_\ast M \otimes \pi_\ast(M^\vee)} \& {\pi_\ast(M\otimes M^\vee)} \& {\pi_\ast \ZZ}
			\arrow["\simeq", from=2-1, to=1-1]
			\arrow["{\mathrm{ev}}", from=1-1, to=1-3]
			\arrow["{\mathrm{lax}}"', from=2-1, to=2-2]
			\arrow["{\pi_\ast(\mathrm{ev})}"', from=2-2, to=2-3]
			\arrow["{\eta=\sum}"', from=2-3, to=1-3]
		\end{tikzcd}\]
		We can apply the lax-monoidal functor $\hat{H}^0$ to the above to obtain the following diagram (using Shapiro's lemma's identifications):
			\[\begin{tikzpicture}[baseline= (a).base]
			\node[scale=.85] (a) at (0,0){
				\begin{tikzcd}[ampersand replacement=\&,column sep=tiny]
					{\hat{H}^0(G,\pi_\ast M)\otimes \hat{H}^0(G,(\pi_\ast M)^\vee)} \& {\hat{H}^0(G,\pi_\ast M \otimes (\pi_\ast M)^\vee)} \&\& {\hat{H}^0(G,\ZZ)=\ZZ/|G|\ZZ} \& {\QQ/\ZZ} \\
					{\hat{H}^0(G,\pi_\ast M)\otimes \hat{H}^0(G,\pi_\ast (M^\vee))} \& {\hat{H}^0(G,\pi_\ast M \otimes \pi_\ast(M^\vee))} \& {\hat{H}^0(G,\pi_\ast(M\otimes M^\vee))} \& {\hat{H}^0(G,\pi_\ast \ZZ)} \\
					{\hat{H}^0(H,M) \otimes \hat{H}^0(H,M^\vee) } \&\& {\hat{H}^0(H,M\otimes M^\vee) } \& {\hat{H}^0(H,\ZZ)=\ZZ/|H|\ZZ } \& {\QQ/\ZZ}
					\arrow["\simeq", from=2-2, to=1-2]
					\arrow["{\mathrm{ev}_\ast}", from=1-2, to=1-4]
					\arrow["{\mathrm{lax}_\ast}"', from=2-2, to=2-3]
					\arrow["{(\pi_\ast(\mathrm{ev}))_\ast}"', from=2-3, to=2-4]
					\arrow["{[G:H]}"', from=2-4, to=1-4]
					\arrow["\cup", from=1-1, to=1-2]
					\arrow["\simeq", from=2-1, to=1-1]
					\arrow["\cup", from=2-1, to=2-2]
					\arrow["\simeq", from=3-1, to=2-1]
					\arrow["\cup", from=3-1, to=3-3]
					\arrow["\simeq"', from=3-3, to=2-3]
					\arrow["\simeq"', from=3-4, to=2-4]
					\arrow["{\mathrm{ev}_\ast}"', from=3-3, to=3-4]
					\arrow[hook, from=3-4, to=3-5]
					\arrow[hook, from=1-4, to=1-5]
					\arrow[Rightarrow, no head, from=3-5, to=1-5]
					\arrow["{(2)}"{description}, draw=none, from=2-2, to=1-4]
					\arrow["{(3)}"{description}, draw=none, from=3-1, to=2-3]
					\arrow["{(1)}"{description}, draw=none, from=2-1, to=1-2]
					\arrow["{(4)}"{description}, draw=none, from=3-3, to=2-4]
					\arrow["{(5)}"{description}, draw=none, from=3-4, to=1-5]
				\end{tikzcd}
	};
	\end{tikzpicture}\]
		Square $(1)$ commutes by functoriality of the cup product, square $(2)$ commutes by functoriality of $\hat{H}^0(G,-)$, square $(3)$ commutes by inspection, square $(4)$ commutes by functoriality in Shapiro's lemma and square $(5)$ commutes because $(e^{\frac{2i\pi}{|G|}})^{[G:H]}=e^{\frac{2i\pi}{|H|}}$. As the bottom left square commutes, the whole diagram commutes and the pairing for $M$ is perfect if and only if the pairing for $\mathrm{ind}_H^G M$ is perfect.
		
		\item For $M=\ZZ$ we have $M^\vee=\ZZ$. The map $R\hat{\Gamma}(G, M^\vee)\to R\hat{\Gamma}(G,M)^\ast$ corresponds in degree $0$ to the adjoint map to the cup product pairing $\ZZ/|G|\ZZ \otimes \ZZ/|G|\ZZ \to \ZZ/|G|\ZZ \to \QQ/\ZZ$ induced by the multiplication $\ZZ \otimes \ZZ \to \ZZ$. Thus it is perfect.
		\item For $M$ finite as an abelian group, we have $R\mathcal{H}om_{G}(M,\QQ)=0$ so the fiber sequence $\ZZ \to \QQ \to \QQ/\ZZ$ gives $M^\vee\simeq M^\ast[-1]$. For any bounded complex $M$, the complex $M\otimes \QQ$ is a complex of $\QQ$-vector spaces hence cohomologically trivial. The map $\QQ/\ZZ[-1] \to \ZZ$ thus induces an isomorphism of pairings
		\[\begin{tikzcd}
			{R\hat{\Gamma}(G,M^\ast)[-1]\otimes^L R\hat{\Gamma}(G,M)} & {R\hat{\Gamma}(G,\QQ/\ZZ)[-1]} & {\hat{H}^{-1}(G,\QQ/\ZZ)[0]} & {\QQ/\ZZ[0]} \\
			{R\hat{\Gamma}(G,M^\vee)\otimes^L R\hat{\Gamma}(G,M)} & {R\hat{\Gamma}(G,\ZZ)} & {\hat{H}^{0}(G,\ZZ)[0]} & {\QQ/\ZZ[0]} \\
			& {}
			\arrow[from=1-1, to=1-2]
			\arrow[from=1-2, to=1-3]
			\arrow[from=1-3, to=1-4]
			\arrow[from=2-3, to=2-4]
			\arrow[from=2-2, to=2-3]
			\arrow[from=2-1, to=2-2]
			\arrow["\simeq"', from=1-1, to=2-1]
			\arrow["\simeq"', from=1-2, to=2-2]
			\arrow[Rightarrow, no head, from=2-4, to=1-4]
			\arrow["\simeq"', from=1-3, to=2-3]
		\end{tikzcd}\]
		It follows that it is enough to prove that the map $R\hat{\Gamma}(G, M^\ast)[-1]\to R\hat{\Gamma}(G,M)^\ast$ is an isomorphism in degree $0$, i.e. that the cup-product pairing
		\[
			\hat{H}^{-1}(G,M^\ast)\times \hat{H}^0(G,M)\to \hat{H}^{-1}(G,\QQ/\ZZ)\to \QQ/\ZZ
		\]
		induced by the Pontryagin duality pairing $M^\ast\otimes M\to \QQ/\ZZ$ is perfect. For $g\in G$, $g$ acts on $M^\ast$ as the transpose of $g^{-1}$. Thus, on $M^\ast$, $N$ acts as $N^t$ and the family $(1-g)_{g\in G}$ is a permutation of the family $((1-g)^t)_{g\in G}$.
		In the perfect pairing $M^\ast\otimes M\to \QQ/\ZZ$, we have $\prescript{\perp}{}{\left(\bigcap_{g\in G}\Ker(1-g)\right)}=\sum_{g\in G} \mathrm{Im}((1-g)^t)$ and $\Ker(N)^\perp=\mathrm{Im}(N^t)$, so this pairing induces a perfect pairing between the subquotients:
		\[
		\Ker(N^t)/\sum_{g\in G} \mathrm{Im}((1-g)^t) \times \bigcap_{g\in G}\Ker(1-g)/\mathrm{Im}(N) \to \QQ/\ZZ
		\]
		This is exactly what we had to prove.
	\end{itemize}
\end{proof}

\section{The maximal tamely ramified extension of a number field}
In this section we discuss the tame Galois group of a number field. The results are certainly known but we did not find a convenient reference.

Let $K$ be the field of fractions of a henselian DVR with finite residue field. A finite extension $L/K$ is called tamely ramified if the ramification index $e_{L/K}$ is prime to the residual characteristic. We have the following properties:
\begin{prop}[{\cite[7.8, 7.9, 7.10]{NeukirchANT}}]
	\begin{enumerate}
		\item[]
		\item If $M/L/K$ is a tower of finite extensions, then $M/K$ is tamely ramified if and only if $M/L$ and $L/K$ are.
		\item The composite of two tamely ramified finite extensions of $K$ is tamely ramified.
		\item The maximal tamely ramified extension $K^t$ is defined as the composite of all finite tamely ramified extensions of $K$ inside $K^{sep}$. Its finite subextensions are tamely ramified.
	\end{enumerate}
\end{prop}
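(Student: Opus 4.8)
The plan is to reduce all three assertions to the purely numerical criterion for tameness over a henselian discrete valuation ring. Write $\cal{O}_K$ for the henselian DVR with fraction field $K$ and residue field $k$ of characteristic $p$; since $k$ is finite it is perfect. For a finite separable extension $N/K$, the integral closure $\cal{O}_N$ of $\cal{O}_K$ in $N$ is a finite $\cal{O}_K$-algebra which is an integrally closed Noetherian domain of dimension $1$, and it is local because $\cal{O}_K$ is henselian; hence $\cal{O}_N$ is again a henselian DVR, with residue field $\ell$ finite over $k$, and $[N:K]=e_{N/K}f_{N/K}$ where $e_{N/K}=v_N(\pi_K)$ ($\pi_K$ a uniformizer of $\cal{O}_K$) and $f_{N/K}=[\ell:k]$. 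Because $k$ is perfect the residue extension $\ell/k$ is automatically separable, so by definition $N/K$ is tamely ramified if and only if $p\nmid e_{N/K}$. The first thing I would do is spell out this reduction carefully; after that each statement becomes an elementary assertion about the integers $e$.

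Statement (1) is then immediate from multiplicativity of ramification indices in a tower $M/L/K$: if $\pi_L$ is a uniformizer of $\cal{O}_L$, then $\pi_K=\pi_L^{e_{L/K}}u$ for a unit $u$, so $e_{M/K}=v_M(\pi_K)=e_{L/K}\,v_M(\pi_L)=e_{L/K}e_{M/L}$; hence $p\nmid e_{M/K}$ exactly when $p\nmid e_{L/K}$ and $p\nmid e_{M/L}$. For statement (2), let $L_1/K$ and $L_2/K$ be tamely ramified finite subextensions of $K^{sep}/K$. Using (1), it suffices to show $L_1L_2/L_1$ is tamely ramified, since then the tower $L_1L_2/L_1/K$ together with tameness of $L_1/K$ gives $L_1L_2/K$ tame. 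Let $L_2^{0}$ be the inertia subfield of $L_2/K$, i.e. the maximal subextension of $L_2/K$ unramified over $K$, so that $L_2/L_2^{0}$ is totally ramified; by (1), $e_{L_2/L_2^0}=e_{L_2/K}=:e$ is prime to $p$. I would decompose $L_1L_2/L_1$ through $L_1L_2^{0}$. First, $L_1L_2^{0}/L_1$ is unramified (stability of unramified extensions under base change), hence tame. Second, $\pi_{L_2^0}$ remains a uniformizer of $L_1L_2^{0}$, and the relation $\pi_{L_2}^{e}=u\,\pi_{L_2^0}$ in $\cal{O}_{L_2}\subset\cal{O}_{L_1L_2}$ forces $e\mid e_{L_1L_2/L_1L_2^0}$; on the other hand $e_{L_1L_2/L_1L_2^0}\le[L_1L_2:L_1L_2^0]\le[L_2:L_2^0]=e$, so $e_{L_1L_2/L_1L_2^0}=e$ is prime to $p$ and $L_1L_2/L_1L_2^0$ is tame. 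By (1) applied to $L_1L_2/L_1L_2^0/L_1$ we conclude $L_1L_2/L_1$ is tame.

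Finally, for statement (3), by (2) the collection of finite tamely ramified subextensions of $K^{sep}/K$ is directed under composition, so $K^t$ is the union of this directed family. Any finite subextension $M/K$ of $K^t$ is generated by finitely many elements, each contained in some member of the family; the composite $L$ of those finitely many members is then a single finite tamely ramified extension of $K$ with $K\subseteq M\subseteq L$, and applying (1) to the tower $L/M/K$ shows $M/K$ is tamely ramified. I expect the only step requiring a genuine argument rather than the unwinding of definitions to be the chain of (in)equalities $e_{L_1L_2/L_1L_2^0}=e$ in (2): the divisibility $e\mid e_{L_1L_2/L_1L_2^0}$ coming from the uniformizer relation must be matched against the degree bound $e_{L_1L_2/L_1L_2^0}\le[L_2:L_2^0]=e$, and one must check that passing to the inertia subfield $L_2^0$ really does make $L_2/L_2^0$ totally (tamely) ramified with the same prime-to-$p$ ramification index. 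Everything else is bookkeeping with the formula $[N:K]=e_{N/K}f_{N/K}$ and the fact that finite residue fields are perfect.
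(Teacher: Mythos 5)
Your reduction to the numerical criterion, and your arguments for (1) and (3), are fine; the problem is in (2), at exactly the step you flagged. The claim that $\pi_{L_2^0}$ remains a uniformizer of $L_1L_2^0$ is false in general: since $L_2^0/K$ is unramified and $L_1L_2^0/L_1$ is unramified (base change), multiplicativity in the two towers over $K$ gives $e_{L_1L_2^0/L_2^0}=e_{L_1/K}$, so $v_{L_1L_2^0}(\pi_{L_2^0})=e_{L_1/K}$, which equals $1$ only when $L_1/K$ is unramified. Consequently the divisibility $e\mid e_{L_1L_2/L_1L_2^0}$ fails: take $L_1=L_2=K(\pi_K^{1/m})$ with $m>1$ prime to $p$ (e.g.\ $K$ the henselization of $\QQ$ at $p$); then $L_2^0=K$, $L_1L_2=L_1L_2^0=L_1$, $e=m$, but $e_{L_1L_2/L_1L_2^0}=1$. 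What the uniformizer relation actually yields is $e_{L_1L_2/L_1L_2^0}\cdot e_{L_1L_2^0/L_2^0}=e\cdot e_{L_1L_2/L_2}$, which does not control the $p$-part of $e_{L_1L_2/L_1L_2^0}$, so the "$e\mid e'$ versus $e'\le e$" squeeze collapses. Note also that the true conclusion is $e_{L_1L_2/L_1L_2^0}\mid e$, not equality, as the same example shows.

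The missing ingredient is precisely the structural fact that the cited source proves first (Neukirch II.7.7) and which the paper implicitly leans on by citing 7.8--7.10: over a henselian DVR, a totally tamely ramified extension of degree $e$ prime to $p$ is generated by an $e$-th root of a uniformizer (Hensel's lemma applied to $X^e-u$). Granting this, $L_2=L_2^0(\pi^{1/e})$ with $\pi$ a uniformizer of $L_2^0$, hence $L_1L_2=(L_1L_2^0)(\pi^{1/e})$ where $v_{L_1L_2^0}(\pi)=e_{L_1/K}$ is prime to $p$, and a short value-group computation shows $e_{L_1L_2/L_1L_2^0}$ divides $e$, so it is prime to $p$; your decomposition through $L_1L_2^0$ and the rest of the proof (including (3)) then go through verbatim. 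Without some such input — the radical description, or equivalently an Abhyankar-type divisibility $e_{FL/F}\mid e_{L/E}$ for tame $L/E$ — pure bookkeeping with $e$ and $f$ cannot close the argument, because that divisibility is a specifically tame phenomenon rather than a formal property of ramification indices under composita.
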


Let $K$ be a number field. For each finite place $v$ of $K$, which corresponds to a closed point of $X=\Spec(\cal{O}_K)$, denote $K_v$ the henselian local field at $v$. Let $K^{sep}$ be a separable closure of $K$ and for each finite place $v$, choose an embedding $K_v\hookrightarrow K^{sep}$; this determines a place $\overline{v}$ of $K^{sep}$ above $v$ and gives $K^{sep}$ the structure of a separable closure of $K_v$. Denote $G_K$, resp. $G_{K_v}$ the absolute Galois group of $K$ resp. $K_v$. The previous proposition is adapted to the global case by working simultaneously at all places, as following: a finite extension $L/K$ is called tamely ramified if for all finite places $v$ of $K$ and $w$ of $L$ above $v$, the finite extension $L_w/K_v$ is tamely ramified. We then have:
\begin{cor}
	\begin{enumerate}
		\item[]
		\item If $M/L/K$ is a tower of finite extensions, then $M/K$ is tamely ramified if and only if $M/L$ and $L/K$ are.
		\item The composite of two tamely ramified finite extensions of $K$ is tamely ramified.
		\item The maximal tamely ramified extension $K^t$ is defined as the composite of all finite tamely ramified extensions of $K$ inside $K^{sep}$. Its finite subextensions are tamely ramified.
	\end{enumerate}
\end{cor}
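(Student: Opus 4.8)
The plan is to deduce all three statements from the corresponding local results stated just above (the Proposition, i.e.\ \cite[7.8, 7.9, 7.10]{NeukirchANT}) by localizing at each finite place. The only additional input is the standard compatibility of henselian local fields with subextensions and composita: if $M/L/K$ is a tower of number fields and $w$ is a finite place of $M$ restricting to the place $u$ of $L$ and to $v$ of $K$, then $L_u$ (resp.\ $K_v$) identifies with the henselization of $L$ (resp.\ $K$) with respect to the valuation induced by $w$, so that $K_v \subset L_u \subset M_w$ is a tower of finite extensions of henselian local fields; and if $M = L_1 L_2$ with $w$ a finite place of $M$ over $v$ of $K$ and $u_i := w|_{L_i}$, then $M_w = L_{1,u_1} L_{2,u_2}$ inside $M_w$, since the right-hand compositum is a finite (hence henselian) extension of $K_v$ containing the dense subfield $L_1 L_2 = M$.

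First I would prove (1). Fix a finite place $v$ of $K$. By definition, $M/K$ is tamely ramified if and only if $M_w/K_{w|_K}$ is tamely ramified for every finite place $w$ of $M$; likewise $M/L$ is tamely ramified if and only if $M_w/L_{w|_L}$ is tamely ramified for every $w$, and $L/K$ is tamely ramified if and only if $L_{w|_L}/K_{w|_K}$ is tamely ramified for every $w$ (here I use that every finite place of $L$ extends to at least one finite place of $M$, and that the last condition depends only on $w|_L$ and $w|_K$). Applying the local Proposition~(1) to the tower $M_w/L_{w|_L}/K_{w|_K}$ for each $w$ then yields the claimed equivalence.

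Next, for (2), let $M = L_1 L_2$ with $L_1/K$ and $L_2/K$ tamely ramified. Given a finite place $v$ of $K$ and a place $w$ of $M$ over $v$, set $u_i = w|_{L_i}$. Since $L_i/K$ is tamely ramified, $L_{i,u_i}/K_v$ is tamely ramified, so by the local Proposition~(2) the compositum $M_w = L_{1,u_1} L_{2,u_2}$ is tamely ramified over $K_v$; as $v$ and $w$ were arbitrary, $M/K$ is tamely ramified. Statement (3) is then formal: a finite subextension $L$ of $K^t/K$ is generated over $K$ by finitely many elements, each contained in some finite tamely ramified extension of $K$, hence $L \subset L_1\cdots L_n$ for finitely many such extensions; by (2) and induction $L_1\cdots L_n / K$ is tamely ramified, and then (1) applied to the tower $L_1\cdots L_n / L / K$ shows that $L/K$ is tamely ramified.

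The only real obstacle is the content of the first paragraph: making precise that the passage to henselian local fields is compatible with forming towers and composita of number fields, so that the global ``for all places'' conditions decompose into matching local conditions. This is routine commutative algebra — it comes down to the behaviour of $-\otimes_K K_v$ on finite $K$-algebras — but it is the point on which the whole reduction rests; everything afterwards is bookkeeping over places.
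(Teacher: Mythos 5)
Your proposal is correct and follows essentially the same route as the paper: reduce to the local statements by passing to henselian local fields, using exactly the two compatibilities you isolate (compatible places in a tower, and $M_w = L_{1,u_1}L_{2,u_2}$ for composita, which the paper justifies via minimality of the henselization rather than your density phrasing — minimality is the cleaner argument for henselian, non-complete fields). The remaining bookkeeping over places matches the paper's intent.
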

\begin{proof}
	This follows from the previous propositions by taking henselizations, using the two following observations:
	\begin{itemize}
		\item For any choice of finite place in any of the three fields in a tower $M/L/K$, there exists a compatible system of places above and below it.
		\item If $L/K$ and $M/K$ are two finite extensions inside $K^{sep}$ and $z$ is a finite place of the composite extension $LM$ mapping to places $w$, $w'$ of $L$ and $M$ then $(LM)_z=L_w M_{w'}$; indeed, $(LM)_z$ is the smallest henselian field inside $K^{sep}$ containing $LM$ so $(LM)_z\subset L_w M_{w'}$, and the other inclusion is immediate.
	\end{itemize}
\end{proof}
The extension $K^t/K$ is Galois; we define the tame Galois group of $K$ as $G_K^t:=\mathrm{Gal}(K^t/K)$.

For a finite place $v$ of $K$, denote $P_v\subset D_v$ the wild ramification and decomposition subgroup of the place $\bar{v}$ of $K^{sep}$. There is an identification $D_v=G_{K_v}$ under which we have $P_v=\Gal(K^{sep}/K_v^t)$. We now characterise the tame Galois group in terms of the wild inertia subgroups $P_v$.

\begin{prop}\label{tame_galois_group_and_wild_ramification}
	Let $N$ be the smallest closed normal subgroup of $G_K$ containing $P_v$ for all finite places $v$. Then $N=\Gal(K^{sep}/K^t)$, and consequently $G_K^t=G_K/N$.
\end{prop}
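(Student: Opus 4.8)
The plan is to compute both $N$ and $H:=\Gal(K^{sep}/K^t)$ by Galois theory. Since $K^t/K$ is Galois, $H$ is a closed normal subgroup of $G_K$, so it suffices to prove the two inclusions $N\subseteq H$ and $H\subseteq N$; then $N=H$, and the asserted identity $G_K^t=G_K/N$ is the statement $\Gal(K^t/K)=G_K/\Gal(K^{sep}/K^t)$, which is just the Galois correspondence.

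For $N\subseteq H$, since $N$ is by definition the smallest closed normal subgroup of $G_K$ containing the subgroups $P_v$, it is enough to check $P_v\subseteq H$ for each finite place $v$. Under the Galois correspondence, and using the identification $P_v=\Gal(K^{sep}/K_v^t)$ recorded above, this is equivalent to $K^t\subseteq K_v^t$ inside $K^{sep}$. I would prove this by taking an arbitrary finite tamely ramified extension $M/K$ inside $K^{sep}$, letting $w_0:=\bar v|_M$ be the place of $M$ induced by the fixed place $\bar v$, and observing that by definition $M_{w_0}/K_v$ is a finite tamely ramified extension of the henselian local field $K_v$; hence $M\subseteq M_{w_0}\subseteq K_v^t$. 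Taking the compositum over all such $M$ gives $K^t\subseteq K_v^t$, so $P_v\subseteq H$, and minimality of $N$ yields $N\subseteq H$.

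For $H\subseteq N$, set $L:=(K^{sep})^N$, which is Galois over $K$ because $N$ is normal in $G_K$. It suffices to show $L\subseteq K^t$, since then $H=\Gal(K^{sep}/K^t)\subseteq\Gal(K^{sep}/L)=N$. As $K^t$ is the compositum of all finite tamely ramified subextensions of $K^{sep}/K$, it is enough to show that every finite subextension $M/K$ of $L/K$ is tamely ramified. So I fix a finite place $v$ of $K$, a place $w'$ of $M$ above $v$, and a place $\tilde w$ of $K^{sep}$ above $w'$. The places of $K^{sep}$ above $v$ form a single $G_K$-orbit, so the wild inertia subgroup $P_{\tilde w}$ at $\tilde w$ is $G_K$-conjugate to $P_v$; since $N$ is normal and contains $P_v$, it contains $P_{\tilde w}$. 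Therefore $P_{\tilde w}\subseteq N\subseteq\Gal(K^{sep}/M)$ (using $M\subseteq L$), so $M$ is fixed by $P_{\tilde w}$, i.e. $M\subseteq(K^{sep})^{P_{\tilde w}}$, which by the identification at $\tilde w$ is the maximal tamely ramified extension of the henselization of $K$ at $\tilde w$. Consequently the henselization $M_{w'}$ embeds into this maximal tamely ramified extension while being finite over the henselization of $K$; hence $e(w'/v)$ is prime to the residue characteristic, i.e. $M_{w'}/K_v$ is tamely ramified. As $v$ and $w'$ were arbitrary, $M/K$ is tamely ramified, so $L\subseteq K^t$.

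Combining the two inclusions gives $N=\Gal(K^{sep}/K^t)$, and hence $G_K^t=G_K/N$. The only genuinely delicate point is the bookkeeping between places of $K^{sep}$, the attached decomposition/inertia data, and the corresponding henselian local fields: in particular, the step where normality of $N$ lets one pass from the distinguished wild inertia subgroups $P_v$ — attached to the chosen places $\bar v$ — to the wild inertia at an arbitrary place $\tilde w$ above $v$, which is exactly what is needed to verify the "for all places $w'\mid v$" clause in the definition of a tamely ramified extension. Everything else is a routine application of infinite Galois theory together with the compatibilities of henselization, ramification indices, and maximal tame extensions stated at the beginning of this section.
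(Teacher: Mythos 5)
Your proof is correct. The first inclusion $N\subseteq\Gal(K^{sep}/K^t)$ is argued exactly as in the paper: reduce to showing $P_v$ fixes $K^t$, which comes down to $M\subseteq M_{w_0}\subseteq K_v^t$ for every finite tamely ramified $M/K$ and hence $K^t\subseteq K_v^t$ (the paper packages this in a commutative diagram, but the content is identical). For the reverse inclusion you diverge slightly: the paper uses that $N$, being closed and normal, is the intersection of the \emph{open normal} subgroups $U\supseteq N$, and then checks tameness of the corresponding finite \emph{Galois} extension $L$ only at the distinguished place $w$ induced by $\bar v$, via $L_w=LK_v=(K^{sep})^{U\cap G_{K_v}}\subseteq K_v^t$; the fact that tameness must hold at all places above $v$ is absorbed by $L/K$ being Galois. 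You instead work directly with the fixed field $(K^{sep})^N$ and its arbitrary (not necessarily Galois) finite subextensions $M/K$, and verify the ``all $w'\mid v$'' clause head-on by conjugating: places of $K^{sep}$ above $v$ form one $G_K$-orbit, so $P_{\tilde w}$ is conjugate to $P_v$ and lies in $N\subseteq\Gal(K^{sep}/M)$, whence $M_{w'}$ sits inside the maximal tame extension of the henselization at $\tilde w$ and $e(w'/v)$ is prime to the residue characteristic. The two routes cost about the same; yours avoids the reduction to open normal subgroups at the price of invoking conjugacy of wild inertia, and it has the small virtue of making explicit the ``every place above $v$'' verification that the paper handles implicitly through Galoisness of its finite quotients. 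All the auxiliary facts you use (transitivity of $G_K$ on places of $K^{sep}$ over $v$, $P_{\sigma\tilde w}=\sigma P_{\tilde w}\sigma^{-1}$, that the henselization of $M$ at $w'$ is the compositum of $M$ with the henselization of $K$ inside $K^{sep}$, and that finite subextensions of the local maximal tame extension are tamely ramified) are standard or recorded earlier in the appendix, so no gap remains.
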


\begin{proof}
	We first show $N\subset \Gal(K^{sep}/K^t)$; since the latter is normal and closed it suffices to show that the elements of $P_v$, for any finite place $v$, fix $K^t$. Let $L/K$ be a finite tamely ramified extension. The place $\bar{v}$ of $K^{sep}$ determines a unique place $w$ of $L$ above $v$. The following diagram commutes
	\[\begin{tikzcd}[ampersand replacement=\&]
	{} \& {\Gal(L_w/K_v)} \& {\Gal(L/K)} \\
	\& {\Gal(K_v^t/K_v)} \& {\Gal(K^t/K)} \\
	{} \& {G_{K_v}} \& {G_K} \\
	{P_v}
	\arrow[two heads, from=3-3, to=2-3]
	\arrow[two heads, from=2-3, to=1-3]
	\arrow[from=4-1, to=3-3]
	\arrow[two heads, from=3-2, to=2-2]
	\arrow[hook, from=4-1, to=3-2]
	\arrow["0", from=4-1, to=2-2]
	\arrow[hook, from=3-2, to=3-3]
	\arrow["{\sigma \mapsto \sigma_{|K^t}}", from=2-2, to=2-3]
	\arrow[two heads, from=2-2, to=1-2]
	\arrow[hook, from=1-2, to=1-3]
	\arrow["{\sigma\mapsto \sigma_{|L_w}}", bend left=30, from=4-1, to=1-2]
	\arrow["{(1)}"{description}, draw=none, from=3-2, to=2-3]
	\end{tikzcd}\]
	Indeed, the only nontrivial part is square $(1)$. If $L/K$ is a finite tamely ramified extension and $w$ is the place of $L$ induced by $\bar{v}$ then $L\subset L_w \subset K_v^t$; therefore there is an inclusion $K^t\subset K_v^t$ and the commutativity of $(1)$ follows. Since the diagram commutes, the elements of $P_v$ are sent to $0$ in $\Gal(L/K)$ so they fix any finite tamely ramified extension of $K$, hence also $K^t$.
	
	We now show $\Gal(K^{sep}/K^t)\subset N$. Since $N$ is normal, it suffices to show that for $N\subset U$ with $U$ an open normal subgroup, we have $\Gal(K^{sep}/K^t)\subset U$. This amounts to showing that the finite extension $L/K$ corresponding to an open normal subgroup $U$ containing $N$ is tamely ramified. Let $v$ be a finite place of $K$ and denote $w$ the place of $L$ induced by $\bar{v}$. Then $L_w=LK_v$ is the fixed field of $U\cap G_{K_v}$, which contains $P_v$, so $L_w\subset K_v^t$ is tamely ramified.
\end{proof}

\section{Miscellaneous results on proétale cohomology and condensed mathematics}

In this section we collect some results on condensed mathematics; these are certainly already known to experts.

\begin{defi}[{\cite[3.2.1]{Bhatt15}}]
	An object $F$ of a topos $\cal{X}$ is called weakly contractible if every surjection $G \to F$ has a section. We say that $\cal{X}$ is locally weakly contractible if it has enough weakly contractible coherent objects, i.e., each $X\in \cal{X}$ admits a surjection $\cup_i Y_i \to X$ with $Y_i$ a coherent weakly contractible object.
\end{defi}

The proétale topos of a scheme is locally weakly contractible \cite[4.2.8]{Bhatt15}; in particular the condensed topos (i.e. the proétale topos of a geometric point) is locally weakly contractible, and (the sheaves represented by) extremally disconnected profinite sets are a suitable family of weakly contractible objects.

\begin{lem}\label{mittag_leffler}
Let $\cal{X}$ be a locally contractible topos. If $(A_i)$ is a projective system of abelian group objects satisfying the Mittag-Leffler condition, then $R\lim A_i= \lim A_i$.
\end{lem}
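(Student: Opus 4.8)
The plan is to reduce the computation of $R\lim$ in the topos $\cal{X}$ to the computation of $R\lim$ of abelian groups evaluated on a convenient collection of objects, and then invoke the classical vanishing of $R^1\lim$ for Mittag-Leffler systems. Since $\cal{X}$ is locally weakly contractible, it has enough weakly contractible coherent objects; write $\mathcal{B}$ for the collection of such objects. The key point is that $R\lim$ is computed sectionwise on each $U\in\mathcal{B}$: more precisely, for a weakly contractible coherent object $U$, the functor $\Gamma(U,-)$ on abelian sheaves is exact (every surjection of sheaves evaluated at $U$ is surjective, because $U$ is weakly contractible and coherent so the surjectivity can be tested after a single cover which splits), and moreover it commutes with all small limits since it is a right adjoint composed with evaluation. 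Thus $R\Gamma(U,R\lim_i A_i) \simeq R\lim_i R\Gamma(U,A_i) \simeq R\lim_i A_i(U)$.

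First I would make precise the exactness of $\Gamma(U,-)$ for $U$ weakly contractible and coherent, citing \cite[3.2.1 ff.]{Bhatt15} or reproving it in one line: a surjection $G\twoheadrightarrow F$ of abelian sheaves, evaluated at $U$, is surjective because $U$ being weakly contractible means the epimorphism $G\to F$ (which is an epimorphism of sheaves, hence locally split, but already split over $U$ by weak contractibility applied to the pullback) admits a section over $U$. Hence $H^i(U,A)=0$ for $i>0$ for any abelian sheaf $A$ and any $U\in\mathcal{B}$. Second, since $R\lim$ of a tower in $\mathcal{D}(\cal{X})$ can be checked to vanish in positive degrees by checking on a generating family of objects, and $\{U : U\in\mathcal{B}\}$ generates $\cal{X}$, it suffices to show $R^i\lim_i A_i$ has vanishing sections over each $U\in\mathcal{B}$ for $i>0$, i.e. that $R^i\Gamma(U,R\lim_i A_i)=0$. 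By the previous step and the commutation of $R\lim$ with $R\Gamma$, this equals $R^i(\lim_i)(A_i(U))$, the classical derived limit of the tower of abelian groups $(A_i(U))_i$.

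Third I would observe that the Mittag-Leffler hypothesis on the tower $(A_i)$ of \emph{sheaves} descends to the tower $(A_i(U))$ of \emph{abelian groups} for each $U\in\mathcal{B}$: if the images $\mathrm{im}(A_j\to A_i)$ stabilize as subsheaves for $j\gg i$, then applying the exact functor $\Gamma(U,-)$ shows $\mathrm{im}(A_j(U)\to A_i(U))=\big(\mathrm{im}(A_j\to A_i)\big)(U)$ stabilizes as well. The classical fact that a Mittag-Leffler tower of abelian groups has $R^1\lim=0$ (and $R^i\lim=0$ for $i\ge 2$ automatically, as $\lim$ over $\NN$ has cohomological dimension $1$) then gives $R\lim_i A_i(U)=\lim_i A_i(U)$ concentrated in degree $0$. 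Since this holds for all $U$ in a generating family, $R\lim_i A_i$ is concentrated in degree $0$, and its $H^0$ is the sheaf $\lim_i A_i$ (limits of sheaves are computed sectionwise), which is exactly the claim.

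I do not expect a serious obstacle here; the only mild subtlety is making rigorous the assertion that one can detect concentration in degree $0$ of an object of $\mathcal{D}(\cal{X})$ by evaluating on the weakly contractible coherent generators — this is where local weak contractibility is essential, since it guarantees both that these objects generate and that $\Gamma$ on them is exact, so that $\Gamma(U,-)$ commutes with the truncation functors and with $R\lim$. One should also note that $R\lim$ exists in $\mathcal{D}(\cal{X})$ because $\cal{X}$ has enough injectives (or because $\mathcal{D}(\cal{X})$ is presentable), so the statement is not vacuous.
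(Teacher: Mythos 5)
Your proof is correct and follows essentially the same route as the paper: evaluate on weakly contractible (coherent) objects, use exactness of $\Gamma(U,-)$ so that it commutes with $R\lim$ and preserves the Mittag-Leffler condition, then invoke the classical vanishing of $R^1\lim$ for ML towers of abelian groups. The paper's version is just a more compressed statement of the same argument, so there is nothing to change beyond trimming.
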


\begin{proof}
	Let $Y$ be a weakly contractible object. As $\Gamma(Y,-)$ is exact and preserves injectives, we find $\Gamma(Y,-)=R\Gamma(Y,-)$ and thus $(R\lim A_i)(Y)=R\lim( A_i(Y))$. Moreover, $(A_i(Y))$ is a projective system of abelian groups satisfying the Mittag-Leffler condition, again by exactness and commutation with arbitrary limits of $\Gamma(Y,-)$; so we are done.
\end{proof}

\begin{lem}\label{mittag_leffler_discrete}
	Let $(A_i)$ be a family of discrete abelian groups satisfying the Mittag-Leffler condition, and let $\nu^\ast:Ab\to \mathrm{Cond}(Ab)$ be the constant sheaf functor. Then $(\nu^\ast A_i)$ satisfies the Mittag-Leffler condition.
\end{lem}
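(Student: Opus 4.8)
The plan is to reduce the Mittag-Leffler condition for the constant condensed sheaves $(\nu^\ast A_i)$ to the Mittag-Leffler condition for the abelian groups $(A_i)$ by evaluating on a suitable family of objects. Since $\mathrm{Cond}(Ab)$ is (computed on) the proétale topos of a point, which is locally weakly contractible with extremally disconnected profinite sets as generating weakly contractible objects, it suffices to check the Mittag-Leffler condition after applying each functor $\Gamma(S,-)$ for $S$ extremally disconnected: indeed, a projective system of sheaves satisfies the Mittag-Leffler condition if and only if the induced system of abelian groups does so after evaluation on each member of a generating family, because the image subobjects $\mathrm{im}(\nu^\ast A_j \to \nu^\ast A_i)$ stabilize as sheaves exactly when they stabilize sectionwise on a conservative family. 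So the key point to establish is a formula for $\Gamma(S,\nu^\ast A)$ when $S$ is extremally disconnected and $A$ is a discrete abelian group.

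First I would recall that $\nu^\ast A$ is the sheafification of the constant presheaf with value $A$, and that for a profinite set $S = \lim_k S_k$ (with $S_k$ finite), the value of $\nu^\ast A$ on $S$ is $\mathrm{Cont}(S, A) = \colim_k \mathrm{Map}(S_k, A)$, the group of continuous maps from $S$ to the discrete group $A$; this is the standard description of the constant sheaf on the pro-étale/condensed site. The second step is to observe that the formation of $\mathrm{Cont}(S,-)$ from $A$, for fixed $S$, is a filtered colimit of finite products of copies of $A$ (one factor per point of each finite quotient $S_k$), hence preserves surjectivity of maps of abelian groups: if $A_j \twoheadrightarrow \mathrm{im}(A_j \to A_i)$ then, because each $\mathrm{Map}(S_k,-)$ is a finite product and filtered colimits are exact, $\mathrm{Cont}(S,A_j) \to \mathrm{Cont}(S, \mathrm{im}(A_j \to A_i)) = \mathrm{im}(\mathrm{Cont}(S,A_j)\to\mathrm{Cont}(S,A_i))$ is surjective. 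Thus for each fixed $S$ the projective system $(\mathrm{Cont}(S,A_i))_i = (\Gamma(S,\nu^\ast A_i))_i$ satisfies the Mittag-Leffler condition, the stabilized image being $\mathrm{Cont}(S, \bigcap_j \mathrm{im}(A_j \to A_i))$.

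The third step is to package this back into a statement about sheaves. Since extremally disconnected profinite sets form a generating family of the condensed topos and $\Gamma(S,-)$ is exact on them, the subsheaf $\mathrm{im}(\nu^\ast A_j \to \nu^\ast A_i) \subset \nu^\ast A_i$ is determined by its sections on all such $S$, namely $S \mapsto \mathrm{im}(A_j \to A_i)$ evaluated via $\mathrm{Cont}(S,-)$; and by the previous step this sequence of subsheaves, as $j$ grows, is eventually constant (equal to $\nu^\ast$ of the stabilized image $\bigcap_j \mathrm{im}(A_j\to A_i)$, using that $\nu^\ast$ is exact and hence commutes with taking images and is fully faithful). That is exactly the Mittag-Leffler condition for $(\nu^\ast A_i)$. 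I expect the main obstacle to be purely expository rather than mathematical: pinning down the evaluation formula $\Gamma(S,\nu^\ast A)=\mathrm{Cont}(S,A)$ and the reduction "Mittag-Leffler for sheaves $\iff$ Mittag-Leffler sectionwise on a generating weakly contractible family" with the right references to \cite[3.1.10, 3.2.1, 4.2.8]{Bhatt15}; the exactness of $\nu^\ast$ and of the $\Gamma(S,-)$ on weakly contractible $S$ does all the real work, and no genuinely hard estimate is involved.
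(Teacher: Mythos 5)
Your proof is correct, but it takes a different route from the paper. The paper argues entirely at the level of the functor: $\nu^\ast$ is exact (hence commutes with epi-mono factorizations, so $\mathrm{im}(\nu^\ast A_j\to\nu^\ast A_i)=\nu^\ast\,\mathrm{im}(A_j\to A_i)$), and on discrete groups it agrees with the limit-preserving functor $\underline{(-)}$, which identifies the stable image with $\nu^\ast$ of the intersection $\bigcap_j \mathrm{im}(A_j\to A_i)$; stabilization then transfers immediately. You instead test the condition sectionwise on the weakly contractible generators (extremally disconnected profinite $S$), using $\Gamma(S,\nu^\ast A)=\mathrm{Cont}(S,A)$ and the fact that $\mathrm{Cont}(S,-)$, being a filtered colimit of finite products, preserves surjections and images. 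Your approach buys an explicit description of the sections of the stable image and mirrors the paper's proof of the companion lemma on $R\lim$ (evaluation at weakly contractible objects); the paper's argument is shorter and shows that exactness of $\nu^\ast$ alone already gives the stabilization, with the limit-preservation only needed to identify the stable image as the constant sheaf on the intersection.

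One small caution: your blanket claim that a projective system of sheaves is Mittag-Leffler \emph{iff} it is sectionwise Mittag-Leffler on a generating family is imprecise as stated, since sectionwise stabilization a priori yields an index $j_0$ depending on $S$, while the sheaf-level condition needs one $j_0$ for all $S$ at once. In your actual argument this is harmless: the stabilization index is inherited from the Mittag-Leffler condition on $(A_i)$ itself, hence is uniform in $S$, and since the $\Gamma(S,-)$ for extremally disconnected $S$ form a conservative family of exact functors, equality of the image subsheaves for $j\ge j_0$ follows. So the proof goes through; just phrase the reduction as "uniform sectionwise stabilization implies sheaf-level stabilization" rather than as an equivalence.
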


\begin{proof}
	The functor $\nu^\ast$ is exact and agrees with the functor $\underline{(-)}:Ab(\mathrm{Top}) \to \mathrm{Cond}(Ab)$ on discrete abelian groups \cite[4.2.12]{Bhatt15}; the latter functor is limit-preserving. An intersection of a decreasing family of abelian groups is a limit of the associated (discrete) topological abelian groups, so we are done.
\end{proof}

\begin{prop}[{\cite[6.1.17]{Bhatt15}}]\label{finite_proet_right_adjoint}
	Let $\pi:Y\to X$ be a finite morphism of finite presentation. Then $\pi_{proet,\ast}:D(Y_{proet})\to D(X_{proet})$ has a right adjoint. 
\end{prop}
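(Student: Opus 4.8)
We record the argument behind \cite[6.1.17]{Bhatt15}. The plan is to apply Neeman's Brown representability theorem: $D(Y_{proet})$ is the derived category of the Grothendieck abelian category of abelian sheaves on $Y_{proet}$, hence is a well generated triangulated category (the homotopy category of the presentable stable $\infty$-category $\cal{D}(Y_{proet})$), so the triangulated functor $\pi_{proet,\ast}$ admits a right adjoint as soon as it preserves arbitrary direct sums. In a cocomplete additive setting preservation of all small colimits reduces to preservation of coproducts together with exactness (which yields preservation of cokernels, hence of coequalizers); and exactness here means that $R\pi_{proet,\ast}$ has bounded cohomological amplitude. Both properties may be tested after evaluating $R\Gamma(W,\pi_{proet,\ast}-)$ on a generating family $W$.

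The generating family to use is the one provided by the local weak contractibility of $X_{proet}$ recalled above (\cite[4.2.8]{Bhatt15}): the sheaves represented by affine, weakly contractible $W\in X_{proet}$ generate, and for such $W$ the slice projection $j_W\colon W_{proet}\simeq (X_{proet})/W\to X_{proet}$ satisfies base change against $\pi_{proet,\ast}$, so that $R\Gamma(W,\pi_{proet,\ast}M)=R\Gamma((W\times_X Y)_{proet},M)$. Thus everything reduces to the claim that, for $Z:=W\times_X Y$, the functor $R\Gamma(Z_{proet},-)$ is exact and commutes with direct sums. Here one invokes the structure theory of \cite{Bhatt15}: an affine weakly contractible $W=\Spec(A)$ is in particular w-strictly local, i.e.\ $A$ is w-local with strictly henselian local rings; since $Z=\Spec(B)$ with $B$ module-finite over $A$, and a finite ring extension of a strictly henselian local ring is a finite product of strictly henselian local rings (a finite field extension of a separably closed field is separably closed), $Z$ is again w-strictly local. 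For a w-strictly local affine scheme $Z$ with profinite space of closed points $Z^c$, \cite{Bhatt15} identifies $R\Gamma(Z_{proet},-)$ with $\Gamma(Z^c,(-)|_{Z^c})$; as $Z^c$ is a spectral space of Krull dimension $0$ this is an exact functor, and it commutes with direct sums (and filtered colimits) of sheaves. Hence $\pi_{proet,\ast}$ is exact of amplitude $\le 0$ on the chosen generators and commutes with coproducts, and Brown representability supplies the right adjoint.

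The genuinely non-formal input — and what is actually carried out in \cite{Bhatt15} — is the stability of the w-local structure under finite base change together with the identification of the proétale cohomology of a w-strictly local scheme via its closed points; granting these, the rest is the formal manipulation sketched above. One could equally argue $\infty$-categorically, replacing Brown representability by Lurie's adjoint functor theorem for the presentable stable $\infty$-category $\cal{D}(Y_{proet})$, the content (preservation of colimits) being identical.
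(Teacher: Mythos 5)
Your overall strategy is the same as the paper's: Brown representability reduces the existence of the right adjoint to showing that $\pi_{proet,\ast}$ commutes with direct sums, and this is tested by evaluating on the weakly contractible affine objects $W$ of $X_{proet}$, using base change to rewrite $R\Gamma(W,\pi_{proet,\ast}M)$ as the pro-étale cohomology of $Z:=W\times_X Y$. The divergence, and the gap, is in how you handle $Z$. You only retain that $Z$ is w-strictly local and then assert that for a w-strictly local affine one has $R\Gamma(Z_{proet},-)\simeq \Gamma(Z^c,(-)|_{Z^c})$, exact and commuting with direct sums. That identification is valid for sheaves pulled back from the étale site, but not for arbitrary abelian sheaves on the pro-étale site: the pro-étale topos of a w-strictly local scheme is essentially "condensed" sheaves over the profinite set $Z^c$, and exactness of sections there is precisely the question of weak contractibility, which by Theorem 2.4.9 of Bhatt--Scholze requires in addition that $\pi_0(Z)$ be extremally disconnected. (If w-strict locality alone gave exact, colimit-compatible global sections for all pro-étale sheaves, the notion of w-contractibility would be superfluous.) Your parenthetical argument — finite over strictly henselian gives a product of strictly henselian local rings — controls the local rings but says nothing about extremal disconnectedness of $\pi_0(Z)$, which is the genuinely non-formal point.

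This is exactly what the paper's citation of \cite[2.4.10]{Bhatt15} supplies: a finite, finitely presented algebra over a w-contractible ring is again w-contractible, so $Z$ is itself weakly contractible and evaluation at $Z$ commutes with all colimits and is exact, with no detour through topological sheaf cohomology of $Z^c$. Note that the finite-presentation hypothesis of the proposition, which is what controls $\pi_0(Z)$, is never used in your argument — a concrete sign that the key input has been dropped. (A secondary quibble: for Brown representability you only need preservation of coproducts; exactness as a triangulated/stable functor is automatic for $\pi_{proet,\ast}$, and "bounded cohomological amplitude" is neither the right formulation nor needed once one knows $Z$ is weakly contractible.) To repair your proof, replace the w-strictly local step by the statement that $W\times_X Y$ is w-contractible, i.e. by \cite[2.4.10]{Bhatt15}, after which the rest of your reduction goes through as written.
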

\begin{proof}
	Since both source and target triangulated categories are compactly generated, it suffices to show that $\pi_{proet,\ast}$ commutes with direct sums. This can be checked on $w$-contractible affines. Since evaluation of sheaves of abelian groups on the proétale site at $w$-contractible affines commutes with colimits, this is easily shown using \cite[2.4.10]{Bhatt15}.
\end{proof}

In the following we follow \cite{SGAIV} and call a topology a family of sieves satisfying the usual axioms, and a pretopology a family of covers satisfying the usual axioms.

\begin{prop}
	The canonical topology on condensed sets induces the proétale topology (i.e. generated by finite jointly surjective families) on profinite sets under the Yoneda embedding.
\end{prop}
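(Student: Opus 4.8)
The goal is to compare two structures on the category of profinite sets: the proétale (pre)topology generated by finite jointly surjective families, and the topology induced via the Yoneda embedding $h:\mathrm{ProFin}\hookrightarrow \mathrm{Cond}(\mathrm{Set})$ from the canonical topology on condensed sets. The first step is to recall the relevant definitions: the canonical topology on a topos is the finest topology for which all representable presheaves are sheaves, equivalently the topology whose covering sieves are the universal effective epimorphic families; and the Yoneda embedding of a site into its topos is continuous and cocontinuous, so it induces a topology on the source by pulling back covering sieves. What must be shown is that a sieve $R$ on a profinite set $S$ is covering for the induced topology if and only if it contains a finite jointly surjective family $\{S_i\to S\}$ of profinite sets.

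\textbf{Key steps.} First I would show that every finite jointly surjective family in $\mathrm{ProFin}$ becomes an effective epimorphic family (indeed a universal one) in $\mathrm{Cond}(\mathrm{Set})$: a finite coproduct of profinite sets is profinite, and the induced map $\coprod_i S_i \to S$ is a surjection of profinite sets, hence a quotient map of compact Hausdorff spaces, hence an effective epimorphism of condensed sets (this is exactly the statement that condensed sets see quotient maps of compact Hausdorff spaces as effective epis, which is the defining feature of the condensed formalism; surjections of profinite sets even admit continuous sections after pullback to an extremally disconnected cover, so the epimorphism is universal). This gives the inclusion: the proétale pretopology is coarser than (contained in) the induced topology. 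Conversely, I would show that if a sieve $R$ on $S$ is covering for the induced topology — i.e. the family of all morphisms in $R$ is universally effective epimorphic in $\mathrm{Cond}(\mathrm{Set})$ — then it already contains a finite jointly surjective subfamily. The clean way to do this is to test against the point: evaluate the would-be covering at an extremally disconnected (equivalently, a one-point) set, or more simply observe that an effective epimorphic family $\{T_j \to S\}$ in condensed sets is in particular jointly surjective on underlying sets after evaluation at $\ast$, which for profinite sets means $\bigcup_j \mathrm{im}(T_j(\ast)\to S(\ast)) = S(\ast) = S$; since $S$ is compact and each image is closed (image of a compact in a Hausdorff space) — here one reduces each $T_j$ to a profinite set mapping through it, using that $\mathrm{ProFin}$ is a basis and every condensed set receives a surjection from a disjoint union of profinite sets — a compactness argument extracts a finite subfamily that is still jointly surjective. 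Finally I would assemble these two inclusions into the equality of topologies, and remark that since the proétale topology on $\mathrm{ProFin}$ is subcanonical and $\mathrm{Cond}(\mathrm{Set})$ is (by construction) the associated topos, the induced topology is exactly the proétale one.

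\textbf{Main obstacle.} The routine directions are that finite jointly surjective families are effective epimorphic (standard for condensed sets) and that covering families are surjective on points. The genuinely delicate point is the compactness/extraction argument in the converse: one must be careful that "effective epimorphic family in $\mathrm{Cond}(\mathrm{Set})$'' is strong enough to force joint surjectivity of the associated maps of profinite \emph{sets} (not merely of the condensed sets $T_j$, which could a priori be larger than any profinite set mapping into $S$), and that the reduction of an arbitrary $T_j \to S$ to a profinite cover of its image is compatible with staying inside the sieve $R$ — here one uses that $R$ is a sieve, so any profinite set factoring a morphism of $R$ through $S$ is again in $R$. Once this is set up correctly, replacing each $T_j$ by a profinite $S_j$ with the same image and invoking compactness of $S$ to pass to a finite subfamily finishes the proof.
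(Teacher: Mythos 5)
Your first inclusion (finite jointly surjective families of profinite sets become epimorphic, hence covering for the topology induced from the canonical one) is fine, but the converse direction as you describe it has a genuine gap, and the flaw is not just a missing detail. You propose to use only surjectivity on $\ast$-points: the images $\mathrm{im}(T_j(\ast)\to S)$ are compact, hence closed, and you then invoke ``a compactness argument'' to extract a finite jointly surjective subfamily. Compactness of $S$ gives finite subcovers of \emph{open} covers, not of closed ones, so this extraction fails. Worse, the intermediate statement you would be proving is false: for $S$ an infinite profinite set, the sieve generated by all points $\ast \to S$ is pointwise surjective with closed images, yet it contains no finite jointly surjective subfamily, and indeed $\coprod_{s\in S}\ast \to S$ (the discrete set underlying $S$ mapping to $S$) is \emph{not} an epimorphism of condensed sets. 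This example shows that evaluation at the point is strictly weaker than the epimorphism condition, so any correct proof of the converse must test against nontrivial profinite (or extremally disconnected) objects. A correct argument runs as follows: if the sieve $R$ on $S$ is covering for the induced topology, then $\coprod_{f\in R}\mathrm{dom}(f)\to S$ is an epimorphism of condensed sets; lifting the section $\mathrm{id}_S\in S(S)$ locally, one finds a finite jointly surjective family $\{U_i\to S\}$ with $U_i$ extremally disconnected and lifts $U_i \to \coprod_{f\in R}\mathrm{dom}(f)$; since evaluation at a weakly contractible object commutes with coproducts, each lift factors through a single $\mathrm{dom}(f)$, and the composites $U_i\to \mathrm{dom}(f)\to S$ lie in $R$ because $R$ is a sieve. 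This yields the finite jointly surjective family inside $R$.

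Note also that the paper proves the statement by an entirely different, softer route: since the representables from profinite sets generate $\mathrm{Cond}(Set)$ under the canonical topology, the comparison lemma (SGA 4, Exp.\ IV, 1.2.1) identifies sheaves for the induced topology with $\mathrm{Cond}(Set)=\Sh(BG_{proet}$-type site$)$, and then one concludes because a topology on a fixed category is determined by its category of sheaves (SGA 4, Exp.\ II, 4.4.4). Your closing remark (``the proétale topology is subcanonical and $\mathrm{Cond}(Set)$ is its topos'') in fact gestures at exactly this argument; if you want to keep your direct two-inclusion strategy instead, you must repair the converse along the lines above.
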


\begin{proof}
	Denote $J_{proet}$, resp. $J_{ind}$ the proétale topology resp. the topology induced from the canonical topology on condensed sets. Since condensed sets are covered under the canonical topology by profinite sets (under the fully faithful, limit-preserving Yoneda embedding), we conclude by \cite[Exp. IV, 1.2.1]{SGAIV} that $\mathrm{Sh}(Top^{pf},J_{proet})=\mathrm{Cond}(Set)\simeq \mathrm{Sh}(Top^{pf},J_{ind})$. Since the topology is caracterised by its category of sheaves \cite[Exp. II, 4.4.4]{SGAIV} we find $J_{proet}=J_{ind}$.
\end{proof}

\begin{defi}[{\cite[4.3.1]{Bhatt15}}]
	Let $G$ be a compactly generated topological group. The pro-étale site $BG_{proet}$ of $G$ is defined as the site of profinite continuous $G$-sets	with covers given by finite jointly surjective families.
\end{defi}

\begin{prop}[{\cite[Corollary 2]{Flach08}}]\label{bg_proet}
	Let $G$ be a compactly generated topological group. We have
	\[
	\mathrm{Sh}(BG_{proet})=\underline{G}\text{-}\mathrm{Cond}(Set)
	\]
	If moreover $G=\Gal(k^{sep}/k)$ for a field $k$, then
	\[
	\Sh(\Spec(k)_{proet})=\underline{G}{-}\mathrm{Cond}(Set)
	\]
\end{prop}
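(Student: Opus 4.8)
The plan is to deduce both assertions from the \emph{comparison lemma} for sites (\cite[Exp.~III, 4.1]{SGAIV}, in the form of \cite[2.2.1]{Bhatt15}), together with the identification of the canonical topology on profinite sets carried out in the previous proposition. Throughout I work with a profinite group $G$, which is the only case actually used in the body of the paper (the residue‑field Galois groups $\hat\ZZ$, the inertia and decomposition groups, and finite Galois quotients); I comment on the general compactly generated case at the end. First I would upgrade the fully faithful embedding $Top^{pf}\hookrightarrow \mathrm{Cond}(Set)$, $T\mapsto \underline T$, to a fully faithful functor $y\colon BG_{proet}\to \underline G\text{-}\mathrm{Cond}(Set)$, $S\mapsto \underline S$, where $\underline S$ carries the $\underline G$-action induced by the continuous $G$-action on $S$. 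Full faithfulness amounts to $\Hom_{\underline G\text{-}\mathrm{Cond}}(\underline S,\underline{S'})=\Hom_{G\text{-}\mathrm{cont}}(S,S')$, which follows from full faithfulness of $\underline{(-)}$ once one checks that a $\underline G$-equivariant map of the associated condensed sets is the same datum as a continuous $G$-equivariant map. Next I would verify that $y$ is continuous and that the proétale topology on $BG_{proet}$ is exactly the topology \emph{induced} from the canonical topology on $\underline G\text{-}\mathrm{Cond}(Set)$: a finite family $\{S_i\to S\}$ is covering for the induced topology precisely when $\coprod_i\underline{S_i}\to\underline S$ is an epimorphism of condensed sets, i.e.\ surjective, since surjections of condensed sets are effective epimorphisms; this is the same reasoning as in the previous proposition, now carried out $\underline G$-equivariantly.

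The key step is then to exhibit \emph{enough objects}. Given an arbitrary $\underline G$-condensed set $X$, I would choose a surjection $\coprod_j\underline{T_j}\twoheadrightarrow X$ of condensed sets with each $T_j$ profinite — possible because $\mathrm{Cond}(Set)=\mathrm{Sh}(Top^{pf},J_{proet})$ by the previous proposition — and then form $\coprod_j\underline{G\times T_j}=\coprod_j(\underline G\times\underline{T_j})\xrightarrow{\ \mathrm{act}\ }X$, which is $\underline G$-equivariant and surjective ($x$ is hit by $(e,\tilde x)$ for any lift $\tilde x$). Here $G\times T_j$, with $G$ acting by left translation on the first factor, is a profinite continuous $G$-set since $G$ is profinite, hence an object of $BG_{proet}$; moreover the subcategory of all such $G\times T$ is stable under the fibre products $G\times(T_i\times_T T_j)$ that appear in the Čech nerves of its covers, so it forms a basis. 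The comparison lemma then applies and yields $\mathrm{Sh}(BG_{proet})\xrightarrow{\ \sim\ }\mathrm{Sh}\bigl(\underline G\text{-}\mathrm{Cond}(Set),\text{canonical}\bigr)=\underline G\text{-}\mathrm{Cond}(Set)$, the last identification because $\underline G\text{-}\mathrm{Cond}(Set)$ is a (large) topos and hence agrees with sheaves on itself for the canonical topology; as usual one first fixes a cutoff cardinal $\kappa$, runs the argument for $\kappa$-condensed $G$-sets, and passes to the colimit over $\kappa$. Concretely, the resulting equivalence sends a sheaf $F$ to the condensed set $T\mapsto F(G\times T)$ with the $\underline G$-action coming from right translation on the $G$-factor, and its quasi-inverse sends $X$ to $S\mapsto \Hom_{\underline G\text{-}\mathrm{Cond}}(\underline S,X)$; one can alternatively take these explicit formulas as the definition and check directly that they are quasi-inverse, bypassing the comparison lemma.

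For the second assertion I would invoke the standard identification of the proétale topos of a field with the classifying topos of its absolute Galois group: the functor sending a finite continuous $G_k$-set to the corresponding finite étale $k$-scheme extends to an equivalence of $BG_{k,proet}$ onto a basis of $\Spec(k)_{proet}$ (\cite{Bhatt15}), and combining this with the first part gives $\mathrm{Sh}(\Spec(k)_{proet})=\underline{G_k}\text{-}\mathrm{Cond}(Set)$.

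The main obstacle I anticipate is one of generality and bookkeeping rather than of idea. For a general compactly generated $G$ the object $G\times T$ need not be profinite — indeed $BG_{proet}$ can be genuinely too small (for instance $BG_{proet}$ collapses to $Top^{pf}$ with the trivial topology when $G=\RR$) — so the ``enough objects'' step really uses extra structure and, in that generality, one must fall back on the analysis underlying \cite[Corollary~2]{Flach08}; since every application in the paper concerns a profinite group, the argument above settles all cases we need. A secondary technical point is the careful handling of the set‑theoretic cutoffs, so that the comparison lemma (stated for small sites) legitimately applies to the large category $\underline G\text{-}\mathrm{Cond}(Set)$, and the compatibility of the equivalence with enlarging the cutoff.
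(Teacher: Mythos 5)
Your argument is essentially the paper's own: the same fully faithful embedding of profinite continuous $G$-sets into $\underline{G}\text{-}\mathrm{Cond}(Set)$ via $\underline{(-)}$, the same identification of the induced topology with the pro\'etale one using the preceding proposition, the same covering family $G\times T$ with the action map as the ``enough objects'' step, and the same comparison lemma (the paper invokes \cite[Exp.~IV, 1.2.1]{SGAIV} where you invoke \cite[Exp.~III, 4.1]{SGAIV}/\cite[2.2.1]{Bhatt15}; these are interchangeable here), with your explicit quasi-inverse and cardinal-cutoff remarks being extra bookkeeping rather than a different route. The only substantive divergence is generality: you restrict to profinite $G$ because $G\times T$ is profinite only in that case. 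This is a fair point rather than a defect of your proposal: the paper's proof uses exactly the same covering family, so as written it also only establishes the profinite case, the compactly generated statement resting on the citation to \cite[Corollary 2]{Flach08} (which concerns the topological classifying topos rather than $BG_{proet}$); and your $G=\RR$ example does show the covering argument cannot work in that generality, since $B\RR_{proet}$ is just profinite sets with trivial action (the topology there is still the pro\'etale one, not the trivial one, but your conclusion stands). Since every application in the paper is to (pro)finite Galois groups, the profinite case you prove is all that is needed, and your deduction of the second assertion from the Bhatt--Scholze identification $\Spec(k)_{proet}\simeq BG_{k,proet}$ is precisely the step the paper leaves implicit.
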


\begin{proof}
	The Yoneda embedding from profinite sets to condensed sets if fully faithful, and it can be extended to a right adjoint (hence limit-preserving) faithful functor $\underline{(-)}$, which is moreover fully faithful when restricted to compactly generated topological space \cite[1.7]{Clausen19}. The action of $G$ on a profinite set $X$ is encoded by a map $G\times X\to X$, where the product is computed in topological spaces. The product of a profinite set and a compactly generated space is compactly generated\cite[7.2]{Rezk18}\footnote{In the above reference, compactly generated spaces are called $k$-spaces}, so this is the same as a map $\underline{G}\times \underline{X} \to \underline{X}$. Thus we get a fully faithful embedding of profinite continuous $G$-sets in $\underline{G}\text{-}\mathrm{Cond}(Set)$. The canonical topology on $\underline{G}$-condensed sets is obtained by forgetting the $\underline{G}$-structure, and similarly for the proétale topology on profinite continuous $G$-sets, so that the canonical topology on the former induces the proétale topology on the latter by the previous proposition. By \cite[Exp. IV, 1.2.1]{SGAIV} it remains only to show that a $\underline{G}$-condensed set is covered by profinite $G$-sets; for any cover of the underlying condensed set by profinite sets $S_i$, $G\times S_i$ (where $G$ acts on the first factor) is a cover by profinite $G$-sets by an adjunction argument.
\end{proof}

\section{Strictly henselian local rings of singular schemes}

Let $X$ be an integral arithmetic scheme with finite normalization $\pi:Y\to X$, and let $x\in X$ be a closed point. We want to understand the fiber above the generic point of $X$ of the strictly henselian local ring at $x$ in terms of the fibers of strictly henselian local rings of points of $Y$ above $x$. We can localize at $x$ and we are thus in the following setting: let $(A,\fr{m})$ be a Noetherian integral local ring with fraction field $K$, perfect residue field $k$ and integral closure $B$. Can we describe $A^{sh}\otimes_A K$ in terms of $B$ and its strict henselizations at various maximal ideals?

We first have
\begin{lem}[{\cite[\href{https://stacks.math.columbia.edu/tag/07QQ}{Tag 07QQ}]{stacks-project}}]
	\[A^{sh}\otimes_A K = \prod_{i=1}^n \kappa(\fr{p}_i)\]
	where $\fr{p}_i$ are the prime ideals of $A^{sh}\otimes_A K$ above $(0)$; moreover each $k(\fr{p}_i)/k$ is separable algebraic.
\end{lem}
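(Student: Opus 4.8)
The plan is to show that $A^{sh}\otimes_A K$ is an \emph{Artinian reduced} ring, so that it is automatically a finite product of fields indexed by its (finitely many) primes, and then to read off the nature of the field factors from the ind-étale description of $A^{sh}$.

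First I would record the only structural facts about $A^{sh}$ that are needed. Since $A$ is Noetherian, its strict henselization $A^{sh}$ is Noetherian (the strict henselization of a Noetherian local ring is Noetherian; see \cite{stacks-project}). Since $A\to A^{sh}$ is faithfully flat and $A$ is a domain, $A^{sh}$ is flat over a domain, hence $A$-torsion free, so that $A^{sh}\otimes_A K = S^{-1}A^{sh}$ with $S=A\setminus\{0\}$; in particular $A^{sh}\otimes_A K$ is again Noetherian, and by going-down for the flat map $A\to A^{sh}$ its prime ideals are exactly the primes of $A^{sh}$ lying over $(0)\subset\Spec A$, i.e. the minimal primes of $A^{sh}$, of which there are finitely many. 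Write $A^{sh}=\colim_i A_i$ as a filtered colimit of étale $A$-algebras.

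Next I would show $A^{sh}\otimes_A K$ is von Neumann regular (absolutely flat), hence reduced and of dimension $0$. Indeed $A^{sh}\otimes_A K=\colim_i (A_i\otimes_A K)$, and each $A_i\otimes_A K$ is étale over the field $K$, hence a finite product of finite separable field extensions of $K$; such a ring is von Neumann regular, and a filtered colimit of von Neumann regular rings is von Neumann regular. Combining with the previous paragraph, $A^{sh}\otimes_A K$ is Noetherian, reduced and $0$-dimensional, hence Artinian, hence a finite product $\prod_{i=1}^n \kappa(\mathfrak p_i)$ over its finitely many primes $\mathfrak p_i$, each $\kappa(\mathfrak p_i)$ a field. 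To identify these fields: each $\kappa(\mathfrak p_i)$ is a quotient of $\colim_i(A_i\otimes_A K)$ by a prime, hence a filtered colimit of (images of) the field factors of the $A_i\otimes_A K$, which are finite separable over $K$; thus $\kappa(\mathfrak p_i)/K$ is separable algebraic. For the residue-field statement, the residue field of $A^{sh}$ is the separable algebraic closure $k^{\mathrm{sep}}$ of $k$ by definition of the strict henselization, and $k$ is perfect, so the residue field extensions occurring are separable algebraic over $k$; the precise bookkeeping is \cite[\href{https://stacks.math.columbia.edu/tag/07QQ}{Tag 07QQ}]{stacks-project}.

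The main obstacle is not any single deep step but assembling the right finiteness inputs. The indispensable external fact is that $A^{sh}$ is Noetherian: without it, an ind-étale $K$-algebra would merely be an arbitrary von Neumann regular $K$-algebra, and one could not conclude a \emph{finite} product of fields. Everything else — flatness and torsion-freeness over the domain $A$, going-down, the structure of étale algebras over a field, and Artinian reduced $\Rightarrow$ finite product of fields — is routine.
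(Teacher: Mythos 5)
Your proof is correct, and it reaches the same structural conclusion as the paper (Noetherian $+$ reduced $+$ zero-dimensional $\Rightarrow$ Artinian reduced $\Rightarrow$ finite product of fields), but the way you obtain reducedness and dimension zero is genuinely different and in fact more self-contained. The paper argues: $A^{sh}$ is Noetherian and flat over $A$, hence has finitely many minimal primes ``which are exactly the primes lying above $(0)$'', and is reduced because $A$ is reduced (reducedness passes to the strict henselization); it then concludes Artinian reduced. You instead exploit the ind-\'etale presentation $A^{sh}\otimes_A K=\colim_i(A_i\otimes_A K)$, each term being a finite product of finite separable extensions of $K$, to get von Neumann regularity, hence simultaneously reducedness and dimension zero, with Noetherianity of the localization supplying finiteness. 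This buys you two things the paper's sketch does not: first, the paper's assertion that primes over $(0)$ coincide with the minimal primes of $A^{sh}$ does not follow from ``Noetherian and flat'' alone (e.g.\ $\ZZ_{(p)}\to\ZZ_{(p)}[t]$ is flat Noetherian and $(pt-1)$ lies over $(0)$ without being minimal); what is really needed is that the generic fiber is zero-dimensional, which is exactly what your von Neumann regular argument provides. Second, your colimit description also proves the separable algebraicity claim (each $\kappa(\mathfrak p_i)$ is a filtered union of finite separable extensions of $K$), which the paper's proof leaves entirely to the citation — note that the ``$k(\mathfrak p_i)/k$'' in the statement should be read as $\kappa(\mathfrak p_i)/K$, as you implicitly did, since $\kappa(\mathfrak p_i)$ contains $K$. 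One small ordering point: you invoke going-down to say the primes of $A^{sh}\otimes_A K$ ``are exactly the minimal primes of $A^{sh}$'' before you have established zero-dimensionality; going-down only gives that minimal primes lie over $(0)$, and the converse is supplied later by your von Neumann regularity step, so the argument should be read in that order — but nothing is missing.
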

\begin{proof}
	$A^{sh}$ is Noetherian and flat over $A$ so it has finitely many minimal primes, which are exactly the primes lying above $(0)$. Moreover it is reduced as $A$ is reduced. Therefore $A^{sh}\otimes_A K$ is Noetherian and has finitely many prime ideals, all minimals, so $A^{sh}\otimes_A K$ is an Artinina ring; since it is reduced, its local rings are fields.
\end{proof}
The lemma says that $A^{sh}\otimes_A K$ is the total rings of fraction of $A^{sh}$. The total ring of fractions of the normalization of $A^{sh}$ is the same, so we want to determine the normalization of $A^{sh}$:
\begin{lem}[{\cite[\href{https://stacks.math.columbia.edu/tag/0CBM}{Tag 0CBM}]{stacks-project}}]
$B':=B\otimes_A A^{sh}$ is the normalization of $A^{sh}$
\end{lem}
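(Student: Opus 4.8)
The plan is to identify $B' = B\otimes_A A^{sh}$ as the integral closure of $A^{sh}$ in its total ring of fractions $Q(A^{sh}) = A^{sh}\otimes_A K$ (the ring appearing in the previous lemma). Since in our situation $A = \mathcal{O}_{X,x}$ is the local ring of an integral arithmetic scheme, it is excellent, hence Japanese, so $B$ is a finite $A$-module and $A^{sh}$ is Noetherian; consequently $B'$ is a finite $A^{sh}$-algebra, in particular integral over $A^{sh}$. (The general non-excellent case is exactly Stacks Tag 0CBM.) I would first record the three elementary points one needs. First, $A^{sh}\hookrightarrow B'$: apply the faithfully flat functor $A^{sh}\otimes_A -$ to the inclusion $A\hookrightarrow B$. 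Second, $B'$ is reduced, being ind-étale over the normal domain $B$ (it is the base change of $A\to A^{sh}$ along $A\to B$), hence $B'$ embeds into its total ring of fractions $Q(B')$. Third, since $A\to B$ is birational (i.e. $B\otimes_A K = K$, as $B$ is the integral closure of $A$ inside $K$) and $A\to A^{sh}$ is flat, one has $B'\otimes_{A^{sh}} Q(A^{sh}) = B\otimes_A K\otimes_A A^{sh} = A^{sh}\otimes_A K = Q(A^{sh})$; after the small verification that the non-zerodivisors of $A^{sh}$ remain non-zerodivisors on the finite reduced $A^{sh}$-algebra $B'$ (equivalently, that generic points of $\Spec B'$ lie over generic points of $\Spec A^{sh}$), this gives $Q(B') = Q(A^{sh})$.

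The real content is that $B'$ is a \emph{normal} ring. I would prove this through the structure of finite algebras over the strictly henselian local ring $A^{sh}$: writing $\mathfrak{m}_1,\dots,\mathfrak{m}_r$ for the maximal ideals of $B$ lying over $\mathfrak{m}$, each residue field $\kappa(\mathfrak{m}_j)$ is a finite extension of $k$, and it is \emph{separable} over $k$ precisely because $k$ is perfect. Hence the closed fibre $B\otimes_A k^{sep}$ is a finite product of local Artinian rings indexed by the pairs $(j,\sigma)$ with $\sigma\colon \kappa(\mathfrak{m}_j)\hookrightarrow k^{sep}$, and correspondingly
\[
B' = B\otimes_A A^{sh} \;=\; \prod_{j=1}^{r}\ \prod_{\sigma\colon \kappa(\mathfrak{m}_j)\hookrightarrow k^{sep}} \bigl(B_{\mathfrak{m}_j}\bigr)^{sh},
\]
a finite product of strict henselizations of the normal local domains $B_{\mathfrak{m}_j}$. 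Since the (strict) henselization of a normal local domain is again a normal local domain, $B'$ is a finite product of normal domains, hence a normal ring. As an alternative one can avoid the residue-field bookkeeping and argue directly from $B' = \colim_i\,(B\otimes_A A_i)$ over étale neighbourhoods $(A_i,\mathfrak{q}_i)$ of $x$: each $B\otimes_A A_i$ is étale over the normal ring $B$, hence normal, and normality passes to such filtered colimits along the flat transition maps (reducing, at each prime, to the fact that a filtered colimit of normal domains is a normal domain).

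Once these pieces are in place the conclusion is immediate: $B'$ is integral over $A^{sh}$, contained in $Q(A^{sh}) = Q(B')$, contains $A^{sh}$, and is integrally closed in $Q(B')$ because it is normal; therefore $B'$ is exactly the integral closure of $A^{sh}$ in its total ring of fractions, i.e. the normalization of $A^{sh}$. I expect the main obstacle to be neither the normality input (which I would treat as a black box from the literature) nor the algebra, but the total-fraction-ring bookkeeping in the non-domain setting: namely pinning down the product decomposition of $B'$ — equivalently that $A^{sh}\otimes_A\kappa(\mathfrak{m}_j)$ is a finite product of copies of $k^{sep}$ — and checking that $\Spec B'\to\Spec A^{sh}$ sends generic points to generic points so that $Q(B') = B'\otimes_{A^{sh}}Q(A^{sh})$. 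Both of these are where the hypothesis that the residue field $k$ is perfect is used in an essential way.
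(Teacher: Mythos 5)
Your proposal is correct, but it argues along a genuinely different line than the paper. The paper's proof is a two-line appeal to the fact that normalization commutes with étale base change and with filtered colimits, applied to $A^{sh}=\operatorname{colim}A_i$ with each $A_i$ étale over $A$: each $B\otimes_A A_i$ is the normalization of $A_i$, and passing to the colimit gives the claim. You instead verify directly that $B'=B\otimes_A A^{sh}$ satisfies the defining property of the normalization of $A^{sh}$: it is finite (hence integral) over $A^{sh}$, it sits inside $Q(A^{sh})$ with $Q(B')=Q(A^{sh})$, and it is normal. This buys a self-contained argument at the cost of the total-fraction-ring bookkeeping you flag (non-zerodivisors of $A^{sh}$ staying non-zerodivisors on $B'$, i.e.\ minimal primes of $B'$ contracting to minimal primes of $A^{sh}$); that check does go through, using flatness of $B\to B'$, the fact that $B$ is a domain, and the previous lemma's statement that the generic fibre $A^{sh}\otimes_A K$ is Artinian. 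Of your two normality arguments, the second (each $B\otimes_A A_i$ is étale over the normal ring $B$, and normality passes to the filtered colimit) uses essentially the same ingredients as the paper's citation, while the first (decomposing $B'$ as a product of strict henselizations $(B_{\mathfrak{m}_j})^{sh}$) front-loads material that the paper only establishes in the \emph{subsequent} lemmas of this appendix; it is not circular, since that decomposition only needs that $B'$ is a finite algebra over a strictly henselian local ring and a filtered colimit of étale $B$-algebras, but the colimit route is cleaner here. One small caveat: perfectness of $k$ is not in fact essential to this lemma (neither the paper's argument nor your colimit argument for normality uses it; it only enters the explicit indexing of the factors by $\operatorname{Hom}_k(\kappa(\mathfrak{m}_j),k^{sep})$), so your closing remark overstates its role, though this does not affect the correctness of the proof.
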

\begin{proof}
	Normalization commutes with étale maps\footnote{It is easy to see the commutation with localization, so this reduces to commutation with standard étale maps} and filtered colimits, and $A^{sh}$ is a filtered colimit of étale $A$-algebras.
\end{proof}
By the previous lemma the minimal primes of $A^{sh}$ and $B'$ are in bijection. Moreover, as $B'$ is finite over $A^{sh}$, $B'$ is a finite product of strictly henselian local rings each finite over $A^{sh}$. Since $B'$ is normal, each local ring is moreover normal, hence integral. We now see that each minimal prime ideal is contained in a unique maximal ideal of $B'$, and vice-versa. Denote $\fr{q}_i$ the minimal prime of $B'$ corresponding to $\fr{p}_i$ and $\fr{M}_i$ the maximal ideal containing $\fr{q}_i$. We deduce
\[
\kappa(\fr{p}_i) = \kappa(\fr{q}_i) = \mathrm{Frac}((B')_{\fr{M}_i})
\]

\begin{lem}[{\cite[\href{https://stacks.math.columbia.edu/tag/08HV}{Tag 08HV}]{stacks-project}}]
	Denote $\fr{n}=\fr{M}_i\cap B$. We have
	\[
	(B')_{\fr{M}_i}=B_{\fr{n}}^{sh}
	\]
\end{lem}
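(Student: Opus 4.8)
The plan is to build the comparison map from the universal property of the strict henselization and then prove it is an isomorphism by recognizing $(B')_{\fr M_i}$ as a filtered colimit of étale localizations over $B_{\fr n}$, all of which become trivial once the base is strictly henselian. Here $B' = B\otimes_A A^{sh}$ is as in the preceding lemmas, $\fr M_i$ one of its maximal ideals, $\fr q_i\subset\fr M_i$ the corresponding minimal prime, and $\fr n=\fr M_i\cap B$.

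First I would record the structure of $B'$. Since $A\to B$ is finite, $B'$ is finite over the henselian local ring $A^{sh}$, hence a finite product of henselian local rings; by the previous lemmas the factor at $\fr M_i$ is $(B')_{\fr M_i}$, it is normal and local, and its residue field is a finite extension of the separably closed field $k^{sep}$. As $k$ is perfect (it is the residue field of a closed point of an arithmetic scheme), $k^{sep}=\bar k$ is algebraically closed, so that residue field equals $\bar k$ and $(B')_{\fr M_i}$ is strictly henselian. Moreover $A^{sh}$ is a filtered colimit of étale $A$-algebras, so $B\to B'$ is ind-étale; composing with the localization at $\fr M_i$ (a direct factor) keeps $B\to (B')_{\fr M_i}$ ind-étale.

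Next, the composite $B\to B'\to (B')_{\fr M_i}$ carries $B\setminus\fr n$ into units, so it factors through a local homomorphism $B_{\fr n}\to (B')_{\fr M_i}$, which is again ind-étale (base change of the previous one). The residue field $\kappa(\fr n)$ is finite over the perfect field $k$, hence perfect, and $\kappa(\fr n)^{sep}=\overline{\kappa(\fr n)}\hookrightarrow\bar k$; this embedding can be taken compatibly with the one built into $A^{sh}$. The universal property of the strict henselization then yields a unique local $B_{\fr n}$-algebra map
\[
\varphi\colon B_{\fr n}^{sh}\longrightarrow (B')_{\fr M_i},
\]
which on residue fields induces the identity of $\bar k$.

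It remains to show $\varphi$ is an isomorphism, and this is where I expect the only genuine work. Writing $(B')_{\fr M_i}=\colim_j C_j$ with each $C_j$ étale over $B_{\fr n}$, the locality of $\varphi$ forces the maximal ideal of $(B')_{\fr M_i}$ to pull back to a prime $\fr q_j\subset C_j$ over the closed point of $B_{\fr n}^{sh}$; since the localization of an étale algebra over a strictly henselian local ring at such a prime recovers the base, $(C_j)_{\fr q_j}\cong B_{\fr n}^{sh}$, compatibly with transition maps. As $(B')_{\fr M_i}$ is already local and localization commutes with filtered colimits, $(B')_{\fr M_i}=\colim_j (C_j)_{\fr q_j}\cong B_{\fr n}^{sh}$, and this identification is inverse to $\varphi$. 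The main obstacle is thus the careful, if standard, bookkeeping with the structure theory of étale algebras over a strictly henselian local ring — choosing the étale presentation so the localizations assemble into the strict henselization — rather than anything specific to the arithmetic situation; the decomposition of $B'$, the residue-field identifications via perfectness of $k$, and the universal-property step are all routine.
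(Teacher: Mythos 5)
Your overall strategy is sound and close in spirit to the paper's (which, via the cited Stacks tag, identifies a strictly henselian, ind-étale local algebra with the strict henselization): establishing that $(B')_{\fr{M}_i}$ is strictly henselian local with residue field $\bar k$, that $B_{\fr{n}}\to (B')_{\fr{M}_i}$ is a local ind-étale map, and constructing $\varphi\colon B_{\fr{n}}^{sh}\to (B')_{\fr{M}_i}$ by the universal property are all fine. The gap is in the step where you prove $\varphi$ is an isomorphism. You write $(B')_{\fr{M}_i}=\colim_j (C_j)_{\fr{q}_j}$ with $C_j$ étale over $B_{\fr{n}}$ and then claim $(C_j)_{\fr{q}_j}\cong B_{\fr{n}}^{sh}$ for each $j$, invoking the triviality of étale algebras over a strictly henselian base. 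But the base here is $B_{\fr{n}}$, which is not strictly henselian; the $C_j$ are not étale over $B_{\fr{n}}^{sh}$ (they only become so after base change along $B_{\fr{n}}\to B_{\fr{n}}^{sh}$, which changes the ring), and ``a prime of $C_j$ over the closed point of $B_{\fr{n}}^{sh}$'' does not typecheck. The claim is false term-wise: the system presenting $B'=B\otimes_A A^{sh}$ begins with $B\otimes_A A=B$, so the initial term gives $(C_0)_{\fr{q}_0}=B_{\fr{n}}\neq B_{\fr{n}}^{sh}$ in general; each $(C_j)_{\fr{q}_j}$ is merely an étale neighborhood of $(B_{\fr{n}},\fr{n})$. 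What is true is that the \emph{colimit} of these étale-local neighborhoods is $B_{\fr{n}}^{sh}$ --- but that is essentially the content of the lemma, so deducing it from the false term-wise statement is circular.

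To close the gap you need the characterization of the strict henselization rather than term-wise triviality: a filtered colimit of étale $B_{\fr{n}}$-algebras which is local, strictly henselian, and whose residue field is a separable closure of $\kappa(\fr{n})$ compatibly with the chosen embedding, \emph{is} $B_{\fr{n}}^{sh}$ (this is in substance the cited Stacks result). Applied directly to $B_{\fr{n}}\to (B')_{\fr{M}_i}$ this finishes the proof, and it renders the explicit map $\varphi$ and the colimit bookkeeping unnecessary. Alternatively, note that each pair $((C_j)_{\fr{q}_j},\ \text{residue embedding})$ is an object of the full filtered system defining $B_{\fr{n}}^{sh}$, giving compatible maps $(C_j)_{\fr{q}_j}\to B_{\fr{n}}^{sh}$ and hence $\psi\colon (B')_{\fr{M}_i}\to B_{\fr{n}}^{sh}$; then check that $\psi\circ\varphi$ and $\varphi\circ\psi$ are the identity using the uniqueness clause of the universal property. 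The paper argues in the opposite direction: $B_{\fr{n}}^{sh}$ is a filtered colimit of étale $B'$-algebras and is strictly henselian local, hence is the strict henselization of $B'$ at the prime under its maximal ideal, which is $(B')_{\fr{M}_i}$ itself since that ring is already strictly henselian; either direction works once the characterization is invoked.
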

\begin{proof}
	$B'$ is a filtered colimit of étale $B$-algebras and so is $B_{\fr{n}}^{sh}$. Since there is a natural morphism of $B$-algebras $B' \to B_{\fr{n}}^{sh}$, $B_{\fr{n}}^{sh}$ is also a filtered colimit of étale $B'$-algebras. Since $B_{\fr{n}}^{sh}$ is a stricly henselian local ring, it must be the strict henselian local ring of $B'$ at any prime above $\fr{n}$.
\end{proof}
Thus $\kappa(\fr{q}_i)=\mathrm{Frac}((B')_{\fr{M}_i})=\mathrm{Frac}(B_{\fr{n}}^{sh})$. Because $B'$ is a colimit of étale $B$-algebras, the maximal ideal $\fr{n}$ of $B$ is above $\fr{m}$. Moreover we have:
\begin{lem}[{\cite[\href{https://stacks.math.columbia.edu/tag/0C25}{Tag 0C25}]{stacks-project}}]
	The fiber above $\fr{n}$ in $B'$ is isomorphic to $\Hom_{k}(\kappa(\fr{n}),k^{sep})$
\end{lem}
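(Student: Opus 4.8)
The plan is to identify the fiber directly with the spectrum of an explicit ring. First I would write the fiber of $\Spec(B')$ above $\fr{n}$ as $\Spec\!\big(B'\otimes_B\kappa(\fr{n})\big)$, and use $B'=B\otimes_A A^{sh}$ together with the associativity of tensor products to get
\[
B'\otimes_B\kappa(\fr{n})\;=\;(B\otimes_A A^{sh})\otimes_B\kappa(\fr{n})\;\cong\;A^{sh}\otimes_A\kappa(\fr{n}).
\]
By the previous lemma $\fr{n}$ lies above $\fr{m}$, so $\fr{m}$ acts by zero on $\kappa(\fr{n})$; hence $\kappa(\fr{n})$ is naturally a $k$-vector space and $A^{sh}\otimes_A\kappa(\fr{n})\cong\big(A^{sh}\otimes_A k\big)\otimes_k\kappa(\fr{n})$.

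Next I would invoke the standard property of the strict henselization that $\fr{m}A^{sh}$ is its maximal ideal and $A^{sh}/\fr{m}A^{sh}\cong k^{sep}$, i.e. $A^{sh}\otimes_A k\cong k^{sep}$; concretely this follows from the construction of $A^{sh}$ as a filtered colimit of étale neighbourhoods of the geometric point, so that passing to special fibers yields the filtered colimit of finite étale $k$-algebras equipped with a chosen $k$-point, which is exactly $k^{sep}$. Thus the ring in question is $k^{sep}\otimes_k\kappa(\fr{n})$. Since $B$ is integral over $A$, $\kappa(\fr{n})=B_{\fr{n}}/\fr{n}B_{\fr{n}}$ is algebraic over $k=A/\fr{m}$, and $k$ being perfect this extension is separable. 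Writing $\kappa(\fr{n})$ as a (filtered colimit of) finite separable $k$-algebra(s) of the form $k[t]/(f)$ with $f$ separable, base change to $k^{sep}$ splits $f$ into distinct linear factors, so that $k^{sep}\otimes_k\kappa(\fr{n})\cong\prod_{\sigma}k^{sep}$, the product being indexed by $\Hom_k(\kappa(\fr{n}),k^{sep})$. Taking $\Spec$, the fiber becomes the (pro)finite discrete set $\Hom_k(\kappa(\fr{n}),k^{sep})$, which is the assertion.

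The argument is essentially formal, and the only step deserving care is the identification $A^{sh}\otimes_A k\cong k^{sep}$ of the special fiber of the strict henselization, together with the observation that perfectness of $k$ is what guarantees $\kappa(\fr{n})/k$ is separable; everything else reduces to the elementary structure theory of separable $k$-algebras after base change to $k^{sep}$.
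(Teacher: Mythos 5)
Your proof is correct and follows the same route as the paper: the chain $B'\otimes_B\kappa(\fr{n})=A^{sh}\otimes_A\kappa(\fr{n})=(A^{sh}\otimes_A k)\otimes_k\kappa(\fr{n})=k^{sep}\otimes_k\kappa(\fr{n})$ is exactly the paper's computation, and you merely make explicit the final (omitted) step that a separable algebra splits over $k^{sep}$ into a product indexed by $\Hom_k(\kappa(\fr{n}),k^{sep})$.
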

\begin{proof}
	We have
	\[
	B'\otimes_{B} \kappa(\fr{n})=A^{sh}\otimes_A \kappa(\fr{n})=A^{sh}\otimes_A k \otimes_k \kappa(\fr{n})=k^{sep}\otimes_k \kappa(\fr{n})
	\]
	hence the result.
\end{proof}

Combining everything, we obtain
\begin{prop}\label{generic_point_strict_henselization}
	Let $(A,\fr{m})$ be a Noetherian integral local ring with fraction field $K$, perfect residue field $k$ and integral closure $B$ finite over $A$. The total ring of fractions of $A^{sh}$ is:
	\[
	A^{sh}\otimes_A K = \prod_{\fr{n}} \prod_{\Hom_k(\kappa(\fr{n}),k^{sep})} \mathrm{Frac}(B_{\fr{n}}^{sh})
	\]
	where $\fr{n}$ goes through the finitely many maximal ideals of $B$.
\end{prop}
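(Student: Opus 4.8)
The plan is to assemble the formula from the three structural lemmas that precede it, chaining the displayed isomorphisms in the order in which they were proved. First I would reduce to the purely local situation: localizing $X$ at the closed point $x$ (which does not change $A^{sh}_{\mathcal{O}_{X,x}}$, nor the generic-fiber computation, since henselization commutes with localization), we land in the setting of a Noetherian integral local ring $(A,\mathfrak{m})$ with fraction field $K$, perfect residue field $k$, and finite integral closure $B$. Here the finiteness of $\pi\colon Y\to X$ is what guarantees $B$ is finite over $A$, so all the cited Stacks-Project lemmas apply verbatim.

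Next I would run through the four lemmas in sequence. The first gives $A^{sh}\otimes_A K=\prod_{i=1}^n \kappa(\mathfrak{p}_i)$, identifying this ring with the total ring of fractions of $A^{sh}$, with each residue extension separable algebraic over $k$. The second identifies $B':=B\otimes_A A^{sh}$ as the normalization of $A^{sh}$, using that normalization commutes with étale maps and filtered colimits and that $A^{sh}$ is a filtered colimit of étale $A$-algebras; since normalization does not change the total ring of fractions, $A^{sh}\otimes_A K$ is also the total ring of fractions of $B'$, and the minimal primes $\mathfrak{p}_i$ of $A^{sh}$ correspond bijectively to minimal primes $\mathfrak{q}_i$ of $B'$. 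Because $B'$ is finite over the strictly henselian $A^{sh}$, it is a finite product of strictly henselian local rings, each normal hence a domain, so each minimal prime $\mathfrak{q}_i$ sits inside a unique maximal ideal $\mathfrak{M}_i$ and $\kappa(\mathfrak{p}_i)=\kappa(\mathfrak{q}_i)=\operatorname{Frac}\big((B')_{\mathfrak{M}_i}\big)$. The third lemma, via the filtered-colimit-of-étale-algebras argument applied on the $B$-side, identifies $(B')_{\mathfrak{M}_i}$ with $B^{sh}_{\mathfrak{n}_i}$ where $\mathfrak{n}_i:=\mathfrak{M}_i\cap B$, a maximal ideal of $B$ lying over $\mathfrak{m}$. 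The fourth lemma computes the fiber of $\operatorname{Spec}(B')\to\operatorname{Spec}(B)$ over $\mathfrak{n}$ as $\operatorname{Hom}_k(\kappa(\mathfrak{n}),k^{sep})$, from the chain $B'\otimes_B\kappa(\mathfrak{n})=A^{sh}\otimes_A\kappa(\mathfrak{n})=k^{sep}\otimes_k\kappa(\mathfrak{n})$.

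Finally I would simply organize the bookkeeping: the maximal ideals $\mathfrak{M}_i$ of $B'$ group according to the maximal ideal $\mathfrak{n}=\mathfrak{M}_i\cap B$ of $B$ they contract to (there are finitely many such $\mathfrak{n}$, all above $\mathfrak{m}$, since $B$ is finite over $A$), and for fixed $\mathfrak{n}$ the $\mathfrak{M}_i$ above it are in bijection with the points of the fiber, i.e.\ with $\operatorname{Hom}_k(\kappa(\mathfrak{n}),k^{sep})$. Assembling, $A^{sh}\otimes_A K=\prod_i\kappa(\mathfrak{p}_i)=\prod_{\mathfrak{n}}\prod_{\operatorname{Hom}_k(\kappa(\mathfrak{n}),k^{sep})}\operatorname{Frac}(B^{sh}_{\mathfrak{n}})$, which is exactly the claim. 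I do not anticipate a genuine obstacle here; the content is entirely in the four lemmas, each of which is cited, and the only care needed is to keep the indexing consistent—ensuring that the product over $\mathfrak{M}_i$ really does decompose as a product over $\mathfrak{n}$ of products over the fiber, with no collapsing or double-counting. The mildly delicate point, if any, is verifying that distinct $\mathfrak{M}_i$ over the same $\mathfrak{n}$ give isomorphic (not merely abstractly isomorphic but the "same") field $\operatorname{Frac}(B^{sh}_{\mathfrak{n}})$; this follows because $B^{sh}_{\mathfrak{n}}$ depends only on $\mathfrak{n}$ up to canonical isomorphism once a separable closure of $\kappa(\mathfrak{n})$ is fixed inside $k^{sep}$, and the various $\mathfrak{M}_i$ correspond precisely to the different embeddings already accounted for by the $\operatorname{Hom}_k(\kappa(\mathfrak{n}),k^{sep})$ factor.
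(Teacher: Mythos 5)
Your proposal is correct and follows the paper's own route exactly: the paper's Appendix D proves the same four preliminary lemmas (total ring of fractions of $A^{sh}$, identification of $B\otimes_A A^{sh}$ as the normalization, identification of its local rings with $B^{sh}_{\fr{n}}$, and the fiber computation $\Hom_k(\kappa(\fr{n}),k^{sep})$) and then obtains the proposition by the same assembly of minimal primes, maximal ideals, and fiber points that you describe. Your extra remark on the indexing over embeddings is a fair elaboration of the bookkeeping the paper leaves implicit, but it introduces no new idea.
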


\section{A lemma on determinants of total complexes}

\begin{lem}\label{trivialization_double_complex}
	Let $\cal{A}$ be an abelian category, let $C$ be a double complex of objects of $\cal{A}$ with uniformly bounded and acyclic rows and columns and let $\det$ be a determinant functor $\Ch^b(\cal{A})_{qis} \to \cal{P}$. We denote by $\mathbf{1}$ the unit of the Picard category $\cal{P}$. Define the filtration by rows on $\mathrm{Tot}C$ by $F^n_r \mathrm{Tot}C=\mathrm{Tot}(\sigma_r^{\leq n} C)$,\footnote{Here $r$ stands for \emph{rows}.} where $(\sigma_r^{\leq n} C)^{p,q}=C^{p,q}$ if $q\leq n$ and $0$ otherwise. The filtration by rows induces a commutative diagram
	\[\begin{tikzcd}[ampersand replacement=\&,column sep=1.5cm]
	{\det \mathrm{Tot} C} \& {\mathbf{1}} \\
	{\bigotimes_j (\det C^{\bullet,j})^{(-1)^j}} \& {\bigotimes_j (\mathbf{1})^{(-1)^j}}
	\arrow["\det0", from=1-1, to=1-2]
	\arrow[from=1-1, to=2-1]
	\arrow["{\otimes_j (\det 0)^{(-1)^j}}"', from=2-1, to=2-2]
	\arrow[from=1-2, to=2-2]
	\end{tikzcd}\]
\end{lem}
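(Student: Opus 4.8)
The strategy is to reduce the statement about the double complex to a statement about a short exact sequence of complexes, to which we can apply the additivity axiom of the determinant functor together with the compatibility of $\det$ with the trivialization of an acyclic complex. The key observation is that, since $\det 0 : \det M \to \mathbf{1}$ is defined for any acyclic complex $M$ (it is the image under $\det$ of the unique morphism $M \to 0$, which is a quasi-isomorphism because $M$ is acyclic), the naturality of $\det$ applied to morphisms of acyclic complexes will force all the relevant squares to commute. First I would set up the row filtration $F^n_r\mathrm{Tot}C$ and observe that each graded piece $\mathrm{gr}^n_r \mathrm{Tot}C = \mathrm{Tot}(\sigma^{=n}_r C)$ is just the row $C^{\bullet,n}$ placed in total degree $\bullet + n$, i.e. it is $C^{\bullet,n}[-n]$, which is acyclic by hypothesis. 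Hence $\mathrm{Tot}C$ itself is acyclic (finite filtration with acyclic graded pieces), so $\det 0 : \det\mathrm{Tot}C \to \mathbf{1}$ makes sense, as does $\det 0$ on each row.

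Next I would proceed by induction on the number of nonzero rows. The short exact sequence of complexes
\[
0 \to F^{n-1}_r\mathrm{Tot}C \to F^n_r\mathrm{Tot}C \to C^{\bullet,n}[-n] \to 0
\]
is a short exact sequence of \emph{acyclic} complexes; applying the additivity axiom of the determinant functor to it gives a canonical isomorphism $\det F^n_r\mathrm{Tot}C \simeq \det F^{n-1}_r\mathrm{Tot}C \otimes \det(C^{\bullet,n}[-n])$, and $\det(C^{\bullet,n}[-n]) = (\det C^{\bullet,n})^{(-1)^n}$ by the behaviour of $\det$ under shifts. The crucial compatibility is that, for a short exact sequence of acyclic complexes $0\to A\to B\to C\to 0$, the triangle formed by $\det 0_B$, $\det 0_A \otimes \det 0_C$ and the additivity isomorphism commutes; this is exactly the statement that the determinant functor is compatible with the canonical trivialization of acyclic complexes, which follows by applying $\det$ to the evident commutative diagram of quasi-isomorphisms from the short exact sequence to the short exact sequence $0\to 0\to 0\to 0\to 0$. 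Iterating this down the filtration assembles the isomorphism $\det\mathrm{Tot}C \simeq \bigotimes_j(\det C^{\bullet,j})^{(-1)^j}$, and the compatibility at each step shows that $\det 0$ on the left corresponds to $\bigotimes_j(\det 0)^{(-1)^j}$ on the right, which is precisely the commutative square claimed.

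The main obstacle I anticipate is purely bookkeeping: one must be careful about signs and about the identification $\mathrm{gr}^n_r\mathrm{Tot}C = C^{\bullet,n}[-n]$ versus $C^{\bullet,n}[n]$ (depending on total-degree conventions), and one must check that the associativity/commutativity constraints of the Picard category $\cal{P}$ are respected when iterating the binary additivity isomorphism over all rows — this is where a careful appeal to the associativity axiom of determinant functors (e.g. \cite[3.1]{Breuning11}) is needed, exactly as in the body of the paper. There is no genuine mathematical difficulty: everything reduces to the two axioms (additivity and associativity) of a determinant functor plus the elementary fact that $\det 0$ is natural for morphisms between acyclic complexes. I would also remark that the same argument run with the column filtration $F^n_c\mathrm{Tot}C = \mathrm{Tot}(\sigma^{\leq n}_c C)$ gives the analogous statement for columns, and that comparing the two yields the relation $\prod_i(\det_{\ZZ,\RR}C^{i,\bullet})^{(-1)^i} = \prod_j(\det_{\ZZ,\RR}C^{\bullet,j})^{(-1)^j}$ used in the proof of \cref{explicit_characteristic} — though strictly speaking only the row version is needed for the present lemma.
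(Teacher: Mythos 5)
Your proof is correct and follows essentially the same route as the paper: filter $\mathrm{Tot}C$ by rows, identify the graded pieces with the shifted acyclic rows $C^{\bullet,n}[-n]$, and conclude by the compatibility of the determinant functor with this filtration and with the trivializations of acyclic complexes. The only difference is that you re-derive that compatibility by hand (induction along the filtration, additivity for the short exact sequences $0\to F^{n-1}_r\mathrm{Tot}C\to F^{n}_r\mathrm{Tot}C\to C^{\bullet,n}[-n]\to 0$, and naturality of the additivity isomorphism along the quasi-isomorphism to the zero sequence), whereas the paper simply invokes the corresponding results of Knudsen \cite[1.7]{Knudsen02} and Breuning \cite[2.3]{Breuning05}.
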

\begin{proof}
	The $n$-th graded piece of the filtration by rows of $\mathrm{Tot}C$ is $C^{\bullet,n}[-n]$.	Since the rows are exact, the map $0:\mathrm{Tot}C \to 0$ is a quasi-isomorphism; moreover it induces quasi-isomorphisms between the graded pieces of the filtrations by rows on $\mathrm{Tot}C$ and the trivial filtration on $0$ because columns are exact. We conclude with \cite[1.7]{Knudsen02} and \cite[2.3]{Breuning05}.
\end{proof}

\todos

\printbibliography

\end{document}